\let\@@enum@org\@@enum@
\def\@@enum@[#1]{\@@enum@org[\normalfont #1]}
\DeclarePairedDelimiter{\form}{\langle}{\rangle}
\newcommand\ba{\begin{align*}}
\newcommand\ea{\end{align*}}
\newcommand\be{\begin{enumerate}}
\newcommand\ee{\end{enumerate}}
\newcommand\bp{\begin{proof}}
\newcommand\ep{\end{proof}}
\newcommand\bpp{\begin{prop}}
\newcommand\epp{\end{prop}}
\newcommand\bd{\begin{defn}}
\newcommand\ed{\end{defn}}
\numberwithin{equation}{subsection}
\newcommand\stab{\mathrm{Stab}}
\newcommand\eu{\mathrm{eu}}
\newcommand\fform[1]{\langle\!\langle #1\rangle\!\rangle}
\newcommand\nought{_0}
\newcommand\bC{\mathbb{C}}
\newcommand\bN{\mathbb{N}}
\newcommand\bR{\mathbb{R}}
\newcommand\bK{\mathbb{K}}
\newcommand\bQ{\mathbb{Q}}
\newcommand\bZ{\mathbb{Z}}
\newcommand\bH{\mathbb{H}}
\newcommand\CC{\mathcal{C}}
\newcommand\EE{\mathcal{E}}
\newcommand\FF{\mathcal{F}}
\newcommand\FG{\mathfrak{G}}
\newcommand\GG{\mathcal{G}}
\newcommand\DD{\mathcal{D}}
\newcommand\HH{\mathcal{H}}
\newcommand\LL{\mathcal{L}}
\newcommand\PP{\mathcal{P}}
\newcommand\CS{\mathcal{S}}
\newcommand\Inn{\operatorname{Inn}}
\newcommand\Hom{\operatorname{Hom}}
\newcommand\Isom{\operatorname{Isom}}
\newcommand\rot{\operatorname{rot}}
\newcommand\Rot{\operatorname{Rot}}
\newcommand\supp{\operatorname{supp}}
\newcommand\Id{\operatorname{Id}}
\newcommand\Mod{\operatorname{Mod}}
\DeclareMathOperator\Homeo{Homeo}
\newcommand\yt{\widetilde}
\newcommand\sse{\subseteq}
\newcommand\co{\colon}
\DeclareMathOperator\tr{tr}
\DeclareMathOperator\Fix{Fix}
\DeclareMathOperator\Out{Out}
\DeclareMathOperator\Aut{Aut}
\DeclareMathOperator\PSL{PSL}
\DeclareMathOperator\SO{SO}
\DeclareMathOperator\Diff{Diff}
\def\thetitle{{Flexibility of group actions on the circle}}
\def\theauthors{{Sang-hyun Kim and Thomas Koberda and Mahan Mj}}
\newtheorem{thm}{Theorem}[section]
\newtheorem{lem}[thm]{Lemma}
\newtheorem{cor}[thm]{Corollary}
\newtheorem{prop}[thm]{Proposition}
\newtheorem{que}[thm]{Question}
\newtheorem*{claim*}{Claim}
\newtheorem{claim}{Claim}
\theoremstyle{remark}
\newtheorem{exmp}[thm]{Example}
\newtheorem{rem}[thm]{Remark}
\newtheorem{notation}[thm]{Notation}
\theoremstyle{definition}
\newtheorem{defn}[thm]{Definition}
\begin{document}
\title\thetitle
\date{\today}
\keywords{}

\author[S. Kim]{Sang-hyun Kim}
\address{Department of Mathematical Sciences, Seoul National University, Seoul, Korea}
\email{s.kim@snu.ac.kr}
\urladdr{http://cayley.kr}

\author[T. Koberda]{Thomas Koberda}
\address{Department of Mathematics, University of Virginia, Charlottesville, VA 22904-4137, USA}
\email{thomas.koberda@gmail.com}
\urladdr{http://faculty.virginia.edu/Koberda}

\author[M. Mj]{Mahan Mj}
\address{School of Mathematics, Tata Institute of Fundamental Research, Homi Bhabha Road, Colaba, Mumbai 400005, India}
\email{mahan@math.tifr.res.in}
\urladdr{http://www.math.tifr.res.in/~mahan/}

\subjclass[2010]{57M05, 37E45 (Primary); 20F65  57S05 (Secondary)}

\keywords{Rotation spectrum, limit groups, surface group representation, Baumslag Lemma, Combination Theorem}

\begin{abstract}
In this  partly expository monograph we develop a general framework for producing uncountable families of exotic actions of certain classically studied groups acting on the circle.
We show that if $L$ is a nontrivial limit group then 
the nonlinear representation variety $\mathrm{Hom}(L,\mathrm{Homeo}_+(S^1))$ contains uncountably many semi-conjugacy classes of faithful actions on $S^1$ with pairwise disjoint rotation spectra (except for $0$)
such that each representation lifts to $\bR$.
For the case of most Fuchsian groups $L$,
we prove further that this flexibility phenomenon occurs even locally,
thus complementing a result of K. Mann.
We prove that each non-elementary free or surface group admits an action  on $S^1$ that is never semi-conjugate to any action that factors through a finite--dimensional connected Lie subgroup in $\Homeo_+(S^1)$. 
It is exhibited that the mapping class groups of bounded surfaces have non-semi-conjugate faithful actions  on $S^1$.
In the process of establishing these results, 
we prove 
general combination theorems for indiscrete subgroups of $\mathrm{PSL}_2(\mathbb{R})$ which apply to most Fuchsian groups and to all limit groups.
We also show a Topological Baumslag Lemma, and general combination theorems for representations into Baire topological groups.
The abundance of $\bZ$--valued subadditive defect--one quasimorphisms on these groups would follow as a corollary.
We also give a mostly self-contained reconciliation of the various notions of semi-conjugacy in the extant literature by showing that they are all equivalent.
\end{abstract}

\maketitle

\setcounter{tocdepth}{2}
\tableofcontents

\section{Introduction}

In this monograph, we study finitely generated groups which are classically known to act faithfully on the circle. The purpose of this monograph is to give a systematic construction of uncountable families of actions of these groups which have ``essentially different'' dynamics. The tools described allow us to construct many \emph{exotic} actions of classically studied groups, i.e. actions which are not semi-conjugate to the ``usual" or ``standard" actions of these groups.

This monograph is partially expository and partially original. We develop theory as coherently as possible, with some methods that are well--known to experts, and others which to our knowledge are our own.

In the course of developing tools to build exotic group actions, we prove several \emph{combination theorems}, which construct a framework for building dense subgroups of $\PSL_2(\bR)$,
viewed as a subgroup of the group $\ensuremath{\Homeo_+(S^1)}$ of orientation--preserving homeomorphisms of the circle. Dense subgroups of $\PSL_2(\bR)$ fall outside the purview of classically studied objects, and to study them we employ methods from representation varieties, dynamics in one dimension, and hyperbolic geometry in two and three dimensions. The interested reader may also consult \cite{hinkkanen90,kova99,rebelo}, for some results related to the themes of this monograph.

\subsection{Some basic notions}
\emph{Throughout this introduction, we will let $L$ be a finitely generated group.}
By a \emph{circle action}\index{circle action} of $L$, 
we will mean a homomorphism \[\rho\co L\to \Homeo_+(S^1).
\] 
A circle action $\rho$ of $L$ is \emph{liftable}\index{liftable} if $\rho$ factors as 
\[\xymatrix{L\ar[r]& \Homeo_\bZ(\bR)\ar[r]&  \Homeo_+(S^1)},\] 
where $\Homeo_\bZ(\bR)$
is the set of orientation-preserving homeomorphisms $f$ on the real line satisfying  \[f(x+1)=f(x)+1.\]

Roughly speaking, we say that two circle actions $\rho_0$ and $\rho_1$ of $L$ are \emph{semi-conjugate}\index{semi-conjugacy}, and write 
$\rho_0
\sim_{\mathrm{semi}}
\rho_1$, 
if there is a monotone degree one map $h\colon S^1\to S^1$ such that \[h\circ\rho_0=\rho_1\circ h\]
after ``lifting'' the actions to $\bR$; see Definition~\ref{defn:r-semi}. We say that $\rho_0$ and $\rho_1$ are \emph{equivalent}\index{equivalent circle action} if there exists an $\alpha\in\Aut(L)$ such that $\rho_0\circ\alpha$ is semi-conjugate to $\rho_1$. Semi-conjugacy and equivalence are indeed an equivalence relation. The reader is directed to Section \ref{sec:prelim} and the Appendix for a comprehensive discussion of this definition. 

Let $\bH^2$ denote the hyperbolic plane.
We call a homomorphism \[\rho\co L\to\PSL_2(\bR)\] as a \emph{projective representation}\index{representation!projective} of $L$, which is also regarded as a circle action: 
\[\PSL_2(\bR)\cong\Isom^+(\bH^2)\hookrightarrow\Homeo_+(S^1).\]
A projective representation $\rho\co L\to\PSL_2(\bR)$ is \emph{parabolic-free}\index{parabolic-free projective representation} if $\rho(L)$ does not contain nontrivial parabolic elements. 
A parabolic-free representation is a generalization of a hyperbolic structure without cusps. 

\begin{notation}
Let $L$ be a group.
\be
\item
We denote by $M(L)$ the double coset space 
\[M(L) = \Out(L)\backslash \Hom(L,\Homeo_+(S^1))/\text{semi-conjugacy}.\]
Note that only $\Out(L)$ is relevant here as  inner automorphisms  give rise to the same action up to a conjugacy, and hence, up to semi-conjugacy.
\item
We denote by $X_{\mathrm{proj}}(L)$
the set of faithful parabolic-free projective representations of $L$. 
\ee
\end{notation}
In other words, $M(L)$ is the space of the equivalence classes of circle actions by $L$,
which in turn can be regarded as the ``moduli space'' of circle actions by $L$.
The motivating question for this monograph is the following.

\begin{que}\label{que:main}
For which finitely generated group $L$ is the image of $X_{\mathrm{proj}}(L)$ in $M(L)$ nonempty or uncountable? 
What is the topology of the image of $X_{\mathrm{proj}}(L)$ in $M(L)$?
\end{que}

We are now ready to summarize the principal results of this monograph.

\subsection{Combination Theorems and Indiscrete Subgroups of $\PSL_2(\bR)$}
Before stating the main combination theorem, we need to introduce some terminology. For a group $L$, we let $T_L$ denote the set of torsion elements.

A set of representations $\Lambda\sse\Hom(L,\PSL_2(\bR))$ is \emph{tracially disjoint}\index{tracially disjoint}
if every pair of distinct elements $\rho,\lambda\in \Lambda$ satisfy
 \[\tr^2\circ\rho(L\setminus T_L)\cap\tr^2\circ\lambda(L\setminus T_L)=\varnothing.\]

Recall from our definition that 
a projective representation that factors as \[\xymatrix{L\ar[r]& \PSL_2^\sim(\bR)\ar[r]^p &  \PSL_2(\bR)}\]  
is called liftable; here, $p$ is the universal covering map.  We will say a subgroup $L\le\PSL_2(\bR)$ is liftable if so is the embedding $L\to\PSL_2(\bR)$.

\bd\label{defn:flex-intro}
A finitely generated group $L$ is called \emph{flexible}\index{flexible} if there exists an uncountable, tracially disjoint subset $\Lambda$ of indiscrete faithful parabolic-free projective representations.
If, furthermore, $\Lambda$ can be chosen so that each representation in $\Lambda$ is liftable, then we say $L$ is \emph{liftable-flexible}\index{flexible!liftable-flexible}.
\ed

Excluding the torsion elements is necessary, since if $g\in\PSL_2(\bR)$ has order $n$ in $L$ then there are only $n$ possible values that $\tr^2(g)$ can take.
We will see later that
the space $M(L)$ is uncountable for a flexible group $L$. 
See Corollary~\ref{cor:consequence-psl} for further implications of flexibility.

We let $\Inn(g)$ denote the conjugation map
\[h\mapsto ghg^{-1}.\]
We also use the notation $h^g = g^{-1}hg$.
Recall a subgroup $C\le A$ is \emph{malnormal}\index{malnormal} if $C^g\cap C=\{1\}$ for all $g\in A\setminus C$.
Let $A$ be a group and let $\phi\co C\to C'$ be an isomorphism between subgroups of $A$. We let \[A\ast_{\phi\co C\to C'}=A\ast_\phi=A\ast_s\] denote the HNN extension of $A$ amalgamated along the map $\phi$, where $s$ is used as the stable generator. 

Let $\FF$ and $\FF^\sim$ denote the classes of flexible and liftable-flexible groups, respectively. The following result is the principal combination theorem which we establish in this monograph.

\begin{thm}[Combination Theorem for Flexible Groups]\label{thm:main-comb-intro}
Let $\HH=\FF$ or $\FF^\sim$.
\be
\item
Infinite cyclic groups are in $\HH$.
\item
If $H\le L\in\HH$ and $[L:H]<\infty$, then $H\in\HH$.
\item
If $A$ and $B$ are in $\HH$, then so is $A\ast B$.
\item
Suppose $A,B\le L$ and $C\le A\cap B$ satisfy \[L=\form{A,B}\in\HH.\]
If $C$ is nontrivial malnormal abelian in $A$ and in $B$,
then $A\ast_C B\in \HH$.
\item 
Suppose $A,C\le L$ and $s\in L$ satisfy \[L=\form{A,s}\in\HH\] and that $C$ and $C^s$ are nontrivial malnormal abelian subgroups of $A$. Assume either 

\begin{itemize}
\item $s\in Z(C)$, or
\item $C$ and $C^s$ are not conjugate in $A$.
\end{itemize}
Then  $A\ast_\phi\in\HH$,
where
$\phi=\Inn(s)\co C\to C^s$.
\item
If $A$ is in $\FF$ and $B$ is a finite cyclic group, then $A\ast B\in\FF$.
\ee
\end{thm}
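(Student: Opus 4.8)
The plan is to build on part (3), which already gives that $A \ast B$ is flexible whenever both $A$ and $B$ are; the only new feature in (6) is that $B$ is finite cyclic, which prevents $B$ — and hence $A \ast B$ — from being liftable-flexible, but should not obstruct plain flexibility. So the task is really to produce, for each fixed uncountable tracially disjoint family of indiscrete faithful parabolic-free representations $\rho_t \colon A \to \PSL_2(\bR)$, a matching uncountable family of representations of $A \ast B$ that remain indiscrete, faithful, parabolic-free, and tracially disjoint. Since $B = \bZ/n\bZ$ has a canonical faithful representation into $\PSL_2(\bR)$ as a rotation by $2\pi/n$ about a chosen point, fix such an embedding $\iota \colon B \hookrightarrow \PSL_2(\bR)$ once and for all; note its image consists of elliptic (torsion) elements, so it contributes nothing to the trace-square set $\tr^2(B \setminus T_B) = \tr^2(\varnothing) = \varnothing$.

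The key steps, in order, are as follows. First, for each parameter $t$ in the given uncountable index set, form the amalgamated data $\rho_t$ on the $A$ factor and $\iota$ on the $B$ factor; by a ping-pong / Klein–Maskit combination argument — the same machinery underlying parts (3)–(5) — conjugate $\iota$ by a generic element $g_t \in \PSL_2(\bR)$ so that the axis/fixed-point configurations of $\rho_t(A)$ and $g_t \iota(B) g_t^{-1}$ are in ``general position,'' guaranteeing that the induced map $\sigma_t \colon A \ast B \to \PSL_2(\bR)$ is faithful with image generated in ping-pong fashion. Second, check parabolic-freeness: elements of $A \ast B$ are, up to conjugacy, either conjugate into $A$ (handled by parabolic-freeness of $\rho_t$), conjugate into $B$ (these are elliptic, hence not parabolic), or genuinely alternating words, which under a ping-pong configuration are hyperbolic, hence not parabolic. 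Third, verify indiscreteness: $\sigma_t(A \ast B) \supseteq \rho_t(A)$, and $\rho_t(A)$ is already indiscrete, so $\sigma_t(A \ast B)$ is indiscrete a fortiori. Fourth, arrange tracial disjointness of the family $\{\sigma_t\}$: for a non-torsion element $w \in A \ast B$, if $w$ is conjugate into $A$ then $\tr^2 \sigma_t(w)$ lies in $\tr^2 \rho_t(A \setminus T_A)$, and the original family $\{\rho_t\}$ is tracially disjoint; if $w$ is a properly alternating word, then $\tr^2 \sigma_t(w)$ is a nonconstant real-analytic function of the conjugating parameter $g_t$, so after passing to an uncountable subfamily on which all these countably many trace-square functions are injective and have pairwise disjoint images across distinct $t$ (a standard measure-theoretic or Baire-category pigeonhole argument, exactly as in the proofs of (3)–(5)), we obtain the desired disjointness.

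The main obstacle I expect is the bookkeeping in the fourth step — ensuring simultaneous tracial disjointness over the uncountably many parameters and all (countably many) conjugacy classes of non-torsion elements of $A \ast B$, while the ``$A$-part'' of the trace spectrum is already pinned down by the given family and the ``alternating part'' must be controlled by the single extra parameter $g_t$. This is precisely the kind of argument the paper must already deploy for part (3), so the honest content of (6) is checking that introducing torsion into one free factor does not break it: torsion elements are simply excluded from the trace-square condition by definition, and they cannot be parabolic, so they are dynamically harmless. The one genuine asymmetry with (3) is that we cannot hope for liftability — a finite cyclic group has no faithful lift to $\PSL_2^\sim(\bR)$, which is torsion-free — and this is exactly why the conclusion is only $A \ast B \in \FF$ rather than membership in $\FF^\sim$; this point should be remarked explicitly.
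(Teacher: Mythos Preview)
Your overall reduction is exactly right and matches the paper: part~(6) is the same as part~(3) except that one free factor is finite cyclic, and since torsion elements are excluded from the tracial-disjointness condition and are elliptic (hence never parabolic), they are harmless. The paper's proof of (6) is literally the one line ``Part~(6) follows from Lemma~\ref{lem:free}~(1),'' i.e.\ the Free Product Pulling-Apart Lemma applied verbatim. Your observation about why liftability fails is also correct and is precisely the content of the paper's remark following the statement.

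There is, however, a genuine gap in the mechanism you invoke. You repeatedly appeal to a ``ping-pong / Klein--Maskit combination argument'' and assert that ``genuinely alternating words, which under a ping-pong configuration are hyperbolic.'' This does not work here: the representation $\rho_t$ of $A$ is \emph{indiscrete}, so $\rho_t(A)$ is dense in $\PSL_2(\bR)$ and there are no ping-pong domains to play with. The paper stresses this point explicitly (see the discussion in Section~\ref{subsubsec:teichmueller}): classical ping-pong is ``unadaptable'' to the indiscrete setting, and this is exactly why the Projective Baumslag Lemma (Lemma~\ref{lem:baumslag-psl}) is developed. Faithfulness and parabolic-freeness come not from a dynamical Schottky configuration but from the algebraic fact that, for a very general conjugator $\nu$, each alternating word $\prod g_i \nu^{m_i}$ has $\tr^2$ avoiding any prescribed countable set (in particular $\{4\}$); this is a Baire-category argument on the one-dimensional parameter $\nu$ in a suitable maximal abelian subgroup, not a ping-pong.

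A second, smaller issue: your Step~4 bookkeeping (pass to an uncountable subfamily on which countably many trace functions are simultaneously injective with disjoint images) is more delicate than you suggest, and the paper sidesteps it entirely. Via Lemma~\ref{lem:flex-count}, flexibility is equivalent to: for every countable $W\subseteq\bR$ there exists \emph{one} indiscrete faithful parabolic-free representation with $\tr^2$-spectrum (on non-torsion elements) disjoint from $W$. The uncountable tracially disjoint family then comes for free by Zorn's lemma. So the correct argument is: given $W$, choose $\rho\in X_{\mathrm{proj}}(A)$ indiscrete with $\tr^2\circ\rho(A\setminus T_A)\cap W=\varnothing$, embed $B$ as a finite rotation group, and apply Lemma~\ref{lem:free}~(1) with this $W$ to find a single generic $\nu$ making $\langle \rho(A),\iota(B)^\nu\rangle \cong A\ast B$ with the required trace control. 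Indiscreteness is inherited from $\rho(A)$ as you say.
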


The groups in $\FF^\sim$ are torsion-free,
so part (6) does not hold for $\FF^\sim$.
\begin{rem}\label{rem:main-comb-intro}
If $L$ is a torsion-free flexible group and if $C$ is a maximal abelian subgroup of $L$, then $C$ is malnormal. 
This is because for a hyperbolic maximal abelian subgroup $\mu\le\PSL_2(\bR)$, the set $N(\mu)\setminus\mu$ consists of elements with order two. Here and throughout, we shall say that a subgroup is hyperbolic maximal abelian if it is abelian, consists of hyperbolic elements, and is maximal satisfying these conditions.
\end{rem}

By a \emph{surface group}\index{surface group}, we will mean the fundamental group of a closed orientable hyperbolic surface.
Although looks technical, the condition (\ref{main:hnn}) is precisely required when one considers a decomposition of a surface group by a nonseparating simple closed curve. 
A relatively straightforward application of Theorem \ref{thm:main-comb-intro} above is for a \emph{double}\index{double (of a group)} of a flexible group. 
For a group $A$ and a subgroup $C$, we denote by $A\ast_C\bar A$ the amalgamation of two copies of $A$ along $\Id_C$.
We write $A\ast_C = A\ast_C (C\times\bZ)$. Thus, $A\ast_C$ is an HNN extension of $A$ along $C$ where the two inclusions of $C$ into $A$ are identical. By putting $A=B$ or $s=1$ in the Combination Theorem, we obtain:

\begin{cor}\label{cor:double-intro}
Let $A$ be a group and let $C$ be a malnormal abelian subgroup of $A$.
If $A$ is flexible or liftable-flexible, then so are $A\ast_C$ and $A\ast_C \bar A$.
\end{cor}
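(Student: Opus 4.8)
The plan is to obtain both assertions as immediate instances of the Combination Theorem (Theorem~\ref{thm:main-comb-intro}), feeding it the two degenerate inputs flagged just before the corollary: take $B=A$ for the amalgam $A\ast_C\bar A$, and take $s=1$ for the HNN extension $A\ast_C$. Write $\HH$ for whichever of $\FF$ or $\FF^\sim$ contains $A$; parts~(4) and~(5) of Theorem~\ref{thm:main-comb-intro} hold for this $\HH$. Suppose first that $C$ is nontrivial.

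\emph{The amalgam $A\ast_C\bar A$.} Put $L:=A$ and let $B:=A$ be a second copy of $A$ sitting inside $L$, so that $C\le A\cap B$ and $L=\form{A,B}=A\in\HH$. Since $C$ is malnormal abelian in $A$, it is malnormal abelian in both $A$ and $B$, and so part~(4) of Theorem~\ref{thm:main-comb-intro} applies and gives $A\ast_C B\in\HH$. By definition $A\ast_C B$ with $A=B$, glued along $\Id_C$, is $A\ast_C\bar A$; this settles the amalgam.

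\emph{The HNN extension $A\ast_C$.} Put $L:=A$ and $s:=1\in L$, so $L=\form{A,s}=A\in\HH$ and $C=C^s$ is nontrivial malnormal abelian in $A$. The first of the two alternatives in part~(5), namely $s\in Z(C)$, holds trivially since $s=1$, so part~(5) yields $A\ast_\phi\in\HH$ with $\phi=\Inn(1)=\Id_C$. But the HNN extension of $A$ along the identity map of $C$ is, by the definition recalled just above the corollary, exactly $A\ast_C(C\times\bZ)=A\ast_C$. This completes the proof in the case $C\ne\{1\}$.

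The only things requiring attention are the two bookkeeping identifications just made, together with the routine check that the degenerate data $B=A$ and $s=1$ satisfy the hypotheses of parts~(4) and~(5); there is no obstacle of substance here, since all the real content is already packed into the Combination Theorem. Finally, if one does not build nontriviality into the notion of a malnormal abelian subgroup, the leftover case $C=\{1\}$ is even easier: then $A\ast_C\bar A=A\ast\bar A\in\HH$ by part~(3), and $A\ast_C=A\ast\bZ\in\HH$ by parts~(1) and~(3).
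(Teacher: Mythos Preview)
Your proof is correct and follows exactly the approach indicated in the paper: the corollary is stated immediately after the sentence ``By putting $A=B$ or $s=1$ in the Combination Theorem, we obtain,'' and the later restatement (Corollary~\ref{cor:double}) is proved by the one-line ``Apply Combination Theorem (\ref{main:free}), (\ref{main:amalgam}), (\ref{main:hnn}) for $L=A=B$ and $s=1$.'' Your write-up is, if anything, more careful in verifying the hypotheses and in handling the degenerate case $C=\{1\}$.
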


With some more work, we obtain the following very general result about circle actions of \emph{limit groups}\index{limit group}. Recall that a limit group is a finitely generated group which is fully residually free.

\begin{cor}
Every nontrivial limit group is liftable-flexible.
\end{cor}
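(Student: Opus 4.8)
The plan is to run an induction on the hierarchical level of a limit group, feeding each stage of the hierarchy into the appropriate clause of Theorem~\ref{thm:main-comb-intro} with $\HH=\FF^\sim$. Two structural facts drive everything. First, every nontrivial limit group is torsion-free (so that $\FF^\sim$, not merely $\FF$, is the right target) and is conjugately separated abelian; hence every maximal abelian subgroup of a limit group is malnormal, which is precisely the source of the malnormality hypotheses appearing throughout Theorem~\ref{thm:main-comb-intro}. Second, by the work of Sela (see also Kharlampovich--Myasnikov, Champetier--Guirardel, and Wilton), every limit group is a \emph{constructible limit group}, and so carries a well-defined level: the groups of level $0$ are the finitely generated free groups, the finitely generated free abelian groups, and the fundamental groups of closed orientable surfaces, while a group of level $n\ge1$ is assembled from groups of level $\le n-1$ by finitely many free products, amalgamations over a maximal abelian subgroup of each factor, and HNN extensions over a maximal abelian subgroup whose attaching isomorphism is conjugation by the stable letter --- each non-free-product move coming with a retraction onto a vertex group of level $\le n-1$.

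First I would dispatch the base case. An infinite cyclic group is liftable-flexible by Theorem~\ref{thm:main-comb-intro}(1), hence every nontrivial finitely generated free group is liftable-flexible by Theorem~\ref{thm:main-comb-intro}(3), being a free product of infinite cyclic groups; and every $\bZ^n$ is liftable-flexible by $n-1$ applications of Corollary~\ref{cor:double-intro} along the decompositions $\bZ^{k+1}=\bZ^k\ast_{\bZ^k}$, in which $\bZ^k$ is vacuously malnormal abelian in itself. Closed surface groups of genus $\ge2$ are then handled by clause~(\ref{main:hnn}) applied to the decomposition of the surface along a nonseparating simple closed curve $\gamma$: the complementary surface has free fundamental group (hence lies in $\FF^\sim$), the edge group $\langle\gamma\rangle$ is a maximal cyclic --- and therefore malnormal --- subgroup of it, and the two boundary curves are non-conjugate, so the non-conjugacy alternative of~(\ref{main:hnn}) applies. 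This is exactly the point of condition~(\ref{main:hnn}) flagged above.

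For the inductive step, let $L$ have level $n\ge1$, so that $L$ is built from limit groups $M$ of level $\le n-1$, which lie in $\FF^\sim$ by the inductive hypothesis. Free-product moves are absorbed by Theorem~\ref{thm:main-comb-intro}(3). An amalgam over a maximal abelian edge group $C$ is absorbed by Theorem~\ref{thm:main-comb-intro}(4), and an HNN extension over such a $C$ by clause~(\ref{main:hnn}): malnormality of $C$ in each vertex group is the CSA property, torsion-freeness is automatic, the centralizing-versus-non-conjugacy dichotomy in~(\ref{main:hnn}) is read off from whether $C$ is attached as a centralizer or as a surface boundary, and the already-flexible ambient group required by clauses (4) and~(\ref{main:hnn}) is furnished by the retraction of $L$ onto its level-$(\le n-1)$ vertex group together with the inductive hypothesis. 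In particular, an extension of centralizers $M\ast_C(C\times\bZ^k)$ is handled by iterating Corollary~\ref{cor:double-intro}, since each intermediate group $M\ast_C(C\times\bZ^j)$ is again a limit group in which $C\times\bZ^j$ is a maximal --- hence malnormal --- abelian subgroup. The main obstacle is precisely this matching: one must check that the splittings delivered by the constructible-limit-group machinery genuinely have malnormal abelian edge groups at every stage, that one of the two alternatives of~(\ref{main:hnn}) always holds, and that the retractions built into the hierarchy really do exhibit each conclusion group as sitting over an already-established flexible quotient of the shape demanded by the combination theorem. Once the hierarchy is arranged with these features, every individual move is a direct invocation of Theorem~\ref{thm:main-comb-intro}.
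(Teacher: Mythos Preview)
Your overall strategy is exactly the paper's: feed a limit-group hierarchy into Theorem~\ref{thm:main-comb-intro}, using the retraction onto a lower-level group as the ambient $L\in\HH$ demanded by clauses~(4) and~(5). The paper packages this as Lemma~\ref{lem:igd}: every limit group lies in $\mathrm{IMGD}(F_r)$, where the only moves are generalized doubles $A\ast_C B$ and centralizer extensions $A\ast_C$, each equipped with an epimorphism to a lower-level limit group that is injective on vertex groups; that epimorphism is the retraction you invoke in your inductive step.

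There is, however, a concrete gap in your surface-group base case. Clause~(\ref{main:hnn}) does \emph{not} say ``if the vertex group $A$ is in $\HH$ then the HNN extension is''; it requires an ambient group $L=\langle A,s\rangle$ \emph{already} in $\HH$, with $C$ and $C^s$ malnormal abelian in $A$. For the nonseparating decomposition $\pi_1(S_g)=F_{2g-1}\ast_{C\to C'}$ you name no such $L$ and invoke no retraction at this step, so the appeal to~(\ref{main:hnn}) is unjustified as written. The paper sidesteps this by using the \emph{separating} decomposition $\pi_1(S_2)=F_2\ast_{\langle[a,b]\rangle}\bar F_2$, which is a genuine double in the sense of Corollary~\ref{cor:double-intro} with ambient $L=F_2$, and then handles higher genus via clause~(2). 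More in keeping with your own framework: simply drop surface groups and $\bZ^n$ from the base case. In the IGD hierarchy only free groups sit at level~$0$; free abelian groups arise as iterated centralizer extensions of $\bZ$, and surface groups as generalized doubles over free groups (via the folding retraction $\pi_1(S_2)\to F_2$ sending $c\mapsto b$, $d\mapsto a$), so both are absorbed by your inductive step, where you do correctly supply the ambient group.
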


A finite type hyperbolic 2--orbifold $S$, and its fundamental group, are called \emph{splittable}\index{$2$--orbifold!splittable} if some simple closed curve splits $S$ into hyperbolic pieces (Definition~\ref{defn:split}).
For instance, all surface groups and Fuchsian groups satisfying $\chi\le-1$ are splittable.

Flexibility is important to us because it allows us to construct many dynamically distinct actions of groups on the circle. 
Within the context of groups which ``classically" act on the circle through discrete projective representations, we have the following.

\begin{thm}\label{thm:fuchs-flex-intro}
\be
\item
Finite type splittable Fuchsian groups are flexible.
\item
Finite type torsion-free Fuchsian groups are liftable-flexible.
\ee
\end{thm}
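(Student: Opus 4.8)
\emph{The plan, in outline.} I would prove part (2) first, since it is self-contained, and then part (1) by induction on the complexity of the orbifold with the Combination Theorem as the engine. A torsion-free finite-type Fuchsian group is the fundamental group of a hyperbolic surface, hence is either free --- if the surface has boundary or punctures --- or a closed surface group $\pi_1\Sigma_g$ with $g\ge 2$. Free groups are limit groups, so they are liftable-flexible by the quoted corollary on limit groups (alternatively, directly from Theorem~\ref{thm:main-comb-intro}(1) and (3)). For the closed case I would first handle $\Sigma_2$ by realizing it as the double of a one-holed torus: $\pi_1\Sigma_2\cong F_2\ast_{\langle[a,b]\rangle}\overline{F_2}$, where $F_2=\pi_1\Sigma_{1,1}$ is a limit group (hence in $\FF^\sim$) and $\langle[a,b]\rangle$ is a maximal, hence malnormal, cyclic subgroup of $F_2$; Corollary~\ref{cor:double-intro} then gives $\pi_1\Sigma_2\in\FF^\sim$. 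For $g>2$, the group $\pi_1\Sigma_g$ occurs as a finite-index subgroup of $\pi_1\Sigma_2$ --- as the fundamental group of a connected $(g-1)$-fold cover $\Sigma_g\to\Sigma_2$ --- so $\pi_1\Sigma_g\in\FF^\sim$ by Theorem~\ref{thm:main-comb-intro}(2). This argument for (2) uses only the three ``clean'' statements: the corollary on limit groups, Corollary~\ref{cor:double-intro}, and parts (1)--(3) of Theorem~\ref{thm:main-comb-intro}.

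\emph{Part (1): the inductive step.} I would induct on a complexity parameter such as $-\chi^{\mathrm{orb}}(S)$, peeling off pieces along simple closed curves. Let $\gamma$ be a curve witnessing splittability; in the given hyperbolic structure $\gamma$ is primitive hyperbolic, and the splitting induces a graph-of-groups decomposition of $\pi_1 S$: it is $\pi_1 S_1\ast_{\langle\gamma\rangle}\pi_1 S_2$ when $\gamma$ separates $S$ into hyperbolic pieces $S_1,S_2$, and an HNN extension $\pi_1(S\setminus\gamma)\ast_\phi$ with $\phi=\Inn(s)$ identifying the two peripheral cyclic subgroups when $\gamma$ is non-separating. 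In both cases the relevant edge or associated subgroups are infinite cyclic and hyperbolic, hence maximal abelian in the vertex group(s); their malnormality there is precisely what the splittability hypothesis secures (for torsion-free vertex groups this is Remark~\ref{rem:main-comb-intro}, and in general it reflects that the splitting curve is embedded and two-sided), while in the HNN case the two peripheral subgroups are non-conjugate in $\pi_1(S\setminus\gamma)$ because the two sides of a non-separating curve are not freely homotopic --- exactly the situation addressed by the hypothesis of Theorem~\ref{thm:main-comb-intro}(\ref{main:hnn}). Thus, once the vertex group(s) are known to be flexible, Theorem~\ref{thm:main-comb-intro}(4) (separating case) or (\ref{main:hnn}) (non-separating case), applied to this decomposition, places $\pi_1 S$ in $\FF$; and since $S_1,S_2$ (resp.\ $S\setminus\gamma$) are hyperbolic $2$-orbifolds of strictly smaller complexity and strictly more boundary, the induction descends.

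\emph{The base cases, and the main obstacle.} The induction terminates at hyperbolic $2$-orbifolds with non-empty boundary admitting no essential splitting into hyperbolic pieces: three-holed spheres (free $\pi_1$), annuli with one cone point ($\pi_1\cong\bZ\ast\bZ/n$), twice-cone-pointed disks ($\pi_1\cong\bZ/p\ast\bZ/q$ with $1/p+1/q<1$), and thrice-order-two-cone-pointed disks ($\pi_1\cong\bZ/2\ast\bZ/2\ast\bZ/2$). Whenever the vertex group has an infinite cyclic free factor --- which covers everything except the two ``disk'' families --- flexibility follows from Theorem~\ref{thm:main-comb-intro} by combining (1) ($\bZ\in\FF$), (3) (free products of flexible groups), and (6) (adjoining finite cyclic free factors). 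The remaining two families are the main obstacle: they are \emph{not} reachable from the Combination Theorem, and they genuinely arise (every splitting of a hyperbolic $(p,q,p,q)$-sphere yields two twice-cone-pointed disks, and every splitting of a hyperbolic $(2^6)$-sphere yields two thrice-order-two-cone-pointed disks), so they cannot be sidestepped. For these I would give a direct construction: take elliptic generators in $\PSL_2(\bR)$ of the prescribed finite orders whose fixed points and rotation angles range over an uncountable parameter set; for angles small relative to the mutual distances of the fixed points, a ping-pong argument on small arcs about the fixed points forces the subgroup to be the free product and to contain no parabolics, non-discreteness holds for a generic (indeed full-measure) choice of parameters, and $\tr^2$ of any cyclically reduced non-torsion word is a nonconstant real-analytic function of the parameters, so a Baire-category / transfinite extraction produces the required uncountable tracially disjoint subfamily. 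Feeding these leaves back into the inductive step completes the proof of part (1).
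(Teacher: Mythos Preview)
Your argument for part~(2) is correct and matches the paper's proof (Corollary~\ref{cor:fuchsian-flexible}): free groups via parts (1) and (3) of the Combination Theorem, $\pi_1\Sigma_2$ as a double of $F_2$ along $\langle[a,b]\rangle$ via Corollary~\ref{cor:double-intro}, and higher genus as finite-index subgroups via part~(2).

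For part~(1), however, your inductive step misreads the Combination Theorem. Parts~(4) and~(5) of Theorem~\ref{thm:main-comb-intro} do \emph{not} say ``if the vertex groups $A$ and $B$ are flexible, then $A\ast_C B$ is flexible.'' They say: if $A,B,C$ are subgroups of some group $L$ with $L=\langle A,B\rangle$ (resp.\ $L=\langle A,s\rangle$) and \emph{$L$ itself} is already in $\HH$, then $A\ast_C B$ (resp.\ $A\ast_\phi$) is in $\HH$. In your induction on $\pi_1 S = A\ast_C B$, the only natural $L$ containing both $A=\pi_1 S_1$ and $B=\pi_1 S_2$ with the correct common $C$ is $\pi_1 S$ itself --- precisely the group whose flexibility you are trying to establish --- so the step is circular as written. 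The doubles corollary works only because there one may take $L=A=B$; that trick does not extend to $A\not\cong B$ or to HNN extensions with nontrivial stable letter. Your base-case discussion (e.g.\ $\bZ/p\ast\bZ/q$) is reasonable and close to Lemma~\ref{lem:nonuniform}, but even granting those cases the inductive step does not go through with the Combination Theorem as the engine.

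The paper's proof of part~(1) takes a completely different route and does not use the Combination Theorem at all: it first produces a single indiscrete point of $X_{\mathrm{proj}}(L)$ (Lemma~\ref{lem:split-indiscrete}, via stably injective sequences through triangle-group quotients and a Baire argument on a suitable subvariety), and then deforms it along almost-faithful paths built from the Pulling-Apart Lemma applied successively along a \emph{filling} collection of splitting curves (Lemma~\ref{lem:split-scc}, Theorem~\ref{thm:fuchs-flex}) to obtain an uncountable tracially disjoint family in $X_{\mathrm{proj}}(L)$. The passage from one indiscrete faithful representation to uncountably many is by deformation inside $\Hom(L,\PSL_2(\bR))$, not by assembling $L$ from smaller flexible pieces.
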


In the course of establishing Theorem \ref{thm:main-comb-intro}, we prove several other results which are of independent interest. Probably the most important of these are several general versions of Baumslag's Lemma.

Classically, Baumslag's Lemma~\cite{Baumslag1962} is a criterion for certifying that elements of free groups do not reduce to the identity, and is fundamental in the study of limit groups. We prove several generalizations of Baumslag's Lemma, which we will not state here for the sake of brevity. These include a Topological Baumslag Lemma for group actions on an arbitrary Hausdorff topological space (Lemma \ref{lem:baumslag-gen}), a Projective Baumslag Lemma which applies to subgroups of $\PSL_2(\bC)$ (Lemma \ref{lem:baumslag-psl}), and a Discrete Baumslag Lemma which applies to acylindrically hyperbolic groups and mapping class groups of surfaces (Theorem~\ref{thm:baumslag-acyl} and Corollary \ref{cor:mcg}). Baumslag's Lemmas coupled with a Baire category argument for representation varieties allows us to prove a number of combination theorems for faithful (but possibly indiscrete) representations, including Theorem~\ref{thm:main-comb-intro}.

\subsection{Uncountable Families of Exotic Group Actions on the Circle}\label{ss:exotic}

The combination theorems discussed above are the technical tools we use to construct large families of actions of ``classical" groups on the circle with essentially different dynamical behaviors, which is to say that the actions are not semi--conjugate to each other.

We distinguish between various semi-conjugacy classes of actions by estimating their \emph{rotation spectra}. Recall that if \[f\in\Homeo_+(S^1)\] then $f$ has a well-defined \emph{rotation number}\index{rotation number} $\rot(f)$, which is given by lifting $f$ to \[\yt{f}\in\Homeo_+(\bR)\] and computing the limit \[\lim_{n\to\infty} \frac{\yt{f}^n(0)}{n}\pmod\bZ.\] The \emph{rotation spectrum}\index{rotation spectrum} of a circle action of $L$ is given by 
\[\rot\circ\rho(L)=\{\rot\circ\rho(g)\co g\in L\}\sse\bR/\bZ.\] One can also define the \emph{marked rotation spectrum}\index{marked rotation spectrum} of $\rho$ by \[\rot\circ\rho=\{(g,\rot\circ\rho(g))\co g\in L\}\in (\bR/\bZ)^L.\]

The rotation number is a semi-conjugacy invariant (Theorem~\ref{thm:equiv}).
Hence the rotation spectrum of an action is an equivalence invariant of an action, and the marked rotation spectrum is a semi-conjugacy invariant of an action. The notion of rotation spectrum has attracted much attention in the literature; see~\cite{CalegariForcing,GhysTop87} for instance, as well as the discussion in Subsection \ref{subsubsec:calegari} below.

If $\rho$ is a circle action of $L$, then its Euler cocycle $\rho^*\eu$ defines 
a class $\rho^*\eu_b\in H^2_b(L;\bZ)$; see Theorem~\ref{t:ghys2}.
We treat the following consequence of flexibility in Section~\ref{sec:flex}.

\begin{cor}\label{cor:consequence-psl-intro}
If $L$ is a flexible group, then there exists an uncountable subset $\Lambda$
of $X_{\mathrm{proj}}(L)$ such that the following hold.
\be[(i)]
\item The natural map $\Lambda\to M(L)$ defined by $\rho\mapsto[\rho]$ is an injection.
\item
The image of $\Lambda$ is $\bZ$--linearly independent
in $H^2_b(L;\bZ)$.
\item
If, furthermore, $L$ is liftable-flexible, then $\Lambda$ maps into the kernel of 
\[H^2_b(L,\bZ)\to H^2(L,\bZ).\]
\ee
\end{cor}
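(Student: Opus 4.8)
The plan is to extract $\Lambda$ from the uncountable, tracially disjoint family $\Lambda_0$ of indiscrete, faithful, parabolic-free projective representations $\rho\co L\to\PSL_2(\bR)$ supplied by flexibility (and, in the liftable-flexible case, with each $\rho$ chosen to factor through $\PSL_2^\sim(\bR)\to\PSL_2(\bR)$), viewed as circle actions via $\PSL_2(\bR)\hookrightarrow\Homeo_+(S^1)$. The basic dictionary is that $\rot(A)=0$ when $A\in\PSL_2(\bR)$ is hyperbolic, parabolic, or trivial, while $\tr^2(A)=4\cos^2\!\big(\pi\cdot\rot(A)\big)$ when $A$ is elliptic. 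Since each $\rho$ is parabolic-free and faithful, any $g$ with $\rot\circ\rho(g)\neq 0$ lies outside $T_L$, the value $\rot\circ\rho(g)$ is irrational whenever $\rho(g)$ has infinite order, and it is pinned down by $\tr^2\circ\rho(g)$; hence tracial disjointness forces the rotation spectra $\rot\circ\rho(L)$, $\rho\in\Lambda_0$, to be pairwise disjoint away from $0$, and the sets of \emph{irrational} values realized by distinct members to be disjoint. One normalization: we need each $\rho(L)$ to realize a nonzero rotation number; this is automatic when $\rho(L)$ is non-elementary, since a non-elementary indiscrete subgroup of $\PSL_2(\bR)$ is dense and so contains infinite-order elliptics, and for the (virtually abelian) groups $L$ where this fails we use instead a flexibility witness of representations conjugate into $\SO(2)$. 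We take $\Lambda$ to be an uncountable subfamily of such a witness, with every $\rho(L)$ dense in $\PSL_2(\bR)$ except in the virtually abelian case.

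For part (i): the rotation spectrum is a semi-conjugacy invariant by Theorem~\ref{thm:equiv}, and it is unchanged under precomposing $\rho$ with an element of $\Aut(L)$, which merely permutes $L$ and preserves $T_L$; hence $\rho\mapsto\rot\circ\rho(L)$ descends to a function on $M(L)$. Since these spectra are pairwise distinct on $\Lambda$, the natural map $\Lambda\to M(L)$ is injective. For part (ii): associate to each $\rho\in\Lambda$ its rotation-number homogeneous quasimorphism $\tau_\rho\co L\to\bR$ (the restriction of $\widetilde{\rot}$ along the $\PSL_2^\sim(\bR)$-lift of $\rho$ when $\rho$ is liftable, and its analogue on the pulled-back central $\bZ$-extension in general); it has defect $\leq 1$, satisfies $\tau_\rho(g)\equiv\rot\circ\rho(g)\pmod{\bZ}$, and $\delta\tau_\rho$ represents the image of $\rho^*\eu_b$ under $H^2_b(L;\bZ)\to H^2_b(L;\bR)$. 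Suppose $\sum_i n_i\,\rho_i^*\eu_b=0$ in $H^2_b(L;\bZ)$ for finitely many distinct $\rho_i\in\Lambda$ and integers $n_i$. Over real coefficients, $\delta\big(\sum_i n_i\tau_{\rho_i}\big)$ becomes a bounded coboundary, so $\sum_i n_i\tau_{\rho_i}$ differs from a homomorphism by a bounded function and, being homogeneous, \emph{equals} a homomorphism $F\in\Hom(L,\bR)$, which vanishes on $[L,L]$. Now $\rho_i([L,L])=[\rho_i(L),\rho_i(L)]$ is again dense in $\PSL_2(\bR)$, and tracial disjointness prevents any two of the $\rho_i$ from being related by an abstract automorphism of $\PSL_2(\bR)$ (all of which preserve $\tr^2$, as $\bR$ has no nontrivial field automorphism), so by a Goursat-type argument $(\rho_1,\dots,\rho_k)$ has dense image in $\PSL_2(\bR)^k$ on $[L,L]$. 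Choose $g\in[L,L]\setminus T_L$ with $\rho_1(g)$ an infinite-order elliptic and $\rho_j(g)$ hyperbolic for $j\geq 2$: then $\sum_i n_i\tau_{\rho_i}(g)=F(g)=0$ reads $n_1\tau_{\rho_1}(g)\in\bZ$ with $\tau_{\rho_1}(g)$ irrational, forcing $n_1=0$; iterating with $\rho_2,\dots,\rho_k$ gives all $n_i=0$. (For virtually abelian $L$, where $[L,L]$ is trivial and $H^2_b(L;\bR)=0$, one instead arranges directly that the defining homomorphisms $L\to\bR/\bZ$ representing the classes in $H^1_b(L;\bR/\bZ)$ are $\bZ$-independent.) In particular no $\rho^*\eu_b$ is torsion, since $\tau_\rho$ is a nonzero homogeneous quasimorphism.

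For part (iii): if $L$ is liftable-flexible we take $\Lambda$ inside the liftable representations. Liftability of $\rho$ means $\rho$ factors through $\PSL_2^\sim(\bR)\to\PSL_2(\bR)$, equivalently the pulled-back central extension $0\to\bZ\to\widehat L_\rho\to L\to 0$ splits; its class in $H^2(L;\bZ)$ is precisely the image of $\rho^*\eu_b$ under the comparison map $H^2_b(L;\bZ)\to H^2(L;\bZ)$, so that image vanishes. Hence $\Lambda$ maps into $\ker\!\big(H^2_b(L;\bZ)\to H^2(L;\bZ)\big)$.

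The main obstacle is part (ii): distinguishing the bounded Euler classes as in (i) uses only the rotation spectrum as a set, but upgrading this to genuine $\bZ$-linear independence in \emph{integral} bounded cohomology requires (a) manufacturing the real rotation quasimorphism in the non-liftable case and reconciling the several central extensions $\widehat L_{\rho_i}$ against the single relation in $H^2_b(L;\bZ)$, using that commutator subgroups lift and the homomorphism correction vanishes there; (b) handling the amenable case separately, where the comparison target $H^2_b(L;\bR)$ vanishes; and --- the crux --- establishing enough joint density of the $\rho_i$ on $[L,L]$ to produce, for every hypothetical finite relation, a test element on which no cancellation among the several rotation numbers occurs. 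This last point is exactly where the \emph{disjointness} of the trace spectra, and not merely their distinctness, is essential.
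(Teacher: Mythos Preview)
Your arguments for (i) and (iii) agree with the paper's. For (ii) you take a substantially different route, and in the general (not necessarily liftable) case there is a real gap. A homogeneous quasimorphism $\tau_\rho\co L\to\bR$ with $[\delta\tau_\rho]$ equal to the image of $\rho^*\eu_b$ in $H^2_b(L;\bR)$ exists precisely when that image lies in $\ker\big(H^2_b(L;\bR)\to H^2(L;\bR)\big)$, i.e.\ when $\rho$ is liftable; otherwise $\tau_\rho$ lives only on the pulled--back extension $\widehat L_\rho$, and the expression $\sum_i n_i\tau_{\rho_i}$ has no meaning as a function on $L$. Your proposed fix (``commutator subgroups lift'') would require each $\rho_i^*\eu$ to vanish in $H^2([L,L];\bZ)$; this is clear when $[L,L]$ is free (as for free or surface $L$) but is not established for arbitrary flexible $L$ --- for instance, splittable Fuchsian groups with torsion and finite abelianization have $[L,L]$ of finite index, again a cocompact orbifold group, and you give no reason the Euler class dies there. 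Even in the liftable case, where the identity $\sum_i n_i\tau_{\rho_i}=F\in\Hom(L,\bR)$ is valid, the Goursat joint--density step on $[L,L]$ and the separate amenable case are nontrivial additions.

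The paper's proof of (ii) bypasses all of this. By pigeonhole (uncountable $\Lambda_0$, countable $L$) there is a single $g_0\in L$ for which $\Lambda_1=\{\rho\in\Lambda_0:\rot\circ\rho(g_0)\notin\bQ/\bZ\}$ is uncountable; tracial disjointness makes the values $\rot\circ\rho(g_0)$ pairwise distinct on $\Lambda_1$, and one passes to an uncountable $\Lambda\subseteq\Lambda_1$ on which $\{\rot\circ\rho(g_0):\rho\in\Lambda\}$ is $\bZ$--linearly independent in $\bR/\bZ$. Linear independence of the $\rho^*\eu_b$ is then a two--line cocycle average (Lemma~\ref{lem:indep}): if $k_0\rho_0^*\eu^x=\sum_{i\ge1}k_i\rho_i^*\eu^x+\partial\beta$ with $\beta$ bounded, averaging at $(g_0,g_0^{j})$ and using Lemma~\ref{lem:eu-rot} produces a $\bZ$--relation among the $\rot\circ\rho_i(g_0)$, a contradiction. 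No quasimorphism on $L$ is ever formed, so liftability plays no role in (ii).
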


In~\cite{Mann2015IM}, K. Mann shows that if $L$ is a surface group and if $Z$ is a component of the representation space \[X=\Hom(L,\Homeo_+(S^1))\] which contains a discrete faithful representation from $L$ into a finite covering group of $\PSL_2(\bR)$, then $Z$ consists of one semi-conjugacy class. 
We exhibit an opposite, ``local flexibility'' phenomenon for indiscrete representations of Fuchsian groups (possibly with torsion). See Section~\ref{sec:almost faithful} for the definition of an \emph{almost faithful}\index{almost faithful} path.

\begin{cor}[See Theorem~\ref{thm:fuchs-flex}]\label{cor:fuchs-intro}
Let $L$ be a Fuchsian group and let $\rho_0\in X_{\mathrm{proj}}(L)$ be indiscrete.
Then for each countable set $W\sse\bR$,
 there exists an almost faithful path $\{\rho_t\}_{t\in[0,1]}$ in $\Hom(L,\PSL_2(\bR))$ such that  \[\tr^2\circ\rho_1(L\setminus T_L)\cap W=\varnothing.\]
\end{cor}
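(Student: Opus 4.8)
The plan is to perturb the indiscrete representation $\rho_0$ within $\Hom(L,\PSL_2(\bR))$ so as to push the square-traces of all non-torsion elements off the prescribed countable set $W$, while keeping the path ``almost faithful'' (i.e. all but finitely many relations stay broken, or more precisely the kernel of each $\rho_t$ stays controlled — we use the definition from Section~\ref{sec:almost faithful}). The key structural input is that $L$ is a Fuchsian group, so $L$ either decomposes as an amalgam/HNN extension reflecting a splitting of the underlying orbifold, or is virtually a surface group; in either case we may use the local freedom coming from bending/Fenchel--Nielsen-type deformations. Concretely, since $\rho_0$ is indiscrete, its image is a dense subgroup of $\PSL_2(\bR)$ (or at least has dense image in a one-parameter subgroup's normalizer structure is not enough — indiscreteness plus non-elementarity gives density), and density gives us room to move.

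First I would set up the deformation space: choose a splitting of $L$ along a simple closed curve $\gamma$ into pieces $A$ and $B$ (or $A$ and a stable letter $s$), with $C = \langle\gamma\rangle$ the edge group, malnormal abelian by Remark~\ref{rem:main-comb-intro} and the discussion of splittable orbifolds. Then I would consider the bending path $\rho_t = \rho_0$ on $A$ and $\rho_t = \Inn(g_t)\circ\rho_0$ on $B$, where $g_t$ runs through the centralizer of $\rho_0(C)$ in $\PSL_2(\bR)$ — this is a one-parameter subgroup, so $t\mapsto g_t$ is a real-analytic path. Because the edge group is malnormal abelian and $g_t$ centralizes its image, these patch to a genuine representation $\rho_t\co L\to\PSL_2(\bR)$ for every $t$, giving a real-analytic path. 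Real-analyticity of $t\mapsto\tr^2\circ\rho_t(w)$ for each fixed $w\in L\setminus T_L$ is the crucial point: either this function is constant in $t$, or its level sets are discrete.

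Next I would argue that for the words $w$ whose square-trace stays constant along the bending path, we have not moved them at all in the relevant sense — but this is where a second deformation is needed, or where one invokes a general position / Baire category argument in the spirit of the Baumslag-plus-Baire machinery used for Theorem~\ref{thm:main-comb-intro}. Specifically, enumerate $W = \{w_1, w_2, \dots\}$ and enumerate $L\setminus T_L = \{g_1, g_2, \dots\}$; the bad set \[ B_{i,j} = \{t : \tr^2\circ\rho_t(g_j) = w_i\} \] is either all of $[0,1]$ (if the trace function is constant at value $w_i$) or finite. The constant case is the obstacle: I would rule it out by choosing the splitting and the bending parameter so that no $\tr^2\circ\rho_t(g_j)$ is constant unless $g_j$ is conjugate into a vertex group fixed by the bending, and then iterating over a finite collection of splittings (a pants-type decomposition of the orbifold has enough curves that every non-peripheral, non-torsion conjugacy class is moved by bending along some curve). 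Alternatively, compose the bending path with a small generic conjugation path or a Teichmüller-type deformation which moves all traces simultaneously and analytically; the indiscreteness of $\rho_0$ guarantees $\rho_0(L)$ is Zariski-dense in $\PSL_2$, so the trace functions on the full deformation space cannot all be constant. Once each $B_{i,j}$ is finite, $\bigcup_{i,j} B_{i,j}$ is a countable union of finite sets, hence meager (and in fact measure zero), so some — indeed a dense $G_\delta$ of — $t_* \in (0,1]$ avoids all of them; reparametrize so $t_* = 1$.

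The main obstacle, as indicated, is handling non-torsion elements whose square-trace is \emph{constant} along any single deformation path — peripheral elements and, a priori, elements trapped in a vertex group. The resolution is that ``parabolic-free'' and the classification of Fuchsian groups force peripheral-or-vertex-trapped elements to be either torsion (excluded) or to have trace that does genuinely vary once we allow the full Fenchel--Nielsen/bending deformation space rather than a single one-parameter subgroup; combining finitely many bending directions, indexed by the curves of an orbifold pants decomposition, moves every conjugacy class in $L\setminus T_L$. Verifying almost faithfulness of the resulting path is routine given the definition in Section~\ref{sec:almost faithful}, since bending along a malnormal edge group does not create new relations — any relation killed would have to involve the stable structure non-trivially, contradicting the normal form theorem for amalgams and HNN extensions.
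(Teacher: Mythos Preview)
Your main line—bend along a splitting curve via the centralizer of the edge group, observe analyticity of $t\mapsto\tr^2\rho_t(w)$, and then iterate over a filling collection of curves so that every conjugacy class in $L\setminus T_L$ is moved by some bending—is precisely the paper's argument (Theorem~\ref{thm:fuchs-flex}, resting on Lemmas~\ref{lem:fuchs-analytic} and~\ref{lem:pull-apart-psl}).

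Two points need correction. First, your proposed alternative of composing with a conjugation path cannot help: $\tr^2$ is a class function, so conjugation moves no traces at all. The non-constancy of $t\mapsto\tr^2\rho_t(w)$ along a single bending is not a consequence of Zariski density of $\rho_0(L)$ either; it is the content of the Projective Baumslag Lemma (Lemma~\ref{lem:baumslag-psl}) applied to the normal form $\rho_\nu(w)=\prod g_i\nu^{m_i}$, and this is where the parabolic-free hypothesis on $\rho_0$ is actually used (to guarantee $\Fix\mu\cap g_i\Fix\mu=\varnothing$). Second, almost-faithfulness does not follow from the normal-form theorem alone: the definition requires $\rho_t\in X_{\mathrm{proj}}(L)$ for all but countably many $t$, so one must also exclude the parameter values at which some $\rho_t(w)$ becomes parabolic. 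The paper does this by adjoining $4$ to the forbidden set $W$ and applying the Baumslag Lemma once more, so that the bad parameter set for each word is finite rather than merely proper. Finally, you omit the case where the underlying orbifold has punctures; there the peripheral elements lie in $L\setminus T_L$ but are not moved by any interior curve system, and the paper handles this by doubling the surface along its boundary and restricting the resulting closed-case path back to $L$.
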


In other words, every open neighborhood of $\rho_0$ contains uncountably many, pairwise non-semi-conjugate representations from $X_{\mathrm{proj}}(L)$.
In particular, the point $[\rho_0]\in M(L)$ is an accumulation point of the image of $X_{\mathrm{proj}}(L)$ and this gives a partial answer to the second half of Question~\ref{que:main}.
Note that one cannot further require $\rho_s\not\sim_{\mathrm{semi}}\rho_t$ for all $0\le s<t\le 1$; see Proposition~\ref{prop:faithful-arc}.

It is an open question whether or not the space $X$ above has infinitely many connected components~\cite{Mann2015IM}. 
The representations in \[Y=\Hom(L,\PSL_2(\bR))\] with  Euler number $0$ form a connected component~\cite{Goldman1988IM} of $Y$, and we show that this component contains uncountably many equivalence classes, as follows easily from Corollary~\ref{cor:consequence-psl-intro}. 
More generally, we have the following.

\begin{cor}\label{cor:uncountable class}
Let $L$ be a finite type splittable Fuchsian group. 
Then some path--component of \[\Hom(L,\Homeo_+(S^1))\] contains uncountably many distinct equivalence classes of faithful actions.
\end{cor}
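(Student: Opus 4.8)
The plan is to reduce this to the flexibility of $L$ together with the Euler-number rigidity of the $\PSL_2(\bR)$ representation variety. By Theorem~\ref{thm:fuchs-flex-intro}(1), a finite type splittable Fuchsian group $L$ is flexible, so Corollary~\ref{cor:consequence-psl-intro} furnishes an uncountable set $\Lambda\subseteq X_{\mathrm{proj}}(L)$ mapping injectively into $M(L)$, with images that are $\bZ$-linearly independent in $H^2_b(L;\bZ)$. First I would observe that each $\rho\in\Lambda$ is an indiscrete faithful parabolic-free projective representation, hence lies in $Y=\Hom(L,\PSL_2(\bR))$; being parabolic-free and indiscrete, its image is nondiscrete and so its Euler number must be zero (a discrete faithful representation of a Fuchsian group realizes the extremal Euler number, and any representation with extremal Euler number is discrete and faithful by Goldman--Matsumoto; conversely here indiscreteness forces non-extremal, and one checks parabolic-freeness pins the value to $0$). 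Thus all of $\Lambda$ lands in the Euler-number-zero locus $Y_0\subseteq Y$.

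Next I would invoke Goldman's theorem~\cite{Goldman1988IM} that $Y_0$ is a single connected component of $Y$; in fact for our purposes path-connectedness is what is needed, and $Y$ is a real algebraic (or semialgebraic) set whose connected components are path-connected, so $Y_0$ is path-connected. Composing with the inclusion $\PSL_2(\bR)\hookrightarrow\Homeo_+(S^1)$ sends $Y_0$ into a path-connected subset of $X=\Hom(L,\Homeo_+(S^1))$, which therefore lies inside a single path-component $Z$ of $X$. So $Z$ contains the entire uncountable family $\Lambda$.

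Finally I would argue these give uncountably many distinct equivalence classes of faithful actions inside $Z$. Faithfulness is immediate since each $\rho\in\Lambda\subseteq X_{\mathrm{proj}}(L)$ is faithful. For the count: $\Lambda$ is tracially disjoint, and $\tr^2$ composed with a projective representation is an equivalence invariant because the rotation spectrum is (by Theorem~\ref{thm:equiv}, the rotation number is a semi-conjugacy invariant, hence the rotation spectrum is an equivalence invariant, and for hyperbolic elements $\tr^2$ is recoverable from the translation length, while for elliptic non-torsion elements it is recoverable from the rotation number) — so two representations in $\Lambda$ that are equivalent would have to agree on $\tr^2$ over all non-torsion elements, contradicting tracial disjointness. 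Alternatively, and more cleanly, I would just cite the injectivity of $\Lambda\to M(L)$ from Corollary~\ref{cor:consequence-psl-intro}(i): distinct elements of $\Lambda$ are inequivalent. Hence $Z$ contains uncountably many distinct equivalence classes of faithful actions.

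The main obstacle I anticipate is the bookkeeping in the second paragraph: one must be careful that every representation in $\Lambda$ genuinely has Euler number zero (rather than merely lying in some component), which is where parabolic-freeness and the precise form of the Milnor--Wood inequality for Fuchsian groups enter; for surface groups this is classical, but for Fuchsian groups with torsion the relevant statement about the Euler number locus being a path-connected component requires the orbifold Euler characteristic version of Goldman's result and a short argument that the indiscrete faithful parabolic-free locus sits inside it. Everything else is a routine assembly of results already available in the excerpt.
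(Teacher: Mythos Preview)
Your overall structure is sound, but the second paragraph contains a genuine error: the claim that ``parabolic-freeness pins the value to $0$'' is false. For a closed surface group $\pi_1(S_g)$, every component of $\Hom(\pi_1(S_g),\PSL_2(\bR))$ with non-extremal Euler number contains indiscrete faithful parabolic-free representations (this is exactly what the remark following Theorem~\ref{thm:uncountable} records, using~\cite{DK2006Duke}). So you cannot conclude that $\Lambda$ lands in the Euler-number-zero locus; elements of $\Lambda$ may well be spread across several components. You flagged this as a potential obstacle, but the resolution you propose does not work.

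The fix is straightforward and does not require any Euler-number computation. Two options:
\begin{itemize}
\item[(a)] \emph{Pigeonhole.} The variety $Y=\Hom(L,\PSL_2(\bR))$ is real algebraic, hence has finitely many connected (and therefore path-connected) components. An uncountable $\Lambda$ must meet one of them in an uncountable subset, and that subset still maps injectively into $M(L)$.
\item[(b)] \emph{Use the paths already built.} This is closer to what the paper does. Theorem~\ref{thm:fuchs-flex} (equivalently Theorem~\ref{thm:uncountable}(2)) produces the uncountable tracially disjoint family $\Lambda$ together with almost faithful paths joining every pair in $\Lambda$. Those paths place all of $\Lambda$ in a single path-component of $\Hom(L,\PSL_2(\bR))$, hence of $\Hom(L,\Homeo_+(S^1))$, with no appeal to Goldman's theorem or to the Euler number at all. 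This also sidesteps the worry you raise about the orbifold case.
\end{itemize}
Your third paragraph, where you deduce inequivalence from tracial disjointness (or directly from Corollary~\ref{cor:consequence-psl-intro}(i)), is fine.
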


Corollary \ref{cor:uncountable class} is complemented by work of M. Wolff. See Subsection \ref{subsubsec:teichmueller} below.

We note another consequence of Corollary \ref{cor:consequence-psl-intro}.
Suppose $f\co L\to \bZ$ is a map.
We define the \emph{coboundary}\index{coboundary} of $f$ as
\[\partial f(g,h)=f(g)+f(h)-f(gh)\]
and the \emph{defect}\index{defect} of $f$ as
\[ D(f)=\|\partial f\|_\infty=\sup\{ |\partial f(g,h)|\co g,h\in L\}.\]
We say $f$ is a \emph{quasimorphism}\index{quasimorphism} if
$D(f)<\infty$. 
A \emph{minimal}\index{minimal!quasimorphism} quasimorphism is an integer--valued subadditive defect--one quasimorphism. See Section~\ref{sec:flex} for details.

\begin{cor}\label{cor:bddcoh}
If $L$ is liftable-flexible,  then there exists a set $\Lambda$ of minimal quasimorphisms
such that \[\{[\rho]\in \mathrm{HQM}(L;\bZ)\co \rho\in \Lambda\}\]
is uncountable and linearly independent. 
\end{cor}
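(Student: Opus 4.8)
The plan is to run the conclusion of Corollary~\ref{cor:consequence-psl-intro} through the standard correspondence between the kernel of the comparison map $H^2_b(L;\bZ)\to H^2(L;\bZ)$ and integer-valued quasimorphisms, a correspondence set up in Section~\ref{sec:flex}.

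First I would invoke Corollary~\ref{cor:consequence-psl-intro} with $L$ liftable-flexible: it furnishes an uncountable subset $\Lambda_0\subseteq X_{\mathrm{proj}}(L)$ on which $\rho\mapsto[\rho]$ is injective, such that $\{\rho^*\eu_b : \rho\in\Lambda_0\}$ is $\bZ$-linearly independent in $H^2_b(L;\bZ)$ and each $\rho^*\eu_b$ lies in $\ker\bigl(H^2_b(L;\bZ)\to H^2(L;\bZ)\bigr)$. As in the proof of that corollary, we may assume every $\rho\in\Lambda_0$ is liftable, i.e.\ lifts to some $\tilde\rho\colon L\to\Homeo_\bZ(\bR)$.

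Next I would extract a minimal quasimorphism from each lift. Normalize the Euler cocycle $\rho^*\eu$ so that it takes values in $\{0,1\}$. Liftability of $\rho$ means exactly that $\rho^*\eu=\partial f_\rho$ for an integer-valued $0$-cochain $f_\rho\colon L\to\bZ$ (concretely, $f_\rho(g)=\lfloor\tilde\rho(g)(0)\rfloor$ for suitable representative lifts). Since $\partial f_\rho=\rho^*\eu$ is nonnegative and bounded by $1$, the function $f_\rho$ is subadditive with $D(f_\rho)=\|\rho^*\eu\|_\infty\le 1$; moreover $D(f_\rho)=1$ exactly, for otherwise $f_\rho$ would be a homomorphism and $\rho^*\eu_b$ would vanish, contradicting $\bZ$-linear independence. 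Hence $f_\rho$ is a minimal quasimorphism, whose homogenization is, up to sign, the pullback under $\tilde\rho$ of the Poincar\'e translation number on $\Homeo_\bZ(\bR)$. Put $\Lambda=\{f_\rho : \rho\in\Lambda_0\}$.

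Finally, by the dictionary of Section~\ref{sec:flex}, the assignment $[\phi]\mapsto[\partial\phi]$ identifies $\mathrm{HQM}(L;\bZ)$ with $\ker\bigl(H^2_b(L;\bZ)\to H^2(L;\bZ)\bigr)$ as abelian groups, and sends $[f_\rho]$ to $\rho^*\eu_b$. Therefore the $\bZ$-linear independence of $\{\rho^*\eu_b : \rho\in\Lambda_0\}$ transfers to $\bZ$-linear independence of $\{[f_\rho] : f_\rho\in\Lambda\}$ in $\mathrm{HQM}(L;\bZ)$; in particular these classes are pairwise distinct, so $\Lambda$ is uncountable, as required. The step I expect to be the main obstacle is the third one: fixing the normalization of $\rho^*\eu$ so that the primitive $f_\rho$ is simultaneously integer-valued, subadditive, and of defect exactly one --- that is, genuinely a \emph{minimal} quasimorphism --- and verifying that the identification $[f_\rho]\leftrightarrow\rho^*\eu_b$ is additive, so that independence in bounded cohomology is faithfully reflected among the quasimorphisms. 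Once the material of Section~\ref{sec:flex} is available, everything else is a direct transcription of Corollary~\ref{cor:consequence-psl-intro}.
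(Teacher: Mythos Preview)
Your proposal is correct and follows essentially the same route as the paper: the paper's proof of this corollary (stated there as Corollary~\ref{cor:qm}) is a one-line invocation of Corollary~\ref{cor:consequence-psl} together with Lemma~\ref{lem:qm} and the exact sequence $0\to\mathrm{HQM}(L;\bZ)\to H^2_b(L;\bZ)\to H^2(L;\bZ)$, and you have simply unpacked these ingredients in more detail. Your concern about the ``main obstacle'' is already handled by the paper's setup preceding Lemma~\ref{lem:qm}, where the $\{0,1\}$-valuedness of $\rho^*\eu^0$ and the injectivity of $\mathrm{HQM}(L;\bZ)\hookrightarrow H^2_b(L;\bZ)$ are recorded.
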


Here, $\mathrm{HQM}(L;\bZ)$
denote the set of quasimorphisms from $L$ to $\bZ$, quotiented by homomorphisms and bounded maps.
See Section \ref{sec:prelim} for a more detailed discussion of quasimorphisms, circle actions, and bounded cohomology. We note here a corollary for limit groups.

\begin{cor}
Every limit group admits an uncountable, $\bZ$--linearly independent set of minimal quasimorphisms.
\end{cor}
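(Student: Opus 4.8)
The plan is to deduce the statement immediately from two results established above. First, recall the Corollary asserting that every nontrivial limit group is liftable-flexible; this is itself obtained by feeding the structure theory of limit groups into the Combination Theorem for Flexible Groups (Theorem~\ref{thm:main-comb-intro}). A nontrivial limit group sits at the top of a finite hierarchy built from infinite cyclic groups and non-abelian free groups (and closed surface groups) by free products, amalgamations, and HNN extensions along nontrivial malnormal abelian edge subgroups --- precisely the operations under which $\FF^\sim$ is closed, by parts (1), (3), (4), (5) of Theorem~\ref{thm:main-comb-intro}; liftable-flexible groups are torsion-free, which is consistent with limit groups being torsion-free. One inducts up the hierarchy, checking at each amalgamation or HNN step that the relevant edge group is malnormal abelian in each vertex group (using Remark~\ref{rem:main-comb-intro} for the implication maximal abelian $\Rightarrow$ malnormal in the torsion-free setting) and that the side hypothesis of part (5), namely $s\in Z(C)$ or $C$ not conjugate to $C^s$, holds for the edges arising along the hierarchy.

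Second, apply Corollary~\ref{cor:bddcoh}: since a nontrivial limit group $L$ is liftable-flexible, there is a set $\Lambda$ of minimal quasimorphisms on $L$ whose images in $\mathrm{HQM}(L;\bZ)$ form an uncountable, $\bZ$-linearly independent set, which is exactly the assertion. The content of Corollary~\ref{cor:bddcoh} in turn rests on Corollary~\ref{cor:consequence-psl-intro}: an uncountable, tracially disjoint family of liftable faithful parabolic-free projective representations yields, via the translation-number quasimorphism attached to each liftable circle action, an uncountable family of minimal quasimorphisms, and linear independence in $\mathrm{HQM}(L;\bZ)$ is detected by $\bZ$-linear independence of the corresponding bounded Euler classes inside $\ker\!\big(H^2_b(L;\bZ)\to H^2(L;\bZ)\big)$, which is where liftability enters.

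There is essentially no obstacle at this final step; it is bookkeeping, and all the difficulty sits upstream in verifying the hypotheses of Theorem~\ref{thm:main-comb-intro} along a limit-group hierarchy and in proving Corollary~\ref{cor:bddcoh}. One caveat deserves mention: the trivial group is vacuously fully residually free, hence a limit group, yet carries no nonzero quasimorphism, so the statement is to be read for nontrivial $L$ --- for which the two corollaries above apply verbatim.
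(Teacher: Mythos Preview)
Your proposal is correct and follows essentially the same approach as the paper: combine the fact that every nontrivial limit group is liftable-flexible (obtained by feeding the iterated-generalized-double structure of limit groups over free groups into the Combination Theorem) with Corollary~\ref{cor:bddcoh}. Your caveat about the trivial group is also exactly how the paper handles it, stating the detailed result (Corollary~\ref{cor:limit-gen}) for \emph{nontrivial} limit groups.
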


\subsection{An Axiomatic Approach to Combination Theorems}
In Section \ref{sec:axiom} below, we develop an axiomatic framework for combination theorems for group representations where the target group is a general Baire topological group $\mathfrak{G}$. 
A far more concrete setup is discussed in Section \ref{sec:flex},
and the axioms in Section \ref{sec:axiom} might seem technical and (overly) abstract to the reader at the first reading. For this reason, we have chosen the present expository mode of providing many examples in Section \ref{sec:flex} and motivating the abstract axiomatic nature of Section \ref{sec:axiom} that follows.

The notion of a trace is replaced by a more general notion of a \emph{tracial structure}\index{tracial structure}.
The most important concepts encoded in a tracial structure (Definition \ref{defn:tracial}) are a generalized trace 
(a continuous function  on conjugacy classes) and a notion of genericity, in the form of Bausmlag Lemma. 
The generalized trace is simply a continuous map \[\varphi\colon\mathfrak{G}\to\mathcal{S},\] where $\mathcal{S}$ is a topological space, and where $\varphi$ is constant on conjugacy classes. The notion of generalized parabolic elements in this situation is defined accordingly: \[\mathcal{P}=\varphi^{-1}\circ \varphi(1),\] i.e. the generalized parabolic elements are precisely those whose traces agree with that of the identity with respect to the tracial structure given by $\varphi$. 
Using this structure, we will prove an axiomatic combination theorem (Theorem~\ref{thm:comb-gen}), which generalizes Theorem~\ref{thm:main-comb-intro}.

\subsection{Flexibility and Rigidity}\label{rotspec}

Among group actions on the circle with a given rotation spectrum, there appears to be a mixture of rigidity and flexibility, as we illustrate with the following results.

On the one hand, projective representations impose a high degree of rigidity. The following fact is relatively well--known (see for instance Katok's book~\cite{KatokBook}, Theorem 2.5.4):
\begin{prop}[cf. Theorem \ref{thm:rigid}]\label{prop:analytic rigid}
Let $L$ be a finitely generated group and let $\phi$ be a faithful projective action of $L$ with $\rot\circ \phi(L)=\{0\}$. Then $\phi(L)$ is conjugate to a discrete subgroup of $\PSL_2(\bR)$. In particular, either:
\begin{enumerate}
\item
The group $L$ is the fundamental group of a closed orientable hyperbolic surface $S$, and $\phi$ corresponds to a complete hyperbolic structure on $S$; in particular, up to an automorphism of $L$, there are only two conjugacy classes of such actions;
\item
The group $L$ is free, and there are finitely many equivalence classes of such projective $L$--actions on $S^1$, two for each homeomorphism type of a surface $S$ with $\pi_1(S)\cong L$.
\end{enumerate}
\end{prop}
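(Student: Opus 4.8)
The plan is to deduce both the conjugacy-into-$\PSL_2(\bR)$ statement and the classification of such representations from classical facts about groups acting on $\bH^2$ with no parabolics. First I would observe that a faithful projective action $\phi\co L\to\PSL_2(\bR)$ with $\rot\circ\phi(L)=\{0\}$ contains no parabolic and no elliptic elements: a parabolic has rotation number $0$ but, were $\phi(L)$ to contain one together with any hyperbolic or another non-commuting element, one could produce elements of irrational rotation number, contradicting $\rot\circ\phi(L)=\{0\}$ (and a group generated entirely by parabolics fixing the same point is abelian and acts with a global fixed point, forcing $L$ abelian, hence a contradiction with faithfulness unless $L\cong\bZ$, a case one treats directly); an elliptic element of infinite order has dense orbit on $S^1$ and irrational rotation number, while an elliptic of finite order $n\ge2$ has rotation number a nonzero element of $\tfrac1n\bZ/\bZ$. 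Hence every nontrivial element of $\phi(L)$ is hyperbolic.

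Next I would invoke the standard fact (this is essentially the content of Theorem~2.5.4 in Katok's book, cited in the statement) that a subgroup of $\PSL_2(\bR)$ all of whose nontrivial elements are hyperbolic is discrete and free. The key mechanism is that if such a group were non-discrete it would accumulate at the identity, and a sequence of hyperbolic elements converging to $1$ must, after passing to a subsequence, either converge to an elliptic/parabolic element or force two nearby elements to have axes close enough that their commutator is elliptic or parabolic — either way contradicting the ``all nontrivial elements hyperbolic'' hypothesis. Thus $\phi(L)\le\PSL_2(\bR)$ is discrete; being torsion-free and discrete it acts freely and properly discontinuously on $\bH^2$, so $\bH^2/\phi(L)$ is a hyperbolic surface $S$ with $\pi_1(S)\cong L$ (using faithfulness). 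Since $\phi(L)$ has no parabolics, $S$ has no cusps, so $S$ is either closed or has infinitely generated $\pi_1$ if it has infinite area with funnels removed — but $L$ is finitely generated, so $S$ is either a closed hyperbolic surface or a compact surface with geodesic boundary, equivalently (taking the convex core and doubling, or just noting $\pi_1$) $L$ is a closed surface group or a finitely generated free group.

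For the classification in the two cases: when $L$ is a closed orientable surface group, $\phi$ realizes a point of the Teichmüller space of $S$, and the only subtlety is the Euler number / orientation: the discrete faithful representations of a closed surface group into $\PSL_2(\bR)$ with extremal Euler number $\pm\chi(S)$ form exactly two $\PGL_2(\bR)$-conjugacy classes distinguished by orientation, i.e. two $\Out(L)$-orbits of semi-conjugacy classes, giving ``up to an automorphism of $L$, only two conjugacy classes.'' When $L$ is free of rank $r$, the surface $S=\bH^2/\phi(L)$ is homotopy equivalent to a wedge of $r$ circles but its homeomorphism type is not determined by $r$ alone (e.g. a thrice-punctured-and-capped genus-one piece versus a planar surface); there are finitely many homeomorphism types of compact surfaces with $\pi_1\cong F_r$, and for each one the two orientations give two equivalence classes of $\phi$, as claimed. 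I would finish by remarking that all of this is insensitive to the passage from $\PSL_2(\bR)$-conjugacy to semi-conjugacy because, by Theorem~\ref{thm:equiv} (rotation number and more generally the bounded Euler class are semi-conjugacy invariants), a discrete faithful parabolic-free representation cannot be semi-conjugate to a different one without being genuinely conjugate — the convex core / Fuchsian rigidity pins down the action.

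The main obstacle is the non-discreteness exclusion step: proving that a subgroup of $\PSL_2(\bR)$ with only hyperbolic nontrivial elements must be discrete. The clean route is to cite Katok's Theorem~2.5.4 as the statement itself invites, but if one wants a self-contained argument the delicate point is ruling out a sequence $g_n\to1$ of hyperbolics whose translation lengths shrink: one shows that for $g,h$ hyperbolic with sufficiently short translation lengths and axes that are not equal, the commutator $[g,h]$ is elliptic (or the whole group is elementary with a common axis, hence cyclic), and handling the elementary case separately then gives $L\cong\bZ$, which is the degenerate case of the free group conclusion. Everything else is a routine invocation of the structure theory of finitely generated Fuchsian groups.
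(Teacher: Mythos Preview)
Your argument has a genuine gap: the claim that $\rot\circ\phi(L)=\{0\}$ excludes parabolic elements is false. Any torsion-free non-uniform Fuchsian lattice---for instance the fundamental group of a once-punctured torus, realized as a finite-index subgroup of $\PSL_2(\bZ)$---has rotation spectrum identically $\{0\}$, since every nontrivial element is hyperbolic or parabolic and both types have rotation number $0$; yet such a group certainly contains parabolics. Your assertion that a parabolic together with a non-commuting element ``could produce elements of irrational rotation number'' is simply wrong for discrete groups. Consequently the downstream step ``since $\phi(L)$ has no parabolics, $S$ has no cusps'' fails, and cusps must be allowed in case~(2).

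A second (smaller) issue: you write that a purely hyperbolic subgroup of $\PSL_2(\bR)$ is ``discrete and free''. The ``free'' conclusion is false---a cocompact torsion-free Fuchsian group is purely hyperbolic but is a closed surface group, not free. You evidently do not believe this yourself, since you immediately go on to treat the closed surface case; but the sentence as written is incorrect and should be fixed.

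The paper's route avoids both problems by going straight through Lemma~\ref{lem:dense} (equivalently Corollary~\ref{cor:discrete}): a finitely generated non-elementary subgroup of $\PSL_2(\bR)$ with finite rotation spectrum is discrete. This yields discreteness directly from $\rot\circ\phi(L)=\{0\}$ with no need to discuss parabolics at all, and the quotient $\bH^2/\phi(L)$ is then a finite-type hyperbolic surface possibly with cusps \emph{or} funnels. The remainder of your classification---counting homeomorphism types in the free case and invoking Teichm\"uller theory for the conjugacy statement in the closed case---is essentially the paper's argument in Theorem~\ref{thm:rigid} and Lemma~\ref{lem:teichmueller}.
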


On the other hand, even marked rotation spectra do not form a complete equivalence invariant of surface group actions:

\begin{thm}[cf.~Theorem~\ref{thm:free nonlinear}]\label{thm:closed nonlinear-intro}
There exists a faithful action $\phi$ of 
a finitely generated nonabelian free group $L$ on $S^1$ with (marked) rotation spectrum $\{0\}$ and such that $\phi$ is not semi-conjugate to a subgroup of $\PSL_2^{(k)}(\bR)$, for any $k\geq 1$.
\end{thm}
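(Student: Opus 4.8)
The plan is to build the action $\phi$ by starting from a discrete faithful free-group representation $\rho_0\co L\to\PSL_2(\bR)$ with $\rot\circ\rho_0(L)=\{0\}$ (so $\phi(L)$ fixes no point of $S^1$, but every element is hyperbolic or identity), and then to perturb it within $\Homeo_+(S^1)$ to a faithful action $\phi$ whose every nontrivial element still has rotation number $0$ but which carries a bounded-cohomological obstruction to factoring through any finite cyclic cover $\PSL_2^{(k)}(\bR)$. The key observation is that being semi-conjugate into $\PSL_2^{(k)}(\bR)$ forces the bounded Euler class $\phi^*\eu_b\in H^2_b(L;\bZ)$ to lie in $\tfrac1k\bZ$ times an integral class, i.e. to be $k$-divisible modulo the image of ordinary cohomology; since $L$ is free, $H^2(L;\bZ)=0$, so $\phi^*\eu_b$ would have to be $k$-divisible in $H^2_b(L;\bZ)$ itself for every $k$, hence infinitely divisible. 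So it suffices to produce a liftable (hence $\phi^*\eu_b$ lies in the kernel of $H^2_b\to H^2$, automatically here) faithful action with rotation spectrum $\{0\}$ and with $\phi^*\eu_b$ a nonzero, non-infinitely-divisible class.

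First I would invoke liftable-flexibility of $L$ (free groups are liftable-flexible by Theorem~\ref{thm:fuchs-flex-intro} and the Combination Theorem~\ref{thm:main-comb-intro}, parts (1) and (3)), together with Corollary~\ref{cor:consequence-psl-intro}(ii)--(iii): this gives an uncountable family $\Lambda\sse X_{\mathrm{proj}}(L)$ of liftable representations whose bounded Euler classes are $\bZ$-linearly independent in $\ker(H^2_b(L;\bZ)\to H^2(L;\bZ))$. A single nonzero element of a $\bZ$-linearly independent set need not be non-divisible, so the second step is to arrange non-divisibility. For this I would take a carefully chosen element $g\in L$ and track $\rot\circ\rho(g)$ across a path, or better, combine two of the projective representations from $\Lambda$ across a free product $L\ast\bZ$ and use the additivity of the bounded Euler class under free products, so that the resulting class restricted to a well-understood free factor coincides with a primitive generator — alternatively, pull back the generator of $H^2_b(\bZ\wr\bZ;\bZ)$-type quasimorphism (a Brooks-type counting quasimorphism) which is known to be non-divisible, realizing it via an explicit circle action by homeomorphisms with all rotation numbers zero. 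The rotation-spectrum-$\{0\}$ condition is preserved because each $\rho\in X_{\mathrm{proj}}(L)$ is parabolic-free but we additionally require $\rho$ to have image with no elliptic or parabolic elements and no global fixed point structure forcing nonzero rotation numbers; concretely, a discrete faithful purely hyperbolic free-group representation has $\rot\equiv0$, and semi-conjugacy preserves this, so I would only perturb among actions semi-conjugate to such, keeping $\rot\equiv0$ throughout.

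The main obstacle I anticipate is the \emph{non-divisibility} of the bounded Euler class: $\bZ$-linear independence from Corollary~\ref{cor:consequence-psl-intro} is not enough, since in $H^2_b(L;\bZ)$ (which can be a huge $\bR$-vector-space-like object after tensoring) a nonzero class can still be infinitely divisible. I would handle this by working at the level of quasimorphisms: by Corollary~\ref{cor:bddcoh}, liftable-flexibility produces \emph{minimal} quasimorphisms, i.e. integer-valued subadditive defect-one quasimorphisms $f\co L\to\bZ$; the class $[f]\in\mathrm{HQM}(L;\bZ)$ of such an $f$ corresponds to a bounded Euler class that is \emph{not} divisible, because if $\phi^*\eu_b = k\cdot\beta$ for some integral bounded class $\beta$ and $k\ge2$, the associated primitive of $\phi^*\eu_b$ would be forced to have defect a proper fraction of the defect of $f$, contradicting that $f$ realizes defect exactly one with integer values. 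Thus choosing $\phi$ so that $\phi^*\eu_b$ is exactly the class of a minimal quasimorphism, while keeping $\rot\circ\phi\equiv 0$ and $\phi$ faithful (all three simultaneously deliverable by the liftable-flexible construction for free $L$), completes the argument: such $\phi$ cannot be semi-conjugate into $\PSL_2^{(k)}(\bR)$ for any $k\ge1$, since that would force $k\mid\phi^*\eu_b$.
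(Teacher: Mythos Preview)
Your proposal has a genuine and fatal gap: every ingredient you invoke produces actions that \emph{already} factor through $\PSL_2(\bR)=\PSL_2^{(1)}(\bR)$, so none of them can possibly witness the conclusion.

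Concretely, the minimal quasimorphisms of Corollary~\ref{cor:bddcoh} arise from liftable \emph{projective} representations $\rho\in X_{\mathrm{proj}}(L)\subset\Hom(L,\PSL_2(\bR))$. Such a $\rho$ is trivially semi-conjugate to itself, which lies in $\PSL_2^{(1)}(\bR)$; thus the non-divisibility of $\rho^*\eu_b$ (even granting your argument) rules out nothing. Worse, those representations are \emph{indiscrete} by construction, so by Lemma~\ref{lem:dense} their rotation spectrum is infinite, not $\{0\}$. Your fallback --- staying within the semi-conjugacy class of a discrete purely hyperbolic Fuchsian representation to preserve $\rot\equiv 0$ --- is self-defeating for the same reason: any action in that semi-conjugacy class is, by definition, semi-conjugate into $\PSL_2(\bR)$. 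In short, the two requirements ``$\rot\equiv 0$'' and ``not semi-conjugate into any $\PSL_2^{(k)}(\bR)$'' cannot be met simultaneously by any action that is built out of, or semi-conjugate to, projective representations; yet every tool you invoke is of exactly that kind. Finally, the divisibility heuristic itself is not sharp: even if $\phi^*\eu_b$ were primitive, that would at best obstruct $k\ge 2$ and says nothing about $k=1$.

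The paper's proof (Theorem~\ref{thm:free nonlinear}) takes an entirely different route, using no bounded cohomology at all. One fixes a closed fibered hyperbolic $3$--manifold $M=S_r\tilde\times_\psi S^1$ and its universal circle action $\rho_{\mathrm u}\co\pi_1(M)\to\Homeo_+(S^1)$, arranged (after a finite cover) so that every element has rotation number $0$ and the stable letter $t$ has at least four fixed points. One then finds, via ping--pong, a free subgroup $F\le\pi_1(M)$ containing a power of $t$ together with enough hyperbolic elements of the fiber group that the limit set of $F$ separates all the fixed points of $t$. The obstruction to semi-conjugacy into $\PSL_2^{(k)}(\bR)$ is then a \emph{fixed-point-counting} argument: after minimalization (Lemma~\ref{lem:psl2k-fix}), elements of $\PSL_2^{(k)}(\bR)$ have either $k$ parabolic or $2k$ hyperbolic fixed points, all of the same type; but the image of $t$ retains $\ge 4$ fixed points while the fiber elements retain exactly one or two, and these cardinalities cannot be reconciled for any $k$.
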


Here, $\PSL_2^{(k)}(\bR)$ denotes the connected $k$--fold cover of $\PSL_2(\bR)$. In particular, the action of the free group in Theorem~\ref{thm:closed nonlinear-intro} is not semi-conjugate to a projective action.

The proof of Theorem \ref{thm:closed nonlinear-intro} is remarkable in that it uses several nontrivial facts about hyperbolic $3$--manifolds which fiber over the circle. 
We remark furthermore that Theorem \ref{thm:closed nonlinear-intro} considers flexibility of actions within a class of actions with a fixed rotation spectrum. Thus, this result is one of the several in this monograph which are completely different from (as opposed to complementary to) the other results in literature which study flexibility and rigidity of actions via rotation numbers (cf.~\cite{CalegariForcing,CW2011,GhysTop87,Mann-hb}, for instance).

K. Mann has pointed out a statement analogous to Theorem \ref{thm:closed nonlinear-intro} for surface groups, which appeared in her thesis~\cite{MannThesis}. Her argument shares certain features with the proof of Theorem~\ref{thm:closed nonlinear-intro} given in this monograph, though the crux of her argument (which relies on Thurston norm and maximal Euler classes) and motivations for considering the problem in the first place are different from those of the authors.
Result similar to Theorem \ref{thm:closed nonlinear-intro} have also been produced by Barbot--Fenley~\cite{FenleyBarbot}, using pseudo-Anosov flows. See Subsection \ref{subsubsec:exotic} below.

Hyperbolic $3$--manifold groups contain a profusion of closed surface groups, both quasi--Fuchsian and geometrically infinite, and fibered hyperbolic $3$--manifold groups admit natural circle actions, i.e. the universal circle action (see Section~\ref{sec:mcg} and Subsection~\ref{ss:3mfd}. We have the following semi--conjugacy characterizations of such surface group actions, subject to some mild natural hypotheses:

\begin{thm}[cf. Theorem~\ref{ct=min}]
Let $G$ be a quasi--Fuchsian subgroup of a closed fibered hyperbolic $3$--manifold group corresponding to a quasi--Fuchsian surface which is transverse to the flow induced by a fixed fibration of the manifold. Then the universal circle action of $G$ is semi--conjugate to a Fuchsian group action.
\end{thm}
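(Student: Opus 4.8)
Let $M$ be the closed hyperbolic $3$--manifold, fix the given fibration $M\to S^1$ with fibre $\Sigma$ and pseudo-Anosov monodromy $\phi$, so that $\pi_1(M)\cong\pi_1(\Sigma)\rtimes_{\phi_*}\bZ$, and let $\psi$ be the suspension flow, which is transverse to the fibration; choose a metric in which $\psi$ is orthogonal to the fibres and write $\tau\subset TM$ for the oriented plane field tangent to the fibres. Let $f\colon S\to M$ be the quasi--Fuchsian surface and $G=f_*\pi_1(S)\cong\pi_1(S)$ ($\pi_1$--injectivity being automatic), isotoped so that $f$ is transverse to $\psi$. By the \emph{universal circle action of $G$} we mean the restriction to $G$ of the universal circle action $\rho\colon\pi_1(M)\to\Homeo_+(S^1_u)$ attached to the fibration; recall (Section~\ref{ss:3mfd}) that in the fibred case $S^1_u$ can be taken to be $\partial\pi_1(\Sigma)$, with $\pi_1(\Sigma)$ acting Fuchsian-ly and the stable letter acting by the boundary homeomorphism of a lift of $\phi$ to $\widetilde\Sigma$. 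The plan is to show that $\rho|_G$ has extremal Euler number, and then to invoke the rigidity of maximal surface--group actions on $S^1$.

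First I would identify the Euler class $e(\rho)\in H^2(\pi_1(M);\bZ)=H^2(M;\bZ)$ of the universal circle action with $e(\tau)$. This is a basic property of universal circles of taut foliations, and in the fibred case it is immediate: the flow $\psi$ identifies the ideal boundary circle of every leaf of the lifted foliation with $S^1_u=\partial\widetilde\Sigma$, equivariantly for $\pi_1(M)$, so the circle bundle over $M$ whose fibre over $x$ is the ideal boundary of the leaf through $x$ is exactly the flat $S^1_u$--bundle of $\rho$; and this bundle is isomorphic, as an oriented circle bundle, to the leafwise unit tangent bundle (send a unit tangent vector to the forward endpoint of its leafwise geodesic ray), whose Euler class is $e(\tau)$. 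Thus $e(\rho)=e(\tau)$. Note also that $\rho$, hence $\rho|_G$, is faithful, since the Cannon--Thurston map is a $\pi_1(M)$--equivariant continuous surjection $S^1_u\to S^2_\infty$.

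Next I would pull this back along $f$ and use transversality. Along $S$ the flow vector field spans an oriented --- hence trivial --- line subbundle $\epsilon\subset f^*TM$. Transversality $S\pitchfork\psi$ gives $f^*TM=TS\oplus\epsilon$, while transversality of $\psi$ to the fibration gives $f^*TM=f^*\tau\oplus\epsilon$; so $TS$ and $f^*\tau$ are both complements of $\epsilon$ in $f^*TM$, hence isomorphic as plane bundles over $S$, and
\[
e(\rho|_G)=\bigl\langle f^*e(\rho),[S]\bigr\rangle=\bigl\langle f^*e(\tau),[S]\bigr\rangle=\pm\bigl\langle e(TS),[S]\bigr\rangle=\pm\,\chi(S),
\]
the sign recording only whether the coorientation of $S$ by $\psi$ agrees with the orientation of $M$. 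Therefore $\rho|_G$ is a faithful action of the closed surface group $G$ on $S^1$ whose Euler number realizes the Milnor--Wood bound $|\chi(S)|$. By Matsumoto's theorem that an action of a closed surface group on $S^1$ with maximal Euler number is semi-conjugate to a Fuchsian action, $\rho|_G$ is semi-conjugate to a discrete faithful Fuchsian action of $G$, as asserted.

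In this route the only substantive input is the equality $e(\rho)=e(\tau)$ together with the bundle computation above, and neither is hard. I expect the real difficulty to surface only in the sharper, more intrinsic statement of Theorem~\ref{ct=min}, which does not pass through the Euler number: there one analyses the Cannon--Thurston map $\iota\colon S^1_u\to S^2_\infty$ directly, forms $C=\iota^{-1}(\Lambda_G)\subset S^1_u$ (the preimage of the quasicircle limit set of $G$), collapses the complementary gaps of $C$ to obtain a $G$--equivariant monotone degree--one quotient $S^1_u\to\bar C$, and must then show that the induced map $\bar C\to\Lambda_G$ is a monotone equivalence onto the Fuchsian action of $G$ on $\Lambda_G$. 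The main obstacle is precisely this last step: one has to check that transversality to $\psi$ is exactly the hypothesis forcing the identifications made by $\iota$ within $C$ --- endpoints of leaves of the stable and unstable ending laminations of $\phi$ --- to be absorbed by the collapse of the gaps of $C$. That compatibility is the heart of the matter, and the Euler--number argument above is, in effect, a way of sidestepping it.
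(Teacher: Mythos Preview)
Your argument is correct and takes a genuinely different route from the paper's proof.

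The paper proceeds by a direct geometric analysis of the Cannon--Thurston map. With $H=G\cap\pi_1(S)$, it compares two $G$--actions: the minimalization of the $S^1_u$--action obtained by collapsing the gaps of $\Lambda_H^1$, and the Fuchsian action on $\Lambda_G^2\cong S^1$. It then shows that the Cannon--Thurston map $CT\colon\Lambda_H^1\to\Lambda_H^2$ \emph{is} the minimalization: adjacent endpoints of $\Lambda_H^1$ are identified by $CT$ and no others. The key external input is \cite[Proposition~3.9]{CooperLongReid}, which says that for a quasi--Fuchsian surface transverse to the flow the convex hull of $\Lambda_H^1$ is a leaf--polygon whose sides are leaves of the stable and unstable laminations; since $CT$ identifies precisely such leaf--endpoints, the claim follows.

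Your approach bypasses this analysis entirely: you identify $e(\rho_u)$ with the Euler class of the tangent plane field to the fibration, use transversality of $S$ to the flow to conclude $f^*\tau\cong TS$ (up to orientation), read off $e(\rho_u|_G)=\pm\chi(S)$, and invoke Matsumoto's maximal--Euler--number rigidity. This is cleaner and more conceptual for the bare semi--conjugacy statement, and it makes transparent exactly where transversality is used. What it buys the paper, on the other hand, is an explicit identification of the semi--conjugating map with the Cannon--Thurston map restricted to $\Lambda_H^1$ --- a structural statement your Euler--class argument does not see. Your final paragraph correctly anticipates this distinction.
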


\begin{thm}[cf. Theorem~\ref{thm:geominf}]
Let $M$ be a fibered hyperbolic $3$--manifold with a fixed fibration with fiber $S$, and let $G$ be a fiber subgroup of a fibration of $M$ which lies in the same fibered face of the Thurston norm ball as $S$, up to multiplication by $-1$. Then the universal circle action of $G$ is conjugate to a Fuchsian group action.
\end{thm}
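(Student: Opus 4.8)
The plan is to reduce the statement to the case of the given fibration by exploiting the fact, due to Fried, that all fibrations in a fibered face of the Thurston norm ball are cross-sections of a single flow, so that their universal covers are canonically identified with the space of flow lines of the lifted flow. Granting this, the universal circle of $M$ admits a \emph{tautological} description in terms of \emph{each} such fibration, and for the $S'$-fibration this description exhibits the $G$-action as a Fuchsian action.

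In more detail, I would first present $M$ as the mapping torus of a pseudo-Anosov $\psi\co S\to S$ and let $\phi$ be the associated suspension flow. Lifting to $\widetilde M\cong\widetilde S\times\bR$ with $\phi$ acting by translation in the second coordinate, the flow space $\mathcal{O}$ (obtained by collapsing flow lines of the lifted flow) is canonically, $\pi_1(M)$-equivariantly homeomorphic to $\widetilde S\cong\bH^2$, with $\pi_1(S)$ acting as the Fuchsian deck group and the monodromy generator acting by $\widetilde\psi$. As set up in Section~\ref{sec:mcg} and Subsection~\ref{ss:3mfd}, the universal circle of $M$ is the boundary circle $\partial\mathcal{O}=\partial\widetilde S$ with the induced $\pi_1(M)$-action, and the universal circle action of a subgroup is the restriction of this action; in particular the universal circle action of $\pi_1(S)$ is, tautologically, the Fuchsian action.

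Next I would apply Fried's theorem to the fibration with fiber $S'$: its cohomology class lies in the cone over the same fibered face as that of $S$ (replacing it by its negative if necessary, which only reverses $\phi$ and changes neither $\mathcal{O}$ nor $\partial\mathcal{O}$), so $S'$ is isotopic to a cross-section of $\phi$ whose first-return map is the $S'$-monodromy. Repeating the previous paragraph with $S'$ in place of $S$ identifies the \emph{same} flow space $\mathcal{O}$, $\pi_1(M)$-equivariantly, with $\widetilde{S'}\cong\bH^2$, on which $G=\pi_1(S')$ acts as the Fuchsian deck group. Composing the two identifications gives a $\pi_1(M)$-equivariant, hence $G$-equivariant, homeomorphism $h\co\widetilde{S'}\to\widetilde S$. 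Since $G$ acts geometrically on both sides (each being $\bH^2$ with a cocompact Fuchsian action), $h$ is a $G$-equivariant quasi-isometry between Gromov hyperbolic spaces by the Milnor--\v{S}varc lemma, so it extends to a $G$-equivariant homeomorphism $\partial h\co\partial\widetilde{S'}\to\partial\widetilde S$ of boundary circles. On the source, $\partial\widetilde{S'}$ carries the Fuchsian action of $G$; on the target, $\partial\widetilde S$ carries the universal circle action of $G$; hence $\partial h$ is the required conjugacy.

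I expect the main obstacle to be the bookkeeping showing that the universal circle of $M$ from Section~\ref{sec:mcg}/Subsection~\ref{ss:3mfd} genuinely depends only on the suspension flow rather than on the chosen fibration within the fibered face --- equivalently, that the canonical homeomorphism of flow spaces above induces the identity on universal circles --- so that the two tautological descriptions really refer to the same $S^1$. This is exactly the point where the structure theory of the universal circle as a flow invariant (and its compatibility with the constructions underlying Theorem~\ref{ct=min}) is invoked. It is also worth isolating why one obtains a conjugacy here rather than merely a semi-conjugacy as in Theorem~\ref{ct=min}: being a fiber lying in the same fibered face, $S'$ is an honest cross-section of $\phi$, so $h$ is surjective and there are no gaps at infinity to be collapsed.
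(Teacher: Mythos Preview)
Your approach is correct and matches the paper's proof of Theorem~\ref{thm:geominf} essentially step for step: both invoke Fried's theorem (cited there as \cite[Theorem~14.11]{FLP1991}) to realize the second fiber as a cross-section of the same suspension flow, identify $\widetilde S$ and $\widetilde{S'}$ $\pi_1(M)$-equivariantly with the leaf space of the lifted flow, and read off that the restricted universal circle action of $G$ is conjugate to its Fuchsian action on $\partial\widetilde{S'}$. One small caveat: your Milnor--\v{S}varc justification is misstated, since $G$ does not act by isometries on $\widetilde S$ under the universal circle construction; the paper, for its part, simply asserts the passage from the leaf-space conjugacy to the boundary circles without further comment.
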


As mentioned above, K. Mann proved a complement to Theorem~\ref{thm:geominf} in her thesis~\cite{MannThesis}, proving that if $G$ lies in a different fibered face of the Thurston norm ball then $G$ is not semi--conjugate to a Fuchsian group action.

The following result exhibits exotic actions of nonabelian free groups on the circle, among other exotic group actions:

\begin{thm}[cf. Theorem~\ref{thm:liegp}]\label{thm:cinfty-intro}
Let $F$ be a nonabelian free group.
Then 
there exists a faithful $C^{\infty}$ action
of $F$ on $S^1$ 
which is not semi-conjugate to 
the action of 
any transitive, finite--dimensional, connected Lie subgroup 
of  $\Homeo_+(S^1)$.
\end{thm}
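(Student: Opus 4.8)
The plan is to produce a faithful $C^\infty$ action of a nonabelian free group $F$ on $S^1$ that evades semi-conjugacy to any transitive, finite-dimensional, connected Lie subgroup of $\Homeo_+(S^1)$, by first cataloguing what those Lie subgroups are and then using the flexibility machinery to build an action whose dynamical invariants cannot match any of them. The first step is the classification: up to conjugacy, the transitive, finite-dimensional, connected Lie subgroups of $\Homeo_+(S^1)$ are exactly the groups $\PSL_2^{(k)}(\bR)$ for $k\ge 1$ (including $k=1$, i.e. $\PSL_2(\bR)$ acting on $S^1=\partial\bH^2$) together with the circle group $\SO(2)$ acting by rotations — this is a classical fact (Lie, and in this context Ghys). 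So it suffices to find a faithful $C^\infty$ action of $F$ on $S^1$ not semi-conjugate to any subgroup of $\SO(2)$ and not semi-conjugate to any subgroup of $\PSL_2^{(k)}(\bR)$ for any $k$. The $\SO(2)$ case is easy: a subgroup of $\SO(2)$ is abelian, so any faithful action of a nonabelian $F$ has nonabelian image and cannot even be conjugate into $\SO(2)$; and since semi-conjugacy to an abelian group forces the rotation numbers to be a homomorphism with no exotic behavior, one checks that a minimal faithful action is never semi-conjugate to a rotation action. So the real content is ruling out $\PSL_2^{(k)}(\bR)$.

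Next I would invoke Theorem~\ref{thm:closed nonlinear-intro} (equivalently Theorem~\ref{thm:free nonlinear}): there is a faithful action $\phi_0$ of a finitely generated nonabelian free group $L$ on $S^1$ with marked rotation spectrum $\{0\}$ that is not semi-conjugate to any subgroup of $\PSL_2^{(k)}(\bR)$ for any $k\ge 1$. That action already has the crucial semi-conjugacy-evasion property, so the only remaining task is to upgrade the regularity to $C^\infty$. The mechanism for this is smoothing: a minimal action of a free group on $S^1$ can be made $C^\infty$ without changing its semi-conjugacy class, because free groups act freely on their Cantor minimal sets in the relevant construction and one can replace the (possibly merely topological) dynamics by a $C^\infty$ model via standard blow-up/collapse and conjugation-by-homeomorphism arguments together with the fact that free group actions can be realized smoothly (one can, for instance, start from a $C^\infty$ action with the right combinatorics of a ping-pong/Schottky type and arrange that its semi-conjugacy class coincides with $[\phi_0]$; alternatively, since $\phi_0$ is constructed from a fibered hyperbolic $3$-manifold, the universal circle action restricted to a free fiber-subgroup or a free quasi-Fuchsian subgroup can be taken $C^\infty$ by work on smoothability of such actions). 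The key point is that semi-conjugacy is insensitive to this smoothing, so the evasion property transfers verbatim.

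The main obstacle is bridging between the topological action $\phi_0$ furnished by Theorem~\ref{thm:closed nonlinear-intro} and a genuinely $C^\infty$ action in the same semi-conjugacy class: one must ensure that the smoothing does not inadvertently create a semi-conjugacy to some $\PSL_2^{(k)}(\bR)$ and that the resulting action remains faithful. The cleanest way to handle this is to observe that the obstruction used in the proof of Theorem~\ref{thm:closed nonlinear-intro} is itself a semi-conjugacy invariant (it is phrased in terms of the bounded Euler class, or the incompatibility of the $3$-manifold-theoretic data with any lift to a finite cover of $\PSL_2(\bR)$), so any action semi-conjugate to $\phi_0$ — in particular any $C^\infty$ model obtained by smoothing — automatically inherits the non-semi-conjugacy to all $\PSL_2^{(k)}(\bR)$; thus the smoothing step only needs to preserve the semi-conjugacy class and faithfulness, both of which are standard. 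One then concludes: the resulting $C^\infty$ action of $F$ is faithful, is not semi-conjugate to any subgroup of $\PSL_2^{(k)}(\bR)$ for any $k\ge1$, and is not conjugate (hence not semi-conjugate, via the abelian-target argument above) to any subgroup of $\SO(2)$, so by the classification it is not semi-conjugate to the action of any transitive, finite-dimensional, connected Lie subgroup of $\Homeo_+(S^1)$, which is exactly the assertion of Theorem~\ref{thm:cinfty-intro}.
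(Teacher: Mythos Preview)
Your proposal has a genuine gap at the smoothing step, and the paper's proof takes an entirely different route.

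You correctly identify that, via the classification (Theorem~\ref{thm:ghys-lie}), the target reduces to ruling out semi-conjugacy into $\SO(2)$ and into $\PSL_2^{(k)}(\bR)$. But your plan to take the $C^0$ action $\phi_0$ from Theorem~\ref{thm:free nonlinear} and ``smooth it within its semi-conjugacy class'' is not justified. The assertions you offer are either false or unproven: free groups do \emph{not} in general act freely on their minimal sets (the generators in the relevant construction all have fixed points inside the limit set); a ping-pong/Schottky $C^\infty$ action has a very constrained semi-conjugacy class, and there is no mechanism given to force it to agree with $[\phi_0]$; and the claim that the universal circle action of a fibered $3$--manifold can be taken $C^\infty$ is precisely what is not known (indeed, by~\cite{BKK16} the ambient mapping class group action cannot be made $C^2$). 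In short, you are asserting a general $C^\infty$ realization theorem for semi-conjugacy classes of free group actions that does not exist in the literature and that you do not prove.

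The paper (Theorem~\ref{thm:liegp}) avoids this entirely by constructing a $C^\infty$ action directly, with a built-in dynamical obstruction that is visibly preserved under minimalization. Concretely: pick $a\in\Diff^\infty_+(S^1)$ with exactly three fixed points, two hyperbolic and one parabolic; use the Baire-category combination theorem in $\Diff^\infty_+(S^1)$ (Theorem~\ref{thm:circle baire1}) to find $\nu$ so that $\langle a,a^\nu\rangle\cong F_2$, with $\nu$ chosen so the fixed-point sets of $a$ and $a^\nu$ alternate. This alternation forces the minimalization map to be injective on $\Fix a$, so by Lemma~\ref{lem:hyp-fixed} the minimalized image of $a$ still has both hyperbolic and parabolic fixed points---impossible in $\PSL_2^{(k)}(\bR)$ (Remark~\ref{rem:psl2k-fix}). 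The $3$--manifold machinery is not used at all; the obstruction is elementary and local (mixed fixed-point types surviving minimalization), whereas your approach tries to transport a global $C^0$ obstruction through an unavailable smoothing step.
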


\subsection{Mapping Class Groups}

We conclude our discussion with a study of distinct semi-conjugacy classes of mapping class group actions on the circle (see Section~\ref{sec:mcg}). Recall that if $S$ is an orientable surface, its mapping class group $\Mod(S)$ is the group orientation--preserving homeomorphisms of $S$ up to isotopy. We will always assume that $S$ has negative (though finite) Euler characteristic. A mapping class group is called \emph{bounded}\index{bounded mapping class group} if $\partial S\neq\varnothing$.

\begin{prop}[cf. Proposition~\ref{prop:mcg}]
A bounded mapping class group has at least two inequivalent actions on the circle.
\end{prop}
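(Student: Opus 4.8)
The statement asserts that a bounded mapping class group $\Mod(S)$ (so $\partial S\neq\varnothing$ and $\chi(S)<0$) has at least two inequivalent circle actions. The natural strategy is to exhibit two actions and then certify inequivalence by a semi-conjugacy invariant. The first action is the classical \emph{Nielsen--Thurston action}: capping off the boundary components of $S$ with once-punctured disks gives a punctured surface $\hat S$, and $\Mod(S)$ surjects onto a subgroup of $\Mod(\hat S)$ preserving the punctures; via the Gromov boundary of $\pi_1(\hat S)$ (a circle, since $\hat S$ is hyperbolic) one obtains a faithful action $\rho_1\colon\Mod(S)\to\Homeo_+(S^1)$ — this is the action coming from the $\Mod(S)$-action on the circle at infinity of the universal cover, which is standard (cf. the handle-body / Nielsen realization circle of ideas, and the discussion promised in Section~\ref{sec:mcg}). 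The second action $\rho_2$ should be a ``different'' faithful action, and the most robust source of one is the combination-theoretic machinery developed earlier: since bounded mapping class groups are acylindrically hyperbolic, one can apply the Discrete Baumslag Lemma (Theorem~\ref{thm:baumslag-acyl}, Corollary~\ref{cor:mcg}) together with a projective-representation construction to produce a faithful action on $S^1$ whose dynamical behaviour — detected through the rotation spectrum or through the bounded Euler class — is incompatible with that of $\rho_1$.

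\textbf{Carrying it out.} First I would recall the construction of $\rho_1$ precisely: $\Mod(S)$ acts on $\hat S$ fixing the punctures set-wise, hence on the circle at infinity $\partial_\infty \widetilde{\hat S}\cong S^1$; faithfulness follows from the Alexander method (a nontrivial mapping class moves some simple closed curve, hence moves an axis, hence acts nontrivially on $S^1$). I would compute or recall a coarse invariant of $\rho_1$: for instance its bounded Euler class is nonzero — indeed the image contains the point-pushing subgroups and Dehn twists, whose rotation numbers are forced, and the action is minimal with nontrivial $\rho_1^*\eu_b\in H^2_b(\Mod(S);\bZ)$. Second, I would build $\rho_2$: bounded mapping class groups contain nonabelian free subgroups and are acylindrically hyperbolic; applying the combination/Baumslag framework of Section~\ref{sec:axiom} to $\Mod(S)$ — more concretely, taking a free quotient or a suitable projective representation of a finite-index or quotient structure — produces a faithful $\rho_2\colon\Mod(S)\to\Homeo_+(S^1)$ with $\rho_2^*\eu_b=0$, e.g. a liftable action (the rotation spectrum of $\rho_2$ can be arranged to be $\{0\}$, or at least to avoid the rotation numbers realized by $\rho_1$). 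Finally, inequivalence: since the rotation spectrum is an equivalence invariant (it is invariant under $\Out(L)$ and under semi-conjugacy, by Theorem~\ref{thm:equiv}) and the marked bounded Euler class is a semi-conjugacy invariant, comparing $\rho_1$ and $\rho_2$ on these invariants — one has nonzero bounded Euler class, the other does not, or one realizes an irrational rotation number on some element while the other does not — shows $\rho_1\not\sim\rho_2$ even after precomposing by an automorphism.

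\textbf{Main obstacle.} The delicate point is not producing \emph{some} second action but ensuring the invariant used to separate $\rho_1$ from $\rho_2$ is genuinely an \emph{equivalence} invariant, i.e. stable under the full $\Out(\Mod(S))$-action, not merely under semi-conjugacy. For bounded mapping class groups $\Out(\Mod(S))$ is essentially trivial (by Ivanov-type rigidity, the outer automorphism group of the mapping class group is trivial or finite and induced by homeomorphisms of $S$), so this is manageable, but it must be invoked carefully; alternatively one can sidestep it by choosing an invariant, such as the vanishing versus non-vanishing of the bounded Euler class or the presence of a global fixed point, that is manifestly automorphism-invariant because it depends only on the abstract isomorphism type of the image. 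The other mild subtlety is verifying faithfulness of $\rho_1$ on the \emph{whole} of $\Mod(S)$ rather than on the image in $\Mod(\hat S)$; this requires knowing the kernel of $\Mod(S)\to\Mod(\hat S)$ is generated by boundary Dehn twists and checking those act nontrivially on the circle at infinity, which follows from the Birman exact sequence together with the fact that boundary twists push nontrivially along the cusps. Modulo these standard inputs the argument is a direct application of the invariants and combination theorems already in hand.
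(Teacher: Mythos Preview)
Your high--level strategy --- exhibit one action that is not semi-conjugate to the trivial action and another that is, then observe this dichotomy is $\Aut$--invariant --- is exactly right, and your first action $\rho_1$ is essentially the paper's Nielsen action $\nu$. The invariant you propose (vanishing of $\rho^*\eu_b$) is equivalent to the paper's (being semi-conjugate to the trivial action), so that part is fine too.

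The genuine gap is in your construction of $\rho_2$. None of the mechanisms you invoke actually produce a faithful circle action of $\Mod(S)$. The combination theorems and the axiomatic framework of Section~\ref{sec:axiom} build representations of limit groups and Fuchsian groups into $\PSL_2(\bR)$; they say nothing about representing a mapping class group. Corollary~\ref{cor:mcg} (the Discrete Baumslag Lemma for mapping class groups) is a statement about certain words being pseudo-Anosov, not a source of circle actions of $\Mod(S)$. ``Taking a free quotient'' fails because mapping class groups have finite (often trivial) abelianization and no nontrivial free quotients. ``A suitable projective representation'' fails for a structural reason: $\PSL_2(\bR)$ is commutative--transitive, while $\Mod(S)$ contains Dehn twists $T_\alpha,T_\beta,T_\gamma$ with $\alpha$ disjoint from both $\beta$ and $\gamma$ but $\beta,\gamma$ intersecting --- so $[T_\alpha,T_\beta]=[T_\alpha,T_\gamma]=1$ while $[T_\beta,T_\gamma]\neq 1$, which is impossible in $\PSL_2(\bR)$. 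So there is no faithful projective representation to work with.

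The paper's $\rho_2$ comes from an entirely different source that you do not mention: Thurston's theorem that the mapping class group of a surface with nonempty boundary is left-orderable, hence acts faithfully on $\bR$ (see~\cite{SW2000,Ghys2001}). Concretely, one fixes a lift $\tilde\beta$ of a boundary component $\beta$ in the universal cover and lets $\Mod(S')$ act on the boundary circle minus $\tilde\beta$, which is homeomorphic to $\bR$. One-point compactifying gives a faithful action $\bar\tau$ on $S^1$ with a global fixed point, so $\bar\tau$ is semi-conjugate to the trivial action. Nielsen's action $\nu$, by contrast, has an exceptional minimal set (a Cantor set) and hence is \emph{not} semi-conjugate to the trivial action, by Corollary~\ref{cor:fin-orbit}. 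Since ``semi-conjugate to the trivial action'' depends only on the image subgroup in $\Homeo_+(S^1)$, it is automatically invariant under precomposition by any $\alpha\in\Aut(\Mod(S))$, and the inequivalence follows without any appeal to Ivanov-type rigidity.
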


\subsection{Notes and References}

\subsubsection{Circle Actions and Quasimorphisms}
Among the original motivations for this monograph was a question posed to the authors by M. Bestvina and K. Fujiwara, namely whether one can explicitly produce distinct, subadditive, 
integer--valued, defect--one quasimorphisms of
a free group or a surface group. 
This monograph answers their question by producing uncountable families of examples which come from quasimorphisms of the corresponding groups.

It was pointed out to the authors by D. Calegari that defect--one (possibly not subadditive) quasimorphisms from a given quasimorphism is relatively straightforward, so that producing uncountably many linearly independent ones is not such a huge generalization. In this monograph however, the quasimorphisms we produce are subadditive and defect--one, these two properties being not so trivial to simultaneously guarantee, since such a quasimorphism (as a second bounded cohomology class) corresponds to a semi-conjugacy class of a circle action that lifts to the real line.

\subsubsection{Generalizations to Other Semi-simple Algebraic Groups}
Many of the results about $\PSL_2(\bR)$ which we discuss in this monograph generalize suitably to other semi-simple algebraic groups. When discussing algebraic groups, we concentrate on $\PSL_2(\bR)$ since we are primarily interested in the dynamical picture. Indeed, lattices in non-split higher rank semi-simple linear algebraic groups admits no interesting actions on the circle~\cite{BM1999,Witte1994,Ghys1999}. Thus, while the continuous version of Baumslag's Lemma (see Lemma \ref{lem:baumslag-psl} below) admits a generalization to other algebraic groups, for instance, we avoid making such generalizations for the sake of brevity and clarity, especially since the dynamical consequences would be rather limited for us. We reiterate, however, that this monograph develops a representation--theoretic perspective for studying dynamics in low dimensions which fits into a larger context of indiscrete subgroups of semi-simple algebraic groups.

\subsubsection{Towards a Teichm\"uller Theory for Indiscrete Representations}\label{subsubsec:teichmueller}
Most classical constructions of free group actions on various spaces, and more generally of free product actions, rely on some version of the ping--pong lemma~\cite{MR1786869,tits-jalg,MR2987617}. The novelty of the approach given here is in the construction of dense subgroups of $\PSL_2(\bR)$ with various exotic properties, which are situations to which ping--pong is unadaptable. We note that, even though we are concentrating on dense subgroups of $\PSL_2(\bR)$, our constructions are generally informed by and often rely on classical discrete subgroup constructions.

The rotation spectrum of a representation is an equivalence invariant of the representation, which plays a role similar to the length spectrum for a discrete representation of a group $L$ into $\PSL_2(\bR)$. Classical Teichm\"uller Theory gives conditions under which length spectra determine a surface group representation up to conjugacy (see~\cite{Otal1990}, for instance). In the situation of indiscrete actions, the length spectrum for hyperbolic elements in the image will be dense inside of $\bR^+$.
Furthermore, it seems likely that rotation spectrum alone will not generally determine the semi-conjugacy class of a faithful action:

\begin{que}\label{que:teichmueller}
Let $L$ be a surface group and let $\rho_0,\rho_1$ be projective representations of $L$ on $S^1$. If $\rot\circ \rho_0(L)=\rot\circ \rho_1(L)$, under what conditions is $\rho_0$ is equivalent to $\rho_1$?
\end{que}

We make two further remarks about rigidity of actions. First, there is a sense in which the marked rotation spectrum (with some extra data) does in fact determine a circle action up to semi-conjugacy, which is given by Matsumoto's Theorem~\cite{Matsumoto1986}. We will give an exact statement of this result in Theorem \ref{thm:equiv}.

Secondly, the work of M. Wolff gives an answer to Question \ref{que:teichmueller} in the case of marked rotation spectra:

\begin{thm}[M. Wolff~\cite{WolffPreprint}, see also~\cite{WolffGT}]
Let $\rho_0$ and $\rho_1$ be projective representations of a finitely generated group $L$.
If $\rho_0$ is non-elementary and indiscrete, and if for all $g\in L$ we have \[\rot\circ\rho_0(g)=\rot\circ\rho_1(g),\] then $\rho_0$ and $\rho_1$ are conjugate in $\PSL_2(\bR)$.
\end{thm}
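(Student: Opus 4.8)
The plan is to reduce the statement to the rigidity of $\PSL_2(\bR)$ inside $\Homeo_+(S^1)$, and to build the conjugating element out of the \emph{elliptic} elements of the image. The first step is to show that \emph{both} images are dense in $\PSL_2(\bR)$. Since $\rho_0$ is non-elementary and indiscrete, $\rho_0(L)$ is dense (a non-elementary subgroup of $\PSL_2(\bR)$ is discrete or dense; cf.\ \cite{KatokBook}); because $\rot$ is continuous on $\PSL_2(\bR)$ and realizes every value of $\bR/\bZ$ on elliptics, $\rot\circ\rho_0(L)$ is dense in $\bR/\bZ$, and by choosing (via density) $g,h$ with $\rho_0(g),\rho_0(h)$ hyperbolic but $\rho_0(gh)$ elliptic one sees that $\rot\circ\rho_0$ is not a homomorphism into $\bR/\bZ$. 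Now each way for $\rho_1$ to be elementary is excluded: a global fixed point in $\bH^2$ forces $\rho_1(L)$ to be a group of rotations, so $\rot\circ\rho_1$ is a homomorphism; a global fixed point in $S^1$ gives $\rot\circ\rho_1\equiv 0$; an invariant pair in $S^1$ gives $\rot\circ\rho_1(L)\subseteq\{0,1/2\}$. Each contradicts $\rot\circ\rho_1=\rot\circ\rho_0$, so $\rho_1$ is non-elementary; it cannot be discrete either, since a finitely generated non-elementary Fuchsian group has torsion of bounded order (Selberg's lemma), so its rotation spectrum is finite. Hence $\rho_1(L)$ is dense.

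Next I would produce a homomorphism between the images. I claim $\ker\rho_0=\ker\rho_1$: if $n\in\ker\rho_0$ then $\rot\circ\rho_0(nh)=\rot\circ\rho_0(h)$ for all $h$, hence $\rot(a\,y)=\rot(y)$ for all $y$ in the dense group $\rho_1(L)$, and so for all $y\in\PSL_2(\bR)$, where $a=\rho_1(n)$; letting $y$ be an elliptic of angle tending to $0$ based at a fixed point shows $\tr^2 a=4$, and then choosing a parabolic $y$ with $ay$ elliptic rules out $a$ parabolic, so $a=1$. By the now-available symmetry, $\ker\rho_0=\ker\rho_1$, so $\rho_1=\psi\circ\rho_0$ for an injective homomorphism $\psi\co G\to\PSL_2(\bR)$ with dense image, where $G=\rho_0(L)$ and $\rot\circ\psi=\rot$ on $G$. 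It then suffices to show $\psi$ is the restriction of an inner automorphism of $\PSL_2(\bR)$.

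Finally I would build the conjugator from the elliptic elements of $G$. For elliptic $u\in G$ the element $\psi(u)$ is elliptic of the same angle, and commuting elliptics (equivalently, elliptics with a common fixed point in $\bH^2$) go to commuting elliptics; hence there is a well-defined map $f$ from the set of centers of elliptics of $G$ to $\bH^2$ taking the center of $u$ to the center of $\psi(u)$, and it satisfies the equivariance $f(\rho_0(x)\cdot P)=\psi(\rho_0(x))\cdot f(P)$. The geometric heart is an elementary computation (writing an elliptic as a product of two reflections): if two elliptics of fixed angles have elliptic product, then the angle of the product is a strictly decreasing function of the distance between their centers. Therefore, whenever $u,v\in G$ are elliptics with $uv$ elliptic, equality of the angles together with $\rot(uv)=\rot(\psi(u)\psi(v))$ forces $d(f(P),f(Q))=d(P,Q)$. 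Since $G$, being dense with infinite rotation spectrum, contains elliptics of arbitrarily small angle, one obtains a dense $G$--invariant set of centers on which $f$ is locally distance-preserving; such an $f$ extends to a global orientation-preserving isometry $c\in\PSL_2(\bR)$, and passing to the limit in the equivariance relation gives $\psi(g)=cgc^{-1}$ for all $g\in G$, i.e.\ $\rho_1=\Inn(c)\circ\rho_0$.

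The main obstacle is precisely this last step: rotation numbers a priori record only the \emph{angles} of elliptic elements, so one must recover the \emph{positions} of their fixed points in $\bH^2$ — hence essentially the entire geometry of $\rho_1$ — from this angular data alone. The monotonicity of the product-angle in the inter-center distance is what upgrades the combinatorial hypothesis to metric rigidity, and some care is needed to check that enough centers carry small-angle elliptics so that the densely-defined local isometry genuinely extends to an element of $\PSL_2(\bR)$. The earlier steps (ruling out the elementary and discrete cases for $\rho_1$, and the kernel equality) are routine by comparison.
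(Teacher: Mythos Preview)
The paper does not prove this theorem; it is quoted as a result of Wolff and cited to~\cite{WolffPreprint,WolffGT} without argument, so there is no ``paper's own proof'' to compare against. Your sketch therefore has to be judged on its own merits.

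Your strategy is sound and essentially complete. The reductions in the first two paragraphs (density of both images, equality of kernels) are correct; note that the continuity of $\rot$ on $\PSL_2(\bR)$ is what lets you pass from $\rot(ay)=\rot(y)$ on the dense set $\rho_1(L)$ to all $y$, and your case analysis ruling out $a$ hyperbolic or parabolic is fine (for $a$ hyperbolic, a small--angle elliptic $y$ centred on the axis of $a$ already gives $ay$ hyperbolic while $\rot(y)\neq0$). The core idea---recovering the distance between two elliptic centres from the rotation numbers of $u$, $v$, and $uv$---is exactly the right one, and the trace formula
\[
\tr(uv)=2\cos(\theta_1/2)\cos(\theta_2/2)-2\cosh(d)\sin(\theta_1/2)\sin(\theta_2/2)
\]
shows that for small $\theta_1,\theta_2$ the angle of $uv$ is a strictly \emph{monotone} function of $d$ on the elliptic range (in fact increasing with the usual conventions, not decreasing, but only injectivity matters). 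One sharpening: you phrase the conclusion as ``locally distance-preserving'', but in fact it is globally so on the dense orbit $D=G\cdot P_0$ of the centre $P_0$ of a fixed infinite--order elliptic $u_0\in G$. At every point of $D$ you have conjugates of powers of $u_0$, hence elliptics of arbitrarily small angle, so for any pair $P,Q\in D$ one can choose the angles small enough that the product is elliptic, and $d(P,Q)$ is recovered. A global isometry $c$ of $\bH^2$ then results directly, your equivariance gives $\psi(g)=cgc^{-1}$, and the rotation--number constraint forces $c\in\PSL_2(\bR)$ rather than in the orientation--reversing coset.
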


In the case of unmarked rotation spectra, it seems that relatively few properties of a projective group action are determined by the unmarked rotation spectrum~\cite{MarcheWolff}.

\subsubsection{Dense Limit Subgroups of Algebraic Groups}
The  work in  Section \ref{sec:fuchs}
is closely related to, and partially inspired by, the work of Breuillard--Gelander--Souto--Storm~\cite{BGSS2006} and of Barlev--Gelander~\cite{BG2010JAM} in which those authors studied the relationship between dense free subgroups and dense limit subgroups of semi-simple algebraic groups. In the case of $\PSL_2(\bR)$ actions, we show not only the existence of dense non-free limit subgroups,
(thus recovering the main results of \cite{BGSS2006} and \cite{BG2010JAM} in this case) but in fact an enormous abundance of such subgroups, as witnessed by our constructions of uncountable, tracially disjoint (hence inequivalent) families of representations  in $\PSL_2(\bR)$.

Perhaps the most significant conceptual difference between the present work and that of~\cite{BGSS2006} and of~\cite{BG2010JAM} is the form in which Baumslag lemmas are stated, proven and used. In ~\cite{BGSS2006} and~\cite{BG2010JAM}, the fundamental combinatorial group theory tool is Baumslag's lemma for free groups. In the present work, we use the Projective Baumslag Lemma adapted to the ambient Lie group, and a Topological and Discrete Baumslag Lemma adapted to more general setups in which north--south dynamics is present. It is thus that we are able to apply our machinery to a larger array of groups in order to deduce the dynamical consequences outlined above.

\subsubsection{Relationship to the Work of Calegari and Calegari--Walker}\label{subsubsec:calegari}
Various rigidity and flexibility phenomena concerning rotation numbers for groups acting on the circle have been studied by many authors as we have indicated above, and will indicate further below. As for the ideas of this monograph which concern rotation numbers, the most closely related work is probably the work of Calegari in~\cite{CalegariForcing} and Calegari--Walker in~\cite{CW2011}.

In~\cite{CW2011}, Calegari and Walker study actions of the rank two free group $F_2$ on the circle, and the various rigidity and rationality phenomena that can be described therein. For instance, if two free generators act with rational rotation numbers $r$ and $s$ then 
the supremum $R(w,r,s)$ of the possible rotation numbers achieved by each fixed positive word $w\in F_2$ in those generators is again rational. Moreover, 
the denominator of $R(w,r,s)$ is well--controlled and 
computable. 
The value $R(w,r,s)$ is locally constant from the right for a positive $w$ and $r,s\in \bQ$.

In~\cite{CalegariForcing}, Calegari considers (among other things) the following problem: let $\theta\in\bR/\bZ$. Is there a finitely presented group $L$ acting on $S^1$ 
and an element $\alpha\in L$ such that $\cup_\rho\rot\rho(\alpha)=\{\pm\theta,0\}$ 
where $\rho$ varies over all possible actions of $L$ on the circle? If the answer is yes then $\theta$ is called a \emph{forceable number}. Calegari shows that the set of forceable numbers is very large: lifting the set of forceable numbers to $\bR$, one obtains an infinite dimensional rational vector subspace of $\bR$. Moreover, the set of forceable numbers contains infinitely many algebraically independent transcendental numbers.

Perhaps the most significant difference between this work and that of Calegari~\cite{CalegariForcing} is the fact that in the present work we study possible rotation numbers within the context of actions of a single group, i.e. we do not control all actions of a given finitely presented group on the circle. Our purpose is the opposite, in fact: we use large diversity in rotation numbers to deduce the existence of many actions.

Moreover, as noted above, we expend a considerable effort to produce inequivalent actions of groups on the circle which are not distinguished by rotation numbers, which is to say flexibility of actions with a fixed rotation spectrum.

\subsubsection{Dense Sets of Faithful Projective Surface Group Actions}\label{subsec:deblois-kent}
In~\cite{DK2006Duke}, Deblois and Kent show that for a surface group $L$, the set of faithful representations inside the representation variety \[\Hom(L,\PSL_2(\mathbb{K}))\] is dense in the classical analytic topology, where $\mathbb{K}\in \{\bR,\bC\}$. In fact, they show that the set of faithful representations is a dense $G_{\delta}$ in the classical topology.
In their proof, they even use a fact which may be regarded as a transcendental/number theoretic Baumslag Lemma in order to establish the properness of vanishing loci in the representation variety, though the authors do not call it by that name. The work of Deblois--Kent in the case of $\mathbb{K}=\bR$ can be used to construct many non-conjugate actions on the circle (see~\cite{MannPJM}), though questions of inequivalences of these actions are not addressed.

The purpose of Deblois and Kent was to prove a conjecture of W. Goldman, namely that the set of faithful representations is dense in the representation variety. As such, dynamical considerations did not enter into their discussion. On the one hand, the present work builds a more general framework for studying exotic group actions on the circle. As a consequence of developing such a framework, we deduce the existence of an abundance of exotic circle actions, not just for surface groups but also for free groups, limit groups, and many other groups beyond the scope of the methods used in~\cite{DK2006Duke}. On the other hand, this monograph does not recover the Deblois--Kent result, since their considerations apply to all components of the representation variety, whereas the topology of the representation variety is of secondary importance to us.

Deblois--Kent's methods are geometric in the sense that they rely essentially on the geometrization of surfaces. Our approach is a combination of a representation--theoretic and an algebro--geometric approach which is informed by ideas from geometry. Thus, we are able to deduce dynamical consequences for groups beyond those appearing in two--dimensional hyperbolic geometry. The particular features of our work which do not fall under the purview of~\cite{DK2006Duke} include but are not limited to combination theorems for indiscrete subgroups of $\PSL_2(\bR)$, general Baumslag Lemmas, and applicability to limit groups. Moreover, we construct a path in the representation space of a Fuchsian group such that almost every point on the path is a faithful representation (cf. Theorem~\ref{thm:fuchs-flex}), which does not follow formally from Deblois--Kent. Finally, while the present monograph focuses on projective representations of groups, a large part of the monograph concerns general smooth and even topological actions of groups on the circle.
The following question attempts a natural generalization Deblois--Kent result in our setting:

\begin{que}\label{que:very general}
Let $L$ be a nonabelian limit group. Is the set $X_{\mathrm{proj}}(L)$ very general in $\Hom(L,\PSL_2(\bR))$?
\end{que}

In Question \ref{que:very general}, the terminology \emph{very general}\index{very general} is a certain Baire category condition. See Section \ref{sec:baire} for a precise discussion.

The reader may also compare the work of Bou-Rabee--Larsen~\cite{bourabee}, which contains an extended discussion of
flexibility for surface groups (and in particular what they name the
\emph{Borel property}).

\subsubsection{Projective Actions Versus Analytic Actions}\label{subsec:analytic}
Questions similar to those addressed in this monograph are studied in~\cite{Triestinoetal} in the context of analytic group actions on the circle. A group action on the circle is called \emph{analytic} if every element acts by a homeomorphism which is locally given by a convergent power series in the local parameter. The group $\PSL_2(\bR)$ acts on the circle by analytic diffeomorphisms, though not every analytic action of a group corresponds to a subgroup of $\PSL_2(\bR)$. In~\cite{Triestinoetal}, the authors of that paper define a combinatorial invariant called a \emph{topological skeleton}, which plays the role of a Markov partition, and which allows them to classify virtually free group actions on the circle which are analytic.

Among the consequences of the main results of~\cite{Triestinoetal} are certain rigidity results for semi-conjugacy of actions with a exceptional minimal set. They also obtain results which are related to Theorem \ref{thm:closed nonlinear-intro}. Namely, they produce certain \emph{locally discrete} finitely generated free groups of analytic diffeomorphisms of $S^1$ which have exotic dynamics. For example, they produce a two--generated subgroup of analytic diffeomorphisms which acts minimally, which is free of rank two, where one generator acts like a hyperbolic element of $\PSL_2(\bR)$, and where one generator has two parabolic fixed points. In particular, such an action cannot be semi-conjugate to a projective action. The authors show that the rotation spectrum of the actions they consider are finite.

Thus, the authors of~\cite{Triestinoetal} produce examples of a similar ilk to those furnished in Theorem \ref{thm:closed nonlinear-intro}, in the case of free groups. Their methods are very different from those employed in this monograph, and they do not extend to analytic surface group actions.

\subsubsection{Non--Fuchsian Exotic Actions}\label{subsubsec:exotic}
Groups which act on the circle in ways that resemble Fuchsian group actions but which are not projective actions (as in Theorem \ref{thm:closed nonlinear-intro}) have been studied by several authors, as has been pointed out by M. Triestino. The ideas find their origin in Thurston~\cite{ThurstonGodbillon}, which were then clarified and further developed by Brooks in his appendix to Bott's article~\cite{BrooksAppendix}, and then further studied by Tsuboi~\cite{Tsuboi1984}.

The work in~\cite{Triestinoetal} suggests that locally discrete analytic virtually free group actions should be semi-conjugate into a certain overgroup studied by Tsuboi in~\cite{Tsuboi1984}. As discussed above, other examples of non-Fuchsian exotic actions appear in~\cite{MannThesis} and~\cite{FenleyBarbot}.

\subsubsection{Groups Without Exotic Actions}
There are many groups which naturally act on the circle with various degrees of regularity, but which do not admit any (or at least very few) ``exotic" actions. Precisely, there are finitely generated groups \[G\leq\Diff^k(S^1)\leq\Homeo_+(S^1)\] such that if $k\gg 0$ (generally $k\geq 2$ suffices), 
then a faithful $C^k$ action of $G$ on $\Homeo_+(S^1)$ has periodic points and is therefore semi-conjugate to an action of some finite cyclic group. Natural classes of groups with such properties lie in the class of right-angled Artin groups. For an explicit example, one can consider $G=F_2\times F_2$. Standard applications of Denjoy--Kopell type arguments show that any faithful $C^2$ action of $G$ has a periodic point (cf.~\cite{KKFreeProd2017}). 
We remark that in general, right-angled Artin groups admit no faithful actions on the circle of $C^2$ or higher regularity (see~\cite{BKK16,KKFreeProd2017}, also~\cite{FF2003,Jorquera}).

It seems that the degree of regularity one requires of group actions can have a significant effect on the existence or nonexistence of exotic actions. For instance, note that every right-angled Artin group occurs as a subgroup of the mapping class group of some surface with boundary (see~\cite{Koberda2012} and the references therein), and these mapping class groups do admit exotic actions (see Section \ref{sec:mcg} below), but no faithful mapping class group action is conjugate to a $C^2$ action by~\cite{BKK16}.

\subsubsection{Mapping Class Groups}

The fact that the mapping class group of a surface with a marked point embeds into $\Homeo_+(S^1)$ is an old result of Nielsen (see~\cite{Nielsen1927,Nielsen1929,Nielsen1932}, also~\cite{CassonBleiler,HT1985}). Such actions for mapping class groups with boundary on the circle and on the real line were studied by Handel--Thurston~\cite{HT1985}. The existence of exotic actions of mapping class groups (i.e. ones not semi--conjugate to Nielsen's original action) was posed as a question by Farb~\cite{Farb2006}, and has been also studied by Bowditch--Sakuma~\cite{BowditchSakuma}.

\subsection{Outline of the monograph}
We have striven to make the present monograph as self--contained as possible. For ideas and methods which are well--known or appear in literature, we strive to give complete references. In Section \ref{sec:prelim} we gather various cohomological facts about circle actions, where an object of central importance is the bounded Euler class of an action. We single out Theorem \ref{thm:equiv} for the attention of the reader, since it is difficult to find a definitive statement of the equivalence of all the various definitions of semi-conjugacy in the literature.

Section \ref{sec:baumslag} contains the various incarnations of Baumslag's Lemma, 
which is one of the main technical tools of the monograph.

Section~\ref{sec:fuchs} produces uncountable families of inequivalent Fuchsian and nontrivial limit group actions on the circle, all arising from indiscrete subgroups of $\PSL_2(\bR)$. This discussion generalizes in the form of a combination theorem in Section \ref{sec:flex}. Implications of flexibility and liftability for limit groups and quasimorphisms are discussed here.

Section \ref{sec:axiom} develops a more axiomatic setup for the discussion in Section \ref{sec:fuchs} and in Section \ref{sec:flex}, and establishes indiscrete combination theorems again for all limit groups and in the setup where the target group is a general Baire topological group.

In Section~\ref{sec:mcg}, we construct actions of mapping class groups that are not conjugate to Nielsen's classical actions.
Section~\ref{sec:rot spec} deals with the rigidity of faithful projective representations with zero rotation spectrum, and also provides  examples of smooth actions of free groups on the circle that are not semi-conjugate to actions factoring through finite dimensional Lie groups.

The logical dependence of the sections is as follows. Section \ref{sec:prelim} is basic but can be skipped by readers familiar with circle actions and bounded cohomology. Section \ref{sec:append} complements  Section \ref{sec:prelim}
by giving details and proofs. Section \ref{sec:flex} relies on Sections~\ref{sec:baumslag} and~\ref{sec:fuchs}. Section \ref{sec:axiom}  is logically independent from the other sections, but is conceptually informed by Sections \ref{sec:fuchs}  and \ref{sec:flex}.

\section{Preliminaries}\label{sec:prelim}
\subsection{Actions on the Circle}\label{sec:prelimcirc}

Ghys employed cohomological methods to ``study the dynamics''~\cite{Ghys1987} of discrete group actions 
on the circle.
More precisely, he defined a relation between group actions he called \emph{semi-conjugacy}\index{semi-conjugacy}
and exhibited a correspondence between cohomology classes and semi-conjugacy classes; see Theorem~\ref{t:ghys2}.

Recently, several other closely related notions of semi-conjugacy were suggested in the literature~\cite{Calegari2007,BFH2014,Mann-hb}.
These modifications were aimed at converting semi-conjugacy into an equivalence relation
and further, to have a powerful characterization of circle actions via cohomology classes. For a complete reference on bounded cohomology of discrete groups, the reader is directed to Frigerio's recent book~\cite{FrigerioBook}, which includes a discussion of second bounded cohomology and circle actions.

A reconciliation of these notions of semi-conjugacy is summarized in Theorem~\ref{thm:equiv} below. 
We refer the reader to the Appendix and the cited references therein for proofs of equivalence.
A different and more detailed exposition of many (but not all) of these equivalent notions has been given in \cite{BFH2014}. The ideas go back to Ghys' original article~\cite{Ghys1987}, and to~\cite{Calegari2007,BFH2014,Mann-hb}. The reader is also directed to~\cite{CalegariCassonfest}. We remark that throughout this section, we make no claims as to originality.

Throughout this section, \emph{we let $L$ be a countable group}.
For $r\in\bR$, we let $T(r)$ denote the translation by $r$ on $\bR$ or on $S^1$.
For brevity, we denote \[T=T(1)\co\bR\to\bR.\]

\medskip
\noindent {\bf Rotation number and Euler class}\\
Let us recall the following well-known terminology. Recall that $\Homeo_\bZ(\bR)$
is the set of orientation-preserving homeomorphisms $f$ on the real line satisfying  \[f(x+1)=f(x)+1.\]
For $f\in\Homeo_\bZ(\bR)$, we define the \emph{translation number}\index{translation number}
	\[
\rot^\sim(f)=\lim_{n\to\infty}\frac{f^n(x)-x}{n},\]
	which is independent of the choice of $x\in\bR$.
	The Poincar\'e's \emph{rotation number}\index{rotation number} of
	$f\in\Homeo_+(S^1)$ is defined as
	\[\rot(f)=\rot^\sim(\tilde f) \pmod \bZ\]
	where $\tilde f\in\Homeo_\bZ(\bR)$ is an arbitrary lift of $f$. 
Then $\rot\co\Homeo_+(S^1)\to S^1$ is a continuous \emph{homogeneous class function}\index{homogeneous class function}.
That is, the map $\rot$ is continuous with respect to the usual (uniform) topology on $\Homeo_+(S^1)$,
and \[\rot(f^n)=n\rot(f),\quad \rot(gfg^{-1})=\rot(f)\] for $f,g\in\Homeo_+(S^1)$ and $n\in\bZ$. The readers may refer to \cite{Navas2011} for more details.

For each $x\in\bR$, 
there exists a unique section 
\[s^x
\co \Homeo_+(S^1)\to\Homeo_\bZ(\bR)\]
satisfying the following
for all $f\in\Homeo_+(S^1)$: \[s^x(f)(x)\in[x,x+1).\] 
We can define the \emph{Euler cocycle based at $x$}\index{Euler cocycle},
denoted as \[\eu^x\co \Homeo_+(S^1)\times\Homeo_+(S^1)\to\{0,1\},\]
by the following condition:
\[s^x(f) s^x(g) =  T(\eu^x(f,g)) s^x(fg).\]
Note that the Euler cocycle is a 2-cocycle in group cohomology.

It is a routine exercise to see that $\eu^x$ is indeed a cocycle and takes values in $\{0,1\}$.
Following~\cite{BFH2014}, we denote the corresponding cohomology classes 
\[\eu
=[\eu^x]\in H^2(\Homeo_+(S^1);\bZ),\quad 
\eu_b
=[\eu^x]\in H^2_b(\Homeo_+(S^1);\bZ).\]
The classes $\eu$ and $\eu_b$ are called the \emph{Euler class}\index{Euler class}  and the \emph{bounded Euler class}\index{Euler class!bounded}, respectively. These classes are independent of the choice of $x\in\bR$. 
Moreover,
 \[\eu\in H^2(\Homeo_+(S^1);\bZ)\] corresponds to the universal central extension 
\[
\xymatrix{
1\ar[r]& \form{T} \ar[r]& \Homeo_\bZ(\bR)\ar[r]^p& \Homeo_+(S^1)\ar[r]& 1}\]

For each $\rho\in\Hom(L,\Homeo_+(S^1))$, we 
have the pull-back of the Euler cocycle
\[
\rho^*\eu^x(a,b)=\eu^x(\rho(a),\rho(b)),\]
which in turn yields cohomology classes
\[
\rho^*\eu\in H^2(L;\bZ),\quad
\rho^*\eu_b\in H^2_b(L;\bZ).\]
The rotation number 
is determined by the Euler cocycle:
\begin{lem}\label{lem:eu-rot}
For each $g\in \Homeo_+(S^1)$, we have
\[ 
\rot(g)=\lim_{n\to\infty}\frac1n\sum_{k=1}^n \eu^0(g,g^k) \mod \bZ.\]
\end{lem}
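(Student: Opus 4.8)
The plan is to unwind the defining relation of the Euler cocycle $\eu^0$ to get a telescoping expression for iterated lifts, and then recognize the translation number in the resulting limit. Recall that $s^0\co \Homeo_+(S^1)\to\Homeo_\bZ(\bR)$ is the section with $s^0(f)(0)\in[0,1)$, and that it satisfies $s^0(f)s^0(g)=T(\eu^0(f,g))\,s^0(fg)$. Fix $g\in\Homeo_+(S^1)$ and write $\tilde g=s^0(g)$, a specific lift of $g$. First I would prove by induction on $n\ge 1$ the identity
\[
\tilde g^{\,n}=T\!\left(\sum_{k=1}^{n-1}\eu^0(g,g^k)\right) s^0(g^n),
\]
where the empty sum for $n=1$ is $0$. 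The inductive step is immediate from $\tilde g^{\,n+1}=\tilde g\cdot \tilde g^{\,n}=s^0(g)\,T\!\left(\sum_{k=1}^{n-1}\eu^0(g,g^k)\right) s^0(g^n)$, using that elements of $\Homeo_\bZ(\bR)$ commute with the integer translation $T(m)$, and then applying the cocycle relation $s^0(g)s^0(g^n)=T(\eu^0(g,g^n))s^0(g^{n+1})$.

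Next I would evaluate this at the point $x=0$. Since $s^0(g^n)(0)\in[0,1)$, we get
\[
\tilde g^{\,n}(0)=\sum_{k=1}^{n-1}\eu^0(g,g^k)+s^0(g^n)(0),
\]
with the last term bounded in $[0,1)$, independently of $n$. Dividing by $n$ and letting $n\to\infty$, the bounded term contributes nothing, so
\[
\rot^\sim(\tilde g)=\lim_{n\to\infty}\frac{\tilde g^{\,n}(0)}{n}=\lim_{n\to\infty}\frac1n\sum_{k=1}^{n-1}\eu^0(g,g^k),
\]
and since $\eu^0$ takes values in $\{0,1\}$, replacing the upper limit $n-1$ by $n$ changes the partial sum by at most $1$, hence does not affect the limit. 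Finally, $\rot(g)=\rot^\sim(\tilde g)\pmod\bZ$ by definition of the rotation number (the value being independent of the choice of lift), which yields the claimed formula.

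I do not anticipate a serious obstacle here; the only points requiring care are the bookkeeping of the index range in the sum (off-by-one between $n-1$ and $n$, harmless since $\eu^0$ is bounded) and the observation that the "error term" $s^0(g^n)(0)$ stays in $[0,1)$ uniformly in $n$ — which is exactly the defining property of the section $s^0$. One should also note that the translation number limit $\lim_n \tilde g^{\,n}(x)/n$ is independent of $x$, so evaluating at $x=0$ loses no generality.
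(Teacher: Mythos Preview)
Your proof is correct and follows essentially the same approach as the paper's proof (given in Lemma~\ref{lem:eu-tau} in the Appendix): both unwind the cocycle relation $s^0(g)s^0(g^k)=T(\eu^0(g,g^k))s^0(g^{k+1})$ to express $s^0(g)^n(0)$ as $\sum_{k=1}^{n-1}\eu^0(g,g^k)$ plus a term in $[0,1)$, then divide by $n$ and pass to the limit. The only cosmetic difference is that the paper phrases the key computation as counting integers in the intervals $\bigl(s(g)^k(0),\,s(g)^{k+1}(0)\bigr]$, whereas you carry out the induction explicitly; these are two ways of writing the same telescoping.
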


The proof is given in Lemma~\ref{lem:eu-tau}.

\medskip
\noindent {\bf Semi-conjugacy}\\
The following diagram  
\[
\xymatrix{
L\ar[r]\ar[d] & A\ar[d]^i\\
B\ar[r]^j & C}\]
will be used to define the \emph{fiber product}\index{fiber product} as follows. Given maps $i\co A\to C$ and $j\co B\to C$, the fiber product $L$ of $i$ and $j$ is defined by:
\[
L = \{(a,b)\in A\times B\co i(a) = j(b)\}.\]
If we define $p : L \to A$ and $q : L \to B$ as the restriction of the projections to factors,
then we have $i \circ p = j \circ q$. Conversely, if there is a group $L'$ and maps $p' : L' \to A$
and $q' : L' \to B$ such that $i \circ p' = j \circ q'$, then there is a map $\phi : L' \to L$ such that
$p' = p \circ \phi$ and $q' = q \circ \phi$. Note that $\phi$ may not be an isomorphism.
 
Let $X$ and $Y$ be topological spaces,
and let
 \[\rho_0\in\Hom(L, \Homeo(X)),\quad \rho_1\in\Hom(L, \Homeo(Y)).\]
We will write 
$\rho_0\succcurlyeq_h\rho_1$
if there exists a (possibly discontinuous) map
$h\co X\to Y$ such that
the following diagram commutes for all $g\in L$:
\[
\xymatrix{
	X\ar[d]^<<<<h \ar[r]^{\rho_0(g)} & X\ar[d]^<<<<h\\
	Y \ar[r]^{\rho_1(g)} & Y
}\]

We say $h$ \emph{semi-conjugates}\index{semi-conjugacy!semi-conjugating map} $\rho_0$ to $\rho_1$.

The following definition is due to Ghys~\cite{Ghys1999}, with a slight modification by Mann~\cite{Mann-hb}.

\bd\label{defn:r-semi}
\be
\item
We say two actions \[\rho_0,\rho_1\in\Hom(L,\Homeo_+(\bR))\] are \emph{semi-conjugate}\index{semi-conjugacy} 
if there exists a proper nondecreasing map $h\co \bR\to\bR$ such that
 $\rho_0\succcurlyeq_h\rho_1$.
\item 
We say two actions \[\rho_0,\rho_1\in\Hom(L,\Homeo_+(S^1))\] are \emph{semi-conjugate},
and write $\rho_0\sim_{\mathrm{semi}}\rho_1$,
if there exist
\begin{itemize}
\item
a group $\tilde L$ and a homomorphism $q\co \tilde L\to L$, 
\item 
two semi-conjugate actions in the sense of part (1):
\[\tilde\rho_0,\tilde\rho_1\co \tilde L\to\Homeo_\bZ(\bR),\] 
\item
a commutative diagram
\[
\xymatrix{
&& \tilde L\ar[r]^q\ar[d]^{\tilde\rho_i} &  L\ar[d]^{\rho_i}\ar[dr]\\
1\ar[r]&\bZ=\form{T}\ar[r]\ar[ru] & \Homeo_\bZ(\bR) \ar[r]^p &\Homeo_+(S^1)\ar[r] & 1
}\]
\end{itemize}
for each $i=0,1$  such that the bottom row is the natural universal central extension and such that the middle square is a fiber product.
\ee
\ed

\begin{lem}\label{lem:equiv}
The semi-conjugacy is an equivalence relation on $\Hom(L,\Homeo_+(\bR))$ and also on
$\Hom(L,\Homeo_+(S^1))$.\end{lem}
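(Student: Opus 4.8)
The plan is to prove the two statements in turn, deriving the circle case from the line case.

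\emph{The line case.} For $\rho_0,\rho_1\in\Hom(L,\Homeo_+(\bR))$, reflexivity is witnessed by $h=\Id$, and transitivity by composition: a composite $k\circ h$ of two proper nondecreasing maps $\bR\to\bR$ is again nondecreasing, and proper since $(k\circ h)^{-1}(C)=h^{-1}(k^{-1}(C))$ is compact whenever $C$ is; and it intertwines the outer actions. The substance is symmetry. Given proper nondecreasing $h$ with $h\,\rho_0(g)=\rho_1(g)\,h$ for all $g$, I would introduce the monotone pseudo-inverse
$$\bar h(y)=\sup\{x\in\bR:h(x)\le y\}.$$
Properness of $h$ (i.e.\ $h(x)\to\pm\infty$ as $x\to\pm\infty$) makes $\bar h(y)$ finite for every $y$; it is visibly nondecreasing, and monotonicity of $h$ gives $\bar h(y)\to\pm\infty$ as $y\to\pm\infty$, so $\bar h$ is proper. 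To intertwine, rewrite the hypothesis as $\rho_1(g)^{-1}h=h\,\rho_0(g)^{-1}$; then for fixed $g$ the substitution $x=\rho_0(g)(x')$ identifies $\{x:h(x)\le\rho_1(g)(y)\}$ with $\rho_0(g)\bigl(\{x':h(x')\le y\}\bigr)$, and since $\rho_0(g)$ is a continuous increasing bijection it commutes with taking suprema; this yields $\bar h(\rho_1(g)(y))=\rho_0(g)(\bar h(y))$, i.e.\ $\rho_1\succcurlyeq_{\bar h}\rho_0$.

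\emph{The circle case; reflexivity and symmetry.} The key remark is that in a witness $(\tilde L,q,\tilde\rho_0,\tilde\rho_1,h)$ for $\rho_0\sim_{\mathrm{semi}}\rho_1$, the distinguished central subgroup $\form{T}\cong\bZ$ sitting inside $\tilde L$ (the arrow $\form{T}\to\tilde L$ of the diagram in Definition~\ref{defn:r-semi}) has the property that both $\tilde\rho_0$ and $\tilde\rho_1$ send its generator $z$ to $T$; consequently the line-level map $h$ satisfies $h\,T=T\,h$, hence $h(x+n)=h(x)+n$ for all $n\in\bZ$, a fact used below. For reflexivity, take $\tilde L$ to be the fiber product of $\rho$ and $p$ (so $\tilde L=\{(g,f)\in L\times\Homeo_\bZ(\bR):\rho(g)=p(f)\}$), with $q$ the first projection, $\tilde\rho_0=\tilde\rho_1$ the second, and $h=\Id$. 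For symmetry, apply the line case to $\tilde\rho_0,\tilde\rho_1\in\Hom(\tilde L,\Homeo_+(\bR))$ to get $\bar h$ with $\tilde\rho_1\succcurlyeq_{\bar h}\tilde\rho_0$; then $(\tilde L,q,\tilde\rho_1,\tilde\rho_0,\bar h)$ witnesses $\rho_1\sim_{\mathrm{semi}}\rho_0$, the two fiber-product squares being unchanged.

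\emph{Transitivity in the circle case} is the main obstacle, because the two hypotheses carry different central extensions of $L$. Starting from witnesses $(\tilde L,q,\tilde\rho_0,\tilde\rho_1,h)$ and $(\tilde L',q',\tilde\rho_1',\tilde\rho_2',h')$, I would form the fiber product $M=\tilde L\times_L\tilde L'$ with projections $\pi,\pi'$ and $q_M=q\circ\pi=q'\circ\pi'$ (all surjective, since $p$ is). Writing $z,z'$ for the central generators from $\tilde L,\tilde L'$, the homomorphisms $\tilde\rho_1\circ\pi$ and $\tilde\rho_1'\circ\pi'$ are both lifts of $\rho_1\circ q_M$ through $p$, so they differ by a homomorphism $\delta\colon M\to\bZ=\ker p$, say $\tilde\rho_1'\circ\pi'=T^{\delta}\cdot(\tilde\rho_1\circ\pi)$; evaluating on $z,z'$ gives $\delta(z)=-1$, $\delta(z')=1$. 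Now set $K=\langle z'\rangle$, $\tilde L''=M/K$, let $q''$ be the induced (surjective) map to $L$, and define
$$\tilde\rho_0''=\overline{\tilde\rho_0\circ\pi},\qquad\tilde\rho_2''=\overline{T^{-\delta}\cdot(\tilde\rho_2'\circ\pi')},$$
the overlines denoting descent to $M/K$, legitimate since both maps kill $z'$ (for the second, $T^{-\delta(z')}\tilde\rho_2'(\pi'(z'))=T^{-1}\cdot T=\Id$). One then checks that $\tilde\rho_0''$ and $\tilde\rho_2''$ lift $\rho_0\circ q''$ and $\rho_2\circ q''$ respectively; that both send the surviving central generator $\bar z$ to $T$ (here the twist $T^{-\delta}$ is exactly what gives $\tilde\rho_2''(\bar z)=T^{-\delta(z)}\tilde\rho_2'(\pi'(z))=T\cdot\Id=T$); and that each middle square is a fiber product, by applying the short five lemma to the natural map from $\tilde L''$ to the pullback $L\times_{\rho_i,p}\Homeo_\bZ(\bR)$, which is a morphism of central $\bZ$-extensions of $L$ restricting to the identity on $\bZ$ and on $L$. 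Finally, a short manipulation starting from $h'\,(\tilde\rho_1'\circ\pi')=(\tilde\rho_2'\circ\pi')\,h'$ on $M$, using $\tilde\rho_1'\circ\pi'=T^{\delta}(\tilde\rho_1\circ\pi)$, the definition of $\tilde\rho_2''$, centrality of $T$ in $\Homeo_\bZ(\bR)$, and $h'\,T=T\,h'$, cancels the common factor $T^{\delta}$ and yields $h'\,(\tilde\rho_1\circ\pi)=\bigl(T^{-\delta}(\tilde\rho_2'\circ\pi')\bigr)\,h'$; combined with $h\,(\tilde\rho_0\circ\pi)=(\tilde\rho_1\circ\pi)\,h$ this gives that $h'\circ h$ intertwines $\tilde\rho_0\circ\pi$ with $T^{-\delta}(\tilde\rho_2'\circ\pi')$, which descends to $\tilde\rho_0''\succcurlyeq_{h'\circ h}\tilde\rho_2''$. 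Since $h'\circ h$ is proper nondecreasing, $(\tilde L'',q'',\tilde\rho_0'',\tilde\rho_2'',h'\circ h)$ witnesses $\rho_0\sim_{\mathrm{semi}}\rho_2$. The genuinely delicate point throughout is the bookkeeping in the central direction: after collapsing $M$ to an extension of $L$ by a single $\bZ$, both descended actions must still restrict to $n\mapsto T^n$ on that $\bZ$, and this is precisely what forces the $T^{-\delta}$ correction; everything else is the pseudo-inverse construction and a routine diagram chase (with the five lemma doing the work in the fiber-product check).
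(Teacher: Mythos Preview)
Your proof is correct. The paper derives this lemma as an immediate consequence of Lemma~\ref{lem:real-me}, which shows that $\rho_0\succcurlyeq_h\rho_1$ for some proper nondecreasing $h$ holds if and only if $\rho_0$ and $\rho_1$ admit a common blow-up via \emph{surjective} nondecreasing maps; since the latter condition is manifestly symmetric, symmetry of semi-conjugacy follows. Your pseudo-inverse construction is the more direct route to symmetry alone and is perfectly valid (indeed the backward direction of Lemma~\ref{lem:real-me} uses essentially the same formula, $h(x)=h_2\circ\sup h_1^{-1}(x)$). The tradeoff is that the paper's approach simultaneously establishes the common-blow-up characterization, which is reused in Theorem~\ref{thm:equiv}, whereas yours gives only the equivalence relation.

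For circle transitivity, your argument works but can be substantially shortened. Since the middle squares for $\tilde\rho_1$ and $\tilde\rho_1'$ are \emph{both} fiber products of $\rho_1$ and $p$, the universal property gives a canonical isomorphism $\phi\colon\tilde L\to\tilde L'$ with $q'\circ\phi=q$ and $\tilde\rho_1'\circ\phi=\tilde\rho_1$. Transporting along $\phi$, one sets $\tilde\rho_2=\tilde\rho_2'\circ\phi$ on $\tilde L$ and composes $h'\circ h$ directly; no quotient construction or $\delta$-bookkeeping is needed. In fact your $\tilde L''=M/K$ is canonically isomorphic to $\tilde L$ via $\pi$ (since $K=\ker\pi$), and under this identification your $\tilde\rho_2''$ becomes exactly $\tilde\rho_2'\circ\phi$, so you have rediscovered this simpler picture in disguise.
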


\bp
The statement is an immediate consequence of Lemma~\ref{lem:real-me}.\ep

\bd\label{defn:monotone}
Let $M=\bR$ or $M=S^1$, and let $h\co M\to M$ be a (possibly discontinuous) map.
\be
\item
For $M=\bR$, we say $h$ is a \emph{monotone degree one map}\index{monotone degree one map} if
$h$ is nondecreasing and if $h(x+1)=h(x)+1$ for each $x\in \bR$.
\item
For $M=S^1$, we say $h$ is a \emph{monotone degree one map} if 
$h$ has a lift to $\bR$ which is monotone degree one. \ee
\ed

\begin{rem}
\be
\item
A nondecreasing map $h\co \bR\to\bR$ is proper if and only if
\[ \lim_{x\to\pm\infty}h(x)=\pm\infty.\]
In particular, we require $h$ to be non-constant.
\item In Definition~\ref{defn:r-semi} (2), a proper nondecreasing map $h$ satisfying $\tilde \rho_0\succcurlyeq_h\tilde \rho_1$ must be  monotone degree--one.  For, we should have \[ h\circ\tilde\rho_0(T) =\tilde\rho_1(T) \circ h.\]
\item Some authors use the terminology that $\rho_0$ is ``semi-conjugate'' to $\rho_1$ if there exists a \emph{continuous} monotone degree one map $h\co M\to M$ such that $\rho_0\succcurlyeq_h \rho_1$,
where $M=\bR$ or $M=S^1$. This notion of ``semi-conjugacy'' is not an equivalence relation. 
\ee
\end{rem}

We let $H^2_{\{0,1\}}(L;\bZ)$ be the set of cohomology classes in $H^2_b(L;\bZ)$
that can be represented by $\{0,1\}$--valued cocycles.
In particular, we have 
\[\eu_b\in H^2_{\{0,1\}}(\Homeo_+(S^1);\bZ).\]
An algebraic characterization of a semi-conjugacy class
is given as follows.

\begin{thm}[\cite{Ghys1999,Ghys2001}]\label{t:ghys2} The bounded Euler class is a semi-conjugacy
invariant. More precisely, 
there exists a one-to-one correspondence:
$$\Hom( L,\Homeo_+(S^1))/\!\sim_{\mathrm{semi}} \to H^2_{\{0,1\}}( L;\bZ),$$ given by
	$$ \quad [\rho]\mapsto \rho^* \eu_b.$$
	
\end{thm}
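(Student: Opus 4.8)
The plan is to establish the bijection in Theorem~\ref{t:ghys2} in two logically separate steps: first show the map $[\rho]\mapsto\rho^*\eu_b$ is well-defined (i.e.\ $\rho_0\sim_{\mathrm{semi}}\rho_1$ implies $\rho_0^*\eu_b=\rho_1^*\eu_b$) and that its image lies in $H^2_{\{0,1\}}(L;\bZ)$; then show it is both injective and surjective onto that target. Since these equivalences are partly attributed to Ghys and partly reorganized in the Appendix, I would lean heavily on Theorem~\ref{thm:equiv} and the fiber-product formulation of semi-conjugacy in Definition~\ref{defn:r-semi}(2), citing the Appendix for the routine cocycle bookkeeping.

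\emph{Well-definedness.} Suppose $\rho_0\sim_{\mathrm{semi}}\rho_1$, witnessed by $q\colon\tilde L\to L$, lifts $\tilde\rho_0,\tilde\rho_1\colon\tilde L\to\Homeo_\bZ(\bR)$ with the middle square a fiber product, and a proper nondecreasing $h\colon\bR\to\bR$ with $\tilde\rho_0\succcurlyeq_h\tilde\rho_1$. Because $h\circ\tilde\rho_0(T)=\tilde\rho_1(T)\circ h$, the map $h$ is monotone degree one (Remark after Definition~\ref{defn:monotone}). First I would check that $h$ conjugates the sections: comparing $s^x(\rho_1(g))$ with $h\,s^{h^{-1}(x)}(\rho_0(g))\,h^{-1}$ and using $h(y+1)=h(y)+1$, one sees these differ by an element of $\form T$ that is bounded, and then $\tilde\rho_i^*\eu^x=q^*(\rho_i^*\eu^x)$ shows $q^*(\rho_0^*\eu^x)$ and $q^*(\rho_1^*\eu^x)$ are cohomologous as bounded cocycles on $\tilde L$. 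The key point — where the fiber-product hypothesis is used — is that $q^*\colon H^2_b(L;\bZ)\to H^2_b(\tilde L;\bZ)$ is \emph{injective} on the relevant classes: this follows because the fiber product sits in the pulled-back central extension, so $\rho_i^*\eu_b$ is recovered from $\tilde\rho_i^*\eu_b$; this is exactly the content of the Appendix reconciliation (Lemma~\ref{lem:real-me} and its consequences). Hence $\rho_0^*\eu_b=\rho_1^*\eu_b$. That $\rho^*\eu_b$ lies in $H^2_{\{0,1\}}(L;\bZ)$ is immediate since $\rho^*\eu^x$ is itself $\{0,1\}$-valued.

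\emph{Injectivity and surjectivity.} For injectivity, assume $\rho_0^*\eu_b=\rho_1^*\eu_b$. The standard argument (Ghys, see also \cite{BFH2014} and the Appendix) is: choose $\{0,1\}$-valued representatives, which forces a coboundary by a \emph{bounded} $\bZ$-valued function $u\colon L\to\bZ$; this $u$ lets one define a common $\tilde L$ (the simultaneous fiber product with $\Homeo_\bZ(\bR)$), lifted actions $\tilde\rho_0,\tilde\rho_1$, and then one builds the semi-conjugating proper nondecreasing $h$ explicitly as a ``sup-of-orbit'' map, $h(t)=\sup\{\tilde\rho_1(g)(0)+\text{(correction)}\colon \tilde\rho_0(g)(0)\le t\}$, checking monotonicity, degree one, and equivariance from the cocycle identity and boundedness of $u$. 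For surjectivity, given a class in $H^2_{\{0,1\}}(L;\bZ)$ represented by a $\{0,1\}$-cocycle $c$, the extension-theoretic construction produces a group $\hat L$ sitting over $L$ with kernel $\bZ$, and the section splitting realizes $\hat L\hookrightarrow\Homeo_\bZ(\bR)$ (using that $c$ takes values in $\{0,1\}$, so the section can be taken with the ``interval'' normalization $s^x(f)(x)\in[x,x+1)$); pushing down gives $\rho\colon L\to\Homeo_+(S^1)$ with $\rho^*\eu_b$ the given class.

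\emph{Main obstacle.} The genuinely delicate point is not any single calculation but the bookkeeping that the fiber-product/lifting data in Definition~\ref{defn:r-semi}(2) is \emph{equivalent} to the bounded-cohomology statement — in particular that the choice of $\tilde L$, $q$, and the lifts does not affect $\rho^*\eu_b$, and conversely that a cohomology between the pulled-back cocycles descends. This is exactly what Theorem~\ref{thm:equiv} and the Appendix are set up to supply, so in the write-up I would isolate this as a citation to Lemma~\ref{lem:real-me} / Theorem~\ref{thm:equiv} rather than reprove it, and spend the visible effort on the explicit construction of the semi-conjugating map $h$ from a bounded coboundary, which is the concrete heart of the equivalence.
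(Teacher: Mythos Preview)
There is a genuine gap in your well-definedness argument. You claim that $q^*\colon H^2_b(L;\bZ)\to H^2_b(\tilde L;\bZ)$ is injective ``on the relevant classes'' because of the fiber-product structure. In fact the opposite is true: since each $\tilde\rho_i$ lands in $\Homeo_\bZ(\bR)$, and the bounded Euler class pulls back to zero along the covering $p\colon\Homeo_\bZ(\bR)\to\Homeo_+(S^1)$, we have
\[
q^*(\rho_i^*\eu_b)=(\rho_i\circ q)^*\eu_b=(p\circ\tilde\rho_i)^*\eu_b=\tilde\rho_i^*(p^*\eu_b)=0
\]
for \emph{both} $i=0,1$. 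So pulling back to $\tilde L$ annihilates exactly the classes you want to compare, and no information survives. (Relatedly, the expression ``$h\,s^{h^{-1}(x)}(\rho_0(g))\,h^{-1}$'' does not parse: $h$ is only monotone degree one, not invertible.) Lemma~\ref{lem:real-me} does not repair this; it is a statement about replacing a discontinuous semi-conjugating map by two surjective ones, not about injectivity of $q^*$.

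The paper's route (in the Appendix, via the chain $(\ref{part:r-semi})\Rightarrow(\ref{part:monotone})\Rightarrow(\ref{part:eu})$) avoids this entirely by never leaving the $S^1$ level. Lemma~\ref{lem:real-me} upgrades the $\bR$-semi-conjugacy to a \emph{common blow-up} $\rho$ of $\rho_0,\rho_1$ by \emph{surjective} monotone degree-one maps on $S^1$; then Lemma~\ref{lem:minimal}(1) compares the Euler cocycles directly on $L$ by choosing a basepoint in $\mathrm{Core}(h)$ (not a boundary point of a gap) and checking that $\rho_0^*\eu^0=\rho_1^*\eu^p$ as cocycles. The basepoint trick is the substantive idea you are missing. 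For the converse direction the paper does not build $h$ from a bounded coboundary as you sketch; it routes through $(\ref{part:eu})\Rightarrow(\ref{part:matsumoto})$ (equality of $\rot$ and $\tau$, via Lemma~\ref{lem:eu-tau}) and then cites \cite{Mann-hb} for $(\ref{part:matsumoto})\Rightarrow(\ref{part:r-semi})$. Your ``sup-of-orbit'' construction is the classical Ghys argument and would also work, but it is not what the paper does. Surjectivity onto $H^2_{\{0,1\}}(L;\bZ)$ is not proved in the paper at all; it is simply attributed to \cite{Ghys1999,Ghys2001}.
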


\noindent {\bf Limit set of a circle action}\\
A topological action of a group $L$ on a space $X$ is called \emph{minimal}\index{minimal!topological action} if every $L$--orbit is dense in $X$.
If $L\le\Homeo_+(S^1)$, then $L$ acts minimally on its \emph{limit set},\index{limit set}
which is defined as
\[
\Lambda(L)\co =\bigcap_{x\in S^1}\overline{L.x}.\]

There is a trichotomy for $\Lambda_L$ as below; see \cite[Theorem 2.1.1]{Navas2011}.
\be[(i)]
\item $\Lambda(L)=S^1$; in this case, $L$ is minimal.
\item $\Lambda(L)$ is a Cantor set, which is called an \emph{exceptional minimal set}\index{exceptional minimal set}.
\item $\Lambda(L)$ is finite (possibly empty); in this case, there exists a finite $L$--orbit.
\ee

\begin{rem}
When $\Lambda(L)$ is nonempty, then it is the \emph{smallest} nonempty minimal $L$--invariant closed subset of $S^1$,
and coincides with the notion of the \emph{minimal set} in the literature.
If $\Lambda(L)$ is empty, then a \emph{minimal} set means a finite orbit of $L$.
Conversely, if $L$ has a finite orbit, then we have the case (iii), and $\Lambda(L)$ may or may not be empty.\end{rem}

For a representation $\rho\co L\to\Homeo_+(S^1)$, we will also use the notation
\[\Lambda(\rho) =\Lambda\circ\rho(L)=\Lambda(\rho(L)).\]

\begin{lem}\label{lem:min-normal}
Let $H\le G\le\Homeo_+(S^1)$ be groups
such that $\Lambda(H)$ is nonempty. Assume either
\be[(a)]
\item $H\unlhd G$, or
\item  $[G:H]<\infty$.
\ee
Then $\Lambda(H)=\Lambda(G)$.
\end{lem}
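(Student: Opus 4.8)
The plan is to use the characterization of $\Lambda(H)$ as the smallest nonempty closed $H$-invariant subset of $S^1$ (valid since $\Lambda(H)\neq\varnothing$), together with the trichotomy recalled above. The key observation is that $\Lambda(H)$ is not merely $H$-invariant but actually $G$-invariant in both cases (a) and (b), because $G$ permutes the finitely many ``smallest minimal sets'' of $H$, and minimality of $H$ on $\Lambda(H)$ pins this permutation action down.

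First I would establish that $g\cdot\Lambda(H)=\Lambda(H)$ for every $g\in G$. In case (a), conjugation by $g$ is an automorphism of $H$, so $g\cdot\Lambda(H)$ is a nonempty closed $H$-invariant set: indeed for $h\in H$ we have $h\cdot(g\cdot\Lambda(H)) = g\cdot((g^{-1}hg)\cdot\Lambda(H)) = g\cdot\Lambda(H)$ since $g^{-1}hg\in H$. By minimality of $\Lambda(H)$ among nonempty closed $H$-invariant sets, $g\cdot\Lambda(H)=\Lambda(H)$. In case (b), let $\{\Lambda(H) = K_1, K_2,\dots\}$ be the $G$-orbit of $\Lambda(H)$ under the (left) translation action; each $K_i = g_i\cdot\Lambda(H)$ is again a minimal nonempty closed $H$-invariant set (same computation, using that $g_i^{-1} H g_i$ has finite index hence shares the same limit set — more simply, since $[G:H]<\infty$ the subgroup $H$ is normal in a finite-index subgroup, or one argues directly that $g_i^{-1}Hg_i$ and $H$ are commensurable so $\Lambda(g_i^{-1}Hg_i)=\Lambda(H)$, giving $H$-invariance of $K_i$). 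The orbit is finite since it consists of distinct minimal sets, which are pairwise disjoint (two distinct minimal sets cannot intersect, as the intersection would be a smaller closed invariant set), and one can only fit finitely many pairwise disjoint such sets when they are each $S^1$ or Cantor sets, and in the finite case one uses finiteness of the orbit directly. Then $\bigcup_i K_i$ is a nonempty closed $H$-invariant set containing $\Lambda(H)$; but actually I would instead note that each $K_i$, being $G$-translate of the minimal $H$-set, is itself $H$-minimal, hence if $K_i\neq\Lambda(H)$ they are disjoint — then since $[G:H]<\infty$, writing $G=\bigsqcup_{j} Hg_j$, the set $\bigcup_j g_j^{-1}\cdot\Lambda(H)$ is $G$-invariant, and its $H$-minimal pieces are the $K_i$; by minimality of $\Lambda(H)$ this union must be $\Lambda(H)$ itself, forcing every $g\cdot\Lambda(H)=\Lambda(H)$.

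Once $\Lambda(H)$ is shown to be a nonempty closed $G$-invariant subset of $S^1$, I would conclude by the same minimality principle applied to $G$: since $\Lambda(G)$ is the smallest nonempty closed $G$-invariant set (again using $\Lambda(G)\supseteq\Lambda(H)\neq\varnothing$, so $\Lambda(G)$ is in case (i) or (ii) and is genuinely minimal), we get $\Lambda(G)\subseteq\Lambda(H)$. For the reverse inclusion, $\Lambda(H)\subseteq\Lambda(G)$ holds automatically whenever $\Lambda(G)\neq\varnothing$: indeed $\Lambda(G)=\bigcap_{x}\overline{G.x} \subseteq \bigcap_x \overline{H.x}\cdot$... more carefully, $\Lambda(G)$ is a closed $G$-invariant set, hence closed $H$-invariant, hence contains a minimal nonempty closed $H$-invariant set, which must be $\Lambda(H)$ by the first part combined with the fact that all $H$-minimal sets inside $\Lambda(G)$ are $G$-translates of each other and hence all equal to $\Lambda(H)$; alternatively in cases (i)/(ii) for $H$ one checks directly that $\overline{H.x}\supseteq\Lambda(H)$ for all $x$. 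Combining the two inclusions yields $\Lambda(H)=\Lambda(G)$.

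The main obstacle I anticipate is the bookkeeping in case (b): making precise that the $G$-orbit of the $H$-minimal set $\Lambda(H)$ is finite and that its members are either pairwise equal or pairwise disjoint, and then leveraging $H$-minimality of $\Lambda(H)$ to collapse this orbit to a single point in the space of minimal sets. The clean way to handle this is to first reduce to the normal case by replacing $H$ with its normal core $N=\bigcap_{g\in G} g^{-1}Hg$, which still has finite index in $G$ and satisfies $\Lambda(N)=\Lambda(H)$ (since $N$ and $H$ are commensurable, which forces equality of limit sets — this commensurability fact should be stated or is elementary from the trichotomy), thereby reducing case (b) entirely to case (a).
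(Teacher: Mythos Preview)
Your case (a) argument and your final ``clean way'' for case (b) via the normal core are exactly the paper's approach. The paper computes directly that $g\Lambda(H)=\bigcap_x\overline{gHg^{-1}.gx}=\Lambda(H)$ for $g\in G$ when $H\unlhd G$, then invokes uniqueness of the minimal closed nonempty $G$-invariant set to get $\Lambda(G)\subseteq\Lambda(H)$; for (b) it passes to a finite-index normal subgroup $H_0\le H$ and applies (a).

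Two places where you are working harder than necessary. First, the inclusion $\Lambda(H)\subseteq\Lambda(G)$ is immediate from the definition: $H\le G$ gives $\overline{H.x}\subseteq\overline{G.x}$ for every $x$, hence $\bigcap_x\overline{H.x}\subseteq\bigcap_x\overline{G.x}$. No minimality argument is needed here. Second, in your normal-core reduction you try to establish $\Lambda(N)=\Lambda(H)$ via ``commensurability forces equality of limit sets''; this is circular, since that statement is exactly an instance of the lemma you are proving. The paper avoids this entirely: it only uses the trivial inclusion $\Lambda(H_0)\subseteq\Lambda(H)$ (from $H_0\le H$), applies case (a) to $H_0\unlhd G$ to get $\Lambda(G)=\Lambda(H_0)$, and sandwiches: $\Lambda(G)=\Lambda(H_0)\subseteq\Lambda(H)\subseteq\Lambda(G)$.

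Your first attempt at (b), before the normal-core suggestion, has genuine problems. The claim that ``one can only fit finitely many pairwise disjoint Cantor sets in $S^1$'' is false, so that cannot be the reason the $G$-orbit of $\Lambda(H)$ is finite. (The actual reason is simply that $H$ stabilizes $\Lambda(H)$, so the orbit has size at most $[G:H]$.) And the step asserting that each translate $g_i\Lambda(H)$ is $H$-invariant because $\Lambda(g_i^{-1}Hg_i)=\Lambda(H)$ again presupposes the result. Drop that whole paragraph and go straight to the normal core; the proof then becomes two lines.
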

\bp
Note that  $\Lambda(H)\sse\Lambda(G)$, and $\Lambda(G)$, being non-empty, is the unique minimal closed non-empty invariant set.
Suppose (a). Then for each $g\in G$ we have
\[g\Lambda(H)=\bigcap_{x\in S^1}\overline{gH.x}
=\bigcap_{x\in S^1}\overline{gHg^{-1}.gx}=\Lambda(H).\]
By the uniqueness of the minimal set $\Lambda(G)$, we see $\Lambda(G)\sse\Lambda(H)$.

We now assume (b). There exists a finite index normal subgroup $H_0$ of $G$ such that $H_0\le H$. By part (a), we have
\[
\Lambda(G)=\Lambda(H_0)\sse\Lambda(H).\qedhere\]\ep

\noindent {\bf Blow-up and minimalization}\\
In Definition~\ref{defn:r-semi}, it is not required to have a continuous semi-conjugating map
 $h\co\bR\to\bR$ such that $\tilde\rho_0\succcurlyeq_h\tilde\rho_1$.
It is often desirable to have notions of semi-conjugacy defined by \emph{surjective} (hence, continuous) semi-conjugating maps. These notions can be defined by \emph{blowing-up} and \emph{minimalization}. 

Recall our notation that $T(a)$ is the translation by $a$ on $\bR$ or on $S^1$. We sometimes use $T$ for the translation by $1$.

\bd\label{defn:minimalization}
Let $\rho$ and $\bar\rho$ be circle actions of $L$.
\be
\item
We say $\rho$ is a \emph{blow-up}\index{blow-up} of $\bar\rho$ 
if $\rho\succcurlyeq_h\bar\rho$ for some surjective monotone degree one map $h$ on $S^1$.
\item
We say $\bar\rho$ is a \emph{minimalization}\index{minimalization} of $\rho$
if one of the following holds:
\be[(i)]
\item
$\rho(L)$ has a finite orbit,
and $\bar\rho = T (\rot\circ\rho)$;
\item
$\rho$ is a blow-up of $\bar\rho$,
and $\bar\rho$ is minimal.
\ee\ee
\ed
\begin{rem}
The cases (i) and (ii) are mutually exclusive. 
\end{rem}

A minimalization can be regarded as a ``standard form'' of a circle action. The proof of the following lemma is given in the Appendix,
after Lemma~\ref{lem:fin-orb}.

\begin{lem}\label{lem:minimal-exists}
Every circle action admits a minimalization, which is unique up to conjugacy.\end{lem}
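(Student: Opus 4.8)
The plan is to split into the three cases of the trichotomy for $\Lambda:=\Lambda(\rho)$ recalled above, which governs exactly which clause of Definition~\ref{defn:minimalization} can apply. \emph{Existence} is then quick. If $\Lambda=S^1$, then $\rho$ is already minimal and $\rho\succcurlyeq_{\operatorname{id}}\rho$ exhibits $\rho$ as a minimalization of itself. If $\Lambda$ is a Cantor set, I would write $S^1\setminus\Lambda$ as a countable disjoint union of open arcs (the \emph{gaps} of $\Lambda$) and collapse each gap to a point; the standard ``devil's staircase'' argument (cf.~\cite{Navas2011}) yields a monotone degree one surjection $h\colon S^1\to S^1$ onto the quotient circle. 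Since $\rho$ preserves $\Lambda$ and hence permutes its gaps, it descends to $\bar\rho$ with $h\circ\rho(g)=\bar\rho(g)\circ h$; because the $L$--action on $\Lambda$ is minimal and $h|_\Lambda$ is onto, $\bar\rho$ is minimal, so $\rho$ is a blow-up of the minimal action $\bar\rho$. If $\Lambda$ is finite, then $\rho(L)$ has a finite orbit, so by Lemma~\ref{lem:fin-orb} (or directly, since the restriction of $\rho$ to a finite orbit factors through a finite cyclic group) the map $\rot\circ\rho\colon L\to\bR/\bZ$ is a homomorphism, and $\bar\rho:=T(\rot\circ\rho)$ is a circle action and a minimalization of type~(i).

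For \emph{uniqueness}, the minimal and finite cases are immediate. When $\rho(L)$ has a finite orbit, no minimalization can be of type~(ii), since a surjective semi-conjugating map would send the finite orbit onto a nonempty finite $\bar\rho$--invariant set, contradicting minimality; hence every minimalization is $T(\rot\circ\rho)$. When $\rho$ is minimal, in any type~(ii) minimalization $\rho\succcurlyeq_h\bar\rho$ the union $U$ of the interiors of the maximal intervals of constancy (``plateaus'') of $h$ is open and $\rho$--invariant, so its complement is a nonempty closed invariant set and thus contains $\Lambda=S^1$; then $h$ has no nondegenerate plateau, is injective, and $\bar\rho=h\rho h^{-1}$.

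The substantive case is uniqueness when $\Lambda$ is a Cantor set, which I would reduce to the claim: \emph{for every surjective monotone degree one $h\colon S^1\to S^1$ with $\rho\succcurlyeq_h\bar\rho$ and $\bar\rho$ minimal, the union $U$ of interiors of plateaus of $h$ equals $S^1\setminus\Lambda$.} The inclusion $U\subseteq S^1\setminus\Lambda$ is soft, as above: $U$ is open and $\rho$--invariant (as $\rho(g)$ permutes plateaus), and its complement is nonempty --- because $h(U)$ is countable while $h$ is onto --- hence contains the minimal set $\Lambda$. The reverse inclusion, that every gap $J=(p,q)$ is collapsed by $h$, is the point I expect to be the main obstacle. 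Here minimality enters through $h(\Lambda)=S^1$ (since $h(\Lambda)$ is a nonempty compact $\bar\rho$--invariant set): if $h(J)$ were a nondegenerate arc, then for each $y$ in its interior the fiber $h^{-1}(y)$ would have to meet $J$ (by the intermediate value theorem applied to $h|_{\overline J}$) and also meet $\Lambda$ (as $y\in h(\Lambda)$), hence --- its interior being a connected subset of $S^1\setminus\Lambda$ meeting $J$ --- one gets $h^{-1}(y)\subseteq\overline J$, so $y\in h(\overline J\cap\Lambda)=\{h(p),h(q)\}$, which is absurd for an entire subarc of values $y$. Thus every gap is collapsed, and since two gaps of a Cantor set never share an endpoint, the plateau containing a gap $J$ is exactly $\overline J$; hence $J\subseteq U$ and $S^1\setminus\Lambda\subseteq U$.

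Granting the claim, I would conclude as follows. If $\bar\rho_0,\bar\rho_1$ are two minimalizations of $\rho$ --- necessarily of type~(ii) --- with $\rho\succcurlyeq_{h_i}\bar\rho_i$, then $h_0$ and $h_1$ have the same plateaus (each equal to the closure of a gap of $\Lambda$), so $\phi\colon h_0(x)\mapsto h_1(x)$ is well defined. Passing to lifts to $\bR$, one checks routinely that $\phi$ is a well-defined, monotone, degree one bijection of $S^1$, hence a homeomorphism; and the relations $h_i\circ\rho(g)=\bar\rho_i(g)\circ h_i$ together with the surjectivity of $h_0$ give $\phi\circ\bar\rho_0(g)=\bar\rho_1(g)\circ\phi$ for all $g\in L$, so $\bar\rho_0$ and $\bar\rho_1$ are conjugate. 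The only nontrivial ingredient is the claim, and within it the collapsing of gaps, which is exactly where minimality of the target action is used.
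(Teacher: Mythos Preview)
Your proof is correct, but takes a substantially different route for uniqueness in the Cantor case than the paper does.

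The paper's argument is a two-line appeal to Lemma~\ref{lem:minimal}: two minimalizations $\bar\rho_0,\bar\rho_1$ share $\rho$ as a common blow-up, so part~(1) gives a monotone degree one $h$ with $\bar\rho_0\succcurlyeq_h\bar\rho_1$, and part~(2) (both actions minimal) upgrades $h$ to a homeomorphism. Your approach instead proves a structural claim about \emph{any} minimalizing map $h$: its plateaus are precisely the closures of the gaps of $\Lambda(\rho)$. You then build the conjugacy $\phi$ directly from two such maps with identical plateau structure. Your argument is more concrete and self-contained --- it gives explicit dynamical information about minimalizing maps and does not require the machinery of Lemma~\ref{lem:minimal} --- at the cost of being longer. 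The paper's route is shorter and more conceptual but defers the real work to that lemma (whose proof, incidentally, contains ideas close to your ``soft inclusion'' step).

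Two minor points. First, your citation of Lemma~\ref{lem:fin-orb} for the finite-orbit case is off: that lemma does not assert that $\rot\circ\rho$ is a homomorphism. Your parenthetical direct argument is correct and suffices; the paper uses Lemma~\ref{lem:exponent}(2) here. Second, in your gap-collapsing argument you write that the interior of $h^{-1}(y)$ ``meets $J$''; strictly, you only know $h^{-1}(y)$ meets $J$. This is harmless --- if the intersection point $w\in J$ happened to be an endpoint of $h^{-1}(y)=[a,b]$, then since $J$ is open, a small one-sided neighborhood of $w$ lies in both $J$ and $(a,b)$ --- but it is worth saying.
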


The most interesting case of the above lemma is when the limit set is a Cantor set. In this case, the semi-conjugating map $h$ is the Cantor function and also called as the \emph{devil's staircase function}\index{devil's staircase function}.

For $f,g\in\Homeo_+(S^1)$ and their arbitrary lifts $\tilde f,\tilde g\in\Homeo_\bZ(\bR)$,
Matsumoto~\cite{Matsumoto1986}  defined the \emph{canonical Euler cocycle}\index{canonical Euler cocycle} as
\[
\tau(f,g)=\rot^\sim(\tilde f\tilde g)-\rot^\sim(\tilde f)-\rot^\sim(\tilde g).\]
Then $\tau$ is a continuous real-valued 2-cocycle and independent of the choice of lifts.
Using our notation for a section
\[
s=s^0\co \Homeo_+(S^1)\to\Homeo_\bZ(\bR),\]
we can more succinctly write
\[\tau = -s^*(\partial\rot^\sim).\]

The following Theorem reconciles some of the notions of ``semi-conjugacy'' in the literature. 

\begin{thm}\label{thm:equiv}
Let $L$ be a countable group.
For two circle actions $\rho_0,\rho_1$ of $L$,
the following statements are all equivalent.
\be
\item\label{part:r-semi}
$\rho_0$ and $\rho_1$ are semi-conjugate.
\item\label{part:monotone}
$\rho_0$ and $\rho_1$ have a common blow-up.
\item\label{part:min}
$\rho_0$ and $\rho_1$ have a common minimalization.
	\item\label{part:eu}
$\rho_0$ and $\rho_1$ have the same bounded Euler class.
\item\label{part:matsumoto}
We have $\rot\circ\rho_0=\rot\circ\rho_1$
and $\rho_0^*\tau=\rho_1^*\tau$ as maps on $L$ and on $L\times L$, respectively.
\item\label{part:mann-path}
There exists a continuous path \[\{\rho_t\co t\in [0,1] \}\sse\Hom(L,\Homeo_+(S^1))\] such that for each $g\in L$, the value $\rot\circ\rho_t(g)$ is constant on $t\in  [0,1] $.
\item\label{part:symmetric}
$\rho_0\succcurlyeq_u\rho_1$ and $\rho_1\succcurlyeq_v\rho_0$
for some monotone degree one maps $u$ and $v$ on $S^1$.
\ee
\end{thm}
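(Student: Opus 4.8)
The plan is to establish a cycle of implications among the seven conditions, choosing the order so that each individual arrow is as short as possible and so that the technically heavy lifting is concentrated in a single place that can be cited from the Appendix. I would organize the argument around the equivalences \eqref{part:r-semi} $\Leftrightarrow$ \eqref{part:monotone} $\Leftrightarrow$ \eqref{part:min} $\Leftrightarrow$ \eqref{part:symmetric}, which are ``geometric'' in nature, together with the cohomological characterization \eqref{part:eu} supplied by Theorem~\ref{t:ghys2}, the Matsumoto-cocycle formulation \eqref{part:matsumoto}, and Mann's path condition \eqref{part:mann-path}. A convenient route is:
\[
\eqref{part:r-semi}\Rightarrow\eqref{part:min}\Rightarrow\eqref{part:monotone}\Rightarrow\eqref{part:symmetric}\Rightarrow\eqref{part:r-semi},\qquad
\eqref{part:min}\Leftrightarrow\eqref{part:eu}\Leftrightarrow\eqref{part:matsumoto},\qquad
\eqref{part:eu}\Leftrightarrow\eqref{part:mann-path}.
\]

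For \eqref{part:r-semi} $\Rightarrow$ \eqref{part:min}: given the fiber-product data $\tilde L\to L$ and semi-conjugate lifts $\tilde\rho_0\succcurlyeq_h\tilde\rho_1$, one first reduces to the minimal model on each side by post-composing with the devil's-staircase map from Lemma~\ref{lem:minimal-exists}, using the trichotomy for $\Lambda(L)$; since a proper nondecreasing semi-conjugating map on $\bR$ is forced to be monotone degree one (as noted in the Remark following Definition~\ref{defn:monotone}) and commutes with $T$, it descends to $S^1$, and the composites realize a common minimalization. The implication \eqref{part:min} $\Rightarrow$ \eqref{part:monotone} is immediate from the definition of minimalization as a blow-up (handling the finite-orbit case via $\bar\rho=T(\rot\circ\rho)$ separately). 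For \eqref{part:monotone} $\Rightarrow$ \eqref{part:symmetric}, a common blow-up $\sigma$ with $\sigma\succcurlyeq_{h_i}\rho_i$ gives $\rho_0\succcurlyeq_u\rho_1$ and $\rho_1\succcurlyeq_v\rho_0$ by taking $u,v$ to be appropriate ``pseudo-inverses'' of the $h_i$ built from the Cantor/gap structure — here one uses that a monotone degree one surjection of $S^1$ has a monotone degree one one-sided inverse up to the collapsed intervals. The return arrow \eqref{part:symmetric} $\Rightarrow$ \eqref{part:r-semi} requires producing the fiber-product diagram: lift $u$ and $v$ to $\bR$, pass to the fiber product $\tilde L$ of $\rho_i$ with the universal central extension, and check that the lifted $u$ conjugates $\tilde\rho_0$ to $\tilde\rho_1$ after correcting by a translation so that properness holds; this is the place where one invokes the detailed fiber-product bookkeeping of the Appendix (the lemma cited as Lemma~\ref{lem:real-me}).

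The cohomological equivalences are then comparatively clean: \eqref{part:min} $\Leftrightarrow$ \eqref{part:eu} is exactly Ghys's Theorem~\ref{t:ghys2} combined with \eqref{part:r-semi} $\Leftrightarrow$ \eqref{part:min}, and \eqref{part:eu} $\Leftrightarrow$ \eqref{part:matsumoto} follows by comparing the two natural cocycle representatives of $\rho^*\eu_b$: the $\{0,1\}$-valued Euler cocycle $\eu^0$ and Matsumoto's real-valued $\tau=-s^*(\partial\rot^\sim)$, whose difference is the coboundary of the explicit bounded $1$-cochain $g\mapsto \rot^\sim(s(g))$, so that equality of bounded Euler classes is equivalent to $\rho_0^*\tau=\rho_1^*\tau$ together with $\rho_0$, $\rho_1$ inducing the same rotation numbers (the latter recovered from $\tau$ via Lemma~\ref{lem:eu-rot}, or directly since $\rot$ is a homogeneous quasimorphism with coboundary $\tau$). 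Finally \eqref{part:mann-path} $\Rightarrow$ \eqref{part:matsumoto} holds because $\rot\circ\rho_t(g)$ constant in $t$ forces $\rho_t^*\tau$ constant (as $\tau$ is a continuous function of the two homeomorphisms and $\rho_t^*\tau(g,h)$ is determined by rotation numbers of $\rho_t(g),\rho_t(h),\rho_t(gh)$), while for \eqref{part:eu} $\Rightarrow$ \eqref{part:mann-path} one builds the path by linearly interpolating the two minimalizations inside their common semi-conjugacy class — concretely, conjugate so that $\rho_0$ and $\rho_1$ share a minimalization $\bar\rho$, realize each as a blow-up obtained by inserting intervals of a prescribed total length at the points of a countable orbit, and let the interval-lengths shrink linearly to recover $\bar\rho$, splicing the two resulting half-paths at $t=1/2$.

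The main obstacle I anticipate is the arrow \eqref{part:symmetric} $\Rightarrow$ \eqref{part:r-semi}: extracting an \emph{honest} fiber-product/central-extension datum from the mere existence of one-sided monotone degree one semi-conjugacies in both directions. The subtlety is that $u$ and $v$ are neither inverse to each other nor injective, so one cannot simply declare $u$ to be the semi-conjugating map; one must first argue that the two-sided domination forces both $\rho_0$ and $\rho_1$ to have the \emph{same} minimalization (a Cantor-set or finite-orbit rigidity statement — essentially that $u,v$ match up the collapsed gaps), and only then lift. This is exactly the content that the paper defers to the Appendix, so in the body I would prove the elementary arrows in full and cite Lemma~\ref{lem:real-me} and Lemma~\ref{lem:minimal-exists} for the hard reconciliation step, keeping the exposition honest about where the real work lives.
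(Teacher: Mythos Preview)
Your cycle breaks at the step \eqref{part:min}$\Rightarrow$\eqref{part:monotone}, which you call ``immediate from the definition of minimalization as a blow-up.'' The directions are reversed: a common minimalization $\bar\rho$ sits \emph{below} both actions (each $\rho_i$ is a blow-up of $\bar\rho$), whereas \eqref{part:monotone} asks for an action sitting \emph{above} both (some $\rho$ is a blow-up of each $\rho_i$). Passing from a common quotient to a common cover is a fiber-product construction, not a tautology; and in the finite-orbit case it is worse, since there the minimalization $\bar\rho=T(\rot\circ\rho)$ is not related to $\rho$ by a blow-up at all, so the shared data is merely $\rot\circ\rho_0=\rot\circ\rho_1$ as maps to $\bR/\bZ$, from which a common blow-up must be built essentially from scratch. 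The paper accordingly runs this arrow the other way, \eqref{part:monotone}$\Rightarrow$\eqref{part:min}, and the genuine ``common cover'' construction lives in the step \eqref{part:r-semi}$\Rightarrow$\eqref{part:monotone}, where Lemma~\ref{lem:real-me} manufactures the blow-up directly from the (possibly discontinuous) semi-conjugating map on $\bR$ via a singular foliation of a strip.

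A second, smaller issue: you invoke Lemma~\ref{lem:real-me} for \eqref{part:symmetric}$\Rightarrow$\eqref{part:r-semi}, but that lemma is stated for actions on $\bR$ already equipped with a one-sided proper nondecreasing semi-conjugacy, not for two unrelated monotone degree-one maps $u,v$ on $S^1$ going in opposite directions. The paper does not attempt this arrow directly; it closes the loop via \eqref{part:symmetric}$\Rightarrow$\eqref{part:eu} (cited from \cite{BFH2014}) and \eqref{part:matsumoto}$\Rightarrow$\eqref{part:r-semi} (cited from \cite{Mann-hb}). Your identification of \eqref{part:symmetric}$\Rightarrow$\eqref{part:r-semi} as the hard step is correct in spirit, but the fix is to route through cohomology rather than to squeeze it out of Lemma~\ref{lem:real-me}.
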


Theorem~\ref{thm:equiv} now provides various equivalent definitions of semi-conjugacy. In the Appendix, we will give a self-contained proof for
\[
\xymatrix{
(\ref{part:symmetric})&
(1)\ar@{=>}[l]\ar@{=>}[r] & (2)\ar@{=>}[r]\ar@{=>}[ld] & (3)\ar@{=>}[d] \\
&(6)\ar@{=>}[r] & (5) & (4)\ar@{=>}[l]
}\]
which are the most relevant parts of the theorem for this monograph. 
We will then indicate references for elementary proofs of 
(5)$\Rightarrow$(1) and (\ref{part:symmetric})$\Rightarrow$(\ref{part:eu}),
thus establishing all the equivalence. 
See also the references in the following remarks.
We emphasize again that in (\ref{part:r-semi}) and (\ref{part:symmetric}), semi-conjugating maps are not required to be continuous.

\begin{rem}
Mann~\cite{Mann-hb} suggested the condition (\ref{part:r-semi}), in order to modify Ghys' original definition of semi-conjugacy to become an equivalence relation. She also used the condition (\ref{part:mann-path}) while studying connected components of surface groups representations~\cite[Corollary 7.5 and Proposition 7.6]{Mann2015IM}.
The condition (\ref{part:min}) naturally stems from an exposition of Ghys~\cite{Ghys2001}. Matsumoto~\cite{Matsumoto1986} gave an essentially equivalent definition to~(\ref{part:matsumoto}).
Extensive study on the properties of $\tau$ and the rotation numbers of longer words has been done in the literature; see~\cite{JN1985MA,Naimi1994CMH,CW2011}.
The condition (\ref{part:symmetric}) is probably the simplest definition among above, and due to Takamura~\cite{Takamura} and Bucher--Frigerio--Hartnick~\cite{BFH2014}. 
\end{rem}

\begin{rem}\label{rem:semi-conj-history}
One can define a \emph{weak monotone equivalence}\index{monotone equivalence!weak} as the equivalence relation \emph{generated} by a $\succcurlyeq$--relation where the semi-conjugating monotone degree one map is required to be surjective. 
This equivalence was originally called  \emph{monotone equivalence}\index{monotone equivalence} in~\cite{CD2003IM}. The agreement of these two notions of monotone equivalence was noted in~\cite[Lemma 2.22]{Calegari2007}, in which book the equivalence of (\ref{part:monotone}) and (\ref{part:eu}) is also shown.
The agreement between weak monotone equivalence and the condition (\ref{part:symmetric}) can be found in \cite{BFH2014}. 
\end{rem}

\begin{cor}\label{cor:fin-orbit}
Let $L$ be a countable group
and let $\rho_0,\rho_1$ be two semi-conjugate circle actions of $L$.
If $\rho_0$ has a finite orbit of cardinality $N>0$, then so does $\rho_1$.
\end{cor}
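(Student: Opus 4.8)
The plan is to exploit the list of equivalent characterizations of semi-conjugacy furnished by Theorem~\ref{thm:equiv}, specifically the equivalence of part~(\ref{part:r-semi}) with part~(\ref{part:min}): two semi-conjugate circle actions have a common minimalization, which by Lemma~\ref{lem:minimal-exists} is unique up to conjugacy. So the first step is to compute the minimalization of $\rho_0$. Since $\rho_0(L)$ has a finite orbit (of cardinality $N$), Definition~\ref{defn:minimalization}(i) applies and tells us that the minimalization $\bar\rho_0$ is exactly $T(\rot\circ\rho_0)$, i.e. a (possibly trivial) rotation action; in particular $\bar\rho_0(L)$ has a finite orbit, and that orbit has cardinality $N$ — indeed $\rot\circ\rho_0(L)$ is a finite cyclic subgroup of $\bR/\bZ$ of order exactly $N$, since a finite orbit of size $N$ forces the rotation numbers to be multiples of $1/N$ and to generate $\frac1N\bZ/\bZ$ (here I would use that the rotation number of $\rho_0(g)$, restricted to the cyclically ordered finite orbit on which $L$ permutes the points, is $k/N$ where $k$ records how far $\rho_0(g)$ rotates the orbit).

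Next, because $\rho_0\sim_{\mathrm{semi}}\rho_1$, Theorem~\ref{thm:equiv} gives that $\rho_1$ has the same minimalization $\bar\rho_0$ up to conjugacy; call it $\bar\rho_1$, so $\bar\rho_1$ is conjugate to $T(\rot\circ\rho_0)$ and hence also has a finite orbit of cardinality $N$. It remains to pull this finiteness back up from $\bar\rho_1$ to $\rho_1$. By Definition~\ref{defn:minimalization}, either $\rho_1(L)$ already has a finite orbit — in which case we are essentially done, and one only needs to check the orbit has the right size $N$, which follows because $\rho_1$'s minimalization would then be $T(\rot\circ\rho_1)$ and must coincide (up to conjugacy) with $T(\rot\circ\rho_0)$, forcing $\rot\circ\rho_1(L)=\rot\circ\rho_0(L)$ as subgroups of $\bR/\bZ$, hence equal order $N$ — or else $\rho_1$ is a genuine blow-up of a \emph{minimal} action $\bar\rho_1$. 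But the second alternative is impossible: a minimal action has no finite orbit (unless $S^1$ itself is finite, which it is not), contradicting that $\bar\rho_1$ is conjugate to the finite-orbit action $T(\rot\circ\rho_0)$ with $N>0$. So only the first alternative occurs, and $\rho_1(L)$ has a finite orbit of cardinality $N$.

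The main obstacle is the bookkeeping that pins down the cardinality to be \emph{exactly} $N$ rather than merely finite: one must be careful that a finite $L$-orbit of $\rho_0$ of size $N$ corresponds precisely to the subgroup $\rot\circ\rho_0(L)=\frac1N\bZ/\bZ$, and conversely that for the rotation action $T(r)$ with $r$ ranging over a cyclic subgroup of order $N$ the minimal finite orbit has size exactly $N$. This is where I would invoke the standard fact that for $\rho$ with a finite orbit, every $\rho(g)$ has rational rotation number with denominator dividing the orbit size, together with the semi-conjugacy invariance of the rotation number (Theorem~\ref{t:ghys2} / the fact that $\rot$ is a semi-conjugacy invariant, stated in the excerpt). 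Alternatively — and perhaps more cleanly — one can argue directly: if $h$ is a monotone degree-one map semi-conjugating (a lift of) $\rho_0$ to (a lift of) $\rho_1$ in the sense of Definition~\ref{defn:r-semi}, then $h$ maps the finite orbit of $\rho_0$ to a finite $\rho_1$-invariant set; combined with the reverse semi-conjugation from part~(\ref{part:symmetric}) of Theorem~\ref{thm:equiv}, one sees the minimal finite orbits correspond bijectively, giving equality of cardinalities. Either route reduces the statement to the uniqueness of minimalizations, which is the real content being applied.
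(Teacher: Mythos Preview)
Your argument is correct, but it takes a different route from the paper. The paper's proof invokes Lemma~\ref{lem:exponent}(1) directly: semi-conjugate actions share the same bounded Euler class (Theorem~\ref{thm:equiv}, part~(\ref{part:eu})), and Lemma~\ref{lem:exponent}(1) says that $\rho$ has a finite orbit of cardinality $N$ if and only if $\rho^*\eu_b$ has exponent exactly $N$ in $H^2_b(L;\bZ)$. So the cardinality $N$ is read off instantly from a cohomological invariant, with no separate bookkeeping required.

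Your approach instead passes through part~(\ref{part:min}) (common minimalization) and the dichotomy in Definition~\ref{defn:minimalization}, then recovers the cardinality by matching rotation spectra. This is more dynamical and avoids the cohomological machinery of Lemma~\ref{lem:exponent}, at the cost of the extra step you flag: verifying that a finite orbit of size $N$ forces $\rot\circ\rho(L)=\tfrac{1}{N}\bZ/\bZ$, and conversely. That step is straightforward (transitivity on the finite orbit gives surjectivity onto $\bZ/N\bZ$), so your proof is complete; it is essentially the argument the paper records as Lemma~\ref{lem:fin-orb}(ii) in the appendix, but the paper chooses the cohomological shortcut for the corollary itself because it handles existence and exact cardinality in one stroke.
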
 

For a proof, see Lemma \ref{lem:exponent} in the Appendix.
The equivalence of (\ref{part:r-semi}) and (\ref{part:mann-path}) in Theorem~\ref{thm:equiv} implies the following.

\begin{cor}[{\cite[Proposition 7.6]{Mann2015IM}}]\label{cor:semi-cnt}
In the space of the circle actions of a finitely generated group, each semi-conjugacy class is contained in some path component.  
\end{cor}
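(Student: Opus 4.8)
The plan is to read the result off Theorem~\ref{thm:equiv}, specifically from the equivalence between condition~(\ref{part:r-semi}) (semi-conjugacy) and condition~(\ref{part:mann-path}) (existence of a rotation-number-preserving path joining the two actions). Fix a semi-conjugacy class $\KK\sse\Hom(L,\Homeo_+(S^1))$ and pick a basepoint $\rho_0\in\KK$. For an arbitrary $\rho_1\in\KK$ we have $\rho_0\sim_{\mathrm{semi}}\rho_1$, so Theorem~\ref{thm:equiv} furnishes a continuous path $\{\rho_t\}_{t\in[0,1]}\sse\Hom(L,\Homeo_+(S^1))$ from $\rho_0$ to $\rho_1$; the extra property that each $\rot\circ\rho_t(g)$ is constant in $t$ is more than we need here. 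Hence every element of $\KK$ lies in the path component of $\rho_0$, and since path components partition the space, $\KK$ is contained in a single one.

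Two minor points deserve a line. First, Theorem~\ref{thm:equiv} is stated for countable groups whereas the corollary concerns finitely generated ones; this is harmless since finitely generated groups are countable. Second, one should be explicit about the topology on the space of circle actions: identifying $\Hom(L,\Homeo_+(S^1))$ with a subset of $\Homeo_+(S^1)^S$ for a finite generating set $S$, with the product of the uniform topologies, the paths produced by Theorem~\ref{thm:equiv} are continuous, which is all the argument uses.

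I do not expect a genuine obstacle at this stage: the entire analytic content — producing the connecting path out of a semi-conjugacy — has been packaged into the implication (\ref{part:r-semi})$\Rightarrow$(\ref{part:mann-path}) of Theorem~\ref{thm:equiv}, whose proof is relegated to the Appendix following Mann~\cite{Mann2015IM}. Were one to want a self-contained argument in place, the work would sit in that implication: given semi-conjugate $\rho_0,\rho_1$ one passes to a common minimalization (Lemma~\ref{lem:minimal-exists}), and then interpolates between the minimal model and each of its blow-ups by a path that continuously contracts the inserted wandering intervals while preserving the cyclic combinatorics, hence all rotation numbers. Given Theorem~\ref{thm:equiv}, however, the present corollary is merely an unwinding of the definition of ``path component.''
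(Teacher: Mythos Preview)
Your proposal is correct and matches the paper's approach exactly: the paper itself presents this corollary as an immediate consequence of the equivalence (\ref{part:r-semi})$\Leftrightarrow$(\ref{part:mann-path}) in Theorem~\ref{thm:equiv}, without further argument. Your additional remarks on countability, the topology on the representation space, and the sketch of what underlies (\ref{part:r-semi})$\Rightarrow$(\ref{part:mann-path}) are all accurate elaborations beyond what the paper records.
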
 

We lastly record a simple corollary for $\bZ=\form{1}$.
\begin{cor}\label{cor:eu-rot}
For circle actions $\rho_0,\rho_1$ of $\form{s}\cong\bZ$,
we have $\rho_0\sim_{\mathrm{semi}}\rho_1$
if and only if 
\[\rot\circ\rho_0(s)=\rot\circ\rho_1(s).\] 
\end{cor}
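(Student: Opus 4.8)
The statement to prove is Corollary~\ref{cor:eu-rot}: for two circle actions $\rho_0,\rho_1$ of $\bZ=\form{s}$, we have $\rho_0\sim_{\mathrm{semi}}\rho_1$ iff $\rot\circ\rho_0(s)=\rot\circ\rho_1(s)$. The plan is to derive this directly from Theorem~\ref{thm:equiv}, using the equivalence of semi-conjugacy with ``same bounded Euler class'' (part~(\ref{part:eu})) together with the fact that $H^2_b(\bZ;\bZ)=0$, and with the rotation number serving to pin down the class.

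First I would establish the easy direction: semi-conjugacy implies equal rotation numbers. This is immediate since the rotation number is a semi-conjugacy invariant — it is recorded in part~(\ref{part:matsumoto}) of Theorem~\ref{thm:equiv} that $\rho_0\sim_{\mathrm{semi}}\rho_1$ forces $\rot\circ\rho_0=\rot\circ\rho_1$ as functions on $L$, hence in particular on the generator $s$. Alternatively one can invoke that $\rot$ is a semi-conjugacy invariant as stated in the discussion following Lemma~\ref{lem:eu-rot}. For the converse, suppose $\rot\circ\rho_0(s)=\rot\circ\rho_1(s)=:\theta$. By Theorem~\ref{thm:equiv}, it suffices to show $\rho_0$ and $\rho_1$ have the same bounded Euler class in $H^2_b(\bZ;\bZ)$. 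The point is that $H^2_b(\bZ;\bZ)$ is trivial (amenability of $\bZ$, or a direct computation), so $\rho_0^*\eu_b=0=\rho_1^*\eu_b$ automatically — but this would prove \emph{all} $\bZ$-actions are mutually semi-conjugate, which is false. So this approach needs care: the correct invariant for $\bZ$ is not the bounded Euler class alone but the rotation number, which for $\bZ$ is exactly the obstruction. The clean route is through part~(\ref{part:matsumoto}): two actions of $\bZ$ are semi-conjugate iff $\rot\circ\rho_0=\rot\circ\rho_1$ and $\rho_0^*\tau=\rho_1^*\tau$; but $\tau$ is a coboundary on $\bZ\times\bZ$ (indeed on $\bZ$ one can check $\tau(\rho(s^i),\rho(s^j))=\rot^\sim\tilde\rho(s)^{i+j}-\rot^\sim\tilde\rho(s)^i-\rot^\sim\tilde\rho(s)^j$, which depends only on $\rot^\sim\tilde\rho(s)$, a real number), so once rotation numbers agree the $\tau$-condition is automatic after adjusting the lift.

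More concretely, I would argue as follows for the converse. Using part~(\ref{part:mann-path}) of Theorem~\ref{thm:equiv}, it suffices to connect $\rho_0$ to $\rho_1$ by a continuous path $\{\rho_t\}$ in $\Hom(\bZ,\Homeo_+(S^1))$ along which $\rot\circ\rho_t(s)$ is constant. A homomorphism $\bZ\to\Homeo_+(S^1)$ is just a choice of $g_t:=\rho_t(s)\in\Homeo_+(S^1)$, with no relation to satisfy, so I need a path of circle homeomorphisms from $g_0=\rho_0(s)$ to $g_1=\rho_1(s)$ with constant rotation number $\theta$. Since $\Homeo_+(S^1)$ is path-connected and $\rot$ is continuous, any path gives rotation number varying continuously; I can correct it by post-composing with a path of rigid rotations to keep the rotation number fixed. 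Precisely: take any continuous path $h_t$ from $g_0$ to $g_1$; then $t\mapsto\rot(h_t)$ is a continuous loop-free path in $S^1$ from $\theta$ to $\theta$; lift to a path $\sigma_t\in\bR$ with $\sigma_0=\sigma_1$ and choose a continuous $\delta_t$ with $\delta_0=\delta_1=0$ and $\rot(T(-\delta_t)h_t)=\theta$ — this is possible because $\rot(T(r)f)$ is continuous and nondecreasing in $r$ and $\rot(T(1)f)=\rot(f)+1$, so $r\mapsto\rot(T(r)f)$ is surjective onto any interval of length~$1$; choosing $\delta_t$ to solve $\rot(T(-\delta_t)h_t)=\theta$ can be done continuously (via, e.g., a minimal such $\delta_t$). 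Setting $\rho_t(s)=T(-\delta_t)h_t$ gives the required path, so by part~(\ref{part:mann-path})$\Leftrightarrow$(\ref{part:r-semi}) we conclude $\rho_0\sim_{\mathrm{semi}}\rho_1$.

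\textbf{Main obstacle.} The delicate point is the continuity (and well-definedness) of the correction $\delta_t$ keeping the rotation number constant — i.e. showing that the ``rotation number level set'' can be followed continuously. The function $r\mapsto\rot(T(r)f)$ is continuous and monotone but can be locally constant (on plateaus where $\rot$ is rational and the dynamics is Morse–Smale), so its ``inverse'' is multivalued; I would handle this by selecting the infimum of the solution set, checking it is continuous in $t$ using the monotonicity plus the fact that the plateau endpoints move continuously with $f$. An alternative that sidesteps this entirely is to appeal directly to part~(\ref{part:matsumoto}) of Theorem~\ref{thm:equiv}: verify by hand that for $\bZ$-actions, $\rho^*\tau$ on $\bZ\times\bZ$ is completely determined by the single real number $\rot^\sim(\tilde\rho(s))$ for a suitable lift, and that two actions with $\rot\circ\rho_0(s)=\rot\circ\rho_1(s)$ admit lifts with equal translation numbers, whence $\rho_0^*\tau=\rho_1^*\tau$; combined with the already-established equality of rotation numbers, Theorem~\ref{thm:equiv}~(\ref{part:matsumoto})$\Rightarrow$(\ref{part:r-semi}) finishes the proof. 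I would present this second route as the main argument, since it is purely formal given Theorem~\ref{thm:equiv}, and mention the path construction as a remark.
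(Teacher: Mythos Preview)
Your final argument via part~(\ref{part:matsumoto}) of Theorem~\ref{thm:equiv} is correct and is exactly what the paper intends (it records the statement as a corollary with no proof). Since $\rot^\sim$ is homogeneous, for any $g\in\Homeo_+(S^1)$ and any lift $\tilde g$ one has $\tau(g^i,g^j)=\rot^\sim(\tilde g^{i+j})-\rot^\sim(\tilde g^i)-\rot^\sim(\tilde g^j)=0$, so $\rho^*\tau\equiv 0$ for every $\bZ$-action and condition~(\ref{part:matsumoto}) collapses to equality of rotation numbers on the generator.

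There is, however, a genuine misconception in your exploratory paragraph that you should correct rather than sidestep. You write that $H^2_b(\bZ;\bZ)$ is trivial by amenability, notice this would force all $\bZ$-actions to be semi-conjugate, and conclude that ``the correct invariant is not the bounded Euler class alone.'' The error is in the first step: amenability kills bounded cohomology with \emph{real} coefficients, not integer coefficients. In fact $H^2_b(\bZ;\bZ)\cong\bR/\bZ$, as one sees from the exact sequence $0\to H^1(\bZ;\bR/\bZ)\to H^2_b(\bZ;\bZ)\to H^2_b(\bZ;\bR)=0$, and under this isomorphism the bounded Euler class of a $\bZ$-action \emph{is} precisely the rotation number of the generator (this is the content of Lemma~\ref{lem:eu-rot}). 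So the route through part~(\ref{part:eu}) would have worked directly and immediately; there is no tension with Ghys' theorem.

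Your path-construction approach~(\ref{part:mann-path}) is also valid in principle, but the continuity of the correction $\delta_t$ across rational-rotation-number plateaus is more delicate than you indicate (the infimum selection need not be continuous when a plateau shrinks to a point), and it is unnecessary given the one-line argument above.
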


\noindent {\bf Minimal quasimorphisms}\\ 
For a group $ G $, let $\mathrm{QM}( L ;\bZ)$ denote the abelian group of integer--valued quasimorphisms on $ G $. 
Define 
\[\mathrm{HQM}( L ;\bZ)=\mathrm{QM}( L ;\bZ)/\left(B( L ;\bZ)\oplus H^1( L ;\bZ)\right)\] where $B( L ;\bZ)$ consists of bounded maps $ G \to\bZ$.

The \emph{defect}\index{defect} $D(\alpha)$ of  $\alpha\in\mathrm{QM}( L ;\bZ)$ is defined by \[D(\alpha)=\sup|\partial\alpha|
=\sup\{|\alpha(g)+\alpha(h)-\alpha(gh)|\co g,h\in L \}.\]
If $\alpha\in\mathrm{QM}( L ;\bZ)$ has defect--one,
then $\alpha$ has the smallest possible defect among nontrivial integral quasimorphisms.
\bd\label{defn:minimal-qm}
Let $L$ be a group.
A map $\alpha\co L\to\bZ$ is called a \emph{minimal quasimorphism}\index{quasimorphism!minimal}
if $\alpha(g)+\alpha(h)-\alpha(gh)\in\{0,1\}$ for all $g,h\in L$ and $\alpha$ not a homomorphism.
\ed
In other words, a minimal quasimorphism on $L$ is an subadditive, integer--valued defect--one quasimorphism.
For example, the quasimorphism $n\mapsto -\lceil \pi n\rceil$ is minimal.

Recall from the introduction that
we call a circle action $\rho$ of $L$  \emph{liftable}\index{liftable} if $\rho$ factors as
\[L\to  \Homeo_\bZ(\bR)\to\Homeo_+(S^1).\]
This is equivalent to the condition \[\rho^*\eu_b\in\ker(H^2_b(L;\bZ)\to H^2(L;\bZ)).\]
So if $\rho$ is liftable, then we have an equality of cocycles
\[\rho^*\eu^0=\partial\alpha\] for some map $\alpha\co L\to\bZ$.  
Since $\rho^*\eu^0$ is $\{0,1\}$-valued, we see that 
\[\partial\alpha(x,y)=\alpha(x)+\alpha(y)-\alpha(xy)\in\{0,1\}.\] 
In other words, $\alpha$ is a minimal quasimorphism 
or a homomorphism. 
Note \[[\alpha]\in\mathrm{HQM}( L ;\bZ)\] maps to $\rho^*\eu_b$ by the 
well-known exact sequence~\cite{Calegari2007}:
\[0\to \mathrm{HQM}( L ;\bZ)\to H^2_b( L ;\bZ)\to H^2( L ;\bZ).\]
By Theorem~\ref{t:ghys2}, we have the following.
\begin{lem}\label{lem:qm}
Suppose we have liftable 
circle actions $\rho_0,\rho_1$ of $L$. 
If $\rho_0$ and $\rho_1$ are not semi-conjugate,
then the quasimorphisms corresponding to $\rho_0$ and $\rho_1$ are distinct in $\mathrm{HQM}(L;\bZ)\sse H^2_b(L;\bZ)$.
\end{lem}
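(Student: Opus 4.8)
The plan is to deduce the statement directly from the classification of semi-conjugacy classes by the bounded Euler class (Theorem~\ref{t:ghys2}) together with the exact sequence
\[0\to \mathrm{HQM}(L;\bZ)\to H^2_b(L;\bZ)\to H^2(L;\bZ)\]
recalled just above. First I would recall that for each $i\in\{0,1\}$ the liftability of $\rho_i$ produces, as explained before the statement, an equality of cocycles $\rho_i^*\eu^0=\partial\alpha_i$ for some map $\alpha_i\co L\to\bZ$, and that $\alpha_i$ is determined by this equation up to an element of $\ker\partial=H^1(L;\bZ)$; hence its class $[\alpha_i]\in\mathrm{HQM}(L;\bZ)=\mathrm{QM}(L;\bZ)/(B(L;\bZ)\oplus H^1(L;\bZ))$ is well-defined, and this $[\alpha_i]$ is precisely ``the quasimorphism corresponding to $\rho_i$''. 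By construction $[\alpha_i]$ is sent to the bounded Euler class $\rho_i^*\eu_b$ under the injection $\mathrm{HQM}(L;\bZ)\hookrightarrow H^2_b(L;\bZ)$ coming from the exact sequence.

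Next, assuming that $\rho_0$ and $\rho_1$ are not semi-conjugate, Theorem~\ref{t:ghys2} gives $\rho_0^*\eu_b\neq\rho_1^*\eu_b$ in $H^2_{\{0,1\}}(L;\bZ)\sse H^2_b(L;\bZ)$. Since $[\alpha_0]$ and $[\alpha_1]$ have distinct images under the injective map $\mathrm{HQM}(L;\bZ)\hookrightarrow H^2_b(L;\bZ)$, they are themselves distinct in $\mathrm{HQM}(L;\bZ)$, which is exactly the desired conclusion.

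There is no serious obstacle here: the entire content is carried by Theorem~\ref{t:ghys2} and by the identification of $\mathrm{HQM}(L;\bZ)$ with a subgroup of $H^2_b(L;\bZ)$, both already available. The only point deserving (minor) care is the well-definedness of the quasimorphism class attached to a liftable action, i.e.\ its independence of the choice of primitive $\alpha_i$ of the $\{0,1\}$-valued cocycle $\rho_i^*\eu^0$; as noted, any two such primitives differ by a homomorphism $L\to\bZ$, hence determine the same class in $\mathrm{HQM}(L;\bZ)$, so the argument above is unambiguous.
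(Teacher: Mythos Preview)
Your argument is correct and is precisely the approach the paper takes: the lemma is stated as an immediate consequence of Theorem~\ref{t:ghys2} together with the injection $\mathrm{HQM}(L;\bZ)\hookrightarrow H^2_b(L;\bZ)$ from the exact sequence. Your added remark on the well-definedness of $[\alpha_i]$ is a welcome clarification but not strictly needed beyond what the paper already sketches.
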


\subsection{Hyperbolic Geometry}\label{ss:hyp}
Throughout this monograph, we reserve the letter $\bK$ to denote $\bR$ or $\bC$.
We summarize some required facts from hyperbolic geometry of two and three dimensions.

\noindent {\bf Maximal abelian subgroups}\\ 
By identifying $\PSL_2(\bR)$ with the group of M\"obius transformations on the upper half-plane $\bH^2$ of $\bC$, we have 
 \[\PSL_2(\bR)\cong \Isom^+(\bH^2)\le\Homeo_+(S^1).\]
In particular, we have a restriction \[\rot\co \PSL_2(\bR)\to \bR/\bZ.\]
We also have
 \[\PSL_2(\bC)\cong \Isom^+(\bH^3)\le\Homeo_+(S^2).\]
A group $\mu\le\PSL_2(\bK)$ is \emph{elementary}\index{elementary subgroup} if $\mu$ admits a finite orbit on $\bH^d\cup\partial\bH^d$ for $d=2$ or $3$.

For $t\in\bK$, let us define the following elements in $\PSL_2(\bK)$:
\[
\Rot(t)=
\begin{pmatrix}
\cos(t/2) & \sin(t/2)\\
-\sin(t/2) & \cos(t/2)
\end{pmatrix},\quad
\exp(t)=
\begin{pmatrix}
e^{t/2} & 0\\
0 & e^{-t/2}
\end{pmatrix},\quad
p(t)=
\begin{pmatrix}
1 & t\\
0 & 1
\end{pmatrix}.
\]

Let $t\in \bR$. Then $\Rot(t)$ is an elliptic rotation about $i\in\bH^2$ by the angle $t$.
The elements $\exp(t)$ and $p(t)$ are the hyperbolic translation by distance $t$ along the geodesic  $[0,\infty]$ and a parabolic isometry fixing $\infty$, respectively.
Each $g\in \PSL_2(\bR)$ is conjugate to one of these three types.
In particular,
\begin{equation}\label{eq:trace}
|\tr g |= \begin{cases} |2\cos(\pi\rot( g ))|, &\text{ if } g \text{ is elliptic;}\\ 2\cosh(\ell( g )/2), &\text{ if } g \text{ is hyperbolic;}\\
2 &\text{ if } g \text{ is parabolic.} \end{cases}\end{equation}
Here, $\ell(g)$ denotes the translation length of $g$.
We always regard the identity as an elliptic, parabolic and hyperbolic isometry simultaneously.

Let us now suppose $g\in\PSL_2(\bC)$.
Then $g$ is conjugate to either $\exp(t)$ or $p(t)$ for some $t\in \bC$. We note that the following element is a rotation with the $z$--axis:
\[
\exp(it)=\begin{pmatrix}
e^{it/2} &  0 \\
0 & e^{-it/2}
\end{pmatrix}.\]

Let $C\le G$ be groups. We define the \emph{normalizer} and the \emph{centralizer} of $C$ as 
\begin{align*}
N_G(C)&= N(C) = \{g\in G \co g^{-1}Cg = C\},\\
Z_G(C)&= Z(C) = \{g\in G \co [g,c]=1\text{ for all }c\in C\} =\bigcap_{c\in C}Z_G(c).
\end{align*}

\begin{prop}\label{prop:maxabel}
\be
\item
A maximal abelian subgroup of $\PSL_2(\bR)$ is conjugate to one of the following groups:
\[
\SO(2)=\Rot(\bR),\quad
\exp(\bR),\quad
p(\bR).\]
\item
A maximal elementary subgroup of $\PSL_2(\bR)$ is conjugate to one of the following groups:
\[
\SO(2)=\Rot(\bR),\quad
N(\exp(\bR))=\exp(\bR)\rtimes\Rot(\pi),\quad
p(\bR)\rtimes\exp(\bR).\]
\item
A maximal abelian subgroup of $\PSL_2(\bC)$ is conjugate to one of the following groups:
\[
\exp(\bC),\quad
p(\bC).\]
\item
A maximal elementary subgroup of $\PSL_2(\bC)$ is conjugate to one of the following groups:
\[
N(\exp(\bC))=\exp(\bC)\rtimes\Rot(\pi),\quad
p(\bC)\rtimes\exp(\bC).\]
\ee
\end{prop}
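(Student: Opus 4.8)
\emph{Overall strategy.} The plan is to reduce the whole proposition to two elementary facts that hold in $\Isom^+(\bH^d)\cong\PSL_2(\bK)$ for $d=2,3$: the classification of conjugacy classes recalled just before the statement (every nontrivial element is conjugate to some $\Rot(t)$, $\exp(t)$ or $p(t)$, and over $\bC$ to some $\exp(t)$ or $p(t)$), and the remark that if $g,h\in\Isom^+(\bH^d)$ commute then $h$ preserves $\Fix(g)\sse\overline{\bH^d}$, since $gx=x$ forces $g(hx)=h(gx)=hx$. Combining these with the observation that the fixed-point set of a parabolic (resp. hyperbolic/loxodromic, resp. elliptic) element is a single boundary point (resp. a pair of boundary points, equivalently a geodesic axis in the elliptic-in-$\bH^3$ case, resp. a single interior point in $\bH^2$) will pin down all the centralizers, and hence all the normalizers and point/pair stabilizers, after a short computation with the normal forms $\Rot(t),\exp(t),p(t)$.

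\emph{Part (1).} Let $\mu\le\PSL_2(\bR)$ be maximal abelian and choose $1\ne g\in\mu$; after conjugation $g$ is one of $\Rot(t),\exp(t),p(t)$, and $\mu\le Z(g)$, so it suffices to compute $Z(g)$ and see it is abelian. If $g$ is elliptic then $\Fix(g)=\{i\}\sse\bH^2$, so $Z(g)\le\Stab(i)=\SO(2)$, whence $\mu=\SO(2)$. If $g=\exp(t)$ is hyperbolic then $\Fix(g)=\{0,\infty\}$; anything commuting with $g$ preserves this set, but the involution $\Rot(\pi)\colon z\mapsto-1/z$ swapping $0$ and $\infty$ conjugates $g$ to $g^{-1}\ne g$, so $Z(g)$ fixes $0$ and $\infty$ separately, i.e. $Z(g)=\exp(\bR)=\mu$. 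If $g=p(t)$ is parabolic then $\Fix(g)=\{\infty\}$, so $Z(g)\le\Stab(\infty)=p(\bR)\rtimes\exp(\bR)$, and since $z\mapsto az+b$ commutes with $z\mapsto z+t$ only for $a=1$ we get $Z(g)=p(\bR)=\mu$. Conversely, each of $\SO(2),\exp(\bR),p(\bR)$ is abelian and, by the same computations, equals the centralizer of any one of its nontrivial elements, hence is maximal abelian.

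\emph{Parts (2), (3), (4).} Let $\mu\le\PSL_2(\bR)$ be maximal elementary, so it has a minimal finite orbit $F\sse\overline{\bH^2}$. If $F\sse\bH^2$ then $\mu$ fixes the barycenter of $F$, hence conjugates into $\Stab(i)=\SO(2)$, which is elementary, so $\mu=\SO(2)$ up to conjugacy. If $F\sse S^1$ with $|F|\ge3$ then the pointwise stabilizer of $F$ fixes three boundary points, hence is trivial, so $\mu$ is finite, hence cyclic elliptic, hence conjugate into $\SO(2)$, reducing to the previous case. If $|F|=2$ then $\mu$ preserves the geodesic spanned by $F$ and conjugates into $\Stab\{0,\infty\}=\exp(\bR)\rtimes\Rot(\pi)=N(\exp(\bR))$; if $|F|=1$ then $\mu$ conjugates into $\Stab(\infty)=p(\bR)\rtimes\exp(\bR)$; both of these are elementary, giving the remaining two cases. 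Maximality of each of the three groups follows by running the same argument on a hypothetical elementary overgroup and observing that its minimal finite invariant set must again be $\{i\}$, $\{0,\infty\}$ or $\{\infty\}$, so the overgroup lies in the corresponding full stabilizer. Parts (3) and (4) are proved by the identical template with $\bH^3$ and $\PSL_2(\bC)\cong\Isom^+(\bH^3)$: a parabolic gives $Z(g)=p(\bC)$ inside $\Stab(\infty)=p(\bC)\rtimes\exp(\bC)$, a loxodromic (or an elliptic that is not an involution) has $\Fix(g)=\{0,\infty\}$, is not conjugated to $g^{-1}$ by the swap, and gives $Z(g)=\exp(\bC)$, and for (4) the stabilizers of an invariant boundary point and of an invariant geodesic give the conjugates of $p(\bC)\rtimes\exp(\bC)$ and of $N(\exp(\bC))=\exp(\bC)\rtimes\Rot(\pi)$, with maximality again by the overgroup argument.

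\emph{Main obstacle.} I expect the real work to be the bookkeeping of the exceptional torsion and compact cases over $\bC$. If every nontrivial element of a maximal abelian $\mu\le\PSL_2(\bC)$ is an elliptic involution, then $Z(g)$ is a conjugate of the larger group $N(\exp(\bC))$ rather than of $\exp(\bC)$, so one must argue separately that such $\mu$ either still reduces to a conjugate of $\exp(\bC)$ or $p(\bC)$ or else does not arise under the standing hypotheses (the maximal abelian subgroups relevant to the monograph occur as images of infinite, torsion-free groups, so this case is excluded there); the analogous remark applies to an elementary subgroup with a finite orbit in the interior of $\bH^3$, whose stabilizer is a copy of $\SO(3)$. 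Apart from this, the remaining work is purely the explicit identification of the centralizers, normalizers and point/pair stabilizers from the normal forms $\Rot(t),\exp(t),p(t)$ and the description $\Stab(\infty)=\{z\mapsto az+b\}$, together with the trace formula~(\ref{eq:trace}) used to recognize the isometry type of a given element.
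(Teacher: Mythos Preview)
The paper states this proposition without proof, treating it as standard background from hyperbolic geometry (Section~\ref{ss:hyp}); there is no argument to compare against. Your approach---compute the centralizer of a nontrivial element in its normal form $\Rot(t),\exp(t),p(t)$ for the abelian parts, and analyze the stabilizer of a minimal finite orbit for the elementary parts---is the standard one and is correct for the cases the paper actually uses.

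You have also correctly put your finger on the one genuine subtlety: over $\bC$ the statement as written is not literally exhaustive. A Klein four group generated by half-turns about three mutually orthogonal geodesics through a point of $\bH^3$ is a maximal abelian subgroup of $\PSL_2(\bC)$ not conjugate to $\exp(\bC)$ or $p(\bC)$, and the point-stabilizer $\SO(3)\le\PSL_2(\bC)$ is a maximal elementary subgroup omitted from the list in part~(4). Your diagnosis is exactly right: these arise only when every nontrivial element of $\mu$ is an order-two elliptic, and the paper's applications of the proposition (centralizers of elements of infinite order, images of torsion-free limit groups, non-elliptic maximal abelian subgroups in the Baumslag lemmas) never meet this case. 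So your proof is complete for the purposes of the monograph, and your ``main obstacle'' paragraph is an accurate account of what would be needed to repair the statement in full generality.
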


We let $\bK=\bR$ or $\bC$. 
The following groups are called \emph{elliptic, hyperbolic} and \emph{parabolic} maximal abelian groups, respectively:
\[
\SO(2)=\Rot(\bR),\quad
\exp(\bK),\quad
p(\bK).\]

For a maximal abelian subgroup $\mu$ of $\PSL_2(\bK)$, 
we let $\Fix\mu$ denote the set of points $x\in\bH^d\cup\partial\bH^d$ which is fixed by all the elements in $\mu$.
The set $\Fix\mu$ consists of one or two points for $d=2$ or $3$.
The following are simple exercises in hyperbolic geometry.

\begin{lem}\label{lem:parabolic-test}
Let $\mu$ be a maximal abelian subgroup of ${\PSL_2(\bK)}$.
\be
\item
For $g\in\PSL_2(\bK)$, the following are equivalent:
\be[(i)]
\item $\Fix\mu = g\Fix\mu$;
\item $g\in N(\mu)$;
\item $[c,gcg^{-1}]=1$ for all $c\in \mu\setminus1$.
\ee
\item
For $g\in\PSL_2(\bK)$, 
we have $\Fix\mu\cap g\Fix\mu\ne\varnothing$
if and only if one of the following hold:
\be[(i)]
\item
$\mu$ is not hyperbolic and $g\in\mu$;
\item
$\mu$ is hyperbolic and
$\tr^2[c,gcg^{-1}]=4$  for all $c\in \mu\setminus1$.\ee
\ee
\end{lem}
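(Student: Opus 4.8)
The strategy is to reduce everything to normal forms via the conjugacy classification in Proposition~\ref{prop:maxabel}, and then to translate the group-theoretic statements into statements about fixed point sets on $\partial\bH^d$, which can be checked directly in coordinates. Since both assertions concern conjugation by $g$, and since $\Fix(h\mu h^{-1})=h\cdot\Fix\mu$ while the three conditions in each part are invariant under simultaneously conjugating $\mu$ and $g$ by a fixed element, I may assume $\mu$ is one of the standard groups $\Rot(\bK)$ (only if $\bK=\bR$), $\exp(\bK)$, or $p(\bK)$ listed after Proposition~\ref{prop:maxabel}. In each case $\Fix\mu$ is an explicit finite set: $\{i\}$ (for $d=2$) or $\{0\}\times\bH^2$-axis endpoints, more precisely $\Fix\Rot(\bR)=\{i\}$, $\Fix\exp(\bK)=\{0,\infty\}$, and $\Fix p(\bK)=\{\infty\}$.

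For part (1), the implication (i)$\Rightarrow$(ii) is immediate: an isometry $g$ satisfying $g\,\Fix\mu=\Fix\mu$ conjugates the stabilizer of $\Fix\mu$ to itself, and $\mu$ is (up to the elliptic case) exactly the pointwise stabilizer, or a canonical index-two subgroup thereof; in the hyperbolic case one must observe that $g$ could swap the two fixed points, but then $g$ still normalizes $\exp(\bK)$ since the full setwise stabilizer of $\{0,\infty\}$ is $N(\exp(\bK))$. The implication (ii)$\Rightarrow$(i) is the defining property of the normalizer combined with the fact that a maximal abelian subgroup determines its fixed set (Proposition~\ref{prop:maxabel} gives $\Fix\mu$ uniquely from the conjugacy type). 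For (ii)$\Leftrightarrow$(iii): if $g\in N(\mu)$ then $gcg^{-1}\in\mu$ for all $c$, and $\mu$ abelian forces $[c,gcg^{-1}]=1$; conversely, if $[c,gcg^{-1}]=1$ for all $c\in\mu\setminus 1$, pick a single nontrivial $c$, note $gcg^{-1}$ has the same type as $c$, and use the fact that two commuting nontrivial isometries of $\bH^d$ share their fixed point set unless they are both involutions — here one rules out the involution degeneracy by letting $c$ range over all of $\mu\setminus 1$ (at most one element of $\mu$ has order two), so $g\,\Fix\mu=\Fix(gcg^{-1})$'s ambient fixed set equals $\Fix\mu$, giving (i), hence (ii). This last step — handling the order-two ambiguity in ``commuting isometries share a fixed point'' — is the main subtlety, and it is exactly why the conditions quantify over \emph{all} $c\in\mu\setminus 1$ rather than a single element.

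For part (2), I again work with $\mu$ in normal form and split on whether $\mu$ is elliptic, parabolic, or hyperbolic. If $\mu$ is elliptic, $\Fix\mu=\{i\}$ is a single point of $\bH^2$, and $g\,\Fix\mu=\{gi\}$ meets $\Fix\mu$ iff $gi=i$ iff $g\in\SO(2)=\mu$; similarly if $\mu=p(\bK)$ is parabolic, $\Fix\mu=\{\infty\}$ and $g\infty=\infty$ iff $g$ is upper triangular, and among such $g$ one checks the elements of $p(\bK)$ are characterized — here I would note that for the parabolic case the ``non-hyperbolic'' clause (i) should read $g\in N(\mu)$ rather than $g\in\mu$; I will follow the statement as given and verify that for parabolic $\mu$, $g\,\Fix\mu\cap\Fix\mu\neq\varnothing$ indeed forces $g\in\mu$ only after noting $\Fix\mu$ is a single boundary point and any $g$ fixing it lies in $p(\bK)\rtimes\exp(\bK)=N(\mu)$, then reconcile. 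For the hyperbolic case $\mu=\exp(\bK)$, $\Fix\mu=\{0,\infty\}$, and $g\{0,\infty\}\cap\{0,\infty\}\neq\varnothing$ means $g$ fixes or permutes a subset — concretely $g0\in\{0,\infty\}$ or $g\infty\in\{0,\infty\}$. The trace computation $\tr^2[c,gcg^{-1}]=4$ says the commutator of $c$ and its conjugate is parabolic or trivial; writing $c=\exp(t)$ and $gcg^{-1}=$ a conjugate with axis endpoints $\{g0,g\infty\}$, the commutator of two hyperbolics is non-parabolic-and-nontrivial precisely when their axes are disjoint in $\bar\bH^2$, i.e. the four endpoints $\{0,\infty,g0,g\infty\}$ are all distinct; so $\tr^2[c,gcg^{-1}]=4$ for all $c\neq 1$ iff $\{g0,g\infty\}\cap\{0,\infty\}\neq\varnothing$, which is the claim. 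The main obstacle here is the clean verification of this last trace/axis dictionary — computing $\tr^2$ of a commutator of two hyperbolic elements and identifying when it equals $4$ — which is a direct but slightly fiddly $2\times 2$ matrix computation; everything else is bookkeeping on fixed point sets. I would carry it out once in the normal-form coordinates and invoke conjugation-invariance to conclude.
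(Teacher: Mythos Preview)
The paper leaves this lemma as an exercise (``simple exercises in hyperbolic geometry'') and gives no proof, so there is nothing to compare against directly; your reduction to the normal forms of Proposition~\ref{prop:maxabel} followed by explicit computation is the natural route and is correct in substance.

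Two comments. First, in part~(1), the involution worry is unnecessary in $\PSL_2(\bK)$: the paper records just before Lemma~\ref{lem:malnormal} that $\PSL_2(\bK)$ is commutative--transitive, so $[c,gcg^{-1}]=1$ for even a \emph{single} $c\in\mu\setminus 1$ already forces $gcg^{-1}\in Z(c)=\mu$, hence $g\Fix\mu=\Fix(gcg^{-1})=\Fix\mu$. No case analysis on order-two elements is needed for (iii)$\Rightarrow$(i).

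Second, you are right that part~(2)(i) is misstated in the parabolic case: if $\mu=p(\bK)$ then $\Fix\mu=\{\infty\}$, and $g\Fix\mu\cap\Fix\mu\neq\varnothing$ holds exactly when $g$ fixes $\infty$, i.e.\ when $g\in N(\mu)=p(\bK)\rtimes\exp(\bK)\supsetneq\mu$. There is nothing to ``reconcile'' here---the statement is simply inaccurate for parabolic $\mu$. However, every invocation of this lemma in the paper (in the proofs of Lemmas~\ref{lem:baumslag-psl}, \ref{lem:free}, and~\ref{lem:pull-apart-psl}) takes $\mu$ to be non-parabolic, so the slip is harmless downstream. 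Your hyperbolic case is correct: writing $g=\bigl(\begin{smallmatrix}\alpha&\beta\\ \gamma&\delta\end{smallmatrix}\bigr)$ one computes $\tr[\exp(t),\,g\exp(t)g^{-1}]=2+\alpha\beta\gamma\delta\,(e^{t/2}-e^{-t/2})^4$, so $\tr^2=4$ for \emph{all} $t\neq 0$ iff $\alpha\beta\gamma\delta=0$ iff $\{g\cdot 0,\,g\cdot\infty\}\cap\{0,\infty\}\neq\varnothing$, which is exactly the dictionary you describe.
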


\noindent {\bf Finite type hyperbolic $2$--orbifolds}\\ 
A \emph{finite type $2$--orbifold}\index{$2$--orbifold!finite type} $S$ is a geometric structure on an oriented surface of finite topological type~\cite{Thurston1997book},
which is prescribed by its  \emph{signature}\index{$2$--orbifold!signature}
\[(g;m_1,m_2,\ldots,m_k).\] 
Here, $g$ is the genus of $S$ and the integers $2\le m_1,\ldots,m_k\le\infty$ are the orders of cone points for some $k\ge0$. The order $m_i=\infty$ corresponds to a puncture or a boundary. If $S$ is a surface, then $k=0$.

The Euler characteristic $\chi(S)$ and the \emph{complexity}\index{$2$--orbifold!complexity} 
$\xi(S)$ are then  defined as:
\begin{align*}
\chi(S) &= 2-2g - \sum_{i=1}^k\left(1-\frac1{m_i}\right)\\
\xi(S) &= 3g -3 + k.
\end{align*}
There is a complete hyperbolic structure on $S$ if and only if $\chi(S)<0$.

A discrete subgroup $L$ of $\PSL_2(\bR)$ is called a \emph{Fuchsian group}\index{Fuchsian group}.
In this case, the quotient space $S=\bH^2/L$ is a complete hyperbolic $2$--orbifold and we write \[L=\pi_1^{\mathrm{orb}}(S).\] 
If $L$ is finitely generated, then $S$ is of finite type.
If, furthermore, $L$ has a finite co-volume, then we say $L$ is a \emph{lattice}. If $S$ is compact, then $L$ is said to be \emph{uniform}. Every finitely generated Fuchsian group is isomorphic to some lattice, since one can contract a geodesic boundary of a complete hyperbolic $2$--orbifold to a cusp without changing its fundamental group.

\noindent {\bf Commutative--transitive groups}\\ 
We say a group $G$ is \emph{commutative--transitive}\index{commutative--transitive group} if $[a,b]=[a,c]=1$ implies $[b,c]=1$ for $a,b,c\in G\setminus 1$.
Equivalently,  all centralizers are abelian. The group $\Isom^+(\bH^n)$ is an example of a commutative--transitive group.
Every limit group is also commutative--transitive~\cite{BF2009book}.

\begin{lem}\label{lem:malnormal}
Let $G$ be a commutative--transitive group,
and let $C$ be a nontrivial abelian subgroup of $G$.
\be
\item
Then $Z(C)$ is the unique maximal abelian subgroup of $G$ containing $C$.
\item If $C$ is malnormal in $G$, then $C$ is maximal abelian in $G$.
\item If $C$ is a maximal abelian subgroup of $G$, then we have
\begin{align*}
N(C)
&=\left\{g\in G \co [c^g,c]=1\text{ for all }c\in  C\right\}\\
&=\left\{g\in G \co [c^g,c]=1\text{ for some }c\in C\setminus1\right\}\\
&=\left\{g\in G \co C\cap C^g\ne 1\right\}.
\end{align*}
\item\label{tr-normalizer}
Let $A, \mu$ be subgroups of $G$ such that $1\ne C\le A\cap \mu$.
Assume that $\mu$ is maximal abelian in $G$ and that $C$ is malnormal in $A$.
Then 
\[C=A\cap \mu = A\cap N(\mu).\]
\ee
\end{lem}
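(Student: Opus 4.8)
The plan is to prove the four parts in order, using only that $G$ is commutative--transitive, i.e.\ that every centralizer $Z_G(a)$ with $a\neq 1$ is abelian.

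First, for part (1), I would argue as follows. Since $C$ is abelian, $C\le Z(C)$, and $Z(C)=\bigcap_{c\in C}Z_G(c)$ is an intersection of centralizers of nontrivial elements, hence abelian by commutative--transitivity (each $Z_G(c)$ is abelian, and a subgroup of an abelian group is abelian). Thus $Z(C)$ is an abelian subgroup containing $C$. For uniqueness and maximality, suppose $M$ is any abelian subgroup with $C\le M$. Pick any $c\in C\setminus 1$. Every $m\in M$ commutes with $c$ (since $M$ is abelian and $c\in M$), so $M\le Z_G(c)$; as this holds for all $c\in C\setminus 1$, we get $M\le\bigcap_{c}Z_G(c)=Z(C)$. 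In particular any maximal abelian subgroup containing $C$ equals $Z(C)$, so $Z(C)$ is \emph{the} maximal abelian subgroup containing $C$.

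For part (2): if $C$ is malnormal, let $M=Z(C)$ be the maximal abelian subgroup containing $C$ from part (1), and suppose $g\in M\setminus C$. Then $g$ centralizes every element of $C$, so $C^g=C$, contradicting malnormality (which would force $C^g\cap C=\{1\}$ while $C\neq 1$). Hence $M=C$, so $C$ is maximal abelian. For part (3), with $C$ maximal abelian, I would prove the chain of set equalities by showing the inclusions cyclically. If $g\in N(C)$ then $C^g=C$, so for every $c\in C$ we have $c^g\in C$ and thus $[c^g,c]=1$ (abelianness of $C$); this gives the first set $\subseteq$ the second trivially, and the second $\subseteq$ third is immediate. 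For the third $\subseteq$ fourth: if $[c^g,c]=1$ for some $c\in C\setminus 1$, then $c$ and $c^g$ are two nontrivial commuting elements, so by commutative--transitivity (applied with $a=c$, noting both lie in the abelian group $Z_G(c)$) the subgroup generated by $C$ and $C^g$ inside $Z_G(c)$ is abelian; in particular $c^g\in Z(C)=C$ by part (1), so $c\in C\cap C^{g^{-1}}$ witnesses $C\cap C^{g}\ne 1$ after relabeling --- more directly, $c^g\in C\cap C^g\setminus\{1\}$. Finally fourth $\subseteq$ first: if $1\ne x\in C\cap C^g$, write $x=c^g$ with $c\in C$; then $x$ and $c$ both lie in the abelian $Z_G(x)$ together with $C$ and $C^g$ (each element of $C$ commutes with $x\in C$, each element of $C^g$ commutes with $x=c^g\in C^g$), so $\langle C,C^g\rangle$ is abelian, hence contained in the unique maximal abelian subgroup $Z(C)=C$ by part (1); thus $C^g\le C$ and by symmetry (applying the same to $g^{-1}$) $C^g=C$, i.e.\ $g\in N(C)$.

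For part (4): by part (1), $\mu=Z(C)$ is the unique maximal abelian subgroup containing $C$, and by part (2) applied inside $A$, $C$ is maximal abelian in $A$. The inclusions $C\subseteq A\cap\mu\subseteq A\cap N(\mu)$ are clear since $C\le\mu\le N(\mu)$ and $C\le A$. For $A\cap N(\mu)\subseteq C$: let $g\in A\cap N(\mu)$. Since $\mu$ is maximal abelian in $G$ and commutative--transitivity holds, part (3) (with $C$ replaced by $\mu$) gives $N(\mu)=\{h\in G: \mu\cap\mu^h\ne 1\}$, so $\mu\cap\mu^g\ne 1$; pick $1\ne y\in\mu\cap\mu^g$. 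But also $C^g\le\mu^g$ and $C^g\le A$, while $C\le\mu$ and $C\le A$; I want to conclude $C^g=C$, which gives $g\in N_A(C)=C$ by malnormality of $C$ in $A$. To see $C^g=C$: since $g\in N(\mu)$, conjugation by $g$ maps $\mu$ to $\mu$, so $C^g\le\mu$; now $C^g$ and $C$ are both abelian subgroups of the abelian group $\mu$, so $\langle C,C^g\rangle\le\mu$ is abelian. As $C$ is maximal abelian \emph{in} $A$ and $C^g\le A$ is part of an abelian subgroup of $A$ containing $C$ (namely $\langle C, C^g\rangle\cap A\supseteq\langle C,C^g\rangle$ since the latter lies in $A$ already, as $C,C^g\le A$), maximality forces $C^g\le C$; applying the same argument to $g^{-1}\in A\cap N(\mu)$ yields $C^{g^{-1}}\le C$, i.e.\ $C\le C^g$, so $C^g=C$. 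Therefore $g\in N_A(C)$, and malnormality of $C$ in $A$ gives $g\in C$. This proves $A\cap N(\mu)\subseteq C$, completing all four parts.

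I expect the main obstacle to be part (4), specifically keeping straight the two different ambient groups: $C$ is only assumed \emph{malnormal in $A$} (not in $G$), while $\mu$ is maximal abelian \emph{in $G$}. The key mechanism is that conjugation by an element $g$ normalizing $\mu$ automatically moves $C^g$ back inside $\mu$, so that $\langle C, C^g\rangle$ is a single abelian subgroup of $A$, and then maximality of $C$ as an abelian subgroup of $A$ (from part (2)) does the rest. The use of commutative--transitivity is localized: it is needed in part (1) to know $Z(C)$ is abelian, and in part (3) to pass from a single commuting pair to the normalizer --- after that, parts (2) and (4) are essentially formal.
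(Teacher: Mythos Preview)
Your proof is correct and follows essentially the same approach as the paper. The paper's argument is more compressed: in part (3) it closes the loop by observing that if $1\ne x\in C\cap C^g$ then both $C$ and $C^g$ are maximal abelian subgroups containing $x$, hence both equal $Z(x)$ by part (1), giving $C=C^g$ directly without your symmetry step; and in part (4) it simply quotes part (3) inside $A$ (noting $[c,c^g]=1$ for all $c\in C$ since $C^g\le\mu^g=\mu$) rather than arguing with $\langle C,C^g\rangle$. Your detour in part (4) through picking $y\in\mu\cap\mu^g$ is unused and can be deleted.
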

\bp
For part (1), 
let $C\le G$ be a nontrivial abelian group and $c\in C\setminus 1$.
Then commutative--transitivity of $G$ implies that the centralizer group $Z(C)=Z(c)$ is the desired maximal abelian group containing $C$.

Part (2) is a good exercise. For part (3),  let us write the given statement as
\[N(C)=C_1=C_2=C_3.\]
Then the inclusions
\[N(C)\sse C_1\sse C_2\sse C_3\]
are immediate by definition and the maximality. To see $C_3\sse N(C)$, let us suppose
$C\cap C^g\ne1$ for some $g\in G$. 
We can find $c_0,c_1\in C\setminus1$ such that $c_0=c_1^g$.
Then 
\[C = Z(c_1) = Z(c_0)^g = C^g\]
and hence, $g\in N(C)$.

For part (\ref{tr-normalizer}), let $g\in A\cap N(\mu)$.
Since $C^g\le \mu^g=\mu$, for each $c\in C$ we have 
$[c,c^g]=1$.
By part (2), the group $C$ is maximal abelian in $A$.
So, part (3) implies that 
\[
g\in N_A(C)=C,\]
as desired.
\ep

\noindent {\bf Minimalization of Fuchsian groups}\\ 
The following characterizes minimalization of Fuchsian groups.

\begin{lem}\label{lem:min-fuchs}
Suppose we have a non-elementary Fuchsian representation 
\[\rho_0\co L\to\PSL_2(\bR)\]
for some finitely generated torsion-free group $L$
such that $\Lambda(\rho_0)\ne S^1$.
Then there exists a path $\{\rho_t\}$ of Fuchsian representations of $L$ such that $\Lambda(\rho_1)=S^1$ and such that  $\rho_1(L)$ contains a nontrivial parabolic element. Furthermore, $\rho_1$ is a minimalization of $\rho_0$.
\end{lem}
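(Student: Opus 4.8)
The plan is to obtain $\rho_1$ by pinching every funnel of the surface $S_0:=\bH^2/\rho_0(L)$ to a cusp, and then to recognise $\rho_1$ as \emph{the} minimalization of $\rho_0$ by a soft argument using Theorem~\ref{thm:equiv}, rather than by constructing a semi--conjugating map by hand.

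First I would fix the geometric picture. As $\rho_0$ is a (discrete, faithful) Fuchsian representation of the finitely generated torsion--free group $L$, the image $\Gamma_0=\rho_0(L)$ is a finitely generated torsion--free Fuchsian group, hence geometrically finite, and $S_0=\bH^2/\Gamma_0$ is a hyperbolic surface of finite topological type. The hypothesis $\Lambda(\rho_0)\ne S^1$ forces $S_0$ to have infinite area, so by the trichotomy for limit sets together with non--elementarity, $\Lambda(\rho_0)$ is a Cantor set and $S_0$ has at least one funnel. Using Fenchel--Nielsen coordinates for the space of hyperbolic structures on $S_0$ with geodesic boundary, I would take the path which keeps all interior length and twist parameters equal to those of $\rho_0$ and replaces the length $\ell_i$ of each funnel core geodesic by $(1-t)\ell_i$, so that at $t=1$ every funnel has been replaced by a cusp. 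Since the holonomy of a marked hyperbolic structure varies real--analytically in the Fenchel--Nielsen parameters and extends continuously as the boundary lengths tend to $0$, normalising by a basepoint of the developing map yields a continuous path $\{\rho_t\}_{t\in[0,1]}\subseteq\Hom(L,\PSL_2(\bR))$ of discrete faithful representations with $\rho_1(L)$ a lattice. This is the required family of Fuchsian representations, and since a funnel core geodesic $\gamma$, regarded as an element of $L$, satisfies $\gamma\ne1$ and $\rho_1(\gamma)$ is parabolic, $\rho_1(L)$ contains a nontrivial parabolic. Moreover $\rho_1(L)$ is a lattice, hence non--elementary with $\Lambda(\rho_1)=S^1$, so $\rho_1$ is minimal.

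It remains to identify $\rho_1$ as a minimalization of $\rho_0$. Here I would avoid any explicit blow--up and argue as follows. Every $\rho_t(L)$ is a torsion--free Fuchsian group, so each of its nontrivial elements is hyperbolic or parabolic and therefore has rotation number $0$; hence $\rot\circ\rho_t(g)=0$ for all $g\in L$ and all $t\in[0,1]$. Regarding $\{\rho_t\}$ as a path in $\Hom(L,\Homeo_+(S^1))$ via $\PSL_2(\bR)\hookrightarrow\Homeo_+(S^1)$, it is continuous and all rotation numbers are constant along it, so Theorem~\ref{thm:equiv} (parts (\ref{part:mann-path})$\Rightarrow$(\ref{part:r-semi})$\Rightarrow$(\ref{part:min})) shows that $\rho_0$ and $\rho_1$ admit a common minimalization $\mu$. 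By Lemma~\ref{lem:minimal-exists} a minimalization is unique up to conjugacy, and $\rho_1$ is its own minimalization (since $\rho_1\succcurlyeq_{\Id}\rho_1$ and $\rho_1$ is minimal), so $\mu$ is conjugate to $\rho_1$. As the blow--up relation is preserved under post--composing a semi--conjugating map with a homeomorphism of $S^1$, it follows that $\rho_1$ itself is a minimalization of $\rho_0$, which completes the argument.

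The only genuinely geometric input, and therefore the step requiring the most care, is the construction of the path: one must verify that shrinking the funnel lengths along a Fenchel--Nielsen path really traces out a continuous family of \emph{discrete faithful} $\PSL_2(\bR)$--representations terminating in a lattice at $t=1$. This is standard in the Teichm\"uller theory of surfaces with geodesic boundary and cusps; granting it, the identification of $\rho_1$ with the minimalization of $\rho_0$ is purely formal.
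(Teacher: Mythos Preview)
Your proof is correct and follows essentially the same approach as the paper: construct the path by shrinking the funnels of $S_0=\bH^2/\rho_0(L)$ to cusps (you invoke Fenchel--Nielsen coordinates where the paper cites Floyd), then use part~(\ref{part:mann-path}) of Theorem~\ref{thm:equiv}---rotation numbers constant along a continuous path---to deduce semi-conjugacy and hence that $\rho_1$ is a minimalization of $\rho_0$. You are slightly more explicit than the paper in verifying that all rotation numbers vanish along the path and in unwinding why semi-conjugacy with a minimal action forces $\rho_1$ to be the minimalization, but there is no substantive difference in strategy.
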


\bp
Let $X$ denote the space of the Fuchsian representations of $L$.
Put $S_0=\bH^2/\rho_0(L)$. The surface $S_0$ contains at least one geodesic funnel, represented by say, $h\in L$.

There exists a continuous deformation $\{S_t\}$ of the hyperbolic surface $S_0$ to a finite-area hyperbolic surface $S_1$. This is done by deforming all the geodesic funnels to cusps; the details of this construction are worked out by Floyd in~\cite{Floyd}.

Let $\{\rho_t\}\sse X$ be a path corresponding to $\{S_t\}$.
Then $\rho_1(h)$ must be a nontrivial parabolic element.
Part (\ref{part:mann-path}) of Theorem~\ref{thm:equiv} implies that $\rho_1$ is semi-conjugate to $\rho_0$. Hence, $\rho_1$ is a minimalization of $\rho_0$.\ep

\subsection{Indiscrete Subgroups of $\PSL_2(\bR)$}\label{sec:indiscretepsl2r}
It will be useful for us to have a dynamical characterization of subgroups of $\PSL_2(\bR)$ which are not discrete. The following lemma is well-known to experts in algebraic groups and Lie groups; see~\cite[Theorem 8.4.1]{Beardon1983GTM},~\cite{KatokBook} and \cite{Raghunathan1972} for instance. We include a proof for the convenience of the reader.

\begin{lem}\label{lem:dense}
For a finitely generated non-elementary subgroup $L$ of $\PSL_2(\bR)$, the following are equivalent:
\begin{enumerate}
\item
The rotation spectrum of $L$ is infinite;
\item
Some element of $L$ is an elliptic element of infinite order;
\item
The group $L$ is indiscrete;
\item
The group $L$ is dense.
\end{enumerate}
\end{lem}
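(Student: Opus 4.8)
The plan is to prove the cyclic chain of implications $(2)\Rightarrow(1)\Rightarrow(3)\Rightarrow(4)\Rightarrow(2)$, which yields all four equivalences at once; moreover $(4)\Rightarrow(1)$ will drop out for free. The implication $(2)\Rightarrow(1)$ is immediate: an elliptic $g\in L$ of infinite order has irrational $\rot(g)$, so $\{\rot(g^n)=n\rot(g)\bmod\bZ:n\in\bZ\}$ is an infinite subset of the rotation spectrum. For $(1)\Rightarrow(3)$ I argue the contrapositive: if $L$ is discrete then, being finitely generated, it is the fundamental group of a finite type hyperbolic $2$--orbifold $S=\bH^2/L$, which has only finitely many cone points, of orders $m_1,\dots,m_k$. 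Each elliptic element of $L$ fixes a point of $\bH^2$, hence lies in a finite cyclic stabilizer, and is conjugate into one of the finitely many maximal finite cyclic subgroups $\langle c_i\rangle$. Since $\rot$ is a conjugacy invariant vanishing on all hyperbolic and parabolic elements, the rotation spectrum of $L$ lies in the finite set $\{0\}\cup\{\rot(c_i^{\,j}):1\le i\le k,\ 0\le j<m_i\}$.

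For $(3)\Rightarrow(4)$ I pass to $H=\overline{L}$. Since $L$ is non-discrete, $H$ is a closed non-discrete, hence positive--dimensional, Lie subgroup of $\PSL_2(\bR)$ (Cartan's closed subgroup theorem); let $H_0$ be its identity component and $\mathfrak h_0\le\mathfrak{sl}_2(\bR)$ its Lie algebra. If $\dim\mathfrak h_0=3$ then $H=H_0=\PSL_2(\bR)$, so $L$ is dense. Otherwise $\mathfrak h_0$ is one-- or two--dimensional, so $H_0$ is, up to conjugacy, one of the connected subgroups $\Rot(\bR),\exp(\bR),p(\bR)$ or the Borel subgroup $p(\bR)\rtimes\exp(\bR)$ from Proposition~\ref{prop:maxabel}; in each case the pointwise fixed set $\Fix H_0\subseteq\bH^2\cup\partial\bH^2$ is nonempty and finite. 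As $H_0\unlhd H$, for $h\in H$, $x\in\Fix H_0$ and $g\in H_0$ we have $g(hx)=h(h^{-1}gh)x=hx$, so $h\,\Fix H_0=\Fix H_0$; thus $\Fix H_0$ is a nonempty finite $L$--invariant set, contradicting that $L$ is non-elementary. Hence $\dim\mathfrak h_0=3$ and $L$ is dense.

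The step $(4)\Rightarrow(2)$ is the delicate one, and is where finite generation is essential --- density by itself is too soft, as a dense subgroup could a priori have all its elliptic elements of finite order, with the orders merely growing. Lifting $L$ to a finitely generated subgroup of $\SL_2(\bR)$, its trace field is a finitely generated extension of $\bQ$ (a classical fact of Fricke), so the relative algebraic closure $K_0$ of $\bQ$ inside it is a number field. An elliptic $g\in L$ of finite order has $\rot(g)\in\bQ$, hence by \eqref{eq:trace} satisfies $\tr^2(g)=4\cos^2(\pi\rot(g))\in B:=\{4\cos^2(\pi r):r\in\bQ\}$; and $B\cap K_0$ is finite, since $4\cos^2(\pi r)\in K_0$ forces a real cyclotomic field $\bQ(2\cos(2\pi/n))$ into $K_0$, and only finitely many $n$ satisfy $\varphi(n)/2=[\bQ(2\cos(2\pi/n)):\bQ]\le[K_0:\bQ]$ (Niven's theorem and elementary cyclotomic field theory). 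On the other hand, $\tr^2\colon\PSL_2(\bR)\to[0,\infty)$ is continuous and surjective, so by density of $L$ the set $\{\tr^2(g):g\in L,\ \tr^2(g)<4\}$ is dense in $[0,4)$ and consists of traces of elliptic elements of $L$. If these were all of finite order, a dense subset of $[0,4)$ would be contained in the finite set $B\cap K_0$, which is absurd; hence some $g\in L$ is elliptic with $\rot(g)\notin\bQ$, i.e.\ of infinite order. (Independently, $(4)\Rightarrow(1)$ holds because $\rot$ is continuous and surjective onto $\bR/\bZ$, so $\rot(L)$ is dense, hence infinite, when $L$ is dense.)

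The main obstacle is exactly $(4)\Rightarrow(2)$: the remaining steps are structural, but here one must genuinely rule out an exotic dense subgroup composed only of loxodromic, parabolic, and finite--order elliptic elements, which is possible only thanks to the finiteness of the trace field of a finitely generated group. A more geometric alternative, avoiding trace fields, is to take $g_n\to 1$ in $L$ (possible since $L$ is non-discrete), arrange the $g_n$ to be elliptic, and combine them with a fixed hyperbolic element of $L$ using the formula for the rotation angle of a product of two hyperbolic rotations to manufacture an infinite--order elliptic; I expect the trace--field route to be cleaner.
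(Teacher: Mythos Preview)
Your proof is correct, and it takes a genuinely different route from the paper's at the crucial step. The paper runs the cycle $(4)\Rightarrow(1)\Rightarrow(2)\Rightarrow(3)\Rightarrow(4)$ and handles the hard implication $(1)\Rightarrow(2)$ via Selberg's Lemma: a finitely generated linear group has a torsion--free subgroup of finite index, so if $L$ contained elliptic elements of arbitrarily large finite order (and no infinite--order elliptic) one would contradict the bounded exponent of torsion. You instead prove $(4)\Rightarrow(2)$ by a trace--field/cyclotomic argument: the $\tr^2$--values of $L$ lie in a finitely generated field, whose algebraic part is a number field, and only finitely many values of $4\cos^2(\pi r)$ with $r\in\bQ$ can live there. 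This trades Selberg's Lemma for elementary algebraic number theory, which is a fair exchange and arguably more self--contained. Your $(3)\Rightarrow(4)$ is also more hands--on than the paper's: the paper invokes Zariski density of a non--elementary subgroup and argues abstractly that the identity component of $\overline L$ must be everything, whereas you classify the possible proper connected Lie subgroups and observe that each has a finite nonempty fixed set, which would be $L$--invariant. One small wording point: ``lifting $L$ to $\SL_2(\bR)$'' should be read as taking the preimage (a degree--two extension, still finitely generated), since a homomorphic lift need not exist; this does not affect the argument, as $\tr^2$ is what you actually use.
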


\bp
Note that $L$ is Zariski dense, since a subgroup of $\PSL_2(\bR)$ which is not virtually solvable cannot be contained in a Borel subgroup and hence is Zariski dense.

We note $L$ is dense if and only if it is Zariski dense and indiscrete. Indeed, if $L$ is dense then its Zariski closure is a closed subgroup of $\PSL_2(\bR)$ containing $L$, and hence is all of $\PSL_2(\bR)$.  Conversely, suppose $L$ is Zariski dense and indiscrete. Then the closure of $L$ in the usual topology on $\PSL_2(\bR)$ contains a component of positive dimension which is a closed subgroup and is hence a Lie subgroup of $\PSL_2(\bR)$. Since $L$ is Zariski dense, this subgroup must be all of $\PSL_2(\bR)$. Thus, we establish the equivalence of (3) and (4).

The density of $L\le\PSL_2(\bR)$ implies that $\tr^2(L)$ is dense in $[0,\infty)$. This implies that $\rot(L)$ is dense in $\bR/\bZ$. So, we see the implication (4)$\Rightarrow(1)$.

To see that (1) implies (2), suppose $L\le\PSL_2(\bR)$ has infinite rotation spectrum but $L$ does not contain an irrational rotation.
Then $L$ must contain elliptic elements of arbitrarily high order. 
Moreover, being a finitely generated linear group, we have that $L$ contains a finite index subgroup $H$ which is torsion-free by Selberg's Lemma. 
If $n$ is the index of $H$ in $L$ and $g \in L$ is arbitrary, then $g^{n! }\in H$. It follows that $L$ must have a bounded exponent, a contradiction.

The implication (2)$\Rightarrow$(3) is obvious. 
\ep

We remark that Lemma \ref{lem:dense} really is specific to $\PSL_2(\bR)$. It fails already in $\PSL_2(\bC)$, where ``infinite rotation spectrum" is replaced by ``infinite order elliptic element" (see \cite{MaloniPalesiTan}).

To illustrate Lemma \ref{lem:dense} a little, we give an example of a dense subgroup of $\PSL_2(\bR)$ with infinite purely arithmetic rotation spectrum, and we explicitly find elliptic elements (here by purely arithmetic rotation spectrum we mean that the rotation spectrum belongs to multiples of $2\pi$ by elements of $\bZ[a_1,\cdots,a_n]$ for some algebraic integers $a_i$).
Consider the ring  $\bZ[\sqrt{2}]$. For an arbitrary $\epsilon > 0$, choose  elements \[a,b,c,d \in \bZ[\sqrt{2}]\] such that
\begin{enumerate}[(i)]
\item $0<a+d<\epsilon$ (this is possible as $\bZ[\sqrt{2}]$ is dense in $\mathbb R$),
\item $b=1, c=ad-1$.
\end{enumerate}

 The matrix (projected to a projective transformation) \[\begin{pmatrix} a& b\\ c&d\end{pmatrix}\in\PSL_2(\bR)\] is easily checked to lie in the finitely generated dense subgroup \[\PSL_2(\bZ[\sqrt{2}])<\PSL_2(\bR),\] and to be an  elliptic element of $\PSL_2(\bR)$. 
The rotation number $\theta$ is given by 
\[2\cos\frac\theta2 = a+d,\] which in turn lies in the interval 
\[[2\arccos(\epsilon/2),\pi],\] thus providing infinitely many distinct arithmetic rotation numbers.
More generally, such examples can be constructed from Hilbert modular groups $\PSL_2(\mathcal{O})$, where $\mathcal{O}$ is the ring of integers in a totally real number field.

The following observation is an immediate consequence of Lemma \ref{lem:dense}:

\begin{cor}\label{cor:discrete}
If $L\le \PSL_2(\bR)$ is a finitely generated group such that $\rot(L)=\{0\}$,
then $L$ is torsion-free and discrete.
\end{cor}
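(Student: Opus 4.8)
The plan is to deduce Corollary~\ref{cor:discrete} directly from Lemma~\ref{lem:dense}, handling the torsion-free assertion separately since it does not rely on the discreteness conclusion. For torsion-freeness, suppose $g\in L$ has finite order $n>1$. Then the cyclic group $\langle g\rangle$ is a finite subgroup of $\Isom^+(\bH^2)$, so it fixes the circumcenter of the orbit of any point of $\bH^2$; hence $g$ is elliptic and, after conjugation, $g=\Rot(2\pi k/n)$ with $k\not\equiv 0\pmod n$. By the trace formula~\eqref{eq:trace} we have $|\tr g|=2|\cos(\pi\rot(g))|$, which forces $\rot(g)$ to be a nonzero element of $\tfrac1n\bZ/\bZ$; this contradicts $\rot(L)=\{0\}$. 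Therefore $L$ is torsion-free.

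For discreteness, the essential case is when $L$ is non-elementary. If such an $L$ were indiscrete, then by Lemma~\ref{lem:dense} (the equivalences of (3), (4), and (1)) it would be dense in $\PSL_2(\bR)$ and hence have infinite rotation spectrum, contradicting $\rot(L)=\{0\}$; so $L$ is discrete. If instead $L$ is elementary, one argues via the classification of elementary subgroups (Proposition~\ref{prop:maxabel}): when $L$ fixes a point of $\bH^2$ it is conjugate into $\SO(2)$, on which $\rot$ is injective, so $\rot(L)=\{0\}$ forces $L=\{1\}$; the remaining possibilities, in which $L$ preserves a point or a pair of points of $\partial\bH^2$, are pinned down using torsion-freeness (which rules out the order-two reflection in $\exp(\bR)\rtimes\Rot(\pi)$) together with~\eqref{eq:trace}.

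I expect essentially all of the genuine work to be absorbed into Lemma~\ref{lem:dense}, whose proof is precisely what ties together indiscreteness, the presence of infinite-order elliptics, density, and infinitude of the rotation spectrum; granting that lemma, the non-elementary case is a one-line contradiction. The only remaining bookkeeping, and the point requiring the most care, is the elementary case, where one must separate out the $\SO(2)$-conjugate possibility from the hyperbolic and parabolic ones; in the applications of this corollary (e.g. to surface and free group representations) $L$ is non-elementary, so the main case suffices there.
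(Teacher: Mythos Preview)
Your treatment of torsion-freeness and of the non-elementary case is correct and is exactly what the paper has in mind: the paper simply declares the corollary an ``immediate consequence of Lemma~\ref{lem:dense},'' and that lemma (with the torsion argument you supply) does the job once $L$ is non-elementary.

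You are right to notice that Lemma~\ref{lem:dense} is stated only for non-elementary $L$, and to try to treat the elementary case separately; the paper glosses over this. However, your elementary-case argument is not complete---you say the boundary-fixing possibilities are ``pinned down'' by torsion-freeness and the trace formula~\eqref{eq:trace}, but neither of these tools yields discreteness. In fact discreteness genuinely fails there: the group
\[
L=\langle p(1),\,p(\sqrt{2})\rangle=\{p(m+n\sqrt{2}):m,n\in\bZ\}\le\PSL_2(\bR)
\]
is finitely generated, torsion-free, consists entirely of parabolics (so $\rot(L)=\{0\}$), and is dense in $p(\bR)$, hence indiscrete. So the corollary as stated requires the tacit hypothesis that $L$ be non-elementary; this is how it is actually used later (e.g.\ in the proof of Theorem~\ref{thm:rigid}, where one invokes Lemma~\ref{lem:dense} directly for nonabelian free and surface groups). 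Your proof is fine once you either add that hypothesis or note explicitly that the elementary case is excluded.
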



\section{Topological Baumslag Lemmas}\label{sec:baumslag}
This section deals with one of the principal technical tools of the monograph, namely an extension of Baumslag's Lemma. The following are some of the main ingredients in this section:

\begin{enumerate}
	\item A Topological Baumslag Lemma, which gives sufficient conditions to guarantee nontriviality of a word \[w (t_1,\cdots , t_k) = g_1 \mu_1(t_1) \cdots g_k \mu_k (t_k)\] for large  values of the parameters $t_i$. Here the
	$\mu_j(t_j)$s are one-parameter subgroups of a continuous (possibly analytic) group. The proof is quite general and reminiscent of Tits'
	proof \cite{tits-jalg} of the Tits' alternative for discrete linear groups in that the underlying idea consists of a  ping-pong argument. 
	\item The one-parameter subgroups are often (generalizations of) parabolic and hyperbolic one-parameter subgroups of $\PSL_2(\bR)$. To handle elliptic subgroups we use a complexification and Zariski density trick by embedding $\PSL_2(\bR)$ in	$\PSL_2(\mathbb{C})$ and reduce the elliptic case to the hyperbolic one (Lemma \ref{lem:baumslag-psl}).
\end{enumerate}
	
 We focus primarily on {\it projective} group actions on the circle, i.e. representations into $\PSL_2(\bR)$; but many of the proofs go through
 in the more general context of representations into linear algebraic semi-simple groups, including $\PSL_2(\bC)$.

\subsection{Topological Setting}
We will often denote a generator of $\bZ$ by $s$.
Recall our notation from the introduction:
\[A\ast_C = A\ast_C(C\times\form{s}).\]

Let $F_n$ denote a free group of rank $n$.
Half a century ago, Baumslag used the following lemma to deduce the residual freeness of groups of the form $F_n\ast_{\form{w}}F_n$ for $w\in F_n$.

\begin{lem}[\cite{Baumslag1962}]\label{lem:baumslag}
Suppose we have \[u, g_1,g_2,\ldots,g_k\in F_n\] such that $[u,g_i]\ne1$ for each $i$.
Then there exists $M>0$ such that 
\[g_1 u^{t_1} g_2 u^{t_2}\cdots g_k u^{t_k}\ne1\]
whenever $|t_i|\ge M $ for each $i$.\end{lem}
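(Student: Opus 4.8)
The plan is to use the standard embedding of $F_n$ into a matrix group and run a ping-pong argument. First I would reduce to a concrete linear model: by Sanov's theorem, $F_2$ embeds in $\SL_2(\bZ)$, and more generally $F_n$ embeds in $\SL_2(\bZ)$, so without loss of generality we may regard $u, g_1, \dots, g_k$ as elements of $\SL_2(\bR)$ (or even $\SL_2(\bZ)$). The hypothesis $[u, g_i] \ne 1$ says that $g_i$ does not preserve the fixed-point set of $u$ on $\partial\bH^2 = S^1$; since $u$ has infinite order and is a nontrivial element of a free group, it is a hyperbolic element of $\SL_2(\bR)$ with exactly two fixed points $u^+, u^-$ on the circle, and $g_i\{u^+,u^-\} \ne \{u^+,u^-\}$.

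Next I would set up the ping-pong. Let $N^+, N^-$ be small disjoint open neighborhoods of $u^+$ and $u^-$ in $S^1$. Because $u$ acts with north-south dynamics, for $t$ large and positive $u^t$ maps the complement of $N^-$ into $N^+$, and for $t$ large and negative $u^t$ maps the complement of $N^+$ into $N^-$; in either case $u^t$ maps a large set into $N^+ \cup N^-$. The key point is that, since $g_i$ does not fix $\{u^+, u^-\}$ setwise, we may shrink the neighborhoods so that $g_i(\overline{N^+ \cup N^-})$ is disjoint from $\{u^+, u^-\}$, hence (shrinking again if needed) disjoint from $N^+ \cup N^-$ itself, for every $i = 1, \dots, k$; here finiteness of the index set is what lets us pick a single choice of neighborhoods and a single threshold $M$. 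Then for $|t_i| \ge M$ one checks by induction, tracking the image of a point outside all the relevant exceptional sets (say a point $x_0 \notin N^+ \cup N^- \cup \bigcup_i g_i^{-1}(N^+\cup N^-)$), that
\[
g_1 u^{t_1} g_2 u^{t_2} \cdots g_k u^{t_k}(x_0) \in g_1(N^+ \cup N^-),
\]
a set not containing $x_0$, so the product is not the identity.

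The main obstacle — or rather the point requiring the most care — is the uniform choice of $M$ and of the neighborhoods: one must verify that the north-south dynamics estimate ``$u^t$ maps $S^1 \setminus N^-$ into $N^+$ for $t \ge M$'' can be made to hold simultaneously with the separation condition on all the $g_i$, and that the threshold $M$ depends only on the neighborhoods (not on which $g_i$ sits in which slot). Since there are only finitely many $g_i$ and $u$ has genuine hyperbolic (contracting/expanding) dynamics, a compactness argument on the circle handles this. An alternative and perhaps cleaner route, avoiding the explicit dynamics, is to argue directly with matrix entries: after conjugating so that $u = \operatorname{diag}(\lambda, \lambda^{-1})$ with $|\lambda| > 1$, write $u^{t} = \operatorname{diag}(\lambda^{t}, \lambda^{-t})$ and expand the product, showing the dominant term in each matrix entry grows like a fixed power of $\lambda^{\sum |t_i|}$ with a nonzero coefficient (nonzero precisely because $[u,g_i]\ne 1$ forces the off-diagonal entries of $g_i$ to be nonzero), so the product cannot equal the identity for $|t_i|$ large. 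Either way, the heart of the matter is the same noncommutation-implies-nondegeneracy mechanism that recurs throughout the paper.
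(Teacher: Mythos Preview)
Your approach is essentially the same ping--pong strategy the paper packages as its Topological Baumslag Lemma (Lemma~\ref{lem:baumslag-gen}) and then specializes in the Projective Baumslag Lemma (Lemma~\ref{lem:baumslag-psl}); the paper does not reprove Lemma~\ref{lem:baumslag} directly but recovers it as a special case (see Example~\ref{exmp:tor-free-hyp}). So the overall plan is fine, but there are two genuine soft spots in your write--up.

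First, Sanov's generators $\begin{pmatrix}1&2\\0&1\end{pmatrix}$ and $\begin{pmatrix}1&0\\2&1\end{pmatrix}$ are \emph{parabolic}, so your assertion that $u$ is automatically hyperbolic with two fixed points fails for that embedding. The fix is painless: embed $F_n$ as a Schottky (convex cocompact) subgroup of $\PSL_2(\bR)$ instead, so that every nontrivial element is hyperbolic; or just note that the attracting dynamics works equally well for parabolics with a single fixed point.

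Second, and more importantly, the step ``since $g_i$ does not fix $\{u^+,u^-\}$ setwise, we may shrink the neighborhoods so that $g_i(\overline{N^+\cup N^-})$ is disjoint from $\{u^+,u^-\}$'' is a non sequitur. If $g_i$ happened to fix $u^+$ while moving $u^-$, it would fail to preserve $\{u^+,u^-\}$ setwise yet $g_i(N^+\cup N^-)$ would always contain $u^+$, and the ping--pong collapses. What you actually need is the stronger disjointness $g_i\{u^+,u^-\}\cap\{u^+,u^-\}=\varnothing$; the paper singles out exactly this hypothesis as condition~(ii) in Lemma~\ref{lem:baumslag-psl} and, in the Remark immediately following, gives an explicit example showing that weakening it to ``$\Fix\mu\ne g_i\Fix\mu$'' breaks the conclusion. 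The good news is that in any \emph{discrete} torsion--free subgroup of $\PSL_2(\bR)$, two elements sharing a boundary fixed point generate an elementary subgroup and hence commute, so $[u,g_i]\ne1$ does give the disjointness you need---but that is the argument you must supply, not the setwise one you wrote. Your alternative matrix--entry route has the same hidden step: the ``nonzero coefficient'' claim requires that $g_i$ is not upper--triangular in the basis diagonalizing $u$, i.e.\ again that $g_i$ moves \emph{both} fixed points of $u$.
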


Sometimes called Baumslag's Lemma, this result plays a fundamental role in the theory of limit groups; see~\cite{Wilton2009solutions} and references therein.
Baumslag's Lemma generalizes for multiple ``twisting words''~\cite{Kim2010,BG2010JAM},
and also for torsion-free word-hyperbolic groups~\cite{GW2010JT}. We will present a continuous version of Baumslag's lemma in this section that will imply both of these generalizations.

Throughout this subsection, we let $T$ be a set (of \emph{parameters}) equipped with a decreasing sequence of subsets 
\[
T\supseteq T_0\supseteq T_1 \supseteq T_2\supseteq\cdots.\]

\bd
\label{defn:attracting}
Let $X$ be a topological space and let $\phi\co T\to\Homeo(X)$ be a map.
\be
\item We say $\phi$ is \emph{attracting (to $C$)}\index{attracting (homeomorphism)} if we have a nonempty proper compact set $C\sse X$ such that for each open neighborhood $U\supseteq C$ there exists $M>0$ satisfying \[\phi(t)^{\pm1}(X\setminus U)\sse U\text{ for all }t\in T_M.\]
\item\label{item:doubly}
We say $\phi$ is \emph{doubly attracting (to $(C^+,C^-)$)}\index{attracting (homeomorphism)!doubly attracting}
if we have a disjoint pair of nonempty compact sets $(C^+,C^-)$ in $X$ such that
$X\ne C^+\cup C^-$
and such that
for each disjoint pair of open neighborhoods $(U^+,U^-)$ of $(C^+,C^-)$
there exists $M>0$ satisfying the following for all $t\in T_M$:
\[\phi(t)(X\setminus U^-)\sse U^+,
\quad
\phi(t)^{-1}(X\setminus U^+)\sse U^-.\]
\ee
\ed

\begin{rem}
\be
\item We do not assume $\phi$ is continuous or a homomorphism.
\item We forbid the possibility of $X=C^+\cup C^-$ in (\ref{item:doubly}),
since we do not want to call the constant map 
\[\phi\co T\to\{\Id\}\]
as doubly attracting.
\item
A doubly attracting map is attracting: we simply set $C=C^+\cup C^-$. 
\ee
\end{rem}

\begin{exmp}\label{exmp:attr}
Let us list some examples of attracting maps.
We let $\bK=\bR$ or $\bC$. 
\be
\item\label{exmp:attr-psl-1}
Suppose $\mu$ is a maximal parabolic subgroup of $\PSL_2(\bK)$.
Up to conjugacy, we can parametrize $\mu$ as
\[ \mu(z)=\begin{pmatrix} 1&z \\ 0& 1\end{pmatrix}. \]
Set $T=\bK$ and $T_M=\{z\in T\co |z|\ge M\}$.
Then for $d=2$ or $3$, the map
\[\mu\co T\to\PSL_2(\bK)\cong\Isom^+(\bH^d)\le\Homeo_+(S^{d-1})\] is attracting to $\Fix \mu$.
\item\label{exmp:attr-psl-2}
A maximal hyperbolic subgroup $\mu\le\PSL_2(\bK)$
can be parametrized as
\[
\mu(z)=\begin{pmatrix} z&0 \\ 0& 1/z\end{pmatrix}
\]
up to conjugacy.
If we let $T=\bK^\times$ and $T_M=\{z\in T\co |z|\ge M\}$,
then
\[\mu\co T\to\PSL_2(\bK)\le\Homeo_+(S^{d-1})\] is again attracting to $\Fix \mu$.
\item 
If $\mu$ is a maximal elementary non-elliptic subgroup of $\Isom^+(\bH^n)$ for some $n\ge3$,
then the natural embedding $\phi\co T=\mu\hookrightarrow\Homeo_+(S^{n-1})$ is attracting to $\Fix \mu$,
for a suitable stratification $T=T_0\supseteq T_1 \supseteq\cdots$.
\item
For $1\le k\le\infty$, we have a $k$--fold cover:
\[\PSL_2^{(k)}(\bR)\to\PSL_2(\bR),\]
Let $\mu\le\PSL_2(\bR)$ be a maximal non-elliptic abelian 
subgroup and $\mu^\sim\le\PSL^{(k)}(2,\bR)$ be the lift of $\mu$. Using the parametrization in~(\ref{exmp:attr-psl-1}) and (\ref{exmp:attr-psl-2}), we see 
\[\phi\co T=\mu^\sim\hookrightarrow\Homeo_+(S^1)\] is attracting to the periodic set $\operatorname{Per}\mu^\sim$.
\item
Let $g\in\Isom^+(\bH^n)$ and define $\phi\co T=\bZ\to\Homeo_+(S^{n-1})$ as $\phi(t)= g^{t}$. We set $T_M=\{t\in\bZ\co |t|\ge M\}$.
Then $\phi$ is attracting to $\Fix g$ if and only if $g$ is non-elliptic.
\item
Let $S$ be a closed hyperbolic surface and let $g\in\Mod(S)$ be  pseudo-Anosov.
We have an action of $\Mod(S)$ on $\partial\mathrm{Teich}(S)=\mathrm{PML}(S)$.
The
 map $\phi\co t\mapsto g^t$ is attracting to the union of the stable and the unstable laminations.
In addition to these actions, we have an action of $\Mod(S,x)$ on $S^1=\partial\bH^2$
such that the map $t\mapsto g^t$ is attracting to the set consisting of even number of points~\cite{CB1988}.
Here, $x$ denotes a base point of $S$.
More detailed account of $\Mod(S)$ action on $S^1$ is given in  Section~\ref{sec:mcg}.
\item
Let $G$ be a torsion-free non-elementary word-hyperbolic group and let $1\ne h\in G$.
The group $G$ acts on $\partial G$ by homeomorphisms \cite{Gromov1987}, 
Then the map $\phi\co t\mapsto h^t$ is attracting to $\{h^\infty,h^{-\infty}\}$.
Note that $\partial G\ne\{h^\infty,h^{-\infty}\}$,
as required by Definition~\ref{defn:attracting} (\ref{item:doubly}).
\ee
\end{exmp}

\begin{lem}\label{lem:doubly}
\be
\item
If $\phi\co T\to\Homeo(X)$ is doubly attracting
for some Hausdorff space $X$,
then for all sufficiently large $M$ and for each $t\in T_M$,
the group $\form{\phi(t)}$ is an infinite cyclic subgroup of $\Homeo(X)$.
\item
Let $\phi\co T\to\Isom^+(\bH^d)\le\Homeo_+(S^{d-1})$ be doubly attracting for some $d\ge2$.
Then for all sufficiently large $M$ and for each $t\in T_M$,
the element $\phi(t)$ is hyperbolic.
\ee
\end{lem}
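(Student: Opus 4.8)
\emph{Part (1).} The plan is a ping-pong argument. Since $C^+\cup C^-$ is a proper subset of $X$, fix a point $x_0\in X\setminus(C^+\cup C^-)$. Using that $X$ is Hausdorff and $C^+,C^-$ are disjoint compacta, I would first separate $C^+$ from $C^-$ by disjoint open sets, and separately separate $\{x_0\}$ from each of $C^+$ and $C^-$; intersecting these produces disjoint open neighborhoods $U^+\supseteq C^+$ and $U^-\supseteq C^-$ with $x_0\notin U^+\cup U^-$. Feed this pair into the definition of ``doubly attracting'' to obtain the associated $M$, fix any $t\in T_M$ and write $g=\phi(t)$. Since $U^+\cap U^-=\varnothing$ we have $U^+\subseteq X\setminus U^-$, whence $g(U^+)\subseteq g(X\setminus U^-)\subseteq U^+$, and inductively $g^n(X\setminus U^-)\subseteq U^+$ for all $n\ge1$. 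If $g^n=\Id$ for some $n\ge 1$ we would get $X\setminus U^-=g^n(X\setminus U^-)\subseteq U^+$, i.e.\ $X=U^+\cup U^-$, contradicting $x_0\notin U^+\cup U^-$. Thus no positive power of $\phi(t)$ is trivial, so $\form{\phi(t)}$ is infinite cyclic.

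\emph{Part (2).} Now $X=S^{d-1}=\partial_\infty\bH^d$ is compact metrizable and $C^+\cup C^-$ is a proper closed set, so I can choose disjoint open neighborhoods $U^\pm$ of $C^\pm$ small enough that $W:=S^{d-1}\setminus(\overline{U^+}\cup\overline{U^-})$ is still nonempty (e.g.\ take metric $\varepsilon$-neighborhoods with $\varepsilon$ less than a third of the distances from a chosen point of the complement to $C^+$, to $C^-$, and between $C^+$ and $C^-$). Let $M$ be the constant the definition assigns to this pair; fix $t\in T_M$ and put $g=\phi(t)$. By the elliptic/parabolic/hyperbolic trichotomy for $\Isom^+(\bH^d)$, it suffices to exclude the first two possibilities.

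If $g$ were elliptic it would fix some $q\in\bH^d$, hence lie in the compact group $\Stab(q)\cong\SO(d)$, which acts transitively on $S^{d-1}=\partial_\infty\bH^d$ and therefore preserves a unique probability measure $\nu_q$ there, lying in the Lebesgue class and so of full support. Then $g$-invariance gives $\nu_q(S^{d-1}\setminus U^-)=\nu_q\big(g(S^{d-1}\setminus U^-)\big)\le\nu_q(U^+)$, hence $\nu_q(U^+)+\nu_q(U^-)\ge1$; as $U^+\cap U^-=\varnothing$ this forces $\nu_q(W)=0$, contradicting full support. If instead $g$ were parabolic with unique fixed point $p\in S^{d-1}$, then in horospherical coordinates $g$ acts on $S^{d-1}\setminus\{p\}\cong\bR^{d-1}$ as a nontrivial Euclidean translation, so $g^{\pm n}$ carries every compact subset of $S^{d-1}\setminus\{p\}$ into any prescribed neighborhood of $p$ once $n$ is large. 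There are three cases. If $p\notin U^+\cup U^-$, then $p\in X\setminus U^-$, so $p=g(p)\in g(X\setminus U^-)\subseteq U^+$, a contradiction. If $p\in U^+$, then $X\setminus U^+$ is a compact subset of $S^{d-1}\setminus\{p\}$; iterating the backward condition gives $g^{-n}(X\setminus U^+)\subseteq U^-$ for all $n\ge1$, while for $n$ large $g^{-n}(X\setminus U^+)\subseteq U^+$, so $X\setminus U^+\subseteq U^+\cap U^-=\varnothing$, forcing $X=U^+$ and then $\varnothing\ne C^-\subseteq U^-\subseteq X\setminus U^+=\varnothing$, absurd; the case $p\in U^-$ is symmetric, using the forward condition instead. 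Hence $g$ is hyperbolic.

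\emph{Main obstacle.} The formal skeleton (Hausdorff separation plus ping-pong) is routine; the real content of part (2) lies in the two standard dynamical facts invoked—that an elliptic isometry sits in a maximal compact subgroup preserving a full-support boundary measure, and that a parabolic isometry is a fixed-point-free translation off its fixed point—and in the case division in the parabolic step. I expect the parabolic case, in particular making ``pushes compacta to $p$'' precise while tracking the position of $p$ relative to $U^\pm$ (and invoking the nonemptiness of both $C^+$ and $C^-$, which is exactly what rules out $X=U^\pm$), to be the fiddliest part. As an alternative to the measure argument one could first apply part (1) to conclude $\phi(t)$ has infinite order and then exclude infinite-order elliptics via the torus closure of $\form{\phi(t)}$, but the invariant-measure route handles all elliptics uniformly.
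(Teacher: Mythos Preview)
Your Part (1) is correct and essentially the same argument as the paper's; the paper tracks the single point $x_0$ (showing $\phi(t)^{\pm n}(x_0)\in U^\pm$ for all $n\ge1$) rather than the whole set $X\setminus U^-$, but your set-wise version implies this and the contradiction is the same.

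For Part (2) your argument works, but it is considerably more elaborate than what the paper does. The paper's proof of (2) is literally one sentence: ``Now (2) is immediate.'' The point is that the orbit-tracking already established in (1) does all the work. Having chosen $U^+,U^-$ with disjoint closures and $x_0\notin\overline{U^+}\cup\overline{U^-}$ (harmless in the metric space $S^{d-1}$), you know $\phi(t)^n(x_0)\in U^+$ for $n\ge1$ and $\phi(t)^{-n}(x_0)\in U^-$ for $n\ge1$. If $\phi(t)$ were elliptic, the cyclic group $\form{\phi(t)}$ would have compact closure, forcing $\phi(t)^{n_k}(x_0)\to x_0$ along some subsequence $n_k\to\infty$; but $x_0\notin\overline{U^+}$. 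If $\phi(t)$ were parabolic with fixed point $p$, both forward and backward orbits of $x_0$ would converge to $p$, putting $p\in\overline{U^+}\cap\overline{U^-}=\varnothing$. So hyperbolicity drops out directly from the separation of forward and backward orbits already proved in (1), with no need for invariant measures or a case split on the location of $p$. Your global set argument in (1) actually contains this orbit information (apply it to $x_0\in X\setminus U^-$), so you could have taken this shortcut yourself.

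One small inaccuracy in your parabolic discussion: for $d\ge4$ a parabolic isometry of $\bH^d$ need not act on the horosphere $\bR^{d-1}$ as a pure translation; it is a fixed-point-free Euclidean isometry, which may have a rotational part. Your conclusion that $g^{\pm n}$ pushes compact sets of $S^{d-1}\setminus\{p\}$ into any neighborhood of $p$ is still correct (the translational component in the $1$-eigenspace of the rotational part is nonzero and grows linearly), so the argument survives, but the sentence as written is not quite right.
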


\bp
(1)
Let $\phi$ doubly attract to $(C^+,C^-)$.
Since $X\ne C^+\cup C^-$ by assumption,
we can find disjoint open neighborhoods $U^\pm$ of $C^\pm$ 
and \[x_0\not\in U^+\cup U^-.\]
If $M$ is sufficiently large and $t\in T_M$,
then 
\[\phi(t)^{\pm1}.x_0\in U^\pm \sse X\setminus U^\mp.\]
So for each $n>0$, we have
 \[\phi(t)^{\pm n}.x_0\in U^{\pm}.\]
In particular, $\phi(t)^{\pm n}\ne\Id$. Now (2) is immediate.
\ep

The following is the main result of this section, and we call it the Topological Baumslag Lemma. Some of the ideas go back to the ping-pong arguments of Klein~\cite{Klein1883MA}, Maskit~\cite{Maskit1988Springer} and Tits \cite{tits-jalg}. 

\begin{lem}[Topological Baumslag Lemma]\label{lem:baumslag-gen}
Let $X$ be a Hausdorff space and let $k\ge1$.
Define  $\psi\co T\to\Homeo(X)$ by
\[\psi(t) = g_1 \phi_1(t)g_2\phi_2(t)\cdots g_k \phi_k(t),\] 
such that the following hold for each $i=1,2,\ldots,k$:
\be[(i)]
\item
 $g_i\in \Homeo(X)$;
\item
$\phi_i\co T\to\Homeo(X)$ is attracting to $C_i$;
\item
$C_i \cap g_{i+1}C_{i+1}=\varnothing$ where indices are taken modulo $k$ (thus including $C_k \cap g_{1}C_{1}=\varnothing$);
\item $X\ne g_1 C_1\cup C_k$.
\ee
Then $\psi$ is doubly attracting to $(g_1 C_1,C_k)$.
\end{lem}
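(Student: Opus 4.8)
The plan is to run a ping-pong argument, treating the composite map $\psi(t) = g_1\phi_1(t)\cdots g_k\phi_k(t)$ as a product of ``stages'' and tracking where a compact set gets pushed. First I would fix a disjoint pair of open neighborhoods: since $X \neq g_1 C_1 \cup C_k$ and $X$ is Hausdorff with $g_1 C_1, C_k$ compact and disjoint (disjointness being hypothesis (iii) with the wraparound index $C_k \cap g_1 C_1 = \varnothing$), I can choose disjoint open sets $U^+ \supseteq g_1 C_1$ and $U^- \supseteq C_k$ with $U^+ \cup U^- \neq X$. The goal is to produce $M$ so that for all $t \in T_M$, $\psi(t)(X \setminus U^-) \subseteq U^+$ and $\psi(t)^{-1}(X \setminus U^+) \subseteq U^-$.

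The heart of the argument is choosing, for each $i$, an open neighborhood $V_i \supseteq C_i$ small enough that the images under the intervening homeomorphisms $g_j$ stay disjoint in the right way. Concretely: hypothesis (iii) says $C_i \cap g_{i+1}C_{i+1} = \varnothing$ for all $i$ mod $k$, so by compactness and Hausdorffness I can shrink the $V_i$'s so that $\overline{V_i} \cap g_{i+1}\overline{V_{i+1}} = \varnothing$ for each $i$; I also arrange $g_1 V_1 \subseteq U^+$ and $V_k \subseteq U^-$. Then for each $i$ I invoke the attracting hypothesis (ii): there is $M_i$ with $\phi_i(t)^{\pm 1}(X \setminus V_i) \subseteq V_i$ for all $t \in T_{M_i}$. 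Set $M = \max_i M_i$ and fix $t \in T_M$. Now I trace a point $x \notin U^-$, i.e. $x \notin g_k V_k$ after applying $g_k^{-1}$... actually the cleaner bookkeeping is to read $\psi(t)$ right to left: apply $\phi_k(t)$ first. If the input avoids $V_k$ then $\phi_k(t)$ lands it in $V_k$; then $g_k$ sends $V_k$ into $g_k V_k$ which (by the shrinking, with index $k$ paired to $1$: $C_k \cap g_1 C_1 = \varnothing$ — wait, I need $g_k V_k$ disjoint from $V_{k-1}$, which is the $i = k-1$ instance) is disjoint from $V_{k-1}$, so $\phi_{k-1}(t)$ applied next lands in $V_{k-1}$, and so on; after the last stage we are inside $g_1 V_1 \subseteq U^+$. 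Since the input $x$ just needs to avoid $V_k$, and $V_k \subseteq U^-$, every $x \notin U^-$ works, giving $\psi(t)(X\setminus U^-) \subseteq U^+$. The inverse direction is symmetric, reading $\psi(t)^{-1} = \phi_k(t)^{-1} g_k^{-1}\cdots$ and using that each $\phi_i$ attracts under $\phi_i(t)^{-1}$ as well (this is why Definition~\ref{defn:attracting} uses $\phi(t)^{\pm 1}$), concluding with landing in $V_k \subseteq U^-$... more precisely $g_k^{-1}$-image of $C_k$, so I should set up $U^-$ around the right set; I'd fix conventions so that $\psi(t)^{-1}(X \setminus U^+) \subseteq U^-$ comes out.

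The main obstacle I anticipate is purely organizational rather than conceptual: getting the indexing and the ``which $g_j$ separates which $V_i$ from which'' completely consistent, especially the cyclic wraparound that connects stage $k$ back to stage $1$ — this is exactly what hypothesis (iii)'s ``indices modulo $k$'' and hypothesis (iv) are engineered to handle. One subtle point worth care: the attracting property only controls $\phi_i(t)(X \setminus V_i)$, so at each stage I must verify the incoming set is genuinely contained in $X \setminus V_i$, which is why I need the strict disjointness $\overline{V_i} \cap g_{i+1}\overline{V_{i+1}} = \varnothing$ after shrinking, together with a base case ensuring the very first set fed in (the complement of $U^\mp$) avoids $V_k$ (resp. $V_1$). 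Finally I note that $(g_1 C_1, C_k)$ is indeed a disjoint pair of nonempty compact sets with $X \neq g_1 C_1 \cup C_k$ by (iii) and (iv), so the conclusion ``$\psi$ is doubly attracting to $(g_1 C_1, C_k)$'' is well-posed in the sense of Definition~\ref{defn:attracting}(\ref{item:doubly}), and the estimate above is exactly its defining condition for the chosen $(U^+, U^-)$; since $(U^+, U^-)$ was an arbitrary disjoint pair of neighborhoods (after the shrinking, which only used Hausdorffness and compactness), this completes the proof.
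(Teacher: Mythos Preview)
Your proposal is correct and follows essentially the same ping-pong argument as the paper: choose neighborhoods $V_i \supseteq C_i$ (the paper calls them $U_i$) with $V_i \cap g_{i+1}V_{i+1} = \varnothing$ cyclically, invoke the attracting hypothesis for each $\phi_i$ to get a common $M$, and chain the inclusions $g_i\phi_i(t)(X\setminus V_i)\subseteq g_iV_i\subseteq X\setminus V_{i-1}$ (and the analogous ones for $\psi(t)^{-1}$) to land in $g_1V_1$ and $V_k$ respectively. If anything you are slightly more careful than the paper, which verifies the doubly-attracting condition only for the particular pair $(g_1U_1,U_k)$ and leaves implicit the observation that any prescribed $(U^+,U^-)$ can be arranged to contain such a pair by shrinking the $U_i$; your explicit inclusion $g_1V_1\subseteq U^+$, $V_k\subseteq U^-$ makes that step visible.
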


\bp 
Choose an open neighborhood $U_i$ of $C_i$ such that
 \[U_i\cap g_{i+1}(U_{i+1})=\varnothing\] for each $1\le i \le k$ (again indices are taken modulo $k$).
There exists $M>0$ such that \[\phi_i(T_M)^{\pm1}(X\setminus U_i)\sse U_i.\]
Fix $t$ such that $t\in T_M$.
It suffices to verify the ping-pong condition of Definition~\ref{defn:attracting} (2)
for $\psi(t)$ with respect to the pair of disjoint open sets $(g_1(U_1),U_k)$.
We first note
\[
g_i\;\phi_i(t)(X\setminus U_i)\sse g_i(U_i)\sse X\setminus U_{i-1},\]
\[
\phi_i(t)^{-1}g_i^{-1}(X\setminus g_i(U_i))=
\phi_i(t)^{-1}(X\setminus U_i)\sse U_i\sse X\setminus g_{i+1}(U_{i+1}).\]
%
So we have that
\begin{align*}
\psi(t)(X\setminus U_k)
&=\prod_{i=1}^k g_i \;\phi_i(t) (X\setminus U_k)
\sse
\prod_{i=1}^{k-1} g_i \;\phi_i(t) (X\setminus U_{k-1})\\
&\sse\cdots\sse g_1 \phi_1(t)(X\setminus U_1)
\sse g_1(U_1),
\end{align*}
\begin{align*}
\psi(t)^{-1}(X\setminus g_1(U_1))
&=\prod_{i=k}^1 \phi_i(t)^{-1}g_i^{-1}
(X\setminus g_1(U_1))
\sse \prod_{i=k}^{2} \phi_i(t)^{-1}g_i^{-1}(X\setminus g_2(U_2))
\\
&\sse\cdots\sse   \phi_k(t)^{-1}g_k^{-1}(X\setminus g_k(U_k))
=  \phi_k(t)^{-1}(X\setminus U_k)
\sse U_k.
\end{align*}
Thus we establish the lemma.
\ep

\subsection{Projective and Discrete Settings}
We will mainly employ Lemma~\ref{lem:baumslag-gen} for the projective setting. For a subgroup $H\le G$, we let $N(H)$ denote the normalizer group of $H$.
For each matrix $g$, we let $\|g\|$ denote its $\ell^2$-norm.
Recall our notation that  $\bK=\bC$ or $\bK=\bR$.

\begin{lem}[Projective Baumslag Lemma]\label{lem:baumslag-psl}
Let $\mu\le\PSL_2(\bK)$ be a maximal abelian subgroup.
We define $\phi\co \mu\to\PSL_2(\bK)$ by
\[\phi(\nu)=g_1 \nu^{m_1} g_2\nu^{m_2}\cdots g_k \nu^{m_k},\]
such that $k\ge1$ and such that for each $i=1,\ldots,k$ we have
\be[(i)]
\item
 $g_i\in\PSL_2(\bK)$
 and
 $m_i\in\bZ\setminus\{0\}$;
\item
$\Fix\mu\cap g_i\Fix\mu=\varnothing$.
\ee
\be
\item
Then the following set is finite
for each $x\in\bK$:
\[\{\nu\in\mu\co \tr^2\circ \phi(\nu)=x\}.\]
\item
If $\mu$ is not purely elliptic, then we have
\[\lim_{\|\nu\|\to\infty, \nu\in\mu} |\tr\circ\phi(\nu)|=\infty.\]
\ee
\end{lem}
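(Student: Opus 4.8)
The plan is to deduce the statement from the Topological Baumslag Lemma (Lemma~\ref{lem:baumslag-gen}) together with the density/complexification trick described in the section introduction. First consider the case where $\mu$ is not purely elliptic, i.e. $\mu$ is hyperbolic or parabolic. Writing $\phi(\nu)=g_1\nu^{m_1}g_2\nu^{m_2}\cdots g_k\nu^{m_k}$, I would set $X=S^{d-1}=\partial\bH^d$ (with $d=2$ if $\bK=\bR$ and $d=3$ if $\bK=\bC$) and apply Lemma~\ref{lem:baumslag-gen} with $\phi_i(\nu)=\nu^{m_i}$ and $C_i=\Fix\mu$. By Example~\ref{exmp:attr}~(\ref{exmp:attr-psl-1})--(\ref{exmp:attr-psl-2}), each $\phi_i$ is attracting to $\Fix\mu$ once we parametrize $T=\mu$ (or $\bK$, $\bK^\times$) with the natural stratification $T_M=\{\nu\co\|\nu\|\ge M\}$ and use that $m_i\ne 0$; hypothesis (ii) gives $C_i\cap g_{i+1}C_{i+1}=\Fix\mu\cap g_{i+1}\Fix\mu=\varnothing$, and the non-degeneracy condition $X\ne g_1C_1\cup C_k$ holds because $\Fix\mu$ is one or two points while $S^{d-1}$ is infinite. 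Lemma~\ref{lem:baumslag-gen} then says $\psi=\phi$ is doubly attracting; by Lemma~\ref{lem:doubly}~(2), $\phi(\nu)$ is a hyperbolic isometry for all large $\|\nu\|$, so $|\tr\circ\phi(\nu)|=2\cosh(\ell(\phi(\nu))/2)$ by~\eqref{eq:trace}. To upgrade ``hyperbolic for large $\nu$'' to ``$|\tr\circ\phi(\nu)|\to\infty$'', I would track the size of the ping-pong more quantitatively: shrinking the neighborhoods $U_i$ forces the image $\phi(\nu)(X\setminus U_k)$ into an arbitrarily small set, which forces the derivative (equivalently, the multiplier, equivalently $|\tr|$) to blow up. Concretely, one can argue that $\log|\tr\circ\phi(\nu)|$ is, up to bounded error, a sum of the $\log|\tr\circ\nu^{m_i}|=|m_i|\log|\tr\,\nu|+O(1)$ terms, since the $g_i$ contribute bounded distortion and the fixed-point disjointness prevents cancellation; this is the one genuinely computational step.

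For part (1) in the same non-elliptic case, note that $\nu\mapsto\tr^2\circ\phi(\nu)$ is a polynomial (or, for $\mu$ hyperbolic parametrized by $z\mapsto\mathrm{diag}(z,1/z)$, a Laurent polynomial) in the parameter of $\mu$; it is non-constant precisely because part (2) shows it is unbounded, hence each fiber $\{\nu\in\mu\co\tr^2\circ\phi(\nu)=x\}$ is finite. (One must observe that $\phi$ is not identically parabolic/trivial, which again follows from unboundedness of the trace.)

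The remaining case is $\mu$ purely elliptic, which is where the complexification trick enters and is the main obstacle. Here $\mu$ is conjugate into $\SO(2)$ and $\phi$ need not have unbounded trace — indeed part (2) is explicitly not claimed — so only part (1) must be proved. The plan is: view $\PSL_2(\bR)\subset\PSL_2(\bC)$, and regard the elliptic one-parameter group $\Rot(t)$ as the specialization at $z=e^{it}$ of the $\bC$-hyperbolic one-parameter group $z\mapsto\mathrm{diag}(z^{1/2},z^{-1/2})$; then $\nu\mapsto\tr^2\circ\phi(\nu)$ extends to a Laurent polynomial $P(z)$ in $z$ with coefficients determined by the $g_i\in\PSL_2(\bC)$. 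It suffices to show $P$ is non-constant, for then $\{z\co P(z)=x\}$ is finite and a fortiori $\{\nu\in\mu\co\tr^2\circ\phi(\nu)=x\}$ is finite. Non-constancy of $P$ is obtained by the Zariski-density argument: if $P$ were constant on the circle $|z|=1$ it would be constant on all of $\bC^\times$, so $\phi(\nu)$ would have constant trace for the complexified hyperbolic family as well; but for that complexified family hypothesis (ii), $\Fix\mu\cap g_i\Fix\mu=\varnothing$, is exactly the input needed to run part (2)'s ping-pong in $\PSL_2(\bC)$ acting on $S^2$ (the fixed points of the complex hyperbolic $\mu$ are the same two points as those of the elliptic $\mu$), giving $|\tr\circ\phi|\to\infty$ along $|z|\to\infty$ — contradiction. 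Thus $P$ is non-constant and part (1) follows. The delicate point to get right is that the fixed-point set of the elliptic $\mu$ in $\partial\bH^2$ and the axis endpoints of its complexification in $\partial\bH^3$ are genuinely the ``same'' data, so that condition (ii) transfers verbatim; I would make this precise using Lemma~\ref{lem:parabolic-test} and the explicit parametrizations $\Rot(t)$, $\exp(it)$ from Subsection~\ref{ss:hyp}.
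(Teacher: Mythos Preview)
Your proposal is correct and follows essentially the same route as the paper: apply the Topological Baumslag Lemma~\ref{lem:baumslag-gen} with $C_i=\Fix\mu$ to get $\phi$ doubly attracting in the non-elliptic case, then use the complexification/Zariski-density trick for the elliptic case by extending $\Rot(t)$ to the diagonal $\bC^\times$-family and invoking the $\bK=\bC$ case. The paper's argument for $|\tr\circ\phi(\nu)|\to\infty$ is precisely your ``shrinking neighborhoods'' observation (phrased there as ``the hyperbolic distance between $g_1\Fix\mu$ and $\Fix\mu$ is infinite''); your alternative additivity-of-$\log|\tr|$ heuristic is not needed and would be harder to justify. One small wording slip: for elliptic $\mu\le\PSL_2(\bR)$ the fixed point lies in $\bH^2$, not $\partial\bH^2$; the transfer of condition~(ii) to $\partial\bH^3$ works because $g_i\in\PSL_2(\bR)$ commutes with complex conjugation on $\hat\bC$, so $g_i\notin\SO(2)$ is equivalent to $g_i\{0,\infty\}\cap\{0,\infty\}=\varnothing$ after diagonalizing --- exactly the check the paper performs implicitly.
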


\bp
Let us first consider the case $\bK=\bC$.
We will prove (2) and then (1).
Since the condition $\|\nu\|\to\infty$ is invariant under conjugation, we may parametrize $\mu$ so that
\[
\mu(z)=
\begin{pmatrix} 1&z \\ 0& 1\end{pmatrix}
\text{ or }
\mu(z)=\begin{pmatrix} z&0 \\ 0& 1/z\end{pmatrix}.\]
In order to apply the Topological Baumslag Lemma (Lemma~\ref{lem:baumslag-gen}), we set 
 \[T=\bC, \quad \phi_i(z)=\mu(m_i z)\] in the former case and \[T=\bC^\times, \quad \phi_i(z)=\mu(z^{m_i})\] in the latter case.
 Further, set \[ C_i=\Fix \mu\sse S^2,\quad \psi(z)=\phi\circ\mu(z).\]
Note $g_i\Fix\mu\cap\Fix\mu=\varnothing$ on $S^2$.
So $\psi$ is doubly attracting to $(g_1\Fix\mu,\Fix\mu)$.
As the hyperbolic distance between $g_1\Fix\mu$ and $\Fix\mu$ is infinite, we have 
\[\lim_{|z|\to\infty} |\tr^2\circ\psi(z)|=\lim_{|z|\to\infty} |\tr^2\circ\phi\circ\mu(z)|=\infty.\]
So part (2) follows. For part (1) we note that
 $\tr^2\circ\phi\circ\mu(z)=x$ is an algebraic equation which is not an identity, and hence, the solution set is finite.
 
Now we set $\bK=\bR$. 
If $\mu$ is non-elliptic, then the proofs of (1) and (2) are almost identical to the previous paragraph. 
So we assume $\mu$ is elliptic
and prove (1). 

We use a complexification trick, embedding $\PSL_2(\bR)$ into $\PSL_2(\mathbb{C})$ and using Zariski density of $\SO(2)$ in $\bC^\times$. 
That is, we regard $\bH^2$ as a hemisphere properly embedded in $\bH^3$ with the upper half-space model. 
We may assume $\mu$ is the rotation group about the $z$--axis of $\bH^3$, so that
\[ \mu(t)= \begin{pmatrix} e^{it/2} & 0 \\ 0& e^{-it/2}\end{pmatrix}
\text{ and }
\phi\circ\mu(2t) = \psi(e^{it}),
\]
where we define
\[ \psi(z)=\prod_{i=1}^k g_i \begin{pmatrix}z & 0\\
0 & z^{-1}\end{pmatrix}^{m_i}.\]
By assumption, we have 
\[g_i.\{0,\infty\}\cap\{0,\infty\}=\varnothing.\]
We define an algebraic set
\[
Y=\{z\in\bC^\times\co \tr^2\circ\psi(z)=x\}.\]
If some $Y\cap S^1$ is infinite, then $Y=\bC^\times$ since every infinite set is Zariski dense in $\bC^\times$.
This is a contradiction to the case $\bK=\bC$.
\ep

\begin{rem}
Let us illustrate why one cannot weaken the condition (ii) to
\be[(i)]
\item[(ii)'] $\Fix\mu\ne g_i\Fix\mu$
\ee
in the above lemma.
Consider $\mu(t)=\exp(t)$ and $g_1,g_2$ are two distinct nontrivial parabolic elements fixing $\infty$.
In the upper half-space model, we see 
\[
[\mu(t),g_i\mu(t)g_i^{-1}]\]
is a translation along $\bR$, and so
\[f(t) = \left[[\mu(t),g_1\mu(t)g_1^{-1}],[\mu(t),g_2\mu(t)g_2^{-1}]\right]\]
is constantly the identity. Note that $f(t)$ satisfies the condition (ii)' and also all the conditions of the lemma except for (ii).
\end{rem}

A version of Baumslag's Lemma holds for other groups with north-south dynamics. 
Recall an isometric action of a non-virtually cyclic group $G$ on a metric space $(X,d)$ is \emph{acylindrical}\index{acylindrical (action)} if for all $r\ge0$ there exists $R,N\ge0$ such that whenever $x,y\in X$ satisfy $d(x,y)\ge R$
the set 
\[
\{g\in G\co d(x,gx)\le r\text{ and }d(y,gy)\le r\}\]
has cardinality at most $N$. If $G$ acts acylindrically on a hyperbolic space $X$, then each element $g\in G$ is either \emph{elliptic}\index{isometry!elliptic} (that is, $\form{g}$ has a bounded orbit) or \emph{loxodromic}\index{isometry!loxodromic} (that is, $g$ acts by translation on a quasi-geodesic axis); see~\cite{Bowditch2008}
and~\cite{Hamenstadt2008JEMS}.
Word-hyperbolic groups, mapping class groups and right-angled Artin groups admit acylindrical actions on hyperbolic spaces~\cite{Bowditch2008,KK2013b,DGO2017}; see also~\cite{BF2002GT}.
Using a local-to-global principle, we can strengthen the Topological Baumslag Lemma for acylindrically hyperbolic groups (i.e.\  groups that admit  acylindrical actions on hyperbolic spaces) to draw a geometric conclusion.

\begin{thm}[Discrete Baumslag Lemma]\label{thm:baumslag-acyl}
Let $G$ be a finitely generated group acting acylindrically on a hyperbolic space $Y$ and let
\[g_1,\ldots,g_k,u_1,\ldots,u_k\in G\]
such that each $u_i$ is loxodromic.
For each $1\le i\le k$, we assume
\[\form{u_i}\cap g_{i+1} \form{u_{i+1}} g_{i+1}^{-1}=\{1\},\]
where the indices are taken modulo $k$.
For $t=(t_1,\ldots,t_k)\in\bZ^k$, 
we define
\[\phi(t) =g_1 u_1^{t_1} g_2 u_2^{t_2} \cdots g_k u_k^{t_k}.\]
Then $\phi(t)$ is loxodromic whenever all $|t_i|$'s are sufficiently large.
\end{thm}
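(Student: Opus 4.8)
The plan is to deduce the theorem from the Topological Baumslag Lemma (Lemma~\ref{lem:baumslag-gen}) applied to the action of $G$ on the Gromov boundary $X=\partial Y$, and then to upgrade the resulting boundary dynamics to a genuinely loxodromic element using acylindricity (this is the ``local--to--global'' step alluded to before the statement). Recall that a loxodromic isometry $u$ of a hyperbolic space has two distinct fixed points $u^{+\infty}\ne u^{-\infty}$ on $\partial Y$ and acts there with north--south dynamics. So I would take $T=\bZ^k$ with the stratification $T_M=\{(t_1,\dots,t_k)\in\bZ^k\co |t_j|\ge M\text{ for all }j\}$, set $\phi_i(t_1,\dots,t_k)=u_i^{t_i}$, and put $C_i=\{u_i^{+\infty},u_i^{-\infty}\}\sse\partial Y$. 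Each $\phi_i\co T\to\Homeo(\partial Y)$ is then attracting to $C_i$ in the sense of Definition~\ref{defn:attracting}: for any open $U\supseteq C_i$ the complement $\partial Y\setminus U$ is closed and misses $u_i^{-\infty}$ (resp.\ $u_i^{+\infty}$), hence is swept into $U$ by $u_i^{\,m}$ (resp.\ $u_i^{-m}$) once $|m|$ is large, exactly as in Example~\ref{exmp:attr}(7). With $\psi(t):=g_1\phi_1(t)g_2\phi_2(t)\cdots g_k\phi_k(t)=\phi(t)$, it then remains only to verify the two nondegeneracy hypotheses of Lemma~\ref{lem:baumslag-gen}: (iii) $C_i\cap g_{i+1}C_{i+1}=\varnothing$ for all $i$ mod $k$, and (iv) $\partial Y\ne g_1C_1\cup C_k$.

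\textbf{The main obstacle: hypothesis (iii).} This is exactly where the condition $\form{u_i}\cap g_{i+1}\form{u_{i+1}}g_{i+1}^{-1}=\{1\}$ must be used, and I expect it to be the crux. Using $g\cdot u^{\pm\infty}=(gug^{-1})^{\pm\infty}$, condition (iii) for the index $i$ is equivalent to disjointness of the fixed--point pairs $\{u_i^{\pm\infty}\}$ and $\{v^{\pm\infty}\}$, where $v=g_{i+1}u_{i+1}g_{i+1}^{-1}$. So it suffices to prove the claim: \emph{if two loxodromic elements $u,v$ of $G$ have intersecting fixed--point sets on $\partial Y$, then $\form{u}\cap\form{v}\ne\{1\}$.} I would argue as follows. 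In an acylindrical action a loxodromic cannot share exactly one fixed point with another loxodromic, so $u$ and $v$ have the same unordered fixed pair $\{a,b\}$ and their quasi--axes lie within bounded Hausdorff distance. After replacing $u,v$ by powers with a common translation length $\ell$ and the same direction along this coarse axis, one checks that for each $n$ there is $m_n$ with $|n\ell-m_n\ell|$ bounded and that the elements $u^{n}v^{-m_n}$ displace every sufficiently far point of the common axis by a uniformly bounded amount; acylindricity then forces $\{u^{n}v^{-m_n}\co n\in\bZ\}$ to be finite, so $u^{n_1}v^{-m_{n_1}}=u^{n_2}v^{-m_{n_2}}$ for some $n_1\ne n_2$, whence $u^{\,n_1-n_2}=v^{\,m_{n_1}-m_{n_2}}$ is a nontrivial element of $\form{u}\cap\form{v}$ (nontrivial since $u$ is loxodromic). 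Equivalently, one may invoke the standard facts that the elementary closure of a loxodromic in an acylindrical action is virtually cyclic and that any two infinite cyclic subgroups of an infinite virtually cyclic group meet nontrivially (cf.~\cite{Bowditch2008,DGO2017}). Applying the claim with $u=u_i$ and $v=g_{i+1}u_{i+1}g_{i+1}^{-1}$ and contraposing the hypothesis gives (iii).

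\textbf{Hypothesis (iv) and conclusion of the Baumslag step.} Here it is enough that $\partial Y$ is infinite, since $g_1C_1\cup C_k$ has at most four points. If $\partial Y$ were finite, a finite--index subgroup of $G$ would fix $\partial Y$ pointwise and hence contain no loxodromic, contradicting that a suitable power of $u_1$ is loxodromic and lies in that subgroup; and the remaining two--point case is excluded because an acylindrical action on a space quasi--isometric to a line forces $G$ to be virtually cyclic, contrary to hypothesis. Granting (i)--(iv), Lemma~\ref{lem:baumslag-gen} shows that for all sufficiently large $M$ and all $t\in T_M$ the map $\phi(t)=\psi(t)$ is doubly attracting to $(g_1C_1,C_k)$ on $\partial Y$.

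\textbf{Upgrading to loxodromic.} Finally one must promote this boundary statement to the geometric conclusion. By Lemma~\ref{lem:doubly}(1), $\phi(t)$ has infinite order. Since $\phi(t)$ carries points of $\partial Y$ into an arbitrarily small neighbourhood of $g_1C_1$ under forward iteration and into a disjoint small neighbourhood of $C_k$ under backward iteration (and $\partial Y$ has points outside both), it cannot act with bounded orbits on $Y$, so $\phi(t)$ is not elliptic; by the elliptic/loxodromic dichotomy for elements of an acylindrical action on a hyperbolic space, $\phi(t)$ is loxodromic, as claimed. (Alternatively, one can bypass the boundary and argue directly in $Y$: for $M$ large the broken path spelled out by the word $g_1u_1^{t_1}g_2u_2^{t_2}\cdots g_ku_k^{t_k}$ is a \emph{local} quasigeodesic with constants independent of $t$---the divergence of quasi--axes of consecutive factors, guaranteed by (iii), prevents backtracking at the junctions---so by the local--to--global principle for quasigeodesics in $\delta$--hyperbolic spaces it is a global quasigeodesic preserved by $\phi(t)$, exhibiting $\phi(t)$ as loxodromic with translation length comparable to $\sum_i|t_i|\,\ell(u_i)$.)
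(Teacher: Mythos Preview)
Your verification of hypotheses (iii) and (iv) for Lemma~\ref{lem:baumslag-gen} is fine and matches the paper's setup (the paper phrases (iii) via the virtually cyclic elementary closure $E(u)=\stab(\partial\ell_u)$ with $[E(u):\form{u}]<\infty$, deducing $\partial\ell_{u_i}\cap g_{i+1}\partial\ell_{u_{i+1}}=\varnothing$; your argument is equivalent). The gap is in your primary \emph{upgrading} step.

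The conclusion of Lemma~\ref{lem:baumslag-gen} is that the \emph{map} $\psi\co T\to\Homeo(\partial Y)$ is doubly attracting: for each pair $(U^+,U^-)$ there is an $M=M(U^+,U^-)$ such that $\psi(t)$ satisfies the ping--pong inclusions on $(U^+,U^-)$ whenever $t\in T_M$. This is a statement about the \emph{family} $\{\psi(t)\}_t$, not about iteration of a single $\psi(t)$. For a fixed $t$, you only get the inclusions for those $(U^+,U^-)$ with $M(U^+,U^-)\le\min_i|t_i|$, not for ``arbitrarily small'' neighbourhoods. So the sentence ``$\phi(t)$ carries points of $\partial Y$ into an arbitrarily small neighbourhood of $g_1C_1$ under forward iteration'' is not justified; and with only one fixed pair $(U^+,U^-)$ at hand you have no evident contradiction with ellipticity (elliptic isometries of non-proper hyperbolic spaces can act nontrivially and aperiodically on the boundary). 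The paper makes exactly this point in the remark immediately following its proof: the boundary Baumslag lemma shows $\phi$ is doubly attracting as a map, ``however, this does not directly imply that the individual isometry $\phi(t)$ is loxodromic whenever $|t_i|$'s are large.''

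The good news is that your parenthetical ``alternative'' \emph{is} the paper's actual proof. The authors fix $y\in Y$, write out the bi-infinite orbit path $\gamma$ of the word $\cdots\phi(t)\phi(t)\cdots$, and show it is a $(K,\ell)$--local quasigeodesic once each $|t_i|\ge K$: length--$K$ subpaths are either of type $u_i^{n}$ (quasigeodesic by loxodromicity) or of type $u_i^{n_1}g_{i+1}u_{i+1}^{n_2}$ (quasigeodesic because the consecutive axes have disjoint ideal endpoints, precisely your (iii)). The local--to--global principle (\cite[III.H.1.13]{BH1999}) then makes $\gamma$ a global quasigeodesic translated by $\phi(t)$, hence $\phi(t)$ is loxodromic. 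So your alternative should be promoted to the main argument; the boundary route, while giving infinite order via Lemma~\ref{lem:doubly}, does not by itself close the loxodromic conclusion.
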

\bp 
The Gromov boundary $\partial Y$ is a complete metric space~\cite[1.8B]{Gromov1987}.
Each loxodromic $u\in G$ admits a quasi-geodesic axis $\ell_u\sse Y$,
and acts by north-south dynamics on $\partial Y$ with exactly two fixed points which comprise $\partial\ell_u$~\cite{Hamenstadt2008JEMS}.
For $E(u)=\stab(\partial \ell_u)$,
we have $[E(u):\form{u}]<\infty$.
See~\cite[Proposition 6]{BF2002GT} and \cite[Lemma 6.5 and Theorem 6.8]{DGO2017}
for relevant details.
So for each $1\le i\le k$, we have
\[E(u_i)\ne g_{i+1}E(u_{i+1})g_{i+1}^{-1},\quad
\partial\ell_{u_{i}}\cap g_{i+1}\partial\ell_{u_{i+1}}=\varnothing.\]

Let us fix $y\in Y$. For each $t\in \bZ^k$, we naturally regard $\phi(t)$ as a word of length 
\[k+|t_1|+\cdots+|t_k|\]
and define a bi-infinite coarse path
\[\gamma\co \bZ\to Y\]
which is the orbit of the bi-infinite words $\cdots\phi(t)\phi(t)\cdots$ based at $y$.

It follows from \cite[III.H.1.13]{BH1999} 
or \cite[Theorem 21, Chapter 5]{GH1990} that for each $\ell \geq 1$, there exists (large) $K$ such that every $(K,\ell)$-local quasi-geodesic in $Y$
 is a global quasi-geodesic. We shall show that $\gamma$ is such a $(K,\ell)$-local quasi-geodesic whenever each $|t_i|$ is sufficiently large;  hence $\phi(t)$ is loxodromic.

It remains to choose each $|t_i|$  large enough so that $\gamma$ is a $(K,\ell)$-local quasi-geodesic with appropriate $K,\ell$.
First note that 

\begin{enumerate}
\item[(1)] There exists $\ell_1$ such that for all $i$ and $n \in \bZ$, the word $u_i^{n}$ is an $(\ell_1,\ell_1)$--quasi-isometric embedding of $[0,|n|]$. This follows from the hypothesis that $u_i$'s are loxodromic.
\item[(2)]  There exists $\ell_2$ such that for all $i$ and $n_1, n_2 \in \bZ$, the word $u_i^{n_1}g_{i+1}u_{i+1}^{n_2}$ is an $(\ell_2,\ell_2)$--quasi-isometric embedding of $[0,|n_1|+|n_2|+1]$. This follows from Item (1) above and the hypothesis that $\partial\ell_{u_{i}}\cap g_{i+1}\partial\ell_{u_{i+1}}=\varnothing$ (see for instance Corollary 1.37 of \cite{mahan-sardar}).
\end{enumerate}

Choose $\ell=\max(\ell_1, \ell_2)$ and let $K$ be large enough so that every $(K,\ell)$-local quasi-geodesic is a global  quasi-geodesic. We now choose
 each $|t_i| \geq K$. Then each subpath  $\gamma (N, N+K)$ of $\gamma$ is of type (1) or (2) above and is therefore an   $(\ell,\ell)$-quasi-geodesic.
 Hence $\gamma$ is a global quasigeodesic and we are done.
\ep
\begin{rem}
In the above proof, we can apply Lemmas \ref{lem:doubly} and~\ref{lem:baumslag-gen}
for the setting \[T=\bZ^k,\quad X=\partial Y,\quad \phi_i(t)=u_i^{t_i},\quad  \phi(t)=\prod_{i=1}^k g_i\phi_i(t),\]
to conclude that $\phi\co\bZ^k\to\Homeo(\partial Y)$ is doubly attracting as a map.
However, this does not directly imply that the individual isometry $\phi(t)$ is loxodromic whenever $|t_i|$'s are large.
\end{rem}


\begin{exmp}\label{exmp:tor-free-hyp}
Suppose $G$ is a torsion-free word-hyperbolic group.
Define $\phi\co\bZ^k\to G$ as in Theorem~\ref{thm:baumslag-acyl}, where $g_i,u_i\in G$ satisfy
\[[u_i,g_{i+1} u_{i+1}g_{i+1}^{-1}]\ne1.\]
Then $\phi(t)\ne1$ whenever all $|t_i|$'s are sufficiently large by Theorem~\ref{thm:baumslag-acyl}. 
If \[u_1=\cdots=u_k=u,\] the hypothesis can be weakened to
\[[u,g_i]\ne1\] for each $i$.
So we have a direct generalization of the original Baumslag's Lemma.
See also \cite[Lemma 5.4]{GW2010JT}, \cite[Lemma 3.6]{Kim2010} and  \cite[Lemma 3.5]{BG2010JAM} for other special  cases of Theorem~\ref{thm:baumslag-acyl}.
\end{exmp}

\begin{cor}\label{cor:mcg}
Let $S$ be a closed oriented hyperbolic surface
and $\Mod(S)$ denote its mapping class group.
Define $\phi\co\bZ^k\to \Mod(S)$ as in Theorem~\ref{thm:baumslag-acyl}, where $g_i,u_i\in G$ satisfy that
\[[u_i,g_{i+1} u_{i+1}g_{i+1}^{-1}]\ne1,\]
and that each $u_i$ is pseudo-Anosov.
Then  $\phi(t)$ is pseudo-Anosov whenever all $|t_i|$'s are sufficiently large. 
\end{cor}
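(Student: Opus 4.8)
The plan is to obtain Corollary~\ref{cor:mcg} directly from the Discrete Baumslag Lemma (Theorem~\ref{thm:baumslag-acyl}), applied to the action of $\Mod(S)$ on the curve complex $\mC(S)$. By Masur--Minsky, $\mC(S)$ is a connected Gromov hyperbolic graph, and by Bowditch~\cite{Bowditch2008} the $\Mod(S)$--action on it is acylindrical; this is one of the examples of acylindrically hyperbolic groups recalled just before Theorem~\ref{thm:baumslag-acyl}. Under this action a mapping class is loxodromic if and only if it is pseudo-Anosov: a periodic class has finite, hence bounded, orbits, a reducible class coarsely fixes a multicurve and so also has bounded orbits, while a pseudo-Anosov acts with positive translation length and north--south dynamics on $\partial\mC(S)$, the two boundary fixed points being the projective classes of its stable and unstable foliations. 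Thus ``each $u_i$ is pseudo-Anosov'' is exactly the hypothesis ``each $u_i$ is loxodromic'' of Theorem~\ref{thm:baumslag-acyl}, and ``$\phi(t)$ is loxodromic'' is exactly ``$\phi(t)$ is pseudo-Anosov''.

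It then remains to check that the commutator conditions $[u_i,g_{i+1}u_{i+1}g_{i+1}^{-1}]\ne1$ (indices modulo $k$) supply the conditions $\form{u_i}\cap g_{i+1}\form{u_{i+1}}g_{i+1}^{-1}=\{1\}$ needed to invoke Theorem~\ref{thm:baumslag-acyl}. Put $v_i=g_{i+1}u_{i+1}g_{i+1}^{-1}$, which is again pseudo-Anosov, and suppose $w=u_i^{a}=v_i^{b}$ for some $a,b\in\bZ\setminus\{0\}$; then $w\ne1$ since pseudo-Anosov classes have infinite order, and $w$ is itself pseudo-Anosov. Now $w$ has the same invariant pair of projective measured foliations as $u_i$ and, simultaneously, as $v_i$, so $u_i$ and $v_i$ both lie in the stabilizer $E$ in $\Mod(S)$ of this foliation pair. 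By McCarthy's structure theorem for (normalizers and) centralizers of pseudo-Anosov mapping classes, $E$ is virtually cyclic; the Teichm\"uller translation length defines a homomorphism $E\to\bZ$ whose kernel is finite (consisting of periodic classes), and $u_i,v_i$ have equal nonzero images under it. A short argument with this structure then forces $[u_i,v_i]=1$, contradicting the hypothesis; hence $\form{u_i}\cap g_{i+1}\form{u_{i+1}}g_{i+1}^{-1}=\{1\}$ for all $i$, and Theorem~\ref{thm:baumslag-acyl} gives that $\phi(t)$ is pseudo-Anosov whenever all $|t_i|$ are sufficiently large.

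The substantive step is the last one: translating the commutator conditions into the malnormality conditions. It is the mapping class group analogue of the observation used in Example~\ref{exmp:tor-free-hyp}, where in a torsion-free word-hyperbolic group the centralizer of an infinite-order element is infinite cyclic, so that $[u,v]\ne1$ already forces $\form{u}\cap\form{v^{g}}=\{1\}$; in $\Mod(S)$ the centralizer is only virtually cyclic, so one must control the finite-order classes that may stabilise the same pair of pseudo-Anosov foliations and check that they do not interfere with commutativity of the pair $u_i,v_i$. Once this reduction is in place the corollary follows with no further work, simply by quoting Theorem~\ref{thm:baumslag-acyl} for the curve complex action.
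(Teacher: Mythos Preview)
Your overall approach is exactly the paper's: invoke Bowditch's acylindricity of the $\Mod(S)$--action on the curve complex and apply Theorem~\ref{thm:baumslag-acyl}, using that ``loxodromic on $\mC(S)$'' coincides with ``pseudo-Anosov''. The paper's own proof is literally one sentence to this effect and does not spell out the passage from the commutator hypothesis $[u_i,g_{i+1}u_{i+1}g_{i+1}^{-1}]\ne1$ to the intersection hypothesis $\form{u_i}\cap g_{i+1}\form{u_{i+1}}g_{i+1}^{-1}=\{1\}$ of Theorem~\ref{thm:baumslag-acyl}. You correctly flag this as the one nontrivial point and try to supply it.

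However, your verification has a gap. With $w=u_i^{a}=v_i^{b}$ you place $u_i,v_i$ in the virtually cyclic group $E=\Stab(\partial\ell_w)$ and appeal to a translation-length homomorphism $\tau\co E\to\bZ$. Two problems: first, one only gets $a\,\tau(u_i)=b\,\tau(v_i)$, not equal images; second, and more seriously, virtual cyclicity of $E$ together with $\form{u_i}\cap\form{v_i}\ne\{1\}$ does \emph{not} by itself force $[u_i,v_i]=1$. In the abstract group $E=\langle f,\sigma\mid \sigma^3=1,\ f\sigma f^{-1}=\sigma^{-1}\rangle$, the elements $u=f$ and $v=f\sigma$ are infinite order, satisfy $u^2=v^2$, yet $[u,v]=\sigma^2\ne1$. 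So your ``short argument with this structure'' cannot work at that level of generality; something specific to $\Mod(S)$ is needed to rule out such a twisted extension occurring as $E(w)$. (This is exactly the issue you anticipate in your last paragraph but do not resolve.) The paper, for its part, is silent here as well, presumably treating the implication as standard; but since you explicitly assert an argument, you should either supply a genuine $\Mod(S)$--specific reason (e.g.\ via McCarthy's description of centralizers of pseudo-Anosovs, showing the finite kernel is in fact central in $C(w)$) or cite a reference.
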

\bp
Since $G$ acts on the curve graph acylindrically~\cite{Bowditch2008}, we can apply Theorem~\ref{thm:baumslag-acyl}.
\ep

\begin{rem}
If $u_1=\cdots=u_k$ in the above corollary, then we can also weaken the hypothesis to 
\[
[u,g_i]\ne1\]
for each $i$, since mapping class groups do not admit Baumslag--Solitar relations that are not commutators~\cite[Theorem 8.2]{KL2007}.
\end{rem}

The following is an immediate consequence of Corollary~\ref{cor:mcg}.
\begin{prop}
Let $G$ be the mapping class group of a closed orientable hyperbolic surface, and let $\form{f}\le G$ be a maximal cyclic pseudo-Anosov subgroup. 
Define $\phi_n\co G\ast_{\form{f}} G\to G$
by extending the map $\Id_G$ on the left factor and 
the map
\[g\mapsto f^n g f^{-n}\] on the right factor.
Then the family $\{\phi_n\}$ is stably injective; that is,
we have
\[
\bigcap_{M\ge1}\bigcup_{n\ge M}\ker\phi_n=\{1\}.\]
\end{prop}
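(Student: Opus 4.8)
The plan is to reduce, via the normal form theorem for amalgamated products, to the case of a cyclically reduced word, and then to recognize $\phi_n(w)$ as an element to which Corollary~\ref{cor:mcg} applies. Write $G_1$ and $G_2$ for the left and right copies of $G$ inside $\Gamma=G\ast_{\langle f\rangle}G$, so that $\phi_n$ restricts to the identity on $G_1$ and to conjugation by $f^n$ on $G_2$; in particular $\phi_n$ is injective on each $G_i$. Fix $1\ne w\in\Gamma$. Since $\phi_n$ is a homomorphism, $\phi_n(w)=1$ if and only if $\phi_n(v^{-1}wv)=1$ for every $v\in\Gamma$, so we may freely replace $w$ by a conjugate. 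Thus we may assume that $w$ is either a nontrivial element of $G_1$, a nontrivial element of $G_2$, or a cyclically reduced word of syllable length at least $2$; and in the last case, after a cyclic permutation (which is a conjugation), we may arrange that
\[
w=x_1y_1x_2y_2\cdots x_my_m,\qquad m\ge1,\ x_i\in G_1\setminus\langle f\rangle,\ y_i\in G_2\setminus\langle f\rangle .
\]
The first two cases are immediate: $\phi_n(w)\ne1$ for every $n$ because $\phi_n$ is injective on $G_1$ and on $G_2$.

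For the remaining case I would expand
\[
\phi_n(w)=x_1\,f^{\,n}\,y_1\,f^{-n}\,x_2\,f^{\,n}\,y_2\,f^{-n}\cdots x_m\,f^{\,n}\,y_m\,f^{-n},
\]
and read off that this is precisely the element $g_1u_1^{t_1}g_2u_2^{t_2}\cdots g_ku_k^{t_k}$ of Corollary~\ref{cor:mcg} with $k=2m$, with $(g_1,g_2,\dots,g_{2m})=(x_1,y_1,x_2,y_2,\dots,x_m,y_m)$, with $u_j=f$ for all $j$, and with $(t_1,t_2,\dots,t_{2m})=(n,-n,n,-n,\dots,n,-n)$, so that $|t_j|=n$ for every $j$. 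Here $f$ is pseudo-Anosov, and, since every twisting element $u_j$ equals $f$, the Remark following Corollary~\ref{cor:mcg} lets us invoke the corollary provided $[f,g_j]\ne1$ for each $j$, i.e. provided no syllable $x_i$ or $y_i$ commutes with $f$. This is the single place the hypothesis enters: a maximal cyclic pseudo-Anosov subgroup is self-centralizing (the centralizer of a pseudo-Anosov in $\Mod(S)$ being virtually cyclic, maximality forces $Z_G(f)=\langle f\rangle$), so from $x_i,y_i\notin\langle f\rangle$ we get $[f,x_i]\ne1$ and $[f,y_i]\ne1$. Corollary~\ref{cor:mcg} then gives that $\phi_n(w)$ is pseudo-Anosov, hence in particular nontrivial, once all the $|t_j|=n$ are sufficiently large, i.e. for all $n\ge M_w$ with $M_w$ depending only on $w$.

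Putting the cases together: for each nontrivial $w\in\Gamma$ there is an $M_w$ with $\phi_n(w)\ne1$ for all $n\ge M_w$; equivalently $w\notin\bigcup_{n\ge M_w}\ker\phi_n$, hence $w\notin\bigcap_{M\ge1}\bigcup_{n\ge M}\ker\phi_n$. This yields $\bigcap_{M\ge1}\bigcup_{n\ge M}\ker\phi_n=\{1\}$, which is the asserted stable injectivity.

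I expect the main obstacle to be the bookkeeping around the normal form rather than the dynamical input. One must cyclically reduce $w$ while keeping every syllable outside the amalgamated subgroup $\langle f\rangle$, and --- crucially --- choose the cyclic rotation so that $w$ begins with a $G_1$-syllable and ends with a $G_2$-syllable: otherwise $\phi_n(w)$ would begin or end with a bare power of $f$, one of the $g_j$ would be trivial, and the cyclic commutation condition of Corollary~\ref{cor:mcg} would degenerate to $[f,f]\ne1$ and fail. The secondary, more routine, point is checking that the maximal cyclic pseudo-Anosov hypothesis indeed makes $\langle f\rangle$ self-centralizing, so that any syllable not lying in $\langle f\rangle$ automatically fails to commute with $f$.
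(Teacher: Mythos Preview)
Your proposal is correct and follows precisely the route the paper intends: the paper states only that the proposition is ``an immediate consequence of Corollary~\ref{cor:mcg}'', and you have supplied exactly the normal--form reduction and application of that corollary (together with the subsequent Remark for the case $u_1=\cdots=u_k=f$) that makes the implication explicit. Your closing paragraph correctly isolates the one substantive point, namely that the maximal cyclic hypothesis is what forces each syllable outside $\langle f\rangle$ to fail to commute with $f$; without this the statement would in fact be false, so your instinct to flag it is on target.
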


\section{Splittable Fuchsian Groups}\label{sec:fuchs}
In this section, we study deformations of (possibly indiscrete) faithful representations of Fuchsian groups such that almost all points on the deformations are still faithful.
Let $L$ be a \emph{splittable}\index{Fuchsian group!splittable} Fuchsian group, which includes all Fuchsian groups with Euler characteristic at most $-1$; see Definition~\ref{defn:split}.
We will prove that an arbitrarily small deformation of a given representation can be chosen so that the new trace spectrum is almost disjoint from the original one (Theorem~\ref{thm:fuchs-flex}). Then we show $X_{\mathrm{proj}}(L)$ contains at least one indiscrete representation (Lemma~\ref{lem:split-indiscrete}). Moreover, if an open set $U$ contains at least one indiscrete representation in $X_{\mathrm{proj}}(L)$, then $U$ contains uncountably many pairwise inequivalent indiscrete representation in $X_{\mathrm{proj}}(L)$ (Theorem~\ref{thm:uncountable}).

The main engine behind this study is the Pulling-Apart Lemma (Lemmas~\ref{lem:free} and~\ref{lem:pull-apart-psl}). Roughly, this lemma means that ``a generic deformation of a representation remembers obvious group relations only''. 
The two ingredients of this lemma are put together here. 
First, we use in an essential way, the algebro-geometric structure of representation varieties to establish that a generic point is faithful {\it provided} no word (varying according to suitably chosen parameters) is  identically trivial. The Topological Baumslag Lemma and the complexification trick from the previous section furnish this sufficient condition. While building faithful representations of free products with amalgamation $G = A\ast_C B$, or HNN extensions $A\ast_\phi$ from faithful representations of $A, B$, we will use genericity or {\it Baire category argument}, which is the second ingredient. This argument makes precise the notion of a property satisfied by a generic point in a real or complex algebraic variety. 

Throughout this section, we will let $\bK=\bC$ or $\bK=\bR$.

\subsection{Very General Points, Abundance and Stable Injectivity}\label{sec:baire}
Our techniques involve both the  topological category as well as that of (real or complex)
algebraic varieties. Thus a choice of terminology needs to be made, which we set out to do here.

First we set up the terminology in the topological category.
Recall that a $G_\delta$ set in a topological space $X$ is the countable intersection of open subsets.
We shall refer to a property as being satisfied by a \emph{very general}\index{very general!in a topological space} point in $X$
if it is satisfied by a point in a (suitably chosen) dense $G_\delta$ subset of $X$.

The corresponding notion in algebraic geometry is more restrictive.
We will regard linear algebraic groups as affine algebraic sets
and equip them with the Zariski topology.
We shall refer to a property as being satisfied by a \emph{very general}\index{very general!in an algebraic set} point in an algebraic set $X$ if it is satisfied by a point in the complement of a countable union of subsets \[\{X_1,X_2,\ldots\}\] of $X$ such that each $X_i$ is a proper algebraic subset of an irreducible component of $X$.

We gather together some facts for use later. All algebraic sets will be defined over $\bR$ or $\mathbb C$.
Note that $\PSL_2(\bR)$ and $\PSL_2(\bC)$ are irreducible algebraic groups.
The following is a standard fact; see ~\cite{borel83,bourabee} or ~\cite[Lemma 2.4]{BGGT} for instance.

\begin{lem}\label{lem:baire}
Let $X$ be an algebraic set, and let
 \[X_1\sse X_2\sse \cdots\] be a countable chain of proper algebraic subsets of $X$. Then the set
 \[\DD=X\setminus\bigcup_{i=1}^\infty X_i\]
contains very general points of some irreducible component of $X$.
Furthermore, if $\dim X>0$, then
$\DD$ is  uncountable.
\end{lem}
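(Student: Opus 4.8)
The plan is to prove Lemma~\ref{lem:baire} by a direct Baire category argument, treating the two assertions --- that $\DD$ meets the very general locus of some component, and that $\DD$ is uncountable when $\dim X > 0$ --- in turn.

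First I would reduce to an irreducible component. Since $X$ is an algebraic set over $\bR$ or $\bC$, it has finitely many irreducible components $Z_1, \ldots, Z_r$. Each $X_i$ is a proper algebraic subset of $X$, so for each fixed component $Z_j$ either $Z_j \sse X_i$ or $Z_j \cap X_i$ is a proper algebraic subset of $Z_j$. Because $X_i$ is proper in $X$, at least one component is not contained in $X_i$; since there are only finitely many components and the $X_i$ form an increasing chain, some component $Z = Z_j$ satisfies $Z \not\sse X_i$ for all $i$. Then $Z \cap X_i$ is a proper closed algebraic subset of the irreducible variety $Z$ for every $i$, and by definition the points of $Z \setminus \bigcup_i (Z\cap X_i) \sse \DD$ are precisely very general points of the component $Z$. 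This establishes the first assertion; it also shows $\DD \cap Z$ is nonempty, since a proper algebraic subset of a nonempty irreducible variety cannot be everything (indeed $Z \setminus X_i$ is Zariski-dense in $Z$, hence dense in the classical topology when the base field is $\bR$ or $\bC$).

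For the uncountability statement, assume $\dim X > 0$, so we may take the component $Z$ above with $\dim Z \geq 1$. Work in the classical (Euclidean) topology on $Z(\bR)$ or $Z(\bC)$; this is a locally compact, hence Baire, topological space, and it is nonempty and without isolated points because $\dim Z \geq 1$ and $Z$ is irreducible (a positive-dimensional irreducible variety over $\bR$ or $\bC$ has no isolated points in the analytic topology --- this is where irreducibility and positive dimension are used). Each $Z \cap X_i$ is a proper closed algebraic subset, hence nowhere dense in the classical topology, so its complement $U_i = Z \setminus X_i$ is a dense open set. By the Baire category theorem, $\bigcap_i U_i = Z \setminus \bigcup_i X_i \sse \DD$ is a dense $G_\delta$ in $Z$, in particular nonempty. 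A nonempty $G_\delta$ in a perfect, completely metrizable space is uncountable (a standard consequence of the Baire category theorem: a countable dense-in-itself set cannot be a $G_\delta$ in such a space). Hence $\DD$ is uncountable.

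The main obstacle is not the topology --- the Baire argument is routine --- but making sure the bookkeeping in the reduction step is airtight: one must verify that the \emph{same} irreducible component $Z$ escapes \emph{every} $X_i$ simultaneously, which uses both the finiteness of the set of components and the fact that the $X_i$ are nested (so the set of components contained in some $X_i$ is itself nested and, being finite, stabilizes without exhausting all components). After that, the only genuinely geometric input is that a positive-dimensional irreducible $\bR$- or $\bC$-variety is perfect in the analytic topology and that proper algebraic subsets are nowhere dense there; both are standard facts about algebraic varieties and I would simply cite them (e.g.\ from \cite{borel83} or a standard reference on real/complex algebraic geometry) rather than reprove them.
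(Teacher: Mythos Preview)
Your proposal is correct and follows essentially the same route as the paper: pick an irreducible component $Z$ not contained in any $X_i$ (using that there are only finitely many components and the $X_i$ form an increasing chain), observe that $Z\setminus\bigcup_i X_i$ is very general in $Z$ by definition, and then appeal to the Baire category theorem for uncountability. Your write-up is simply more explicit than the paper's about why the resulting dense $G_\delta$ must be uncountable.
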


\begin{proof} 
Each $X_i$ is closed in the Zariski topology, 
and hence is either nowhere dense in $X$ or contains an irreducible component of $X$. 
Since $X$ has finitely many components
and each $X_i$ is a proper subset,
there exists an irreducible component $V$ of $X$ such that $V\cap X_i$ is proper in $V$ for each $i$.
By definition, \[Y=V\setminus\bigcup_i X_i\] is very general in $V$.
For the second part, note
$Y$ contains a dense $G_\delta$ set.\end{proof}

We indicate here a general method of extracting a large (uncountable)
family of faithful representations by applying
Lemma \ref{lem:baire}. The main  idea in this comes from translating a concept
from the (discrete) context of limit groups into the (algebraic geometry)
context of representations into algebraic groups.

\bd[\cite{Sela2001PIHES}]\label{defn:stab-inj}
We say a sequence of group homomorphisms \[\{f_n\}_{n\ge1}\sse\Hom(G,H_n)\] is \emph{stably injective}\index{stably injective} if 
for each $g\in G\setminus 1$ there exists $n_0>0$ such that $f_n(g)\ne1$ for all $n\ge n_0$.
\ed

For terminological convenience, we will also say a sequence of maps is \emph{stably injective} if the sequence has a stably injective subsequence.  As a warm-up, we now give a simple illustration of the Baire category argument 
in connection with stably injective maps.

\begin{lem}\label{lem:baire-simple}
Let $L$ be a  finitely generated group, and ${G}$ be an algebraic group.
If there exists a stably injective sequence of representations \[\{f_n\co L\to{G}\}_{n\in\bN}\] 
which contains infinitely many distinct representations, 
then faithful representations are very general in some irreducible component of $\Hom(L,G)$.
\end{lem}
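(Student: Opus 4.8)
The plan is to realise $\Hom(L,G)$ as an algebraic set and, using the stably injective sequence to pick out the right irreducible component, to cut that component by the countably many vanishing loci of the nontrivial elements of $L$.

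First I would fix a finite generating set $g_1,\dots,g_d$ of $L$ and identify $\Hom(L,G)$ with the Zariski--closed subset of $G^d$ determined by the relations of $L$; since $G^d$ is Noetherian this is an algebraic set. For each $g\in L\setminus 1$ the evaluation $f\mapsto f(g)$ is a morphism into $G$ (a word in the coordinate projections and the regular group operations of $G$), so the vanishing locus
\[ V_g=\{f\in\Hom(L,G)\co f(g)=1\} \]
is Zariski--closed, being the preimage of the closed point $1$. A representation is faithful precisely when it lies outside $\bigcup_{g\in L\setminus 1}V_g$, and $L\setminus 1$ is countable because $L$ is finitely generated.

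The crucial step is to isolate a single irreducible component of $\Hom(L,G)$ on which \emph{every} $V_g$ restricts to a proper subvariety. Since $\Hom(L,G)$ has only finitely many irreducible components and the set $S=\{f_n\co n\ge 1\}$ is infinite, some component $V$ contains infinitely many of the $f_n$; in particular $\dim V\ge 1$. Now fix $g\in L\setminus 1$. Stable injectivity gives an $n_0$ with $f_n(g)\ne 1$ for all $n\ge n_0$, so $V_g$ contains only finitely many elements of $S$; hence $V$ is not contained in $V_g$, and as $V$ is irreducible, $V_g\cap V$ is a proper algebraic subset of $V$.

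Finally I would enumerate $L\setminus 1=\{h_1,h_2,\dots\}$, form the increasing chain $X_i=\bigcup_{j\le i}(V_{h_j}\cap V)$ of proper algebraic subsets of the irreducible variety $V$, and apply Lemma~\ref{lem:baire} with ambient algebraic set $V$: the complement $V\setminus\bigcup_i X_i$ contains very general points of $V$, and this complement is exactly the set of faithful representations lying in $V$. This exhibits the faithful representations as a very general subset of the irreducible component $V$ of $\Hom(L,G)$. The one place where the hypotheses are genuinely used --- and the only real subtlety --- is the third step: a single component $V$ must simultaneously avoid being swallowed by each $V_g$, which is precisely what is guaranteed by ``stably injective'' together with ``infinitely many distinct $f_n$'' (one representation, or a non-stably-injective family, would provide neither the uniformity across all $g$ nor a positive--dimensional component in which to run the Baire argument).
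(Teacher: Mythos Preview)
Your proof is correct and follows essentially the same approach as the paper: realise $\Hom(L,G)$ as an algebraic set, cut by the countably many vanishing loci $V_g$ (the paper packages these into finite unions $Y(Q)$), and invoke Lemma~\ref{lem:baire}. The only organisational difference is that you single out the irreducible component $V$ in advance via pigeonhole on the $f_n$, whereas the paper simply checks each $Y(Q)$ is proper in the whole of $\Hom(L,G)$ and lets Lemma~\ref{lem:baire} locate the component; both routes are equivalent.
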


\bp
Write $L=\form{S}$. 
Note that \[X=\Hom(L,{G})\sse {G}^S\] is an algebraic set (by the Hilbert Basis Theorem).
Since $X$ is infinite, it has a positive dimension.

For each finite subset $Q\sse L\setminus 1$, 
 we define an algebraic set
 \[Y(Q)=\{ \rho\in X \co \rho(g)=1\text{ for some }g\in Q\}.\]
The stable injectivity implies that $Y(Q)$ is proper for all $n\in\bN$.
We can now apply Lemma~\ref{lem:baire} to the set of faithful representations, which can be written as
 \[\DD=X\setminus \bigcup_{Q\sse L,\text{ finite} } Y(Q).\qedhere\]
\ep

\bd\label{defn:limit group}
A finitely generated group $L$ is a \emph{limit group}\index{limit group}
if there exists a stably injective sequence \[\{\rho_n\co L\to F\}_{n\in\bN}\] for some fixed nonabelian free group $F$.\ed
Limit groups are torsion-free and finitely presented~\cite{BF2009book}. 
Surface groups and free groups are limit groups. 
An obvious consequence of Lemma~\ref{lem:baire-simple} 
is that if an algebraic group ${G}$ contains a nonabelian free group $F$, then ${G}$ contains an isomorphic copy of every limit group; see \cite{BF2009book,Wilton2009solutions,BG2010JAM}.

The following group--theoretic lemma will come handy when finding stably injective sequences.
For a group $L$, a subset $P\sse L$ and an integer $n>0$, 
we denote
\[ {P^n}={\{g^n\co g\in P\}}.\]
\begin{lem}\label{lem:sep}
For a finitely generated residually finite group $L$ and a finite subset $P \sse L$,
the following sequence of the quotient maps is stably injective:
\[\{\rho_n\co L\to L/\fform{P^{n!}}\co n\ge1\}.\]
\end{lem}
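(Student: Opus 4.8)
The plan is to verify the defining condition of stable injectivity directly: fix an arbitrary $g\in L\setminus\{1\}$ and exhibit an $n_0>0$ such that $g\notin\fform{P^{n!}}$, equivalently $\rho_n(g)\ne1$, for every $n\ge n_0$.

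First I would use residual finiteness of $L$ to choose a homomorphism $\pi\co L\to F$ onto a finite group with $\pi(g)\ne1$; put $N=\ker\pi$, a finite-index \emph{normal} subgroup of $L$ with $g\notin N$, and let $m=|F|$. The crux is that the factorial in the exponent is designed precisely so as to eventually absorb the exponent of any fixed finite quotient: for every $n\ge m$ we have $m\mid n!$, so for each $p\in P$ the element $\pi(p)$, whose order divides $m$ by Lagrange, satisfies $\pi(p)^{n!}=1$. Hence $P^{n!}\sse N$ for all $n\ge m$, and since $N$ is normal in $L$ this forces $\fform{P^{n!}}\sse N$. Because $g\notin N$, we conclude $\rho_n(g)\ne1$ for all $n\ge m$, so $n_0=m$ works for this $g$. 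As $g$ was arbitrary, the sequence $\{\rho_n\}$ is stably injective.

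I do not foresee a real obstacle: the only ingredients are residual finiteness (used to separate $g$ from $1$ in a finite quotient and then to pass to the normal kernel $N$), Lagrange's theorem, and the elementary divisibility $m\mid n!$ for $n\ge m$. I would remark in passing that finiteness of $P$ is not essential to the stated conclusion — once $g$ and the finite quotient $F$ are fixed, the \emph{same} quotient kills every $p\in P$ as soon as $n\ge|F|$, regardless of $|P|$; likewise finite generation of $L$ is not needed for this particular lemma, though of course it is needed when Lemma~\ref{lem:sep} is later fed into Lemma~\ref{lem:baire-simple}.
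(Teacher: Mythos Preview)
Your proof is correct and follows essentially the same approach as the paper: separate $g$ from $1$ by a finite-index normal subgroup $N$ of index $m$, then use $m\mid n!$ for $n\ge m$ to conclude $\fform{P^{n!}}\le N$. The paper's version is simply a terser rendering of the same argument, and your remarks on the inessential hypotheses are accurate.
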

\bp
If $N$ is a normal subgroup of $L$ with index $k$ and $n\geq k$, then $\fform{P^{n!}}\le N$.
So for any sequence $n_i \to \infty$, we have 
\[\bigcap_{i\ge1}\fform{P^{n_i!}}=\{1\},\] or equivalently $\rho_n$ is stably injective.
\ep

\subsection{Pulling-Apart Lemma}
Let us now describe a more general method to continuously ``pull-apart'' subgroups to become (amalgamated) free products of those subgroups. Similar ideas can be found in the literature, e.g.~\cite{BF2009book} and~\cite{Mann-hb}. McCammond and Sulway coined the term ``pull-apart'' in a different (but loosely relevant) sense from ours; see~\cite{MS2017IM}. 

Recall our convention that $\bK=\bR$ or $\bK=\bC$.
For a group $C\le \PSL_2(\bK)$, we let $N(C)$ denote its normalizer group in ${\PSL_2(\bK)}$. 
Suppose $C\le {\PSL_2(\bK)}$ is nontrivial and abelian,
so that the centralizer $Z(C)$ is the maximal abelian group containing $C$; see Lemma~\ref{lem:malnormal}.
In case $K = \bR$, if $C$ is hyperbolic, then $[N(C):Z(C)]=2$, if elliptic $N(C) = Z(C)$, and if $C$ is parabolic, $[N(C) : Z(C)] = 1$.
In case $K = \bC$, if $C$ is loxodromic, then $[N(C) : Z(C)] = 2$, and if $C$ is parabolic,
$[N(C) : Z(C)] = 1$.

Using the multiplicative notation, we let $\form{s}\cong\bZ$.
For $g,t\in{\PSL_2(\bK)}$, we define homomorphisms
$\Inn(t)\co \PSL_2(\bK)\to \PSL_2(\bK)$ and $\delta_t\co \bZ\to\PSL_2(\bK)$ by
\[
\Inn(t)(g)=g^t,\quad
\delta_t(s)=t^s.\] 
If $L$ is a group and $V\sse L$ is a set, we define
\[
V^L=\{v^g\co v\in V, g\in L\}.
\]

The following can be regarded as a warm-up case for the Amalgamated Pulling-Apart Lemma.

\begin{lem}[Free Product Pulling-Apart Lemma]\label{lem:free}
\be
\item
Let $A$ and $B$ be countable subgroups of ${\PSL_2(\bK)}$,
and let $W\sse\bK$ be a countable set.
Then for a very general $\nu\in{\PSL_2(\bK)}$, 
the group $L_\nu=\form{A,B^\nu}$ is isomorphic to $A\ast B$
and furthermore, 
\[\tr^2\left(L_\nu\setminus (A\cup B^\nu)^{L_\nu}\right)\cap W=\varnothing.\]
\item
Let $A$ be a countable subgroup of ${\PSL_2(\bK)}$,
and let $W\sse\bK$ be a countable set.
Then for a very general $\nu\in{\PSL_2(\bK)}$, 
the group $L_\nu=\form{A,\nu}$ is isomorphic to $A\ast \bZ$
and furthermore, 
\[\tr^2\left(L_\nu\setminus A^{L_\nu}\right)\cap W=\varnothing.\]
\ee
\end{lem}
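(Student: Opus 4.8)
The plan is to view the assignment $\nu\mapsto L_\nu$ as an algebraic family of representations of $A\ast B$ parametrized by $\PSL_2(\bK)$, and to remove a countable union of proper Zariski-closed subsets of parameters, after which Lemma~\ref{lem:baire} hands us a very general $\nu$ realizing all the desired properties at once. I describe part (1); part (2) is identical after replacing $B$ by $\form{s}\cong\bZ$, the homomorphism $b\mapsto b^\nu$ by $s\mapsto\nu$, and adding one easy family of words (see below). For $\nu\in\PSL_2(\bK)$ let $\rho_\nu\co A\ast B\to\PSL_2(\bK)$ be the homomorphism equal to the identity on $A$ and to $\Inn(\nu)\co b\mapsto b^\nu$ on $B$, so $L_\nu=\rho_\nu(A\ast B)=\form{A,B^\nu}$; then $L_\nu\cong A\ast B$ exactly when $\rho_\nu$ is injective, and for injective $\rho_\nu$ the image of $(A\cup B)^{A\ast B}$ is $(A\cup B^\nu)^{L_\nu}$. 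For a fixed $w\in A\ast B$ the map $\nu\mapsto\rho_\nu(w)$ is a word in $\nu$, $\nu^{-1}$ and constants, hence a morphism of the irreducible affine group $\PSL_2(\bK)$, so
\[V_w=\{\nu\co\rho_\nu(w)=1\},\qquad Z_{w,x}=\{\nu\co\tr^2\rho_\nu(w)=x\}\quad(x\in\bK)\]
are Zariski closed. As $A$ and $B$ are countable, so are $A\ast B$ and $W$; so it suffices to prove that (a) $V_w$ is a proper subset for every $w\neq1$, and (b) $Z_{w,x}$ is a proper subset for every $x\in W$ and every $w$ not conjugate into $A$ nor into $B$. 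Granting (a) and (b), Lemma~\ref{lem:baire} applied to the countable family $\{V_w\}\cup\{Z_{w,x}\}$ gives a complement $\DD$ containing very general $\nu$, and any $\nu\in\DD$ satisfies both conclusions of the lemma.

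First I would dispose of the words $w$ conjugate into a free factor. If $1\neq w$ is conjugate in $A\ast B$ into $A$ or into $B$, then $\rho_\nu(w)$ is a $\PSL_2(\bK)$-conjugate of a nontrivial element of $A$ or $B$, so $\rho_\nu(w)\neq1$ for every $\nu$ and $V_w=\varnothing$; such $w$ are excluded from (b), so nothing more is needed. (In part (2) one must also treat $w=gs^ng^{-1}$ with $n\neq0$: here $V_w=\{\nu\co\nu^n=1\}$ is a finite union of conjugacy classes of elliptic elements, hence proper, and $Z_{w,x}=\{\nu\co\tr^2\nu^n=x\}$ is proper because $\tr^2\nu^n$ is a nonconstant polynomial in $\tr^2\nu$.) Thus both (a) and (b) reduce to the case that $w$ is not conjugate into a free factor.

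So assume $w$ is not conjugate into a free factor. Conjugating $w$ inside $A\ast B$ conjugates $\rho_\nu(w)$ in $\PSL_2(\bK)$, changing neither whether it is trivial nor the value of $\tr^2$, so I may take $w$ cyclically reduced; it then has the form $w=a_1b_1a_2b_2\cdots a_kb_k$ with $k\ge1$, $a_i\in A\setminus1$, $b_i\in B\setminus1$ (in part (2): $w=a_1s^{n_1}a_2s^{n_2}\cdots a_ks^{n_k}$ with $a_i\in A\setminus1$, $n_i\in\bZ\setminus\{0\}$). Each syllable, being a nontrivial isometry, fixes at most two points of $\bH^d\cup\partial\bH^d$, so only finitely many points are fixed by some syllable; choose a parabolic maximal abelian subgroup $\mu\le\PSL_2(\bK)$, parametrized $z\mapsto\mu(z)$ as in Example~\ref{exmp:attr}, whose unique fixed point $\Fix\mu$ is fixed by no syllable, so that $\Fix\mu\cap g\,\Fix\mu=\varnothing$ for every syllable $g$. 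Substituting $\nu=\mu(z)$ and collecting powers of $\mu(z)$ gives
\[\rho_{\mu(z)}(w)=g_1\mu(z)^{m_1}g_2\mu(z)^{m_2}\cdots g_{k'}\mu(z)^{m_{k'}},\]
where every $g_i$ is one of the syllables of $w$ and every $m_i\in\bZ\setminus\{0\}$ (indeed $m_i=\pm1$ in part (1), $m_i=n_i$ in part (2)). This is exactly the input of the Projective Baumslag Lemma (Lemma~\ref{lem:baumslag-psl}(1)), whose conclusion is that $\{z\co\tr^2\rho_{\mu(z)}(w)=x\}$ is finite for every $x\in\bK$. Since $\mu$ is infinite, $Z_{w,x}$ cannot contain $\mu$ and so is proper, giving (b); and $V_w\sse Z_{w,4}$ is proper too, giving (a). The one genuinely nontrivial step is this last one — choosing the one-parameter subgroup $\mu$ so that $\rho_{\mu(z)}(w)$ acquires precisely the shape covered by the Projective Baumslag Lemma (and so that the trace grows / is nonconstant along $\mu$); everything else is the routine Baire-category bookkeeping over the countably many pairs $(w,x)$.
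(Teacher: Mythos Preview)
Your proof is correct and follows essentially the same strategy as the paper: set up the family $\rho_\nu$, reduce to showing each bad locus is a proper Zariski-closed subset, and certify properness via the Projective Baumslag Lemma along a one-parameter maximal abelian subgroup $\mu$, then apply Lemma~\ref{lem:baire}. The only implementation difference is that the paper first runs a separate countability argument to find a \emph{single hyperbolic} $\mu$ whose fixed-point pair is moved off itself by every nontrivial element of $\form{A\cup B}$, and then uses part (2) of Lemma~\ref{lem:baumslag-psl}; you instead choose a \emph{parabolic} $\mu$ per word (avoiding finitely many bad points on $\partial\bH^d$) and invoke part (1), which is slightly more direct.
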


\bp
In Case (1), we let \[L=A\ast B,\quad V=A\cup B\sse \PSL_2(\bK).\]
In Case (2), we let \[L=A\ast\form{s},\quad V=A\sse \PSL_2(\bK)\]
for $\form{s}\cong\bZ$.
Note we allow the possibility that $A=B$.

For each $h\in \form{V}\setminus\{1\}$, there exists a hyperbolic element $c=c(h)\in\PSL_2(\bK)$ such that $\Fix c\cap h\Fix c=\varnothing$. By Lemma~\ref{lem:parabolic-test} (2), we see that 
\[\tr^2[c,hch^{-1}]\ne 4.\]
Since $\form{V}\setminus\{1\}$ is countable
and since $\tr^2[c,hch^{-1}]=4$ is an algebraic equation of $c$ and $h$,
there exists some hyperbolic element $c_0\in\PSL_2(\bK)$ 
such that whenever $h\in\form{V}\setminus\{1\}$ 
we have that
\[\tr^2[c_0,hc_0h^{-1}]\ne 4.\]
Let $\mu$ be the hyperbolic maximal abelian group containing $c_0$. 
Then for all $h\in\form{V}\setminus\{1\}$, we have \[h\Fix\mu\cap \Fix\mu=\varnothing.\]

We will define a natural map below
\[\rho_\nu\co L\to L_\nu.\]
For $T\sse \bK$ and $Q\sse L\setminus V^{L}$, we then define
\[\CC(Q,T):=\{\nu\in \PSL_2(\bK)
\co \tr^2\circ\rho_\nu(Q)\cap T\ne\varnothing\}.\]
The crucial step of the proof is the following claim.

\begin{claim*}
If $T\sse \bK$ 
and $Q\sse L\setminus V^{L}$ are finite subsets,
then $\CC(Q,T)\ne  \PSL_2(\bK)$.
\end{claim*}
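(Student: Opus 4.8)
The plan is to use the finiteness of $Q$ and $T$ to reduce everything to a single trace computation governed by the Projective Baumslag Lemma. Since $Q\times T$ is finite, it suffices to produce one $\nu_0\in\PSL_2(\bK)$ with $\tr^2\circ\rho_{\nu_0}(q)\notin T$ for every $q\in Q$, and I will look for $\nu_0$ inside the hyperbolic maximal abelian subgroup $\mu$ fixed just above, which was arranged precisely so that $h\Fix\mu\cap\Fix\mu=\varnothing$ for all $h\in\form{V}\setminus\{1\}$. The key assertion I want is: for each fixed $q\in Q$, the function $\nu\mapsto\tr^2\circ\rho_\nu(q)$, restricted to $\mu$, takes each value of $\bK$ only finitely often. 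Granting this, $\{\nu\in\mu\co\tr^2\circ\rho_\nu(q)\in T\text{ for some }q\in Q\}$ is a finite subset of the infinite group $\mu$, and any $\nu_0\in\mu$ outside it avoids $\CC(Q,T)$. Since $\nu\mapsto\tr^2\circ\rho_\nu(q)$ is a regular function on $\PSL_2(\bK)$, the same assertion shows moreover that $\CC(Q,T)$ is contained in a proper Zariski-closed subset of $\PSL_2(\bK)$, which is the form in which the claim is used afterwards in the Baire-category argument (via Lemma~\ref{lem:baire}).

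To prove the key assertion I would fix $q\in Q\subseteq L\setminus V^L$. As $\tr^2$ is a class function and $\rho_\nu$ a homomorphism, $\tr^2\circ\rho_\nu$ is unchanged under conjugating $q$, so I may take $q$ to be cyclically reduced in the free product $L$. Because $q\notin V^L$, this cyclically reduced $q$ is not a single syllable, so either it has syllable length at least two, with syllables alternating between the two free factors, or --- and only in Case (2) --- it is a nonzero power $s^n$ of the stable letter. In the latter case $\rho_\nu(s^n)=\nu^n$, and $\tr^2(\nu^n)$ is a non-constant polynomial in $\nu$, already non-constant on $\mu$, so such $q$ cause no trouble. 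In the former case, after one more cyclic permutation of its normal form I may assume $q$ begins with a syllable from the factor $A$, and then applying $\rho_\nu$ (the identity on $A$, conjugation by $\nu$ on the other factor in Case (1), and $s\mapsto\nu$ in Case (2)) yields literally a product
\[\rho_\nu(q)=g_1\,\nu^{m_1}\,g_2\,\nu^{m_2}\cdots g_k\,\nu^{m_k},\qquad k\ge1,\quad m_i\in\bZ\setminus\{0\},\]
where each $g_i$ is one of the syllables of $q$, hence a nontrivial element of $V\subseteq\form{V}$, while each $m_i$ equals $\pm1$ in Case (1) and the nonzero exponent of the corresponding stable-letter syllable in Case (2). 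This is exactly the shape of the map $\phi$ in the Projective Baumslag Lemma (Lemma~\ref{lem:baumslag-psl}) relative to $\mu$: its hypothesis (i) holds since $m_i\ne0$, and its hypothesis (ii), $\Fix\mu\cap g_i\Fix\mu=\varnothing$, holds because every $g_i\in\form{V}\setminus\{1\}$ and $\mu$ was chosen with exactly this property. Part (1) of that lemma then gives that $\{\nu\in\mu\co\tr^2\circ\rho_\nu(q)=x\}$ is finite for every $x\in\bK$, which is the key assertion.

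I expect the main difficulty to be organizational rather than conceptual: one must verify carefully that for an element $q$ not conjugate into $V$ the image $\rho_\nu(q)$ can genuinely be put, up to conjugacy, into the alternating product $g_1\nu^{m_1}\cdots g_k\nu^{m_k}$ with every $g_i$ nontrivial and every $m_i\ne0$, so that Lemma~\ref{lem:baumslag-psl} applies. This is precisely where the two hypotheses enter: $Q\subseteq L\setminus V^L$ rules out syllable length one (apart from the innocuous powers of the stable letter, handled directly), and the earlier deliberate choice of a hyperbolic maximal abelian $\mu$ with $h\Fix\mu\cap\Fix\mu=\varnothing$ for all nontrivial $h\in\form{V}$ is what makes hypothesis (ii) of the Projective Baumslag Lemma automatic.
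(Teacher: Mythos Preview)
Your proposal is correct and follows essentially the same route as the paper: cyclically reduce each $q\in Q$, write $\rho_\nu(q)$ as an alternating word $g_1\nu^{m_1}\cdots g_k\nu^{m_k}$ with $g_i\in\form{V}\setminus\{1\}$ (treating the special case $q=s^m$ separately in Case~(2)), invoke the choice of $\mu$ to guarantee $\Fix\mu\cap g_i\Fix\mu=\varnothing$, and apply the Projective Baumslag Lemma to find $\nu\in\mu$ outside $\CC(Q,T)$. The only cosmetic difference is that you appeal to part~(1) of Lemma~\ref{lem:baumslag-psl} (finite level sets), while the paper uses part~(2) ($|\tr|\to\infty$ as $\|\nu\|\to\infty$ along the hyperbolic $\mu$); both yield the claim immediately.
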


Let us prove the claim in the cases (1) and (2) separately, after defining $\rho_\nu$ precisely.

(1) For each $\nu\in{\PSL_2(\bK)}$, we define $\rho_\nu$ by the following diagram:
\[
\xymatrix{
& A\ar[rd]\ar[rrrd]^{\Id_A}\\
1\ar@{^(->}[ru] \ar@{^(->}[rd]& & 
L=A\ast B\ar[rr]^<<<<<<<{\rho_\nu} && {\PSL_2(\bK)}\\
& B\ar[ru]\ar[rrru]_{\Inn(\nu)\circ\Id_B}
}\]

Each $g\in Q$ can be written as
\[
g=\prod_{i=1}^k a_i b_i\]
for some $k\ge1, a_i\in A\setminus1$ and $b_i\in B\setminus1$
up to conjugacy. It follows that, after reducing cyclically,
\[
\rho_\nu(g)=\prod_{i=1}^\ell g_i\nu^{m_i}\]
for some $g_i\in (A\cup B)\setminus1$ and $m_i=\pm1$.
By Lemma~\ref{lem:parabolic-test} and Projective Baumslag Lemma (Lemma~\ref{lem:baumslag-psl}),
whenever $\nu\in \mu$ and $\|\nu\|\gg0$ we have
\[\tr^2\circ\rho_\nu(Q)\cap T=\varnothing.\]

(2) For each $\nu\in{\PSL_2(\bK)}$, we define $\rho_\nu$ as
\[
\xymatrix{
& A\ar[rd]\ar[rrrd]^{\Id_A}\\
1\ar@{^(->}[ru] \ar@{^(->}[rd]& & L=A\ast \form{s}\ar[rr]^<<<<<<<{\rho_\nu} && {\PSL_2(\bK)}\\
& \form{s}\ar[ru]\ar[rrru]_{s\mapsto \nu}
}\]

Let $g\in Q$. 
If $g=s^m$ for some $m\ne 0$ (up to conjugacy),
then we have
\[\rho_\nu(g) = \nu^m.\]
Otherwise, we have, after reducing cyclically,
\[
\rho_\nu(g)=\prod_{i=1}^\ell g_i \nu^{m_i}\]
for some $k\ge1, g_i\in A\setminus1$ and $m_i\in\bZ\setminus0$
up to conjugacy. 
The rest of the proof is identical to (1).

From the claim above and Lemma~\ref{lem:baire}, we see that 
 the following set is very general:
\[\PSL_2(\bK)\setminus \CC(L\setminus V^L,W\cup\{4\}).\]
Note that if $1\ne g\in V^L$, then $\rho_\nu(g)\ne1$.
Hence we can further require that $\rho_\nu$ is faithful.
\ep

For $C\le\PSL_2(\bK)$, recall our notation for the centralizer:
\[ Z(C) = \{ g\in\PSL_2(\bK)\co [g,c]=1\text{ for all }c\in C\}.\]

\begin{lem}[Amalgamted Pulling-Apart Lemma]\label{lem:pull-apart-psl}
For a countable subgroup $L\le \PSL_2(\bK)$ and a countable set $W\sse\bK$, assume one of the following.
\be
\item
For some subgroups $A,B,C\le L$, we have that $L=\form{A,B}$
and that $C$ is a malnormal nontrivial abelian subgroup of $A$ and  also of $B$. 
For each $\nu\in Z(C)$ we let $\rho_\nu$ be the map uniquely determined by the following commutative diagram
\[\xymatrix{
& A\ar[rd]\ar[rrrd]^{\Id_A}\\
C\ar@{^(->}[ru] \ar@{^(->}[rd]& & L^*=A\ast_C B\ar[rr]^<<<<<<<{\rho_\nu} && {\PSL_2(\bK)}\\
& B\ar[ru]\ar[rrru]_{\Inn(\nu)\circ\Id_B}}\]
We further assume $V=A\cup B$ is parabolic-free.
\item
For some subgroups $A,C\le L$ and an element $s\in L$,
we have that $L=\form{A,s}$ and that $C$ and $C^s$ are malnormal nontrivial abelian subgroups of $A$. Furthermore, we assume that 
either 
\begin{itemize}
\item $s\in Z(C)$, or
\item $C$ and $C^s$ are not conjugate in $A$.
\end{itemize}
For each $\nu\in Z(C)$ we let $\rho_\nu$ be the map uniquely determined by the following commutative diagram
\[
\xymatrix{
& A\ar[rd]\ar[rrrd]^{\Id_A}\\
C\ast C^s\ar[ru] \ar[rd]& & L^*=A\ast_{\Inn(s)\co C\to C^s}\ar[rr]^<<<<<<<{\rho_\nu} && {\PSL_2(\bK)}\\
& C\ast\form{ s^*}\ar[ru]\ar[rrru]_{  s^*\mapsto \nu s}
}\]
Here $s^*$ denotes the stable generator of $L^*$,
and the map $C\ast C^s\to C\ast\form{s^*}$ is defined as the extension of $g\mapsto g$ and $s^{-1}gs\mapsto (s^*)^{-1}gs^*$ for $g\in C$.
We further assume $V=A$ is parabolic-free.
\ee
Under the assumption (1) or (2), for all but countably many $\nu\in Z(C)$ the map $\rho_\nu$ is faithful  and  \[\tr^2 \circ\rho_\nu\left(L^*\setminus V^{ L^*}\right)\cap W=\varnothing.\]
\end{lem}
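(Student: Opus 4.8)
The plan is to follow the proof of the Free Product Pulling-Apart Lemma (Lemma~\ref{lem:free}) almost verbatim, the only change being that the conjugating parameter $\nu$ now ranges over the maximal abelian subgroup $\mu:=Z(C)\le\PSL_2(\bK)$ — which by Lemma~\ref{lem:malnormal}(1) is the unique maximal abelian subgroup containing $C$ — rather than over all of $\PSL_2(\bK)$. First I would record the ingredients: $\mu$ is conjugate to one of $\SO(2)$, $\exp(\bK)$, $p(\bK)$, hence is an irreducible algebraic subgroup of positive dimension; since $V$ is parabolic-free and $C\subseteq V$, the nontrivial abelian group $C$ is elliptic or hyperbolic, so $\mu$ is elliptic or hyperbolic and $\Fix\mu=\Fix C$. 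Next I would check that $\rho_\nu$ is well defined for every $\nu\in\mu$: in case (1) the two composites $C\hookrightarrow A\hookrightarrow\PSL_2(\bK)$ and $C\hookrightarrow B$ followed by $\Inn(\nu)$ coincide because $\nu$ centralizes $C$; in case (2) the HNN relation is respected since $(\nu s)^{-1}c(\nu s)=s^{-1}\nu^{-1}c\nu s=s^{-1}cs=c^s$ for $c\in C$.

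Conjugates of $V$ cause no trouble: if $1\ne g\in V^{L^*}$, write $g=h a h^{-1}$ with $h\in L^*$ and $1\ne a\in V$; then $\rho_\nu(g)=\rho_\nu(h)\rho_\nu(a)\rho_\nu(h)^{-1}$ is $\PSL_2(\bK)$-conjugate to $\rho_\nu(a)\ne1$ (as $\rho_\nu$ restricts to $\Id_A$ on $A$ and to $\Inn(\nu)$ on $B$, both injective), so $\rho_\nu(g)\ne1$ for \emph{every} $\nu$. Everything then reduces to the following Claim, exactly as in Lemma~\ref{lem:free}: for all finite $Q\subseteq L^*\setminus V^{L^*}$ and finite $T\subseteq\bK$, the set $\CC(Q,T):=\{\nu\in\mu:\tr^2\circ\rho_\nu(Q)\cap T\ne\varnothing\}$ is a proper Zariski-closed subset of $\mu$.

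To prove the Claim, fix $g\in Q$. Since $g$ is not conjugate into a vertex group, the Normal Form Theorem for the amalgam in case (1) (resp. Britton's Lemma for the HNN extension in case (2)) lets me replace $g$ by a conjugate that is cyclically reduced of syllable length $\ge2$ (harmless, since $\tr^2$ and $V^{L^*}$ are conjugation-invariant). Substituting $\rho_\nu(s^*)=\nu s$ and collecting the occurrences of $\nu$, I would write $\rho_\nu(g)$, up to conjugacy, as a ``Baumslag word'' $g_1\nu^{m_1}g_2\nu^{m_2}\cdots g_\ell\nu^{m_\ell}$ with $\ell\ge1$ and $m_i\in\bZ\setminus\{0\}$, where each $g_i$ is a single letter from $(A\cup B)\setminus C$ (case (1)), or has the shape $a$ or $s^{\pm1}as^{\mp1}$ with $a\in A$ and $a\notin C$ (or $a\notin C^s$) forced by Britton reducedness (case (2)). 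The crucial point is the hypothesis $\Fix\mu\cap g_i\Fix\mu=\varnothing$ of the Projective Baumslag Lemma (Lemma~\ref{lem:baumslag-psl}): if $g_i\Fix C$ met $\Fix C$, then \emph{either} $g_i$ would permute $\Fix C=\Fix\mu$, hence $g_i\in N(\mu)$, hence $g_i\in C$ (resp. $C^s$) by malnormality of $C$ (resp. $C^s$) in $A$ together with Lemma~\ref{lem:malnormal}(4) — which applies since $\PSL_2(\bK)$ is commutative-transitive — contradicting reducedness; \emph{or} (only when $C$ is hyperbolic) $g_i$ would share exactly one boundary fixed point with a hyperbolic $c\in C$, so that $c$ and $g_ic g_i^{-1}$ are two hyperbolic elements of a conjugate of $A$ or $B$ with a single common fixed point, forcing a parabolic into that group and contradicting parabolic-freeness. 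Granting this, Lemma~\ref{lem:baumslag-psl}(1) gives that $\{\nu\in\mu:\tr^2\circ\rho_\nu(g)=x\}$ is finite for each $x$; the only exceptional syllables, occurring in case (2) when $s\in Z(C)$ (where $g_i$ may be a power of $s$ and so fixes $\Fix\mu$), are handled directly, since then $\rho_\nu(g)$ lies on the one-parameter group $\mu$, on which $\tr^2$ is a nonconstant regular function, so the same finiteness holds. As $Q,T$ are finite, $\CC(Q,T)$ is finite, hence proper in the infinite variety $\mu$, and it is Zariski closed because $\nu\mapsto\tr^2\circ\rho_\nu(g)$ is regular on $\mu$.

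Finally, I would apply Lemma~\ref{lem:baire} to
\[\DD=\mu\setminus\Big(\bigcup_{g\in L^*\setminus V^{L^*}}\CC(\{g\},\{4\})\ \cup\ \bigcup_{g\in L^*\setminus V^{L^*},\,w\in W}\CC(\{g\},\{w\})\Big),\]
a countable union of proper Zariski-closed subsets of the irreducible set $\mu$ since $L^*$ and $W$ are countable. For $\nu\in\DD$ and $g\in L^*\setminus V^{L^*}$ we get $\tr^2\circ\rho_\nu(g)\ne4=\tr^2(\Id)$, so $\rho_\nu(g)\ne1$ there, and $\rho_\nu(g)\ne1$ for $g\in V^{L^*}\setminus\{1\}$ by the second paragraph; hence $\rho_\nu$ is faithful, and moreover $\tr^2\circ\rho_\nu(g)\notin W$ for all $g\in L^*\setminus V^{L^*}$. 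Since $\dim\mu=1$, every proper Zariski-closed subset of $\mu$ is finite, so $\mu\setminus\DD$ is countable and the conclusion holds for all but countably many $\nu\in Z(C)$. I expect the main obstacle to be the fixed-point disjointness step in the third paragraph: one must verify, for every syllable pattern of a cyclically reduced word — and in case (2) through the conjugation by $s$ introduced by $\rho_\nu(s^*)=\nu s$ — that each collected syllable $g_i$ moves $\Fix C$ entirely off itself rather than merely fixing one of its points, which is precisely the degeneracy the Remark following Lemma~\ref{lem:baumslag-psl} shows cannot be allowed; this is exactly where malnormality, commutative-transitivity, and the parabolic-free hypothesis are used in concert.
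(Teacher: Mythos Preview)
Your overall strategy is exactly the paper's: reduce to a ``Key Claim'' that $\CC(Q,T)\subsetneq\mu$ for all finite $Q,T$, verify the Baumslag hypothesis $\Fix\mu\cap h_j\Fix\mu=\varnothing$ for each syllable $h_j$, and conclude by Lemma~\ref{lem:baumslag-psl} together with Lemma~\ref{lem:baire}. Case~(1) and case~(2) with $s\in Z(C)$ are handled correctly (in the latter, $\nu$ and $s$ commute, so the syllables are $g_is^{m_i}$ with $g_i\in A\setminus C$, and $g_is^{m_i}\Fix\mu=g_i\Fix\mu$; the residual word $c(s^*)^m$ lands in $\mu$ as you say).

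The gap is in case~(2) with $C$ and $C^s$ non-conjugate. There $\nu$ and $s$ do \emph{not} commute, so expanding $(\nu s)^{m_i}$ with $|m_i|\ge2$ produces internal syllables $h_j=s^{\pm1}$, and the between-block syllable depends on the signs of adjacent exponents: one gets $h_j\in\{g_i,\ sg_i,\ g_is^{-1},\ sg_is^{-1}\}$, not only the two shapes $a$ and $sas^{-1}$ you list. Your disjointness argument (``$g_i\in N(\mu)\Rightarrow g_i\in C$ or $C^s$; else a parabolic appears in $A$'') covers $h_j=g_i$ and $h_j=sg_is^{-1}$, but not the remaining three. For $h_j=s^{\pm1}$ one must show $s\notin N(\mu)$ (else $C^s=C$) and then that $[c,s^{-1}cs]\in\langle C,C^s\rangle\le A$ is the parabolic witness. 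For $h_j=sg_i$ (resp.\ $g_is^{-1}$), if $h_j\in N(\mu)$ then $(C^s)^{g_i}=C$ (resp.\ $C^{g_i}=C^s$), contradicting precisely the non-conjugacy hypothesis; otherwise the parabolic commutator one needs is $[c,g_i^{-1}(s^{-1}cs)g_i]\in A$, which lies in $A$ only because $s^{-1}cs\in C^s\subseteq A$. This is the five-case analysis the paper carries out, and it is where \emph{both} extra hypotheses of case~(2) --- malnormality of $C^s$ and non-conjugacy of $C,C^s$ in $A$ --- are actually consumed. You correctly flagged this step as the main obstacle; what is missing is the complete list of syllable patterns and the observation that $C^s\subseteq A$ is what forces the parabolic witness back into $A$.
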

\bp
We let $\mu=Z(C)$, which is a non-parabolic maximal abelian group.
Also, put $V=A\cup B$ in (1), and $V=A$ in (2), as subsets of $L^*$. As in the proof of Lemma~\ref{lem:free}, it suffices to show the following claim.
\begin{claim*}[Key Claim]
If $T\sse \bK$ 
and $Q\sse L^*\setminus V^{ L^*}$ are finite subsets,
then we have
\[\CC(Q,T):=\{\nu\in \mu\co \tr^2\circ\rho_\nu(Q)\cap T\ne\varnothing\}\ne \mu.\]
\end{claim*}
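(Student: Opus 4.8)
The plan is to establish the Key Claim stated at the end of the excerpt; the passage from it to the conclusion of the lemma is formal, exactly as in the proof of Lemma~\ref{lem:free}. Indeed, once we know that for each $g\in L^*\setminus V^{L^*}$ and each $t\in\bK$ the set $\{\nu\in\mu:\tr^2\circ\rho_\nu(g)=t\}$ is finite, it is a proper Zariski-closed subset of $\mu=Z(C)$, a one-dimensional irreducible algebraic group; a finite union of such sets (over $g\in Q$, $t\in T$) is again proper, which is the Key Claim, and a countable union of them (over all $g\in L^*\setminus V^{L^*}$ and $t\in W\cup\{4\}$) has, by Lemma~\ref{lem:baire}, a co-countable complement in $\mu$. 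For $\nu$ in that complement, $\tr^2\circ\rho_\nu(L^*\setminus V^{L^*})\cap W=\varnothing$; faithfulness comes for free, since $\rho_\nu$ restricts to $\Id$ on $A$ and to $\Inn(\nu)$ on $B$ (so $\rho_\nu(g)\neq1$ for $g\in V^{L^*}\setminus1$), while for $g\in L^*\setminus V^{L^*}$ the condition $\tr^2\circ\rho_\nu(g)\neq4$ already forces $\rho_\nu(g)\neq1$. So everything reduces to the finiteness of those level sets.

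First I would put $\tr^2\circ\rho_\nu(g)$ into the shape handled by the Projective Baumslag Lemma. In Case (1), since $g$ is not conjugate into a vertex group, up to conjugacy in $L^*$ (which does not change $\tr^2\circ\rho_\nu$) $g$ is cyclically reduced, $g=x_1x_2\cdots x_{2k}$ with $k\ge1$, $x_{2i-1}\in A\setminus C$ and $x_{2i}\in B\setminus C$; because $\rho_\nu$ is $\Id$ on $A$ and $\Inn(\nu)$ on $B$ and $\nu\in Z(C)$,
\[\tr^2\circ\rho_\nu(g)=\tr^2\bigl(x_1\,\nu^{-1}x_2\nu\,x_3\,\nu^{-1}x_4\nu\cdots x_{2k-1}\,\nu^{-1}x_{2k}\nu\bigr)=\tr^2\circ\phi(\nu),\]
where $\phi(\nu)=g_1\nu^{m_1}\cdots g_{2k}\nu^{m_{2k}}$ with $g_i=x_i$ and $m_i=(-1)^i\in\bZ\setminus\{0\}$. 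In Case (2) I would argue identically from the Britton normal form of $g$ with respect to the stable generator $s^*$: if $g$ is conjugate to $(s^*)^m$ with $m\neq0$ then $\rho_\nu(g)$ is conjugate to $(\nu s)^m$, and the hypothesis on $s$ (either $s\in Z(C)=\mu$, so $\nu s$ ranges over $\mu$, or $s\notin N(\mu)$ — which is where the alternative ``$C,C^s$ not conjugate in $A$'' is used, since $C^s=C$ whenever $s\in N(\mu)$) makes $\tr^2\circ\rho_\nu(g)$ a nonconstant regular function of $\nu\in\mu$ directly; otherwise the cyclically reduced form of $g$ contains at least one $s^*$, and substituting $s^*\mapsto\nu s$ and collecting the $\nu^{\pm1}$ once more writes $\tr^2\circ\rho_\nu(g)$ as $\tr^2\circ\phi(\nu)$ with $\phi(\nu)=\prod_i g_i\nu^{m_i}$, $m_i\in\{\pm1\}$, each $g_i$ involving a single element of $A$ together with one or two copies of $s^{\pm1}$; here the same dichotomy ``$s\in Z(C)$ or $C,C^s$ not conjugate in $A$'' guarantees the Britton form does not collapse, precisely as for an HNN splitting of a surface group along a nonseparating curve.

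With $\phi$ in hand, the Projective Baumslag Lemma (Lemma~\ref{lem:baumslag-psl}(1)) gives that $\{\nu\in\mu:\tr^2\circ\phi(\nu)=t\}$ is finite — provided $\Fix\mu\cap g_i\Fix\mu=\varnothing$ for every syllable $g_i$. Verifying this is the heart of the argument and the only place the hypotheses are used. Here $\mu=Z(C)$ is a non-parabolic maximal abelian subgroup (Lemma~\ref{lem:malnormal}, as $C\neq1$ is abelian and $C\le A$ is parabolic-free) with $\Fix\mu=\Fix C$. Malnormality of $C$ in $A$, together with $\mu=Z(C)$, gives $A\cap N(\mu)=C$ by Lemma~\ref{lem:malnormal}(\ref{tr-normalizer}) — and likewise $B\cap N(\mu)=C$, and the analogue for $C^s$ in Case (2) — so each syllable $x_i\in A\setminus C$ (resp.\ $B\setminus C$) satisfies $x_i\Fix\mu\neq\Fix\mu$. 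But, as the Remark after Lemma~\ref{lem:baumslag-psl} warns, this only excludes equality; the upgrade to disjointness is exactly what parabolic-freeness buys: if a syllable $x$ in $A$ shared precisely one point $p$ of $\Fix\mu=\Fix C=\{p,q\}$ (the hyperbolic or loxodromic case), then $x$, being non-parabolic, fixes $p$ and some $q'\notin\{p,q\}$, so for any $c\in C\setminus1$ the commutator $[c,x]\in A$ is a nontrivial parabolic fixing $p$, contradicting parabolic-freeness of $V$; when $C$ is elliptic in $\PSL_2(\bR)$ one first complexifies (as in the proof of Lemma~\ref{lem:baumslag-psl}), replaces $\Fix\mu$ by the pair of poles of the complexified torus, and observes that an element of $A\le\PSL_2(\bR)$ fixing a pole must fix the elliptic fixed point, hence lie in $A\cap\mu=C$. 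The syllables $s^{\pm1}a,\dots$ of Case (2) are handled in the same way, using $\Fix(s^{-1}\mu s)=\Fix Z(C^s)$ and the non-conjugacy hypothesis.

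The hard part, then, is this fixed-point disjointness: malnormality alone yields only $g_i\Fix\mu\neq\Fix\mu$, and it is the elementary observation that two non-parabolic isometries sharing exactly one fixed point have a nontrivial parabolic commutator — so parabolic-freeness forbids it — that closes the gap. The secondary technical burden is the bookkeeping in Case (2): tracking the Britton normal form under $s^*\mapsto\nu s$ and checking that the ``$s\in Z(C)$ / $C,C^s$ not conjugate in $A$'' alternative genuinely rules out collapse and genuinely produces syllables with disjoint fixed-point sets. Once the hypothesis of Lemma~\ref{lem:baumslag-psl} is verified, the finiteness of the level sets follows, and with it the Key Claim and the lemma.
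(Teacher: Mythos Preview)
Your approach matches the paper's: reduce to the Projective Baumslag Lemma by verifying $\Fix\mu\cap g_i\Fix\mu=\varnothing$ for every syllable, using malnormality (Lemma~\ref{lem:malnormal}(\ref{tr-normalizer})) to get $g_i\notin N(\mu)$ and parabolic-freeness to upgrade inequality to disjointness. Two points need repair.

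First, your commutator argument is wrong as written. From $x\Fix\mu\cap\Fix\mu=\{p\}$ you cannot conclude that $x$ \emph{fixes} $p$: this says only $p\in x\{p,q\}$, so possibly $x(q)=p$ while $x(p)\notin\{p,q\}$, and then $[c,x]$ need not be parabolic. The correct witness is $[c,xcx^{-1}]$ (this is exactly Lemma~\ref{lem:parabolic-test}(2)(ii)): $c$ and $xcx^{-1}$ are hyperbolic with exactly one common fixed point, so their commutator is a nontrivial parabolic lying in $A$ (or $B$), contradicting parabolic-freeness of $V$. This is how the paper's Claim~1(ii) is proved. Also, the elliptic case needs no complexification at this stage: by Lemma~\ref{lem:parabolic-test}(2)(i), for non-hyperbolic $\mu$ the condition $\Fix\mu\cap g\Fix\mu\ne\varnothing$ already forces $g\in\mu$, hence $g\in V\cap\mu=C$; the complexification trick lives inside the proof of Lemma~\ref{lem:baumslag-psl}, not here.

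Second, Case~(2) is not ``handled the same way''. When $s\in Z(C)$ the syllables are $g_is^{m_i}$ with $g_i\in A\setminus C$, and $s^{m_i}\Fix\mu=\Fix\mu$ reduces everything to Claim~1(ii). But in the non-conjugate subcase, expanding each $(\nu s)^{m_i}$ gives $\rho_\nu(g)=\prod_j h_j\nu^{\pm1}$ with the $h_j$ falling into five shapes: $s^{\pm1}$, $sg_i$, $g_is^{-1}$, $g_i$, and $sg_is^{-1}$. Each requires its own check of $\Fix\mu\cap h_j\Fix\mu=\varnothing$; for instance $h_j=sg_is^{-1}$ with $g_i\in A\setminus C^s$ uses malnormality of $C^s$ (not of $C$) in $A$ and produces the parabolic $[c^s,\,g_i\,c^s\,g_i^{-1}]\in A$, while $h_j=sg_i$ uses that $C$ and $C^s$ are non-conjugate in $A$ to rule out $sg_i\in N(\mu)$. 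The paper carries out this five-case analysis explicitly; your sketch does not, and ``handled in the same way'' hides genuine bookkeeping.
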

From the Key Claim,
we see that the following set is very general, as desired:
\[\mu\setminus \CC\left( L^*\setminus V^{L^*},  \{4\}\cup W\right).\]

Let us now prove the Key Claim through a sequence of steps.

\begin{claim}\label{claim:nmu}
\be[(i)]
\item
We have $N(\mu)\cap V=C\le \mu$, as subsets of $\PSL_2(\bK)$.
\item
For $h\in V\setminus C$, we have $\Fix\mu\cap h\Fix\mu=\varnothing$.
\item
In Case (2), if $s\not\in Z(C)$, then we have
 $\Fix\mu\cap s\Fix\mu=\varnothing$.
\ee
\end{claim}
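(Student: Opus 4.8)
The plan is to derive all three parts from two facts already in hand: the commutative--transitive lemma (Lemma~\ref{lem:malnormal}), applied with $G=\PSL_2(\bK)$, and the fixed-point test for maximal abelian subgroups (Lemma~\ref{lem:parabolic-test}). Throughout I use freely that $\mu=Z(C)$ is a non-parabolic maximal abelian subgroup of $G$ containing $C$, that $Z(c)=\mu$ for every $c\in C\setminus 1$ (commutative--transitivity plus maximality), and that $A$ and $B$ are parabolic-free. For \emph{part (i)}: since $C$ is nontrivial abelian, malnormal in $A$, and contained in $A\cap\mu$, Lemma~\ref{lem:malnormal}(\ref{tr-normalizer}) gives $A\cap N(\mu)=A\cap\mu=C$. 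In Case (2), where $V=A$, this is the assertion. In Case (1) the same argument applied to $B$ (using $C\le B$ malnormal and $C\le B\cap\mu$) gives $B\cap N(\mu)=C$, and then $N(\mu)\cap V=(N(\mu)\cap A)\cup(N(\mu)\cap B)=C\le\mu$.

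For \emph{part (ii)} I would argue by contradiction: suppose $\Fix\mu\cap h\Fix\mu\ne\varnothing$ for some $h\in V\setminus C$, and let $D$ be whichever of $A,B$ contains $h$ (so $D=A$ in Case (2)). Then $C\le D$ is malnormal, $D$ is parabolic-free, and $D\cap\mu=D\cap N(\mu)=C$ as in part (i). Lemma~\ref{lem:parabolic-test}(2) gives two cases. If $\mu$ is not hyperbolic, it is elliptic and $h\in\mu\le N(\mu)$, whence $h\in V\cap N(\mu)=C$ by part (i), a contradiction. If $\mu$ is hyperbolic, then $\tr^2[c,hch^{-1}]=4$ for all $c\in\mu\setminus 1$; fixing $c\in C\setminus 1$, the commutator $[c,hch^{-1}]$ lies in $D$ because $c,h\in D$, hence it is parabolic or trivial, hence trivial since $D$ is parabolic-free. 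Then $c$ centralizes $hch^{-1}$, so $hch^{-1}\in Z(c)=\mu$, and therefore $hch^{-1}\in D\cap\mu=C$; malnormality of $C$ in $D$ then forces $h\in C$, a contradiction.

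For \emph{part (iii)} we are in Case (2) with $s\notin Z(C)=\mu$, so by hypothesis $C$ and $C^s$ are not conjugate in $A$; suppose for contradiction that $\Fix\mu\cap s\Fix\mu\ne\varnothing$. By Lemma~\ref{lem:parabolic-test}(2), either $\mu$ is not hyperbolic and $s\in\mu$ — impossible since $s\notin\mu$ — or $\mu$ is hyperbolic and $\tr^2[c,scs^{-1}]=4$ for all $c\in\mu\setminus 1$. In the latter case, for $c\in C\setminus 1$, conjugating by $s^{-1}$ (which preserves $\tr^2$) gives
\[
4=\tr^2[c,scs^{-1}]=\tr^2\bigl(s^{-1}[c,scs^{-1}]s\bigr)=\tr^2[c^s,c],
\]
where $c^s=s^{-1}cs\in C^s\le A$; since $c\in C\le A$ as well, $[c^s,c]\in A$, so it is trivial by parabolic-freeness of $A$, i.e.\ $c^s\in Z(c)=\mu$. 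As this holds for every $c\in C\setminus 1$, we get $C^s\le\mu$, and conjugating by $s$ gives $C\le s\mu s^{-1}$, so $1\ne C\le\mu\cap s\mu s^{-1}$. Since two distinct maximal abelian subgroups of $\PSL_2(\bK)$ intersect trivially, $\mu=s\mu s^{-1}$, i.e.\ $s\in N(\mu)$. As $s\notin\mu$ and the nontrivial coset of $\mu$ in $N(\mu)$ acts on the hyperbolic (or loxodromic) group $\mu$ by inversion, $C^s=\{c^{-1}:c\in C\}=C$, so $C$ and $C^s$ are conjugate in $A$ — contradicting the hypothesis. Hence $\Fix\mu\cap s\Fix\mu=\varnothing$.

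I expect the bookkeeping in part (iii) to be the main obstacle: arranging the conjugation so that the commutator whose trace is controlled actually lies inside the parabolic-free factor $A$, and then correctly invoking the index-two structure of $N(\mu)$ over $\mu$ together with its action by inversion to deduce $C^s=C$ and reach the contradiction with the non-conjugacy hypothesis. The hyperbolic case of part (ii) needs the same kind of care — namely noticing that $h$ and $c$ always share a common parabolic-free factor $D$, so that $[c,hch^{-1}]\in D$ and parabolic-freeness can be applied.
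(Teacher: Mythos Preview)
Your proof is correct and follows essentially the same route as the paper's. For part~(i) you invoke Lemma~\ref{lem:malnormal}(\ref{tr-normalizer}) directly, which is actually cleaner than the paper's case split on whether $C$ is hyperbolic; for parts~(ii) and~(iii) both arguments hinge on the same idea---use parabolic-freeness of the relevant factor to force the commutator $[c,hch^{-1}]$ (or its conjugate $[c^s,c]\in A$) to be trivial, then contradict $h\notin N(\mu)$ via Lemma~\ref{lem:parabolic-test}. Your version just unwinds the contrapositive more explicitly.
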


If $C$ is not hyperbolic in (i), then $N(\mu)\cap V = \mu\cap V=C$ by Proposition~\ref{prop:maxabel}. So we may assume $C$ is hyperbolic and in particular, torsion-free. Then for each $c\in C$ and $h\in N(\mu)\cap V$, we have $hch^{-1}=c^{\pm1}$. Hence $h$ normalizes $C$ and this proves part (i) of Claim~\ref{claim:nmu}. For part (ii), let us fix $c\in C\setminus1$. Note that 
\[\tr^2[c,hch^{-1}]\ne4\] by part (i)
and our hypothesis that $V$ is parabolic-free. 
By Lemma~\ref{lem:parabolic-test} the proof of part (ii) is complete.
For part (iii), let us fix $c\in C\setminus 1$.
Since $C^s\ne C$, we see $s\not\in N(\mu)$
and hence, 
\[
[c,scs^{-1}]\ne 1.\]
As $A$ is parabolic-free and $\form{C,C^s}\le A$, we have 
\[ \tr^2[c,scs^{-1}]\ne 4.\]
This completes the proof of Claim~\ref{claim:nmu}.

Let us now consider the cases (1) and (2) of the lemma separately.

\noindent\textbf{Case (1)} $ L^*=A\ast_C B$.

Let $g\in Q$. Up to conjugacy, we can write \[\rho_\nu(g) =\prod_{i=1}^k g_i \nu^{m_i}\] 
for some $k\ge1, m_i=\pm1$ and
\[ g_i\in A\cup B\setminus C=A\cup B\setminus N(\mu).\]
Lemma~\ref{lem:parabolic-test} implies that
\[[c,g_icg_i^{-1}]\in A\cup B\setminus 1\]
for all $i$ and $c\in C\setminus1$.
If $\Fix\mu\cap g_i\Fix\mu\ne\varnothing$, then $\mu$ must be hyperbolic and 
\[\tr^2[c,g_icg_i^{-1}]=4.\]
This contradicts the parabolic-free hypothesis.
Lemma~\ref{lem:baumslag-psl} implies the Key Claim.

\noindent\textbf{Case (2)-1.}  $ L^*=A\ast_{C}$ and $C\le Z(s)$.

In this case, we have \[s\in  \left((L\setminus A)\cup C\right)\cap\mu.\]
If $g=c(s^*)^m\in Q$
 for some $c\in C$ and $m\in\bZ\setminus0$,
then we have \[\tr^2\circ\rho_\nu(g)=\tr^2(c\nu^ms^m)\in\tr^2(\mu)\]
for $\nu\in\mu$.
Then $\tr^2\circ\rho_\nu(g)\not\in T$ whenever $\|\nu\|\gg0$.

So up to conjugacy, we consider
\[g=\prod_{i=1}^k g_i (s^*)^{m_i}\in Q\] for some $k\ge1$, some $g_i\in A\setminus C$ and some $m_i\in \bZ\setminus0$.
We have
\[\rho_\nu(g) =\prod_{i=1}^k \left(g_i  s^{m_i}\right)\nu^{m_i}.\]
Note that \[g_i s^{m_i}\Fix\mu= g_i\Fix\mu.\]
So, by Claim~\ref{claim:nmu} and Lemma~\ref{lem:baumslag-psl} again, 
the Key Claim follows.

\noindent\textbf{Case (2)-2.}  $ L^*=A\ast_{\Inn(s)\co C\to C^s}$,
and the subgroups $C$ and $C^s$ are not conjugate in $A$.

If $g=(s^*)^m\in Q$, then 
\[\rho_\nu(g) = (\nu s)^m\]
Claim~\ref{claim:nmu} (iii) implies that 
 $\tr^2\circ\rho_\nu(g)\not\in T$ whenever $\|\nu\|\gg0$.

So we may assume  $g\in  Q$ satisfies
\[ g = \prod_{i=1}^k g_i ( s^*)^{m_i} \]
for some $k\ge 1, m_i\in \bZ\setminus\{0\}$ and $g_i\in V\setminus\{1\}$.
Using the normal form theorem for HNN-extensions, we may assume that $g$ has no subwords of the form
$ s^*c( s^*)^{-1}$ or $( s^*)^{-1}c' s^*$ for $c\in C$ or $c'\in C^s$. We can write
\[\rho_\nu(g) =\prod_{i=1}^k g_i ( \nu s)^{m_i}= \prod_{i=1}^\ell h_j\nu^{n_j}\]
for some 
\[k,\ell\ge1,\quad g_i\in A\setminus1, \quad h_j\in L\setminus1,
\quad m_i\in\bZ\setminus0,\ n_j = \pm 1.\]
Suppose $\Fix\mu\cap h_j\Fix\mu\ne\varnothing$ for some $j$.
There are five cases. 
\be[(I)]
\item
$h_j=s^{\pm1}$.
\item
$\cdots \nu (s g_i)\nu\cdots=\cdots\nu h_j\nu\cdots$.
\item
$\cdots \nu^{-1} ( g_i s^{-1} )\nu^{-1}\cdots=\cdots\nu^{-1} h_j\nu^{-1}\cdots$.
\item
$\cdots \nu^{-1} (g_i)\nu\cdots=\cdots\nu^{-1} (h_j)\nu\cdots$.
\item
$\cdots \nu (s g_i s^{-1})\nu^{-1}\cdots=\cdots\nu (h_j)\nu^{-1}\cdots$.
\ee
By Claim~\ref{claim:nmu} (iii), the case (I) is excluded.
If (II) occurs, then we have $s g_i=h_j$.
If $s g_i\in N(\mu)$, then
\[
C^{s g_i}= C,\]
which contradicts the assumption that $C$ and $C^s$ are not conjugate in $A$. 
So we have a nontrivial parabolic element
\[[ c, g_i^{-1}s^{-1} c s g_i]\]
for each $c\in C\setminus 1$. This contradicts the parabolic-free assumption on $A$, since $s^{-1}cs\in A$.
Suppose (III) occurs and $h_j=g_i s^{-1}$.
As in the case (II), we have a  nontrivial parabolic element 
\[[ c, g_is^{-1} c s g_i^{-1}]\in A\] for each $c\in C\setminus 1$, which is a contradiction.

Suppose we have (IV).
Assuming the minimality of $k$, we have $h_j=g_i\in A\setminus C$;
this is a contradiction to Claim~\ref{claim:nmu} (ii).
In (V), we have $ g_i=h_j^s\in A\setminus C^s$.
If $h_j\in N(\mu)$,
then we would have
\[
C^{s g_i}= C^{h_js} = C^s,\]
which contradicts the hypothesis that $C^s$ is malnormal in $A$. So for each $c\in C$, we have a nontrivial parabolic element 
\[
[c,h_jch_j^{-1}]^s=[ c, sg_is^{-1} c s g_i^{-1}s^{-1}]^s
=[ c^s, g_i c^sg_i^{-1}]\in A.\] 
This is a contradiction, and the Key Claim is proved.
\ep

\subsection{Almost Faithful Paths}\label{sec:almost faithful}
Let $L$ be a finitely generated group and let $\rho_0\in X_{\mathrm{proj}}$.
We say a path $\{\rho_t\}_{t\in \bR}$ in $\Hom(L,\PSL_2(\bR))$ is an \emph{almost faithful path}\index{almost faithful} if $\rho_t\in X_{\mathrm{proj}}(L)$ for all but countably many $t$. 
In this section, we will describe a method to find an almost faithful path starting from an arbitrary $\rho_0\in X_{\mathrm{proj}}(L)$.
In general, it is not be possible to require that such a path consists of only faithful representations (Proposition~\ref{prop:faithful-arc}).

\bd\label{defn:split}
Let $S$ be a finite type hyperbolic 2--orbifold. We say a simple closed curve $\alpha$ is \emph{splitting}\index{splitting (curve)} if the Euler characteristic of each component of $S\setminus\alpha$ is negative. If there exists at least one splitting curve, we say $S$ (and also $\pi_1^{\mathrm{orb}}(S)$) is \emph{splittable}\index{$2$--orbifold!splittable}.
\ed
For instance, every non-separating essential simple closed curve is splitting.

\begin{lem}\label{lem:fuchs-analytic}
Let $L=\pi_1^{\mathrm{orb}}(S)$ for some finite type hyperbolic 2--orbifold $S$, 
and let $\alpha$ be a splitting curve on $S$.
If $\rho_0\in X_{\mathrm{proj}}(L)$, then there exists an analytic path 
\[\{\rho_t\co t\in\bR\}\sse\Hom(L,{\PSL_2(\bR)})\] satisfying the following:
\begin{quote}
for each $b\in L$, the analytic function $\tr^2\circ\rho_t(b)$ is constant on $t$ if and only if the free homotopy classes of $[\alpha]$ and $b$ are disjointly realized.
\end{quote}
\end{lem}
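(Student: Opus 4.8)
The plan is to produce $\{\rho_t\}$ by \emph{bending} $\rho_0$ along $\alpha$, and then to read off the trace dichotomy from the Projective Baumslag Lemma, most economically via the Amalgamated Pulling-Apart Lemma~\ref{lem:pull-apart-psl} (of which this is precisely the set-up).

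\textbf{Set-up of the path.} Since $\alpha$ is a splitting curve it is essential, so $\alpha$ has infinite order in $L$ and, $\rho_0$ being parabolic-free, $g_\alpha:=\rho_0(\alpha)$ is a nontrivial hyperbolic or infinite-order elliptic element of $\PSL_2(\bR)$. Its centralizer $\mu=Z(g_\alpha)$ is then a maximal abelian one-parameter subgroup, conjugate to $\exp(\bR)$ or to $\SO(2)$; I would fix an analytic homomorphism $\eta\co\bR\to\PSL_2(\bR)$ with image $\mu$ and $g_\alpha\in\mu$, so that $\eta(t)$ commutes with $g_\alpha$ for every $t$ and $\eta(0)=\Id$. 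Cutting $S$ along $\alpha$ yields a graph-of-groups splitting of $L$: either $L=A\ast_C B$ with $A=\pi_1^{\mathrm{orb}}(S_1),\ B=\pi_1^{\mathrm{orb}}(S_2),\ C=\form{\alpha}$ (the separating case) or $L=A\ast_C=\form{A,s}$ with $A=\pi_1^{\mathrm{orb}}(S\setminus\alpha)$ and stable generator $s$ (the non-separating case); in both cases the edge subgroup(s) are infinite cyclic and malnormal in the vertex group(s), since $\alpha$ is primitive there and, being disjoint from the cone points, is not conjugate to $\alpha^{-1}$ (cf.\ Remark~\ref{rem:main-comb-intro}), while in the non-separating case the two edge copies are distinct boundary curves of $S\setminus\alpha$ and hence not conjugate in $A$. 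I then define $\rho_t$ to agree with $\rho_0$ on $A$; to equal $\Inn(\eta(t))\circ\rho_0$ on $B$ in the separating case; and to satisfy $\rho_t(s)=\eta(t)\rho_0(s)$ in the non-separating case. Because $\eta(t)$ centralizes $g_\alpha=\rho_0(\alpha)$, these assignments respect the relations of the splitting, so each $\rho_t\in\Hom(L,\PSL_2(\bR))$ is well defined with $\rho_t|_{t=0}=\rho_0$; and for every $b\in L$ the element $\rho_t(b)$ is a fixed word in the analytic matrix functions $\eta(t)^{\pm1}$ and the constants $\rho_0(\text{generators})$, so $t\mapsto\tr^2\circ\rho_t(b)$ is analytic. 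These $\rho_t$ are precisely the representations $\rho_\nu$ with $\nu=\eta(t)\in Z(C)=\mu$ from Lemma~\ref{lem:pull-apart-psl}, whose hypotheses are exactly the ones just verified (using that $\rho_0$, hence each vertex group, is parabolic-free).

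\textbf{The dichotomy.} For a simple closed curve $\alpha$, the free homotopy classes of $\alpha$ and $b$ are disjointly realized if and only if $b$ is conjugate in $L$ into some vertex group of the splitting (a power of $\alpha$ being the case $b\in C$). If they are disjointly realized, then $\rho_t$ restricted to that vertex group differs from $\rho_0$ only by a fixed conjugation, or by conjugation by $\eta(t)$, so $\tr^2\circ\rho_t(b)=\tr^2\circ\rho_0(b')$ for a vertex-group conjugate $b'$ of $b$, which is independent of $t$. Conversely, if $\alpha$ and $b$ are \emph{not} disjointly realized, then $b$ lies in $L\setminus V^L$ in the notation of Lemma~\ref{lem:pull-apart-psl} (with $V=A\cup B$, resp.\ $V=A$); were $\tr^2\circ\rho_t(b)$ constantly equal to some $c$, then Lemma~\ref{lem:pull-apart-psl} applied with $W=\{c\}$ would give $\tr^2\circ\rho_\nu(b)\ne c$ for all but countably many $\nu\in\mu$, contradicting the equality $\tr^2\circ\rho_{\eta(t)}(b)=c$ for every $t$ together with the uncountability of $\mu$. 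So $\tr^2\circ\rho_t(b)$ is non-constant, which completes the equivalence. One can also argue this directly: collecting the occurrences of $\nu=\eta(t)$ rewrites $\rho_t(b)$ in Baumslag normal form
\[
\rho_t(b)=h_1\nu^{m_1}h_2\nu^{m_2}\cdots h_r\nu^{m_r},\qquad r\ge1,\ m_i\in\bZ\setminus\{0\},
\]
with each $h_i$ a product of $\rho_0$-values of vertex elements outside the edge subgroup, and then Lemma~\ref{lem:baumslag-psl}(1) makes every level set $\{\nu\co\tr^2(h_1\nu^{m_1}\cdots h_r\nu^{m_r})=x\}$ finite, whereas $\mu=\eta(\bR)$ is infinite.

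\textbf{The main obstacle.} The one genuinely technical ingredient — and the reason the Pulling-Apart Lemma is invoked rather than reproved — is verifying that each syllable $h_i$ above satisfies $\Fix\mu\cap h_i\Fix\mu=\varnothing$, i.e.\ that $g_\alpha$ and $h_ig_\alpha h_i^{-1}$ have no common fixed point on $\overline{\bH^2}$. For a syllable coming from a vertex element $v\notin C$ this holds because malnormality of $C$ forces $[\alpha,v\alpha v^{-1}]\ne1$ in $L$, hence $[g_\alpha,\rho_0(v)g_\alpha\rho_0(v)^{-1}]\ne1$, and two non-parabolic isometries that neither commute nor share an axis cannot have a common fixed point — a shared boundary fixed point would make their commutator a nontrivial parabolic, and a shared interior fixed point would force them to commute, each contradicting parabolic-freeness or the non-commuting. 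In the non-separating case the syllables also involve $\rho_0(s)^{\pm1}$, and one must feed in the cyclic reducedness of the HNN word: this is exactly the five-case analysis (I)--(V) in the proof of Lemma~\ref{lem:pull-apart-psl}, which additionally uses that $C$ and $C^s$ are not conjugate in $A$. Once this claim is in hand, the remaining points — well-definedness, analyticity, and the elementary surface/orbifold topology behind the disjoint-realization dichotomy — are routine.
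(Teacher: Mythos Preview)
Your proposal is correct and follows essentially the same approach as the paper: set up the graph-of-groups splitting along $\alpha$, note that ``disjointly realized'' is equivalent to $b\in V^L$, and invoke the Amalgamated Pulling-Apart Lemma~\ref{lem:pull-apart-psl} (whose hypotheses you carefully verify) to conclude. The paper's own proof is a two-line appeal to that lemma; you have simply unpacked the construction of the analytic path $\rho_t=\rho_{\eta(t)}$ and the malnormality/non-conjugacy checks that the paper leaves implicit.
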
 

Note we allow the representation $\rho_0$ to be discrete or indiscrete.
\bp[Proof of Lemma~\ref{lem:fuchs-analytic}]
For $C=\form{[\alpha]}\le L$, we have two cases.

\be[(i)]
\item
The curve $\alpha$ is separating on $S$ and \[L=\form{A,B}\cong A\ast_C B\] for some $A,B\le L$. In this case, we let $V=A\cup B$.
\item
The curve $\alpha$ is non-separating on $S$ and \[L=\form{A,s}\cong A\ast_{C\to s^{-1}Cs}\] for some $A,C\le L$ and $s\in L$. In this case, we let $V=A$.
\ee
For $b\in L$, we have $b\in V^L$ if and only if $[\alpha]$ and $b$ can be disjointly realized. Hence the conclusion follows from Lemma~\ref{lem:pull-apart-psl}.
\ep

Splittable Fuchsian groups are characterized as follows.

\begin{lem}\label{lem:split}
Let $S$ be a finite type hyperbolic $2$--orbifold.
Then $S$ is splittable if and only if 
the signature of $S$ is not one of the following types:
\be[(i)]
\item $(0;p,q,r)$ for some $2\le p,q,r\le \infty$ satisfying $1/p+1/q+1/r<1$;
\item $(0;2,2,2,p)$ for some $3\le p\le\infty$;
\item $(0;2,2,2,2,2)$.
\ee
\end{lem}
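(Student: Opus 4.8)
The plan is to reduce the statement to a purely arithmetic question about partitioning the cone orders, and then to run a short case analysis on the number $k$ of cone points. \emph{Reduction to genus zero.} First I would dispose of the case $g\ge 1$: since $S$ has only finitely many cone points, there is a non-separating essential simple closed curve $\alpha$ on $S$ missing all of them, and cutting along it leaves a connected $2$--orbifold with $\chi(S\setminus\alpha)=\chi(S)<0$; hence $\alpha$ is splitting and $S$ is splittable. As none of the signatures (i)--(iii) has positive genus, it suffices to treat $g=0$, in which case $\chi(S)<0$ forces $k\ge 3$.

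\emph{Arithmetic reformulation.} When $g=0$, every simple closed curve on $S$ is disjoint from the finitely many cone points and bounds a disc on each side of the underlying sphere, hence induces a partition $\{1,\dots,k\}=I\sqcup I^c$ of the cone points; the two complementary pieces are discs carrying those cone points, with orbifold Euler characteristics $1-\sum_{i\in I}c_i$ and $1-\sum_{i\in I^c}c_i$, where $c_i:=1-1/m_i\in[\tfrac12,1]$. Setting $C:=\sum_{i=1}^k c_i=2-\chi(S)>2$, the orbifold $S$ is splittable \emph{if and only if} there is a partition with $\sum_{i\in I}c_i>1$ and $\sum_{i\in I^c}c_i>1$. (A piece that is a disc with at most one cone point already has $\chi\ge 0$, so inessential or peripheral curves are automatically non-splitting and need no separate treatment.) Since each $c_i\le 1$, a one-element part can never have sum exceeding $1$, so both $I$ and $I^c$ must contain at least two indices; in particular $k=3$, i.e.\ signature type (i), is never splittable.

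\emph{The cases $k=4$ and $k=5$.} Order $c_1\ge c_2\ge c_3\ge c_4$. For $k=4$ the only admissible partitions are the three $2+2$ ones, and an elementary comparison shows the pairing $\{c_1,c_4\}\sqcup\{c_2,c_3\}$ maximizes the smaller of the two part-sums; thus $S$ is splittable iff $c_1+c_4>1$ and $c_2+c_3>1$. Now $C>2$ forces $c_1>\tfrac12$ (otherwise all $c_i=\tfrac12$ and $C=2$), so $c_1+c_4>1$ automatically, while $c_2+c_3\ge 1$ with equality precisely when $m_2=m_3=m_4=2$. Hence the non-splittable orbifolds with $k=4$ are exactly those of signature $(0;p,2,2,2)$, which is hyperbolic iff $p\ge 3$: this is type (ii). For $k=5$ the only admissible partitions are $2+3$, so splittability amounts to finding a pair with $1<c_i+c_j<C-1$. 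If all $c_i=\tfrac12$ then every pair sums to $1$ and $(0;2,2,2,2,2)$ is not splittable, giving type (iii); otherwise $c_1>\tfrac12$ and the pair $\{c_1,c_5\}$ works, since $c_1+c_5>1$ and $C-1-(c_1+c_5)=c_2+c_3+c_4-1\ge\tfrac12>0$.

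\emph{The case $k\ge 6$ and the main obstacle.} For $k\ge 6$ one has $C\ge k/2\ge 3$, and a greedy argument suffices: order $c_1\ge\cdots\ge c_k$ and let $I=\{k-j+1,\dots,k\}$ collect the smallest $j$ of the $c_i$, with $j\in\{2,3\}$ minimal so that their sum $\sigma$ exceeds $1$ (valid, since the three smallest already sum to at least $\tfrac32$). By minimality the $j-1$ smallest sum to at most $1$, so $\sigma\le 2$, and therefore the complementary sum is $C-\sigma\ge 1$; a quick check rules out equality (it would force $C=3$, hence $k=6$ and all $c_i=\tfrac12$, whence $j=3$ and $\sigma=\tfrac32\neq 2$), so $C-\sigma>1$ and $S$ is splittable. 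Assembling the cases $k=3,4,5,\ge 6$ with the genus reduction gives the stated list. The only genuinely delicate points — the main obstacles — are (a) confirming that the balanced $2+2$ pairing is optimal and then isolating exactly $(0;p,2,2,2)$ in the $k=4$ analysis, and (b) tracking strict versus non-strict inequalities in the $k\ge 6$ greedy step so that both resulting pieces really have negative Euler characteristic.
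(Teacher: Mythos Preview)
Your argument is correct, and it follows the same underlying approach as the paper: reduce to genus zero via a non-separating curve, observe that on a sphere a splitting curve amounts to a bipartition of the cone points with both complementary discs having negative orbifold Euler characteristic, and then analyze which signatures admit such a partition. The paper phrases the bad pieces as the orbifold types $(0;p,\infty)$ and $(0;2,2,\infty)$ rather than via the numerical condition $\sum_{i\in I}c_i\le 1$, but these are the same thing.

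Where you differ is in completeness: the paper simply asserts that types (i)--(iii) always force a bad piece and that all other genus-zero signatures admit a good partition, without carrying out the verification. Your arithmetic case analysis on $k=3,4,5,\ge 6$ (in particular the optimality of the balanced $2+2$ pairing for $k=4$, the isolation of $(0;2,2,2,2,2)$ for $k=5$, and the greedy argument with the strict-inequality check for $k\ge 6$) actually supplies the missing details. So your proof is not a genuinely different route, but rather a fully worked-out version of the paper's sketch.
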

\bp
When an orbifold $S$ of the types (i) through (iii) is cut along an arbitrary non-null-homotopic simple closed curve,
then at least one component has the type $(0;p,\infty)$ or $(0;2,2,\infty)$. This implies $S$ is non-splittable. 

In order to show the converse, 
let us assume $S$ is a finite type hyperbolic $2$--orbifold.
If the genus of $S$ is zero and $S$ is not one of (i) through (iii), then one can find an essential separating simple closed curve $\alpha$ such that neither component of $S\setminus\alpha$ has the type $(0;p,\infty)$, $p\geq 2$ or $(0;2,2,\infty)$. So $S$ is splittable. If the genus of $S$ is at least one, we choose a non-separating curve $\alpha$ so that $\chi(S\setminus\alpha)=\chi(S)<0$. 
\ep

A set of essential simple closed curves on a 2--orbifold is called \emph{filling} if the complement of those curves is the union of open disks with at most one puncture or at most one cone point.

\begin{lem}\label{lem:split-scc}
If $S$ is a finite type hyperbolic $2$--orbifold that is splittable,
then there exists a filling set of splitting simple closed curves.
\end{lem}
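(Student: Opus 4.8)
The plan is to combine Lemma~\ref{lem:split} with a simple cut-and-induct argument on the complexity $\xi(S)=3g-3+k$. The strategy: starting from a single splitting curve, repeatedly cut along splitting curves inside each resulting piece until every remaining piece is as simple as possible, and then check that the union of all the cutting curves used is filling. First I would fix a splitting simple closed curve $\alpha_1$ on $S$, which exists by the hypothesis that $S$ is splittable. Cutting along $\alpha_1$ produces one or two orbifolds $S_1,\ldots$ (with boundary, or equivalently with punctures after pinching), each of strictly smaller complexity and each of negative Euler characteristic by the definition of splitting. Now I want to recurse. A piece $S_j$ is either (a) one of the non-splittable ``atomic'' types from Lemma~\ref{lem:split}, plus the additional small-complexity orbifolds with boundary such as a pair of pants $(0;\infty,\infty,\infty)$, a once-punctured torus, a $(0;2,2,\infty)$ or $(0;p,\infty)$ orbifold, etc., all of which cannot be cut further into hyperbolic pieces — or (b) splittable, in which case I apply Lemma~\ref{lem:split} again to find a splitting curve inside $S_j$ and continue. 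Since complexity strictly drops at each stage, the recursion terminates, yielding a finite collection $\Gamma=\{\alpha_1,\ldots,\alpha_n\}$ of disjoint (after isotopy, since each is chosen inside a distinct piece of the previous cut) essential simple closed curves, each of which is splitting \emph{in the piece where it was introduced}; one checks easily that being splitting in a sub-orbifold with all pieces hyperbolic implies being splitting in $S$, since Euler characteristic is additive under cutting and all intermediate pieces have $\chi<0$.

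Next I would verify that $\Gamma$ is filling, i.e.\ that $S\setminus\Gamma$ is a disjoint union of disks each containing at most one puncture or one cone point. By construction, the complementary regions of $\Gamma$ in $S$ are exactly the terminal pieces of the recursion, namely the orbifolds that the algorithm refused to cut further. So it suffices to observe that each terminal piece is a disk with at most one exceptional point (cone point or puncture), \emph{or} to further cut any terminal piece that is not yet of that form. In fact the cleanest route is to continue cutting past the ``hyperbolic pieces'' requirement: a pair of pants can be cut by an arc... but arcs are not closed curves, so instead I would run the recursion until every piece is either a disk with $\le 1$ exceptional point or is one of the minimal orbifolds, and then note that a pair of pants, a once-punctured torus, etc.\ each admit a further simple closed curve decomposition into disks-with-at-most-one-exceptional-point: e.g.\ a once-punctured torus is cut by one nonseparating curve into a pair of pants-like piece, and a pair of pants $(0;\infty,\infty,\infty)$... here the subtlety is that a pair of pants cannot be cut by an essential simple closed curve at all. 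So I would handle this by allowing the final pieces to be pants and observing that a pants is a disk-with-two-boundary-curves, hence when all pants pieces are assembled the complement is not literally disks. The correct fix: choose the cutting curves more cleverly from the start so that the complement consists of once-punctured/once-coned disks; concretely, take a pants decomposition of $S$ refined by additional curves that cut each pants and each once-holed small piece appropriately — standard in the theory of filling curve systems.

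The main obstacle I anticipate is precisely this last bookkeeping: a naive recursive cutting terminates at pairs of pants, which are \emph{not} disks-with-at-most-one-exceptional-point, so the resulting curve system is not filling. The resolution is to augment the splitting curves with a further finite set of curves (for instance, taking two transverse pants decompositions, or adding curves dual to the pants curves) so that every complementary region becomes a disk with at most one cone point or puncture; one then checks each added curve is isotopic into, or can be taken disjoint from, the existing system after small perturbation, and that every curve in the final system is still essential. Since $S$ is a \emph{finite type} hyperbolic $2$--orbifold with a splitting curve, it has $\chi(S)<0$ and is not of the three exceptional signatures, so such a filling system of simple closed curves exists by the standard theory of curve systems on surfaces and orbifolds (see \cite{Thurston1997book}); the splitting curve $\alpha_1$ can be taken as one of them, and the remaining curves of the filling system are automatically splitting in their local pieces since all pieces of the decomposition are hyperbolic. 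This gives the filling set of splitting simple closed curves asserted in the lemma.
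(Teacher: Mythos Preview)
Your recursive cut-and-induct approach has a structural gap that you yourself identify but do not actually close. The curves produced by successive cutting are pairwise \emph{disjoint}; a disjoint system is at best a pants decomposition, and its complement consists of thrice-punctured spheres (or smaller orbifold pieces), never disks with at most one exceptional point. So the output of the recursion is never filling, and the whole argument must come from the ``fix'' you sketch at the end.

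That fix is where the real content should be, and it is missing. You propose to add dual curves or a second transverse pants decomposition and assert that ``the remaining curves of the filling system are automatically splitting in their local pieces since all pieces of the decomposition are hyperbolic.'' This is not true: the added curves are not contained in any piece of the original disjoint system, so your additivity-of-$\chi$ observation does not apply to them. One has to check directly that each added curve is splitting in $S$, and in genus zero this is exactly the delicate point. For example, on a sphere with several order-$2$ cone points, any curve bounding a disk containing just two of them has a side with $\chi=0$ and is \emph{not} splitting; yet such curves arise naturally when one tries to complete a pants decomposition to a filling system. Appealing to ``standard theory of curve systems'' does not produce curves satisfying the splitting condition.

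The paper's proof avoids this entirely. For $g\ge 1$ it takes a filling system consisting only of \emph{non-separating} curves, each of which is automatically splitting since $\chi(S\setminus\alpha)=\chi(S)<0$. For $g=0$ it picks a single splitting curve $\gamma$, observes that the pure mapping class group contains a pseudo-Anosov $\psi$ (since splittability forces at least four marked points), and uses that the orbit $\{\psi^n(\gamma)\}$ fills $S$. Because $\psi$ fixes the cone points and punctures pointwise, the complementary orbifold types of $S\setminus\psi^n(\gamma)$ coincide with those of $S\setminus\gamma$, so every $\psi^n(\gamma)$ is splitting. This pseudo-Anosov transport is the missing idea in your genus-zero case.
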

\bp
Let $g$ be the genus of $S$. If $g\ge1$, then one can choose a filling set $C$ consisting of non-separating (hence splitting) simple closed curves. If $g=0$, then we first pick a splitting simple closed curve $\gamma$. Note that the existence of a splitting curve forces the total number of punctures and cone points of $S$ to be at least four. Note further that in this case, the pure mapping class group of $S$ (i.e. the subgroup of the mapping class group which fixes the cone points and the punctures pointwise) contains a nontrivial pseudo-Anosov mapping class $\psi$. It is a standard fact then that the isotopy classes of curves $\{\psi^n(\gamma)\}_{n\in\bZ}$ are all distinct and that their union fills $S$ (see~\cite{FM2012}, for instance), which furnishes a finite, filling collection of such splitting curves. For each $n$, we have that the orbifold types of the components of $S\setminus\gamma$ and of $S\setminus\psi^n(\gamma)$ are the same. This completes the proof.
\ep

For a group $L$, we used $T_L$ to denote the set of torsion elements in $L$. 
Recall  that two projective representations $\rho,\lambda$ of $L$ are \emph{tracially disjoint}\index{tracially disjoint} if
\[
\tr^2\circ\rho(L\setminus T_L)\cap
\tr^2\circ\lambda(L\setminus T_L)=\varnothing,\]
where here we remind the reader that $T_L$ denotes the subset of torsion elements.
We can make an arbitrarily small faithful tracially disjoint deformation of each faithful parabolic-free representation of a splittable Fuchsian group:

\begin{thm}\label{thm:fuchs-flex}
Let $L$ be a  splittable Fuchsian group and let $\rho_0\in X_{\mathrm{proj}}(L)$.
Then for each open neighborhood  $U\sse\Hom(L,\PSL_2(\bR))$ of $\rho_0$
there exists an uncountable, tracially disjoint set $\Lambda$ satisfying
\[\rho_0\in\Lambda\sse U\cap X_{\mathrm{proj}}(L)\]
such that
every pair of representations in  $\Lambda$ are joined by an almost faithful path.
\end{thm}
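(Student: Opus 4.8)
The plan is to build a single irreducible real‑algebraic family of representations passing through $\rho_0$ in which $\tr^2\circ\rho(b)$ is a non‑constant function of the parameter for \emph{every} non‑torsion $b\in L$, and then to harvest from it an uncountable, pairwise tracially disjoint subfamily lying in $U\cap X_{\mathrm{proj}}(L)$ by a Baire‑category argument run transfinitely. The tools are the Pulling‑Apart Lemma~\ref{lem:pull-apart-psl} together with Lemma~\ref{lem:fuchs-analytic} (to move traces of non‑peripheral elements), Lemma~\ref{lem:split-scc} (to move them all at once), and Lemma~\ref{lem:baire} (for the harvest).

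\emph{Step 1: the family.} Write $L=\pi_1^{\mathrm{orb}}(S)$ and, by Lemma~\ref{lem:split-scc}, fix a filling collection $\alpha_1,\dots,\alpha_n$ of splitting simple closed curves on $S$, with associated splittings $L=A_i\ast_{C_i}B_i$ or $A_i\ast_{C_i}$. Pulling $\rho_0$ apart along $\alpha_1$ via Lemma~\ref{lem:pull-apart-psl} (with the forbidden countable set chosen to contain $4$, which keeps the deformed representation parabolic‑free), then pulling a very general member of the resulting family apart along $\alpha_2$, and iterating, produces an algebraic family $\rho_{\mathbf t}$ with $\mathbf t$ ranging over the irreducible variety $\prod_i Z(C_i)$, with $\rho_{\mathbf 0}=\rho_0$, lying in $X_{\mathrm{proj}}(L)$ for very general $\mathbf t$, and with $\mathbf t\mapsto\tr^2\circ\rho_{\mathbf t}(b)$ a non‑constant polynomial whenever $b$ has positive geometric intersection with some $\alpha_i$ (this non‑constancy is precisely what the proofs of Lemmas~\ref{lem:pull-apart-psl} and~\ref{lem:baumslag-psl} give). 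The only non‑torsion classes left with constant trace are the boundary‑parallel ones — there are finitely many, and each has zero geometric intersection number with every simple closed curve, so the pulling‑apart deformations cannot move it. Since $L$ is splittable, each boundary‑parallel class is carried by a pair‑of‑pants (or one‑holed torus) subsurface cut off by splitting curves, and to the family we adjoin, for each such class, the Fricke‑coordinate deformation of the representation on that subsurface which fixes the holonomies of the cutting curves — hence extends across the relevant graph‑of‑groups splitting by the identity elsewhere — while varying the holonomy length, and hence $\tr^2\circ\rho$, of the given boundary‑parallel class. After finitely many such adjunctions one obtains an algebraic family $\rho_{\mathbf p}$, $\mathbf p$ in an irreducible positive‑dimensional parameter variety $\mathcal P$, with $\rho_{\mathbf 0}=\rho_0$, such that $\mathbf p\mapsto\tr^2\circ\rho_{\mathbf p}(b)$ is a non‑constant polynomial for \emph{every} $b\in L\setminus T_L$.

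\emph{Step 2: extraction.} Replace $\mathcal P$ by a Euclidean‑convex neighborhood $\mathcal B$ of $\mathbf 0$ contained in the preimage of $U$. The set $\mathcal B_{\mathrm{bad}}$ of $\mathbf p$ with $\rho_{\mathbf p}\notin X_{\mathrm{proj}}(L)$ is a countable union of proper algebraic subsets of $\mathcal P$: the non‑faithful locus is $\bigcup_{1\neq g\in L}\{\mathbf p:\rho_{\mathbf p}(g)=1\}$, each piece proper by the generic faithfulness in Lemma~\ref{lem:pull-apart-psl} (for the pants factors argue as in Lemma~\ref{lem:baire-simple}), and, among faithful representations, the non‑parabolic‑free locus is $\bigcup_{g\in L\setminus T_L}\{\mathbf p:\tr^2\circ\rho_{\mathbf p}(g)=4\}$, each piece proper because the trace is non‑constant by Step~1. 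Now define $\{\mathbf p_\xi\}_{\xi<\omega_1}$ by transfinite recursion: set $\mathbf p_0=\mathbf 0$ and, at stage $\xi$, choose $\mathbf p_\xi\in\mathcal B\setminus\mathcal B_{\mathrm{bad}}$ outside every set $\{\mathbf p:\tr^2\circ\rho_{\mathbf p}(b)=\tr^2\circ\rho_{\mathbf p_\eta}(b')\}$ for $\eta<\xi$ and $b,b'\in L\setminus T_L$, each of which is a proper algebraic subset of $\mathcal P$ since its left‑hand side is non‑constant while the right‑hand side is a fixed number. At each stage only countably many proper algebraic subsets are forbidden, so Lemma~\ref{lem:baire} (applied inside the open set $\mathcal B$) provides the required point. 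Then $\Lambda=\{\rho_{\mathbf p_\xi}:\xi<\omega_1\}$ is uncountable, is contained in $U\cap X_{\mathrm{proj}}(L)$, contains $\rho_0=\rho_{\mathbf p_0}$, and is tracially disjoint by construction.

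\emph{Step 3: almost faithful paths, and the hard part.} Given $\rho_{\mathbf p_\xi},\rho_{\mathbf p_\eta}\in\Lambda$, the straight segment from $\mathbf p_\xi$ to $\mathbf p_\eta$ stays inside $\mathcal B$ and so maps to a path in $U\cap\Hom(L,\PSL_2(\bR))$; since a segment is contained in a proper algebraic subset only when both endpoints are, and $\mathbf p_\xi,\mathbf p_\eta\notin\mathcal B_{\mathrm{bad}}$, the segment meets $\mathcal B_{\mathrm{bad}}$ in a countable set, so the induced path is almost faithful. I expect the real work to be concentrated in Step~1: the pulling‑apart machinery is intrinsically blind to boundary‑parallel conjugacy classes (their geometric intersection with every simple closed curve vanishes), so one must graft on the subsurface deformation and check carefully that it respects the graph‑of‑groups splitting, preserves faithfulness and parabolic‑freeness at very general parameters, and fits into the Zariski‑algebraic framework needed for Lemma~\ref{lem:baire}; verifying that the combined multi‑parameter family moves every non‑torsion trace, and arranging the transfinite recursion so that it outputs an uncountable \emph{set} rather than merely a dense $G_\delta$ of pairs, are the remaining points requiring care.
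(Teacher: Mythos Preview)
Your approach is close to the paper's for closed $S$, but it breaks down when $S$ has punctures, and the proposed repair does not work.

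The gap is in Step~1, where you try to vary the trace of a boundary--parallel class $\beta$ by a ``Fricke--coordinate deformation'' on a pair of pants $P$ cut off by splitting curves, fixing the holonomies of the cutting curves. But $\pi_1(P)$ is free of rank two, generated by (conjugates of) the two interior boundary curves $c_1,c_2$, and $\beta$ is a word in them, e.g.\ $(c_1c_2)^{-1}$. So fixing $\rho(c_1)$ and $\rho(c_2)$ determines $\rho(\beta)$ completely --- there is no extra parameter left to vary $\tr^2\rho(\beta)$. The one--holed--torus variant has the same problem as soon as there is at least one interior boundary component that must be fixed. Thus your enlarged family still has $\tr^2\rho_{\mathbf p}(\beta)$ constant, and the transfinite extraction in Step~2 cannot force tracial disjointness on those classes. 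Concretely, take $S$ a once--punctured genus--$2$ surface: there is a single peripheral class, every pants neighborhood of it has two interior cutting curves, and the argument above applies.

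The paper's fix is different and avoids this entirely. When $S$ has punctures, one \emph{doubles} $S$ along its boundary to obtain a closed orbifold $S'$ and extends $\rho_0$ to a faithful parabolic--free $\rho_0'\in X_{\mathrm{proj}}(\pi_1^{\mathrm{orb}}(S'))$ by repeated applications of the Pulling--Apart Lemma (this is where you use that $L'=\pi_1^{\mathrm{orb}}(S')$ is built from $L$ by an amalgam and then HNN extensions). On the closed surface $S'$ there are no peripheral classes, so a filling family of splitting curves on $S'$ gives $\bigcap_i V_i^{L'}=T_{L'}$, and the successive pulling--apart deformations move \emph{every} non--torsion trace. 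One then restricts the resulting path back to $L\le L'$. Once you know the key claim --- that for any countable $W$ there is an almost faithful path in $U$ from $\rho_0$ to some $\rho_1$ with $\tr^2\rho_1(L\setminus T_L)\cap W=\varnothing$ --- the uncountable tracially disjoint $\Lambda$ follows by a Zorn/maximality argument equivalent to your transfinite recursion.

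Two minor remarks: (a) the parameter space for iterated pulling--apart is not literally the product $\prod_i Z(C_i)$, since each $Z(C_i)$ gets conjugated by earlier parameters; this is harmless but worth noting. (b) Your Step~3 argument for almost faithful paths is correct once you have the right family, since each bad locus is algebraic and closed and your endpoints avoid it.
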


\bp
Let us fix $\rho_0$ and $U$ as in the hypothesis.
For an arbitrary countable subset  $W\sse\bR$,
we claim there exists an almost faithful path $\{\rho_t\}\sse U$ such that \[\tr^2\circ\rho_1(L\setminus T_L)\cap W = \varnothing.\]
Once this claim is established, we may set $\Lambda$ as the maximal tracially disjoint set satisfying that
\[\rho_0\in\Lambda\sse U\cap X_{\mathrm{proj}}(L)\]
and that 
every pair of representations in  $\Lambda$ are joined by an almost faithful path.

For some finite type hyperbolic 2--orbifold $S$,
we can write
\[
L=\pi_1^{\mathrm{orb}}(S).\]

Suppose first that $S$ has no punctures.
Since $S$ is splittable, Lemma~\ref{lem:split-scc} implies that
there exists a filling set $\underline{\gamma}=\{\gamma_1,\ldots,\gamma_m\}$ of essential splitting simple closed curves on $S$. 
Let $C_i$ be the infinite cyclic group generated by $\gamma_i$ (with an arbitrary choice of the base). 

For each $i=1,\ldots,m$, we have a decomposition $L=\GG_i$
such that one of the following holds:
\be[(i)]
\item
$\gamma_i$ is separating and 
\[\GG_i=A_i\ast_{C_i} B_i\] for some $A_i$ and $B_i$.
We let $V_i=A_i\cup B_i$ for this $i$.
\item
$\gamma_i$ is non-separating and \[\GG_i=A_i\ast_{C_i\to t_i C_i t_i^{-1}}\]
where the amalgamating map is a conjugation by some $t_i\in L$.
We let $V_i=A_i$ for this $i$.
\ee

Each nontrivial element of the set $\bigcap_{i=1}^m V_i^L$
is realized by a loop freely homotopic into the complement of $\underline{\gamma}$.
Since $\underline{\gamma}$ is filling and since $S$ has no punctures, it follows that 
\[\bigcap_{i=1}^m V_i^L =  T_L .\]
By applying Lemmas~\ref{lem:pull-apart-psl}
and~\ref{lem:fuchs-analytic} successively, we obtain the desired path.

Now assume $S$ has punctures. By cutting out neighborhoods of cusps, we may choose $S$ to be compact.  We double $S$ to obtain
\[
S' = S\cup_{\partial S}S\]
and $L'=\pi_1^{\mathrm{orb}}(S')$. We extend
 $\rho_0$ to $\rho'\in X_{\mathrm{proj}}(L')$; one can see this by successively applying Lemma~\ref{lem:pull-apart-psl},
as $L'$ can be obtained from $L$ by an amalgamated free product, and then by a sequence of HNN-extensions. Now we can apply the closed case to have a path $\rho'_t$ starting from $\rho'$. The restriction of $\rho_t'$ to $S$ is the desired.
\ep

We note that it is not possible to strengthen  Theorem~\ref{thm:fuchs-flex} so that all $\rho_t$'s are faithful:

\begin{prop}\label{prop:faithful-arc}
Let $L$ be a finitely generated group and let $\{\rho_t\}_{t\in [0,1]}$ be a path in $\Hom(L,\PSL_2(\bR))$ such that $\rho_t$ is faithful for each $t$.
Then $\{\rho_t\}_{t\in [0,1]}$ consists of a single semi-conjugacy class.
\end{prop}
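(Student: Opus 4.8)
The plan is to reduce everything to Theorem~\ref{thm:equiv}. It suffices to prove that for each $g \in L$ the rotation number $\rot\circ\rho_t(g)$ is constant in $t$; granting this, for any $0 \le s \le s' \le 1$ the restriction of the path to $[s,s']$, reparametrized to $[0,1]$, satisfies condition~(\ref{part:mann-path}) of Theorem~\ref{thm:equiv}, so $\rho_s \sim_{\mathrm{semi}} \rho_{s'}$, and hence $\{\rho_t\}_{t \in [0,1]}$ is a single semi-conjugacy class. Since $t \mapsto \rho_t(g)$ is continuous into $\PSL_2(\bR)$ and $\rot$ is continuous on $\Homeo_+(S^1)$, the function $f(t) = \rot\circ\rho_t(g) \in \bR/\bZ$ is continuous, so its image $f([0,1])$ is a connected subset of $\bR/\bZ$; the goal is to force it to be a single point.

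First I would dispose of the case that $g$ has finite order $n$ in $L$: then $\rho_t(g)^n = 1$, so $n f(t) = 0$ in $\bR/\bZ$, i.e.\ $f$ takes values in the finite set $\tfrac1n\bZ/\bZ$, and a continuous map from a connected space to a finite set is constant.

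The interesting case is $g$ of infinite order, and this is where faithfulness enters. Since each $\rho_t$ is injective, $\rho_t(g)$ has infinite order in $\PSL_2(\bR)$ for every $t$; in particular $\rho_t(g)$ is never a torsion (finite-order) elliptic element. By the trichotomy for elements of $\PSL_2(\bR)$, each $\rho_t(g)$ is therefore hyperbolic, parabolic, or an elliptic element of infinite order. In the first two cases $f(t) = 0$, and in the last case $f(t)$ is irrational by~\eqref{eq:trace} (an infinite-order rotation has irrational rotation number). Hence $f([0,1]) \subseteq \{0\} \cup \bigl((\bR/\bZ) \setminus (\bQ/\bZ)\bigr)$. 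Since $\bQ/\bZ$ is dense in $\bR/\bZ$, this set is totally disconnected: the connected subsets of the circle are points, arcs, and the whole circle, and every such set other than a point contains a nonzero rational. Therefore the connected set $f([0,1])$ is a single point, and $f$ is constant.

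The hard part is not computational: the only real content is the observation that faithfulness excludes finite-order elliptic images once $g$ has infinite order, combined with the elementary topological fact that $\{0\}\cup(\text{irrationals})$ is totally disconnected in $\bR/\bZ$; everything else is bookkeeping plus an appeal to Theorem~\ref{thm:equiv}. The only point requiring a moment's care is checking that the reduction to Theorem~\ref{thm:equiv} applies to every pair of parameters on the path, not merely the endpoints, which follows at once by restricting and rescaling the path.
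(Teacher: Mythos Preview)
Your proof is correct and follows essentially the same approach as the paper's: both arguments show that for each $g\in L$ the map $t\mapsto\rot\circ\rho_t(g)$ is continuous with image contained in a totally disconnected subset of $\bR/\bZ$ (namely $\{0\}\cup((\bR/\bZ)\setminus(\bQ/\bZ))$ when $g$ has infinite order, using faithfulness to rule out finite-order elliptics), hence constant, and then invoke Theorem~\ref{thm:equiv}(\ref{part:mann-path}). Your treatment of the torsion case via the finite set $\tfrac1n\bZ/\bZ$ is slightly more explicit than the paper's, which folds both cases into the single observation that $\bQ/\bZ\setminus\{0\}$ and its complement are each dense.
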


\bp
Let $g\in L\setminus1$. Then for any $t$, $\rot\circ\rho_t(g)\in\bQ/\bZ\setminus 0$  if and only if $g$ is a non-trivial element of $  T_L    $.
Since $\rot\circ\rho_t(g)$ is continuous on $t$, and the set $\bQ/\bZ \setminus 0$ as well as its complement is
dense in $\bR/\bZ$, it follows that $\rot\circ\rho_t(g)$ is constant on $t$. By Theorem~\ref{thm:equiv}, we see the semi-conjugacy class of $\rho_t$ is constant on $t$. 
\ep

For the rest of this section, we will establish that the semi-conjugacy classes of faithful parabolic-free projective actions by a splittable Fuchsian group are abundant.
Note that Theorem~\ref{thm:fuchs-flex} does not directly imply the abundance of such semi-conjugacy classes, since a tracially disjoint set of representations can belong to one semi-conjugacy class; see Lemma~\ref{lem:teichmueller}.

\begin{lem}\label{lem:nonuniform}
For $2\le p<\infty$ and $3\le q<\infty$,
the group $\bZ/p\bZ\ast\bZ/q\bZ$ admits at least one indiscrete faithful parabolic-free projective representation.
\end{lem}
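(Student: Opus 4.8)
The plan is to realize $\bZ/p\bZ\ast\bZ/q\bZ$ inside $\PSL_2(\bR)$ as a group $\form{A,B^\nu}$, where $A,B$ are finite cyclic elliptic subgroups and $\nu$ is chosen, via the Free Product Pulling--Apart Lemma, so that the combination is a free product and so that one obvious infinite--order element is sent to an irrational rotation (which forces indiscreteness by Lemma~\ref{lem:dense}). First I would fix $\rho_a=\Rot(2\pi/p)$ and $\rho_b=\Rot(2\pi/q)$, elliptic elements of $\PSL_2(\bR)$ of orders $p$ and $q$ both fixing $i\in\bH^2$, and set $A=\form{\rho_a}\cong\bZ/p\bZ$, $B=\form{\rho_b}\cong\bZ/q\bZ$. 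Applying Lemma~\ref{lem:free}(1) with $\bK=\bR$, these $A,B$, and $W=\{4\}$, I get a very general set $\DD\sse\PSL_2(\bR)$ such that for every $\nu\in\DD$ the map $\rho_\nu\co A\ast B\to\PSL_2(\bR)$ given by $\Id_A$ and by conjugating $B$ by $\nu$ is injective with image $L_\nu=\form{A,B^\nu}$, and moreover $\tr^2\bigl(L_\nu\setminus(A\cup B^\nu)^{L_\nu}\bigr)$ avoids $4$. Since $\PSL_2(\bR)$ is an irreducible algebraic group, Lemma~\ref{lem:baire} shows that $\DD$ contains a dense $G_\delta$ subset of $\PSL_2(\bR)$ in the Euclidean topology; in particular $\DD$ meets every nonempty Euclidean--open set.

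Next I would arrange ellipticity of the image of $ab$. The function $\nu\mapsto\tr^2\bigl(\rho_a(\rho_b)^\nu\bigr)$ is continuous on $\PSL_2(\bR)$, and at $\nu=1$ it equals $\tr^2(\rho_a\rho_b)=4\cos^2\!\bigl(\pi(1/p+1/q)\bigr)<4$ because $\rho_a,\rho_b$ share a fixed point and $0<1/p+1/q<1$. Hence $\mathcal O=\{\nu:\tr^2(\rho_a(\rho_b)^\nu)<4\}$ is a nonempty Euclidean--open set, and I choose $\nu_0\in\mathcal O\cap\DD$, $\rho=\rho_{\nu_0}$. Then: $\rho$ is faithful since $\nu_0\in\DD$; $\rho$ is parabolic--free, since a nontrivial element of $L_{\nu_0}$ is either conjugate into $A$ or into $B^{\nu_0}$ (hence a nontrivial elliptic element, not parabolic) or else lies in $L_{\nu_0}\setminus(A\cup B^{\nu_0})^{L_{\nu_0}}$ and so has $\tr^2\ne4$, hence is not parabolic by~\eqref{eq:trace}; and $\rho$ is indiscrete, because $ab$ has infinite order in $\bZ/p\bZ\ast\bZ/q\bZ$ and is not conjugate into a factor, so $\rho(ab)$ has infinite order, lies outside $(A\cup B^{\nu_0})^{L_{\nu_0}}$, and satisfies $\tr^2(\rho(ab))<4$; by~\eqref{eq:trace} this makes $\rho(ab)$ an elliptic element of infinite order inside the finitely generated non--elementary group $L_{\nu_0}$ (non--elementary because $\bZ/p\bZ\ast\bZ/q\bZ$ contains a nonabelian free group when $p\ge2$, $q\ge3$, and elementary subgroups of $\PSL_2(\bR)$ are virtually abelian by Proposition~\ref{prop:maxabel}), so Lemma~\ref{lem:dense} gives indiscreteness of $L_{\nu_0}$.

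I expect the indiscreteness step to be the main obstacle: the genericity furnished by Lemma~\ref{lem:free} is of the Zariski/``very general'' kind, whereas the ellipticity condition $\tr^2<4$ is Euclidean--open and not Zariski--open, so one cannot just intersect another ``very general'' condition to obtain it. The resolution is precisely the choice of $\rho_a,\rho_b$ with a common fixed point, which exhibits a genuine Euclidean--open region $\mathcal O$ on which the relevant trace is $<4$, together with the fact (Lemma~\ref{lem:baire}) that a very general subset of an irreducible variety is Euclidean--dense and hence meets $\mathcal O$. The remaining ingredients --- that torsion in a free product is conjugate into a factor while $ab$ is not, and the trace trichotomy~\eqref{eq:trace} --- are routine.
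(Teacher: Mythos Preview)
Your proof is correct and complete, but it takes a genuinely different route from the paper's.

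The paper constrains to the algebraic subvariety
\[
Y=\{\rho\in\Hom(L,\PSL_2(\bR)):[\rho(ab),\Rot(1)]=1\},
\]
in which $\rho(ab)$ is automatically elliptic (it commutes with an irrational rotation), and then shows each $Y(Q)=\{\rho\in Y:4\in\tr^2\rho(Q)\}$ is a proper algebraic subset by exhibiting the triangle--group quotients $L\to L/\fform{(ab)^r}\to\PSL_2(\bR)$ as witnesses via Lemma~\ref{lem:sep}; a Baire category argument on $Y$ then yields a faithful parabolic--free $\rho\in Y$, which is indiscrete by construction. By contrast, you work directly with Lemma~\ref{lem:free} on the full $\PSL_2(\bR)$ and handle indiscreteness separately by intersecting the very general set $\DD$ with the Euclidean--open locus $\{\tr^2(\rho_a(\rho_b)^\nu)<4\}$, using that $\DD$ is Euclidean--dense.

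What each approach buys: the paper's argument stays entirely in the algebraic category (ellipticity of $ab$ is baked into $Y$, no mixing of Zariski and Euclidean topologies), and its use of $\{\rho_r\}$ prefigures the stably--injective machinery used again in Lemma~\ref{lem:split-indiscrete}. Your argument is more elementary --- it avoids Lemma~\ref{lem:sep} and triangle groups altogether --- but hinges on the observation that a very general subset of the irreducible variety $\PSL_2(\bR)$ is Euclidean--dense, together with the concrete choice of $\rho_a,\rho_b$ sharing a fixed point so that $\tr^2(\rho_a\rho_b)=4\cos^2(\pi(1/p+1/q))<4$ lies genuinely in the open condition. Both are valid; yours is shorter and self--contained, while the paper's fits more uniformly into the template used elsewhere in Section~\ref{sec:fuchs}.
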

\bp
We put
\begin{align*}
L&=\bZ/p\bZ\ast\bZ/q\bZ=\form{a,b\mid a^p=1=b^q},\\
Y &= \{\rho\in\Hom(L,\PSL_2(\bR))\co
[\rho(ab),\Rot(1)]=1\}.\end{align*}
(Recall that $\Rot(t)$ is an elliptic rotation about $i \in {\mathbb{H}}^2$ by the angle $t$.)
Then $Y$ is algebraic.  Also since $\rho(ab)$ commutes with an irrational rotation ($1$ is an irrational multiple of $2\pi$),  it must be 
 elliptic for each $\rho\in Y$.
In particular, each faithful $\rho\in Y$ is indiscrete.
For all sufficiently large integer $r$, we have a (non-faithful) discrete parabolic-free representation
$\rho_r\in Y$ defined as the composition
\[
L\to L/\fform{(ab)^r} \to \PSL_2(\bR).\]
Note that $L/\fform{(ab)^r} $ is a triangle group and the second map in the previous line and gives a faithful Fuchsian representation of $L/\fform{(ab)^r} $.
For each finite set $Q\sse L\setminus1$, let us define
\[
Y(Q) = \{\rho\in Y \co 4\in\tr^2\circ\rho(Q)\}.\]
By Lemma~\ref{lem:sep}, we see $\rho_r\in Y\setminus Y(Q)$ for some $r$.
Lemma~\ref{lem:baire} implies that for a very general $\rho$ in $Y$, we have $\rho\in X_{\mathrm{proj}}(L)$ is indiscrete.
\ep

\begin{lem}\label{lem:split-indiscrete}
Every finite type splittable Fuchsian group admits at least one indiscrete faithful parabolic-free projective representation.\end{lem}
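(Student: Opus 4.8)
The plan is to split $L$ along a splitting curve, build an indiscrete faithful parabolic-free representation of each vertex group, and recombine using the Pulling-Apart Lemma. First I would arrange that $L=\pi_1^{\mathrm{orb}}(S)$ with $S$ having geodesic boundary and no cusps (replacing each cusp by a geodesic boundary component, which does not change $\pi_1^{\mathrm{orb}}$ and makes every discrete faithful projective representation of a bounded suborbifold parabolic-free). Choosing a splitting curve $\gamma$ gives $L\cong A\ast_C B$ if $\gamma$ separates and $L\cong A\ast_\phi$ if $\gamma$ is non-separating, with $C=\langle[\gamma]\rangle\cong\bZ$; here the vertex orbifolds have negative Euler characteristic and at least one boundary component, so the vertex group $A$ (and $B$, when present) is the fundamental group of a hyperbolic orbifold with boundary, i.e.\ a free product $F_r\ast\bZ/m_1\ast\cdots\ast\bZ/m_k$.

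Next I would show that any such vertex group $G$ admits an indiscrete faithful parabolic-free projective representation. If $r\geq1$, write $G=\bZ\ast G'$, send the $\bZ$-factor to the irrational rotation $\Rot(1)$ and $G'$ to a discrete faithful parabolic-free representation, and apply Lemma~\ref{lem:free} with $W=\{4\}$: for very general $\nu$ the group $\langle\Rot(1),G'^\nu\rangle$ is isomorphic to $G$, faithful, and parabolic-free (no element outside conjugates of the two factors has $\tr^2=4$, and the factors are visibly parabolic-free), and it contains the infinite-order elliptic $\Rot(1)$, hence is indiscrete by Lemma~\ref{lem:dense}. If $r=0$, then $G=\bZ/m_1\ast\cdots\ast\bZ/m_k$ with $k\geq2$ (and, as $\chi<0$, not both $m_1=m_2=2$ when $k=2$), and I would run the argument of Lemma~\ref{lem:nonuniform}: with $w=x_1\cdots x_k$, the quotients $L/\fform{w^{n!}}\cong\pi_1^{\mathrm{orb}}(0;m_1,\dots,m_k,n!)$ are closed hyperbolic orbifold groups for all large $n$, and composing the quotient maps (stably injective by Lemma~\ref{lem:sep} with $P=\{w\}$) with their Fuchsian structures, normalized so that $w$ becomes a rotation about $i$, yields a stably injective parabolic-free sequence inside the algebraic set $Y=\{\sigma:[\sigma(w),\Rot(1)]=1\}$; Lemma~\ref{lem:baire} then gives a very general $\sigma\in Y$ that is faithful and parabolic-free, with $\sigma(w)$ an infinite-order elliptic, so $\sigma(G)$ is indiscrete by Lemma~\ref{lem:dense}.

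Finally I would recombine: fix an indiscrete faithful parabolic-free representation $\alpha_0$ of the vertex group $A$, and (in the separating case) any faithful parabolic-free representation $\beta_0$ of $B$. Composing with conjugations and exploiting the parameters in the constructions above, I would arrange that the images of $[\gamma]$ on the two sides agree, generating a common subgroup $C'$, and that $C'$ is malnormal abelian in each vertex image — for a peripheral cyclic subgroup this follows from commutative-transitivity and Lemma~\ref{lem:malnormal}, since in a discrete realization a boundary geodesic carries no cone points, so its stabilizer equals the cyclic group it generates and is self-normalizing. Lemma~\ref{lem:pull-apart-psl}(1) (or (2)), applied with $W=\{4\}$, then produces for very general $\nu$ a faithful parabolic-free $\rho_\nu\colon L^{*}\cong L\to\PSL_2(\bR)$ that restricts to the identity on the copy of $A$; its image therefore contains the indiscrete group $\alpha_0(A)$, has infinite rotation spectrum, and is finitely generated and non-elementary, so is indiscrete by Lemma~\ref{lem:dense}. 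For closed $S$ of positive genus one cuts first along a non-separating splitting curve, obtaining a free vertex group handled by the $r\geq1$ case; the genus-zero closed case is the delicate one, where both vertex groups have $r=0$ and one must ensure the elliptic images of $[\gamma]$ can be made to coincide. The main obstacle is precisely this compatibility: since the Pulling-Apart Lemma only controls the vertex images, the edge group $C$ must be realized consistently inside a single copy of $\PSL_2(\bR)$, and showing that the parameters of the vertex constructions are flexible enough to do so, while keeping $C$ malnormal on the indiscrete side, is the crux.
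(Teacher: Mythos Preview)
Your outline correctly handles the case $\partial S\ne\varnothing$ (vertex groups are free products, and your two sub-cases match the paper's brief treatment via Lemmas~\ref{lem:free} and~\ref{lem:nonuniform}). The genuine gap is exactly where you flag it: the recombination step for closed $S$.

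The Pulling-Apart Lemma~\ref{lem:pull-apart-psl} takes as \emph{input} a subgroup $L=\langle A,B\rangle\le\PSL_2(\bR)$ with $C\le A\cap B$; the compatibility on $C$ must already hold before you can invoke it. Your vertex construction for $A$ (say, sending a free $\bZ$-factor to $\Rot(1)$ and pulling apart) gives no control whatsoever over $\tr^2\alpha_0([\gamma])$: the boundary curve $[\gamma]$ is typically a long word in the generators, and its image under a very general $\nu$ will land at some uncontrolled point of $\PSL_2(\bR)$. To feed into Lemma~\ref{lem:pull-apart-psl} you would need $\alpha_0([\gamma])$ and $\beta_0([\gamma])$ to be conjugate in $\PSL_2(\bR)$, i.e.\ to have equal $\tr^2$; nothing in your argument arranges this, and ``exploiting the parameters'' is not enough of a plan---you would effectively have to redo the Baire argument on a constrained variety, which is precisely what the paper does instead. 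The HNN case has the same issue: you need $\alpha_0(c)$ and $\alpha_0(c')$ (the two boundary curves of the cut-open orbifold) to be conjugate in $\PSL_2(\bR)$, which your construction does not guarantee.

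The paper's approach bypasses this entirely. Rather than building indiscrete representations of the pieces and then matching them, it works directly on the algebraic set $Y=\{\rho\in\Hom(L,\PSL_2(\bR)):[\rho(c),\Rot(1)]=1\}$ of representations of the \emph{whole} group $L$ in which $\rho(c)$ is forced to be elliptic. To show the bad loci $Y(Q)$ are proper, it produces a stably injective sequence $\rho_p^*\in Y$ by quotienting each vertex group by $c^p$, taking a cocompact Fuchsian structure on the resulting closed orbifold (so $c\mapsto\Rot(2\pi/p)$ on both sides, automatically compatible), and only then applying Lemma~\ref{lem:pull-apart-psl} to these \emph{discrete} images where the edge group is a finite rotation group. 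The Baire argument on $Y$ then yields a faithful parabolic-free $\rho$ with $\rho(c)$ elliptic of infinite order, hence indiscrete. The point is that compatibility is trivial once $c$ has a prescribed finite order on both sides, and indiscreteness is obtained not from the pieces but from the constraint defining $Y$.
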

\begin{rem}
For a surface group, this lemma is immediate from~\cite{Goldman1988IM} and~\cite{DK2006Duke}.
We give an independent proof for general Fuchsian groups.
\end{rem}
\bp[Proof of Lemma~\ref{lem:split-indiscrete}]
Let $S$ be a compact splittable $2$--orbifold and $L=\pi_1^{\mathrm{orb}}(S)$. 
If $\partial S\ne\varnothing$, then $L$ splits as a free product, and a repeated application of Lemmas~\ref{lem:pull-apart-psl} and~\ref{lem:nonuniform} implies $X_{\mathrm{proj}}(L)$ contains an indiscrete representation. So let us assume $S$ is closed and has the signature  \[(g;m_1,\ldots,m_k).\]

\textbf{Case 1. $S$ is splittable by a separating simple closed curve.} 

This case occurs when $g=0$, in particular.
We have a nontrivial splitting
\[
L=A\ast_{\form{c}}B\]
where $c$ is represented by a separating simple closed curve on $S$. Since $A$ and $B$ are (once-punctured) hyperbolic 2--orbifold groups, we see that 
\[
N_A(C)=C=N_B(C).\]
For $Q\sse L\setminus 1$, let us define algebraic sets
\begin{align*}
Y &= \{\rho\in\Hom(L,\PSL_2(\bR))\co
[\rho(c),\Rot(1)]=1\},\\
Y(Q) &= \{\rho\in Y\co 4\in \tr^2\circ\rho(Q)\}.
\end{align*}

Fix a finite set $Q\sse L\setminus1$. Each $q\in Q$ is either $q=d\in C\setminus1$ or 
\[q=a_1b_1a_2b_2\cdots\text{ or }q=b_1a_1b_2a_2\cdots,\] 
for some $a_i\in A\setminus C$ and $b_i\in B\setminus C$.
Let us denote by $C_0,A_0,B_0$ as the finite sets of 
such $d$'s, $a_i$'s or $b_i$'s, respectively.
We have the natural quotient maps
\[
\alpha_p\co A\to A/\fform{c^p},\quad
\beta_p\co B\to B/\fform{c^p}\]
for each $p>0$. By applying Lemma~\ref{lem:sep} for the elements
in $A_0,B_0,C_0$ and also for the elements $[a_i,c],[b_i,c]$, we see that there exists some $p\gg0$ such that
\begin{align*}
\alpha_p(A_0)\cap \alpha_p\form{c}=\varnothing,\quad&
1\not\in\alpha_p(C_0),\\
\beta_p(B_0)\cap \beta_p\form{c}=\varnothing,\quad&
1\not\in\beta_p(C_0).\end{align*}
We fix cocompact Fuchsian representations
\[
\xymatrix{
A/\fform{c^p}\ar[r]^{\bar\alpha} &\PSL_2(\bR) & B/\fform{c^p}\ar[l]_{\bar\beta},}\]
so that $c$ maps to $\Rot(2\pi/p)$ (these come from  complete hyperbolic structures on the corresponding orbifolds of negative Euler characteristic).
We have a commutative diagram as below
\[
\xymatrix{
& A\ar[rd]\ar[r]& A/\fform{c^p}\ar[rrd]^{\bar\alpha} \\
\form{c}\ar@{^(->}[ru] \ar@{^(->}[rd]& & L=A\ast_C B\ar@{-->}[rr]^<<<<<<<{\exists\rho_p} && \PSL_2(\bR)\\
& B\ar[ru]\ar[r]
 & B/\fform{c^p}\ar[rru]_{\bar\beta}}\]
We put \[L_p=\rho_p(L) = \form{\rho_p(A),\rho_p(B)}.\]
Note that $\rho_p(C)$ is a maximal abelian subgroup of $\rho_p(A)$, as is the case of $A/\fform{c^p}$. 
By applying Lemma~\ref{lem:pull-apart-psl} to $L_p$, we have
a faithful parabolic-free representation 
\[\rho_p'\co \rho_p(A)\ast_{\rho_p(C)}\rho_p(B)\to\PSL_2(\bR)\]
such that 
\[\tr^2\circ\rho_p'\circ\rho_p(g)=\tr^2\circ\rho_p(g)\]
for $g\in A$ or $g\in B$.
Define $\rho_p^*$ as the composition 
\[ L\to \rho_p(A)\ast_{\rho_p(C)}\rho_p(B)\to\PSL_2(\bR).\]
Then $4\not\in\tr^2\circ\rho_p^*(Q)$
and $\rho_p^*(c)=\Rot(2\pi/p)\in\SO(2)$, up to conjugacy.
So $Y(Q)$ is proper in $Y$.
By Lemma~\ref{lem:baire}, 
a very general point $\rho\in Y$ belongs to $\in X_{\mathrm{proj}}(L)$
and is indiscrete.

\textbf{Case 2. $S$ is splittable by a non-separating simple closed curve.} 

This case occurs exactly when $g\ge1$.
We proceed in a similar way to the Case 1.
We have a nontrivial splitting
\[
L=A\ast_{C\to C'}=\form{A,s}\]
where $C=\form{c}$ is represented by a non-separating simple closed curve on $S$, and $C'=s^{-1}Cs$. 
Note $A$ is a (twice-punctured) hyperbolic 2--orbifold group and
\[
N_A(C)=C.\]
We define $Y$ and $Y(Q)$ as in Case 1.

Consider a finite set $Q\sse L\setminus1$. Each $q\in Q$ can be written as
\[q=a_1 s^{m_1}a_2 s^{m_2}\cdots\text{ or }
q=s^{m_1}a_1 s^{m_2}a_2 \cdots,\]
for some $a_i\in A\setminus1$ and $m_i\in\bZ\setminus0$.
By Lemma~\ref{lem:sep}, if $p$ is sufficiently large
then
the natural quotient map
\[
\alpha_p\co A\to A/\fform{c^p, (c')^p}\]
sends each $a_i$ to a nontrivial element.
We can further require that
whenever $a_i\not\in\form{c}$, we have $\alpha_p(a_i)\not\in\form{\alpha_p(c)}$,
and 
whenever $a_i\not\in\form{c'}$, we have $\alpha_p(a_i)\not\in\form{\alpha_p(c')}$,

We fix cocompact Fuchsian representations
\[\bar\alpha\co A/\fform{c^p, (c')^p} \to\PSL_2(\bR)\] 
so that $c$ and $c'$ map to rotations of angle $2\pi/p$;
in particular, we have some $\bar s\in\PSL_2(\bR)$ such that 
\[\bar\alpha(c')=\bar s^{-1}\bar\alpha_p(c)\bar s.\]

We have a commutative diagram as below
\[
\xymatrix{
& A\ar[rd]\ar[r]& A/\fform{c^p, (c')^p} \ar[rrd]^{\bar\alpha} \\
C\ast C'\ar[ru] \ar[rd]& & L=A\ast_{C\to C'} \ar@{-->}[rr]^<<<<<<<{\exists\rho_p} && \PSL_2(\bR)\\
& C\ast\form{s}\ar[ru]\ar[r]
 & C/\fform{c^p}\ast\form{s}\ar[rru]_{s\mapsto\bar s}}\]
We put \[L_p=\rho_p(L) = \form{\rho_p(A),\rho_p(s)}.\]
Since $\rho_p(C)$ is a maximal abelian subgroup of $L_p$,
we can apply Lemma~\ref{lem:pull-apart-psl} to obtain
a faithful parabolic-free representation 
\[\rho_p'\co \rho_p(A)\ast_{\rho_p(C)\to\rho_p(C')}\to\PSL_2(\bR)\]
such that 
\[\tr^2\circ\rho_p'\circ\rho_p(g)=\tr^2\circ\rho_p(g)\]
for $g\in A$.
Define $\rho_p^*$ as the composition 
\[ L\to \rho_p(A)\ast_{\rho_p(C)\to\rho_p(C')}\to\PSL_2(\bR).\]
Then $4\not\in\tr^2\circ\rho_p^*(Q)$
and $\rho_p^*(c)=\Rot(2\pi/p)\in\SO(2)$.
So $Y(Q)\ne Y$.
By Lemma~\ref{lem:baire},
 a very general point $\rho\in Y$
is  indiscrete 
and belongs to
$X_{\mathrm{proj}}(L)$.
\ep

Let $S_g$ denote a closed oriented hyperbolic surface of genus $g$.
It follows from~\cite{Mann2015IM} that each discrete faithful representation
\[\rho_0\co \pi_1(S_g)\to \PSL_2(\bR)\] is \emph{locally rigid}\index{locally rigid} in $\Hom(\pi_1(S_g),\Homeo_+(S^1))$. This means that an open neighborhood $U$ of $\rho_0$ is a single semi-conjugacy class.
Here we prove that each indiscrete $\rho_0\in X_{\mathrm{proj}}(L)$ of a splittable Fuchsian group $L$ is \emph{locally flexible}\index{locally flexible} in the following strong sense:

\begin{thm}\label{thm:uncountable}
Let $L$ be a splittable Fuchsian group.
\be
\item
Then there exists uncountably many pairwise-inequivalent 
indiscrete faithful parabolic-free projective representations of $L$.
\item
Let $\rho$ be an indiscrete faithful parabolic-free projective representation of $L$.
Then every open neighborhood $U\sse \Hom(L,\PSL_2(\bR))$ of $\rho$
contains an uncountable set $\Lambda\sse U\cap X_{\mathrm{proj}}(L)$ of pairwise-inequivalent indiscrete representations such that each pair of points in $\Lambda$ can be joined by an almost faithful path in $U$.
\ee
\end{thm}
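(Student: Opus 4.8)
The plan is to deduce part~(1) from part~(2), so I concentrate on part~(2). Given the indiscrete $\rho\in X_{\mathrm{proj}}(L)$ and a neighborhood $U$, the first move is to use indiscreteness to single out one infinite-order elliptic element and then to \emph{shrink} $U$ so that this element stays elliptic under every deformation produced by Theorem~\ref{thm:fuchs-flex}. Since a splittable Fuchsian group is non-elementary, $\rho(L)\le\PSL_2(\bR)$ is finitely generated, non-elementary and indiscrete, so Lemma~\ref{lem:dense} yields some $g_0\in L$ with $\rho(g_0)$ an elliptic element of infinite order. As $\rho$ is faithful, $g_0\in L\setminus T_L$ and $\theta_0:=\rot\circ\rho(g_0)$ is irrational, whence $\tr^2\circ\rho(g_0)=4\cos^2(\pi\theta_0)<4$ by \eqref{eq:trace}. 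Put
\[U'=U\cap\{\mu\in\Hom(L,\PSL_2(\bR)):\tr^2\circ\mu(g_0)<4\},\]
an open neighborhood of $\rho$ with $U'\subseteq U$. Applying Theorem~\ref{thm:fuchs-flex} to $\rho$ and $U'$ produces an uncountable, tracially disjoint set $\Lambda$ with $\rho\in\Lambda\subseteq U'\cap X_{\mathrm{proj}}(L)$ such that any two members of $\Lambda$ are joined by an almost faithful path; as is apparent from that proof, the path joining $\rho$ to a member of $\Lambda$ lies in $U'$, and concatenating two such paths at $\rho$ joins any pair in $\Lambda$ by an almost faithful path inside $U'\subseteq U$.

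Next I would verify the two remaining properties of $\Lambda$. Indiscreteness: each $\lambda\in\Lambda\subseteq U'$ satisfies $\tr^2\circ\lambda(g_0)<4$, so $\lambda(g_0)$ is elliptic; faithfulness of $\lambda$ together with the infinite order of $g_0$ makes $\lambda(g_0)$ an infinite-order elliptic element, and a group containing such an element is indiscrete (the implication (2)$\Rightarrow$(3) of Lemma~\ref{lem:dense}). Pairwise inequivalence: suppose $\lambda_1\neq\lambda_2$ in $\Lambda$ were equivalent, say $\lambda_1\circ\alpha\sim_{\mathrm{semi}}\lambda_2$ for some $\alpha\in\Aut(L)$. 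Since the marked rotation spectrum is a semi-conjugacy invariant (Theorem~\ref{thm:equiv}), $\rot\circ\lambda_1(\alpha(g_0))=\rot\circ\lambda_2(g_0)=:\theta$, and $\theta$ is irrational because $\lambda_2(g_0)$ is an infinite-order elliptic. Then $\lambda_1(\alpha(g_0))$ has nonzero rotation number, hence is elliptic, and is of infinite order since $\theta$ is irrational; moreover $\alpha(g_0)\in L\setminus T_L$ as $\alpha$ is an automorphism. By \eqref{eq:trace},
\[\tr^2\circ\lambda_1(\alpha(g_0))=4\cos^2(\pi\theta)=\tr^2\circ\lambda_2(g_0),\]
so this common value lies in $\tr^2\circ\lambda_1(L\setminus T_L)\cap\tr^2\circ\lambda_2(L\setminus T_L)$, contradicting the tracial disjointness of $\Lambda$. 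This proves part~(2).

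For part~(1), Lemma~\ref{lem:split-indiscrete} furnishes an indiscrete $\rho_0\in X_{\mathrm{proj}}(L)$, and part~(2) applied with this $\rho_0$ and $U=\Hom(L,\PSL_2(\bR))$ gives uncountably many pairwise-inequivalent indiscrete members of $X_{\mathrm{proj}}(L)$. The one genuinely delicate point is the passage from ``tracially disjoint'' to ``inequivalent'': tracial disjointness by itself does not preclude an automorphism of $L$ intertwining two of the representations up to semi-conjugacy. The device that kills this possibility is the single distinguished element $g_0$, and the whole purpose of replacing $U$ by the shrunken neighborhood $U'$ is to force $g_0$ to remain infinite-order elliptic — hence to carry an irrational, in particular nonzero, rotation number — across all of $\Lambda$, which is precisely what makes the trace comparison above valid.
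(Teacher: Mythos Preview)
Your argument is correct and follows the paper's route—apply Lemma~\ref{lem:split-indiscrete} and then Theorem~\ref{thm:fuchs-flex}—but you supply the steps the paper leaves implicit. The paper's proof is a two-line appeal to those two results and does not spell out why the resulting tracially disjoint family is indiscrete or why tracial disjointness forces inequivalence. Your device of shrinking $U$ to $U'=\{\mu:\tr^2\mu(g_0)<4\}$ handles both issues cleanly: it pins down a single infinite-order elliptic witness $g_0$ across all of $\Lambda$, which yields indiscreteness via Lemma~\ref{lem:dense} and makes the inequivalence argument a one-line trace comparison. The paper's implicit reasoning (made explicit only later, in the proof of Corollary~\ref{cor:consequence-psl}) instead compares the full irrational rotation spectra $\rot\circ\rho(L)\setminus(\bQ/\bZ)$ of the two representations; that argument requires knowing \emph{a priori} that every member of $\Lambda$ is indiscrete, which Theorem~\ref{thm:fuchs-flex} as stated does not assert. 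So your shrinking trick is not merely cosmetic: it is what makes the deduction self-contained at this point in the paper. One minor remark: you read the containment of the almost faithful path in $U'$ out of the \emph{proof} of Theorem~\ref{thm:fuchs-flex} rather than its statement; this is legitimate (the paths there are built by successive applications of Lemma~\ref{lem:fuchs-analytic} inside the given neighborhood), and indeed the paper itself relies on the same reading to get the ``in $U$'' clause in Theorem~\ref{thm:uncountable}(2).
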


\bp
By Lemma~\ref{lem:split-indiscrete}, there exists at least one  $\rho$ as in part (2).
The conclusion follows from Theorem~\ref{thm:fuchs-flex}.
\ep

\begin{rem}\label{rem:DK2006}
For a surface group $L$,
there exists a dense set of faithful representations in
\[\Hom(L,\PSL_2(\bK)),\]
as shown in~\cite{DK2006Duke}.  Let $\rho_0\co L\to\PSL_2(\bK)$ be a representation. A small modification of a Baire category argument in the same paper implies that $\rho_0$ admits an arbitrarily small deformation $\{\rho_t\}_{t\in [0,1]}$ such that the hypothesis of Theorem~\ref{thm:fuchs-flex} holds,
and from this, one can then deduce each indiscrete representation in $X_{\mathrm{proj}}(L)$ in Theorem~\ref{thm:uncountable} is accumulated.
However, such an approach does not directly yield almost faithful paths 
or work with the orbifold case.
\end{rem}

\begin{rem}
Let $L=\pi_1(S_g)$.
For each $\rho\in\Hom(L,\Homeo_+(S^1))$,
the Milnor--Wood Inequality asserts that $\eu(\rho)$ takes integer values between $2-2g$ and $2g-2$.
Moreover, $|\eu(\rho)|=2g-2$ if and only if $\rho$ is injective and semi-conjugate to a discrete faithful representation of $L$ into $\PSL_2(\bR)$~\cite{Goldman1988IM,Matsumoto1987IM,Mann2015IM}.
On the other hand, a connected component of \[\Hom(\pi_1(S_g),{\PSL_2(\bR)})\] is precisely determined by its Euler number~\cite{Goldman1988IM}. Each component contains a faithful parabolic-free (possibly indiscrete) representation by~\cite{DK2006Duke}.
So Theorem~\ref{thm:uncountable} implies that each component where the Euler number is not $\pm(2-2g)$ contains uncountably many distinct semi-conjugacy classes of faithful parabolic-free actions which are joined by almost faithful paths.
\end{rem}

\begin{exmp}\label{exmp:triangle}
Theorem~\ref{thm:uncountable} is not true without the splittability hypothesis.
For example, consider a hyperbolic triangle group
\[L=\form{a,b,c\mid a^p=b^q=c^r=abc=1}\]
such that $2\le p,q,r<\infty$ and $1/p+1/q+1/r<1$.
If $\rho\co L\to\PSL_2(\bR)$ is a representation,
then the elliptic centers of $\rho (a),\rho (b),\rho (c)$ form a hyperbolic triangle with internal angles 
\[
{u\pi}/{p},\quad{v\pi}/{q},\quad{w\pi}/{r},\]
for some $u,v,w\in\bN$.
Since $(u,v,w)$ determines the isometry type of the triangle formed by the centers of those rotations,
there are only finitely many possible conjugacy classes of a  representations $\rho\co L\to\PSL_2(\bR)$. 
We point out that not only are there finitely many conjugacy classes of
projective representations, but further that the are also finitely many semi-conjugacy classes of such representations in
$\Homeo^+(S^1)$ \cite{CalegariForcing}.

By a similar (and easier) reasoning, we see that an infinite dihedral group does not admit an indiscrete representation.

\end{exmp}

\subsection{Simultaneous Control of Rotation Numbers}\label{ss:simul}
So far we have developed techniques of finding uncountably many inequivalent faithful projective actions of a given group. 
In this subsection, we describe an approach to simultaneously control the rotation numbers of given (finitely many) group elements. 

To state the main result of this section more precisely, let us introduce a notation.
Suppose $L$ is a finitely generated group and $Z$ is a set of elements (or, conjugacy classes) in $L$. 
We write
\[E(L,Z)=\{\rho\in X_{\mathrm{proj}}(L) \co \rot\circ\rho(Z)\text{ is a singleton}\}.\]

A \emph{multi-curve} $Z$ on a $2$--orbifold $S$ means a finite collection of disjoint simple closed curves on $S$, which can be identified with a certain set of conjugacy classes in $\pi_1^{\mathrm{orb}}(S)$. We assume further that each component of $S \setminus Z$ has negative orbifold Euler number.
For each representation $\rho\co\pi_1^{\mathrm{orb}}(S)\to\PSL_2(\bR)$ we define
\[\rot\circ\rho(Z)=\{\rot\circ\rho(g)\co [g]\in Z\}\sse\bR/\bZ.\]

\begin{thm}\label{thm:simul}
Let $S$ be a finite type hyperbolic $2$--orbifold and let $Z$ be a multi-curve on $S$. If each element of $Z$ is either splitting or peripheral (i.e.\ corresponding to a puncture),  then  the following set is uncountable:
\[\{\rot\circ\rho(Z)\co \rho\in E(\pi_1^{\mathrm{orb}}(S),Z)\}.\]
\end{thm}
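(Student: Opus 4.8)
Throughout write $L=\pi_1^{\mathrm{orb}}(S)$ and $Z=\{\gamma_1,\dots,\gamma_n\}$. Since $\rot\circ\rho(Z)$ is a single point whenever $\rho\in E(L,Z)$, and distinct values of that point give distinct elements of the set in the statement, it suffices to produce, for uncountably many $\theta\in(0,\tfrac12)$, a representation $\rho\in X_{\mathrm{proj}}(L)$ with $\rot\circ\rho(\gamma_i)=\theta$ for every $i$.

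The plan is to deform a Fuchsian structure on $S$ by turning every curve of $Z$ simultaneously into an elliptic element of a common, slowly increasing rotation number. Fix a complete hyperbolic structure on the orbifold $S$ in which every puncture of $S$ is realized with geodesic boundary, so that its holonomy $\rho_*$ is a discrete faithful, parabolic-free representation in $X_{\mathrm{proj}}(L)$, and in which all curves of $Z$ have a common geodesic length; note $\rot\circ\rho_*(\gamma_i)=0$ for all $i$. By hypothesis each component of $S\setminus Z$ has negative orbifold Euler number, so $S\setminus Z$ carries a complete hyperbolic structure, and opening up its cusps along $Z$ into cone points of common cone angle $2\pi t$ produces, for all small $t>0$, a hyperbolic cone structure on $S$ in which every curve of $Z$ is elliptic of rotation number $t$; this connects analytically to $\rho_*$ through the intermediate structures in which the curves of $Z$ are pinched to cusps (standard cone-surface deformation theory, in the spirit of Floyd's construction invoked in Lemma~\ref{lem:min-fuchs}). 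Parametrizing by the common squared trace $v=\tr^2\circ\rho_v(\gamma_1)=\dots=\tr^2\circ\rho_v(\gamma_n)$, we obtain a real-analytic family $\{\rho_v\}$ running over an interval that contains the value $v_*=\tr^2\circ\rho_*(\gamma_1)>4$ and a half-open interval $(4-\eta,4)$, along which (with a consistent orientation of the $\gamma_i$) $\rho_v(\gamma_i)$ is elliptic of rotation number $\tfrac1\pi\arccos(\sqrt v/2)$ for every $i$.

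Finally I would run the standard ``almost faithful path'' argument along this arc. For each $w\in L\setminus1$, the loci $\{v:\rho_v(w)=1\}$ and $\{v:\tr^2\circ\rho_v(w)=4,\ \rho_v(w)\neq1\}$ are zero sets of real-analytic functions on the arc; since $\rho_*$ is faithful and parabolic-free, neither locus is all of the arc, so each is discrete, and as $L$ is countable their union is countable. Hence for all but countably many $v\in(4-\eta,4)$ one has $\rho_v\in X_{\mathrm{proj}}(L)$, and then $\rho_v\in E(L,Z)$ with $\rot\circ\rho_v(Z)=\{\tfrac1\pi\arccos(\sqrt v/2)\}$. As $v$ ranges over this co-countable subset of $(4-\eta,4)$, the number $\tfrac1\pi\arccos(\sqrt v/2)$ assumes uncountably many distinct values, which proves the theorem.

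The single point that needs genuine work is the construction of the analytic family $\{\rho_v\}$ with all curves of $Z$ carried along at one common squared trace: this is exactly where the hypothesis on the components of $S\setminus Z$ is used, since it is what makes small-cone-angle hyperbolic structures on $S$ along $Z$ exist, vary real-analytically, and limit to the complete structure on $S\setminus Z$ as the cone angles tend to $0$. Everything else is the familiar discreteness-of-the-degenerate-locus argument for analytic paths already used for almost faithful paths in this section. Alternatively, the family may be built purely representation-theoretically by realizing each component of $S\setminus Z$ with its $Z$-boundary curves elliptic of a common rotation number and gluing these representations along $Z$ via the Amalgamated and HNN Pulling-Apart Lemmas (Lemma~\ref{lem:pull-apart-psl}), exactly as in the proof of Theorem~\ref{thm:fuchs-flex}; since pulling apart along a curve of $Z$ only conjugates a vertex subgroup, it leaves all the rotation numbers $\rot\circ\rho(\gamma_i)$ unchanged, and faithfulness is supplied by the Baire category argument built into that lemma.
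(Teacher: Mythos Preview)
Your approach is genuinely different from the paper's, and the gap you identify is real. Let me explain what the paper does instead and why your sketch is incomplete as it stands.

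\textbf{The paper's route.} Rather than a continuous analytic family, the paper builds a \emph{discrete, stably injective} sequence of \emph{non-faithful} representations. For each sufficiently large $p$ it constructs (by induction on complexity, via the Pulling-Apart Lemma) a faithful parabolic-free representation
\[
\phi_p\colon L/\fform{Z^p}\longrightarrow\PSL_2(\bR)
\]
with $\rot\circ\phi_p(Z)=\{1/p\}$; here the pieces of $S\setminus Z$ are genuine cocompact $2$--orbifolds with cone points of order $p$, so faithfulness of the vertex representations is automatic. Precomposing with $L\to L/\fform{Z^{p!}}$ gives a stably injective sequence $\rho_p\colon L\to\PSL_2(\bR)$ with $\rot\circ\rho_p(Z)=\{1/(p!)\}$. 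The uncountability conclusion then comes from a separate Baire-category lemma (Lemma~\ref{lem:uv1}): one defines the algebraic set
\[
Y\subset \SO(2)\times\PSL_2(\bR)^Z\times\PSL_2(\bR)^A
\]
parametrizing representations in which every $\gamma_i$ is conjugate to a common $\alpha_0\in\SO(2)$; the sequence $\{\rho_p\}$ witnesses that both the parabolic/unfaithful loci $Y(Q)$ and the ``rotation-constrained'' loci $W(U)$ are proper algebraic subsets, and Lemma~\ref{lem:baire} then forces the projection $\Pi_0$ to take uncountably many values on $E(L,Z)$.

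\textbf{Where your primary approach is incomplete.} The phrase ``opening up its cusps along $Z$ into cone points of common cone angle $2\pi t$'' does not describe a geometric structure on $S$: cone structures have cone \emph{points}, and $Z$ is a multicurve. What you need is a real-analytic arc in $\Hom(L,\PSL_2(\bR))$ along which $\tr^2\rho(\gamma_1)=\cdots=\tr^2\rho(\gamma_n)$ and which passes from the Fuchsian locus ($v>4$) into the elliptic regime ($v<4$). Fenchel--Nielsen coordinates give you such a family on Teichm\"uller space, but they degenerate precisely at $v=4$ (the twist parameter along a pinched curve becomes undefined), so the analyticity of the continuation into $v<4$ is not free; it must be argued directly in the representation variety, with all $n$ trace constraints imposed simultaneously. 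This can likely be done, but you have not done it, and without it the discreteness-of-the-bad-locus argument does not reach any $v<4$ at all.

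\textbf{Where your alternative approach is incomplete.} Your gluing-via-Pulling-Apart proposal requires, as input, \emph{faithful} parabolic-free representations of each piece $\pi_1(S_j)$ with the $Z$--boundary curves elliptic of a given irrational rotation $\theta$; Lemma~\ref{lem:pull-apart-psl} needs the vertex groups to already sit faithfully in $\PSL_2(\bR)$. For rational $\theta=1/p$ this is immediate (discrete orbifold structures), which is exactly what the paper exploits; for irrational $\theta$ it is not, and supplying it is essentially the theorem again on the pieces. The paper sidesteps this by never requiring faithful representations at irrational $\theta$ as input, using stable injectivity of the rational sequence together with the algebraic-variety Baire argument instead.
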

Hence, there exist uncountable set $\Lambda$ of pairwise-inequivalent indiscrete faithful parabolic-free projective representations of $\pi_1^{\mathrm{orb}}(S)$
such that each $\rho\in\Lambda$ maps $Z$ to elliptic elements of the same nonzero rotation number.
So we have the following corollary.
Recall
\[M(L) = \Out(L)\backslash \Hom(L,\Homeo_+(S^1))/\text{semi-conjugacy}.\]

\begin{cor}\label{cor:simul}
If $S$ is a finite type hyperbolic surface
and if $Z$ is a multi-curve on $S$,
then the image of $E(\pi_1(S),Z)$ in $M(\pi_1(S))$ is uncountable.
\end{cor}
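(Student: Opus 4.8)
The statement follows by combining Theorem~\ref{thm:simul} with the rigidity of $\rot$ as a semi-conjugacy invariant, together with a counting argument to absorb the action of $\Out(\pi_1(S))$. Let me set $L=\pi_1(S)$ and let $Z$ be the given multi-curve. First I would invoke Theorem~\ref{thm:simul} to produce an uncountable set of values $\{\rot\circ\rho(Z)\co\rho\in E(L,Z)\}$; since each element of $Z$ is a single conjugacy class $[g_i]$ (for a surface there are no cone points, so ``splitting or peripheral'' covers all the curves we need), this set of values is an uncountable subset of $(\bR/\bZ)^{Z}$, i.e.\ there are uncountably many distinct tuples $(\rot\circ\rho(g_i))_{[g_i]\in Z}$ realized by representations $\rho\in E(L,Z)$.

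The key point is that two representations $\rho,\rho'\in E(L,Z)$ with $[\rho]=[\rho']$ in $M(L)$ must have related rotation data. Indeed, $[\rho]=[\rho']$ means there is $\alpha\in\Aut(L)$ with $\rho\circ\alpha\sim_{\mathrm{semi}}\rho'$. By Theorem~\ref{thm:equiv} (the rotation number is a semi-conjugacy invariant; cf.\ also Theorem~\ref{t:ghys2}), we get $\rot\circ\rho(\alpha(g))=\rot\circ\rho'(g)$ for all $g\in L$; in particular for each $g_i$ generating a component of $Z$, the value $\rot\circ\rho'(g_i)$ equals $\rot\circ\rho(\alpha(g_i))$. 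Now $\alpha(g_i)$ is again an element of $L$, and its image under $\rho$ has some rotation number; the subtlety is that $\alpha$ need not preserve $Z$, so a priori $\rot\circ\rho(\alpha(g_i))$ could be any value in the (dense) rotation spectrum of $\rho$. To control this I would use that, by construction in Theorem~\ref{thm:simul}, $\rho$ sends every element of $Z$ to an elliptic element of a common nonzero rotation number $\theta_\rho\in(\bR/\bZ)\setminus\{0\}$, and (after passing to the uncountable subset furnished by the theorem) one can additionally arrange that the map $\rho\mapsto\theta_\rho$ is injective on this family. The remaining task is to bound, for a fixed $\rho$ in the family, how many of the other representations $\rho'$ in the family can be equivalent to $\rho$: each such $\rho'$ satisfies $\theta_{\rho'}=\rot\circ\rho(\alpha(g_i))$ for some $\alpha\in\Aut(L)$ and some fixed $g_i$, and the set $\{\rot\circ\rho(\alpha(g_i))\co\alpha\in\Aut(L)\}$ is a countable set (since $\Aut(L)$ is countable, $L$ being finitely generated, and $\rho$ has countable image). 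Hence each equivalence class in $M(L)$ meeting our uncountable family of $\theta$-values accounts for only countably many of those values, so the image of $E(L,Z)$ in $M(L)$ must itself be uncountable.

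The main obstacle I anticipate is exactly the bookkeeping in the previous paragraph: making sure that the automorphism $\alpha$ cannot, for uncountably many distinct $\rho'$, conspire to produce the same equivalence class. The clean way to handle this is to observe that the relation ``$[\rho]=[\rho']$ in $M(L)$'' partitions our uncountable family $\Lambda\subseteq E(L,Z)$ into equivalence classes, and to show each class is countable. Fix $\rho\in\Lambda$; any $\rho'\in\Lambda$ equivalent to it yields $\alpha\in\Aut(L)$ with $\rot\circ\rho'=\rot\circ\rho\circ\alpha$ as functions on $L$, so $\rho'$ is determined, as far as its rotation spectrum data on $Z$ is concerned, by the countably many possibilities for $\alpha$; since the representations in $\Lambda$ are distinguished precisely by their rotation data on $Z$ (this is how Theorem~\ref{thm:simul} delivers ``uncountably many''), the class of $\rho$ is countable. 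An uncountable set that is a countable union of its (countable) equivalence classes is impossible unless there are uncountably many classes, which is the desired conclusion. I would write this up in two short paragraphs: one extracting the uncountable $\theta$-family from Theorem~\ref{thm:simul}, and one running the equivalence-class counting argument using Theorem~\ref{thm:equiv} and the countability of $\Aut(\pi_1(S))$.
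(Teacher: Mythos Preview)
Your proposal is correct and follows the same route the paper sketches: the paper simply notes (in the sentence preceding the corollary) that Theorem~\ref{thm:simul} yields an uncountable family in $E(\pi_1(S),Z)$ with pairwise distinct nonzero values of $\rot\circ\rho(Z)$, and the passage to $M(\pi_1(S))$ is exactly the countability argument you give---each equivalence class meets the family in only countably many members because $\Aut(\pi_1(S))$ is countable and semi-conjugacy preserves rotation numbers. Your write-up makes explicit the step the paper leaves to the reader; one minor simplification is that you do not even need to track $\alpha(g_i)$ specifically, since $\theta_{\rho'}\in\rot\circ\rho(L)$ already lies in a countable set.
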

We will prove Theorem~\ref{thm:simul} in this subsection.
Recall our notation
\[\fform{Z^p}=\fform{\{g^p\co [g]\in Z\}}\unlhd\pi_1^{\mathrm{orb}}(S).\]
\begin{lem}
Let $S$ and $Z$ be as in Theorem~\ref{thm:simul}.
Then for a sufficiently large $p>0$, the following hold.
\be
\item
If $[c]\in Z$ is peripheral on $S$, then the group $\pi_1^{\mathrm{orb}}(S)/\fform{Z^p}$
contains $\form{c}\cong\bZ/p\bZ$ as a maximal abelian subgroup.
\item
For distinct peripheral $c_1, c_2\in Z$, the images of $\form{c_1}$ and $\form{c_2}$ are non-conjugate in $\pi_1^{\mathrm{orb}}(S)/\fform{Z^p}$.
\item
There exists
a faithful parabolic-free representation $\phi_p\co \pi_1^{\mathrm{orb}}(S)/\fform{Z^p}\to\PSL_2(\bR)$ such that
\[\rot\circ\phi_p(Z)=\{1/p\}.\]
\ee
\end{lem}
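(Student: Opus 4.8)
The plan is to realize $\pi_1^{\mathrm{orb}}(S)/\fform{Z^p}$ as the fundamental group of a graph of hyperbolic $2$--orbifolds and then to read off all three statements from Bass--Serre theory together with the Pulling-Apart Lemma. Write $\gamma_1,\dots,\gamma_k$ for the splitting curves in $Z$ and $c_1,\dots,c_\ell$ for the peripheral ones. Cutting $S$ along $\gamma_1,\dots,\gamma_k$ produces pieces $S_1,\dots,S_n$, each a finite type hyperbolic $2$--orbifold with $\chi(S_j)<0$, and a graph of groups $\mathcal G$ with vertex groups $A_j=\pi_1^{\mathrm{orb}}(S_j)$, edge groups $\form{\gamma_i}\cong\bZ$, and $\pi_1(\mathcal G)=\pi_1^{\mathrm{orb}}(S)$; each $c_r$ lies in a unique vertex group as a peripheral element. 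For $p$ large let $\hat S_j$ be $S_j$ with every boundary component coming from some $\gamma_i$ and every puncture coming from some $c_r$ replaced by a cone point of order $p$. Since $\chi(\hat S_j)=\chi(S_j)+O(1/p)<0$ for $p$ large, each $\hat S_j$ is a hyperbolic $2$--orbifold, so $\pi_1^{\mathrm{orb}}(\hat S_j)$ is Fuchsian and every new cone point has local group exactly $\bZ/p\bZ$.

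The central step is to show $\pi_1^{\mathrm{orb}}(S)/\fform{Z^p}\cong\pi_1(\hat{\mathcal G}_p)$, where $\hat{\mathcal G}_p$ is the graph of groups with the same underlying graph as $\mathcal G$, vertex groups $\pi_1^{\mathrm{orb}}(\hat S_j)$, and edge groups $\bZ/p\bZ$. I would do this by killing the powers $\gamma_i^p$ and $c_r^p$ one curve at a time, using at each step the standard fact that for an amalgam $P\ast_{\form{\gamma}}Q$ or an HNN extension $P\ast_{\form{\gamma}\to\form{\gamma'}}$ in which the image of $\gamma$ has order exactly $p$ in the relevant vertex quotients, dividing by $\fform{\gamma^p}$ yields the amalgam (resp.\ HNN extension) of those quotients along $\bZ/p\bZ$. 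The hypothesis that $\gamma$ retains order exactly $p$ is precisely the statement that adjoining a cone point of order $p$ keeps the intermediate orbifolds hyperbolic, which is where ``$p$ sufficiently large'' is needed; checking that the normal closure of the $Z$--powers interacts correctly with the iterated amalgam/HNN structure is the main technical obstacle. In particular the vertex groups of $\hat{\mathcal G}_p$ inject into $\pi_1(\hat{\mathcal G}_p)$, so each $c\in Z$ has image of order exactly $p$ in $\pi_1^{\mathrm{orb}}(S)/\fform{Z^p}$.

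Parts (1) and (2) then follow from Bass--Serre theory applied to $\hat{\mathcal G}_p$. The image of any $c\in Z$ is a cone point element of some vertex group $\hat A_j=\pi_1^{\mathrm{orb}}(\hat S_j)$ and is not conjugate into any edge group, since distinct components of the multi-curve $Z$ are non-isotopic and disjoint, a property preserved in the quotient by a standard fixed-point argument on the Bass--Serre tree. Hence $c$ fixes a unique tree vertex $v$; any element commuting with $c$ preserves $\{v\}$ and so lies in $\hat A_j$, and $Z_{\hat A_j}(c)=\form{c}\cong\bZ/p\bZ$ because in a Fuchsian group the stabiliser of an elliptic fixed point is its own centraliser and equals the cone point's local group. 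This gives part (1). For part (2), two distinct peripheral curves $c_1,c_2$ either fix distinct tree vertices, and are then non-conjugate, or fix a common vertex, in which case they are distinct cone points of the same $\hat S_j$ and hence non-conjugate in $\pi_1^{\mathrm{orb}}(\hat S_j)$.

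For part (3), choose for each $j$ a discrete faithful representation $\hat\alpha_j\co\pi_1^{\mathrm{orb}}(\hat S_j)\to\PSL_2(\bR)$ coming from a hyperbolic structure on $\hat S_j$ with geodesic boundary at the cusps arising from ends of $S$ not in $Z$; this is parabolic-free, and each cone point element $c$ satisfies $\hat\alpha_j(c)=\Rot(2\pi/p)$, so $\rot\circ\hat\alpha_j(c)=1/p$. I would then assemble the $\hat\alpha_j$ into a representation $\phi_p$ of $\pi_1(\hat{\mathcal G}_p)=\pi_1^{\mathrm{orb}}(S)/\fform{Z^p}$ by adding the edges of $\hat{\mathcal G}_p$ one at a time: at each step form the amalgam or HNN extension over the relevant $\bZ/p\bZ$ edge group (sent to $\form{\Rot(2\pi/p)}$) and then restore faithfulness via the Amalgamated Pulling-Apart Lemma (Lemma~\ref{lem:pull-apart-psl}), exactly as in the proofs of Lemma~\ref{lem:split-indiscrete} and Theorem~\ref{thm:fuchs-flex}. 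Its hypotheses hold at every stage: the edge group, being the stabiliser of a cone point of a Fuchsian group, is malnormal abelian in every vertex subgroup and, by the Bass--Serre argument above, in every partially built factor; and in the HNN case the two copies of the edge group are distinct cone points, hence non-conjugate in the factor. Since the deforming element $\nu$ in Lemma~\ref{lem:pull-apart-psl} lies in the centraliser $Z(C)$ and therefore commutes with every element of the edge group $C$, the equalities $\phi_p(c)=\Rot(2\pi/p)$ survive every step, so $\rot\circ\phi_p(Z)=\{1/p\}$; parabolic-freeness survives because the union of the vertex representations is parabolic-free. Hence $\phi_p$ is the desired faithful parabolic-free projective representation.
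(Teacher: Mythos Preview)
Your proposal is correct and follows essentially the same approach as the paper: both realize $L_p=\pi_1^{\mathrm{orb}}(S)/\fform{Z^p}$ as a graph of hyperbolic $2$--orbifold groups with $\bZ/p\bZ$ edge groups, deduce parts (1) and (2) from Bass--Serre theory (the paper phrases this via the normal form theorem, you via fixed vertices in the tree), and build $\phi_p$ from cocompact Fuchsian representations of the vertex orbifolds glued via the Amalgamated Pulling-Apart Lemma.

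The only organizational difference is that the paper argues part (3) by induction on the complexity $\xi(S)$, cutting one non-peripheral curve $c\in Z$ at a time and invoking the inductive hypothesis on the components of $S\setminus c$, whereas you cut along all splitting curves in $Z$ at once and then add the edges of $\hat{\mathcal G}_p$ one by one. These are equivalent: your ``partially built factor'' at each stage is exactly the quotient the paper obtains at the corresponding inductive step, and in both cases the malnormality hypothesis of Lemma~\ref{lem:pull-apart-psl} is supplied by part (1) applied to that intermediate group. One small point to tighten: when you write $\hat\alpha_j(c)=\Rot(2\pi/p)$ for every cone point $c$, this holds only up to conjugacy, so when matching edge groups you must conjugate one side (as the paper does implicitly); this does not affect the argument.
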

\bp
Put $L=\pi_1^{\mathrm{orb}}(S)$, and define $L_p=L/\fform{Z^p}$.
The group $L_p$ has a graph of groups structure such that  the edges groups $\{E_j\}$ are all isomorphic to $\bZ/p\bZ$. 
Since we are assuming $p$ is sufficiently large, the vertex groups $\{V_i\}$ are hyperbolic 2--orbifold groups.
Moreover, each edge group $E_j$ corresponds to a cone point of an incident vertex group $V_i$. By the property of a Fuchsian group, we have an embedding $E_j\le V_i$. By repeated applications of amalgamated free products and HNN--extensions, we see that each vertex group $V_i$ embeds into $L_p$.

In part (1), we let $\bar c$ denote the image of $c$ in $L_p$. 
Let $V_i$ be the vertex group containing $\form{\bar c}$ as the cyclic group corresponding to a cone point of order $p$.
Again by the property of a hyperbolic 2--orbifold group,
the element $\bar c$ is not conjugate in $V_i$ into any of the edge groups incident with $V_i$.
Suppose $g\bar c g^{-1}=\bar c$ for some $g\in L_p$. 
By the normal form theorem for graph of groups~\cite{LS2001},
we see that $g\in V_i$. This implies $g\in \form{\bar c}$, as desired.
The proof of part (2) is very similar, by considering the equation of the form $g\bar c_1 g^{-1}=\bar c_2$.

Let us consider part (3).
If all elements in $Z$ are peripheral, then we can find a parabolic-free Fuchsian representation $\phi_p$ such that the curves in $Z$ are mapped to rotations of angle $2\pi/p$,
while the other peripheral elements are mapped to hyperbolic elements.
So we may assume some element $c\in Z$ is non-peripheral.
We will induct on the complexity $\xi(S)>0$.

Suppose $c$ is separating. 
We have a nontrivial decomposition $L=A\ast_C B$ for $C=\form{c}$.
Possibly after conjugation, we may write 
\[\Sigma_A\cup \Sigma_B=Z,\quad \Sigma_A\cap \Sigma_B=\{c\}.\]
By the inductive hypothesis on each component of $S\setminus c$,
we have faithful parabolic-free representations (for sufficiently large $p$)
\[
\xymatrix{
A_p=A/\fform{\Sigma_A^p}\ar[r]^{\alpha_p}&
 \PSL_2(\bR)
 &
B_p=B/\fform{\Sigma_B^p}.\ar[l]_{\beta_p}
}\]
such that \[\alpha_p(c)=\beta_p(c)=\Rot(2\pi/p).\]
Here, we abuse the notation so that $c$ also means the images of $c$ in the quotients.

By part (1), we can apply the Pulling-Apart Lemma
to the group
\[
\form{A_p,B_p}\]
to have the desired representation $L_p\to\PSL_2(\bR)$;
see Lemma~\ref{lem:pull-apart-psl} (i).

If $c$ is non-separating, the proof is almost identical using part (2) and the Pulling-Apart Lemma (iii).
\ep

The following composition yields a stably injective sequence of parabolic-free representations:
\[
\xymatrix{
\pi_1^{\mathrm{orb}}(S)\ar[r] & \pi_1^{\mathrm{orb}}(S)/\fform{Z^{p!}}\ar[r]^{\phi_p} & \PSL_2(\bR)
}\]

Theorem \ref{thm:simul}  follows from the next lemma.
\begin{lem}\label{lem:uv1}
Let $L$ be a finitely generated group and let $Z\sse L$ be a nonempty finite set.
Suppose that for all sufficiently large $p$ 
there exists $c_p\in(0,1/p]$
and
 a parabolic-free projective representations $\rho_p$
such that
\[\rot\circ\rho_p(Z)=\{c_p\}\]
and
such that
the sequence $\{\rho_p\}$ is stably injective.
Then the following set is uncountable:
\[
\{\rot\circ\rho(Z)\co \rho\in E(L,Z)\}.\]
\end{lem}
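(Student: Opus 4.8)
The plan is to mimic the Baire category argument of Lemma~\ref{lem:split-indiscrete}, carrying along enough control of rotation numbers to produce uncountably many distinct ones. Write $Z=\{z_1,\dots,z_m\}$. Since $0<c_p\le 1/p$, each $\rho_p(z_i)$ has nonzero rotation number $c_p\in(0,1/2)$ and is therefore a nontrivial elliptic element with $\tr^2\rho_p(z_i)=4\cos^2(\pi c_p)$ by \eqref{eq:trace}, and the values $c_p$ are infinitely many distinct (each is attained by finitely many $p$, as $c_p\le 1/p$). After conjugating each $\rho_p$ — which preserves stable injectivity, parabolic-freeness, and all rotation numbers — I may assume $\rho_p(z_1)\in\SO(2)$. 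I would then set
\[
Y=\{\rho\in\Hom(L,\PSL_2(\bR))\co [\rho(z_1),\Rot(1)]=1,\ \tr^2\rho(z_i)=\tr^2\rho(z_1)\text{ for all }z_i\in Z\},
\]
a real algebraic set (the first condition is algebraic and, since $\Rot(1)$ is an irrational rotation, says exactly $\rho(z_1)\in\SO(2)$; the remaining conditions are polynomial). Each normalized $\rho_p$ lies in $Y$. As $Y$ has finitely many irreducible components, one of them, call it $V$, contains infinitely many seeds with pairwise distinct traces $\tr^2\rho_p(z_1)=4\cos^2(\pi c_p)$; hence $\dim V>0$ and $f:=\tr^2\rho(z_1)|_V$ is a non-constant regular function on the irreducible variety $V$.

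Next I would localize where rotation numbers are rigidly tied to traces. On the open semialgebraic set $V'=\{\rho\in V\co 2<\tr^2\rho(z_1)<4\}$ — which contains $\rho_p$ for all large $p$ — every $\rho(z_i)$ is a nontrivial elliptic with $\tr^2\in(2,4)$, so $\rot\rho(z_i)\in(0,1/4)\cup(3/4,1)$. Let $W$ be a connected component of $V'$ containing one of the seeds. Since $\rho\mapsto(\rot\rho(z_1),\dots,\rot\rho(z_m))$ is continuous, $W$ is connected, and at the seed every coordinate equals $c_p\in(0,1/4)$, on all of $W$ every $\rot\rho(z_i)$ stays in $(0,1/4)$; because $t\mapsto 4\cos^2(\pi t)$ is injective on $(0,1/4)$ and $\tr^2\rho(z_i)=\tr^2\rho(z_1)$ holds on $V\supseteq W$, I conclude $\rot\rho(z_i)=\rot\rho(z_1)$ for every $i$. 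Thus $\rot\circ\rho(Z)=\{\rot\rho(z_1)\}$ is a singleton for all $\rho\in W$, and moreover $\rot\rho(z_1)$ is a strictly monotone function of $f(\rho)$ on $W$.

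Then I would extract faithful, parabolic-free representations by Baire category. Applying Lemma~\ref{lem:baire} to the irreducible $V$: for each $g\in L\setminus\{1\}$ the sets $\{\rho\in V\co\rho(g)=1\}$ and $\{\rho\in V\co\tr^2\rho(g)=4\}$ are proper algebraic subsets, since stable injectivity together with parabolic-freeness of the $\rho_p$ produces a seed in $V$ avoiding each of them. Hence $\DD=V\setminus\bigcup_{g\ne 1}\bigl(\{\rho(g)=1\}\cup\{\tr^2\rho(g)=4\}\bigr)$ contains a dense $G_\delta$ subset of $V$ and $\DD\sse X_{\mathrm{proj}}(L)$. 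It remains to show $f$ takes uncountably many values on $W\cap\DD$: if its image there were a countable set $\{c_n\}$, then $W\cap\DD\sse\bigcup_n f^{-1}(c_n)$ with each $f^{-1}(c_n)$ a proper algebraic subset of $V$ (as $f$ is non-constant and $V$ irreducible), so by Lemma~\ref{lem:baire} the set $\DD\setminus\bigcup_n f^{-1}(c_n)$ would still contain a dense $G_\delta$ of $V$ and hence meet the nonempty open set $W$ — contradicting $W\cap\DD\sse\bigcup_n f^{-1}(c_n)$. Since $\rot\rho(z_1)$ is a monotone function of $f(\rho)$ on $W$, this yields uncountably many $\rho\in E(L,Z)$ with pairwise distinct singletons $\rot\circ\rho(Z)$, which proves the lemma (and then Theorem~\ref{thm:simul}).

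I expect the main obstacle to be the control of rotation numbers: equality of $\tr^2$ pins down a rotation number only up to the symmetry $\theta\leftrightarrow 1-\theta$, so one really must work on a connected component of a locus where all the traces are bounded away from $\pm 2$, forcing the continuous functions $\rho\mapsto\rot\rho(z_i)$ into a common monotonicity interval; and a second Baire-category argument, run against the level sets of $f$, is needed to upgrade ``uncountably many representations'' to ``uncountably many rotation numbers.''
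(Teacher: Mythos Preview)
Your argument is correct, but it takes a genuinely different route from the paper's.

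The paper avoids the $\theta\leftrightarrow 1-\theta$ ambiguity altogether by \emph{parametrizing} rather than constraining: it introduces the auxiliary algebraic set
\[
Y^*=\SO(2)\times\PSL_2(\bR)^Z\times\PSL_2(\bR)^A,
\]
where $A$ is a finite set completing $Z$ to a generating set, and sends $\chi=(\alpha_0,\{\alpha_z\},\{\beta_a\})$ to the representation $\phi_\chi$ with $\phi_\chi(z)=\alpha_0^{\alpha_z}$ and $\phi_\chi(a)=\beta_a$. By construction every $\phi_\chi(z)$ is a conjugate of the single element $\alpha_0\in\SO(2)$, so $\rot\circ\phi_\chi(Z)=\{\rot\alpha_0\}$ holds automatically and the common rotation number is read off by the algebraic projection $\Pi_0:\chi\mapsto\alpha_0$. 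A single Baire argument then shows simultaneously that the bad loci $\{\phi_\chi(g)\text{ parabolic}\}$ and the preimages $\Pi_0^{-1}(U)$ of any countable $U\subseteq\SO(2)$ are proper, yielding uncountably many values of $\Pi_0$ on $E(L,Z)$.

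Your approach instead stays inside $\Hom(L,\PSL_2(\bR))$, replaces the conjugacy parametrization by the algebraic condition $\tr^2\rho(z_i)=\tr^2\rho(z_1)$, and then resolves the sign ambiguity with a topological step (restriction to a classically connected open piece $W\subseteq V$ where all rotation numbers are forced into $(0,1/4)$). This costs you a second Baire pass against the level sets of $f=\tr^2\rho(z_1)$ to upgrade uncountably many representations to uncountably many values. What you gain is that you never leave the representation variety and never introduce the auxiliary generators $A$; what the paper gains is that the singleton-rotation condition and the rotation-number map are both algebraic from the outset, so no connectedness argument is needed and the whole proof is a single invocation of Lemma~\ref{lem:baire}.
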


\bp
Let us fix a finite set $A\sse L\setminus Z$ so that $L=\form{Z\cup A}$.
Define an algebraic set
\[
Y^* =\SO(2)\times\PSL_2(\bR)^Z\times \PSL_2(\bR)^A.\]
Consider an arbitrary
\[\chi = (\alpha_0,\{\alpha_t\}_{t\in Z},\{\beta_a\}_{a\in A})\in Y^*.\]
We will denote the projection map to the first component
\[
\chi\mapsto \alpha_0\]
as $\Pi_0$.
Given $\chi \in  Y^\ast$, there exists at most one homomorphism \[\phi_\chi\in\Hom(L,\PSL_2(\bR))\] satisfying the following:
\[
\phi_{\chi}(x) =
\begin{cases}
\alpha_0^{\alpha_x},&\text{ if }x\in Z\\
\beta_x,&\text{ if }x\in A.
\end{cases}
\]
So we have an algebraic set 
\[Y=\{\chi\in Y^*\co \phi_\chi\text{ defines a homomorphism}\},\]
For any $\rho_p$ as in the lemma, there exists $\chi_p\in Y$ such that 
\[
\rho_p=\phi_{\chi_p},\quad\rot\circ\rho_p(Z)=\Pi_0(\chi_p).\]

For each finite subset $Q\sse L\setminus 1$, we have an algebraic set 
\[Y(Q) = \{ \chi \in Y \co \phi_\chi(Q)\text{ contains a (possibly trivial) parabolic element}\}.\]
We may identify 
\[E(L,Z)=Y\setminus \bigcup_{Q\sse L\setminus1, \text{ finite }} Y(Q).\]

Let $Z\sse \SO(2)$ be an arbitrary countable set.
For each finite subset $U\sse Z$, we define 
\[
W(U) = \Pi_0^{-1}(U)\cap Y.\]
Since $\{\rho_p\}$ is stably injective and parabolic-free, 
each $Y(Q)$ is proper in $Y$. Moreover, the condition on the rotation number implies that $W(U)$ is proper as well. 
By Lemma~\ref{lem:baire}, the following set is uncountable:
\[E(L,Z)\setminus\bigcup_{U\sse Z,\text{ finite}}W(U).\]
It follows that $\Pi_0(E(L,Z))$ is uncountable.
\ep

\begin{exmp}\label{exmp:genus-two}
Let $L$ be the fundamental group of a genus--two surface.
For an illustrative purpose, let us prove that the image of $X_{\mathrm{proj}}(L)$ is uncountable in $M(L)$, without resorting to Theorem~\ref{thm:uncountable}.
See Lemma~\ref{lem:uv-alg} for a generalization.

We have a sequence of maps as follows.
\[\xymatrix{
L=\form{a,b,c,d| [a,b]=[c,d]}\ar@{^(->}[r]^<<<<<{\iota}
& H=\form{a,b,t|[[a,b],t]=1}\ar[d]_{f_p}\\
\iota(c)=a^t,\ \iota(d)=b^t&F_2=\form{a,b} \ar[d]_{g_q}\\
f_p(t)=[a,b]^p& \pi_1^{\mathrm{orb}}(\Sigma_q) = \form{a,b| [a,b]^q=1}
\ar@{^(->}[r]^<<<<{\rho_q}& \PSL_2(\bR).
}\]
Here,  $g_q$ is the natural quotient map,
and 
$\Sigma_q$ is a compact genus--one hyperbolic orbifold with exactly one cone point, whose order is $q$.
The group $\pi^{\mathrm{orb}}_1(\Sigma_q)$ admits a 
 cocompact Fuchsian
 representation $\rho_q$
such that \[\rho_q[a,b]=\Rot(2\pi/q).\]

Recall the definition of stable injectivity from Definition~\ref{defn:stab-inj} and from the paragraph following it.
The sequence $\{f_p\}$ is stably injective by applying Lemma~\ref{lem:baumslag} to the case $u=[a,b]\in F_2$.
Also, Lemma~\ref{lem:sep} implies that  $\{g_q\}$ is stably injective.
For each $p\ge1$ we can choose a sufficiently large \[q=q(p)\ge p\] such that  \[\left\{\phi_p = g_{q(p)}\circ f_p\co H\to \pi_1^{\mathrm{orb}}(\Sigma_{q(p)})\right\}\] 
is a stably injective sequence. Note that \[\rot\circ \phi_p[a,b]=1/q.\]
So we can apply Lemma~\ref{lem:uv1} and see that
the following set is uncountable:
\[
\left\{\rot\circ\rho\left([a,b]\right)\co \rho\in E(H,\left\{[a,b]\right\})\right\}.\]
In particular, $L$ admits uncountably many pairwise-inequivalent indiscrete faithful parabolic-free representations.
\end{exmp}

\section{Combination Theorem for Flexible Groups}\label{sec:flex}
In this section, we establish a combination theorem (Theorem~\ref{thm:main-comb}) for the class of flexible and liftable-flexible groups. 

\subsection{Statement of the result}
Recall that a set 
\[ \Lambda\sse\Hom(L,\PSL_2(\bR))\]
 is tracially disjoint if for all distinct $\rho$ and $\lambda$ in $\Lambda$ we have \[\tr^2\circ\rho(L\setminus T_L)\cap\tr^2\circ\lambda(L\setminus T_L)=\varnothing.\]
The following definition is given in the introduction.

\bd\label{defn:flex}
A finitely generated group $L$ is called \emph{flexible}\index{flexible} if there exists an uncountable, tracially disjoint subset $\Lambda$ of indiscrete faithful parabolic-free projective representations.
If, furthermore, $\Lambda$ can be chosen so that each representation in $\Lambda$ is liftable, then we say $L$ is \emph{liftable-flexible}\index{flexible!liftable-flexible}.
\ed

We will often use the following alternative characterization of flexibility. The proof is immediate by Zorn's lemma.

\begin{lem}\label{lem:flex-count}
A finitely generated group $L$ is {flexible} if and only if 
for each countable set $W\sse\bR$ there exists an indiscrete faithful representation $\rho\co L\to \PSL_2(\bR)$ such that 
\[\tr^2\circ\rho(L\setminus T_L)\cap W=\varnothing.\]
Moreover, $L$ is liftable-flexible
if and only if such $\rho$ can be chosen to be liftable.
\end{lem}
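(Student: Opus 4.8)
The plan is to prove the two equivalences by elementary bookkeeping, with Zorn's Lemma as the only real tool. Throughout I would use that $L$, being finitely generated, is countable, so that $\tr^2\circ\rho(L\setminus T_L)$ is a countable subset of $\bR$ for every $\rho\in\Hom(L,\PSL_2(\bR))$.

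For the forward direction, suppose $L$ is flexible and let $\Lambda$ be an uncountable tracially disjoint family of indiscrete faithful parabolic-free projective representations. Given a countable set $W\subseteq\bR$, the key point is that for each fixed $w\in W$ there is \emph{at most one} $\rho\in\Lambda$ with $w\in\tr^2\circ\rho(L\setminus T_L)$: two distinct such $\rho,\lambda$ would give $w\in\tr^2\circ\rho(L\setminus T_L)\cap\tr^2\circ\lambda(L\setminus T_L)$, contradicting tracial disjointness. Hence $\{\rho\in\Lambda\co \tr^2\circ\rho(L\setminus T_L)\cap W\ne\varnothing\}$ is a countable union of singletons, so some $\rho\in\Lambda$ avoids $W$ altogether; this $\rho$ is an indiscrete faithful representation with $\tr^2\circ\rho(L\setminus T_L)\cap W=\varnothing$, as required. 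In the liftable-flexible case the members of $\Lambda$ are already liftable, so the same $\rho$ proves the liftable statement.

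For the converse I would apply Zorn's Lemma to the poset $\mathcal{F}$ of all tracially disjoint subsets of $X_{\mathrm{proj}}(L)$ whose members are indiscrete, ordered by inclusion. Tracial disjointness is a condition on pairs, so the union of a chain in $\mathcal{F}$ again lies in $\mathcal{F}$, and Zorn yields a maximal element $\Lambda$. I claim $\Lambda$ is uncountable. If not, put $W=\{4\}\cup\bigcup_{\rho\in\Lambda}\tr^2\circ\rho(L\setminus T_L)$, a countable set, and apply the hypothesis to get an indiscrete faithful $\rho_0\co L\to\PSL_2(\bR)$ with $\tr^2\circ\rho_0(L\setminus T_L)\cap W=\varnothing$. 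Since $4\in W$ and any nontrivial parabolic element is the image of a non-torsion element whose square-trace equals $4$, the representation $\rho_0$ is parabolic-free, so $\rho_0\in X_{\mathrm{proj}}(L)$. Moreover $\rho_0\notin\Lambda$: otherwise $\tr^2\circ\rho_0(L\setminus T_L)$ would be both contained in and disjoint from $W$, forcing $L=T_L$, which is impossible because a finitely generated torsion subgroup of $\PSL_2(\bR)$ is finite, hence discrete, by Selberg's Lemma, contradicting indiscreteness of $\rho_0$. Finally $\Lambda\cup\{\rho_0\}$ is still tracially disjoint (for $\rho\in\Lambda$ we have $\tr^2\circ\rho(L\setminus T_L)\subseteq W$, disjoint from $\tr^2\circ\rho_0(L\setminus T_L)$), so it strictly enlarges $\Lambda$ inside $\mathcal{F}$, contradicting maximality. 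Hence $\Lambda$ is uncountable and $L$ is flexible; for the liftable version, restrict $\mathcal{F}$ to families of liftable representations and run the identical argument, using that the $\rho_0$ supplied by the liftable hypothesis is liftable.

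No step here is a genuine obstacle; the only thing requiring a moment's care is ensuring parabolic-freeness of the representations one extracts, which I handle uniformly by always forcing $4\in W$, together with the standard fact (a consequence of Selberg's Lemma, already used in Lemma~\ref{lem:dense}) that a finitely generated torsion linear group is finite.
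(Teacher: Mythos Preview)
Your proof is correct and follows exactly the approach the paper indicates: the paper's entire proof is the sentence ``The proof is immediate by Zorn's lemma,'' and you have supplied precisely the details that make this immediate. Your device of including $4$ in $W$ to force parabolic-freeness, and your use of Selberg's Lemma to rule out $L=T_L$, are the natural ways to handle the small gaps between the hypothesis of the lemma (which does not mention parabolic-freeness) and the definition of flexibility (which does).
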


\begin{exmp}\label{exmp:flex}
\be
\item
The infinite cyclic group is liftable-flexible.
This is because we can map a generator to an arbitrary irrational rotation.
\item
Infinite dihedral groups and triangle groups are not flexible (Example~\ref{exmp:triangle}).
\ee
\end{exmp}

Let us note several restrictions for flexible groups.
The proofs are trivial.

\begin{prop}
\be
\item 
A flexible group is commutative--transitive.
\item
Every finite subgroup of a flexible group is cyclic.
\item
A liftable-flexible group is torsion-free.
\ee
\end{prop}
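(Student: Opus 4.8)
The plan is to observe that in each part we only need the existence of a \emph{single} faithful projective representation of $L$ (plus liftability for part~(3)); flexibility provides this because the set $\Lambda$ of Definition~\ref{defn:flex} is in particular nonempty. So fix $\rho\in\Lambda$, which is a faithful homomorphism $L\hookrightarrow\PSL_2(\bR)\cong\Isom^+(\bH^2)$.

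For part~(1) I would use that $\Isom^+(\bH^2)$ is commutative-transitive, as recalled in the text just before Lemma~\ref{lem:malnormal}, together with the trivial observation that the defining implication of commutative-transitivity passes to subgroups: if $a,b,c\in L\setminus 1$ satisfy $[a,b]=[a,c]=1$, then viewing them inside $\PSL_2(\bR)$ and applying commutative-transitivity there gives $[b,c]=1$. Hence $L$ is commutative-transitive. For part~(2), let $F\le L$ be finite, so $\rho(F)$ is a finite subgroup of $\Isom^+(\bH^2)$; since $\bH^2$ is $\CAT(0)$, any finite orbit of $\rho(F)$ has a circumcenter, which is fixed by all of $\rho(F)$, so $\rho(F)$ is conjugate into a point stabilizer $\SO(2)\cong\bR/\bZ$, whose finite subgroups are cyclic. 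Therefore $F\cong\rho(F)$ is cyclic.

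For part~(3), assume $L$ is liftable-flexible and choose $\rho\in\Lambda$ to be faithful and liftable, so that $\rho$ factors through the universal cover $\PSL_2^\sim(\bR)$. Composing with the natural map $\PSL_2^\sim(\bR)\to\Homeo_\bZ(\bR)$ (the lift of $\PSL_2(\bR)\hookrightarrow\Homeo_+(S^1)$ to the universal central extensions) yields an embedding $L\hookrightarrow\Homeo_\bZ(\bR)\le\Homeo_+(\bR)$. It then suffices to note that an orientation-preserving homeomorphism $f$ of $\bR$ of finite order is the identity: if $f(x_0)>x_0$ for some $x_0$, monotonicity forces $\{f^n(x_0)\}_{n\ge 0}$ to be strictly increasing, which is incompatible with $f$ being periodic; likewise if $f(x_0)<x_0$. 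Thus $\Homeo_+(\bR)$ is torsion-free, hence so is $L$. (Alternatively, one may cite directly that $\PSL_2^\sim(\bR)$ is torsion-free, its elliptic one-parameter subgroups lifting to copies of $\bR$ and its center being infinite cyclic.)

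As the authors indicate, there is no genuine obstacle here; the only point requiring a moment's care is verifying in part~(3) that ``liftable'' in the projective sense (factoring through $\PSL_2^\sim(\bR)$) really does produce a faithful homomorphism into $\Homeo_\bZ(\bR)$ — i.e.\ that the map $\PSL_2^\sim(\bR)\to\Homeo_\bZ(\bR)$ is injective, which follows because its kernel would lie in the center and the map is an isomorphism on the central $\bZ$'s.
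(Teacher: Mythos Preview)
Your argument is correct and is exactly the kind of reasoning the authors have in mind; the paper itself dismisses the proof with the single sentence ``The proofs are trivial,'' and your use of a single faithful (liftable) representation from $\Lambda$ to inherit commutative--transitivity, cyclic finite subgroups, and torsion-freeness from $\PSL_2(\bR)$ and $\PSL_2^\sim(\bR)\hookrightarrow\Homeo_+(\bR)$ is precisely the intended content. The only minor simplification you could make in part~(3) is to bypass the discussion of injectivity of $\PSL_2^\sim(\bR)\to\Homeo_\bZ(\bR)$ altogether: since $\rho=p\circ\tilde\rho$ is faithful, $\tilde\rho\colon L\to\PSL_2^\sim(\bR)$ is already faithful, and $\PSL_2^\sim(\bR)$ is torsion-free (being a simply connected Lie group whose one-parameter subgroups are copies of $\bR$).
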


We will denote by $\FF$ and $\FF^\sim$ the classes of flexible groups and liftable-flexible groups, respectively. Note that $\FF^\sim\sse\FF$. The main result of this section is the following.

\begin{thm}[Combination Theorem for Flexible Groups]\label{thm:main-comb}
Let $\HH=\FF$ or $\FF^\sim$.
\be
\item\label{main:inf-cyc}
Infinite cyclic groups are in $\HH$.
\item\label{main:sub}
If $H\le L\in\HH$ and $[L:H]<\infty$, then $H\in\HH$.
\item\label{main:free} If $A$ and $B$ are in $\HH$, then so is $A\ast B$.
\item\label{main:amalgam}
Let $A,B\le L$ and $C\le A\cap B$ satisfy that \[L=\form{A,B}\in\HH.\]
If $C$ is nontrivial malnormal abelian in $A$ and in $B$,
then $A\ast_C B\in \HH$.
\item\label{main:hnn}
Let $A,C\le L$ and $s\in L$ satisfy that  \[L=\form{A,s}\in\HH\] and that $C$ and $C^s$ are nontrivial malnormal abelian subgroups of $A$. Assume either 
\begin{itemize}
\item $s\in Z(C)$, or
\item $C$ and $C^s$ are not conjugate in $A$.
\end{itemize}
Then  $A\ast_{\Inn(s)\co C\to C^s}\in\HH$.
\item\label{main:fin-cyc}
If $A$ is in $\FF$ and $B$ is a finite cyclic group, then $A\ast B\in\FF$.
\ee
\end{thm}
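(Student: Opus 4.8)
The plan is to derive each clause from the criterion of Lemma~\ref{lem:flex-count}: to place a finitely generated group $G$ in $\HH$ it is enough, given a countable set $W\sse\bR$ (which we always enlarge so that $4\in W$), to exhibit a single indiscrete faithful parabolic-free projective representation $\sigma\co G\to\PSL_2(\bR)$, liftable if $\HH=\FF^\sim$, with $\tr^2\circ\sigma(G\setminus T_G)\cap W=\varnothing$; once $4\in W$ this trace condition automatically makes $\sigma(G)$ parabolic-free, since a nontrivial parabolic has $\tr^2=4$ while nontrivial torsion elements are elliptic with $\tr^2\neq4$. Clause~(\ref{main:inf-cyc}) is then immediate: choose an irrational $\theta$ avoiding the countable set of values for which $4\cos^2(\pi n\theta)\in W$ for some $n\neq0$, and send a generator of $\bZ$ to $\Rot(2\pi\theta)$; the image is an infinite group of rotations, hence indiscrete, the representation is faithful, and it is liftable because $\bZ$ is free. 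For clause~(\ref{main:sub}), let $\rho\co L\to\PSL_2(\bR)$ be a flexible (resp.\ liftable-flexible) embedding avoiding $W$, supplied by $L\in\HH$. Then $\rho|_H$ is faithful, $\rho(H)\le\rho(L)$ is parabolic-free, and $\rho(H)$ is indiscrete — otherwise $\rho(H)$ would be a discrete, hence closed, finite-index subgroup of $\rho(L)$, making $\rho(L)=\bigsqcup_i g_i\rho(H)$ a closed subgroup with a discrete subgroup of finite index, hence discrete, contrary to indiscreteness of $\rho(L)$. Since $H\setminus T_H\sse L\setminus T_L$ we get $\tr^2\circ\rho|_H(H\setminus T_H)\sse\tr^2\circ\rho(L\setminus T_L)$, which misses $W$; and any lift of $\rho$ restricts to a lift of $\rho|_H$.

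For clauses~(\ref{main:free}) and~(\ref{main:fin-cyc}) I would invoke the Free Product Pulling-Apart Lemma~\ref{lem:free}(1). Pick a flexible (liftable-flexible, for~(\ref{main:free})) embedding $\rho_A\co A\to\PSL_2(\bR)$ avoiding $W$, and likewise $\rho_B\co B\to\PSL_2(\bR)$ for~(\ref{main:free}); for~(\ref{main:fin-cyc}) take $\rho_B$ realizing the finite cyclic group $B$ as a group of rotations. Applying Lemma~\ref{lem:free}(1) to the subgroups $\rho_A(A)$ and $\rho_B(B)$ with the countable set $W$ yields, for a very general $\nu\in\PSL_2(\bR)$, an isomorphism $A\ast B\cong\form{\rho_A(A),\rho_B(B)^\nu}=:L_\nu$; write $\sigma_\nu$ for the associated faithful representation with $\sigma_\nu|_A=\rho_A$ and $\sigma_\nu|_B=\Inn(\nu)\circ\rho_B$. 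By the Kurosh subgroup theorem every non-torsion element of $A\ast B$ is either conjugate into $A\setminus T_A$ or into $B\setminus T_B$ — in which case its $\tr^2$ lies in $\tr^2\circ\rho_A(A\setminus T_A)\cup\tr^2\circ\rho_B(B\setminus T_B)$, disjoint from $W$ (for~(\ref{main:fin-cyc}) note $\tr^2\circ\rho_B(B\setminus\{1\})\sse[0,4)$, so there one needs only $4\in W$) — or is not conjugate into either factor, in which case $\sigma_\nu$ carries it into $L_\nu\setminus(\rho_A(A)\cup\rho_B(B)^\nu)^{L_\nu}$, whose $\tr^2$-image avoids $W$ by Lemma~\ref{lem:free}. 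Hence $\tr^2\circ\sigma_\nu((A\ast B)\setminus T_{A\ast B})\cap W=\varnothing$, so $\sigma_\nu$ is parabolic-free; and $\sigma_\nu$ is indiscrete because $L_\nu\supseteq\rho_A(A)$, which is indiscrete. For~(\ref{main:free}), liftability follows from the universal property of the free product applied to a lift of $\rho_A$ and a lift of $\Inn(\nu)\circ\rho_B$ (the inner automorphism $\Inn(\nu)$ lifts to an automorphism of $\PSL_2^\sim(\bR)$); clause~(\ref{main:fin-cyc}) produces a group with torsion, which is why it is stated only for $\FF$.

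Clauses~(\ref{main:amalgam}) and~(\ref{main:hnn}) are the substantial ones, and I expect the main obstacle to be showing indiscreteness of the pulled-apart image. Fix a flexible (liftable-flexible, for $\FF^\sim$) embedding $\rho\co L\to\PSL_2(\bR)$ avoiding $W$, where $L=\form{A,B}$ in case~(\ref{main:amalgam}) and $L=\form{A,s}$ in case~(\ref{main:hnn}); throughout we assume the amalgam, resp.\ HNN extension, is non-degenerate ($C$ a proper subgroup of $A$ and of $B$), as in all intended applications. Because $\rho$ is faithful and $\rho(L)$ is parabolic-free, every hypothesis of the Amalgamated (resp.\ HNN) Pulling-Apart Lemma~\ref{lem:pull-apart-psl} transfers to the images $\rho(A),\rho(B),\rho(C)$ (resp.\ $\rho(A),\rho(C),\rho(s)$): malnormality, nontriviality and commutativity of $\rho(C)$ are preserved by the faithful $\rho$, the stated dichotomy ($s\in Z(C)$ versus $C,C^s$ not conjugate in $A$) passes to images, and $V=\rho(A)\cup\rho(B)$ (resp.\ $\rho(A)$) is parabolic-free. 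Lemma~\ref{lem:pull-apart-psl} then yields, for all but countably many $\nu$ in the maximal abelian group $\mu=Z(\rho(C))$, a faithful representation $\sigma_\nu$ of $L^*=A\ast_C B$ (resp.\ $A\ast_{\Inn(s)\co C\to C^s}$) into $\PSL_2(\bR)$ with $\tr^2\circ\sigma_\nu(L^*\setminus V^{L^*})\cap W=\varnothing$; handling the elements conjugate into $V$ exactly as in the previous paragraph shows $\sigma_\nu$ is parabolic-free. For $\HH=\FF^\sim$ one checks liftability: $L$ is torsion-free so $L^*$ is, and the preimage $p^{-1}(\mu)\le\PSL_2^\sim(\bR)$ of the maximal abelian $\mu$ is again abelian — this holds for the elliptic, hyperbolic and parabolic one-parameter subgroups alike — so a lift $\tilde\nu\in p^{-1}(\mu)$ commutes with the lift of $\rho(C)$; hence the lifts of $\sigma_\nu|_A$ and of $\sigma_\nu|_B$ (resp.\ of the stable letter $\mapsto\nu\rho(s)$) agree on $C$ and assemble, by the universal property of amalgams (resp.\ HNN extensions), into a lift of $\sigma_\nu$.

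Finally I would select $\nu$ so that $\sigma_\nu(L^*)=\form{\rho(A),\rho(B)^\nu}$ (resp.\ $\form{\rho(A),\nu\rho(s)}$) is indiscrete, via the following dichotomy. Since $\rho(L)$ is dense (Lemma~\ref{lem:dense}), it contains an infinite-order elliptic element $e$; pick any $w\in L^*$ with $\sigma_1(w)=e$, where $\sigma_1=\rho\circ q$ and $q\co L^*\to L$ is the canonical quotient. If $w$ is conjugate in $L^*$ into $(A\cup B)^{L^*}$ (resp.\ $A^{L^*}$), then $e$ is conjugate in $\rho(L)$ to $\rho$ of a non-torsion element of $A$ or $B$, so $\rho(A)$ or $\rho(B)$ already contains an infinite-order elliptic and $\sigma_\nu(L^*)$ is indiscrete for every admissible $\nu$. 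Otherwise $w$ is genuinely mixed (cyclically reduced of syllable length at least two), so exactly as in the proof of Lemma~\ref{lem:pull-apart-psl} — via the Projective Baumslag Lemma~\ref{lem:baumslag-psl} together with Claim~\ref{claim:nmu} — $\sigma_\nu(w)$ is conjugate to a product $\prod_i g_i\nu^{m_i}$ with $\Fix\mu\cap g_i\Fix\mu=\varnothing$, whence the real-analytic function $f(\nu)=\tr^2\circ\sigma_\nu(w)$ on $\mu$ is non-constant (it has finite fibers when $\mu$ is elliptic, and tends to $\infty$ otherwise). Since $f(1)=\tr^2(e)\in(0,4)$, the open set $U=f^{-1}((0,4))\sse\mu$ is nonempty and $f|_U$ is nowhere locally constant, so after deleting the countable set $\{\nu\in U\co f(\nu)=4\cos^2(\pi r)\text{ for some }r\in\bQ\}$ there remain uncountably many $\nu\in U$ for which $\sigma_\nu(w)$ is an infinite-order elliptic; for such $\nu$ the cyclic group generated by $\sigma_\nu(w)$, hence $\sigma_\nu(L^*)$, is indiscrete. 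Intersecting this uncountable set with the co-countable set of $\nu$ for which Lemma~\ref{lem:pull-apart-psl} already supplies faithfulness and the trace condition produces the required representation, and Lemma~\ref{lem:flex-count} then completes the proof of all clauses.
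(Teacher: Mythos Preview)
Your proof is correct and follows the same overall architecture as the paper: reduce to the countable-avoidance criterion of Lemma~\ref{lem:flex-count}, feed the given flexible embedding into the appropriate Pulling-Apart Lemma (Lemma~\ref{lem:free} for (\ref{main:free}) and (\ref{main:fin-cyc}), Lemma~\ref{lem:pull-apart-psl} for (\ref{main:amalgam}) and (\ref{main:hnn})), and handle liftability by lifting $\nu$ through the abelian preimage $p^{-1}(\mu)$ --- which is exactly the content of the paper's Lemma~\ref{lem:lift}.

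The one place where you work harder than necessary is the indiscreteness step in (\ref{main:amalgam}) and (\ref{main:hnn}). Your dichotomy (vertex-conjugate $w$ versus genuinely mixed $w$, followed by an analyticity argument on $f(\nu)=\tr^2\sigma_\nu(w)$) is valid, but the paper bypasses it with a one-line continuity observation: pick any $w\in L^*$ with $\sigma_1(w)=e$ an irrational rotation; since $\nu\mapsto\sigma_\nu(w)$ is continuous and ellipticity is an open condition, $\sigma_\nu(w)$ is elliptic for all $\nu$ in some neighbourhood of $1\in\mu$; intersecting this open set with the co-countable set of $\nu$ for which $\sigma_\nu$ is faithful (supplied by Lemma~\ref{lem:pull-apart-psl}) yields $\nu$ with $\sigma_\nu(w)$ an elliptic of infinite order, since $w$ itself has infinite order in $L^*$. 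No case split or Baumslag-type non-constancy is needed at this stage. Your argument has the minor advantage of not requiring $\nu$ to be near the identity, but the paper's route is shorter.

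Two small cosmetic points: the fact you attribute to ``Kurosh'' in (\ref{main:free}) and (\ref{main:fin-cyc}) is really just the normal form theorem for free products (torsion elements are conjugate into a factor); and in (\ref{main:sub}) the paper gives an alternative to your closed-subgroup argument, namely taking a power of an irrational rotation into $H$ via Lemma~\ref{lem:dense}, though your version is equally clean.
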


\begin{rem}
The malnormality condition of $C$ can be replaced by a seemingly weaker (but equivalent in our case) condition that
\[ N_L(C)\cap V=C,\]
where $V=A\cup B$ in (1) and $V=A$ in (2).
\end{rem}

\subsection{Proof}
\bp[Proof of Theorem~\ref{thm:main-comb}]
Throughout this proof, we will fix a countable set $W\sse\bR$.
Our goal in each part is, to find $\rho$ satisfying the conditions in Lemma~\ref{lem:flex-count}.
We will first consider the case $\HH=\FF$.

Part (1) is shown in Example~\ref{exmp:flex}. For part (2), suppose $\rho_0\co L\to \PSL_2(\bR)$ is indiscrete faithful and satisfies
\[
\rho_0(L\setminus T_L)\cap W=\varnothing.\]
By Lemma~\ref{lem:dense}, we can pick $g_0\in L$ such that $\rot\circ\rho(g_0)$ is irrational.
There is $N>0$ such that
\[h_0=g_0^N\in H.\] 
So $\lambda\restriction_H$ is the desired indiscrete faithful representation. 

In the Pulling-Apart Lemmas,
we note that if $g$ is an irrational rotation in $L\le\PSL_2(\bR)$,
and if $\nu$ is sufficiently close to the identity, 
then very generally $\rho_\nu(g)$ is also an irrational rotation. This is because $\rho_\nu$ is continuous on $\nu$ and $g$ has infinite order as an element of $L$. So the parts (3) through (5) are immediate from the Pulling-Apart Lemma (Lemma~\ref{lem:pull-apart-psl}).

Part (6) follows from Lemma~\ref{lem:free} (1).

Now, the case $\HH=\FF^\sim$ follows from the lemma below.
\ep

\begin{lem}\label{lem:lift}
Let $G,\tilde G$ be groups and $p\co \tilde G\to G$ be an epimorphism.
Choose an abelian subgroup $\mu\le G$ such that $\tilde\mu=p^{-1}(\mu)$ is abelian,
and fix \[\nu\in\mu,\quad\tilde\nu\in \tilde\mu, \] such that $p(\tilde{\nu}) = \nu$.
For some group $L$, suppose we have maps \[\rho\co L\to G,\quad\tilde\rho\co L\to \tilde G,\] 
such that $p\circ\tilde\rho = \rho$.
\be
\item
Let $A,B\le L$ and $C\le A\cap B$ satisfy $L=\form{A,B}$ and $\rho(C)\le\mu$.
If we put \[L^*=A\ast_C B,\]
then there uniquely exist homomorphisms \[\rho^*\co L^*\to G,\quad
\tilde\rho^*\co L^*\to \tilde G\] such that the following hold:
\be[(i)]
\item $\rho^*\restriction_A=\rho\restriction_A$ and $\rho^*\restriction_B=\Inn(\nu)\circ\rho\restriction_B$;
\item $\tilde\rho^*\restriction_A=\tilde\rho\restriction_A$ and $\tilde\rho^*\restriction_B=\Inn({\tilde\nu})\circ\tilde\rho\restriction_B$;
\item $p\circ\tilde\rho^*=\rho^*$.
\ee
\item
Let $A,C\le L$ and $s\in L$ satisfy $C\cup C^s\le A$,
$L=\form{A,s}$ and $\rho(C)\le\mu$.
If we put \[L^*=A\ast_{C\to s^{-1}Cs},\]
then there uniquely exist homomorphisms \[\rho^*\co L^*\to G,\quad\tilde\rho^*\co L^*\to \tilde G\] such that the following hold:
\be[(i)]
\item $\rho^*\restriction_A=\rho\restriction_A$ and $\rho^*(s)=\nu\rho(s)$;
\item $\tilde\rho^*\restriction_A=\tilde\rho\restriction_A$ and $\tilde\rho^*(s)=\tilde\nu\tilde\rho(s)$;
\item $p\circ\tilde\rho^*=\rho^*$.
\ee
\ee
\end{lem}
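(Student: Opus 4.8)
The plan is to build the maps $\rho^*$ and $\tilde\rho^*$ directly from the universal property of amalgamated free products and HNN extensions, and then to verify the compatibility $p\circ\tilde\rho^* = \rho^*$ by checking it on a generating set. Since everything is symmetric in the two parts, I would write the argument for part (1) in detail and indicate the (routine) modifications for part (2).

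For part (1), first I would observe that to define a homomorphism out of $L^* = A\ast_C B$ into a group $G$, by the universal property it suffices to give homomorphisms $f_A\co A\to G$ and $f_B\co B\to G$ that agree on $C$. Set $f_A = \rho\restriction_A$ and $f_B = \Inn(\nu)\circ\rho\restriction_B$. These agree on $C$ precisely because $\rho(C)\le\mu$ and $\mu$ is abelian: for $c\in C$ we have $\Inn(\nu)(\rho(c)) = \nu\rho(c)\nu^{-1} = \rho(c)$, so $f_B\restriction_C = \rho\restriction_C = f_A\restriction_C$. This defines $\rho^*\co L^*\to G$ uniquely with property (i). In exactly the same way, using that $\tilde\mu = p^{-1}(\mu)$ is abelian and that $\tilde\rho(C)\le\tilde\mu$ (which holds since $p(\tilde\rho(C)) = \rho(C)\le\mu$), the maps $\tilde\rho\restriction_A$ and $\Inn(\tilde\nu)\circ\tilde\rho\restriction_B$ agree on $C$, yielding a unique $\tilde\rho^*\co L^*\to\tilde G$ with property (ii).

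It remains to check (iii), namely $p\circ\tilde\rho^* = \rho^*$. Both sides are homomorphisms $L^*\to G$, so it suffices to check equality on $A$ and on $B$ (which together generate $L^*$). On $A$: $p\circ\tilde\rho^*\restriction_A = p\circ\tilde\rho\restriction_A = \rho\restriction_A = \rho^*\restriction_A$, using the hypothesis $p\circ\tilde\rho = \rho$. On $B$: for $b\in B$,
\[
p(\tilde\rho^*(b)) = p\bigl(\tilde\nu\,\tilde\rho(b)\,\tilde\nu^{-1}\bigr) = p(\tilde\nu)\,p(\tilde\rho(b))\,p(\tilde\nu)^{-1} = \nu\,\rho(b)\,\nu^{-1} = \rho^*(b),
\]
where we used $p(\tilde\nu) = \nu$, $p\circ\tilde\rho = \rho$, and that $p$ is a homomorphism. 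Hence $p\circ\tilde\rho^* = \rho^*$, and uniqueness of $\rho^*,\tilde\rho^*$ follows from the uniqueness in the universal property together with the fact that $A\cup B$ generates $L^*$.

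For part (2), the only change is that $L^* = A\ast_{\Inn(s)\co C\to s^{-1}Cs}$ is an HNN extension, so a homomorphism out of $L^*$ is specified by a homomorphism $f_A\co A\to G$ together with an element $g\in G$ (the image of the stable letter) satisfying the compatibility $g^{-1} f_A(c) g = f_A(s^{-1}cs)$ for all $c\in C$. Taking $f_A = \rho\restriction_A$ and $g = \nu\rho(s)$, one checks this compatibility: since $\rho(C)\le\mu$ is abelian and $\nu\in\mu$, conjugation by $\nu$ fixes $\rho(c)$, so $g^{-1}\rho(c)g = \rho(s)^{-1}\nu^{-1}\rho(c)\nu\rho(s) = \rho(s)^{-1}\rho(c)\rho(s) = \rho(s^{-1}cs)$, as required. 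This gives $\rho^*$ with property (i); the analogous computation with $\tilde\nu,\tilde\rho,\tilde G$ (using $\tilde\rho(C)\le\tilde\mu$ abelian) gives $\tilde\rho^*$ with property (ii). Property (iii) is again checked on the generating set $A\cup\{s\}$: on $A$ it is $p\circ\tilde\rho = \rho$, and on $s$ it is $p(\tilde\nu\tilde\rho(s)) = p(\tilde\nu)p(\tilde\rho(s)) = \nu\rho(s) = \rho^*(s)$.

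I do not expect any genuine obstacle here; the statement is essentially a bookkeeping lemma. The only point requiring a moment's care is verifying the gluing/compatibility conditions for the amalgam and the HNN extension, and these reduce entirely to the observation that $\mu$ and $\tilde\mu$ are abelian and contain $\nu$, $\tilde\nu$ respectively — which is exactly what makes conjugation by $\nu$ (resp. $\tilde\nu$) act trivially on $\rho(C)$ (resp. $\tilde\rho(C)$). The uniqueness in all cases is immediate from the universal properties.
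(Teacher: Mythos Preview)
Your proof is correct and follows exactly the same approach as the paper, which simply states that the result is immediate from the universal property of amalgamated free products and HNN extensions and draws the corresponding commutative diagrams. You have merely written out the verifications (gluing on $C$, compatibility of the stable letter, and checking $p\circ\tilde\rho^*=\rho^*$ on generators) that the paper leaves implicit.
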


\bp
The proof is immediate from the universality in the following diagrams.
\[
\xymatrix{
& A\ar[rrrrrrd]^{\rho}\ar[rd]\ar[rrrd]^>>>>>>>>{\tilde\rho}\\
C\ar[ru] \ar[rd]& & L^*\ar[rr]^<<<<<{\tilde\rho^*} &&  \tilde G\ar[rrr]^<<<<<<<<p
&&& G\\
& B\ar[rrrrrru]_>>>>>>>>>>>>{\Inn(\nu)\circ\rho}\ar[ru]\ar[rrru]_>>>>>>>>{\Inn(\tilde\nu)\circ\tilde\rho}
}\]
\[
\xymatrix{
& A\ar[rrrrrrd]^{\rho}\ar[rd]\ar[rrrd]^>>>>>>>>{\tilde\rho}\\
C\ast C^s\ar[ru] \ar[rd]& & L^*\ar[rr]^<<<<<{\tilde\rho^*} &&  \tilde G\ar[rrr]^<<<<<<<<p
&&& G\\
& C\ast\form{s}\ar[rrrrrru]_>>>>>>>>>>>>{s\mapsto \nu\rho(s)}\ar[ru]\ar[rrru]_>>>>>>>>{s\mapsto \tilde\nu\tilde\rho(s)}
}\]\ep

For the rest of this section, we will investigate consequences of the combination theorem and the properties of (liftable-)flexibility.

A particularly simple cases of the combination theorem are as below.
Let $A$ be a group and $C$ be a subgroup.
Then we simply write 
\[A\ast_C = A\ast_{\Id\co C\to C} = A\ast_C(C\times\bZ).\]
For a group $A$ and a subgroup $C$, we denote by $A\ast_C\bar A$ the amalgamation of two copies of $A$ along $\Id_C$.

\begin{cor}\label{cor:double}
Let $A$ be a group and let $C$ be an abelian and malnormal subgroup of $A$.
If $A$ is flexible or liftable-flexible, then so are  $A\ast_C$ and $A\ast_C \bar A$.
\end{cor}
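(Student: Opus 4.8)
The plan is to derive both assertions directly from Theorem~\ref{thm:main-comb}, by specializing its free-product and HNN constructions to their degenerate forms. Throughout, let $\HH$ denote $\FF$ if $A$ is assumed flexible and $\FF^\sim$ if $A$ is assumed liftable-flexible, so that $A\in\HH$ is our hypothesis and membership in $\HH$ of $A\ast_C$ and $A\ast_C\bar A$ is exactly what we want.

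First I would dispose of the case $C=\{1\}$. Here $A\ast_C = A\ast_C(C\times\bZ) = A\ast\bZ$ and $A\ast_C\bar A = A\ast\bar A$, so both statements follow from parts~(\ref{main:inf-cyc}) and~(\ref{main:free}) of Theorem~\ref{thm:main-comb}, using that $\bZ\in\HH$ and $A\in\HH$. From now on assume $C\neq\{1\}$. For the double $A\ast_C\bar A$, I would apply Theorem~\ref{thm:main-comb}(\ref{main:amalgam}) with the ambient group taken to be $A$ itself and with both amalgam factors equal to $A$; that is, in the notation of that part we set ``$L$''$=$``$A$''$=$``$B$''$=A$ and keep the distinguished subgroup $C$. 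The hypotheses hold trivially: $A\le L$, $C\le A\cap B=A$, $L=\form{A,B}=A\in\HH$, and $C$ is nontrivial, abelian and malnormal in $A$, hence in each of the two factors. The conclusion $A\ast_C B\in\HH$, once one observes that here the two factors and their two inclusions of $C$ coincide with the canonical one, is precisely $A\ast_C\bar A\in\HH$.

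For $A\ast_C$, recall that $A\ast_C = A\ast_{\Id\colon C\to C}$, the HNN extension of $A$ in which the two copies of $C$ are identified. I would apply Theorem~\ref{thm:main-comb}(\ref{main:hnn}) with ambient group $L:=A$, distinguished subgroup $C$, and stable element $s:=1\in A$. Then $C^s=C$, so $C$ and $C^s$ are nontrivial malnormal abelian subgroups of $A$; moreover $L=\form{A,s}=A\in\HH$, and $s=1\in Z(C)$, so the side condition of part~(\ref{main:hnn}) is met. Its conclusion $A\ast_{\Inn(s)\colon C\to C^s}\in\HH$ reads $A\ast_{\Inn(1)\colon C\to C}=A\ast_{\Id\colon C\to C}=A\ast_C\in\HH$, as desired.

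There is essentially no obstacle here beyond bookkeeping; the only point requiring a moment's care is to confirm that the degenerate specializations ``$A$''$=$``$B$'' in part~(\ref{main:amalgam}) and $s=1$ in part~(\ref{main:hnn}) are genuinely permitted — they are, since those parts impose no nondegeneracy on the choice of factors or of $s$ beyond the malnormality and generation hypotheses we have verified — and to identify the resulting abstract groups with $A\ast_C\bar A$ and $A\ast_C$ respectively. Since Theorem~\ref{thm:main-comb} is stated uniformly for $\HH=\FF$ and $\HH=\FF^\sim$, this simultaneously covers the flexible and liftable-flexible cases.
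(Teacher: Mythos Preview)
Your argument is correct and follows exactly the paper's own route: apply parts~(\ref{main:free}), (\ref{main:amalgam}), (\ref{main:hnn}) of Theorem~\ref{thm:main-comb} with $L=A=B$ and $s=1$. Your write-up simply spells out the bookkeeping (and the degenerate case $C=\{1\}$) that the paper leaves implicit.
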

\bp
Apply Combination Theorem (\ref{main:free}), (\ref{main:amalgam}), (\ref{main:hnn}) for $L=A=B$ and $s=1$.
\ep

Let us observe a consequence for Fuchsian groups.
\begin{cor}\label{cor:fuchsian-flexible}
\be
\item
Finite type splittable Fuchsian groups are flexible.
\item
Finite type torsion-free Fuchsian groups are liftable-flexible.
\ee\end{cor}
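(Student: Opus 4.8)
The plan is to treat the two parts separately: for part~(1) I would feed the deformation machinery of Section~\ref{sec:fuchs} back into the definition of flexibility, while for part~(2) I would build the group from infinite cyclic groups using the Combination Theorem~\ref{thm:main-comb}.

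For part~(1), let $L$ be a finite type splittable Fuchsian group. Since $L$ is splittable it is non-elementary, and by Lemma~\ref{lem:split-indiscrete} there is an \emph{indiscrete} representation $\rho_0\in X_{\mathrm{proj}}(L)$. By Lemma~\ref{lem:dense} applied to $\rho_0(L)$, there is an element $g_0\in L$ of infinite order with $\rho_0(g_0)$ an infinite-order elliptic; in particular $\tr^2\circ\rho_0(g_0)<4$. I would then fix an open neighborhood $U$ of $\rho_0$ in $\Hom(L,\PSL_2(\bR))$ small enough that $\tr^2\circ\rho(g_0)<4$ for every $\rho\in U$, and apply Theorem~\ref{thm:fuchs-flex} to $\rho_0$ and $U$ to produce an uncountable, tracially disjoint set $\Lambda$ with $\rho_0\in\Lambda\sse U\cap X_{\mathrm{proj}}(L)$. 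The point is that every $\rho\in\Lambda$ is automatically indiscrete: $\rho(g_0)$ is elliptic because $\rho\in U$, and it has infinite order because $\rho$ is faithful and $g_0$ has infinite order, so $\rho(g_0)$ is an irrational rotation, $\rho(L)$ has infinite rotation spectrum, and $\rho$ is indiscrete by Lemma~\ref{lem:dense}. Thus $\Lambda$ is an uncountable, tracially disjoint family of indiscrete faithful parabolic-free projective representations, so $L\in\FF$ by Definition~\ref{defn:flex}.

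For part~(2), let $L$ be a finite type torsion-free Fuchsian group, so $L=\pi_1(S)$ for a finite type orientable hyperbolic surface $S$ (no cone points, as $L$ is torsion-free). If $S$ has a puncture or boundary component, then $L$ is free of finite rank, hence $L\cong\bZ\ast\cdots\ast\bZ\in\FF^\sim$ by parts~(\ref{main:inf-cyc}) and~(\ref{main:free}) of Theorem~\ref{thm:main-comb}. If $S$ is closed, write $S=S_g$ with $g\ge2$ and cut along a non-separating simple closed curve $\alpha$; this exhibits $L$ as an HNN extension $L=A\ast_{\Inn(s)\co C\to C^s}$, where $A=\pi_1(S_g\setminus\alpha)$ is free of rank $2g-1$ and $C=\form{\alpha^+}$, $C^s=\form{\alpha^-}$ are the cyclic subgroups carried by the two boundary circles created by the cut. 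By the case just treated, $A\in\FF^\sim$; since $\alpha^\pm$ are primitive elements of the free group $A$, each of $C$ and $C^s$ is maximal abelian in $A$ and hence malnormal in $A$ by Remark~\ref{rem:main-comb-intro}; and $C$ and $C^s$ are not conjugate in $A$ because $\alpha^+$ and $\alpha^-$ are distinct boundary components of $S_g\setminus\alpha$, which is not an annulus as $g\ge2$. The hypotheses of Theorem~\ref{thm:main-comb}(\ref{main:hnn}) (the ``$C$ and $C^s$ not conjugate in $A$'' alternative) therefore hold, and $L=A\ast_{\Inn(s)\co C\to C^s}\in\FF^\sim$.

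The algebraic inputs to part~(2) — a finite type surface group with boundary is free, boundary curves of a surface with boundary are primitive, and distinct boundary components of a non-annular surface are non-conjugate — are all standard and I would only cite them. The step that needs care is part~(1): Theorem~\ref{thm:fuchs-flex} and Lemma~\ref{lem:split-indiscrete} supply, respectively, an uncountable tracially disjoint family inside a prescribed neighborhood and a single indiscrete point, but Definition~\ref{defn:flex} asks that the \emph{entire} family be indiscrete, and $X_{\mathrm{proj}}(L)$ also contains discrete faithful representations; the device of shrinking $U$ so that the fixed infinite-order element $g_0$ is forced to remain elliptic, together with Lemma~\ref{lem:dense}, is exactly what rules out discrete members of $\Lambda$.
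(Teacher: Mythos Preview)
Your argument for part~(1) is correct and in fact makes explicit the indiscreteness step that the paper's one-line citation of Theorem~\ref{thm:fuchs-flex} and Theorem~\ref{thm:uncountable} leaves implicit; the device of shrinking $U$ so that a fixed infinite-order element stays elliptic is exactly right. Your treatment of the free case in part~(2) also matches the paper.

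The closed-surface case in part~(2), however, has a genuine gap. Theorem~\ref{thm:main-comb}(\ref{main:hnn}) has as a \emph{hypothesis} that the ambient group $L=\form{A,s}$ already lie in $\HH$; the conclusion is that the abstract HNN extension $A\ast_{\Inn(s)}$ lies in $\HH$. In your setup $L=\pi_1(S_g)$ is both the ambient group and the HNN extension, so invoking part~(\ref{main:hnn}) assumes precisely what you are trying to prove. You cannot repair this by taking the ambient group to be $A$ itself (then $s\in A$ would force $C$ and $C^s$ to be conjugate in $A$, contradicting your own non-conjugacy claim), nor by taking $A\ast\bZ$ (then $C^s$ is not a subgroup of $A$).

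The paper sidesteps this by using a \emph{separating} curve on $S_2$: one gets $\pi_1(S_2)=F_2\ast_{\form{[a,b]}}F_2$, which is the double $A\ast_C\bar A$ with $A=F_2$, and now Corollary~\ref{cor:double} applies because the ambient group in part~(\ref{main:amalgam}) can be taken to be $L=A=B=F_2$, which is already known to be liftable-flexible. Higher genus is then handled by part~(\ref{main:sub}), since every closed surface group sits as a finite-index subgroup of $\pi_1(S_2)$. The point is that the amalgam decomposition, unlike your HNN decomposition, allows the free vertex group itself to serve as the required ambient flexible group.
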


\bp
Part (1) follows from Theorem~\ref{thm:fuchs-flex} and Theorem~\ref{thm:uncountable}.
For part (2), let $L$ be a torsion-free Fuchsian group of finite type. 
If $L$ is free, then  Theorem~\ref{thm:main-comb}  (\ref{main:inf-cyc}) and (\ref{main:free}) imply that $L$ is liftable-flexible.
If $L$ is not free, then since every surface group is a finite index subgroup of the genus 2 surface
group, we may assume
\[L = \pi_1(S_2)=F_2\ast_\bZ F_2=\form{a,b}\ast_{[a,b]=[c,d]}\form{c,d},\]
using Theorem~\ref{thm:main-comb} (\ref{main:sub}).
In this case, Corollary~\ref{cor:double} implies  $L$ is liftable-flexible. 
\ep

\subsection{Exotic Circle Actions}
In this section we provide uncountably many distinct exotic circle actions of flexible groups, and make a connection to the existence of quasimorphisms on those groups. We will prove that most Fuchsian groups and their iterated generalized doubles are flexible. Limit groups will be shown to be liftable-flexible.

Recall from Theorem \ref{t:ghys2} that there is an embedding
\[\Hom(L,\Homeo_+(S^1))/\text{semi-conjugacy} \to H^2_b(L;\bZ)\]
defined by $\rho\mapsto \rho^*\eu_b$.

\begin{lem}\label{lem:indep}
Let $L$ be a countable group and $g_0\in L$.
Suppose  \[\Lambda\sse \Hom(L,\PSL_2(\bR))\] satisfies that the family
\[\{\rot\circ\rho(g_0)\co\rho\in\Lambda\}\] is $\bZ$--linearly independent
(as numbers in
$\bR/\bZ$).
Then \[\{\rho^*\eu_b\co\rho\in\Lambda\}\] is 
linearly independent in $H^2_b(L;\bZ)$.
\end{lem}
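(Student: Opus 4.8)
The plan is to reduce the statement to the cyclic subgroup $\langle g_0\rangle\le L$ and to exploit Lemma~\ref{lem:eu-rot}, which exhibits the rotation number as a Ces\`aro average of the Euler cocycle $\eu^0$. So $\rho\mapsto\rot\circ\rho(g_0)\bmod\bZ$ will play the role of a $(\bR/\bZ)$-valued linear functional on bounded Euler classes, and $\bZ$-linear independence of its values on $\Lambda$ will transfer to $\bZ$-linear independence of the classes $\rho^*\eu_b$.

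Concretely, I would suppose given distinct $\rho_1,\dots,\rho_m\in\Lambda$ and integers $a_1,\dots,a_m$ with $\sum_{i=1}^m a_i\,\rho_i^*\eu_b=0$ in $H^2_b(L;\bZ)$, and aim to show all $a_i=0$. Since each $\rho_i^*\eu_b$ is represented by the explicit $\{0,1\}$-valued cocycle $\rho_i^*\eu^0$, the vanishing of the sum means $\sum_i a_i\,\rho_i^*\eu^0=\partial f$ for some bounded map $f\colon L\to\bZ$. Restricting this identity of cocycles to $\langle g_0\rangle$ and evaluating at the pair $(g_0,g_0^k)$ gives, for every $k\ge1$, the equality $\sum_i a_i\,\rho_i^*\eu^0(g_0,g_0^k)=f(g_0)+f(g_0^k)-f(g_0^{k+1})$; here I use only that the restriction of $\partial f$ to a subgroup is $\partial(f|_{\langle g_0\rangle})$, with $f|_{\langle g_0\rangle}$ still bounded and integer-valued.

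Next I would average over $k=1,\dots,n$. The right-hand side telescopes to $f(g_0)+\bigl(f(g_0)-f(g_0^{n+1})\bigr)/n$, which converges to the integer $f(g_0)$ as $n\to\infty$ because $f$ is bounded. On the left-hand side, Lemma~\ref{lem:eu-rot} applied to $g=\rho_i(g_0)$ (noting $\rho_i^*\eu^0(g_0,g_0^k)=\eu^0(\rho_i(g_0),\rho_i(g_0)^k)$) yields $\tfrac1n\sum_{k=1}^n\rho_i^*\eu^0(g_0,g_0^k)\to\rot\circ\rho_i(g_0)\pmod\bZ$. Taking the $a_i$-weighted sum and passing to the limit modulo $\bZ$ gives $\sum_{i=1}^m a_i\,\rot\circ\rho_i(g_0)\equiv f(g_0)\equiv 0\pmod\bZ$. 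By hypothesis the family $\{\rot\circ\rho(g_0)\}_{\rho\in\Lambda}$ is $\bZ$-linearly independent in $\bR/\bZ$, so $a_1=\dots=a_m=0$, which is precisely the asserted linear independence of $\{\rho^*\eu_b\}_{\rho\in\Lambda}$.

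I do not expect a serious obstacle here; the argument is short and essentially formal. The only points demanding care are bookkeeping: working throughout with the fixed representative $\eu^0$ (so Lemma~\ref{lem:eu-rot} applies verbatim rather than with an unknown bounded correction), and observing that passing to a subgroup preserves the property of being a coboundary of a bounded $\bZ$-valued function. A minor remark worth including is that $\bZ$-linear independence of $\{\rot\circ\rho(g_0)\}$ in particular forces $\rho\mapsto\rot\circ\rho(g_0)$ to be injective on $\Lambda$, so the $\rho_i$ appearing above are automatically pairwise distinct circle actions.
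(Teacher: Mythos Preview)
Your proposal is correct and follows essentially the same approach as the paper: both write a hypothetical linear dependence as a coboundary $\partial f$ of a bounded integer-valued map, evaluate at $(g_0,g_0^k)$, average using Lemma~\ref{lem:eu-rot}, and use the telescoping of $\partial f$ to obtain a $\bZ$-linear relation among the rotation numbers $\rot\circ\rho_i(g_0)$ in $\bR/\bZ$, contradicting the hypothesis. The only cosmetic difference is that the paper isolates one term on the left (writing $k_0\rho_0^*\eu_b=\sum_{i\ge1}k_i\rho_i^*\eu_b$) while you set the full sum equal to zero; the computations are otherwise identical.
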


\bp
Let us suppose that \[k_0\rho_0^*\eu_b=\sum_{i=1}^n k_i\rho_i^*\eu_b\] for some $\rho_i\in\Lambda$ and nonzero integers $k_i$. Then there exists a bounded map $\beta\co G\to\bZ$ such that
 \[k_0\rho_0^*\eu^x=\sum_{i=1}^n k_i\rho_i^*\eu^x+\partial\beta\] as two--cocycles.
By Lemma~\ref{lem:eu-rot}, we have
\begin{align*}
\rot \rho_0(  g_0 )
&=\lim_{N\to\infty}
 \frac1N \sum_{j=1}^{N}
\rho_0^*\eu^x( g_0 , g_0 ^{j-1})
\\
&=\lim_{N\to\infty}
 \frac1{k_0 N} \sum_{j=1}^{N}
\left(
\sum_{i=1}^n
k_i\rho_i^*\eu^x( g_0 , g_0 ^{j-1}) +\beta( g_0 )+\beta( g_0 ^{j-1})-\beta( g_0 ^{j})
\right)
\\
&=\frac1{k_0}\beta( g_0 )+\sum_{i=1}^n \frac{k_i}{k_0}\rot\rho_i( g_0 ). 
\end{align*}
This contradicts the $\bZ$-linear independence hypothesis (since the hypothesis requires $\bZ$-linear independence  as numbers in
$\bR/\bZ$).
\ep

\begin{cor}\label{cor:consequence-psl}
If $L$ is a flexible group, then there exists 
an uncountable set $\Lambda$ of
faithful parabolic-free projective representations of $L$
such that the following hold.
\be[(i)]
\item All representations in $\Lambda$ stay distinct in the double coset space
\[M(L) = \Out(L)\backslash \Hom(L,\Homeo_+(S^1))/\text{semi-conjugacy}.\]
\item
The image of $\Lambda$ is $\bZ$--linearly independent
in $H^2_b(L;\bZ)$.
\item
If, furthermore, $L$ is liftable-flexible, then $\Lambda$ maps to the kernel of 
\[H^2_b(L,\bZ)\to H^2(L,\bZ).\]
\ee
\end{cor}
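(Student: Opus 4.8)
The plan is to deduce the corollary directly from flexibility together with Lemma~\ref{lem:dense}, Lemma~\ref{lem:indep}, and Theorem~\ref{t:ghys2}. First I would use the definition of flexibility (or the convenient reformulation in Lemma~\ref{lem:flex-count}) to produce an uncountable, tracially disjoint family $\Lambda_0\subseteq X_{\mathrm{proj}}(L)$ of indiscrete faithful parabolic-free projective representations; if $L$ is liftable-flexible we may moreover take every member of $\Lambda_0$ to be liftable. Since each $\rho\in\Lambda_0$ is indiscrete and non-elementary (it is faithful, and a flexible group is not virtually abelian, being commutative-transitive and containing an irrational rotation after pulling apart), Lemma~\ref{lem:dense} guarantees an element whose rotation number is irrational. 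The issue is that different $\rho$ may use \emph{different} group elements $g$ for this, so I cannot immediately invoke Lemma~\ref{lem:indep}, which needs a single $g_0\in L$.

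To fix a common $g_0$, I would argue as follows. Fix any $g_0\in L$ of infinite order (such an element exists: a flexible group contains $\bZ$ after pulling apart, and more simply any faithful indiscrete projective image contains an infinite-order element whose preimage is infinite order). For a representation $\rho$, the value $\rot\circ\rho(g_0)$ is determined by $\tr^2\circ\rho(g_0)$ together with ellipticity; in particular, distinct \emph{transcendence-theoretic} values of $\tr^2\circ\rho(g_0)$ force $\bZ$-linear independence of the corresponding rotation numbers in $\bR/\bZ$. The cleanest route is: inside the Pulling-Apart machinery that produces $\Lambda_0$, arrange additionally that for each member $\rho$ the element $g_0$ is sent to an irrational elliptic rotation, and that the uncountably many values $\{\tr^2\circ\rho(g_0):\rho\in\Lambda_0\}$ form an algebraically (hence $\bZ$-linearly as $\rot$-values) independent set. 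Concretely, one can use Lemma~\ref{lem:uv1}/Theorem~\ref{thm:simul} with $Z=\{g_0\}$, which already yields that $\{\rot\circ\rho(g_0)\}$ ranges over an uncountable subset of $\bR/\bZ$; passing to an uncountable $\bZ$-linearly independent subset of an uncountable subset of $\bR/\bZ$ is an elementary cardinality argument (a maximal $\bZ$-independent subset of an uncountable subgroup-free subset of $\bR/\bZ$ is uncountable, since each element is $\bZ$-dependent on only countably many others). Relabel the resulting uncountable subfamily as $\Lambda$.

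Now the three conclusions follow quickly. For (ii): by construction $\{\rot\circ\rho(g_0):\rho\in\Lambda\}$ is $\bZ$-linearly independent in $\bR/\bZ$, so Lemma~\ref{lem:indep} gives that $\{\rho^*\eu_b:\rho\in\Lambda\}$ is linearly independent in $H^2_b(L;\bZ)$. For (i): by Theorem~\ref{t:ghys2} the map $[\rho]\mapsto\rho^*\eu_b$ is injective on semi-conjugacy classes, and linear independence in particular forces the classes $\rho^*\eu_b$ to be pairwise distinct; since $\Out(L)$ is countable and $\Lambda$ is uncountable, after discarding a countable subset we may assume no two members of $\Lambda$ are related by an element of $\Out(L)$ — alternatively, one observes that the $\Out(L)$-orbit of any single class $\rho^*\eu_b$ is countable, so an uncountable subfamily survives with all images in $M(L)$ distinct. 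For (iii): if $L$ is liftable-flexible we took every $\rho\in\Lambda$ liftable, which by the discussion in Section~\ref{sec:prelim} (the paragraph on minimal quasimorphisms) is exactly the statement $\rho^*\eu_b\in\ker\big(H^2_b(L;\bZ)\to H^2(L;\bZ)\big)$.

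The main obstacle is the step marrying "uncountable tracially disjoint family" with "a \emph{single} $g_0$ whose rotation numbers are $\bZ$-independent across the family": tracial disjointness a priori only separates trace-squared \emph{sets}, not the trace of one fixed element, and one must invoke the simultaneous-control results (Lemma~\ref{lem:uv1}, Theorem~\ref{thm:simul}) — or re-run the Baire-category/Pulling-Apart argument with the extra constraint built in — to pin down $g_0$ and extract the $\bZ$-independent uncountable subfamily. Everything after that is bookkeeping with Lemma~\ref{lem:indep}, Theorem~\ref{t:ghys2}, and countability of $\Out(L)$.
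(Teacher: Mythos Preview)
Your overall architecture is right, and the endgame (Lemma~\ref{lem:indep} for (ii), liftability for (iii), the ``uncountable set contains an uncountable $\bZ$-independent subset'' reduction) matches the paper. The gap is in how you pin down a common $g_0$.

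You propose to \emph{fix} $g_0$ in advance and then re-enter the Pulling-Apart machinery or invoke Lemma~\ref{lem:uv1}/Theorem~\ref{thm:simul} to force $\rho(g_0)$ to be irrational elliptic for every $\rho$. This does not work at the stated level of generality: the hypothesis is simply ``$L$ is flexible'', so there is no machinery to re-enter---flexibility is a black-box assumption here, not a construction. Moreover Theorem~\ref{thm:simul} is specific to Fuchsian groups and multi-curves, and Lemma~\ref{lem:uv1} needs as input a stably injective sequence with prescribed rotation behaviour that a general flexible group is not assumed to provide.

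The paper's fix is a one-line pigeonhole that you missed. Each $\rho\in\Lambda_0$ is indiscrete, so by Lemma~\ref{lem:dense} some $g=g(\rho)\in L$ has irrational $\rot\circ\rho(g)$. Since $L$ is countable and $\Lambda_0$ is uncountable, a single $g_0\in L$ works for an uncountable subfamily $\Lambda_1\subseteq\Lambda_0$. Now tracial disjointness already separates the values $\tr^2\circ\rho(g_0)$ (because $g_0\in L\setminus T_L$), hence the irrational rotation numbers $\rot\circ\rho(g_0)$ are pairwise distinct over $\Lambda_1$, and you pass to a $\bZ$-independent uncountable $\Lambda\subseteq\Lambda_1$ exactly as you said. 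So the ``main obstacle'' you flagged dissolves without any simultaneous-control results.

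For (i) your route via linear independence plus countability of $\Out(L)$ is valid but forces a further shrinking of $\Lambda$. The paper instead observes directly that for distinct $\rho,\lambda\in\Lambda_0$ the sets $\rot\circ\rho(L)\setminus(\bQ/\bZ)$ and $\rot\circ\lambda(L)\setminus(\bQ/\bZ)$ are disjoint (tracial disjointness on $L\setminus T_L$ plus equation~\eqref{eq:trace}); the unmarked irrational rotation spectrum is invariant under both semi-conjugacy and $\Out(L)$, so \emph{all} of $\Lambda_0$ already injects into $M(L)$ with no shrinking and no appeal to countability of $\Out(L)$.
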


\bp
Let $\Lambda_0$ be a maximal tracially disjoint set of indiscrete representations in 
$X_{\mathrm{proj}}\sse\Hom(L,\PSL_2(\bR))$.
By flexibility (see the remark after Definition \ref{defn:flex-intro}), the set $\Lambda_0$ is uncountable and $L\ne T_L$. 
(Alternately, H\"older's theorem gives us that if a group acts on $S^1$ and every element has torsion, then the group is
semi-conjugate to a group of rotations.)

For two distinct $\rho,\lambda\in\Lambda_0$, the two sets
\[\rot\circ\rho(L)\setminus(\bQ/\bZ),\quad
\rot\circ\lambda(L)\setminus(\bQ/\bZ)\]
are a disjoint pair of infinite subsets of $\bR/\bZ$ by Lemma \ref{lem:flex-count}. Hence $\Lambda_0$ satisfies the condition (i). 

For each $\rho\in\Lambda_0$, there exists some $g\in L$ such that $\rho(g)$ is an elliptic element of infinite order. Since $\Lambda_0$ is uncountable and $L$ is countable,
we have some $g_0\in L$ such that the following set is uncountable:
\[\Lambda_1=\{\rho\in\Lambda_0\co \rot\circ\rho(g_0)\not\in\bQ/\bZ\}.\]
By our assumption, for all distinct $\rho,\lambda\in\Lambda_1$, we have
\[\rot\circ\rho(g_0)\ne\rot\circ\lambda(g_0).\]
Since the $\bZ$--span of a countable set in $\bR/\bZ$ is countable,
we can find an uncountable subset $\Lambda\sse\Lambda_1$ such that the following set is $\bZ$--linearly independent:
\[\{\rot\circ\rho(g_0)\co \rho\in\Lambda\}.\]
By Lemma~\ref{lem:indep}, we see that $\Lambda$ satisfies (ii). 

If $L$ is liftable-flexible as in (iii),
we will further require that each representation in $\Lambda_0$ is liftable.
In this case, $\Lambda_0$ (and its subset $\Lambda$) maps to the kernel of 
\[H^2_b(L,\bZ)\to H^2(L,\bZ).\]
\ep

\subsection{Quasimorphisms}
It is known that $\mathrm{HQM}(L;\bZ)$ is  infinite dimensional if $L$ is a nontrivial free group, 
a  non-elementary word-hyperbolic group or a mapping class group~\cite{Brooks1981,EpsteinFujiwara1997,BBF2016IHES}.
Our construction shows that for a liftable-flexible $L$, 
the abelian group $\mathrm{HQM}(L;\bZ)$ has
 uncountable dimension
even when we count only defect--one subadditive quasimorphisms:

\begin{cor}\label{cor:qm}
If $L$ is liftable-flexible,  then there exists a set $\Lambda$ of integer--valued subadditive defect--one quasimorphisms
such that \[\{[\lambda]\in \mathrm{HQM}(L;\bZ)\co \lambda\in \Lambda\}\]
is uncountable and linearly independent. 
\end{cor}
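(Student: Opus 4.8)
The plan is to derive Corollary~\ref{cor:qm} directly from Corollary~\ref{cor:consequence-psl} together with the cohomological dictionary established in Section~\ref{sec:prelim}. Since $L$ is liftable-flexible, Corollary~\ref{cor:consequence-psl}(iii) furnishes an uncountable set $\Lambda_0$ of faithful parabolic-free projective representations whose bounded Euler classes $\{\rho^*\eu_b\co\rho\in\Lambda_0\}$ are $\bZ$--linearly independent in $H^2_b(L;\bZ)$ and, moreover, lie in the kernel of the comparison map $H^2_b(L;\bZ)\to H^2(L;\bZ)$. The point is then to recognize that this kernel is precisely $\mathrm{HQM}(L;\bZ)$ via the exact sequence
\[0\to \mathrm{HQM}(L;\bZ)\to H^2_b(L;\bZ)\to H^2(L;\bZ)\]
recalled just before Lemma~\ref{lem:qm}, and to identify the quasimorphism corresponding to each $\rho^*\eu_b$ as a minimal quasimorphism.

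The key steps, in order, would be as follows. First, fix $\rho\in\Lambda_0$. Because $\rho$ is liftable, $\rho^*\eu_b$ maps to $0$ in $H^2(L;\bZ)$, so there is a map $\alpha_\rho\co L\to\bZ$ with $\rho^*\eu^0=\partial\alpha_\rho$ as integer-valued $2$--cocycles; this is exactly the discussion following Lemma~\ref{lem:qm}. Since $\eu^0$ takes values in $\{0,1\}$, so does $\partial\alpha_\rho$, which means $\partial\alpha_\rho(x,y)=\alpha_\rho(x)+\alpha_\rho(y)-\alpha_\rho(xy)\in\{0,1\}$ for all $x,y\in L$; hence $\alpha_\rho$ is either a homomorphism or an integer-valued subadditive defect-one quasimorphism, i.e.\ a minimal quasimorphism in the sense of Definition~\ref{defn:minimal-qm}. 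Second, I would argue $\alpha_\rho$ is not a homomorphism (for all but at most countably many $\rho$, which is harmless since $\Lambda_0$ is uncountable): if $\alpha_\rho$ were a homomorphism then $\rho^*\eu_b=[\partial\alpha_\rho]=0$ in $H^2_b(L;\bZ)$, contradicting the $\bZ$--linear independence of $\{\rho^*\eu_b\}$ over the uncountable set $\Lambda_0$ (a single zero element would already be a nontrivial linear dependence, or at worst we discard it). So after discarding at most one bad representation we may take every $\alpha_\rho$ to be a genuine minimal quasimorphism. Third, set $\Lambda=\{\alpha_\rho\co\rho\in\Lambda_0\}$; the class $[\alpha_\rho]\in\mathrm{HQM}(L;\bZ)$ is sent to $\rho^*\eu_b$ under the injection $\mathrm{HQM}(L;\bZ)\hookrightarrow H^2_b(L;\bZ)$, so $\bZ$--linear independence of $\{\rho^*\eu_b\}$ pulls back to $\bZ$--linear independence of $\{[\alpha_\rho]\}$ in $\mathrm{HQM}(L;\bZ)$, and injectivity of the map also shows the classes $[\alpha_\rho]$ are pairwise distinct, hence $\Lambda$ (or rather its image in $\mathrm{HQM}(L;\bZ)$) is uncountable.

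I expect no serious obstacle: the real content is entirely contained in Corollary~\ref{cor:consequence-psl} and the known exact sequence relating quasimorphisms and bounded cohomology. The only mild subtlety — the "hard part", such as it is — is the bookkeeping to ensure that the cocycle $\rho^*\eu^0$ is the right representative (the one valued in $\{0,1\}$) so that its primitive $\alpha_\rho$ is genuinely \emph{subadditive} with defect exactly one rather than merely a quasimorphism up to a coboundary; this is where liftability is used essentially, via $\rho^*\eu_b\in\ker(H^2_b\to H^2)$, exactly as in Lemma~\ref{lem:qm}. One should also remark that distinct quasimorphisms here are not semi-conjugate as circle actions by Theorem~\ref{t:ghys2} and Lemma~\ref{lem:qm}, which is what makes the linear independence statement meaningful, but this is automatic from the argument above.
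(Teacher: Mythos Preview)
Your proposal is correct and follows essentially the same route as the paper's one-line proof, which simply cites Corollary~\ref{cor:consequence-psl}(ii) together with Lemmas~\ref{lem:qm} and~\ref{lem:indep}. Your only superfluous step is the hedging about ``at most countably many'' homomorphisms: since the classes $\{\rho^*\eu_b\}$ are $\bZ$--linearly independent by (ii), none of them is zero, so no $\alpha_\rho$ is ever a homomorphism and there is nothing to discard.
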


\bp
This is immediate from 
Corollary~\ref{cor:consequence-psl} (ii), along with Lemmas~\ref{lem:qm} and~\ref{lem:indep}.
\ep

\subsection{Limit Groups}
\bd[cf.~\cite{CG2005IJM}]\label{defn:igd}
\be
\item
An epimorphism $\phi\co L\to L'$ is called a \emph{generalized double over $L'$}\index{double (of a group)!generalized} if 
$L=A\ast_C B$ for some finitely generated nontrivial groups $A$ and $B$ such that $C$ is maximal abelian both in $A$ and in $B$,
and such that $\phi\restriction_A$ and $\phi\restriction_B$ are both injective.
\item
An epimorphism $\phi\co L\to L'$ is called a \emph{centralizer extension}\index{centralizer extension} if 
$L=A\ast_C$ for some finitely generated nontrivial group $A$ such that $C$ is maximal abelian in $A$ and such that $\phi\restriction_A$ is injective.
\item\label{p:igd}
If there is a sequence of epimorphisms
\[\xymatrix{L=L_n\ar@{->>}[r]^{f_n} & L_{n-1}\ar@{->>}[r]^{f_{n-1}} & \cdots \ar@{->>}[r] 
&  L_1\ar@{->>}[r]^>>>>>{f_1}  & L_0,}\]
such that each $f_i$ is a generalized double or a centralizer extension,
then we say $L$ is an \emph{iterated generalized double over $L_0$}\index{double (of a group)!iterated generalized double}
and write $L\in \mathrm{IGD}(L_0)$.
\item
In part (\ref{p:igd}), we say a group $L$ is an \emph{iterated malnormal generalized double over $L_0$}\index{double (of a group)!iterated malnormal generalized double}
if the amalgamating subgroup of $L_i$ in each step is 
further required to be malnormal in the vertex groups of $L_i$.
We write $L\in \mathrm{IMGD}(L_0)$ in this case.
\ee
\ed

The following characterizes limit groups as iterated generalized doubles of free groups.

\begin{lem}\label{lem:igd}
\be
\item
Every limit group is in $\mathrm{IMGD}(F_r)$ for some $r$.
\item
Suppose $L$ and $H$ are groups such that $L$ is in $\mathrm{IMGD}(H)$.
If $H$ is flexible or liftable-flexible, then so is $L$.
\ee
\end{lem}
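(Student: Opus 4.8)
The plan is to treat the two parts separately. Part~(2) is a short induction that feeds one step at a time into the Combination Theorem; part~(1) is an appeal to the structure theory of limit groups together with the fact that limit groups are conjugately separated abelian. Throughout let $\HH$ denote $\FF$ or $\FF^\sim$, and recall that Theorem~\ref{thm:main-comb} is established for both choices at once, so liftability requires no separate bookkeeping.

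For part~(2) I would induct on the length $n$ of a witnessing hierarchy $L=L_n\twoheadrightarrow L_{n-1}\twoheadrightarrow\cdots\twoheadrightarrow L_0=H$. If $n=0$ there is nothing to prove. If $n\ge 1$, the truncated sequence $L_{n-1}\twoheadrightarrow\cdots\twoheadrightarrow L_0=H$ exhibits $L_{n-1}\in\mathrm{IMGD}(H)$, so $L_{n-1}\in\HH$ by induction, and it remains to push $\HH$ across the single step $\phi=f_n\co L\twoheadrightarrow L_{n-1}$, a malnormal generalized double or a malnormal centralizer extension. In the generalized double case write $L=A\ast_C B$ with $\phi\restriction_A,\phi\restriction_B$ injective and $C$ maximal abelian (hence, by the $\mathrm{IMGD}$ hypothesis, malnormal) in $A$ and in $B$. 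Since $\phi$ is onto and $L=\form{A,B}$ we have $L_{n-1}=\form{\phi(A),\phi(B)}\in\HH$, and since $\phi\restriction_A,\phi\restriction_B$ are isomorphisms onto their images, $\phi(C)\le\phi(A)\cap\phi(B)$ is a nontrivial malnormal abelian subgroup of $\phi(A)$ and of $\phi(B)$. Theorem~\ref{thm:main-comb}\,(\ref{main:amalgam}) then gives $\phi(A)\ast_{\phi(C)}\phi(B)\in\HH$, and because $\phi\restriction_A$ and $\phi\restriction_B$ agree on $C$ they induce an isomorphism $A\ast_C B\to\phi(A)\ast_{\phi(C)}\phi(B)$, so $L\in\HH$. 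In the centralizer extension case write $L=A\ast_C$ with $\phi\restriction_A$ injective and $C$ maximal abelian (hence malnormal) in $A$; here $s$ centralizes $C$ in $L$, so $C^s=C$, $\phi(s)\in Z(\phi(C))$, and $\phi(C)^{\phi(s)}=\phi(C)$ is again a nontrivial malnormal abelian subgroup of $\phi(A)$. Applying Theorem~\ref{thm:main-comb}\,(\ref{main:hnn}) to $L_{n-1}=\form{\phi(A),\phi(s)}$ with the option $\phi(s)\in Z(\phi(C))$ yields $\phi(A)\ast_{\Inn(\phi(s))\co\phi(C)\to\phi(C)^{\phi(s)}}\in\HH$; since $\phi(s)$ centralizes $\phi(C)$ this HNN extension is exactly $\phi(A)\ast_{\phi(C)}$, which is isomorphic via $\phi\restriction_A$ to $A\ast_C=L$, so $L\in\HH$. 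This closes the induction.

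For part~(1) I would invoke the fundamental structure theorem for limit groups (Sela; see also \cite{CG2005IJM,BF2009book,Wilton2009solutions}): a finitely generated group $L$ is a limit group if and only if it admits a hierarchy $L=L_n\twoheadrightarrow\cdots\twoheadrightarrow L_0=F_r$ each of whose steps is a generalized double or a centralizer extension and each of whose terms $L_i$ is again a limit group. To upgrade such a hierarchy to an $\mathrm{IMGD}$-hierarchy I would use two standard facts: finitely generated subgroups of limit groups are limit groups, and every limit group is CSA, i.e.\ each of its maximal abelian subgroups is malnormal (a property which strengthens the commutative-transitivity of limit groups recalled in Subsection~\ref{ss:hyp}). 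At the $i$-th step the vertex group $A$ (and $B$) is a finitely generated subgroup of the limit group $L_i$, hence itself a limit group, hence CSA; as $C$ is maximal abelian in $A$ (and in $B$) it is therefore malnormal there, so every step is already malnormal and $L\in\mathrm{IMGD}(F_r)$. (Several treatments in fact state the limit group hierarchy directly with malnormal amalgamating subgroups, in which case part~(1) is a pure citation.)

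The main obstacle is part~(1): the existence of the generalized-double hierarchy for limit groups is a deep theorem (via Sela's shortening argument, or Kharlampovich--Myasnikov's algebraic theory of fully residually free groups), and it is the only substantial external input needed. Given it, the malnormality upgrade is immediate, and part~(2) is essentially bookkeeping — the one genuine point being that $\phi$ restricted to each vertex group is an isomorphism, which is precisely what allows malnormality, maximal abelianness, and the condition $s\in Z(C)$ to descend to $L_{n-1}$ so that the hypotheses of Theorem~\ref{thm:main-comb} are met.
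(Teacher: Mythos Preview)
Your proof is correct and follows essentially the same approach as the paper: part~(2) is the induction feeding each step into Theorem~\ref{thm:main-comb}\,(\ref{main:amalgam}) or (\ref{main:hnn}), and part~(1) is the structure theorem $L\in\mathrm{IGD}(F_r)$ together with a malnormality upgrade. The only cosmetic difference is in that upgrade: the paper argues that each $L_i$ (hence each vertex group) embeds in $\PSL_2(\bR)$ and then invokes Remark~\ref{rem:main-comb-intro} to conclude that maximal abelian subgroups of torsion-free subgroups of $\PSL_2(\bR)$ are malnormal, whereas you invoke the CSA property of limit groups directly --- both are standard and yield the same conclusion.
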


\bp
(1)
It is well-known that every limit group $L$ is torsion-free and in $\mathrm{IGD}(F_r)$ for some $r$;
see~\cite{Sela2001PIHES,CG2005IJM}.
So, we have a sequence as shown in part (\ref{p:igd})  of Definition~\ref{defn:igd}.
Since every limit group embeds into $\PSL_2(\bR)$
\cite{BF2009book,Wilton2009solutions},
so does each $L_i$.
For a torsion-free group $L\le\PSL_2(\bR)$, a subgroup $C$ is maximal abelian if and only if $C$ is malnormal (Remark~\ref{rem:main-comb-intro}).
Hence each $L_i$ is actually an iterated \emph{malnormal} generalized double of $F_r$.

Part (2) is immediate from the Combination Theorem \ref{thm:main-comb}.
\ep

Recall $F_r$ is liftable-flexible (Theorem~\ref{thm:main-comb}). 
By Lemmas~\ref{lem:pull-apart-psl} and~\ref{lem:igd}, we have the following general result.
\begin{cor}\label{cor:limit-gen}
\be
\item
Every nontrivial limit group is liftable-flexible.
\item
Every nontrivial limit group admits a set $\Lambda$ of integer--valued subadditive defect--one quasimorphisms such that \[\{[\lambda]\in \mathrm{HQM}(L;\bZ)\co \lambda\in \Lambda\}\]
is uncountable and linearly independent. 
\ee
\end{cor}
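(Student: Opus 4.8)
The plan is to derive both parts from the combination machinery already in place, so that essentially no new argument is needed. For part~(1), I would first record that for every $r\ge 1$ the free group $F_r$ is liftable-flexible: by Theorem~\ref{thm:main-comb}(\ref{main:inf-cyc}) the infinite cyclic group lies in $\FF^\sim$, and by Theorem~\ref{thm:main-comb}(\ref{main:free}) the class $\FF^\sim$ is closed under free products, so $F_r\cong\bZ\ast\cdots\ast\bZ\in\FF^\sim$. Next, given a nontrivial limit group $L$, I would invoke Lemma~\ref{lem:igd}(1) to write $L\in\mathrm{IMGD}(F_r)$ for a suitable $r\ge 1$, and then Lemma~\ref{lem:igd}(2) to conclude that $L$, being an iterated malnormal generalized double of the liftable-flexible group $F_r$, is itself liftable-flexible. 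The reason Lemma~\ref{lem:igd}(1) yields a tower in $\mathrm{IMGD}$ rather than merely $\mathrm{IGD}$ is Remark~\ref{rem:main-comb-intro}: in a torsion-free subgroup of $\PSL_2(\bR)$ a maximal abelian subgroup is automatically malnormal, which is precisely the hypothesis needed to apply Theorem~\ref{thm:main-comb}(\ref{main:amalgam})--(\ref{main:hnn}) at each stage.

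Part~(2) is then immediate: $L$ is liftable-flexible by part~(1), so Corollary~\ref{cor:qm} applies verbatim and produces the desired set $\Lambda$ of integer--valued subadditive defect--one quasimorphisms whose classes in $\mathrm{HQM}(L;\bZ)$ are uncountable in number and $\bZ$--linearly independent. For completeness one can trace this back through Corollary~\ref{cor:consequence-psl}(ii)--(iii), which furnishes an uncountable, tracially disjoint family of liftable faithful parabolic-free projective representations of $L$ with $\bZ$--linearly independent bounded Euler classes lying in $\ker(H^2_b(L;\bZ)\to H^2(L;\bZ))$, and then through Lemmas~\ref{lem:qm} and~\ref{lem:indep}, which convert this into the corresponding statement about minimal quasimorphisms.

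There is no genuine obstacle here; the corollary simply packages the Combination Theorem together with the structure theory of limit groups. The only points deserving a line of care are the base case $r\ge 1$ (the trivial group is excluded by hypothesis, and $\bZ=F_1$ is covered directly by Theorem~\ref{thm:main-comb}(\ref{main:inf-cyc})) and the verification---already built into the definition of $\mathrm{IMGD}$---that the malnormality hypotheses of the Combination Theorem hold along the defining tower. The substantive content lives upstream, in the Pulling-Apart Lemmas (Lemmas~\ref{lem:free} and~\ref{lem:pull-apart-psl}) and the Baire category argument behind Theorem~\ref{thm:main-comb}.
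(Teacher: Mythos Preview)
Your proposal is correct and follows essentially the same route as the paper: the paper's one-line proof cites Theorem~\ref{thm:main-comb} for the liftable-flexibility of $F_r$ and then invokes Lemmas~\ref{lem:pull-apart-psl} and~\ref{lem:igd}, which is exactly your argument unpacked, with part~(2) following from Corollary~\ref{cor:qm} as you say.
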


Consider the special case of part (1) in Corollary~\ref{cor:limit-gen}, when $L$ is non-Fuchsian. Then we can give a shorter proof of the flexibility of $L$ as follows; see~\cite{Wilton2009solutions,BG2010JAM} for similar ideas.  Let $Q\sse L\setminus 1$ and $W\sse\bR$ be finite sets such that $4\in W$. We can find homomorphisms
\[\xymatrix{L\ar[r]^\lambda& F_2\ar[r]^\rho &\PSL_2(\bR)}\]
such that $1\not\in\lambda(Q)$ and such that $\tr^2\circ\rho\circ\lambda(Q)\cap W=\varnothing$, using the Projective Baumslag Lemma \ref{lem:baumslag-psl}. A Baire Category argument as in Lemma~\ref{lem:baire-simple} shows that for all countable $W\sse \bR$
there exists a faithful parabolic-free projective representation $\rho$ of $L$ such that
\[\tr^2\circ\rho(L\setminus 1)\cap W=\varnothing.\]
Since $L$ is non-Fuchsian, we see that $\rho$ is necessarily indiscrete, and so, $L$ is flexibile. However, this argument neither yields directly the \emph{liftable}--flexibility of $L$, 
nor applies to the case when $L$ is Fuchsian.

\section{Axiomatics}\label{sec:axiom}
In this section, the phrase `very general' will always be used in the topological sense.
\subsection{Tracial Structure}\label{ss:tracial}
A map defined on a group is called a \emph{class function}\index{class function}
if the map is constant on each conjugacy class.
\begin{defn}\label{defn:tracial}
Let $\FG$ be a commutative--transitive topological group,
which is either a complete metric space or a locally compact Hausdorff space.
Suppose $\varphi\co \FG\to\CS$ is a continuous surjective class function for some Hausdorff space $\CS$.
Under this setting, we say the pair $(\FG,\varphi)$ is a \emph{tracial structure}\index{tracial structure} if both of the following hold.
\be[(A)]
\item \textbf{Baumslag I.} \label{alg1} 
Let $f\co \FG\to\FG$ be a map defined by
\be[(i)]
\item
$f(\nu)=\nu^m$ for some $m\in\bZ\setminus0$, or
\item
$f(\nu) = g_1 \nu^{m_1} g_2 \nu^{m_2}\cdots g_k \nu^{m_k}$
for some $k\ge1, m_i\in\bZ\setminus0$ and $g_i\in\FG\setminus 1$.
\ee
Then for each $x\in\CS$, the set
$(\varphi\circ f)^{-1}(x)$ is nowhere dense in $\FG$.
\item \textbf{Baumslag II.} \label{alg2} 
Let $\mu$ be a maximal abelian subgroup of $\FG$ such that $\varphi(\mu)\ne\{\varphi(1)\}$.
Suppose a map  $f\co \mu\to\FG$ is defined by
\be[(i)]
\item $f(\nu)=\nu^m$ for some $m\in\bZ\setminus0$, or
\item
$f(\nu) = g_1 \nu^{m_1} g_2 \nu^{m_2}\cdots g_k \nu^{m_k}$
for some $k\ge1, m_i\in\bZ\setminus0$ and $g_i\in\FG$ such that 
for each $i$, there exists $c\in \mu$ satisfying $\varphi[c,g_ic{g_i}^{-1}]\ne\varphi(1)$.\ee
Then for each $x\in\CS$, the set
$(\varphi\circ f)^{-1}(x)\cap \mu$ is nowhere dense in $\mu$.
\ee
\ed

\begin{exmp}\label{ex:psl}
The reader is advised to keep in mind the key examples of this monograph, which are
$\FG=\PSL_2(\bK)$ for $\bK=\bR$ or $\bK=\bC$.
We let \[\varphi=\tr^2\co \PSL_2(\bK)\to \bK,\]
The conditions \textbf{Baumslag I, II} follow from Baumslag's Lemma for $\PSL_2(\bK)$, that is Lemma~\ref{lem:baumslag-psl}.  It is routine to verify the rest of the conditions.
\end{exmp}

\begin{exmp}\label{ex:pslk}
The group $\PSL_2^{(k)}(\bR)$ for $2\le k\le\infty$ admits a natural tracial structure:
\[
\tr^2\co \PSL_2^{(k)}(\bR)\to\PSL_2(\bR)\to\bR.\]
\end{exmp}

\bd\label{defn:flex-gen}
Given a tracial structure $(\FG,\varphi\co \FG\to\CS)$,
a finitely generated group $L$ is in the class $\FF(\FG,\varphi)$
if for each nonempty open set $U\sse\CS$
there exists an uncountable subset $\Lambda\sse\Hom(L,\FG)$ of faithful representations
such that $\varphi\circ\rho(L)\cap U\ne\varnothing$ for each $\rho\in \Lambda$
and such that 
\[\varphi\circ\rho(L\setminus T_L)\cap\varphi\circ\lambda(L\setminus T_L)=\varnothing\] for all distinct $\rho$ and $\lambda$ in $\Lambda$.
\ed
\begin{exmp}
A flexible group (Definition~\ref{defn:flex} and Lemma~\ref{lem:flex-count}) is in the class $\FF(\PSL_2(\bR),\tr^2)$.
To see this, recall that a flexible group $L$ admits an indiscrete faithful representation. 
If $L$ is virtually cyclic, then $L$ is actually cyclic (Example~\ref{exmp:triangle}) and its generator can assume a very general trace value in $\bR$. If $L$ contains $F_2$, then the the trace spectrum of an arbitrary faithful indiscrete representation is dense in $\bR$ (Lemma~\ref{lem:dense}).
\end{exmp}

For the rest of this section, we fix a tracial structure $(\FG,\varphi\co\FG\to\CS)$,
and simply write $\FF_\varphi=\FF(\FG,\varphi)$.
We set $\PP=\varphi^{-1}\circ\varphi(1)$ and call each element in $\PP$ as a \emph{parabolic}\index{tracial structure!parabolic}.
A set is \emph{parabolic-free}\index{tracial structure!parabolic-free set} if it does not contain a nontrivial parabolic element. A subgroup $H\le \FG$ will be called \emph{non-parabolic}\index{tracial structure!non-parabolic set} if $H\not\sse\PP$.
The Combination Theorem for Flexible Groups (Theorem~\ref{thm:main-comb}) generalizes as follows.

\begin{thm}\label{thm:comb-gen}
\be
\item\label{inf-cyc-gen}
Infinite cyclic groups are in $\FF_\varphi$.
\item\label{part:free-gen} If $A,B\in\FF_\varphi$, then $A\ast B\in \FF_\varphi$.
\item\label{part:amalgam-gen}
Let $A,B\le L$ and $C\le A\cap B$ satisfy that \[L=\form{A,B}\in\FF_\varphi.\]
If $C$ is nontrivial malnormal abelian in $A$ and in $B$,
then $A\ast_C B\in\FF_\varphi$.
\item\label{part:hnn-gen}
Let $A,C\le L$ and $s\in L$ satisfy that  
\[L=\form{A,s}\in\FF_\varphi\]
and that $C$ and $C^s$ are nontrivial malnormal abelian subgroups of $A$. Assume either 
\begin{itemize}
\item $s\in Z(C)$, or
\item $C$ and $C^s$ are not conjugate in $A$.
\end{itemize}
Then  $A\ast_{\Inn(s)\co C\to C^s}\in\FF_\varphi$.
\ee
\end{thm}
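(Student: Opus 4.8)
The plan is to mirror the proof of Theorem~\ref{thm:main-comb} almost verbatim, replacing $\PSL_2(\bR)$ by the abstract tracial group $\FG$ and $\tr^2$ by the generalized trace $\varphi$, and replacing every invocation of the Projective Baumslag Lemma (Lemma~\ref{lem:baumslag-psl}) by the two axioms \textbf{Baumslag I} and \textbf{Baumslag II} built into the tracial structure. First I would record an analogue of Lemma~\ref{lem:flex-count}: by Zorn's lemma, $L\in\FF_\varphi$ if and only if for each nonempty open $U\sse\CS$ and each countable set $W\sse\CS$ there is a faithful $\rho\co L\to\FG$ with $\varphi\circ\rho(L)\cap U\ne\varnothing$ and $\varphi\circ\rho(L\setminus T_L)\cap W=\varnothing$. (The reduction uses that $\CS$ is Hausdorff and second-countable-ish enough that a single faithful representation with ``generic'' trace spectrum can be amplified to an uncountable tracially disjoint family; one extracts the family inside a fixed $U$ by transfinite recursion, at each step excluding the countably many trace values already used. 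Here one must be a little careful, since $\CS$ is only assumed Hausdorff — but in the applications $\CS$ is $\bR$ or $\bC$, and the abstract statement only needs that a nonempty open set is uncountable, which follows from the metric/locally compact hypothesis plus the fact that $\varphi$ is nonconstant wherever we need it via \textbf{Baumslag}.)

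Part~(\ref{inf-cyc-gen}) is the base case: for $\bZ=\form{s}$, pick any maximal abelian $\mu\le\FG$ with $\varphi(\mu)\not\subseteq\{\varphi(1)\}$ (such $\mu$ exists, else \textbf{Baumslag II} is vacuous and $\FG$ would be trivial on traces); the map $\nu\mapsto\varphi(\nu^m)$ is nowhere dense in preimages by \textbf{Baumslag I}(i) applied inside $\mu$, so a Baire category argument in $\mu$ (which is a Baire space, being closed in the complete metric / locally compact Hausdorff group $\FG$) produces uncountably many $\nu\in\mu$ generating infinite cyclic groups with pairwise tracially disjoint spectra and hitting any prescribed open $U$. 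Parts~(\ref{part:free-gen}), (\ref{part:amalgam-gen}), (\ref{part:hnn-gen}) then follow the Pulling-Apart strategy of Lemmas~\ref{lem:free} and~\ref{lem:pull-apart-psl}: given faithful $\rho_0\co L\to\FG$ realizing the flexibility of the ``ambient'' group $L=\form{A,B}$ or $\form{A,s}$, one sets $\mu=Z(C)$ (a maximal abelian subgroup by commutative--transitivity and Lemma~\ref{lem:malnormal}), defines the twisted family $\rho_\nu$ for $\nu\in\mu$ exactly as in the diagrams of Lemma~\ref{lem:pull-apart-psl}, and for each finite $Q\subseteq L^\ast\setminus V^{L^\ast}$ and finite $T\subseteq\CS$ shows $\CC(Q,T)=\{\nu\in\mu:\varphi\circ\rho_\nu(Q)\cap T\ne\varnothing\}$ is nowhere dense in $\mu$. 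This last step is precisely where \textbf{Baumslag II} is invoked: writing a conjugacy-reduced $g\in Q$ as $\rho_\nu(g)=\prod g_i\nu^{m_i}$, the malnormality of $C$ (together with Lemma~\ref{lem:malnormal}(\ref{tr-normalizer}) and the analogue of Claim~\ref{claim:nmu}) guarantees $g_i\notin N(\mu)$, hence the existence of $c\in\mu$ with $\varphi[c,g_i c g_i^{-1}]\ne\varphi(1)$, which is exactly the hypothesis (ii) of \textbf{Baumslag II}; thus $\varphi\circ\rho_\nu(g)$ has nowhere-dense level sets. A countable union of nowhere-dense sets being meager, Baire category in $\mu$ gives a very general $\nu$ with $\rho_\nu$ faithful and $\varphi\circ\rho_\nu(L^\ast\setminus V^{L^\ast})$ avoiding any prescribed countable set, and the $U$-hitting condition is preserved because $\rho_\nu$ depends continuously on $\nu$ and agrees with $\rho_0$ on $V$ for $\nu$ near $1$ (here one uses that an infinite-order generic element of $L$ stays infinite-order and generic under small perturbation, exactly as in the proof of Theorem~\ref{thm:main-comb}).

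The three combination cases then differ only in bookkeeping: for~(\ref{part:free-gen}) one takes $C=1$ and twists by a fully generic $\nu\in\FG$, using \textbf{Baumslag I} instead of \textbf{Baumslag II} (no maximal-abelian constraint is needed); for~(\ref{part:amalgam-gen}) and~(\ref{part:hnn-gen}) one runs the case analysis of Lemma~\ref{lem:pull-apart-psl} (the five subcases (I)--(V) in Case (2)-2), verifying in each that a would-be coincidence $\Fix\mu\cap h_j\Fix\mu\ne\varnothing$ — in the abstract setting, $h_j\in N(\mu)$ — contradicts either malnormality of $C$ in $A$, non-conjugacy of $C$ and $C^s$, or the parabolic-free requirement $V\cap\PP\subseteq\{1\}$. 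Note that the parabolic-free hypothesis in Lemma~\ref{lem:pull-apart-psl} becomes, in the axiomatic language, simply that $V$ is parabolic-free in the sense $V\cap\PP=\{1\}$, and the inputs ``$\tr^2[c,g c g^{-1}]\ne4$'' translate to ``$\varphi[c,gcg^{-1}]\ne\varphi(1)$'', which is the content of \textbf{Baumslag II}(ii).

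The main obstacle I anticipate is not any single step but making the translation genuinely work at the level of the \emph{dynamical/geometric} facts that the $\PSL_2$ proof uses implicitly — namely the role of $\Fix\mu$ and Lemma~\ref{lem:parabolic-test}, which has no literal counterpart over an abstract $\FG$. The right move is to observe that the \emph{only} use of those geometric lemmas in Lemma~\ref{lem:pull-apart-psl} is to certify, for the relevant $g_i$, that $g_i\notin N(\mu)$ and hence (by commutative--transitivity, Lemma~\ref{lem:malnormal}) that some commutator $[c,g_i c g_i^{-1}]$ is nontrivial — and then to conclude it is ``parabolic-free,'' i.e. has $\varphi$-value $\ne\varphi(1)$. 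So one must rephrase every geometric claim purely group-theoretically: replace ``$\Fix\mu\cap g\Fix\mu=\varnothing$'' by ``$g\notin N(\mu)$'', replace ``$g$ is parabolic'' by ``$g\in\PP$'', and use malnormality plus Lemma~\ref{lem:malnormal}(\ref{tr-normalizer}) to run the intersection-of-normalizers arguments. Once this dictionary is fixed, the proof is a routine, if lengthy, re-derivation; I would present it by stating the axiomatic analogues of Claim~\ref{claim:nmu} and the Key Claim, then citing ``the proof is identical to that of Lemma~\ref{lem:pull-apart-psl}, with Lemma~\ref{lem:baumslag-psl} replaced by \textbf{Baumslag I}, \textbf{Baumslag II} and Baire category in $\mu$,'' and finally deducing Theorem~\ref{thm:comb-gen} from the tracial analogue of Lemma~\ref{lem:flex-count} exactly as Theorem~\ref{thm:main-comb} was deduced.
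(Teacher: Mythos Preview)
Your proposal is correct and follows essentially the same approach as the paper. The paper packages the argument into two explicit auxiliary lemmas---a General Free Product Pulling-Apart Lemma (Lemma~\ref{lem:free-gen}) and a General Amalgamated Pulling-Apart Lemma (Lemma~\ref{lem:pull-apart-gen})---whose proofs it declares identical to those of Lemmas~\ref{lem:free} and~\ref{lem:pull-apart-psl} after replacing ``finite'' by ``closed nowhere dense'' and invoking \textbf{Baumslag I}/\textbf{II}; it also records your ``$\mu$ is a Baire space'' observation as Lemma~\ref{lem:tr-prelim}. Your dictionary (replace $\Fix\mu\cap g\Fix\mu=\varnothing$ by $g\notin N(\mu)$, replace ``parabolic'' by $\varphi$-value $\varphi(1)$) is exactly the implicit translation the paper performs, and your handling of the $U$-hitting condition via continuity in $\nu$ matches the paper's argument in part~(\ref{part:amalgam-gen}).
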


The proof of Theorem~\ref{thm:comb-gen} is almost identical to that of Theorem~\ref{thm:main-comb},
so will be only sketched. We first point out two key lemmas below needed for the proof.

\begin{lem}\label{lem:tr-prelim}
\be
\item\label{tr-inf}
A very general $\nu\in\FG$ is non-parabolic and has infinite order.
\item\label{tr-closed}
If $\mu$ is a maximal abelian subgroup of $\FG$,
then $\mu$ and $N(\mu)$ are closed and nowhere dense in $\FG$;
in particular, they are Baire spaces.
\ee
\end{lem}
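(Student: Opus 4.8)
\textbf{Plan for the proof of Lemma~\ref{lem:tr-prelim}.}

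The plan is to verify the two assertions separately, both being ``soft'' topological statements once one unwinds the definitions of a tracial structure. For part (\ref{tr-inf}), I would argue that each of the two offending conditions---being parabolic, or having finite order---is confined to a nowhere dense (indeed, in the locally compact case, closed nowhere dense) subset of $\FG$, so that the complement of their union is a dense $G_\delta$ and hence contains very general points. For the parabolic locus, note $\PP=\varphi^{-1}\circ\varphi(1)$; applying \textbf{Baumslag I} with the trivial word $f(\nu)=\nu=\nu^1$ (case (i) with $m=1$) and $x=\varphi(1)$ shows that $(\varphi\circ f)^{-1}(\varphi(1))=\varphi^{-1}(\varphi(1))=\PP$ is nowhere dense in $\FG$. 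For the torsion locus, I would observe that $\{\nu\in\FG\co \nu^m=1\}$ is a closed subset for each fixed $m\ge1$ (it is the preimage of $\{1\}$ under the continuous map $\nu\mapsto\nu^m$), and that it is nowhere dense: again by \textbf{Baumslag I} with $f(\nu)=\nu^m$ (case (i)), the set $(\varphi\circ f)^{-1}(\varphi(1))\supseteq\{\nu\co \nu^m=1\}$ is nowhere dense, so its closed subset $\{\nu\co\nu^m=1\}$ is too. Since $\FG$ is a Baire space by hypothesis (a complete metric space or a locally compact Hausdorff space), the countable union $\PP\cup\bigcup_{m\ge1}\{\nu\co\nu^m=1\}$ is meager, and its complement---the set of non-parabolic elements of infinite order---is comeager, hence dense $G_\delta$, which is exactly what ``very general'' means here.

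For part (\ref{tr-closed}), I would first treat $N(\mu)$ and then deduce the claim for $\mu\le N(\mu)$. Since $\FG$ is commutative--transitive and $\mu$ is a nontrivial maximal abelian subgroup, Lemma~\ref{lem:malnormal}(3) gives the description $N(\mu)=\{g\in\FG\co [c^g,c]=1\text{ for some }c\in\mu\setminus1\}$; equivalently, fixing any single $c_0\in\mu\setminus1$ and using that $\mu=Z(c_0)$ is itself the centralizer, one gets $N(\mu)=\{g\in\FG\co [c_0^{\,g},c_0]=1\}$, which is the preimage of $\{1\}$ under the continuous map $g\mapsto[g^{-1}c_0g,c_0]$, hence closed. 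The same map exhibits $\mu=Z(c_0)$ as the closed set $\{g\co[g,c_0]=1\}$. For nowhere density: since $\mu$ is maximal abelian and $\varphi(\mu)\neq\{\varphi(1)\}$ is automatic provided $\mu$ is not purely parabolic---but in fact I only need \textbf{Baumslag I} here, not \textbf{Baumslag II}. Indeed, apply \textbf{Baumslag I} with the word $f(\nu)=\nu^{-1}c_0\,\nu\,c_0^{-1}$, which after rewriting is of type (ii) with $k=2$, $m_1=-1$, $m_2=1$ (absorbing $c_0,c_0^{-1}$ into the $g_i$'s; here we use $c_0\ne1$ so the $g_i$'s are nontrivial). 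Then $g\in N(\mu)$ iff $[c_0^{\,g},c_0]=1$ iff $\nu^{-1}c_0\nu c_0^{-1}=1$ for $\nu=g$, i.e. iff $f(g)=1$, so $N(\mu)\subseteq(\varphi\circ f)^{-1}(\varphi(1))$, which is nowhere dense by \textbf{Baumslag I}. Hence $N(\mu)$, and its subset $\mu$, are both nowhere dense. Finally, a closed subspace of a Baire space need not be Baire in general, but a closed subspace of a complete metric space is complete (hence Baire), and a closed subspace of a locally compact Hausdorff space is locally compact Hausdorff (hence Baire); so in either case $\mu$ and $N(\mu)$, being closed in $\FG$, are Baire spaces.

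The only genuine subtlety I anticipate is making sure the algebraic rewriting in part (\ref{tr-closed}) literally matches the template in \textbf{Baumslag I}: one must write the commutator-type expression as a word $g_1\nu^{m_1}\cdots g_k\nu^{m_k}$ with \emph{nonzero} exponents $m_i$ and \emph{nontrivial} coefficients $g_i$, which requires exploiting maximality of $\mu$ (so that $c_0\ne1$ can be chosen) and perhaps a short case check depending on whether one uses $[c_0^{\,g},c_0]$ or the cleaner single-commutator form. Everything else---closedness of preimages, the Baire property passing to closed subspaces in the two allowed classes of ambient spaces, and the identification of ``very general'' with ``comeager''---is routine. I would present part (\ref{tr-inf}) first, since it sets up the Baire-category bookkeeping that part (\ref{tr-closed}) reuses.
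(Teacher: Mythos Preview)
Your overall approach matches the paper's exactly: both parts reduce to \textbf{Baumslag I} applied to suitable commutator-type words, together with the observation that $\mu$ and $N(\mu)$ are cut out by continuous equations in a single element $c_0\in\mu\setminus1$ (via Lemma~\ref{lem:malnormal}). You are also right that \textbf{Baumslag I}, not \textbf{Baumslag II}, is the relevant axiom here, since the conclusion is nowhere density in $\FG$.

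There is one genuine slip in your treatment of $N(\mu)$. You assert that $[c_0^{\,g},c_0]=1$ is equivalent to $g^{-1}c_0gc_0^{-1}=1$, but the latter says $c_0^{\,g}=c_0$, i.e.\ $g\in Z(c_0)=\mu$, while the former only says $c_0^{\,g}\in Z(c_0)=\mu$, i.e.\ $g\in N(\mu)$. So your word $f(\nu)=\nu^{-1}c_0\nu c_0^{-1}$ has $f(g)=1$ exactly when $g\in\mu$, not when $g\in N(\mu)$; with it you have shown $\mu$ is nowhere dense but not yet $N(\mu)$. The fix is immediate: use instead
\[
f(\nu)=[c_0,\nu c_0\nu^{-1}]=c_0\,\nu\, c_0\,\nu^{-1}\, c_0^{-1}\,\nu\, c_0^{-1}\,\nu^{-1},
\]
which is of \textbf{Baumslag I} type (ii) with $k=4$ and $g_i\in\{c_0^{\pm1}\}\subset\FG\setminus1$. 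Then $f(g)=1$ iff $g\in N(\mu)$, whence $N(\mu)\subseteq(\varphi\circ f)^{-1}(\varphi(1))$ is nowhere dense, and $\mu\le N(\mu)$ inherits this. (For $\mu$ directly, the cyclic rotation $c_0\nu c_0^{-1}\nu^{-1}$ of your word already fits the required template $g_1\nu^{m_1}g_2\nu^{m_2}$ with $g_i\ne1$; recall $\varphi$ is a class function so cyclic rotation is harmless.) With this correction your argument is complete and coincides with the paper's.
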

\bp
(\ref{tr-inf}) Apply \textbf{Baumslag I} for $f(\nu)=\nu^m$ for $m\in\bZ\setminus0$.

(\ref{tr-closed}) Fix $c\in \mu\setminus1$. As was seen in Lemma~\ref{lem:malnormal}, the subgroups $\mu$ and $N(\mu)$ are algebraically defined as
\[
\mu=\{g\in\FG\co [c,g]=1\},\quad
N(\mu) = \{ g\in\FG \co [c,gcg^{-1}]=1\}.\]
By \textbf{Baumslag II}, both are closed and nowhere dense in $\FG$.
\ep

As before, the process of pulling-apart subgroups will play a crucial role for us.

\begin{lem}[General Free Product Pulling-Apart Lemma]\label{lem:free-gen}
\be
\item
Let $A$ and $B$ be countable subgroups of $\FG$,
and let $W\sse\CS$ be a countable set.
Then for a very general $\nu\in\FG$, 
the group $L_\nu=\form{A,B^\nu}$ is isomorphic to $A\ast B$
and furthermore, 
\[\varphi\left(L_\nu\setminus (A\cup B^\nu)^{L_\nu}\right)\cap W=\varnothing.\]
\item
Let $A$ be a countable subgroup of $\FG$,
and let $W\sse\CS$ be a countable set.
Then for a very general $\nu\in\FG$, 
the group $L_\nu=\form{A,\nu}$ is isomorphic to $A\ast \bZ$
and furthermore, 
\[\varphi\left(L_\nu\setminus A^{L_\nu}\right)\cap W=\varnothing.\]
\ee
\end{lem}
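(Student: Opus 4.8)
The plan is to imitate the proof of Lemma~\ref{lem:free} almost verbatim, making exactly two substitutions: the Projective Baumslag Lemma (Lemma~\ref{lem:baumslag-psl}) is replaced by the tracial-structure axiom \textbf{Baumslag I}, and the algebro-geometric genericity step (Lemma~\ref{lem:baire}) is replaced by the Baire category theorem, which applies since $\FG$ is a Baire space by hypothesis. Note that \textbf{Baumslag II} is not needed for this statement, because \textbf{Baumslag I} already permits the twisting parameter $\nu$ to range over all of $\FG$ with no constraint on the coefficients $g_i$, which is precisely the free-product situation.

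First I would fix notation as in Lemma~\ref{lem:free}: in Case (1) put $L = A\ast B$ and $V = A\cup B\sse L$ (allowing $A=B$), and in Case (2) put $L = A\ast\form{s}$ and $V = A\sse L$; for each $\nu\in\FG$ let $\rho_\nu\co L\to\FG$ be the homomorphism determined by $\rho_\nu|_A = \Id_A$ together with $\rho_\nu|_B = \Inn(\nu)\circ\Id_B$ in Case (1) and $\rho_\nu(s)=\nu$ in Case (2). Enlarging $W$, I may assume $\varphi(1)\in W$. The key point is that for every $q\in L\setminus V^L$, the normal form theorem for free products provides a conjugate of $q$ whose $\rho_\nu$-image is a word map of exactly the shape occurring in \textbf{Baumslag I}: in Case (1) it is
\[\rho_\nu(q) = g_1\nu^{m_1}g_2\nu^{m_2}\cdots g_\ell\nu^{m_\ell},\]
with $\ell\ge1$, each $g_i\in V\setminus 1$ and each $m_i=\pm1$; in Case (2) it is either $\nu^m$ for some $m\in\bZ\setminus0$ (when $q$ is conjugate to a power of $s$) or an expression of the same form with $g_i\in V\setminus 1$ and $m_i\in\bZ\setminus0$. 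Since $\varphi$ is a class function, the conjugation does not change $\varphi\circ\rho_\nu(q)$, so \textbf{Baumslag I} applies and shows that for every $t\in\CS$ the set $\{\nu\in\FG\co\varphi\circ\rho_\nu(q)=t\}$ is nowhere dense; it is also closed, being the preimage of the point $t$ under the continuous map $\nu\mapsto\varphi(\rho_\nu(q))$ (using continuity of multiplication and powering in $\FG$, continuity of $\varphi$, and the Hausdorffness of $\CS$).

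Then I would form
\[\DD \;=\; \bigcap_{q\in L\setminus V^L}\ \bigcap_{t\in W}\ \bigl(\FG\setminus\{\nu\in\FG\co\varphi\circ\rho_\nu(q)=t\}\bigr),\]
a countable intersection of dense open subsets of the Baire space $\FG$, hence a dense $G_\delta$; this is the desired set of very general parameters. For $\nu\in\DD$: each $q\in L\setminus V^L$ satisfies $\varphi\circ\rho_\nu(q)\notin W$ (which contains $\varphi(1)$), so in particular $\rho_\nu(q)\ne1$; and each $1\ne g\in V^L$ has $\rho_\nu(g)\ne1$ automatically, being a conjugate of a nontrivial element of $\FG$. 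Hence $\rho_\nu$ is injective, so $L_\nu=\form{A,B^\nu}\cong A\ast B$ (resp.\ $L_\nu=\form{A,\nu}\cong A\ast\bZ$). Carrying $V$ across this isomorphism identifies $V^{L_\nu}$ with $\rho_\nu(V^L)$, so that $\varphi\bigl(L_\nu\setminus V^{L_\nu}\bigr)=\{\varphi\circ\rho_\nu(q)\co q\in L\setminus V^L\}$ is disjoint from $W$; this yields both assertions.

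I expect the argument to be essentially mechanical once \textbf{Baumslag I} is granted. The one place requiring genuine care is the normal-form bookkeeping in the second paragraph: one must check that every element of $L\setminus V^L$ becomes, after a suitable conjugation (including a cyclic rotation so that the reduced word begins with a letter of $A$), a word all of whose coefficients $g_i$ are nontrivial, so that it falls under \textbf{Baumslag I}(i) or (ii). A secondary, routine point is confirming that the hypotheses on $\FG$ --- complete metric space, or locally compact Hausdorff space --- indeed make it a Baire space, so that the countable intersection $\DD$ is dense.
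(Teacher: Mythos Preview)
Your argument is correct. It differs from the paper's proof in one organizational respect: the paper first uses \textbf{Baumslag I} to manufacture an auxiliary non-parabolic maximal abelian subgroup $\mu\le\FG$ (with $\varphi[c,gcg^{-1}]\ne\varphi(1)$ for all $c\in\mu\setminus1$ and $g\in\form{V}\setminus1$) and then follows the template of Lemma~\ref{lem:free}, implicitly invoking \textbf{Baumslag II} along $\mu$; you instead apply \textbf{Baumslag I} directly with $\nu$ ranging over all of $\FG$, which already yields nowhere-denseness of each $\{\nu:\varphi\circ\rho_\nu(q)=t\}$ and lets Baire finish immediately. Your route is shorter and avoids \textbf{Baumslag II} entirely; the paper's detour through $\mu$ is chosen to parallel Lemma~\ref{lem:free} (and the subsequent amalgamated Lemma~\ref{lem:pull-apart-gen}, where one is forced to work along $\mu=Z(C)$), but for the free-product case it is not needed. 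The normal-form bookkeeping and the closedness observation you flag are both handled correctly.
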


\bp
Let $V={A\cup B}$ in (1), and $V=A$ in (2).
We choose $\mu$ to be a non-parabolic maximal abelian group 
such that $[c,gcg^{-1}]$ is not parabolic for all $c\in\mu\setminus1$ and $g\in \form{V}\setminus1$.
here, the hypothesis \textbf{Baumslag I} is used.
The rest of the proof is very similar to the proof of Lemma~\ref{lem:free}.\ep

Recall that for $C\le\FG$ we write
\[
Z(C) = \{\nu\in\FG\co [c,\nu]=1\text{ for all }c\in C\}.\]
\begin{lem}[General Amalgamated Pulling-Apart Lemma]\label{lem:pull-apart-gen}
For a countable subgroup $L\le\FG$
and for a countable set $W\sse\CS$,
assume one of the following.
\be
\item
For some subgroups $A,B,C\le L$, we have that $L=\form{A,B}$
and that $C$ is a malnormal nontrivial abelian subgroup of $A$ and  also of $B$. 
For each $\nu\in Z(C)$ we let $\rho_\nu$ be the map uniquely determined by the following commutative diagram
\[\xymatrix{
& A\ar[rd]\ar[rrrd]^{\Id_A}\\
C\ar@{^(->}[ru] \ar@{^(->}[rd]& & L^*=A\ast_C B\ar[rr]^<<<<<<<{\rho_\nu} && {\FG}\\
& B\ar[ru]\ar[rrru]_{\Inn(\nu)\circ\Id_B}}\]
We further assume $V=A\cup B$ satisfies 
$\varphi(1)\not\in\varphi(V\setminus 1)$.
\item
For some subgroups $A,C\le L$ and an element $s\in L$,
we have that $L=\form{A,s}$ and that $C$ and $C^s$ are malnormal nontrivial abelian subgroups of $A$. Furthermore, we assume that either
\begin{itemize}
\item $s\in Z(C)$, or
\item $C$ and $C^s$ are not conjugate in $A$.
\end{itemize}
For each $\nu\in Z(C)$ we let $\rho_\nu$ be the map uniquely determined by the following commutative diagram
\[
\xymatrix{
& A\ar[rd]\ar[rrrd]^{\Id_A}\\
C\ast C^s\ar@{^(->}[ru] \ar[rd]& & L^*=A\ast_{\Inn(s)\co C\to C^s}\ar[rr]^<<<<{\rho_\nu} && {\FG}\\
& C\ast\form{ s^*}\ar[ru]\ar[rrru]_{  s^*\mapsto \nu s}
}\]
Here $s^*$ denotes the stable generator of $L^*$.
We further assume $V=A$ satisfies 
$\varphi(1)\not\in\varphi(V\setminus 1)$.
\ee
Under the assumption (1) or (2), 
for a very general $\nu\in Z(C)$
the map $\rho_\nu$ is faithful  and 
\[\varphi \circ\rho_\nu\left(L^*\setminus V^{ L^*}\right)\cap W=\varnothing.\]
\end{lem}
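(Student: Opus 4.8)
The plan is to follow the same template as the proof of Lemma~\ref{lem:pull-apart-psl}, replacing the use of the Projective Baumslag Lemma (Lemma~\ref{lem:baumslag-psl}) and of the trace function $\tr^2$ by the axioms \textbf{Baumslag I} and \textbf{Baumslag II} and by the generalized trace $\varphi$. Set $\mu = Z(C)$, which is a non-parabolic maximal abelian subgroup of $\FG$ (non-parabolic because it contains $C$, and $\varphi(1)\notin\varphi(V\setminus1)\supseteq\varphi(C\setminus1)$); by Lemma~\ref{lem:tr-prelim}(\ref{tr-closed}) it is a Baire space, so ``very general'' statements about $\nu\in\mu$ make sense. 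As in the earlier proof, the key reduction is: it suffices to show that for all finite subsets $T\sse\CS$ and $Q\sse L^*\setminus V^{L^*}$, the set
\[\CC(Q,T) = \{\nu\in\mu\co \varphi\circ\rho_\nu(Q)\cap T\ne\varnothing\}\]
is nowhere dense in $\mu$. Indeed, once that is established, intersecting over the countably many finite $Q\sse L^*\setminus V^{L^*}$ and using $T=\{\varphi(1)\}\cup W$ gives a dense $G_\delta$ in $\mu$ consisting of $\nu$ with $\varphi\circ\rho_\nu(L^*\setminus V^{L^*})\cap(W\cup\{\varphi(1)\})=\varnothing$; and the condition that $\rho_\nu$ be faithful is handled exactly as before, since for $1\ne g\in V^{L^*}$ we automatically have $\rho_\nu(g)\ne1$ because $\rho_\nu$ restricted to $A$ (and to $B$, up to conjugation by $\nu$) is injective, and the normal form theorem for amalgams / HNN extensions guarantees faithfulness given the non-parabolicity structure.

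Next I would prove the analogue of Claim~\ref{claim:nmu}: using commutative--transitivity of $\FG$ (part of the tracial-structure axioms) and Lemma~\ref{lem:malnormal}, one gets $N(\mu)\cap V = C\le\mu$; for $h\in V\setminus C$ there is $c\in C\setminus1$ with $\varphi[c,hch^{-1}]\ne\varphi(1)$ (the ``generalized $\Fix\mu\cap h\Fix\mu=\varnothing$'' condition), because $h\notin N(\mu)$ forces $[c,hch^{-1}]\ne1$ and non-parabolicity of $V$ forces its $\varphi$-value to differ from $\varphi(1)$; and in Case (2) with $s\notin Z(C)$, similarly $\varphi[c,scs^{-1}]\ne\varphi(1)$ since $s^{-1}cs\in A$ is non-parabolic. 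These are the hypotheses needed to invoke \textbf{Baumslag II}.

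Then, case by case exactly as in the proof of Lemma~\ref{lem:pull-apart-psl}, for $g\in Q$ I would write $\rho_\nu(g)$ in reduced form $\prod_{i=1}^\ell h_i\nu^{n_i}$ (with $n_i=\pm1$, or with nontrivial exponents in the ``$g=c(s^*)^m$'' subcase where $\rho_\nu(g)\in\mu$ and $T$-avoidance is immediate for large $\nu$), and verify that each $h_i\notin N(\mu)$ and indeed $\varphi[c,h_ich_i^{-1}]\ne\varphi(1)$ for some/all $c\in C\setminus1$; the five-case analysis in Case (2)-2 (conjugacy/malnormality of $C$, $C^s$ in $A$, and parabolic-freeness of $A$) goes through verbatim with ``$\tr^2[\,\cdot\,]=4$'' replaced by ``$\varphi[\,\cdot\,]=\varphi(1)$''. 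With the hypotheses of \textbf{Baumslag II} met by the word $f(\nu)=\prod h_i\nu^{n_i}$ on $\mu$, the set $(\varphi\circ f)^{-1}(x)\cap\mu$ is nowhere dense in $\mu$ for each $x\in T$, hence $\CC(Q,T)$ is a finite union of nowhere dense sets, hence nowhere dense. I expect the main obstacle to be purely bookkeeping: checking that in every branch of the normal-form analysis the relevant commutator $[c,h_ich_i^{-1}]$ (or $[c^s,g_ic^sg_i^{-1}]$, etc.) is genuinely non-parabolic, which is where the malnormality and non-conjugacy hypotheses are consumed — but since this is word-for-word parallel to Lemma~\ref{lem:pull-apart-psl}, no genuinely new idea is required, and in the write-up I would simply say the argument is identical with $\tr^2 = 4$ replaced by $\varphi = \varphi(1)$ and Lemma~\ref{lem:baumslag-psl} replaced by axiom \textbf{Baumslag II}.
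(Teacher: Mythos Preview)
Your proposal is correct and follows exactly the approach the paper takes: the paper's own proof is omitted entirely, with only the remark that it is identical to that of Lemma~\ref{lem:pull-apart-psl} after substituting ``closed nowhere dense'' for ``finite''. Your write-up spells out precisely this translation (replacing $\tr^2=4$ by $\varphi=\varphi(1)$, Lemma~\ref{lem:baumslag-psl} by axiom \textbf{Baumslag~II}, and the algebraic Baire argument by the topological one on the Baire space $\mu$), so there is nothing to add.
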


The proof of the above lemma is omitted, as it is identical to that of Lemma~\ref{lem:pull-apart-psl},
after substituting the phrase ``closed nowhere dense'' for ``finite''.

\bp[Proof of Theorem~\ref{thm:comb-gen}]
For the proof, let us fix  $U\sse \FG,W\sse\CS$ such that $U$ is open and $W$ is countable.
Furthermore, we assume $\varphi(1)\in W$.
We use an alternate characterization of flexibility as in Lemma~\ref{lem:flex-count}.

(\ref{inf-cyc-gen})
In Lemma~\ref{lem:pull-apart-gen} (2), we simply put $A=L=\{1\}$ and $\nu\in\varphi^{-1}(U)$.
In Lemma~\ref{lem:tr-prelim}, we saw a very general $\nu$ has infinite order.

(\ref{part:free-gen})
Suppose $\alpha\co A\to\FG$ and $\beta\co B\to\FG$ be faithful representations
satisfying the conditions (of $\rho$) in Definition~\ref{defn:flex-gen}.
Then Lemma~\ref{lem:free-gen} (1) applies with 
 $L=\form{\alpha(A),\beta(B)}$.

{(\ref{part:amalgam-gen})}
Suppose we have a faithful representation 
$\rho\co L\to\FG$
such that \[\rho(L)\cap U\ne\varnothing,\quad\rho(L\setminus T_L)\cap W=\varnothing.\]
Fix $g\in L$ such that $\rho(g)\in U$.
Let $\mu\le\FG$ be the maximal abelian subgroup containing $\rho(C)$.
For each $\nu\in\mu$, we have a map \[\rho_\nu\co L^*=A\ast_C B\to\FG\]
as in the proof of the Pulling Apart Lemma (Lemma~\ref{lem:pull-apart-psl}).
Then for a very general $\nu\in\mu$, the representation $\rho_\nu$ satisfies
\[\rho_\nu(L\setminus T_L)\cap W=\varnothing.\]
In particular, one can find $\nu\in\mu$ such that $\rho_\nu(g)\in U$.

We omit the proof of (4), as it is very similar to the above.
\ep

By Lemma~\ref{lem:igd} and Theorem~\ref{thm:comb-gen},
we have the following.
\begin{cor}
Every nontrivial limit group is in $\FF_\phi$.\end{cor}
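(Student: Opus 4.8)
The plan is to reproduce the proof of Lemma~\ref{lem:igd}(2) (equivalently, of Corollary~\ref{cor:limit-gen}(1)) verbatim, with the class $\FF$ replaced by $\FF_\varphi$ and with Theorem~\ref{thm:comb-gen} invoked in place of the Combination Theorem~\ref{thm:main-comb}. First I would fix a nontrivial limit group $L$. By Lemma~\ref{lem:igd}(1), $L\in\mathrm{IMGD}(F_r)$ for some $r$, and since $L$ is nontrivial one must have $r\ge 1$ (a trivial base group would, step by step, force every vertex group in the tower to be trivial, contradicting that the vertex groups of a generalized double or a centralizer extension are nontrivial). Unwinding Definition~\ref{defn:igd}, this gives a chain of epimorphisms $L=L_n\twoheadrightarrow L_{n-1}\twoheadrightarrow\cdots\twoheadrightarrow L_0=F_r$ in which each $f_i\co L_i\to L_{i-1}$ is either a generalized double, $L_i=A\ast_C B$ with $C$ nontrivial, maximal abelian and (by the malnormality requirement in $\mathrm{IMGD}$) malnormal in both $A$ and $B$, and $f_i|_A$, $f_i|_B$ injective; or a centralizer extension, $L_i=A\ast_C$ with $C$ nontrivial, maximal abelian and malnormal in $A$, and $f_i|_A$ injective.

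Next I would run the induction. The base case $L_0=F_r\cong\bZ\ast\cdots\ast\bZ$ lies in $\FF_\varphi$ by Theorem~\ref{thm:comb-gen}(\ref{inf-cyc-gen}) together with (\ref{part:free-gen}). For the inductive step, assume $L_{i-1}\in\FF_\varphi$; I claim $L_i\in\FF_\varphi$. The point is to translate the abstract data of $f_i$ into the concrete hypotheses of Theorem~\ref{thm:comb-gen}. Since $f_i$ is surjective and restricts to an isomorphism on each vertex group, the images $f_i(A),f_i(B),f_i(C)$ are subgroups of $L_{i-1}$ with $f_i(C)\cong C$ nontrivial, $f_i(C)\le f_i(A)\cap f_i(B)$ (resp.\ $f_i(C)\le f_i(A)$), and $L_{i-1}=\form{f_i(A),f_i(B)}$ (resp.\ $L_{i-1}=\form{f_i(A),f_i(s^*)}$ with $s^*$ the stable letter of $L_i$); moreover malnormality of $C$ in $A$ (and $B$) transports, under the isomorphism $f_i|_A$ (resp.\ $f_i|_B$), to malnormality of $f_i(C)$ in $f_i(A)$ (resp.\ $f_i(B)$). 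In the generalized-double case, Theorem~\ref{thm:comb-gen}(\ref{part:amalgam-gen}) then yields $f_i(A)\ast_{f_i(C)}f_i(B)\in\FF_\varphi$, and the homomorphism induced by $f_i$ on the two factors identifies this group with $A\ast_C B=L_i$. In the centralizer-extension case, applying $f_i$ to the defining relation $s^*c(s^*)^{-1}=c$ ($c\in C$) shows that $f_i(s^*)$ centralizes $f_i(C)$; hence $f_i(s^*)\in Z(f_i(C))$ and $f_i(C)^{f_i(s^*)}=f_i(C)$, so Theorem~\ref{thm:comb-gen}(\ref{part:hnn-gen}) applies under its first alternative and gives $f_i(A)\ast_{\Inn(f_i(s^*))\co f_i(C)\to f_i(C)}\in\FF_\varphi$; since $\Inn(f_i(s^*))$ restricts to the identity on $f_i(C)$, this HNN extension is simply $f_i(A)\ast_{f_i(C)}\cong A\ast_C=L_i$. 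In either case $L_i\in\FF_\varphi$, so by induction $L=L_n\in\FF_\varphi$.

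I expect the only real subtlety to be this translation step: confirming that the hypotheses of Theorem~\ref{thm:comb-gen} — $C$ nontrivial malnormal abelian in each vertex group, the relevant vertex groups generating $L_{i-1}$, and (in the HNN case) $s\in Z(C)$ — are correctly extracted from the $\mathrm{IMGD}$ data, and in particular that passing to the images under $f_i$ preserves malnormality and the centralizing/conjugacy conditions. It does, precisely because $f_i$ is an isomorphism on each vertex group and the relevant relations are preserved by any homomorphism. No new technical input is needed: Lemma~\ref{lem:igd}(1) supplies the $\mathrm{IMGD}$ decomposition (using that every $L_i$ embeds in $\PSL_2(\bR)$, so that maximal abelian subgroups are malnormal), and Theorem~\ref{thm:comb-gen} supplies the closure properties of $\FF_\varphi$; the argument is therefore essentially identical to that of Lemma~\ref{lem:igd}(2).
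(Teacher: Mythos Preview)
Your proposal is correct and follows exactly the approach the paper takes: the paper's proof consists of the single sentence ``By Lemma~\ref{lem:igd} and Theorem~\ref{thm:comb-gen}, we have the following,'' and you have simply unpacked this reference by running the induction along the $\mathrm{IMGD}$ tower and invoking parts (\ref{inf-cyc-gen}), (\ref{part:free-gen}), (\ref{part:amalgam-gen}), (\ref{part:hnn-gen}) of Theorem~\ref{thm:comb-gen} at the appropriate steps. Your translation of the $\mathrm{IMGD}$ data into the hypotheses of Theorem~\ref{thm:comb-gen} via the images under $f_i$ is correct and is precisely what is implicit in the paper's one-line proof.
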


\subsection{UV--structure}
We can also generalize  Lemma~\ref{lem:uv1} to the context of an arbitrary algebraic group in order to control certain ``spectra'' of representations.
Suppose we want to find many representations $\rho\co G\to\PSL_2(\bR)$ 
which map certain fixed elements to irrational rotations.
Note that neither the set $\EE=(\tr^2)^{-1}(0,4)\sse\PSL_2(\bR)$ nor the map \[\rot\co \PSL_2(\bR)\to \bR/\bZ\] is algebraic. So we use a parametrization
by an algebraic group:
\[\xymatrix{\SO(2)\times\PSL_2(\bR) \ar@{->>}[r]^<<<<{\xi} \ar[rd]^{\eta} &\EE\ar[d]^{\bar \eta}\ar@{^(->}[r] & \PSL_2(\bR)\\& \SO(2)}\]
where 
\[\xi(x,y)=x^y,\quad \eta(x,y)=x,\quad \bar \eta(x) = \Rot(2\pi\rot(x)).\]

\bd\label{defn:uv}
\be
\item
Let $\mathfrak{G}$ be an algebraic group and $k>0$.
For each $i=1,2,\ldots,k$, we assume there is a commutative diagram
\[\xymatrix{U_i \ar@{->>}[r]^<<<<{\xi_i} \ar[rd]^{\eta_i} & \xi_i(U_i)\ar[d]^{\bar \eta_i}\ar@{^(->}[r] & \mathfrak{G}\\& V_i},\]
such that
\be[(i)]
\item
$U_i$ and $V_i$ are algebraic sets;
\item
$\xi_i$ and $\eta_i$ are polynomial maps;
\item
$\mathfrak{G}\setminus\bigcup_{i=1}^k \xi_i(U_i)$ is an algebraic set.
\ee
In this situation, we say $\mathfrak{G}$ is equipped with a \emph{UV--structure}\index{UV--structure}.
\item
Assume (1). Suppose  $G$ is a finitely generated group,
and \[A=\{A_1,\ldots,A_k\}\] is a collection of finite subsets $A_i\sse G$.
Fix algebraic subsets  
\[W_i\sse \prod_{A_i} V_i\] for each 
$i=1,2,\ldots,k$,
and put \[W=\{W_1,\ldots,W_k\}.\]
We say a representation $\rho\co G\to\mathfrak{G}$ is \emph{UV--compatible (with respect to $A$ and $W$)}\index{UV--structure!UV--compatible} if the following hold.
\begin{itemize}
\item
$\rho(G)\sse \bigcup_{i=1}^k \xi_i(U_i)$.
\item
 $\rho(A_i)\sse \xi_i(U_i)$ 
 and
 $\bar \eta_i\circ\rho\restriction_{A_i}\in W_i$ 
 for each $i$.
\end{itemize}
\ee
\ed

\begin{rem}
\be
\item
We assume neither that $\xi_i(U_i)$ is algebraic nor that $\bar{\eta_i}$ is a polynomial map. The map $\bar{\eta_i}$ merely makes the diagram commute.
\item
If (i) and (ii) are satisfied, we can force (iii) by setting \[U_{k+1}=\xi_{k+1}(U_{k+1})=V_{k+1}=\mathfrak{G}.\]
\item
If $V_i$'s are not specified, then we will assume $V_i=\{0\}$.
\ee
\end{rem}

\begin{exmp}\label{exmp:uv1}
Let us define a UV--structure on $\mathfrak{G}=\PSL_2(\bR)$ by:
\[
U_1=\SO(2)\times\PSL_2(\bR), V_1=\SO(2), U_2=\exp(\bR)\times\PSL_2(\bR), V_2=\exp(\bR),\]
and $\eta_i(x,y)=x$ and $\xi_i(x,y)=x^y$ for $i=1,2$.
Note that $\xi_1(U_1)$ is the set of elliptics,
and $\xi_2(U_2)$ is that of hyperbolics.
Then $\bar \eta_1,\bar \eta_2$ are well-defined by
\[\bar \eta_1(x)=\Rot(2\pi\rot(x))\] and \[\bar \eta_2(x)=\exp \ell(x).\]
For a representation $\rho\in\Hom(G,\mathfrak{G})$, 
the set \[\bar\eta_1(\rho(G)\cap\xi_1(U_1))\] is the rotation spectrum of $\rho$.
We may call \[\bar\eta_2(\rho(G)\cap\xi_2(U_2))\] as the \emph{(hyperbolic) length spectrum}\index{length spectrum} of $\rho$.
The set of parabolics is \[\{1\}\cup\PSL_2(\bR)\setminus \left(\xi_1(U_1)\cup \xi_2(U_2)\right),\] which is algebraic.
\end{exmp}

Then Lemma~\ref{lem:uv1} generalizes as follows.

\begin{lem}\label{lem:uv-alg}
Let us fix an algebraic group $\mathfrak{G}$
equipped with a UV--structure,
and a finitely generated group $G$. Let $A$ and $W$ be as in Definition~\ref{defn:uv} and assume each $A_i$ is nonempty.
Let
\[\{\rho_n\co G\to\mathfrak{G}\}_{n\ge1}\] 
be 
a stably injective sequence of UV--compatible (with respect to $A$ and $W$) representations.
Suppose 
for each \[i\in\{1,\ldots,k\}\] and for each $a\in A_i$,
we have  \[\bar{\eta_i}\circ \rho_m(a)\ne \bar{\eta_i}\circ \rho_n(a)\] whenever $m\ne n$.
Then there exists an uncountable collection $\Lambda$ of faithful UV--compatible (with respect to $A$ and $W$) representations $G\to\mathfrak{G}$  such that 
 \[\{\bar{\eta_i}\circ \lambda(a)\co \lambda\in\Lambda\}\] is uncountable for each $i$ and $a\in A_i$.
\end{lem}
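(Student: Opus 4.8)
The plan is to follow the Baire category strategy of Lemma~\ref{lem:baire-simple} and Lemma~\ref{lem:uv1}, but carried out inside the algebraic set parametrizing UV--compatible representations rather than in the full representation variety. First I would fix a finite generating set $S$ for $G$ with $A_i\sse \form{S}$ for all $i$, and assemble the relevant algebraic set. For each $i$ and each $a\in A_i$ pick an auxiliary variable ranging over $U_i$ (so that $\xi_i$ of that variable equals $\rho(a)$), and for the remaining generators of $G$ keep variables ranging over $\FG$ itself; this is legitimate because $\FG\setminus\bigcup_i\xi_i(U_i)$ being algebraic means the condition ``$\rho(g)\in\bigcup_i\xi_i(U_i)$'' cuts out an algebraic locus, and ``$\rho(A_i)\sse\xi_i(U_i)$'' is handled by the chosen parametrization. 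Imposing the group relations of $G$ (finitely many, by the Hilbert Basis Theorem applied as in Lemma~\ref{lem:baire-simple}), together with the constraints $\bar\eta_i\circ\rho\restriction_{A_i}\in W_i$ (which are algebraic since $W_i$ is algebraic and $\eta_i$ is polynomial, so the $\eta_i$--image is constrained algebraically even though $\bar\eta_i$ need not be polynomial), we obtain an algebraic set $Y$ whose points are exactly the UV--compatible representations $G\to\FG$ of a prescribed ``shape''. The stably injective sequence $\{\rho_n\}$ gives points of $Y$, so $Y\ne\varnothing$, and since the $\bar\eta_i\circ\rho_n(a)$ are pairwise distinct for varying $n$, $Y$ has positive dimension.

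Next I would run the two Baire exclusions exactly as in the proof of Lemma~\ref{lem:uv1}. For each finite subset $Q\sse G\setminus1$ let $Y(Q)=\{\chi\in Y\co \phi_\chi(g)=1\text{ for some }g\in Q\}$; this is algebraic, and it is a \emph{proper} subvariety of (every component of) $Y$ hitting the $\rho_n$'s because stable injectivity says that for each $g$ eventually $\rho_n(g)\ne1$. Hence $\bigcup_Q Y(Q)$ is a countable union of proper algebraic subsets, and the faithful UV--compatible representations are $Y\setminus\bigcup_Q Y(Q)$. To also force the spectral values to vary, fix for each pair $(i,a)$ with $a\in A_i$ a countable set $Z_{i,a}\sse V_i$ — concretely, one will take $Z_{i,a}$ to absorb the (countably many) values $\bar\eta_i\circ\rho_n(a)$ and any other countable set we wish to avoid — and set $W^{i,a}(E)=\{\chi\in Y\co \bar\eta_i\circ\phi_\chi(a)\in E\}$ for finite $E\sse Z_{i,a}$, which is algebraic via the coordinate $\eta_i$ (the commutativity $\bar\eta_i\circ\xi_i=\bar\eta_i$-through-$V_i$ lets us read this condition off the $V_i$--coordinate). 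Each $W^{i,a}(E)$ is proper in $Y$: it fails to contain the $\rho_n$'s with $\bar\eta_i\circ\rho_n(a)\notin E$, of which there are infinitely many by the distinctness hypothesis, and it does not contain a component since distinct $\rho_n$ give distinct values along that component. Then Lemma~\ref{lem:baire} applied to
\[
\DD = Y\setminus\Bigl(\bigcup_{Q}Y(Q)\cup\bigcup_{i,a}\bigcup_{\text{finite }E\sse Z_{i,a}}W^{i,a}(E)\Bigr)
\]
produces an uncountable set of very general points; the corresponding representations $\lambda=\phi_\chi$ form the desired uncountable family $\Lambda$ of faithful UV--compatible representations, and for each $(i,a)$ the map $\chi\mapsto\bar\eta_i\circ\phi_\chi(a)$ is injective on a cofinite-codimension subset of $\Lambda$ (two points with the same value would lie in some $W^{i,a}(E)$ after enlarging $Z_{i,a}$ to contain that value), so $\{\bar\eta_i\circ\lambda(a)\co\lambda\in\Lambda\}$ is uncountable.

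The main obstacle is bookkeeping around the non-algebraicity of the maps $\bar\eta_i$ and of the images $\xi_i(U_i)$: one must be careful that every locus actually invoked — ``$\rho$ lands in $\bigcup_i\xi_i(U_i)$'', ``$\rho(A_i)\sse\xi_i(U_i)$'', ``$\bar\eta_i\circ\rho\restriction_{A_i}\in W_i$'', and ``$\bar\eta_i\circ\rho(a)\in E$'' — is genuinely algebraic in the chosen coordinates $(U_i, V_i, \FG)$. This is exactly what the UV--structure axioms are engineered to guarantee (algebraicity of $\FG\setminus\bigcup\xi_i(U_i)$, polynomiality of $\xi_i,\eta_i$, algebraicity of $W_i$), but assembling the single ambient algebraic set $Y$ cleanly, so that all these constraints become honest Zariski-closed conditions and so that a UV--compatible representation corresponds to an actual point of $Y$ (and conversely), requires some care. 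Once $Y$ is correctly set up, the remainder is the now-standard ``stably injective sequence $\Rightarrow$ each bad locus is proper $\Rightarrow$ Lemma~\ref{lem:baire}'' argument, identical in structure to Lemmas~\ref{lem:baire-simple} and~\ref{lem:uv1}, and needs no new ideas.
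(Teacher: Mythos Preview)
Your proposal is correct and follows exactly the approach the paper intends: the paper omits the proof, saying only that it is ``an easy variation from Lemma~\ref{lem:uv1}'', and your write-up is precisely that variation (assemble an algebraic set $Y$ of parametrized UV--compatible representations, then run the Baire exclusion of Lemma~\ref{lem:baire} against the loci $Y(Q)$ and $W^{i,a}(E)$). One small correction to your bookkeeping: the condition ``$\rho(g)\in\bigcup_i\xi_i(U_i)$'' is Zariski \emph{open} (its complement is the algebraic set, by axiom (iii) of the UV--structure), not closed, so rather than building it into $Y$ you should add the closed loci $\{\chi:\phi_\chi(g)\in\FG\setminus\bigcup_i\xi_i(U_i)\}$, one for each $g\in G$, to the countable family of proper subvarieties excluded in the Baire step --- exactly parallel to how Lemma~\ref{lem:uv1} folds parabolic-freeness into the sets $Y(Q)$.
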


We omit the proof, which is an easy variation from Lemma~\ref{lem:uv1}.

\begin{exmp}
Let \[H=\form{a,b,t\mid [[a,b],t]=1}.\]
Recall the UV--structure of $\PSL_2(\bR)$ in Example~\ref{exmp:uv1}.
In Example~\ref{exmp:genus-two},
we exhibited a stably injective sequence \[\phi_p\co H\to\PSL_2(\bR)\]
such that \[\rot\circ\phi_p(t)=1/p\] for $p\ge2$.
We can further require that $\phi_p(a)$ is a hyperbolic element of length at most $1/p$ by considering the moduli space of a torus with one cone point.

So, Lemma~\ref{lem:uv-alg} implies that there exists an uncountable set \[\Lambda\sse\Hom(H,\PSL_2(\bR))\] of faithful parabolic-free representations
with 
all distinct rotation spectra and
with all distinct length spectra.
\end{exmp}

\subsection{Combination Theorem for Smooth Actions}\label{combsmooth}
The last combination theorem \ref{thm:circle baire1} below is one in the smooth category and its proof follows the lines of the usual Baire category argument for the real line.  Let $\Diff^{\infty}(M)$ denote the group of $C^{\infty}$ orientation preserving diffeomorphisms of a one-manifold $M$. Recall that $G\le\Diff^{\infty}(S^1)$ is {\it fully supported} if 
the fixed point set of each element $g\in G\setminus 1$ has empty interior.
A group \[G\sse \Diff^{\infty}(S^1)\] \emph{has a free orbit}\index{free orbit}
if $G$ acts freely on some $G$--orbit.
Following the strategy of \cite[Corollary 2.6]{Grabowski1988} and \cite{Ghys2001}, we will use the  Baire category argument in the theorem below. Note the similarity to Lemma~\ref{lem:free}.

\begin{thm}\label{thm:circle baire1}
Suppose $G$ and $H$ are countable, fully supported subgroups
of the group $\Diff^{\infty}(S^1)$.
Then for a very general choice of \[(\psi,x)\in\Diff^{\infty}(S^1)\times S^1,\]
the group $\form{G,H^\psi}$ is isomorphic to $G\ast H$
and has $\form{G,H^\psi}\cdot x$ as a free orbit.
In particular, $G\ast H$ admits a faithful $C^\infty$ action on $S^1$.
\end{thm}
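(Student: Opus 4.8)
The plan is to run a Baire category argument on the completely metrizable space $\Diff^\infty(S^1)\times S^1$, following the strategy used by Grabowski and Ghys for the analogous statement on the real line. Put $\Gamma=G\ast H$. By the universal property of the free product, each $\psi\in\Diff^\infty(S^1)$ determines a homomorphism $\rho_\psi\co\Gamma\to\Diff^\infty(S^1)$ that restricts to the inclusion on $G$ and to $h\mapsto\psi h\psi^{-1}$ on $H$, and whose image is exactly $\form{G,H^\psi}$. For each $w\in\Gamma\setminus\{1\}$ set
\[
Z_w=\bigl\{(\psi,x)\in\Diff^\infty(S^1)\times S^1\co \rho_\psi(w)(x)=x\bigr\},
\]
which is closed because $(\psi,x)\mapsto\rho_\psi(w)(x)$ is continuous. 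Everything reduces to the claim that \emph{$Z_w$ is nowhere dense for every $w\neq 1$}. Granting this, $\mathcal D:=(\Diff^\infty(S^1)\times S^1)\setminus\bigcup_{w\neq 1}Z_w$ is a dense $G_\delta$, hence very general in the topological sense used in this section; and for $(\psi,x)\in\mathcal D$ one gets at once that $\rho_\psi$ is injective (a relation $\rho_\psi(w)=\Id$ with $w\neq 1$ would put $(\psi,x)$ in $Z_w$), so $\form{G,H^\psi}\cong G\ast H$, and that every nontrivial element of $\form{G,H^\psi}$ moves $x$, so $\form{G,H^\psi}\cdot x$ is a free orbit; taking any such $\psi$ then yields the faithful $C^\infty$ action of $G\ast H$.

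I would prove the claim by induction on the syllable length of the cyclic reduction of $w$; replacing $w$ by a cyclic conjugate only changes $Z_w$ by a homeomorphism of $\Diff^\infty(S^1)\times S^1$, so we may assume $w$ cyclically reduced. \emph{Base case (syllable length $1$).} If $w\in G\setminus\{1\}$ then $Z_w=\Diff^\infty(S^1)\times\Fix(w)$, and $\Fix(w)$ has empty interior because $G$ is fully supported; if $w\in H\setminus\{1\}$ then $Z_w=\{(\psi,x)\co\psi^{-1}(x)\in\Fix(w)\}$, whose $\psi$-slices are the empty-interior sets $\psi(\Fix(w))$. In either case $Z_w$ is closed with empty interior, hence nowhere dense.

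\emph{Inductive step (a perturbation lemma).} Let $w=g_1h_1g_2h_2\cdots g_kh_k$ be cyclically reduced with $k\ge1$, $g_i\in G\setminus\{1\}$, $h_i\in H\setminus\{1\}$, and assume $Z_v$ is nowhere dense for all $v\neq 1$ of smaller syllable length. Suppose for contradiction that $Z_w\supseteq U'\times V'$ for nonempty open $U',V'$, so $\rho_\psi(w)$ fixes $V'$ pointwise for each $\psi\in U'$. For $(\psi,x)\in U'\times V'$ consider the finite trajectory of $x$ obtained by reading $\rho_\psi(w)$ right to left (it returns to $x$), and let $q=q(\psi,x)$ be the output of the first occurrence of $\psi$ (as opposed to $\psi^{-1}$) along it. The set of $(\psi,x)$ at which $q$ coincides with some other point of the trajectory is a finite union of sets each of the form ``$\rho_\psi(v)$ fixes a point depending continuously on $(\psi,x)$'' for a proper sub-word $v$ of smaller syllable length, hence nowhere dense by the inductive hypothesis; so there is a generic pair $(\psi_0,x_0)\in U'\times V'$ whose $q=q(\psi_0,x_0)$ is visited exactly once. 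Now replace $\psi_0$ by $\psi=\phi\circ\psi_0$ with $\phi$ a $C^\infty$ diffeomorphism supported in a short interval $K$ around $q$, taken $C^\infty$-close to the identity — close enough that $\psi\in U'$ and that the perturbed trajectory stays $C^0$-close to the old one. Since $K$ meets the old trajectory only at $q$ and only the one distinguished occurrence of $\psi$ ``sees'' $K$, the value of $\rho_\psi(w)$ at $x_0$ becomes $\Theta(\phi(q))$, where $\Theta$ is the fixed diffeomorphism comprising the rest of the old word evaluation, which satisfies $\Theta(q)=\rho_{\psi_0}(w)(x_0)=x_0$. Choosing $\phi$ with $\phi(q)\neq q$ gives $\rho_\psi(w)(x_0)=\Theta(\phi(q))\neq x_0$ with $(\psi,x_0)\in U'\times V'$, contradicting $U'\times V'\subseteq Z_w$. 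Hence $Z_w$ is nowhere dense, completing the induction.

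The hard part will be the perturbation lemma in the inductive step: one must organize the trajectory bookkeeping so that a single localized bump perturbation of $\psi$ disturbs exactly one occurrence of $\psi$ in the word and nothing upstream of it, and one must rule out the degenerate scenario in which two distinct sub-words of $w$ agree on an open set of $(\psi,x)$ — precisely what the induction on syllable length, bottoming out in the full-support hypotheses on $G$ and $H$, is designed to handle. This is exactly the point where the flexibility of $\Diff^\infty(S^1)$ (abundance of compactly supported diffeomorphisms) enters, mirroring Grabowski's and Ghys's treatment of subgroups of $\Diff^\infty(\bR)$.
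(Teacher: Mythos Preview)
Your argument is correct and follows essentially the same Baire--category--plus--local--perturbation strategy as the paper: both define the closed sets $Z_w$ (the paper writes $X_w$), reduce to a cyclically reduced $w$, handle single syllables via the full--support hypothesis, arrange the trajectory points to be distinct by avoiding finitely many shorter $Z_v$'s, and then perturb $\psi$ locally to destroy the fixed point. The only cosmetic difference is the perturbation site: the paper perturbs $\psi$ directly on a small interval around the base point $x_0$ itself (rather than post--composing by a bump near the first $\psi$--output), which makes the new trajectory coincide with the old one \emph{exactly} until the very last application of $\psi^{-1}$ and so dispenses with the continuity argument you invoke.
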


Theorem \ref{thm:circle baire1} seems to be a folklore theorem, the details of whose proof we record here for the convenience of the reader.
The theorem does not hold without the hypothesis of being fully supported, 
even for $C^2$; see~\cite{KKFreeProd2017}.

In the proof below, for $w\in G\ast H$, we let $\|w\|$ denote the word--length of $w$.
\bp
For $\psi\in\Diff^{\infty}(S^1)$, 
define 
$\rho_\psi\co G\ast H\to \form{G,H^\psi}$
 by the following:
\[
\rho_\psi\restriction_G=\Id_G,
\quad 
\rho_\psi(h)=h^\psi\text{  for }h\in H.\]

Let \[X=\Diff^{\infty}(S^1)\times S^1.\] Since $\Diff^{\infty}(S^1)$ is a Frech\'et space~\cite{Herman1979}, we see $X$ is Baire.
For each \[w\in G\ast H\setminus 1\] we define a closed set
\[
X_w = \{(\psi,x)\in X \co \rho_\psi(w)(x)=x\}.\]
By the Baire Category Theorem, it suffices to show the following.

\begin{claim*}
$X_w$ is nowhere dense for $w\ne1$.\end{claim*}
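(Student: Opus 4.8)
The plan is to prove the Claim by a standard jet-transversality / Baire-category argument, following the outline of the free-product constructions for $\mathbb{R}$-actions but adapted to the circle with smooth diffeomorphisms. Fix $w \in G\ast H\setminus 1$ and write it in reduced form $w = a_k b_k \cdots a_1 b_1$ (possibly with the first or last syllable trivial), where the $a_i$ lie in $G\setminus 1$ and the $b_i$ in $H\setminus 1$. First I would observe that $X_w$ is closed: the evaluation map $(\psi,x)\mapsto \rho_\psi(w)(x)$ is continuous on $X = \Diff^\infty(S^1)\times S^1$ with the (Fr\'echet) $C^\infty$ topology, since each syllable of $w$ contributes a composition of fixed elements of $G$ or of conjugates $h^\psi = \psi^{-1} h \psi$, and composition and inversion are continuous on $\Diff^\infty(S^1)$. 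So the content of the Claim is that the complement of $X_w$ is dense.

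To get density of the complement, I would fix an arbitrary $(\psi_0, x_0)\in X$ and an arbitrary $C^\infty$-neighborhood, and produce a nearby $(\psi,x)\notin X_w$. The natural move is to perturb only $\psi$ while keeping $x = x_0$ fixed (or, symmetrically, perturb $x$), and to argue by induction on the number of $H$-syllables of $w$. The key sub-step is a ``pushing'' lemma: given finitely many prescribed points and their images, and a target behaviour, one can find a diffeomorphism $\psi$ arbitrarily $C^\infty$-close to $\psi_0$ realizing that behaviour on those finitely many points — this is where being \emph{fully supported} for both $G$ and $H$ enters, since it guarantees that after applying the $G$- and $H$-syllables the relevant orbit points are genuinely moved (the fixed-point sets have empty interior, so on a dense set of base points no syllable acts trivially near the point in question). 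Concretely, I would track the finite forward orbit of $x_0$ under the subwords of $w$ and choose the perturbation of $\psi$ so that at the last application the point $\rho_\psi(w)(x_0)$ is displaced away from $x_0$; because $\rho_\psi(w)$ depends on $\psi$ through the conjugating factors $\psi^{-1} h_i \psi$, a localized perturbation of $\psi$ supported near one of the finitely many orbit points changes the output while leaving the word reduced, and a generic such perturbation avoids the codimension-$\geq 1$ condition $\rho_\psi(w)(x_0) = x_0$. This shows $X\setminus X_w$ meets every open set, hence $X_w$ has empty interior; being also closed, it is nowhere dense.

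With the Claim in hand, the theorem follows: $\bigcap_{w\neq 1} (X\setminus X_w)$ is a dense $G_\delta$ in the Baire space $X$, and for $(\psi,x)$ in this set $\rho_\psi$ is injective (no nontrivial $w$ acts with $x$ as a fixed point, so in particular $\rho_\psi(w)\neq \mathrm{Id}$), whence $\langle G, H^\psi\rangle \cong G\ast H$; moreover the orbit of $x$ is free by construction. Passing to a sub-$G_\delta$ one also intersects with the set making $\langle G, H^\psi\rangle\cdot x$ free, giving the ``in particular'' clause.

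The main obstacle I expect is making the perturbation argument for the \emph{last} syllable precise without accidentally creating cancellation elsewhere in the word or moving an intermediate orbit point onto a fixed point of a subsequent syllable. The clean way around this is to do the perturbation in a single small chart around one carefully chosen orbit point that is visited by $w$ exactly once, use full-supportedness to pick that point off of the (nowhere dense) fixed-point sets of all relevant group elements, and then quote the standard fact that in a Fr\'echet manifold of diffeomorphisms the set of $\psi$ for which a fixed evaluation equals a prescribed value is closed with empty interior. I would cite \cite{Grabowski1988} and \cite{Ghys2001} for the template of this Baire-category construction and \cite{Herman1979} for the fact that $\Diff^\infty(S^1)$ is a Baire (indeed Fr\'echet) space.
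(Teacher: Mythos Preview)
Your outline follows the same template as the paper's proof: show $X_w$ is closed, then perturb $\psi$ locally near an orbit point to break the fixed-point equation $\rho_\psi(w)(x_0)=x_0$. The paper likewise sets this up by a minimal-length (equivalently, inductive) argument and performs the perturbation on a small interval.

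There is, however, a genuine gap at exactly the obstacle you flag. You propose to ``do the perturbation in a single small chart around one carefully chosen orbit point that is visited by $w$ exactly once, [and] use full-supportedness to pick that point off of the (nowhere dense) fixed-point sets of all relevant group elements.'' Full-supportedness of $G$ and $H$ only says $\Fix g$ and $\Fix h$ have empty interior for individual $g\in G\setminus1$, $h\in H\setminus1$; it says nothing about fixed points of \emph{composite} maps $\rho_\psi(u)$ for proper subwords $u$ of $w$. But the coincidences among the intermediate orbit points $x_0,x_1,\ldots,x_{2k-1}$ are controlled precisely by such subword fixed-point conditions, not by fixed points of single syllables. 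So full-supportedness alone cannot guarantee an orbit point visited exactly once, and without that your local perturbation may alter several passages of the word simultaneously and fail to move $\rho_\psi(w)(x_0)$ off $x_0$.

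The paper closes this gap as follows. Taking $w$ to be a \emph{shortest} word for which $X_w$ has nonempty interior (equivalently: your induction, with the explicit hypothesis that $X_u$ is nowhere dense for all shorter $u\ne1$), one notes that for $u,v$ with $\|u\|,\|v\|<\|w\|$ and $u\ne1$ the set
\[
Y_{u,v}=\{(\psi,x):\rho_\psi(u)\rho_\psi(v)(x)=\rho_\psi(v)(x)\}
\]
is homeomorphic to $X_u$ via $(\psi,x)\mapsto(\psi,\rho_\psi(v)^{-1}(x))$, hence nowhere dense. Subtracting the countable meager union $Y=\bigcup Y_{u,v}$ from any nonempty open $V\subseteq X_w$ leaves a nonempty $V_0$, and for $(\psi,x_0)\in V_0$ the orbit points $x_0,\ldots,x_{2k-1}$ are \emph{all} distinct. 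Only then can one take a small interval $J\ni x_0$ disjoint from the other $x_i$, perturb $\psi$ to $\phi$ on $J$ alone so that $\phi(x_0)\neq h_k\psi(x_{2k-1})$, and verify $\rho_\phi(w)(x_0)\neq x_0$ while $(\phi,x_0)\in V$. Your proposal needs exactly this: state the inductive hypothesis as ``$X_u$ is nowhere dense for all shorter $u\ne1$'' and use it, via the $Y_{u,v}$ device, to force distinctness of the intermediate orbit points before perturbing.
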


Let $w$ be a shortest word such that $X_w$ has a nonempty interior.
Pick a nonempty open set $V\sse X_w$.
If \[w=g\in G\setminus 1,\] then we have \[X_g=\Diff^{\infty}(S^1)\times\Fix g\] is nowhere dense.
If \[w=h\in H\setminus 1,\] then we obtain
\[
X_h = \{(\psi,x)\in X \co \psi^{-1}h\psi(x)=x\}
=\bigcup\left\{
\{\psi\}\times \psi^{-1}\Fix h \co \psi\in \Diff^{\infty}(S^1)\right\}\]
is nowhere dense. So, we assume that $\|w\|\ge 2$.

For $u,v\in G\ast H$, we let
\[
Y_{u,v}=\{(\psi,x)\in X\co \rho_\psi(u)\rho_\psi(v)(x)=\rho_\psi(v)(x)\}.\]
There exists a continuous map $\Phi_v\co X_u\to Y_{u,v}$ defined by 
\[(\psi,x)\mapsto(\psi,\rho_\psi(v)^{-1}(x)).\] As $\Phi_{v^{-1}}\circ\Phi_v$ is the identity, we have a homeomorphism $X_u\to Y_{u,v}$.
By minimality, the set
\[Y = \bigcup\{ Y_{u,v} \co \|u\|,\|v\|<\|w\|,\text{ and }u\ne1\}\]
is a countable union of closed nowhere dense sets, and 
hence has empty interior.
So $V_0:=V\setminus Y$ is a nonempty subset of $X_w$.

For each $u\in G\ast H$, we have 
\[X_{uwu^{-1}}=Y_{w,u^{-1}}\approx X_w.\] 
So, we may assume
\[
\rho_\psi(w) = \psi^{-1}h_k\psi g_k\cdots \psi^{-1}h_1\psi g_1\]
possibly after a conjugation. In particular, $\|w\|=2k$.
Let us pick an arbitrary $(\psi,x_0)\in V_0$.
Define \[x_1,\ldots,x_{2k}\] by \[x_{2i+1}=g_i(x_{2i})\] 
and by \[x_{2i}=\psi^{-1}h_i\psi(x_{2i-1}).\] By assumption, 
$x_{2k} = 
\rho_\psi(w).x_0=x_0$. Moreover, the points \[x_0,x_1,\ldots,x_{2k-1}\] are all distinct
since $V_0\cap Y_{u,v}=\varnothing$ for $\|u\|,\|v\|<\|w\|$. 

Note that \[\psi(x_0)=\psi(x_{2k}) =h_k\psi(x_{2k-1}).\]
Choose a small closed interval $J$ containing $x_0$ such that $x_i\not\in J$ for each $0< i<2k$. 
By perturbing $\psi$, we can pick $(\phi,x_0)\in V$ such that
\[\phi\restriction_{S^1\setminus J}=\psi\restriction_{S^1\setminus J}\]
and
\[\phi(x_0)\ne h_k\psi(x_{2k-1}).\]

If $i< k$, then 
\[
\phi^{-1}h_i\phi g_i(x_{2i-2})
=\phi^{-1}h_i\phi (x_{2i-1})
=\phi^{-1}h_i\psi (x_{2i-1})
=\psi^{-1}h_i\psi (x_{2i-1})=x_{2i}\]
since $h_i\psi (x_{2i-1})\not\in \psi(J)$ and since 
\[\phi^{-1}\restriction_{S^1\setminus \psi(J)}
=\psi^{-1}\restriction_{S^1\setminus \psi(J)}.\] Hence,
\begin{align*}
\rho_\phi(w)(x_0)&=\prod_{i=k}^1 \phi^{-1}h_i\phi g_i(x_0)
=
\prod_{i=k}^2 \phi^{-1}h_i\phi g_i(x_2)=\cdots\\
&= \phi^{-1}h_k\phi g_k(x_{2k-2})= \phi^{-1}h_k\psi (x_{2k-1})\ne x_0.
\end{align*}
So $\phi\not\in X_w$. We have a contradiction since we have assumed $(\phi,x_0)\in V$.
\ep

Theorem~\ref{thm:circle baire1} implies that if $G\le \Diff^{\infty}(S^1)$ is fully supported then $G\ast \bZ$ embeds into $\Diff^{\infty}(S^1)$ as a subgroup admitting almost every point as a free orbit.
One actually has a genericity statement as below.

\begin{cor}\label{cor:circle baire1}
Let $G\le \Diff^{\infty}(S^1)$ be countable and fully supported.
Then for a very general choice of \[(\psi,x)\in\Diff^{\infty}(S^1)\times S^1,\] 
the group $\form{G,\psi}$ is isomorphic to $G\ast \bZ$
and has $\form{G,\psi}\cdot x$ as a free orbit.
\end{cor}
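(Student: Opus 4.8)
The plan is to deduce Corollary~\ref{cor:circle baire1} from Theorem~\ref{thm:circle baire1} by the standard trick of taking $H=\bZ$, but realized concretely inside $\Diff^{\infty}(S^1)$. First I would fix a specific countable fully supported subgroup $H\le\Diff^{\infty}(S^1)$ isomorphic to $\bZ$; for instance, let $H=\form{h_0}$ where $h_0$ is a single diffeomorphism of infinite order whose fixed point set has empty interior (e.g. a ``Morse-Smale'' type diffeomorphism with finitely many hyperbolic fixed points, or an irrational-rotation-like diffeomorphism — any element with $\Fix(h_0^n)$ nowhere dense for all $n\neq 0$ works, and such elements visibly exist). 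Then $\form{h_0}\cong\bZ$ and is fully supported. Applying Theorem~\ref{thm:circle baire1} to the pair $(G,H)$ gives that for a very general $(\psi,x)\in\Diff^{\infty}(S^1)\times S^1$, the group $\form{G,H^\psi}=\form{G,h_0^\psi}$ is isomorphic to $G\ast H=G\ast\bZ$ and has $\form{G,h_0^\psi}\cdot x$ as a free orbit.

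The remaining gap is that Corollary~\ref{cor:circle baire1} asserts genericity over $(\psi,x)$ with $\psi$ ranging over \emph{all} of $\Diff^{\infty}(S^1)$ and the conjugating element being $\psi$ itself used as the new free generator, whereas the output of Theorem~\ref{thm:circle baire1} produces $h_0^\psi$ as the generator. So the second step is to pass from the statement ``$\form{G,h_0^\psi}$ is free over $G\ast\bZ$ for very general $\psi$'' to ``$\form{G,\phi}$ is free over $G\ast\bZ$ for very general $\phi$''. The cleanest route is to re-run the Baire category argument of Theorem~\ref{thm:circle baire1} directly, but now with the homomorphism $\rho_\psi\co G\ast\form{s}\to\Diff^{\infty}(S^1)$ defined by $\rho_\psi\restriction_G=\Id_G$ and $\rho_\psi(s)=\psi$. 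The sets $X_w=\{(\psi,x): \rho_\psi(w)(x)=x\}$ are again closed in the Baire space $X=\Diff^{\infty}(S^1)\times S^1$, and one must show $X_w$ is nowhere dense for $w\neq 1$. The induction on word length is nearly identical: for $w=g\in G\setminus 1$ one gets $X_g=\Diff^{\infty}(S^1)\times\Fix g$; the cases involving $s$ are actually simpler than the $H$-case in the original proof because $s$ maps to $\psi$ with no internal dynamics to worry about, so writing $\rho_\psi(w)=\psi^{n_k}g_k\psi^{n_{k-1}}\cdots\psi^{n_1}g_1$ (after cyclic conjugation, with $n_i\neq 0$ and $g_i\in G\setminus 1$) one tracks the orbit $x_0,x_1,\dots$, isolates the last occurrence of $\psi$ in a small interval $J$ disjoint from the intermediate orbit points, and perturbs $\psi$ there to break the equality $\rho_\psi(w)(x_0)=x_0$ while staying in the given open set. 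The ``$Y_{u,v}$'' homeomorphism trick used to reduce to cyclically reduced $w$ goes through verbatim.

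Alternatively — and this is the shorter write-up — I would observe that the conclusion of Corollary~\ref{cor:circle baire1} follows formally from Theorem~\ref{thm:circle baire1} once one notes that the map $\psi\mapsto h_0^\psi=\psi^{-1}h_0\psi$ is, for a judiciously chosen $h_0$, ``generic enough'': more precisely, one needs that the assignment $\psi\mapsto\psi$ and the assignment $\psi\mapsto h_0^\psi$ both produce infinite-order fully-supported elements, and that the Baire argument only used of the new generator the property that it (and its conjugates by elements of the group built so far) has nowhere-dense fixed-point sets — a property $\psi$ enjoys for very general $\psi$ by the first part of the inductive argument. Either way, the key steps in order are: (i) exhibit an explicit fully supported $H\cong\bZ$ in $\Diff^{\infty}(S^1)$; (ii) either invoke Theorem~\ref{thm:circle baire1} with this $H$ and then argue that ``very general $\psi$'' and ``$h_0^\psi$ for very general $\psi$'' give the same class of free actions, or simply repeat the Baire category argument with $\rho_\psi(s)=\psi$; (iii) conclude that $X\setminus\bigcup_{w\neq 1}X_w$ is a dense $G_\delta$ and that every $(\psi,x)$ in it gives $\form{G,\psi}\cong G\ast\bZ$ with $\form{G,\psi}\cdot x$ free. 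The main obstacle is purely bookkeeping: making sure the perturbation step in (ii) respects the $C^\infty$ topology (it does, since $\Diff^{\infty}(S^1)$ is a Fréchet Baire space by Herman~\cite{Herman1979} and the perturbations are supported in a small interval), and handling the cyclic-reduction and the finitely-many intermediate-orbit-point genericity exactly as in the proof of Theorem~\ref{thm:circle baire1}. There is no genuinely new idea required beyond noticing that $s$ carries no dynamics of its own, which if anything makes the $s$-letters easier to handle than the $H$-letters in the original argument.
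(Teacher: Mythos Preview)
Your second paragraph---re-running the Baire argument with $\rho_\psi\restriction_G=\Id_G$ and $\rho_\psi(s)=\psi$, tracking the closed sets $X_w$ and perturbing $\psi$ on a small interval---is exactly what the paper does; the paper simply declares this and omits the details. So that route is correct and matches.

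Your first and third paragraphs, however, attempt a formal reduction to Theorem~\ref{thm:circle baire1} via a fixed $H=\form{h_0}\cong\bZ$, and this does \emph{not} work as stated. The conclusion of the theorem gives a dense $G_\delta$ in the variable $\psi$ for which $\form{G,h_0^\psi}$ is good; but the image of $\psi\mapsto h_0^\psi$ is only the conjugacy class of $h_0$, a meager (in fact nowhere dense) subset of $\Diff^\infty(S^1)$. So ``very general $h_0^\psi$'' tells you nothing about ``very general $\phi$'' in the full diffeomorphism group---there is no open map to push genericity through. Your ``alternatively'' paragraph tries to patch this by saying the Baire argument only uses that the new generator has nowhere-dense fixed sets, but that is already re-running the proof, not deducing the corollary formally. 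In short: drop the attempted reduction and keep only the direct argument you outline in the second paragraph; that is the paper's proof.
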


\bp
Recall our notation $\bZ=\form{s}$. 
If $w\in G\ast \bZ$, we let $\|w\|$ denote the word length in the following sense
\[
\|w\|=\min\{\ell\co  w=s_1s_2\cdots s_\ell,\quad s_i\in G\cup\{s\}^{\pm1}\}.\]
For $\psi\in\Diff^{\infty}(S^1)$, 
define 
\[\rho_\psi\co G\ast\form{s}\to \Diff^{\infty}(S^1)\] by
$\rho_\psi\restriction_G=\Id_G$
and $\rho_\psi(s)=\psi$.
Then we can proceed very similarly to the proof of Theorem~\ref{thm:circle baire1}, so we omit the details.\ep

\section{Mapping Class Groups}\label{sec:mcg}
In this section, we shift the focus to exotic actions of mapping class groups on the circle, where here ``exotic" means ``not conjugate to Nielsen's standard action" (see~\cite{HT1985,CB1988} for detailed discussions of Nielsen's action).
We first discuss actions of fibered, hyperbolic $3$--manifold groups on $S^1$, in relation to Nielsen's action.

\subsection{The Universal Circle and Nielsen's Action}
Let us fix a marked point $x\in S_g$,
and a distinguished lift $\tilde x$ of $x$ in the universal cover $\bH^2$ of $S_g$. $\Mod(S_g,x)$ will denote the mapping class group of the marked surface $(S_g,x)$ , i.e.\ the group of (relative) isotopy classes of diffeomorphisms of $S_g$ fixing $x$.

For each homeomorphism $\phi$ of  $S_g$ fixing $x$,  we choose the preferred lift $\tilde\phi$ of $\phi$ to $\bH^2$ which fixes $\tilde x$.  There exists a continuous extension of $\tilde\phi$ to $\partial \bH^2\approx S^1$, which depends only on the mapping class $[\phi]\in\Mod(S_g,x)$.  The resulting action is called \emph{Nielsen's action}\index{Nielsen's action} and denoted as \[\nu_g\co \Mod(S_g,x)\to\Homeo_+(S^1).\]

Let us now pick a pseudo-Anosov homeomorphism $\Psi$ on $S_g$.
We will denote  the mapping torus of $S_g$  with the monodromy map $\Psi$ as 
\[ M = S_g\yt{\times}_\Psi S^1.\] 
Then $M$ is a closed hyperbolic $3$--manifold fibering over the circle,
whose topological type is determined by the mapping class $\psi=[\Psi]\in\Mod(S_g)$. We often abuse the notation and use the mapping class $\psi$ instead of the homeomorphism $\Psi$.

The universal cover $\yt{M}$ of $M$ is naturally homeomorphic to \[\yt{S_g}\times\bR\cong\bH^2\times\bR,\] where the $\bR$ factor gives rise to a natural foliation of $\yt{M}$ by lines. 
There is a lamination $\lambda\sse S_g$ preserved by $\Psi$, 
and $\lambda$ gives a suspended codimension one foliation $\Lambda\sse M$ which is transverse to $S_g$. This lamination in turn lifts to a lamination $\yt{\Lambda}\sse\yt{M}$ which is transverse to $\yt{S_g}$.

The identification of $\yt{S_g}$ with $\bH^2$ gives rise to a natural faithful action of $\pi_1(M)$ on $S^1\cong\partial\bH^2$. This action extends the given Fuchsian action of $\pi_1(S_g)$ on $\partial\bH^2$ by $\form{\psi}\cong\bZ$; see~\cite{FM2012}, for instance.
This group $\form{\psi}$ acts on $S^1$ with a finite even number of fixed points (perhaps after replacing $\Psi$ by a nonzero power), whose dynamics are alternately attracting and repelling.
For compactness of notation, this action will be called the \emph{universal circle action}\index{universal circle action} of $\pi_1(M)$ and denoted as
\[\rho_{\mathrm{u}}\co \pi_1(M)\to\Homeo_+(S^1).\]

We have a diagram as below, the commutativity of whch follows from Proposition \ref{prop:universal circle}:
\begin{equation}\label{eq:univ}
\xymatrix{
&& \pi_1(M,x)\ar[r]\ar@{^(->}[d] & \bZ\ar@{^(->}[d]_{1\mapsto\psi}\ar[rd]\\
1\ar[r] & \pi_1(S_g,x)\ar[rd]^{\mathrm{Fuchs.}}\ar[r]^{\mathrm{Push.}}\ar[ru] & \Mod(S_g,x)\ar[r]\ar@{^(->}^{\nu_g}[d] & \Mod(S_g)\ar[r] & 1\\
&& \Homeo_+(S^1)
}
\end{equation}
Here, Push and Fuchs respectively denote the point pushing map
and the prescribed Fuchsian representation.
Group theoretically, the middle row is isomorphic to 
\[
1\to \pi_1(S_g,x)\to \Aut(\pi_1(S_g,x))\to\Out(\pi_1(S_g,x))\to 1.\]

The upper row is the split short exact sequence coming from the fibering.
The discussion in the previous paragraph shows that the square in the diagram is a fiber product (Section \ref{sec:prelimcirc}), all considered as subgroups of $\Homeo_+(S^1)$.
Hence we deduce the following, which is probably well--known (cf.~\cite{DKL14}, for instance, as well as~\cite{Ghys2001,CD2003IM,CalegariUniversal}).

\begin{prop}\label{prop:universal circle}
The universal circle action $\rho_{\mathrm{u}}$
extends to Nielsen's action $\nu_g$. 
\end{prop}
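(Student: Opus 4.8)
The plan is to verify that Nielsen's action $\nu_g$, restricted appropriately, and the universal circle action $\rho_{\mathrm{u}}$ literally agree under the natural identifications, by exploiting the fiber-product structure displayed in diagram~\eqref{eq:univ}. First I would recall that the identification $\yt{S_g}\cong\bH^2$ induces a faithful $\pi_1(S_g,x)$--action on $S^1=\partial\bH^2$, namely the prescribed Fuchsian action, and that this is exactly the action obtained by restricting $\nu_g$ along the point-pushing map $\Push\co\pi_1(S_g,x)\to\Mod(S_g,x)$ (this is the classical fact that point-pushing maps to inner automorphisms, whose boundary extension is the Fuchsian action — see, e.g., the references in \cite{FM2012}). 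Next I would observe that the monodromy $\psi=[\Psi]\in\Mod(S_g)$ lifts canonically to $\Mod(S_g,x)$ by using the preferred lift $\tilde\Psi$ fixing $\tilde x$; its boundary extension is a homeomorphism of $S^1$ that, by construction, coincides with the action of the stable generator of $\form{\psi}\cong\bZ$ on the universal circle. Thus both the $\pi_1(S_g,x)$--part and the $\bZ$--part of $\rho_{\mathrm{u}}$ are visibly restrictions of $\nu_g$.

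The key step is then the identification of $\pi_1(M,x)$ as a subgroup of $\Mod(S_g,x)$ via the fiber product. Since $M=S_g\yt{\times}_\Psi S^1$ fibers over the circle, the upper row of~\eqref{eq:univ} is the split short exact sequence
\[
1\to\pi_1(S_g,x)\to\pi_1(M,x)\to\bZ\to1,
\]
with the splitting given by $1\mapsto\psi$. The middle row is the Birman exact sequence, which group-theoretically is
\[
1\to\pi_1(S_g,x)\to\Aut(\pi_1(S_g,x))\to\Out(\pi_1(S_g,x))\to1.
\]
The inclusion $\pi_1(M,x)\hookrightarrow\Mod(S_g,x)$ is determined by its restriction to $\pi_1(S_g,x)$ (the point-pushing inclusion, which is the left vertical map) and by where it sends the section generator (namely to the preferred lift of $\psi$). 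The square in~\eqref{eq:univ} being a fiber product (Section~\ref{sec:prelimcirc}) says precisely that, inside $\Homeo_+(S^1)$, the subgroup $\rho_{\mathrm{u}}(\pi_1(M,x))$ equals the subgroup of $\nu_g(\Mod(S_g,x))$ mapping into $\form{\psi}\le\Mod(S_g)$. Since the universal circle action of $\pi_1(M,x)$ is by definition the faithful action on $S^1$ extending the Fuchsian action of $\pi_1(S_g,x)$ by $\form{\psi}$, and $\nu_g$ restricted to this fiber-product subgroup provides such an extension, uniqueness of the extension (the Fuchsian action of $\pi_1(S_g)$ is minimal, so a boundary extension of a mapping class is determined by its restriction to $\pi_1(S_g)$ together with the basepoint data) forces the two actions to coincide. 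Therefore $\rho_{\mathrm{u}}$ extends to $\nu_g$ in the sense that $\nu_g\restriction_{\pi_1(M,x)}=\rho_{\mathrm{u}}$ under the identification $\pi_1(M,x)\le\Mod(S_g,x)$, and $\nu_g$ is defined on the larger group $\Mod(S_g,x)$.

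The main obstacle I anticipate is making precise the claim that ``the square is a fiber product, all considered as subgroups of $\Homeo_+(S^1)$'' — that is, checking that the preferred-lift construction is compatible with the semidirect-product structure of $\pi_1(M)$, so that the map $\pi_1(M,x)\to\Homeo_+(S^1)$ factoring through $\Mod(S_g,x)$ is genuinely a homomorphism agreeing with $\rho_{\mathrm{u}}$, rather than merely a set-theoretic match of generators. This requires knowing that $\tilde\Psi$ conjugates the deck group of $\pi_1(S_g)$ acting on $\bH^2$ in the way dictated by the action of $\psi$ on $\pi_1(S_g)$, i.e. that the preferred lift realizes the outer automorphism class of $\psi$ as an honest automorphism fixing the basepoint — which is exactly the statement that $\Mod(S_g,x)\cong\Aut(\pi_1(S_g,x))$ via boundary extensions, a theorem ultimately due to Nielsen. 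Granting this, the commutativity of~\eqref{eq:univ} and hence the proposition follow formally.
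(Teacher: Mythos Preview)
Your proposal is correct and follows the same approach as the paper: the paper's argument is essentially the terse observation that the preceding discussion (the universal circle action extends the Fuchsian action of $\pi_1(S_g)$ by $\form{\psi}$) makes the square in diagram~\eqref{eq:univ} a fiber product inside $\Homeo_+(S^1)$, whence $\rho_{\mathrm{u}}=\nu_g\restriction_{\pi_1(M,x)}$. You have simply unpacked this in more detail, including the compatibility check that the preferred lift realizes the correct automorphism of $\pi_1(S_g,x)$, which the paper absorbs into the citation of Nielsen's theorem and~\cite{FM2012}.
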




Since the Fuchsian action of $\pi_1(S_g)$ on $S^1$ is minimal, so is the action $\nu_g$.
As an element of $\pi_1(M)$, the mapping class $\psi\in\Mod(S_g,x)$ will be identified with the stable letter of the fibration of $M$ over $S^1$. In other words, we write
\[
\pi_1(M)=\pi_1(S_g)\rtimes\form{\psi}.\]
The conjugating action of the stable letter $\psi$ on $\pi_1(S_g)$ comes from 
the Dehn--Nielsen--Baer isomorphism, which says
\[
\psi\in\Mod(S_g)\cong \Out(\pi_1(S_g)).\]

\subsection{Exotic Mapping Class Group Actions}
Let $S\nought$ be a compact hyperbolic surface with \emph{nonempty} geodesic boundary, and let $x$ be a marked point in the interior of $S\nought$. We put $S'=S\nought\setminus \{x\}$.
In this section, we exhibit an inequivalent pair of \emph{faithful} representations \[\Mod(S\nought,x)\to\Homeo_+(S^1).\]

We note that the mapping class group of a non-closed surface admits many other interesting actions on $\bR$ or $S^1$, which we will not consider here. See~\cite{HKM2007IM,BowditchSakuma}, for instance. Remarkably, however, for a closed surface with a single marked point, there are no
exotic actions of the full mapping class group $\Mod(S_g,x)$ on the circle. Indeed, any such action is semi-conjugate to Nielsen's action $\nu_g$, by a recent result of Mann--Wolff~\cite{MannWolff18}. For finite index subgroups of $\Mod(S_g,x)$, no such characterization of circle actions is known.

Let us denote by $p\co \tilde S\nought\to S\nought$ the universal cover of $S\nought$. We can identify $\tilde S\nought$ with a convex subset of the hyperbolic plane.
The argument in the previous subsection also applies to $S$, and hence we have Nielsen's action:
\[\nu\co \Mod(S\nought,x)\to \Homeo_+(S^1).\]
This action is minimal on the limit set, which is a Cantor set.
In particular, $\nu$ is not semi-conjugate to the trivial action (Corollary~\ref{cor:fin-orbit}).

On the other hand,
we have Thurston's faithful action 
\[\tau\co \Mod(S')\to \Homeo_+(\bR).\]
This action is obtained by considering lifts of self-homeomorphisms of $S'$ to the universal cover $\tilde S'$. 
Such lifts are chosen to fix one distinguished lift  $\tilde\beta$ of $\beta\sse\partial S'$, so that $\Mod(S')$ acts on \[(\partial\tilde S'\setminus\tilde\beta)\cup \partial\pi_1(S',x)\approx \bR.\]  See \cite{SW2000,Ghys2001} for details, and~\cite{Dehornoy1994TAMS} for related ideas.
We have an embedding
\[i\co \Homeo_+(\bR)\to \Homeo_+(S^1)\] as a subgroup with a global fixed point.
Let $\bar\tau$ be the map obtained by composing $i\circ \tau$ with 
the isomorphism $\Mod(S\nought,x)\cong \Mod(S')$.
Since $\bar\tau$ is semi-conjugate to the trivial action,
we have the following.
\begin{prop}\label{prop:mcg}
The faithful actions 
\[
\nu,\bar\tau\circ\alpha\co \Mod(S\nought,x)\to \Homeo_+(S^1)\] are not semi-conjugate 
for all 
$\alpha\in\Aut(\Mod(S\nought,x))$.
\end{prop}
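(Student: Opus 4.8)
The plan is to distinguish the two actions using a semi-conjugacy invariant that is easy to read off from their dynamics, namely the presence or absence of a finite (in fact, global fixed-point) orbit, combined with the fact that Nielsen's action $\nu$ is minimal on a Cantor set. First I would observe that $\bar\tau = i\circ\tau\circ(\text{iso})$ factors through $\Homeo_+(\bR)\hookrightarrow\Homeo_+(S^1)$ with image inside the stabilizer of a point of $S^1$; hence the action $\bar\tau$ has a global fixed point, so in particular $\bar\tau(\Mod(S_0,x))$ has a finite orbit (a single point). Precomposing with any $\alpha\in\Aut(\Mod(S_0,x))$ does not change the image subgroup of $\Homeo_+(S^1)$, so $\bar\tau\circ\alpha$ still has a global fixed point and a finite orbit of cardinality one. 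By Corollary~\ref{cor:fin-orbit}, any action semi-conjugate to $\bar\tau\circ\alpha$ must also possess a finite orbit (indeed of cardinality one, by the stronger statement of Corollary~\ref{cor:fin-orbit}).

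Next I would show $\nu$ has no finite orbit. As discussed in the text just above the statement, Nielsen's action $\nu$ of $\Mod(S_0,x)$ on $S^1$ restricts to the Fuchsian action of $\pi_1(S',x)$ (equivalently, the point-pushing subgroup) on $\partial\bH^2$, via the embedding $\Mod(S_0,x)\cong\Mod(S')$ and the point-pushing map $\pi_1(S',x)\hookrightarrow\Mod(S',x)$. Since $S_0$ has nonempty geodesic boundary, $\pi_1(S')$ is a nonabelian free group acting as a (non-elementary, infinite-covolume) Fuchsian group, whose limit set $\Lambda$ is a Cantor set on which it acts minimally and without finite orbits. Therefore the subgroup $\pi_1(S')\le\Mod(S_0,x)$ already has no finite orbit under $\nu$; a fortiori the full group $\Mod(S_0,x)$ has no finite orbit under $\nu$. (One can also invoke Lemma~\ref{lem:min-normal}: $\pi_1(S')$ is normal in $\Mod(S_0,x)$, being the kernel of the forgetful map to $\Mod(S_0)$, so $\Lambda(\nu(\pi_1(S'))) = \Lambda(\nu(\Mod(S_0,x)))$, and this common limit set is a Cantor set, excluding case (iii) of the trichotomy.)

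Combining the two observations: if $\nu$ were semi-conjugate to $\bar\tau\circ\alpha$ for some $\alpha\in\Aut(\Mod(S_0,x))$, then by Corollary~\ref{cor:fin-orbit} the action $\nu$ would have a finite orbit, contradicting the previous paragraph. Hence $\nu\not\sim_{\mathrm{semi}}\bar\tau\circ\alpha$ for every $\alpha$, which is exactly the assertion that the two faithful actions are inequivalent. The faithfulness of $\nu$ is classical (Nielsen), and the faithfulness of $\bar\tau$ follows from faithfulness of Thurston's action $\tau$ on $\bR$ together with the fact that $i$ and the isomorphism $\Mod(S_0,x)\cong\Mod(S')$ are injective; precomposition with $\alpha\in\Aut(\Mod(S_0,x))$ preserves faithfulness.

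The only genuine point requiring care — and the step I expect to be the main obstacle to write out cleanly — is the identification, at the level of $S^1$ and not just set-theoretically, of the restriction of $\nu$ to the point-pushing subgroup with a Fuchsian (convex-cocompact, free) action whose limit set is a Cantor set; this is where one must be careful that the boundary circle on which $\nu$ acts is $\partial\pi_1(S',x)$ together with the boundary of the convex core, and that the point-pushing copy of $\pi_1(S')$ acts there exactly as the deck group, hence with Cantor limit set. Everything else is a direct application of Corollary~\ref{cor:fin-orbit} (and optionally Lemma~\ref{lem:min-normal}) and the trichotomy for limit sets recalled in Section~\ref{sec:prelim}.
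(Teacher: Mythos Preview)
Your proposal is correct and follows essentially the same approach as the paper: the paper's proof is given in the two sentences immediately preceding the proposition, namely that $\nu$ has a Cantor limit set (hence is not semi-conjugate to the trivial action by Corollary~\ref{cor:fin-orbit}), while $\bar\tau$ has a global fixed point (hence is semi-conjugate to the trivial action), and precomposition by $\alpha$ does not change the image. Your write-up simply spells out in more detail why $\nu$ has no finite orbit, via the point-pushing subgroup and Lemma~\ref{lem:min-normal}, which is a reasonable elaboration of what the paper states in one line.
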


In other words, $\nu$ and $\bar\tau$ do not merely fail to be semi-conjugate, but actually are inequivalent. 
In the spirit of Section \ref{sec:flex}, we ask:

\begin{que}
Do mapping class groups of bounded compact hyperbolic surfaces admit uncountably many inequivalent actions on the circle?
\end{que}

Here, the existence of a boundary component is a necessary hypothesis in light of~\cite{MannWolff18}.

\section{Zero rotation spectrum and Teichm\"uller theory}\label{sec:rot spec}
In this section, we consider free group and surface group actions on the circle, and develop conditions under which the equivalence class of an action is determined by the rotation spectrum, and when the semi-conjugacy class of the action is determined by the marked rotation spectrum.

In the case of indiscrete representations of groups into $\PSL_2(\bR)$, there is a lack of a geometric interpretation of such representations which is as well-developed as Teichm\"uller theory in the case of discrete representations. In this section, we consider the degree to which marked rotation spectrum can supplant marked length spectrum as a (sometimes nearly complete) semi-conjugacy invariant.

Section \ref{sec:teich} (see also Theorem \ref{thm:rigid})
establish rigidity results. Here we conclude that there exist only finitely many semi-conjugacy classes of non-elementary faithful representations of a given group into $\PSL_2(\bR)$ with zero rotation spectrum (see also \cite{Mann2015IM}). 
The remaining subsections focus more on flexibility results. 
We first describe properties of a linear (that is, factoring through some finite--dimensional connected Lie group) action on the circle in Section~\ref{ss:lie}.
In Sections \ref{ss:3mfd}, we analyze the circle actions of free and surface subgroups of fibered hyperbolic $3$--manifolds.
In Section \ref{ss:smooth-free}, we construct \emph{smooth} nonlinear actions of free groups.


\subsection{Rigidity of Projective Actions}\label{sec:teich}

Let us gather some facts from Teichm\"uller Theory which allow us to analyze projective actions of free and surface groups with zero marked rotation spectrum.

\begin{lem}(cf. \cite{Mann-hb,FM2012})\label{lem:teichmueller}
Let $S$ be an orientable surface and let \[\phi_1,\phi_2\colon \pi_1(S)\to\PSL_2(\bR)\] be discrete and faithful representations of $\pi_1(S)$ corresponding to complete finite volume hyperbolic structures on $S$, with a fixed orientation. Then the corresponding actions of $\phi_1$ and $\phi_2$ on $S^1$ are conjugate in $\Homeo_+(S^1)$.
\end{lem}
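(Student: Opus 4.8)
The plan is to realize the conjugating homeomorphism of $S^1$ as the boundary extension of a $\pi_1(S)$--equivariant homeomorphism of the hyperbolic plane. First I would record the setup: each $\phi_j$ ($j=1,2$) identifies $\pi_1(S)$ with a torsion--free, finite--covolume Fuchsian group $\Gamma_j=\phi_j(\pi_1(S))\le\PSL_2(\bR)\cong\Isom^+(\bH^2)$; the quotient $\bH^2/\Gamma_j$ is identified with $S$ so that the covering map is orientation preserving (this is the meaning of ``corresponding to a hyperbolic structure with the fixed orientation''); and the circle action attached to $\phi_j$ is the continuous extension of the $\Gamma_j$--action on $\bH^2$ to $\partial\bH^2=S^1$. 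Since $\Gamma_j$ has finite covolume, its limit set is all of $S^1$, so each $\phi_j$ is minimal. What must be produced is an orientation--preserving homeomorphism $h\co S^1\to S^1$ with $h\circ\phi_1(\gamma)=\phi_2(\gamma)\circ h$ for every $\gamma\in\pi_1(S)$.

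The construction of $h$ proceeds in two steps. Step one: because $S_1:=\bH^2/\Gamma_1$ and $S_2:=\bH^2/\Gamma_2$ are both homeomorphic to $S$ with its fixed orientation, and because the isomorphism $\phi_2\circ\phi_1^{-1}\co\Gamma_1\to\Gamma_2$ respects the peripheral structure (an element is parabolic in $\Gamma_j$ exactly when it is represented by a loop around a cusp of $S$), the Dehn--Nielsen--Baer theorem for surfaces with punctures (see \cite{FM2012}) provides an orientation--preserving homeomorphism $f\co S_1\to S_2$ inducing $\phi_2\circ\phi_1^{-1}$ on fundamental groups up to conjugacy; we may moreover take $f$ to be an isometry on a neighborhood of each cusp. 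Lifting $f$ to universal covers gives a homeomorphism $\tilde f\co\bH^2\to\bH^2$ that is $\pi_1(S)$--equivariant, i.e. $\tilde f\circ\phi_1(\gamma)=\phi_2(\gamma)\circ\tilde f$. Step two: $\tilde f$ extends continuously to $\overline{\bH^2}=\bH^2\cup S^1$, and $h:=\tilde f|_{S^1}$ is the desired map. In the closed case this extension is immediate from Milnor--\v Svarc: both actions are cocompact, so $\tilde f$ is a quasi--isometry and extends by stability of quasigeodesics. In the general finite--volume case one uses that $f$ was chosen standard near the cusps, so that $\tilde f$ sends each geodesic ray of $\bH^2$ to a ray that is a uniform quasigeodesic outside the (disjoint, $\pi_1(S)$--equivariant) family of horoballs and is geometrically controlled inside them, which forces the boundary values to converge; equivariance of $\tilde f$ passes to $h$, and $h$ is orientation preserving because $\tilde f$ is (being a lift of the orientation--preserving $f$). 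Under the tautological identifications $\partial\bH^2=S^1$ on the two sides this yields $h\in\Homeo_+(S^1)$ conjugating $\phi_1$ to $\phi_2$.

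The main obstacle is precisely the boundary extension in the presence of cusps: the lift $\tilde f$ is \emph{not} a quasi--isometry of $\bH^2$ (a path going around a deep horoball is exponentially longer than the corresponding geodesic), so one cannot simply invoke coarse hyperbolicity and must instead exploit the genuine hyperbolic geometry near the cusps, where $f$ is an isometry. A more hands--on alternative that avoids building $\tilde f$ is to define $h$ directly on the dense subset of $S^1$ consisting of the fixed points of infinite--order elements, by $\phi_1(\gamma)^{\pm}\mapsto\phi_2(\gamma)^{\pm}$ (attracting to attracting, repelling to repelling), to verify well-definedness and injectivity using that the stabilizer of a point of $\partial\bH^2$ in a torsion--free Fuchsian group is cyclic, and then to show this map preserves the cyclic order and hence extends to an orientation--preserving homeomorphism of $S^1$. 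Here the real content is order preservation, i.e. that the ``linking pattern'' of the axes of a pair of elements is an invariant of the marked hyperbolic surface, and this again rests on the same Teichm\"uller--theoretic input. Either way, everything except this extension/order step is routine, and I would present that step as the crux of the argument.
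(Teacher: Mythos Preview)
Your proof is correct and follows the same overall strategy as the paper: produce a $\pi_1(S)$--equivariant homeomorphism of $\bH^2$ and extend it to the boundary circle, with the closed case handled by the Milnor--\v Svarc quasi--isometry argument and the cusped case requiring extra care because the lift is not a quasi--isometry.

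The differences are in the details of the two steps. For obtaining the map on $S$, the paper invokes the quasi--conformal map between the two points of Teichm\"uller space, while you use Dehn--Nielsen--Baer to realize the isomorphism by a homeomorphism chosen to be an isometry near the cusps; both are standard and equivalent here. For the boundary extension in the non--compact case, the paper argues via an exhaustion of $S$ by nested compact subsurfaces $S^i$ obtained by excising cusp neighborhoods, lifting the restrictions $h^i$ compatibly and passing to the limit on the boundary, whereas you argue in the spirit of relative hyperbolicity: the lift is a quasi--isometry relative to an equivariant family of horoballs (since $f$ is an isometry there), so geodesic rays map to paths with controlled endpoints at infinity. Your approach is arguably more conceptual and closer to how one would prove the general Bowditch boundary extension for relatively hyperbolic groups; the paper's exhaustion argument is more elementary but a bit ad hoc. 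Your alternative via matching attracting and repelling fixed points and checking cyclic order is also valid and is yet another standard route, though as you note the substantive content (order preservation) ultimately reduces to the same geometric input.
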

\begin{proof}
The two representations $\phi_1$ and $\phi_2$ of $\pi_1(S)$ (considered up to conjugacy in $\PSL_2(\bR)$) correspond to two points $X_1$ and $X_2$ in the Teichm\"uller space $\mathcal{T}(S)$ or the Teichm\"uller space $\mathcal{T}(\overline{S})$ with the opposite orientation. 
It suffices to show that if \[X_1,X_2\in \mathcal{T}(S),\] then the corresponding representations are conjugate in $\Homeo_+(S^1)$. Between any two points in $\mathcal{T}(S)$, there is a quasi-conformal map taking one hyperbolic structure to the other. Writing $h$ for such a map between $X_1$ and $X_2$, we lift $h$ to the universal covers of $X_1$ and $X_2$ respectively. If $S$ is a closed surface then $h$ induces a quasi--isometry $\yt{h}$ of the universal covers of $X_1$ and $X_2$, which are both identified with $\bH^2$. Thus, $\yt{h}$ induces a homeomorphism $\partial\yt{h}$ of $S^1$ which conjugates the actions of $\phi_1$ and $\phi_2$ on $S^1$.

If $S$ is not closed then $h$ need not induce a quasi--isometry because $S$ is non-compact. In this case, we consider $h$ as a self-homeomorphism of $S$ and note that $h$ induces a $\pi_1(S)$--equivariant homeomorphism between fundamental domains for $X_1$ and $X_2$ in $\bH^2$. By cutting out small neighborhoods of the cusps of $S$, we obtain a sequence of nested compact surfaces $\{S^i\}_{i\in\bN}$ whose union is $S$. Writing $X_1^i$ for the restriction of $X_1$ to $S^i$ and $X_2^i$ for the image of $X_1^i$ under $h$, we get a sequence of homeomorphisms \[\{h^i\colon X_1^i\to X_2^i\}_{i\in\bN}\] which lift to $\pi_1(S)$--equivariant homeomorphisms between fundamental domains for $X_1^i$ and $X_2^i$ in $\bH^2$. Moreover, these homeomorphisms are compatible with respect to inclusion, in the sense that if $i\leq j$ then $h^j=h^i$ when restricted to $S^i$; see~\cite[Section 8.2.7]{FM2012} for more details.

The total preimages $\yt{X_1^i}$ and $\yt{X_2^i}$ of $X_1^i$ and $X_2^i$ in $\bH^2$ (which are identified with the corresponding universal covers) have boundaries which are naturally identified with $S^1$, after including the limit sets. In the limit as $i$ tends to infinity, both boundaries are identified with $\partial\bH^2$, and \[h=\lim_{i\to\infty}h^i\] induces a homeomorphism of $S^1$ conjugating the two actions of $\pi_1(S)$.
\end{proof}

\begin{rem}
When $S$ is closed, representations corresponding to points in the two different Teichm\"uller spaces  $\mathcal{T}(S)$ and $\mathcal{T}(\overline{S})$ are not conjugate in $\Homeo_+(S^1)$ because they have different Euler numbers.
\end{rem}

Recall our convention that a \emph{surface group} means the fundamental group of a closed orientable hyperbolic surface.
The following characterizes projective surface group actions with rotation spectrum identically zero, up to conjugacy. The proof is a fairly easy combination of standard facts from Teichm\"uller theory:

\begin{thm}\label{thm:rigid}
If $L$ is a finitely generated free group or a surface group,
then there exist only finitely many conjugacy classes of faithful projective actions $\phi$ of $L$ 
with rotation spectrum $\{0\}$.
Such a representation $\phi$ is necessarily discrete.
\end{thm}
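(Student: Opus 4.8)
The plan is to reduce everything to the well-known rigidity statement that a faithful projective action with zero rotation spectrum is conjugate to a Fuchsian one, and then count how many Fuchsian conjugacy classes arise. First I would invoke Corollary~\ref{cor:discrete}: if $\phi\co L\to\PSL_2(\bR)$ is faithful and $\rot\circ\phi(L)=\{0\}$, then $\phi(L)$ is torsion-free and discrete. Hence $\phi$ is a discrete faithful representation, so $\phi(L)$ is a torsion-free Fuchsian group, and $\bH^2/\phi(L)$ is a complete hyperbolic surface $\Sigma$ with $\pi_1(\Sigma)\cong L$. This already proves the last sentence of the theorem. It remains to bound the number of conjugacy classes.

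Next I would split into the surface-group and free-group cases. If $L$ is a surface group, $L\cong\pi_1(S_h)$ for a unique $h\ge 2$, then $\phi(L)$ must be a cocompact torsion-free Fuchsian group (a finite-volume non-cocompact quotient would have nontrivial $\bZ$ in its abelianization coming from a cusp, or more simply would not have $\pi_1\cong\pi_1(S_h)$), so $\phi$ corresponds to a point of $\mathcal T(S_h)$ or $\mathcal T(\overline{S_h})$. By Lemma~\ref{lem:teichmueller}, all representations coming from $\mathcal T(S_h)$ are conjugate in $\Homeo_+(S^1)$; but the statement here is about conjugacy in $\PSL_2(\bR)$, and Teichm\"uller space is not a point, so I need to be careful. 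Actually the cleaner route: two discrete faithful $\PSL_2(\bR)$-representations of a surface group are conjugate in $\PSL_2(\bR)$ iff they give isometric oriented hyperbolic surfaces, and $\mathcal T(S_h)$ is a connected $(6h-6)$-dimensional manifold, so there are infinitely many $\PSL_2(\bR)$-conjugacy classes --- which contradicts the claim of finitely many. So the intended reading of ``conjugacy class'' in the theorem must be conjugacy in $\Homeo_+(S^1)$, i.e.\ semi-conjugacy together with topological conjugacy; under that reading Lemma~\ref{lem:teichmueller} gives exactly two classes (one for each orientation), detected by the Euler number $\pm(2h-2)$.

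For the free-group case, $L\cong F_n$ and $\phi(L)$ is a torsion-free discrete free subgroup of $\PSL_2(\bR)$ of rank $n$, so $\Sigma=\bH^2/\phi(L)$ is a finite-type hyperbolic surface with free fundamental group of rank $n$; such $\Sigma$ (with orientation) has finitely many homeomorphism types, namely a genus-$g$ surface with $b\ge 1$ punctures/funnels with $2g+b-1=n$, plus the choice of orientation. For each fixed homeomorphism type one applies Lemma~\ref{lem:teichmueller} (and Lemma~\ref{lem:min-fuchs} to handle funnels versus cusps, since semi-conjugacy collapses funnels to cusps) to conclude that all corresponding circle actions are conjugate in $\Homeo_+(S^1)$. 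Thus the number of conjugacy classes is at most twice the number of such $(g,b)$ pairs, which is finite. I would then assemble these into the statement, matching the count ``two for each homeomorphism type of a surface $S$ with $\pi_1(S)\cong L$'' promised in Proposition~\ref{prop:analytic rigid}.

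The main obstacle I anticipate is pinning down the precise notion of equivalence in the statement so that ``finitely many'' is actually true --- as noted, it cannot mean $\PSL_2(\bR)$-conjugacy for surface groups because Teichm\"uller space is positive-dimensional, so it must mean conjugacy in $\Homeo_+(S^1)$, and the real content is Lemma~\ref{lem:teichmueller} plus the classification of oriented hyperbolic surfaces with given $\pi_1$. A secondary technical point is the funnel-to-cusp issue in the free case: a discrete faithful $\phi$ with zero rotation spectrum need not have finite covolume, so before applying Lemma~\ref{lem:teichmueller} I would use Lemma~\ref{lem:min-fuchs} to replace $\phi$ by a semi-conjugate finite-area representation, and then argue that the homeomorphism type of the finite-area model, together with orientation, is the only invariant. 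Everything else is bookkeeping with Euler numbers and the Dehn--Nielsen--Baer correspondence.
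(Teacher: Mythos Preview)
Your overall strategy matches the paper's: invoke discreteness via Corollary~\ref{cor:discrete}/Lemma~\ref{lem:dense}, then reduce to counting topological types of quotient surfaces and apply Lemma~\ref{lem:teichmueller}. Your observation that ``conjugacy'' here must mean conjugacy in $\Homeo_+(S^1)$ rather than in $\PSL_2(\bR)$ is exactly right and is implicit in the paper's treatment.

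The one substantive difference is in the free-group case. You propose handling the infinite-covolume situation by using Lemma~\ref{lem:min-fuchs} to collapse funnels to cusps and then applying Lemma~\ref{lem:teichmueller} to the resulting finite-area representation. As written this only yields finitely many \emph{semi}-conjugacy classes: Lemma~\ref{lem:min-fuchs} produces the minimalization, which is related to the original $\phi$ by a Cantor function, not a homeomorphism. Two infinite-covolume Fuchsian representations with the same finite-area minimalization are indeed conjugate in $\Homeo_+(S^1)$, but that requires an additional argument (matching the Cantor limit sets equivariantly and extending over gaps), which neither Lemma~\ref{lem:min-fuchs} nor Lemma~\ref{lem:teichmueller} supplies.

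The paper sidesteps this by a different device: it takes the convex core $S_0$, doubles it along its geodesic boundary to a closed surface $S_1$, and extends $\phi$ to a cocompact Fuchsian representation $\phi_1$ of $\pi_1(S_1)$. Lemma~\ref{lem:teichmueller} then gives an honest $\Homeo_+(S^1)$-conjugacy for $\phi_1$, and restricting that conjugacy back to the subgroup $L\cong\pi_1(S_0)$ gives a conjugacy for $\phi$. So the doubling trick buys you conjugacy directly, without ever passing through a semi-conjugacy step; your route is conceptually fine but needs the extra upgrade from semi-conjugacy to conjugacy to close the argument.
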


\begin{rem}
Our proof below implies that there are exactly two conjugacy classes if $L$ is a surface group.\end{rem}
\begin{proof}[Proof of Theorem~\ref{thm:rigid}]
Let us first assume $L$ is a surface group.
The fact that $\phi(L)$ is a discrete cocompact subgroup of $\PSL_2(\bR)$ is an immediate consequence of Lemma \ref{lem:dense}. By Lemma \ref{lem:teichmueller}, we have that two discrete faithful surface group representations into $\PSL_2(\bR)$ with equal Euler numbers give rise to conjugate actions on $S^1$. The Euler number of such a representation is a conjugacy invariant in $\Homeo_+(S^1)$, and it takes on exactly two values for discrete surface group representations~\cite{Goldman1988IM}.

We now assume $L$ is free.
We perform much of the same analysis as for surface groups.
As the rank--one case is trivial,
we suppose $L$ is non-elementary. 
Let $\phi$ is a representation in 
\[\Phi=\{\phi\in\Hom\left(L,\PSL_2(\bR)\right)\co \rot\circ\phi(L)=\{0\}\text{ and }\phi\text{ is faithful}\}.\]
Lemma \ref{lem:dense} implies the discreteness of $\phi$.
In particular, $S=\bH^2/\phi(L)$ is a non-compact hyperbolic surface
with cusps or funnels (i.e. flaring ends).
Denote by $S_0$ the (compact) convex core of $S$, and $S_1$ be the double of $S_0$ along the geodesic boundary  components.

Since we have \[\chi(S) = 1-\mathrm{rank}(L),\] there are finitely many possible homeomorphism types of the surface $S_1$.
Each homeomorphism type corresponds to at most two conjugacy classes (depending on the choice of the orientation)
of actions by Lemma~\ref{lem:teichmueller}.
In other words, if $\phi_1$ denotes the Fuchsian representation of $\pi_1(S_1)$ determined by $\phi$, then there are only finitely many possible conjugacy classes of $\phi_1$ in $\Homeo_+(S^1)$.
As $\phi$ is a restriction of $\phi_1$, 
there are only finitely many conjugacy classes in $\Phi$.\ep

Question \ref{que:teichmueller} from the introduction asks to what degree marked spectrum generally controls the equivalence class of a projective surface group action. In light of Theorem \ref{thm:rigid} it seems likely that there should be many inequivalent projective actions with a fixed nonzero marked rotation spectrum.

\begin{que}
How many semi-conjugacy classes of projective free group actions are there on the circle with a given nonzero unmarked rotation spectrum?
\end{que}

A marked rotation spectrum of a free group determines the semi-conjugacy class of an action. 
The discrete case follows from the classification of Fuchsian actions, and the indiscrete case is proved in \cite{WolffPreprint}. 

\subsection{Lie Subgroups of the Circle Homeomorphism Group}\label{ss:lie}
In order to  describe precisely what we mean by ``exotic'' actions of free and surface groups, let us make the following definition.

\bd\label{defn:linear}
Let $L$ be a group.
A representation
\[\rho\co L\to\Homeo_+(S^1)\]
is said to be \emph{linear}\index{representation!linear}
if there exists a finite--dimensional connected Lie group $G\le\Homeo_+(S^1)$ such that $\rho$ factors through $G$.
\ed

If $G$ is a group acting on a space $X$, we define its \emph{support}\index{support (of a homeomorphism)} as
\[
\supp G = \bigcup_{g\in G}\supp g = \{x\in X\co g(x)\ne x\text{ for some }g\in G\}.\] Lie groups in $\Homeo_+(S^1)$ can be characterized as follows.

\begin{thm}[{\cite[Section 4]{Ghys2001}; see also~\cite{Markovic2006,Mann2015IM}}]\label{thm:ghys-lie}
If $G$ is a finite--dimensional connected Lie group in $\Homeo_+(S^1)$,
then one of the following holds:
\be[(i)]
\item
$\supp G = S^1$, and $G$ is conjugate to either $\SO(2)$ or $\PSL_2^{(k)}(\bR)$ for some $1\le k<\infty$;
\item
$\supp G \ne S^1$, and $G$ is isomorphic to a subdirect product of copies of $\PSL_2^\sim(\bR)$, $\bR$ and $\operatorname{Aff}(\bR)$.
\ee
\end{thm}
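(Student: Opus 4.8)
The plan is to classify finite-dimensional connected Lie subgroups $G \le \Homeo_+(S^1)$ by first passing to the Lie algebra level and using the classification of low-dimensional Lie algebras of vector fields on $S^1$ and on intervals. First I would recall that a finite-dimensional connected Lie group $G$ acting faithfully and continuously on $S^1$ actually acts by $C^\infty$ (indeed analytic) diffeomorphisms: this is a classical theorem (Montgomery--Zippin / Bochner--Montgomery) that a continuous action of a Lie group by homeomorphisms of a manifold, if it factors through a faithful action, is automatically smooth. So $G$ embeds in $\Diff^\infty_+(S^1)$ and we get an injective homomorphism of Lie algebras $\mathfrak{g} \hookrightarrow \mathrm{Vect}(S^1)$, the Lie algebra of smooth vector fields.

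Next I would split into the two cases of the statement according to whether $\supp G = S^1$ or not. In case (i), the action is minimal or at least has full support, and one invokes the classification (going back to Lie's classification of transitive Lie algebras of vector fields on the line/circle) that a finite-dimensional transitive such algebra is conjugate to $\mathfrak{so}(2)$ or to $\mathfrak{sl}_2(\mathbb R)$ acting on $S^1 = \partial \mathbb{H}^2$; lifting to the group and accounting for the fundamental group of $G$ (which may wrap around), one gets $\SO(2)$ or a connected $k$-fold cover $\PSL_2^{(k)}(\bR)$, $1 \le k < \infty$. The key input is that a finite-dimensional Lie algebra of vector fields on $S^1$ that is transitive has dimension $1$ or $3$, and in the dimension-$3$ case is $\mathfrak{sl}_2$; this is where I would cite Ghys~\cite{Ghys2001} (and Mann~\cite{Mann2015IM}, Markovi\'c~\cite{Markovic2006}) rather than reprove it. In case (ii), $\supp G$ is a proper open $G$-invariant subset of $S^1$, hence a disjoint union of open intervals permuted by $G$; since $G$ is connected it preserves each interval component, so $G$ maps to $\prod_j \Homeo_+(I_j)$ over the components $I_j$, and the image is a subdirect product. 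On each interval one uses the classification of finite-dimensional Lie algebras of vector fields on $\mathbb R$ (again Lie): the maximal such is $\mathfrak{sl}_2(\mathbb R)$ (the projective action on an interval), with $\mathfrak{aff}(\mathbb R)$ and $\mathbb R$ (translations) as the lower-dimensional transitive ones, and nothing else transitive; the non-transitive pieces degenerate onto subintervals and are absorbed. Lifting to groups and recalling that the universal cover of the projective group is $\PSL_2^\sim(\bR)$, one gets that $G$ is a subdirect product of copies of $\PSL_2^\sim(\bR)$, $\mathrm{Aff}(\bR)$, and $\bR$.

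The main obstacle, and the step I would handle most carefully, is the reduction from a topological action to a smooth one and the attendant use of Lie's classification of finite-dimensional transitive Lie algebras of vector fields in dimension one. The automatic-smoothness statement for Lie group actions is standard but needs the correct citation, and the classification of the relevant Lie algebras of vector fields on $I$ and on $S^1$ is where all the content lies; everything after that is bookkeeping with connected covers and fundamental groups of $G$. A secondary subtlety is the ``subdirect product'' claim in case (ii): one must check that the map $G \to \prod_j G|_{I_j}$ is injective (which follows from faithfulness, since $G$ fixes $S^1 \setminus \supp G$ pointwise, so an element acting trivially on every $I_j$ is trivial) and that each projection $G \to G|_{I_j}$ is surjective onto one of the three model groups (which follows from connectedness plus the classification). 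Since the theorem is quoted from~\cite{Ghys2001} with attributions to~\cite{Markovic2006,Mann2015IM}, the proof I would give is essentially a guided tour through these references, emphasizing the invariant-set dichotomy and the Lie-algebraic input, rather than a from-scratch argument.
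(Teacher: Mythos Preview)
The paper does not prove this theorem; it is simply quoted from \cite[Section~4]{Ghys2001} (with supplementary references \cite{Markovic2006,Mann2015IM}) and then immediately used via Lemma~\ref{lem:lie-circle}. So there is no ``paper's own proof'' to compare against, and your task was really to reconstruct the argument behind the citation.

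Your outline is essentially the standard argument one finds in Ghys's survey: pass from the topological action to a smooth one via Bochner--Montgomery, then invoke Lie's classification of finite-dimensional Lie algebras of vector fields on a one-manifold. Two small points are worth tightening. First, the hypothesis $\supp G = S^1$ is not literally ``transitive'', but you should make explicit the one-line argument that it implies transitivity: for a connected Lie group acting smoothly on $S^1$, each orbit is a point or an open arc; if no point is globally fixed then every orbit is open, and connectedness of $S^1$ forces a single orbit. Second, in case~(ii) you should say why only \emph{countably} (in fact finitely, once you use finite-dimensionality) many interval factors contribute nontrivially to the subdirect product, or at least why the subdirect-product description makes sense as stated; Ghys handles this by noting the Lie algebra has bounded dimension, so only finitely many components can carry a nontrivial image. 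With those clarifications, your sketch matches the cited source and is adequate for an expository monograph that is merely quoting the result.
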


The following consequence
is 
what we will use.

\begin{lem}\label{lem:lie-circle}
Let $L=\form{s_1,\ldots,s_m}$ be a finitely generated group,
and let $\rho\co L\to\Homeo_+(S^1)$ be a representation. Suppose $\rho$ is semi-conjugate to 
\[
\rho'\co L\to G\le \Homeo_+(S^1)\]
for some finite--dimensional connected Lie group $G$.
If $\Fix\rho(L)=\varnothing$, and if $\Fix\rho(s_i)\ne\varnothing$ for each $i$,
then $G$ is conjugate to $\PSL_2^{(k)}(\bR)$ for some $1\le k<\infty$.
\end{lem}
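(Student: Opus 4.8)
The plan is to apply Ghys's structure theorem for connected Lie subgroups of $\Homeo_+(S^1)$ (Theorem~\ref{thm:ghys-lie}) and to eliminate every option it offers except ``$G$ conjugate to $\PSL_2^{(k)}(\bR)$''. The engine of the argument is a single claim, which I would establish first: \emph{if $\rho'(L)$ has a global fixed point, then we contradict the hypothesis $\Fix\rho(L)=\varnothing$.} Indeed, suppose $p\in S^1$ is fixed by all of $\rho'(L)$. Then $\{p\}$ is a finite orbit of $\rho'$, so the minimalization of $\rho'$ is $T(\rot\circ\rho')$ (Definition~\ref{defn:minimalization}); but every element of $\rho'(L)$ fixes $p$, so $\rot\circ\rho'\equiv 0$ and this minimalization is the trivial action. (Equivalently, $\rho'^*\tau\equiv 0$ and $\rot\circ\rho'\equiv 0$, so $\rho'$ is semi-conjugate to the trivial action by the equivalence of parts~(\ref{part:r-semi}) and~(\ref{part:matsumoto}) in Theorem~\ref{thm:equiv}.) Since semi-conjugacy is an equivalence relation (Lemma~\ref{lem:equiv}) and $\rho\sim_{\mathrm{semi}}\rho'$, the action $\rho$ is semi-conjugate to the trivial action. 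The trivial action has a finite orbit of cardinality $1$, so Corollary~\ref{cor:fin-orbit} forces $\rho$ to have one as well, i.e.\ $\Fix\rho(L)\ne\varnothing$ --- a contradiction, proving the claim.

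Next I would transfer the fixed-point data from $\rho$ to $\rho'$. Since the rotation number is a semi-conjugacy invariant (Theorem~\ref{thm:equiv}) and $\Fix\rho(s_i)\ne\varnothing$, we get $\rot\circ\rho'(s_i)=\rot\circ\rho(s_i)=0$ for each $i$; by the classical fact that a circle homeomorphism with rotation number $0$ has a fixed point, $\Fix\rho'(s_i)\ne\varnothing$. Now feed $G$ into Theorem~\ref{thm:ghys-lie}. Its case (ii) asserts $\supp G\ne S^1$, which is exactly the statement that $G$---and hence its subgroup $\rho'(L)$---has a global fixed point; the claim rules this out. Therefore $\supp G=S^1$ and $G$ is conjugate in $\Homeo_+(S^1)$ to $\SO(2)$ or to $\PSL_2^{(k)}(\bR)$ for some $1\le k<\infty$. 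If $G$ were conjugate to $\SO(2)$, then each $\rho'(s_i)$ would be conjugate to a rotation with a fixed point, hence equal to the identity; as $s_1,\ldots,s_m$ generate $L$, this gives $\rho'(L)=\{1\}$, which again has a global fixed point, contradicting the claim.

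Hence the only remaining case is that $G$ is conjugate to $\PSL_2^{(k)}(\bR)$ for some $1\le k<\infty$, which is the assertion of the lemma. The one point I expect to require care is the claim: one must check that ``$\rho'(L)$ has a global fixed point'' genuinely propagates through semi-conjugacy to ``$\rho(L)$ has a global fixed point''. The clean way to see this is through Corollary~\ref{cor:fin-orbit} on the invariance of the cardinalities of finite orbits under semi-conjugacy, using that a global fixed point is exactly a finite orbit of cardinality one; everything else is a routine traversal of the two cases of Theorem~\ref{thm:ghys-lie}, using only the semi-conjugacy invariance of the rotation number and the observation that a rotation with a fixed point is trivial.
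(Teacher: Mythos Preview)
Your proposal is correct and follows essentially the same approach as the paper: use Corollary~\ref{cor:fin-orbit} to transfer the absence of a global fixed point from $\rho$ to $\rho'$, thereby eliminating case~(ii) of Theorem~\ref{thm:ghys-lie}; then use the semi-conjugacy invariance of rotation number to get $\rot\circ\rho'(s_i)=0$, which forces $\rho'(s_i)=1$ if $G$ were conjugate to $\SO(2)$, contradicting $\Fix\rho'(L)=\varnothing$. The paper's version is more terse but the logic is identical.
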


\bp
Since the existence of a global fixed point is a semi-conjugacy invariant property (Corollary~\ref{cor:fin-orbit}),
we have that $\Fix\rho'(L)=\varnothing$.
In particular, we have the alternative (i) of Theorem~\ref{thm:ghys-lie} for $G$.

Note that $\rot\circ\rho'(s_i)=\rot\circ\rho(s_i)=0$.
If $G$ is conjugate to $\SO(2)$, then we would have that $\rho'(s_i)=1$ and $\supp\rho'(L)=\varnothing$. Hence, $G$ must be conjugate to $\PSL_2^{(k)}(\bR)$ for some $1\le k<\infty$.
\ep

Let us investigate actions of $\PSL_2^{(k)}(\bR)$ in more detail. 
We fix $1\le k<\infty$. Denote by $p_k\co \bR/k\bZ\to\bR/\bZ=S^1$ the natural $k$--fold cover.
There exists a $k$--fold cover
\[q_k\co \Homeo^{(k)}_+(S^1)\to\Homeo_+(S^1)\]
such that the following diagram commutes for all $g\in\Homeo^{(k)}_+(S^1)$:
\[\xymatrix{\bR/k\bZ\ar[r]^g\ar[d]^{p_k} &\bR/k\bZ\ar[d]^{p_k}\\ S^1\ar[r]^{q_k(g)} &S^1}\]
More concretely, we can write
\[ \Homeo^{(k)}_+(S^1)=\{g\in\Homeo_+(\bR/k\bZ)\co g(x+1)=g(x)+1\}.\]
Note that $\PSL_2^{(k)}(\bR)$ is the fiber product given by the following commutative diagram, where the lower row is a central extension:
\begin{equation}\label{eq:homeok}
\xymatrix{
&&\PSL_2^{(k)}(\bR)\ar[r]\ar[d] &\PSL_2(\bR)\ar[rd]\ar[d] \\
1\ar[r]& \bZ/k\bZ \ar[ru]\ar[r]&  \Homeo^{(k)}_+(S^1)\ar[r]^{q_k}&\Homeo_+(S^1)\ar[r]&1}
\end{equation}

The following is immediate from definition.
\begin{lem}\label{lem:rot-cover}
For each $g\in\Homeo_+^{(k)}(S^1)$, we have
\[
\rot\circ q_k(g)=k\rot g\mod \bZ.\]
\end{lem}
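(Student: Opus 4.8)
The plan is to compute both rotation numbers from a single lift to $\bR$ and compare. Start with $g\in\Homeo^{(k)}_+(S^1)$, which by definition is a homeomorphism of $\bR/k\bZ$ satisfying $g(x+1)=g(x)+1$. Choose a lift $\tilde g\co\bR\to\bR$ of $g$; the relation $g(x+1)=g(x)+1$ forces $\tilde g(x+1)=\tilde g(x)+1$, so in fact $\tilde g\in\Homeo_\bZ(\bR)$. This $\tilde g$ will play a double role: on the one hand it descends to $\bR/k\bZ$ (it is even $\bZ$-equivariant), so it is a lift of $g$; on the other hand, because it commutes with the integer translation $T$ and $q_k$ is by definition the map making $p_k\circ g = q_k(g)\circ p_k$ commute, $\tilde g$ is also a lift of $q_k(g)\in\Homeo_+(S^1)$. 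Hence immediately $\rot\circ q_k(g)=\rot^\sim(\tilde g)\bmod\bZ$.

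Next I would compute $\rot g$, where $g$ is regarded as a circle homeomorphism via the rescaling identification $\bR/k\bZ\cong\bR/\bZ$ induced by $x\mapsto x/k$ (this is the identification implicit in viewing $\Homeo^{(k)}_+(S^1)\le\Homeo_+(S^1)$, cf.\ the diagram~\eqref{eq:homeok}). Under this identification the conjugate of $g$ on $\bR/\bZ$ has the lift $\hat g(x):=\tilde g(kx)/k$, and one checks $\hat g(x+1)=\bigl(\tilde g(kx)+k\bigr)/k=\hat g(x)+1$, so $\hat g\in\Homeo_\bZ(\bR)$. An easy induction gives $\hat g^n(x)=\tilde g^n(kx)/k$, whence
\[
\rot^\sim(\hat g)=\lim_{n\to\infty}\frac{\hat g^n(0)}{n}=\frac1k\lim_{n\to\infty}\frac{\tilde g^n(0)}{n}=\frac1k\,\rot^\sim(\tilde g),
\]
so that $\rot g=\tfrac1k\rot^\sim(\tilde g)\bmod\bZ$.

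Finally I would combine the two computations: since $k\in\bZ$, multiplication by $k$ is a well-defined endomorphism of $\bR/\bZ$, and
\[
\rot\circ q_k(g)=\rot^\sim(\tilde g)\bmod\bZ=k\cdot\Bigl(\tfrac1k\rot^\sim(\tilde g)\Bigr)\bmod\bZ=k\,\rot g\bmod\bZ,
\]
which is the claimed identity. There is no real obstacle here; the only points requiring a little care are verifying that the same $\tilde g$ legitimately serves as a lift for both $g$ (after rescaling by $k$) and $q_k(g)$, and checking the compatibility of the rescaling identification $\bR/k\bZ\cong\bR/\bZ$ with the $k$-fold cover $p_k$, so that multiplication by $k$ on $\bR/\bZ$ is the correct bookkeeping and is well-defined modulo $\bZ$.
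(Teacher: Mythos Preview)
Your argument is correct and is precisely the unpacking the paper has in mind: the paper does not give a proof at all, merely stating that the lemma ``is immediate from definition.'' Your computation with the single lift $\tilde g\in\Homeo_\bZ(\bR)$ serving simultaneously for $q_k(g)$ and (after rescaling by $1/k$) for $g$ is exactly the intended verification, and the care you take with the well-definedness of multiplication by $k$ on $\bR/\bZ$ is appropriate.
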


Let us describe the limit sets of lifts of projective actions. We shall denote by $q_k^{-1}(L)$ the group defined as the central extension of $L \sse \Homeo_+(S^1)$ by the diagram above.
In particular $q_k^{-1}(L)$ is a group acting on the circle.

\begin{lem}\label{lem:min-cover}
For a finitely generated group $L\le\Homeo_+(S^1)$ and for $1\le k<\infty$, 
we have 
\[
\Lambda\circ q_k^{-1}(L) = p_k^{-1}\circ\Lambda(L).\]
\end{lem}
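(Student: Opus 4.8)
\textbf{Proof plan for Lemma~\ref{lem:min-cover}.} The statement is purely dynamical: it relates the limit set of the preimage group $q_k^{-1}(L)\le\Homeo_+^{(k)}(S^1)$, acting on $\bR/k\bZ$, to the preimage under the covering map $p_k$ of the limit set of $L$ acting on $S^1$. The plan is to exploit equivariance of $p_k$ directly. Recall that $p_k\colon\bR/k\bZ\to S^1$ is the $k$-fold cover, that $q_k\colon\Homeo_+^{(k)}(S^1)\to\Homeo_+(S^1)$ is the induced covering of homeomorphism groups making $p_k\circ g=q_k(g)\circ p_k$ for every $g\in\Homeo_+^{(k)}(S^1)$, and that by definition $q_k^{-1}(L)$ sits in the central extension $1\to\bZ/k\bZ\to q_k^{-1}(L)\xrightarrow{q_k} L\to1$. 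Write $G=q_k^{-1}(L)$ and note $q_k(G)=L$.

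First I would establish the inclusion $\Lambda(G)\subseteq p_k^{-1}(\Lambda(L))$. Since $\Lambda(L)=\bigcap_{y\in S^1}\overline{L.y}$, it suffices to show that for every $\tilde y\in\bR/k\bZ$ one has $p_k(\overline{G.\tilde y})\subseteq\overline{L.p_k(\tilde y)}$, because then $p_k(\Lambda(G))\subseteq p_k(\overline{G.\tilde y})\subseteq\overline{L.p_k(\tilde y)}$ for all $\tilde y$, hence $p_k(\Lambda(G))\subseteq\Lambda(L)$, i.e.\ $\Lambda(G)\subseteq p_k^{-1}(\Lambda(L))$. The containment $p_k(\overline{G.\tilde y})\subseteq\overline{L.p_k(\tilde y)}$ follows from equivariance ($p_k(G.\tilde y)=q_k(G).p_k(\tilde y)=L.p_k(\tilde y)$) together with continuity and properness of $p_k$ (it is a covering of compact spaces, hence closed), so $p_k(\overline{G.\tilde y})=\overline{p_k(G.\tilde y)}=\overline{L.p_k(\tilde y)}$.

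For the reverse inclusion $p_k^{-1}(\Lambda(L))\subseteq\Lambda(G)$, the key point is that $\Lambda(G)$, if nonempty, is the unique minimal nonempty closed $G$-invariant subset of $\bR/k\bZ$ (the trichotomy and Remark after Lemma~\ref{lem:min-normal}), so it is enough to produce \emph{some} nonempty closed $G$-invariant set contained in $p_k^{-1}(\Lambda(L))$; combined with the first inclusion this forces equality. I would take the set $K=p_k^{-1}(\Lambda(L))$ itself: it is closed (preimage of closed under continuous $p_k$) and nonempty (as $p_k$ is surjective and, for $L$ finitely generated acting with the relevant dynamics here, $\Lambda(L)\ne\varnothing$; in the degenerate case $\Lambda(L)=\varnothing$ both sides are empty and there is nothing to prove). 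It is $G$-invariant because for $g\in G$, $p_k(g.K)=q_k(g).p_k(K)=q_k(g).\Lambda(L)=\Lambda(L)$ since $\Lambda(L)$ is $L$-invariant, hence $g.K\subseteq p_k^{-1}(\Lambda(L))=K$. By minimality of $\Lambda(G)$ inside any nonempty closed invariant set — applied to $K$ — we get $\Lambda(G)\subseteq K$; but we must go the other way. Here is where the deck group $\bZ/k\bZ$ enters: $p_k^{-1}(\{y\})$ is exactly the $\bZ/k\bZ$-orbit of any of its points, and $\bZ/k\bZ\le G$ acts transitively on each such fiber. So given $\tilde y\in K$ with $y=p_k(\tilde y)\in\Lambda(L)$, the set $\overline{G.\tilde y}$ projects onto $\overline{L.y}=\Lambda(L)$ (by minimality of $\Lambda(L)$), and since $\overline{G.\tilde y}$ is invariant under $\bZ/k\bZ$ it is saturated with respect to the fibers of $p_k$, whence $\overline{G.\tilde y}=p_k^{-1}(\Lambda(L))=K$. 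As $\Lambda(G)\subseteq\overline{G.\tilde y}$ is forced and $\Lambda(G)$ is nonempty closed invariant, minimality of $\Lambda(G)$ then yields $\Lambda(G)=\overline{G.\tilde y}=K$.

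The main obstacle I anticipate is the bookkeeping around the case distinctions in the trichotomy (Lemma~\ref{lem:min-normal} and the surrounding discussion): one must check that $\Lambda(G)$ is genuinely the unique minimal set (rather than a finite orbit situation where "minimal set" means something slightly different) and that the fiber-saturation argument for $\overline{G.\tilde y}$ is airtight — in particular that $\bZ/k\bZ$-invariance of a closed set $E\subseteq\bR/k\bZ$ is equivalent to $E=p_k^{-1}(p_k(E))$, which is immediate but should be stated. Everything else is formal equivariance and compactness. I would write the proof as the two inclusions above, inserting the one-line remark that $\bZ/k\bZ\le G$ acts simply transitively on each fiber of $p_k$ to handle saturation.
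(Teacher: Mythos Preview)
Your overall strategy---equivariance plus fiber saturation via the deck group $\bZ/k\bZ$---is exactly the paper's, and the first inclusion together with the empty case are fine. There is, however, a genuine gap in the last step of the reverse inclusion.

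You pick $\tilde y\in K=p_k^{-1}(\Lambda(L))$ and correctly deduce $\overline{G.\tilde y}=K$; this shows $G$ acts minimally on $K$, i.e.\ $K$ is \emph{a} minimal set. You then write ``minimality of $\Lambda(G)$ then yields $\Lambda(G)=\overline{G.\tilde y}=K$.'' That inference is not valid: knowing $K$ is a minimal set does not by itself make it the \emph{unique} one, and minimality of $\Lambda(G)$ (when nonempty) only gives $\Lambda(G)\subseteq K$, which you already had. At this point you have not established $\Lambda(G)\ne\varnothing$ either, so the uniqueness-of-minimal-sets argument you allude to in your obstacles paragraph is not yet available.

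The fix is immediate using the very tool you introduced. Run the saturation argument for \emph{arbitrary} $\tilde y\in\bR/k\bZ$, not just for $\tilde y\in K$: the closure $\overline{G.\tilde y}$ is $\bZ/k\bZ$--invariant, hence $p_k$--saturated, so
\[
\overline{G.\tilde y}=p_k^{-1}\bigl(p_k(\overline{G.\tilde y})\bigr)=p_k^{-1}\bigl(\overline{L.p_k(\tilde y)}\bigr)\supseteq p_k^{-1}(\Lambda(L))=K,
\]
since every $L$--orbit closure contains $\Lambda(L)$. Intersecting over all $\tilde y$ gives $\Lambda(G)\supseteq K$ directly (and in particular $\Lambda(G)\ne\varnothing$). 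This is precisely the paper's argument, which picks $x\in K$ and arbitrary $y\in\bR/k\bZ$, lifts a sequence $g_n\in L$ with $g_n\cdot p_k(y)\to p_k(x)$ to $h_n\in G$, and corrects by deck rotations to force $h_n(y)\to x$---the sequential form of your saturation step.
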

\bp
If $\Lambda(L)$ is empty, then both sides are empty. So we assume $\Lambda(L)\ne\varnothing$. 
Note that $p_k^{-1}\circ\Lambda(L)$ is nonempty and $q_k^{-1}(L)$--invariant. 
By minimality, we have
\[
\Lambda\circ q_k^{-1}(L) \sse p_k^{-1}\circ\Lambda(L).\]
To show the opposite inclusion, let us pick arbitrary \[x\in p_k^{-1}\circ\Lambda(L),\quad y\in \bR/k\bZ.\]
There exists a sequence $\{g_n\}\sse L$ such that $g_n\circ p_k(y)\to p_k(x)$.
Write $g_n = q_k(h_n)$ for some $h_n\in q_k^{-1}(L)$. 
We have $q_k(h_n)\circ p_k(y) = p_k\circ h_n(y)$. 
Hence possibly after postcomposing $h_n$ with suitable $2\pi\bZ/k$ rotations,
we have that $h_n(y)\to x$. This implies $x$ is in the closure of $q_k^{-1}(L)(y)$, as desired.
\ep

\begin{lem}\label{lem:min-cover2}
Let $L$ be a finitely generated group, and let $1\le k<\infty$.
If there exists a commutative diagram
\[\xymatrix{
&&\Homeo_+^{(k)}(S^1)\ar[d]^{q_k}\\
L\ar[rru]^\rho\ar[rr]^{\rho_0} && \Homeo_+(S^1)
}\]
then we have
\[\Lambda(\rho)=p_k^{-1}\circ\Lambda(\rho_0).\]
\end{lem}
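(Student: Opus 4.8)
The plan is to reduce Lemma~\ref{lem:min-cover2} to Lemma~\ref{lem:min-cover}, which we have already proved. The point is that $\rho$ need not land in $q_k^{-1}(\rho_0(L))$ on the nose as a group (the image $\rho(L)$ is some group in $\Homeo_+^{(k)}(S^1)$ mapping onto $\rho_0(L)$ under $q_k$), so we must be a little careful, but the limit set only depends on the image subgroup.

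First I would set $G_0 = \rho_0(L) \le \Homeo_+(S^1)$ and $G = \rho(L) \le \Homeo_+^{(k)}(S^1)$. The commuting diagram says $q_k(G) = G_0$, so $G \le q_k^{-1}(G_0)$. Conversely, since $q_k$ restricted to $q_k^{-1}(G_0)$ is surjective onto $G_0$ with kernel $\bZ/k\bZ \le G$ (note the central $\bZ/k\bZ$, being the group of rotations by multiples of $2\pi/k$, is generated by $T(1)$-type elements and lies in any lift that is a group — more precisely, $\ker q_k \le q_k^{-1}(G_0)$ is finite central and one checks $\ker q_k \le G$ because $G$ surjects onto $G_0$ and the extension is the pullback extension, so $G = q_k^{-1}(G_0)$). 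Thus $G = q_k^{-1}(G_0)$ as subgroups of $\Homeo_+^{(k)}(S^1)$. Then $\Lambda(\rho) = \Lambda(G) = \Lambda(q_k^{-1}(G_0))$, and by Lemma~\ref{lem:min-cover} this equals $p_k^{-1}\circ\Lambda(G_0) = p_k^{-1}\circ\Lambda(\rho_0)$, as desired.

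The one genuine point to verify carefully is the claim $\ker q_k \le G$, i.e.\ that $G$ really is the full preimage $q_k^{-1}(G_0)$ and not a proper index-$d$ sublift for some $d \mid k$. This follows because $\PSL_2^{(k)}(\bR)$ is defined in the excerpt (diagram~\eqref{eq:homeok}) as the fiber product of $\PSL_2(\bR)$ with $\Homeo_+^{(k)}(S^1)$ over $\Homeo_+(S^1)$, so the extension $1 \to \bZ/k\bZ \to \Homeo_+^{(k)}(S^1) \to \Homeo_+(S^1) \to 1$ pulls back to the central extension $1 \to \bZ/k\bZ \to \PSL_2^{(k)}(\bR) \to \PSL_2(\bR) \to 1$; and here $\rho$ factors through $G$ with $q_k \circ \rho = \rho_0$, so the image $G$ together with $q_k|_G : G \to G_0$ receives a map from $L$ compatible with $\rho_0$. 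Since $\ker(q_k|_G)$ is central in $G$ and maps isomorphically to a subgroup of $\bZ/k\bZ$, and since $G$ surjects onto $G_0 = q_k(G)$, one gets $|G| = |G_0| \cdot |\ker q_k|$ in the appropriate (cardinality-free) sense, forcing $G = q_k^{-1}(G_0)$. I expect this bookkeeping with the pullback extension to be the main (though quite minor) obstacle; everything else is a direct appeal to Lemma~\ref{lem:min-cover}. If one prefers to sidestep it entirely, an alternative is to repeat the proof of Lemma~\ref{lem:min-cover} verbatim with $q_k^{-1}(L)$ replaced by $\rho(L)$: the set $p_k^{-1}\circ\Lambda(\rho_0)$ is nonempty (assuming $\Lambda(\rho_0)\neq\varnothing$; otherwise both sides are empty) and $\rho(L)$--invariant, giving $\Lambda(\rho) \subseteq p_k^{-1}\circ\Lambda(\rho_0)$ by minimality, and the reverse inclusion follows by the same orbit-chasing argument using that $q_k\circ\rho(h)\circ p_k = p_k\circ\rho(h)$ and post-composing with rotations by multiples of $2\pi/k$.
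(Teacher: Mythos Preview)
Your claim that $G = q_k^{-1}(G_0)$ is false in general, and the argument you give for it is circular. Nothing forces $\ker q_k \le \rho(L)$: take $L = \bZ$, $\rho_0$ trivial, and $\rho$ trivial; then $\rho(L) = \{1\}$ while $q_k^{-1}(\rho_0(L)) = \bZ/k\bZ$. More realistically, a lift of a Fuchsian surface group to $\PSL_2^{(k)}(\bR)$ typically does \emph{not} contain the center. The sentence ``$G$ surjects onto $G_0$ and the extension is the pullback extension, so $G = q_k^{-1}(G_0)$'' asserts exactly what needs to be proved; surjecting onto $G_0$ only gives that $\rho(L)$ has index dividing $k$ in $q_k^{-1}(G_0)$, not index $1$.

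Your alternative at the end has the same gap in disguise. When you ``post-compose with rotations by multiples of $2\pi/k$'' in the orbit-chasing argument, those rotations lie in $\ker q_k$, not necessarily in $\rho(L)$; so the resulting sequence is not in $\rho(L)$ and you cannot conclude $x$ lies in the closure of a $\rho(L)$--orbit. The paper's proof handles this cleanly: from $q_k(\rho(L)) = \rho_0(L)$ one gets
\[
\left[q_k^{-1}\circ\rho_0(L):\rho(L)\right] \le k < \infty,
\]
and then Lemma~\ref{lem:min-normal} (finite-index subgroups share the limit set) gives $\Lambda(\rho(L)) = \Lambda(q_k^{-1}(\rho_0(L)))$, after which Lemma~\ref{lem:min-cover} finishes. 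Your alternative could also be repaired by observing that $\Lambda(\rho(L))$ is $\bZ/k\bZ$--invariant because $\bZ/k\bZ$ is central and hence permutes $\rho(L)$--orbit closures, but that is essentially the content of Lemma~\ref{lem:min-normal}(a).
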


\bp
From the central extension~\eqref{eq:homeok}, we see that
\[
\left[
q_k^{-1}\circ\rho_0(L):\rho(L)
\right]\le k<\infty.\]
By Lemma~\ref{lem:min-normal}, we obtain
\[\Lambda\circ\rho(L)=\Lambda\circ q_k^{-1}\circ\rho_0(L).\]
The desired conclusion follows from Lemma~\ref{lem:min-cover}.
\ep

We will need the following analogue of Lemma~\ref{lem:min-fuchs}. 

\begin{lem}\label{lem:psl2k-fix}
Let $1\le k<\infty$, and let $L$ be a finitely generated group. 
Suppose we have an action $\rho$ of $L$ on $\bR/k\bZ$ such that
\[\rho\co L\to \PSL_2^{(k)}(\bR)\le \Homeo_+(\bR/k\bZ).\]
Then a minimalization of $\rho$ can be chosen as some representation
\[
\bar\rho\co L\to\PSL_2^{(k)}(\bR).\]
\end{lem}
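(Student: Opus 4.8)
The plan is to split into cases according to where the limit set of the induced $\PSL_2(\bR)$--action sits. Write $\check\rho\colon L\to\PSL_2(\bR)$ for the composition of $\rho$ with the covering $\PSL_2^{(k)}(\bR)\to\PSL_2(\bR)$; by the commutative square in~\eqref{eq:homeok}, the action $\rho$ on $\bR/k\bZ$ and the action $\check\rho$ on $S^1=\bR/\bZ$ are related exactly as in Lemma~\ref{lem:min-cover2}, so $\Lambda(\rho)=p_k^{-1}\circ\Lambda(\check\rho)$. Regarding $\bR/k\bZ$ as the circle, Lemma~\ref{lem:minimal-exists} gives a minimalization $\bar\rho$ of $\rho$, unique up to conjugacy, and the task is to realize it inside $\PSL_2^{(k)}(\bR)$. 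I will also use the observation that the preimage of $\SO(2)\le\PSL_2(\bR)$ in $\PSL_2^{(k)}(\bR)$ is precisely the group of translations of $\bR/k\bZ$: it is a connected circle subgroup covering $\SO(2)$ with degree $k$, and conversely any element of $\Homeo_+^{(k)}(S^1)$ covering a rotation of $S^1$ is a translation of $\bR/k\bZ$.

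If $\rho$ is already minimal (equivalently $\Lambda(\check\rho)=S^1$), then $\bar\rho=\rho$ works. If $\rho(L)$ has a finite orbit --- equivalently $\Lambda(\check\rho)$, hence $\Lambda(\rho)=p_k^{-1}\circ\Lambda(\check\rho)$, is finite, equivalently $\check\rho(L)$ has a finite orbit --- then by Definition~\ref{defn:minimalization} together with Lemma~\ref{lem:minimal-exists} the minimalization is $\bar\rho=T(\rot\circ\rho)$, a homomorphism from $L$ into the translation group of $\bR/k\bZ$, which by the previous paragraph is contained in $\PSL_2^{(k)}(\bR)$.

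There remains the case in which $\rho$ is not minimal and $\rho(L)$ has no finite orbit. Then $\Lambda(\check\rho)\ne S^1$ (else $\rho$ is minimal) and $\Lambda(\check\rho)$ is not finite (else $\rho(L)$ has a finite orbit), so by the trichotomy $\Lambda(\check\rho)$ is a Cantor set. Hence $\check\rho(L)$ is non-elementary (an elementary subgroup has a limit set of at most two points) and discrete (a finitely generated non-elementary indiscrete subgroup of $\PSL_2(\bR)$ is dense, hence minimal, by Lemma~\ref{lem:dense}); thus $\check\rho$ is a non-elementary Fuchsian representation with $\Lambda(\check\rho)\ne S^1$, and $\bH^2/\check\rho(L)$ is a hyperbolic $2$--orbifold with at least one geodesic funnel. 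Running the proof of Lemma~\ref{lem:min-fuchs} for $\check\rho$ --- Floyd's funnel-to-cusp deformation applies verbatim to hyperbolic $2$--orbifolds and to a non-faithful $\check\rho$ (one deforms the image group and precomposes with $\check\rho$) --- produces a path $\{\sigma_t\}_{t\in[0,1]}$ in $\Hom(L,\PSL_2(\bR))$ with $\sigma_0=\check\rho$, with $\rot\circ\sigma_t(g)$ independent of $t$ for every $g\in L$, and with $\Lambda(\sigma_1)=S^1$.

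Finally I lift the path $\{\sigma_t\}$. Since $\PSL_2^{(k)}(\bR)\to\PSL_2(\bR)$ is a covering map and $[0,1]$ is simply connected, for a finite generating set $s_1,\dots,s_m$ of $L$ each $\sigma_t(s_i)$ lifts to a path $\rho_t(s_i)$ in $\PSL_2^{(k)}(\bR)$ with $\rho_0(s_i)=\rho(s_i)$; for each relator $r$ of $L$ the map $t\mapsto r(\rho_t(s_1),\dots,\rho_t(s_m))$ is continuous, lies in the discrete kernel $\bZ/k\bZ$ of the covering (it projects to $r(\sigma_t(s_1),\dots,\sigma_t(s_m))=1$) and is the identity at $t=0$, hence is identically the identity, so each $\rho_t$ is a homomorphism and $\{\rho_t\}$ is a path in $\Hom(L,\PSL_2^{(k)}(\bR))$ lifting $\{\sigma_t\}$ with $\rho_0=\rho$. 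By Lemma~\ref{lem:rot-cover}, $k\cdot\rot\circ\rho_t(g)\equiv\rot\circ\sigma_t(g)\pmod{\bZ}$ for every $g\in L$; the right side is constant in $t$ and $\rot\circ\rho_t(g)$ is continuous in $t$, so $\rot\circ\rho_t(g)$ takes values in a fixed finite subset of $\bR/\bZ$ and is therefore constant in $t$. Theorem~\ref{thm:equiv} then shows the $\rho_t$ are pairwise semi-conjugate, and $\Lambda(\rho_1)=p_k^{-1}\circ\Lambda(\sigma_1)=\bR/k\bZ$ by Lemma~\ref{lem:min-cover2}, so $\rho_1$ is minimal; since semi-conjugate actions share a minimalization (Theorem~\ref{thm:equiv}) and a minimal action is its own minimalization, $\rho_1$ is, up to conjugacy, the minimalization of $\rho$. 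Hence $\bar\rho=\rho_1\colon L\to\PSL_2^{(k)}(\bR)$ is as required. The main obstacle is this last case, where I must recognize that the abstract minimalization coincides with a genuinely projective action obtained from Floyd's deformation, and then transport that deformation --- together with the constancy of rotation numbers --- up the covering to $\PSL_2^{(k)}(\bR)$.
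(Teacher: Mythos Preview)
Your proof is correct and the trichotomy (minimal / finite orbit / Cantor limit set) mirrors the paper's, but in the Cantor case you take a genuinely different route. The paper works \emph{statically}: it lets $\bar\rho_0$ be the Fuchsian minimalization of $\check\rho$ (your $\sigma_1$), lifts the devil's-staircase map $h_0\colon S^1\to S^1$ to a monotone degree-one $h$ on $\bR/k\bZ$ via covering theory, and then \emph{defines} $\bar\rho(g)$ directly by the cube diagram
\[
h\circ\rho(g)=\bar\rho(g)\circ h,\qquad p_k\circ\bar\rho(g)=\bar\rho_0(g)\circ p_k,
\]
checking afterwards that $\bar\rho$ really lands in $\PSL_2^{(k)}(\bR)$ (because it covers the Fuchsian $\bar\rho_0$) and is minimal. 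You instead work \emph{dynamically}: you lift Floyd's deformation path $\{\sigma_t\}$ to a path $\{\rho_t\}$ through $\PSL_2^{(k)}(\bR)$, use Lemma~\ref{lem:rot-cover} plus continuity to pin down constancy of rotation numbers upstairs, and then invoke Theorem~\ref{thm:equiv}(\ref{part:mann-path}) and (\ref{part:min}) to identify $\rho_1$ as the minimalization. Your approach avoids the diagram-chase and the verification that the lifted $\bar\rho$ is well-defined; the paper's approach avoids appealing to the path version of Lemma~\ref{lem:min-fuchs} and the full strength of Theorem~\ref{thm:equiv}. Both rely on the same extension of Lemma~\ref{lem:min-fuchs} to possibly non-faithful, possibly orbifold representations, which you flag explicitly and the paper uses implicitly.
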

Note that \emph{a priori}, the minimalization $\bar\rho$ of $\rho$ 
merely maps into $\Homeo_+(\bR/k\bZ)$, and not necessarily into $\PSL_2^{(k)}(\bR)$.  

\bp[Proof of Lemma~\ref{lem:psl2k-fix}]
We may assume $\rho$ is faithful, by considering $L/\ker\rho$ instead of $L$ if necessary.
If $\rho$ is minimal, then we have nothing to prove. 
If $\rho$ has a finite orbit then $\bar\rho$ factors through $\SO(2)$, which can be also regarded as a subgroup of $\PSL_2^{(k)}(\bR)$.
So we will assume that $\Lambda(\rho)$ is a Cantor set.

We put $\rho_0=q_k\circ\rho$. By Lemma~\ref{lem:min-cover2}, we have
\[
\Lambda(\rho)=p_k^{-1}\circ\Lambda(\rho_0).\]
Note that $\rho_0(L)$ is discrete; otherwise, $\rho_0(L)$ would contain an irrational rotation by Lemma~\ref{lem:dense} and $\rho$ would be minimal.
The same reasoning shows that $\rho_0(L)$ is non-elementary. We see that $\Lambda(\rho_0)$ is a Cantor set.

By Lemma~\ref{lem:min-fuchs}, the minimalization $\bar\rho_0$ of $\rho_0$ can be chosen as a finite-area Fuchsian representation of 
\[
L/\ker\rho_0\cong \rho_0(L).\]
There is a Cantor function $h_0\co S^1\to S^1$
for $\Lambda\circ \rho_0(L)$ 
such that
\[
\rho_0\succcurlyeq_{h_0}\bar\rho_0.\]

We claim that there exists a Cantor function $h$ on $\bR/k\bZ$
and a representation $\bar\rho\co L\to\Homeo_+(\bR/k\bZ)$ such that 
the cube diagram in Figure~\ref{fig:psl2k-fix} holds for all $g\in L$,
where all the vertical maps denote $p_k$.
From the standard covering theory for the map $p_k$,
we can define $h$ so that the front, left, right and bottom faces all commute. 
Then $h$ is the Cantor function corresponding to $\Lambda(\rho)$. 
If $h(x)=h(y)$ for some $x,y\in\bR/k\bZ$, then $h\circ\rho(g)(x)=h\circ\rho(g)(y)$; this follows from that 
$\Lambda(\rho)$ is $\rho(L)$--invariant.
So there exists a well-defined map $\bar\rho$ such that the upper face of the cube commutes. The rear face commutes by diagram chasing.

We have $\bar\rho\succcurlyeq_{p_k}\bar\rho_0$ from the cube. 
In particular, we see
\[\bar\rho\in\Hom(L,\PSL_2^{(k)}(\bR)).\]
We conclude that $\bar\rho$ is minimal
from Lemma~\ref{lem:min-cover2} and from that
\[
\Lambda(\bar\rho)=p_k^{-1}\circ\Lambda(\bar\rho_0)=\bR/k\bZ.\qedhere\]
\ep

\begin{figure}[h!]
\[
 \xymatrix{
& \bR/k\bZ \ar[rr]^>>>>>>>>{\bar\rho(g)} \ar'[d][dd] && \bR/k\bZ \ar[dd] \\
 \bR/k\bZ \ar[ru]^h \ar[rr]^>>>>>>>>{\rho(g)} \ar[dd] &&  \bR/k\bZ \ar[ru]^h \ar[dd] & \\
& S^1  \ar'[r][rr]^>>>>>>>>{\bar\rho_0(g)} &&  S^1  \\ 
S^1   \ar[ru]^{h_0} \ar[rr]^>>>>>>>>{\rho_0(g)} &&  S^1  \ar[ru]^{h_0} &
}
\]
\caption{Proof of Lemma~\ref{lem:psl2k-fix}. All the vertical maps are $p_k$.}
\label{fig:psl2k-fix}
\end{figure}
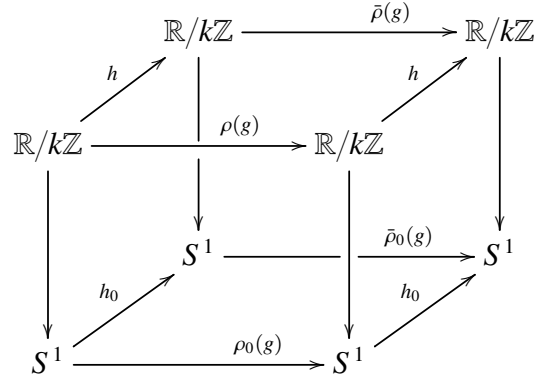

\begin{rem}\label{rem:psl2k-fix}
Let $g\in\Homeo_+(S^1)$, and let $x\in S^1$ be an isolated fixed point of $g$. 
We say $x$ is a \emph{hyperbolic fixed point}\index{fixed point!hyperbolic} if $x$ is an attracting or repelling fixed point; otherwise, we say $x$ is a \emph{parabolic fixed point}\index{fixed point!parabolic}; see for example \cite{BMNR2013MZ}. Note also that a non-elliptic element $h\in\PSL_2^{(k)}(\bR)$ may or may not have a fixed point; however, $h^k$ will have $k$  fixed points which are parabolic, or $2k$ fixed points which are hyperbolic. So in the above lemma,
for each $g\in L\setminus\ker\bar\rho$ the fixed point set of $\bar\rho(g)$ on $\bR/k\bZ$ are all hyperbolic or all parabolic.
\end{rem}

Let us now note easy constructions of circle actions by free and surface groups which are not projective even up to semi-conjugacy.
Hence it makes more sense to quest a free or surface group representation that is not conjugate or semi-conjugate to any linear action in the sense of Definition~\ref{defn:linear}.
We begin with a general observation:

\begin{exmp}
Fix a positive divisor $k>1$ of $2g-2$.
Each Fuchsian representation 
\[\rho\co \pi_1(S_g)\to\PSL_2(\bR)\le\Homeo_+(S^1)\]
has the Euler number $\mathrm{e}(\rho)=\pm(2g-2)$,
which is the (well-defined!) translation number of the lift of the relator in $\pi_1(S_g)$ to $\bR$.
As $k$ divides $2g-2$, this $\bR$--action projects to some faithful $\bR/k\bZ$--action 
\[\rho'\co \pi_1(S_g)\to\PSL_2^{(k)}(\bR)\le\Homeo_+(\bR/k\bZ).\]
By Lemma~\ref{lem:rot-cover}, the rotation spectrum is $\rho'$ is contained in $\bQ/\bZ$.
Also, the action $\rho'$ is minimal by Lemma~\ref{lem:min-cover2}.
Assume that $\rho'$ is semi-conjugate to some projective action. 
Then Lemmas~\ref{lem:dense} and~\ref{lem:min-fuchs} would imply that 
$\rho'$ is conjugate to some discrete projective action.
This is a contradiction, since the $k$--th power of every nontrivial element fixes exactly $2k$ points under $\rho'$.
We conclude that $\rho'$ is not semi-conjugate to any projective action.
\end{exmp}

\begin{exmp}
We can lift a finite--area Fuchsian representation $\rho$ of $F_n$ to:
\[\rho\co F_n\to \PSL_2^{(k)}(\bR)\le \Homeo_+(\bR/k\bZ).\]
The same argument as above shows that $\rho$ is not semi-conjugate to any projective action for $k>1$.
\end{exmp}


\subsection{Free and surface subgroups of $\pi_1(M)$}
\label{ss:3mfd}

Throughout this subsection, we let $M$ be a closed fibered hyperbolic $3$--manifold 
\[ M = S_r\tilde\times_t S^1\]
for some genus $r\ge2$ and 
some pseudo-Anosov mapping class $t$ on $S_r$;
see~\eqref{eq:univ}.
We have the universal circle action and the Kleinian action \cite{thurstonnotes}:
\begin{align*}
\rho_{\mathrm{u}}&\co \pi_1(M)\to\Homeo_+(S^1),\\
\rho_{\mathrm{K}}&\co\pi_1(M)\to\PSL_2(\bC)\le\Homeo_+(S^2).
\end{align*}
We will make two additional assumptions:
\be[(i)]
\item $\rho_{\mathrm{u}}(t)$ fixes at least four points;
\item $\rho_{\mathrm{u}}(a)$ fixes at least one point
for each $a\in \pi_1(M)$.
\ee
The assumption (i) simply means that a stable foliation of $t$ has no single-prong singularity~\cite{CB1988}.
The assumption (ii) can always be guaranteed after taking a finite--sheeted cover of $M$; see Lemma~\ref{fp3}.

We will study the restriction of $\rho_{\mathrm{u}}$ on 
 a surface subgroup $G\le\pi_1(M)$.
In the case when $G$ is quasi-Fuchsian (i.e.\ a quasiconformal deformation of a Fuchsian group in $\PSL_2(\bC)$ \cite{thurstonnotes}) and transverse to the flow, we will show that $\rho_{\mathrm{u}}\restriction_G$ is semi-conjugate to a Fuchsian actions. When $G$ is geometrically infinite, we sketch K. Mann's argument~\cite{MannThesis} that $\rho_{\mathrm{u}}\restriction_G$ is ``often'' not projective even up to semi-conjugacy.

For existential results, we find a quasi-Fuchsian surface group $G$ in $\pi_1(M)$ such that $\rho_{\mathrm{u}}\restriction_G$ is not \emph{conjugate} to any linear actions.
We will also find a finitely generated free subgroup $F\le\pi_1(M)$ such that $\rho_{\mathrm{u}}\restriction_F$ is non-linear even up to semi-conjugacy.
All of the above actions will have rotation spectra identically zero (by the second assumption), 
and hence will share some of the features of discrete projective surface group actions.

\subsubsection{Quasi-Fuchsian Surface Subgroups}
We first justify the assumption (ii):
\begin{lem}\label{fp3} 
Let $N$ be a closed fibered hyperbolic 3--manifold,
and let $\rho$ denote the universal circle action of $N$.
Then there exists a finite sheeted cover $N'$ of $N$ such that
$\rot\circ\rho(\pi_1(N'))=\{0\}$.
\end{lem}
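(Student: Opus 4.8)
The plan is to realise $N$ as a surface bundle, translate ``$\rho(a)$ has a fixed point'' into a statement about closed orbits of the suspension flow (equivalently, about how the monodromy permutes the prongs of its invariant foliations), and then pass to a finite cover that destroys the finitely many offending conjugacy classes, using subgroup separability of $\pi_1(N)$.

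First I would fix the geometry. Since $N$ is hyperbolic and fibered, a fibration has pseudo-Anosov monodromy (a reducible or periodic monodromy would make $N$ non-hyperbolic), so write $N=S\tilde\times_\psi S^1$ with $S$ a closed surface and $\psi$ pseudo-Anosov; then $\pi_1(N)=\pi_1(S)\rtimes\langle\psi\rangle$, and $\rho$ (the universal circle action $\rho_{\mathrm u}$) restricts to the Fuchsian action of $\pi_1(S)$ on $S^1=\partial\bH^2$, with $\bH^2=\widetilde S$, while $\psi$ acts through the boundary extension of its preferred lift. Every nontrivial $a\in\pi_1(S)$ acts as a hyperbolic isometry of $\bH^2$, hence fixes a point of $S^1$, so $\rot\circ\rho$ vanishes on $\pi_1(S)$ and, by conjugation invariance of $\rot$, on every element conjugate into $\pi_1(S)$. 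If instead $a$ has nontrivial image in $\bZ=\pi_1(N)/\pi_1(S)$, then $a$ acts on the closed disk $\bH^2\cup S^1$ — the orbit space of the lifted suspension flow $\Phi$ together with its ideal circle — and this action extends $\rho(a)$ on $S^1$; by Brouwer's theorem $a$ fixes a point of the disk. If that point lies on $S^1$ we are done, and otherwise $a$ is freely homotopic to a closed orbit of $\Phi$, so $a$ preserves the stable and unstable leaves of $\Phi$ through the interior fixed point $\tilde p$ and acts on their endpoints on $S^1$. If $\tilde p$ is regular and $a$ preserves the leaf orientations, these endpoints are fixed and $\rot\circ\rho(a)=0$; if $\tilde p$ is a $q$-prong singularity — and, $S$ being closed, $q\ge3$ — then $a$ cyclically permutes the $q$ prongs by some $j$ and $\rot\circ\rho(a)=j/q\pmod{\bZ}$. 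The one external fact I would import is this last normal-form computation for a pseudo-Anosov boundary action near a fixed singular point; cf.~\cite{CB1988}.

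I would then use two finite covers. The strong stable line field of $\Phi$ has an orientability obstruction $w_1\in H^1(N;\bZ/2)$; passing to the double cover $N_1\to N$ it determines, every element of $\pi_1(N_1)$ preserves the stable leaf orientation, so by the dichotomy the only elements of $\pi_1(N_1)$ that can have nonzero rotation number are those conjugate to powers $\delta_i^m$ of the finitely many primitive singular closed orbits $\delta_1,\dots,\delta_r\in\pi_1(N_1)$, with $\rot\circ\rho(\delta_i^m)=mj_i/q_i\pmod{\bZ}$; set $e_i=q_i/\gcd(j_i,q_i)$, so $\rot\circ\rho(\delta_i^m)=0$ iff $e_i\mid m$. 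Now $\pi_1(N_1)$ is the fundamental group of a hyperbolic $3$-manifold, hence subgroup separable (it is virtually special, by Agol and Wise), so the profinite topology of $\pi_1(N_1)$ restricts to the full profinite topology on each cyclic $\langle\delta_i\rangle\cong\bZ$; intersecting finitely many finite quotients yields a surjection $q\colon\pi_1(N_1)\twoheadrightarrow Q$ onto a finite group with $e_i\mid\operatorname{ord}(q(\delta_i))$ for all $i$. Let $N'\to N_1\to N$ be the finite cover with $\pi_1(N')=\ker q$. If some $a\in\pi_1(N')\setminus\{1\}$ had $\rho(a)$ without a fixed point, then $a$ would be conjugate in $\pi_1(N_1)$ to some $\delta_i^m$ with $e_i\nmid m$, whence $q(\delta_i)^m\ne1$ (as $e_i\mid\operatorname{ord}(q(\delta_i))$), contradicting $a\in\ker q$. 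Hence every element of $\pi_1(N')$ fixes a point of $S^1$, i.e.\ $\rot\circ\rho(\pi_1(N'))=\{0\}$.

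The delicate point I expect is the dichotomy of the second paragraph: once the stable line field is oriented, one must be sure that the only remaining source of a nonzero rotation number is prong rotation at one of the finitely many singular orbits. I would pin this down by combining the Brouwer argument with the local normal form of a pseudo-Anosov homeomorphism near a periodic point (a regular point or a $q$-prong), which governs the induced circle dynamics on leaf-endpoints there; the subsequent separability bookkeeping is then routine.
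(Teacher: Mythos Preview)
Your approach is substantially heavier than the paper's and contains a genuine gap.

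\textbf{The gap.} Your claim that the strong stable line field of the suspension flow has an orientability obstruction $w_1\in H^1(N;\bZ/2)$ is not justified, and fails whenever the pseudo-Anosov monodromy has a singularity with an odd number of prongs. Near a $q$--prong singular orbit the line field winds by $(2-q)\pi$ around a meridian disk, so for $q$ odd the line field is non-orientable in every neighborhood of that orbit; the obstruction then lives only in $H^1(N\setminus\{\text{singular orbits}\};\bZ/2)$ and need not come from $H^1(N;\bZ/2)$, so your double cover $N_1\to N$ need not exist. Without that cover you cannot reduce to finitely many offending conjugacy classes: there may be infinitely many \emph{regular} periodic orbits whose first-return map reverses leaf orientation, each contributing elements of rotation number $1/2$, and no separability argument handles infinitely many primitives. (Your last paragraph anticipates a delicacy in the dichotomy, but locates it in the wrong place --- the local normal form is fine; the global orientability is not.)

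\textbf{An inefficiency.} Even where your argument does go through, invoking Agol--Wise for subgroup separability is enormous overkill. Each singular primitive $\delta_i$ has nonzero image $d_i$ in $\bZ$ under the fibering map $\pi_1(N_1)\twoheadrightarrow\bZ$; the quotient $\bZ\to\bZ/(e_id_i)\bZ$ already sends $\delta_i$ to an element of order $e_i$, so the surjection to $\bZ$ alone produces the finite quotient you need.

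\textbf{What the paper does instead.} The paper's argument avoids all of this. Take $L$ so that $\rho(t^L)$ has an even number of fixed points on $S^1$, alternately attracting and repelling (this is a standard fact about the boundary action of a pseudo-Anosov lift, see~\cite{CB1988}). Let $N'\to N$ be the cyclic cover with $\pi_1(N')=\pi_1(S_r)\rtimes\langle t^L\rangle$. For $h\in\pi_1(S_r)$ and $k>0$ with $\Fix\rho(h)\cap\Fix\rho(t^L)=\varnothing$, choose an attracting fixed point $p$ of $\rho(h)$ and an adjacent attracting fixed point $q$ of $\rho(t^L)$ bounding an innermost interval $J=(p,q)$. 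Then $\rho(t^{Lk})$ pushes $J$ toward $q$ and $\rho(h)$ pushes it toward $p$, so $\rho(ht^{Lk})(\overline J)\subset J$ and a fixed point exists by the intermediate value theorem. No Brouwer, no prong bookkeeping, no separability --- just one power of the monodromy and an interval mapped into itself.
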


\begin{proof}
We will denote $\rho(g)(x)=g.x$ for $g\in\pi_1(N)$ and $x\in S^1$.
Let $t$ denote the stable letter of $\pi_1(N)$ from the given fibering.
Then for some $L>0$,
the pseudo-Anosov element $t^L$ has an even number of fixed points, in such a way that attracting and repelling fixed points alternate~\cite{CB1988}.
Define a finite sheeted cover $N'\to N$ by
\[\pi_1(N')=\pi_1(S_r)\rtimes\form{t^L}.\]

We claim every element of $\pi_1(N')$ fixes at least one point.
Consider an arbitrary $ht^{Lk}\in\pi_1(N')$. For brevity we assume $k>0$, as the case  $k<0$ is similar. We may only consider the case that $\Fix h$ and $\Fix t^L$ are disjoint. 
The hyperbolic element $ h\in\PSL_2(\bR)$ has one attracting and one repelling fixed points on $S^1$. 
Pick an interval $J=(p,q)$ on $S^1$ such that $p$ and $q$ are attracting fixed points of $ h $ and of $ t^L$, respectively.
We choose $J$ to be innermost, in the sense that $J$ contain no fixed points of $ h$ or $ t^L$.
Up to reversing, we have a circular ordering
\[p <  ht^{Lk}.p <  ht^{Lk}.q < q.\]
This implies $ ht^{Lk}$ fixes some point in $J$, as desired. 
\end{proof}

The first main point of this subsection is the following.

\begin{thm}\label{ct=min}
	Let $M$ be a closed hyperbolic $3$--manifold which is a surface bundle with fiber $S$,
	and let $G$ be a quasi-Fuchsian surface subgroup of $\pi_1(M)$ corresponding to a quasi-Fuchsian surface $W$ which is transverse to the flow induced by the fibration with fiber $S$. 
	The group $\pi_1(M)$ admits the universal  circle action on $S^1 = \partial \pi_1(S)$.
	Then the restriction of the universal  circle action of $\pi_1(M)$ to $G$ is semi-conjugate to a Fuchsian action of $G$.
\end{thm}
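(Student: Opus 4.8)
The plan is to produce a $G$--equivariant, surjective, monotone degree one map from the universal circle onto the circle at infinity of a hyperbolic structure on $W$; by the blow-up criterion of Theorem~\ref{thm:equiv} (together with Definition~\ref{defn:minimalization} and Lemma~\ref{lem:minimal-exists}) this identifies the minimalization of $\rho_{\mathrm{u}}\restriction_G$ with a cocompact, hence minimal, Fuchsian action of $G$, which is exactly the assertion. To set up coordinates I would, as in the discussion around~\eqref{eq:univ}, pass to $\bH^3$: write $\bH^3\cong\tilde S\times\bR$ so that the lifted suspension flow $\tilde\Phi$ is the vertical flow. Its orbit space is $\mathcal{O}=\bH^3/\tilde\Phi\cong\tilde S\cong\bH^2$, the deck group $\pi_1(M)$ acts on $\mathcal{O}$, and $\partial\mathcal{O}=S^1=\partial\pi_1(S)$ is precisely the universal circle, with the $\pi_1(M)$--action on $\partial\mathcal{O}$ equal to $\rho_{\mathrm{u}}$ (this is the statement of Proposition~\ref{prop:universal circle} read on the orbit space). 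Write $p\colon\bH^3\to\mathcal{O}$ for the projection.

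Next I would use the transversality hypothesis geometrically. Lift the $\pi_1$--injective surface $W$ to an embedded $G$--invariant plane $\tilde W\subset\bH^3$; transversality of $W$ to $\Phi$ lifts to transversality of $\tilde W$ to $\tilde\Phi$. Invoking the structure theory of (pseudo-)Anosov flows on fibered hyperbolic $3$--manifolds (Fried, Fenley), $\tilde W$ is a \emph{section} of $\tilde\Phi$ over an open $G$--invariant set $U_W:=p(\tilde W)\subseteq\mathcal{O}$, so that $p\restriction_{\tilde W}\colon\tilde W\to U_W$ is a $G$--equivariant homeomorphism. If $W$ were a fiber then $U_W=\mathcal{O}$ and one would simply recover the universal circle; the point is that a quasi-Fuchsian $W$ is never a fiber, and one expects $U_W\subsetneq\mathcal{O}$ to be a proper subset, $G$ acting cocompactly on it. Since $W$ is quasi-Fuchsian, its boundary $\Lambda_G:=\partial_\infty\tilde W\subset S^2=\partial\bH^3$ is a Jordan curve, and the action of $G$ on $\Lambda_G$ is topologically conjugate to the Fuchsian action of $G$ on $S^1=\partial\bH^2$ (being a quasiconformal deformation of a Fuchsian one).

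Now I would bring in the Cannon--Thurston map of the fibration: a continuous, surjective, $\pi_1(M)$--equivariant map $\partial i\colon S^1=\partial\mathcal{O}\to S^2=\partial\bH^3$, whose only non-injectivities are the identifications of the pairs of ideal endpoints of leaves, and the ideal complementary polygons, of the lifted stable and unstable laminations of the monodromy. Let $\Lambda:=\Lambda(\rho_{\mathrm{u}}\restriction_G)\subseteq S^1$ be the smallest nonempty closed $G$--invariant set. Since $(\partial i)^{-1}(\Lambda_G)$ is a nonempty closed $G$--invariant set we get $\Lambda\subseteq(\partial i)^{-1}(\Lambda_G)$, and since the convergence action of $G$ on the Jordan curve $\Lambda_G$ is minimal, $\partial i(\Lambda)=\Lambda_G$. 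The heart of the matter — and the step where transversality of $W$ to the flow is used essentially — is to show that $\partial i\restriction_\Lambda$ is \emph{injective}: no leaf of the stable or unstable lamination has both of its ideal endpoints in $\Lambda$, equivalently the section $\tilde W$ meets no weak (un)stable leaf in a bi-infinite arc both of whose ideal endpoints lie on $\partial U_W$. Granting this, $\partial i\restriction_\Lambda\colon\Lambda\to\Lambda_G$ is a $G$--equivariant homeomorphism, so the action of $G$ on $\Lambda$ is conjugate to a cocompact Fuchsian action; in particular $\rho_{\mathrm{u}}\restriction_G$ has no finite orbit.

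Finally, collapsing the closure of each component of $S^1\setminus\Lambda$ to a point gives a $G$--equivariant surjective monotone degree one map $h\colon S^1\to S^1$ with $\rho_{\mathrm{u}}\restriction_G\succcurlyeq_h\sigma$, where $\sigma$ is the (minimal) Fuchsian action of $G$ produced above. Thus $\rho_{\mathrm{u}}\restriction_G$ is a blow-up of $\sigma$, and since $\sigma$ is its own minimalization, Theorem~\ref{thm:equiv} yields that $\rho_{\mathrm{u}}\restriction_G$ is semi-conjugate to $\sigma$, a Fuchsian action of $G$. When $U_W\subsetneq\mathcal{O}$ one has $\Lambda\subsetneq S^1$, so $h$ is not injective and this semi-conjugacy is genuinely not a conjugacy, in contrast with the geometrically infinite fiber case of Theorem~\ref{thm:geominf}. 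The expected main obstacle is precisely the injectivity of $\partial i$ along $\Lambda$, that is, the careful use of the transversality hypothesis to rule out $\Lambda_G$ meeting the collapsing locus of the Cannon--Thurston map; I would extract this from Fenley's analysis of transverse (quasi-Fuchsian) surfaces relative to the flow ideal boundary. A secondary, more routine, technical point is the claim that a surface transverse to the flow lifts to a genuine section over an \emph{open} subset of the orbit space rather than merely a local section.
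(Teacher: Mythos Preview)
Your overall architecture is right — pass to the limit set $\Lambda=\Lambda(\rho_{\mathrm{u}}\restriction_G)\subset S^1$, use the Cannon--Thurston map $CT\colon S^1\to S^2$ to relate $\Lambda$ to the quasi-Fuchsian limit circle $\Lambda_G^2\subset S^2$, and collapse complementary arcs — but the central claim is stated backwards and is in fact topologically impossible. You assert that $CT\restriction_\Lambda$ is \emph{injective}. However $\Lambda$ equals the limit set of $H=G\cap\pi_1(S)$ (Lemma~\ref{lem:min-normal}), and since $G$ is quasi-Fuchsian rather than a fiber, $H$ has infinite index in $\pi_1(S)$ and $\Lambda$ is a Cantor set. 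You also (correctly) argue $CT(\Lambda)=\Lambda_G^2\cong S^1$. A continuous bijection from a Cantor set onto a circle would be a homeomorphism between non-homeomorphic compacta; so $CT\restriction_\Lambda$ cannot be injective. In particular your reformulation ``no leaf of the stable or unstable lamination has both of its ideal endpoints in $\Lambda$'' is false: many leaves do.

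What actually happens is the opposite, and this is exactly where transversality enters. By \cite[Proposition~3.9]{CooperLongReid}, transversality of $W$ to the flow forces the convex hull of $\Lambda$ in $\tilde S\cong\bH^2$ to be a \emph{leaf-polygon}: its boundary geodesics are leaves of the stable/unstable laminations. Since $CT$ identifies precisely the ideal endpoints of such leaves (and ideal polygon vertices) \cite{CT}, the map $CT\restriction_\Lambda$ identifies exactly the pairs of \emph{adjacent} boundary points of $\Lambda$ and nothing else. In other words, $CT\restriction_\Lambda$ \emph{is} the devil's-staircase collapsing map $\Lambda\to\Lambda/{\sim}\cong S^1$, and this induced circle map is your $h$. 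The paper's proof consists of verifying these two conditions (adjacent pairs are identified; no other pairs are), the second following from the first by an elementary argument about finite-to-one quotient maps $S^1\to S^1$. So rather than extracting an injectivity statement from Fenley, the fix is to invoke Cooper--Long--Reid to get the leaf-polygon structure and then check that $CT$ is order-preserving on $\Lambda$ with the prescribed gluings.
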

\bp We first fix some auxiliary notation. By hypothesis, $G$ is isomorphic to the fundamental group $\pi_1(W)$ of a closed surface $W$. Also, $G$ is quasi-Fuchsian.
Set $H=G\cap\pi_1(S)$, the kernel of the map $G \to  \bZ$ induced by the fibration $M \to S^1$ when restricted to $G$. Note that $H$ is normal in $G$. 
Limit sets of $H$ or $G$ in $S^1$ and $S^2$ will be denoted by superscripts 1, 2 respectively.  Since limit sets are unique minimal closed non-empty invariant sets, we have,
\begin{itemize}
	\item $(H.x)' = \Lambda_H^1$ for any $x \in S^1$
	
	\item $(H.x)' = \Lambda_H^2$ for any $x \in S^2$
\end{itemize}

The Cannon-Thurston map $CT$ maps $\Lambda_H^1$ continuously and $H-$equivariantly onto $\Lambda_H^2$~\cite{CT}.
In particular,
$\Lambda_H^1 \subset  CT^{-1} (\Lambda_H^2)$. There are now two natural circle actions of $H$; further  the $H-$actions
 give us two natural circle actions of $G$, since $H$ is normal in $G$ with quotient $\mathbb Z$. These actions are given by:

\begin{enumerate}
	\item The minimalization of the action of $H$ on $S^1$ obtained by collapsing the complementary arcs of  $\Lambda_H^1$. Thus, the minimalization is a standard semiconjugacy and one gets an action of $G$ on $S^1$ as a result;
	\item The action of $H$ on $\Lambda_H^2 = \Lambda_G^2$, which is homeomorphic to $S^1$. Since $\Lambda_G^2$ is homeomorphic to a circle and the $G-$action on  $\Lambda_H^2$ is conjugate to the usual Fuchsian action of $G$ on $S^1$, we obtain another action of $G$ on $S^1$. 
\end{enumerate}

We shall prove that these two actions of $G$ on $S^1$ are topologically conjugate, i.e.\ the 
Cannon-Thurston map
$CT$  is (topologically) identical to the minimalization. This suffices to prove the theorem. Pairs of points on the boundary of $S^1 \setminus\Lambda_H^1$ will be referred to as {\it adjacent boundary points}.
To prove that $CT$  is (topologically) identical to the minimalization map it suffices to prove that $CT$ preserves circular order, or equivalently the following two conditions:

\begin{enumerate}
	\item
	Adjacent boundary points of $\Lambda_H^1$  are identified by $CT$;
	\item
	No other pair of points of  $\Lambda_H^1$  are identified by $CT$.
\end{enumerate}

We first show that Condition 2 follows from Condition 1. If $CT(p) = CT(q)$ for all adjacent boundary points of $\Lambda_H^1$, then $CT$ induces a map
\[\overline{CT}: \Lambda_H^1/\sim \to \Lambda_H^2,\] where $\sim$ denotes the equivalence relation given by $p\sim q$ if $p, q$ are adjacent boundary points of $\Lambda_H^1$. But $\Lambda_H^1/\sim$ is homeomorphic to the circle $S^1$ (and the quotient map $\Lambda_H^1 \to \Lambda_H^1/\sim$ is the standard "devil's staircase" map sending the Cantor set onto the circle by collapsing complementary intervals). Hence $\overline{CT}$ is a quotient map from the circle to the circle.
Further $CT$ is a finite-to-one map that is injective on the dense subset of $\Lambda_H^1$ given by the set of attracting fixed points of elements of $H$. Hence $\overline{CT}$ is a finite-to-one quotient map from the circle to the circle that is injective on a dense subset. Such a quotient map is necessarily 
a homeomorphism; in particular it is injective. Condition 2 follows.

It remains to show Condition 1.  In
\cite[Proposition 3.9]{CooperLongReid}, it is shown that if $G$ corresponds to a quasi-Fuchsian surface {\it transverse to the flow}, then
the convex hull of $\Lambda_H^1$ in $\tilde{S} (= {\mathbb H}^2)$ is an infinite sided polygon whose sides are leaves of the stable and unstable laminations. Such a polygon is referred to as a leaf-polygon in \cite{CooperLongReid}.  Recall \cite{CT} that the Cannon-Thurston map identifies precisely end-points of stable and unstable laminations (and ideal points of complementary polygons). Hence if $p, q$ are adjacent boundary points of $\Lambda_H^1$,
then the bi-infinite geodesic joining $p, q$ is a leaf of either the stable or the unstable lamination and hence $CT(p) = CT(q)$. This proves Condition 1.
\ep

We address the second main point of Section~\ref{ss:3mfd}.

\begin{thm}\label{thm:qf-nonconj}
There exists a quasi-Fuchsian surface subgroup $G\le\pi_1(M)$ such that $\rho_{\mathrm{u}}\restriction_G$ is not conjugate to any (possibly non-injective) linear action.\end{thm}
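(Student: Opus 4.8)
The plan is to produce a quasi-Fuchsian surface subgroup $G\le\pi_1(M)$ whose restricted universal circle action $\rho_{\mathrm u}\restriction_G$ is not conjugate to any linear action, using the classification of finite-dimensional connected Lie subgroups of $\Homeo_+(S^1)$ from Theorem~\ref{thm:ghys-lie} together with Lemma~\ref{lem:lie-circle}. First I would observe that for a quasi-Fuchsian $G\le\pi_1(M)$, the action $\rho_{\mathrm u}\restriction_G$ has no global fixed point: its limit set $\Lambda_G^1=\Lambda_H^1$ (where $H=G\cap\pi_1(S)$, a finitely generated normal subgroup) is infinite, since $H$ is non-elementary, so $\rho_{\mathrm u}\restriction_G$ cannot factor through $\SO(2)$ or have a global fixed point. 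Under our standing assumption (ii) (guaranteed after passing to a finite cover by Lemma~\ref{fp3}), every element of $\pi_1(M)$ has a fixed point under $\rho_{\mathrm u}$, so in particular each generator $s_i$ of $G$ satisfies $\Fix\rho_{\mathrm u}(s_i)\ne\varnothing$. Hence if $\rho_{\mathrm u}\restriction_G$ were \emph{conjugate} to a linear action $\rho'\colon G\to G_0\le\Homeo_+(S^1)$, Lemma~\ref{lem:lie-circle} forces $G_0$ to be conjugate to $\PSL_2^{(k)}(\bR)$ for some $1\le k<\infty$, and since conjugacy preserves the property of being minimal, $\rho_{\mathrm u}\restriction_G$ would have to be a minimal action into $\PSL_2^{(k)}(\bR)$.

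The next step is to choose $G$ so as to obstruct exactly this possibility. The key is to use Theorem~\ref{ct=min}: for a quasi-Fuchsian surface subgroup $G$ corresponding to a surface transverse to the suspension flow, $\rho_{\mathrm u}\restriction_G$ is semi-conjugate to a Fuchsian action of $G$ — but it is \emph{not} minimal, because the limit set $\Lambda_H^1$ is a Cantor set (the convex hull of $\Lambda_H^1$ is an infinite-sided leaf-polygon, as recalled in the proof of Theorem~\ref{ct=min}, so $\Lambda_H^1\ne S^1$). Thus $\rho_{\mathrm u}\restriction_G$ has an exceptional minimal set. Now I would invoke Lemma~\ref{lem:min-fuchs} together with Lemma~\ref{lem:dense}: if $\rho_{\mathrm u}\restriction_G$ were conjugate to a \emph{minimal} action into $\PSL_2^{(k)}(\bR)$, then passing through the $k$-fold cover $q_k$ and applying Lemma~\ref{lem:min-cover2}, the pushed-down action $q_k\circ\bar\rho$ on $S^1$ would be a minimal projective action of a quotient of $G$; by Lemma~\ref{lem:dense} it is either dense (giving infinite rotation spectrum, contradicting $\rot\circ\rho_{\mathrm u}(\pi_1(M))=\{0\}$ and Lemma~\ref{lem:rot-cover}) or discrete, i.e.\ Fuchsian and minimal, hence cocompact. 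But a cocompact Fuchsian action has limit set all of $S^1$; pulling back via $p_k$ using Lemma~\ref{lem:min-cover}, the minimal action of $G$ into $\PSL_2^{(k)}(\bR)$ would have limit set $\bR/k\bZ$, which is incompatible with $\rho_{\mathrm u}\restriction_G$ having a Cantor limit set (conjugacy preserves limit sets). This contradiction shows no such conjugacy exists.

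Concretely, to produce such a $G$ I would cite the existence of quasi-Fuchsian surface subgroups transverse to a fixed fibration: by Cooper--Long--Reid~\cite{CooperLongReid} (as used in Theorem~\ref{ct=min}), every fibered hyperbolic $3$--manifold contains quasi-Fuchsian surfaces transverse to the flow — for instance by taking an embedded quasi-Fuchsian surface and tilting, or by noting that the fiber itself can be perturbed; alternatively one can appeal to the abundance of quasi-Fuchsian surface subgroups in hyperbolic $3$--manifold groups (Kahn--Markovic and refinements). Passing to a finite cover if necessary to arrange assumption (ii) changes neither the existence of transverse quasi-Fuchsian surfaces nor the Cantor nature of the relevant limit sets, so the argument survives. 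The main obstacle I anticipate is precisely the bookkeeping around the covers $p_k,q_k$: one must be careful that ``conjugate to a linear action'' really does, via Lemma~\ref{lem:lie-circle} and the fiber-product description~\eqref{eq:homeok} of $\PSL_2^{(k)}(\bR)$, reduce to the minimal projective (possibly covered) case, and that the limit-set computation in $\bR/k\bZ$ via Lemmas~\ref{lem:min-cover},~\ref{lem:min-cover2} is applied to the correct group $q_k^{-1}(\rho_{\mathrm u}(G))$ rather than to $G$ itself. Once that reduction is pinned down, the contradiction ``Cantor limit set versus full limit set'' is immediate.
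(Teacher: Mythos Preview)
Your approach is genuinely different from the paper's. The paper does \emph{not} use a transverse quasi-Fuchsian surface or a limit-set dichotomy. Instead it invokes Kahn--Markovic (Lemma~\ref{lem:kahn markovic}) to produce a quasi-Fuchsian $G\le\pi_1(M)$ containing $t^N$ for some $N\gg0$, and then argues by counting fixed points: if $\rho_{\mathrm u}\restriction_G$ were conjugate into $\PSL_2^{(k)}(\bR)$, the element $t^N$ (with at least four fixed points by standing assumption~(i)) forces $k>1$, whereas every nontrivial element of $H=G\cap\pi_1(S_r)$ acts as a hyperbolic M\"obius transformation with exactly two (hyperbolic) fixed points, contradicting Remark~\ref{rem:psl2k-fix}. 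This is a three-line obstruction once $t^N\in G$ is arranged; it needs neither Theorem~\ref{ct=min} nor any statement about $\Lambda_H^1$.

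Your limit-set route can be made to work, but as written it has a gap. You assert that $q_k\circ\bar\rho$ is a ``minimal projective action'' and then that it is ``discrete, i.e.\ Fuchsian and minimal, hence cocompact''; neither inference is justified. Discreteness (from rotation spectrum $\{0\}$) does \emph{not} by itself give minimality or cocompactness---a discrete free Fuchsian image would have Cantor limit set, and then $p_k^{-1}$ of that is again Cantor, so no contradiction with $\Lambda_H^1$ arises. What you are missing is the observation that $\bar\rho$ is faithful (it is conjugate to the faithful action $\rho_{\mathrm u}\restriction_G$), and that $G$, being a closed surface group of genus $\ge2$, has trivial center; hence $\ker(q_k\circ\bar\rho)=\bar\rho^{-1}(\bZ/k\bZ)$ is a central subgroup of $G$ and therefore trivial. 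Now $q_k\circ\bar\rho$ is a \emph{faithful} discrete representation of a closed surface group into $\PSL_2(\bR)$, which must be cocompact, so $\Lambda(q_k\circ\bar\rho)=S^1$ and Lemma~\ref{lem:min-cover2} gives $\Lambda(\bar\rho)=\bR/k\bZ$, contradicting the Cantor limit set. You should also be more careful with the existence step: Cooper--Long--Reid is cited in the paper for the leaf-polygon structure of a transverse quasi-Fuchsian surface, not for the existence of such a surface in an arbitrary fibered $M$; you would need a separate reference or argument for that.
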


We will employ the following consequence of \cite{KM2012}.
We are grateful to J. Kahn for explaining the relevant part to us in the argument below.

\begin{lem}[See ~\cite{KM2012}]\label{lem:kahn markovic} 
	For each element $g\in\pi_1(M)$, there exists an $N\gg 0$ and a quasi-Fuchsian surface subgroup $G\le \pi_1(M)$ such that $g^N\in G$.
\end{lem}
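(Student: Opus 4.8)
The plan is to obtain this as a direct consequence of the Kahn--Markovic surface subgroup theorem \cite{KM2012}, used in the refined form in which one of the ``good'' pairs of pants in their construction is required to have a prescribed cuff. If $g=1$ there is nothing to do: $\pi_1(M)$ contains a quasi-Fuchsian surface subgroup by \cite{KM2012}, and $g^N=1$ lies in it for every $N$. So assume $g\ne1$; since $M$ is a closed hyperbolic $3$--manifold, $\pi_1(M)$ is torsion-free and $\rho_{\mathrm{K}}(g)$ is a loxodromic element of $\PSL_2(\bC)$ whose axis descends to a closed geodesic in $M$. Write $\ell\in\bC/2\pi i\bZ$ for the complex translation length of $g$, so $\mathrm{Re}\,\ell>0$.

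First I would recall the relevant output of \cite{KM2012}: for a fixed small $\epsilon>0$ and every sufficiently large $R$ there is an abundant family of $(R,\epsilon)$--good pairs of pants immersed in $M$ --- within $\epsilon$ of totally geodesic, with cuffs closed geodesics of complex length within $\epsilon$ of $R$ --- and, crucially, for any closed geodesic $c$ of complex length within $\epsilon$ of $R$ the ``feet'' of the good pants having $c$ as a cuff are $\epsilon$--equidistributed in the unit normal bundle of $c$ (this equidistribution being the exponential--mixing input of \cite{KM2012}). The next step is an elementary observation about the powers of $g$: the complex lengths $N\ell$ either have imaginary part (mod $2\pi$) dense near $0$ along a subsequence of $N$ (when $\mathrm{Im}\,\ell/2\pi$ is irrational) or have vanishing imaginary part along the multiples of a fixed integer (when $\mathrm{Im}\,\ell/2\pi\in\bQ$); either way there are infinitely many $N$ for which the imaginary part of $N\ell$ is within $\epsilon$ of $0\bmod 2\pi$. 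Choosing such an $N$ large enough that $N\,\mathrm{Re}\,\ell$ exceeds the Kahn--Markovic threshold and taking $R$ to be $N\,\mathrm{Re}\,\ell$ (or a nearby admissible value), the closed geodesic $\gamma_N$ representing $g^N$ is itself an $(R,\epsilon)$--good curve.

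Then I would run the Kahn--Markovic assembly, seeded so that $\gamma_N$ appears as a curve of the resulting surface. Concretely, by the equidistribution of feet on the unit normal bundle of $\gamma_N$ one may choose two good pants $P_0,P_1$ with cuff $\gamma_N$ glued along $\gamma_N$ with a nearly $1$--shift, and then complete this partial good assembly to a closed one by the balanced-measure/Hall's-marriage argument of \cite{KM2012}. This produces a closed surface $S$ with a $\pi_1$--injective immersion into $M$ whose image $G$ is $(1+\epsilon_0)$--quasi-Fuchsian, with $\epsilon_0\to0$ as $\epsilon\to0$ and $R\to\infty$; taking $\epsilon$ small at the outset, $G$ is quasi-Fuchsian. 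Since $\gamma_N$ is one of the cuffs of $S$, the element it represents --- which is $g^N$ or $g^{-N}$ up to conjugacy in $\pi_1(M)$ --- lies in a conjugate $G'$ of $G$; as $G'$ is a group it contains $g^N$, and $G'$ is again a quasi-Fuchsian surface subgroup, so $G'$ is the desired subgroup.

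I expect the main obstacle to be the middle step together with the seeding in the last one: arranging simultaneously that the real and rotational parts of $g^N$ land in the good window, and verifying that forcing a single cuff to equal $\gamma_N$ is compatible with the equidistribution and matching estimates of \cite{KM2012} that make the assembly close up and remain quasi-Fuchsian. Both points are implicit in \cite{KM2012} (and are exactly the content J. Kahn explained to the authors), which is why we state the lemma as a consequence of that work rather than reprove it.
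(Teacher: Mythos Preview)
Your proposal is correct and follows essentially the same approach as the paper: both invoke the Kahn--Markovic construction, seeding it so that a sufficiently high power $g^N$ appears as a cuff of one of the good pants, and conclude that the resulting quasi-Fuchsian surface group contains $g^N$. Your write-up is in fact more careful than the paper's brief sketch, since you explicitly handle the rotational part of the complex translation length (needed for $\gamma_N$ to be an $(R,\epsilon)$--good curve) and the conjugacy issue at the end, both of which the paper leaves implicit.
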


\begin{proof} 
	The construction of Kahn and Markovic in \cite{KM2012} proceeds by gluing pairs of pants together to construct closed surface subgroups $G\le \pi_1(M)$.
	This is arranged in such a way that 
	for a given $\epsilon>0$ and for $R>0$ sufficiently large,
	if the translation length $\ell(h)$ of $h\in\pi_1(M)$ satisfies 
	$|\ell(h)-R|\le\epsilon/2$, then a surface group $G\le\pi_1(M)$ can be required to contain $h$; see also \cite[Section 4.1]{KM2012GT}.
	In particular, we can require that $g^N\in G$ for some $N\gg0$.
We remark that geometrically, the group $G$ is forced to be quasi--Fuchsian (see also \cite{CooperLongReid}).
\end{proof}

\bp[Proof of Theorem~\ref{thm:qf-nonconj}]
By Lemma~\ref{lem:kahn markovic}, there exists a quasi--Fuchsian surface subgroup $G\le\pi_1(M)$ that contains $t^N$ for some $N\gg0$.
\emph{Assume for contradiction} that $\rho:=\rho_{\mathrm{u}}\restriction_G$ is conjugate to a linear action.
Write $H=G\cap\pi_1(S_r)\unlhd G$. Since
\[G/H\le \pi_1(M)/\pi_1(S_r)\cong\bZ,\]
and since $\rho_{\mathrm{u}}$ restricts to a Fuchsian action on $\pi_1(S_r)$,
the restriction $\rho\restriction_H$ is a non-elementary Fuchsian action. In particular, $\rho$ has no global fixed point.
By Lemma~\ref{lem:lie-circle}, for some integer $k\ge1$ we may write
\[\rho_{\mathrm{u}}\co G\to \PSL_2^{(k)}(\bR).\]
We have $k>1$ since $t^N\in G$.
On the other hand, each $\rho(u)\in \rho(H)\setminus\{1\}$
has exactly two hyperbolic points on $S^1$.
This contradicts Remark~\ref{rem:psl2k-fix}.
\ep

\subsubsection{Geometrically infinite surface groups}
With regard to Theorems~\ref{ct=min} and~\ref{thm:qf-nonconj}, the following question suggests itself:
\begin{que}\label{que:kahn markovic}
For which surface subgroups $G\le\pi_1(M)$,
is the restriction $\rho_{\mathrm{u}}\restriction_G$ semi-conjugate to a linear action?
\end{que}

By Theorem \ref{ct=min}, such a surface group $G$ is either a quasi-Fuchsian surface not transverse to the flow or else
necessarily geometrically infinite.
The difficulty of repeating the argument of Theorem~\ref{thm:qf-nonconj} in addressing Question~\ref{que:kahn markovic} has to do with the location of the limit set of a certain subgroup of the fiber group. Indeed, let $G\le\pi_1(M)$ be a surface subgroup containing $t^N$ for some $N\gg0$, as in the proof of Theorem~\ref{thm:qf-nonconj}. Put $H=G\cap\pi_1(S_r)$ 
and write 
\[
\Lambda:=\Lambda\circ\rho_{\mathrm{u}}(G)=\Lambda\circ\rho_{\mathrm{u}}(H).\]
The set $\Lambda$ in general will be smaller than the limit set of $\pi_1(S_r)$, since $H$ is not normal in $\pi_1(S_r)$. It is possible that the entirety of $\Lambda$ lies in between two consecutive attracting points of $t$. See Figure~\ref{f:pA}.

\begin{figure}[h]
\includegraphics[scale=0.45]{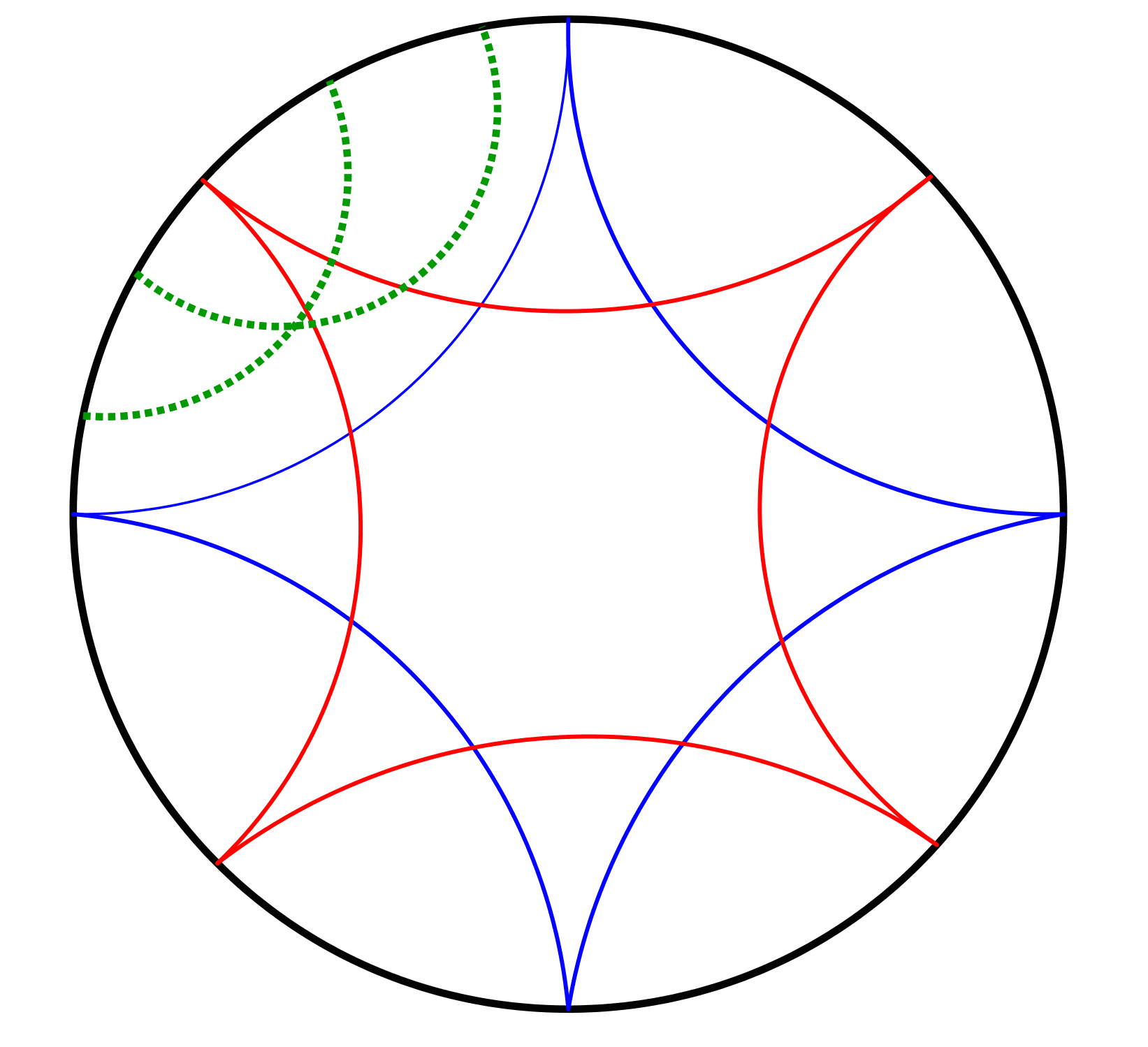}
\caption{The stable letter stabilizes the blue and the red geodesics, corresponding to attracting and repelling fixed points of the stable letter, respectively. The green dotted geodesics connect points in the limit set $\Lambda$ of $H$.}
\label{f:pA}
\end{figure}

If $\Lambda$ is entirely contained between two attracting fixed points of the stable letter, then the minimalization of the action of the action of $G$ may collapse the fixed points of $t$ into two or one fixed point, which is dynamically indistinguishable from a hyperbolic or parabolic fixed point. Note that in this situation, $\Lambda$ contains at most three fixed points of the stable letter $t$. However, we have the following proposition:

\begin{prop}\label{prop:closed nonlinear semi}
Let $G\le\pi_1(M)$ be a surface subgroup containing $t^N$ for some $N>0$. Suppose that the limit set $\Lambda$ of $H=G\cap\pi_1(S_r)$ contains more than four fixed points of $\rho_0(t)$.
Then the action of $G$ is not semi-conjugate to any (possibly non--injective) linear action.
\end{prop}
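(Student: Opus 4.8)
The plan is to argue by contradiction following the strategy of Theorem~\ref{thm:qf-nonconj}, but now upgrading the conclusion from ``not conjugate'' to ``not semi-conjugate''. Suppose $\rho=\rho_{\mathrm{u}}\restriction_G$ is semi-conjugate to a linear action $\rho'\co G\to \Homeo_+(S^1)$ that factors through some finite-dimensional connected Lie group. First I would note that since $G$ contains a non-elementary Fuchsian group, namely $H=G\cap\pi_1(S_r)$ (on which $\rho_{\mathrm{u}}$ restricts to the Fuchsian action), the action $\rho$ has no global fixed point; and by the two standing assumptions on $\rho_{\mathrm{u}}$ (that $\rho_{\mathrm{u}}(t)$ has at least four fixed points and every element has a fixed point), each generator $s_i$ of $G$ satisfies $\Fix\rho(s_i)\ne\varnothing$, i.e.\ $\rot\circ\rho(s_i)=0$. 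Global fixed-point freeness is a semi-conjugacy invariant (Corollary~\ref{cor:fin-orbit}), so $\Fix\rho'(G)=\varnothing$ as well, and Lemma~\ref{lem:lie-circle} forces the target Lie group to be conjugate to $\PSL_2^{(k)}(\bR)$ for some $1\le k<\infty$. Since $t^N\in G$ and $\rho_{\mathrm{u}}(t^N)$ has at least four fixed points (in fact at least $8$ after raising to a further power, but $\ge 4$ suffices to rule out $k=1$), we must have $k>1$.

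Next I would pass to minimalizations. Using Lemma~\ref{lem:psl2k-fix}, the minimalization $\bar\rho'$ of $\rho'\co G\to\PSL_2^{(k)}(\bR)\le\Homeo_+(\bR/k\bZ)$ can be chosen to land in $\PSL_2^{(k)}(\bR)$ again, and it is minimal. By Theorem~\ref{thm:equiv}, $\rho$ and $\rho'$ have a common minimalization, so $\bar\rho'$ is also (conjugate to) a minimalization of $\rho=\rho_{\mathrm{u}}\restriction_G$. Here is where the hypothesis that $\Lambda$ contains more than four fixed points of $\rho_{\mathrm{u}}(t)$ enters: I would compute the minimalization of $\rho_{\mathrm{u}}\restriction_G$ directly. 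The limit set $\Lambda=\Lambda\circ\rho_{\mathrm{u}}(H)$ is a Cantor set (as $H$ is a non-elementary, non-lattice Fuchsian group, being an infinite-index subgroup of $\pi_1(S_r)$), and the minimalization collapses the complementary intervals via the devil's staircase $h\co S^1\to S^1$. The element $t^N$ acts on $\Lambda$ fixing exactly those fixed points of $\rho_{\mathrm{u}}(t)$ that lie in $\Lambda$; by hypothesis there are more than four such points. These fixed points are not identified with one another by the collapsing map $h$ (distinct points of the Cantor set $\Lambda$ are never collapsed, only endpoints of a common complementary gap are, and consecutive fixed points of $\rho_{\mathrm{u}}(t)$ on $\Lambda$ are separated by infinitely many points of $\Lambda$). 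Hence in the minimalization $\bar\rho$ the element $\overline{\rho(t^N)}$ still has more than four fixed points on $S^1$, alternately attracting and repelling.

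On the other hand, in $\bar\rho'\co G\to\PSL_2^{(k)}(\bR)$, the element $\bar\rho'(t^N)$ is a non-elliptic element (it has fixed points, so it is not elliptic of infinite order, and it is not finite order since $G$ is torsion-free), and by Remark~\ref{rem:psl2k-fix} its fixed-point set on $\bR/k\bZ\cong S^1$ consists of exactly $k$ parabolic points or exactly $2k$ hyperbolic points. Comparing cardinalities: $\bar\rho(t^N)$ has $>4$ fixed points while $\bar\rho'(t^N)$ has $k$ or $2k$ of them; since $\bar\rho$ and $\bar\rho'$ are conjugate this gives $2k>4$ or $k>4$, which is not yet a contradiction. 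The refinement I would use is the alternating attracting/repelling dynamics: $\bar\rho(t^N)$ has its fixed points alternating attracting and repelling (inherited from the pseudo-Anosov dynamics of $t$ on $\partial\bH^2$ via~\cite{CB1988}), which matches the $2k$-hyperbolic-fixed-point case of $\PSL_2^{(k)}(\bR)$ but forces $2k>4$, i.e.\ $k\ge 3$, and then one counts more carefully: for $\bar\rho'(t^N)$ to have exactly $2k$ alternating hyperbolic fixed points, and simultaneously every element $\bar\rho'(u)$ with $u\in H\setminus 1$ to have exactly two hyperbolic fixed points (as required because $\rho_{\mathrm{u}}\restriction_H$ is Fuchsian, so each $\bar\rho(u)$ for $u\in H\setminus1$ has exactly two fixed points which are hyperbolic), contradicts Remark~\ref{rem:psl2k-fix}, which says that for a fixed $k>1$ all nontrivial elements must have the same number ($2k$ or $k$) of fixed points. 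Since $H$ is nontrivial and its elements have $2$ fixed points while $t^N$ has $>4$, we need $k=1$ to accommodate $H$ but $k>1$ to accommodate $t^N$ — contradiction. The main obstacle I anticipate is making the counting argument about fixed points of $\overline{\rho(t^N)}$ in the minimalization fully rigorous: one must verify carefully that the devil's staircase collapse does not merge or create fixed points of $t^N$, which requires knowing the precise combinatorial position of the fixed points of $\rho_{\mathrm{u}}(t)$ relative to the gaps of the Cantor set $\Lambda$, and that the hypothesis ``more than four'' is exactly what defeats every allowed value of $k$ in Remark~\ref{rem:psl2k-fix} once one accounts for the presence of the Fuchsian subgroup $H$.
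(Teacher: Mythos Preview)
Your approach is essentially the same as the paper's, and the final contradiction you arrive at (elements of $H$ have $\le 2$ fixed points while $t^N$ has more, which is incompatible with Remark~\ref{rem:psl2k-fix} for any single $k$) is exactly what the paper uses. The difference is that the paper's counting is much simpler and sidesteps precisely the obstacle you flag in your last paragraph.

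The paper does not attempt to show that all $>4$ fixed points of $t$ in $\Lambda$ survive under the minimalization map $h$. Instead it simply observes that $\Lambda$ is perfect and that the devil's staircase $h$ is at most two--to--one on $\Lambda$ (only pairs of gap endpoints can be identified). Hence ``more than four'' fixed points of $t$ in $\Lambda$ give \emph{at least three} fixed points of $\bar\rho(t^N)$ in the minimalization. Meanwhile, Lemma~\ref{lem:min-fuchs} gives that each nontrivial element of $H$ has either one (parabolic) or two (hyperbolic) fixed points in the minimalization --- your claim that they have \emph{exactly two hyperbolic} fixed points is slightly too strong, since the minimalization of the infinite--covolume Fuchsian action $\rho\restriction_H$ will contain parabolics. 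Now ``$\ge 3$'' versus ``$\le 2$'' already contradicts Remark~\ref{rem:psl2k-fix}, which forces all nontrivial elements of $\bar\rho'(G)\le\PSL_2^{(k)}(\bR)$ to have fixed-point sets of the same cardinality ($k$ or $2k$). So your detour through ``$2k>4$ or $k>4$'' and the attempt to show no two $t$-fixed points are gap-adjacent is unnecessary: the weaker $\ge 3$ bound does the job, and your claim that consecutive $t$-fixed points in $\Lambda$ are separated by infinitely many other points of $\Lambda$ would need a separate argument.
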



\begin{proof}
Note that the limit set $\Lambda$ is perfect, and the minimalization map $\Lambda\to S^1$ is at most two--to--one. It follows that if $\Lambda$ contains more than four fixed points of the stable letter $t$ then the action of $t$ in the minimalization will have at least three fixed points, whereas each element of $H$ will have either one or two fixed points by Lemma~\ref{lem:min-fuchs}. By the same argument as Theorem~\ref{thm:qf-nonconj}, we see that such a minimal action cannot be conjugate (and hence cannot be semi-conjugate) to any linear action.
\end{proof}

Let us now assume $G\le\pi_1(M)$ is a geometrically infinite surface subgroup.
From a result by Bonahon and Thurston \cite{Bonahon1986AM}, we have a finite cover $M'\to M$ such that $G$ is the associated fiber subgroup of some fibering of $M'$. 
We say \emph{$G$ is in the original fibered face},
if $G$ and the restriction of $\pi_1(S_r)$ belongs to the same fibered face of the Thurston norm sphere for $M'$, possibly up to multiplication by $-1$.

We can now state the third main point of Section~\ref{ss:3mfd}.
\begin{thm}\label{thm:geominf}
Let $G$ be a fiber subgroup of $\pi_1(M)$ associated to some fibering over $S^1$.
If $G$ is in the original fibered face, then $\rho_{\mathrm{u}}(G)$ is conjugate to a Fuchsian action.
\end{thm}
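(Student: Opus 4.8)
The plan is to realise $\rho_{\mathrm{u}}\restriction_G$ as the Fuchsian action built into the universal circle of the \emph{other} fibration, using that all fibrations in a fixed fibered face yield the same universal circle action up to conjugacy.

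First I would fix notation. Since $G$ is a fiber subgroup of $\pi_1(M)$ whose associated cohomology class lies in the fibered cone over the same fibered face as the class of the original fibration with fiber $S_r$ (up to sign), we may write $\pi_1(M)=G\rtimes\langle s\rangle$ with $G\cong\pi_1(\Sigma)$ for a closed hyperbolic surface $\Sigma$ and $s$ acting on $G$ as a pseudo-Anosov mapping class of $\Sigma$. Running the construction of \eqref{eq:univ} for this fibration produces its own universal circle action
\[
\rho_{\mathrm{u}}^{\Sigma}\co\pi_1(M)\longrightarrow\Homeo_+\bigl(\partial\widetilde\Sigma\bigr)=\Homeo_+(S^1),
\]
in which $G=\pi_1(\Sigma)$ acts by the Fuchsian uniformization of $\Sigma$ and $s$ acts by Nielsen's extension of its monodromy; in particular $\rho_{\mathrm{u}}^{\Sigma}\restriction_G$ \emph{is} a Fuchsian action of $G$. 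Hence the theorem reduces to the claim that $\rho_{\mathrm{u}}$ and $\rho_{\mathrm{u}}^{\Sigma}$ are conjugate as $\pi_1(M)$-actions on $S^1$; restricting the conjugating homeomorphism to $G$ then finishes the proof, and since $G$ is a fiber group the resulting action is minimal (its limit set being all of $S^1$), so the conclusion is a genuine conjugacy and not merely a semi-conjugacy, in contrast with Theorem~\ref{ct=min}.

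For the reduction I would invoke two facts. Because the $S_r$-fibration and the $\Sigma$-fibration lie in a common fibered face, a theorem of Fried provides a pseudo-Anosov flow $\Phi$ on $M$ to which both fibers may be isotoped to be transverse. The universal circle $\mathcal U$ associated to $\Phi$ — equivalently the suitably compactified ideal boundary of the orbit space of $\widetilde\Phi$, as in Thurston's universal circle theory~\cite{CalegariUniversal,Ghys2001,CD2003IM} — carries a faithful $\pi_1(M)$-action, and for every closed fiber $F$ transverse to $\Phi$ there is a $\pi_1(M)$-equivariant homeomorphism $\partial\widetilde F\xrightarrow{\ \cong\ }\mathcal U$ intertwining the universal circle action on $\mathcal U$ with the Fuchsian-plus-Nielsen action on $\partial\widetilde F$ described above. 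Applying this to $F=S_r$ identifies $\rho_{\mathrm{u}}$ with the action on $\mathcal U$, and applying it to $F=\Sigma$ identifies $\rho_{\mathrm{u}}^{\Sigma}$ with the same action; composing gives the desired conjugacy. Alternatively one can run the comparison through the two Cannon--Thurston sphere-filling curves $\partial\widetilde S_r\to\Lambda(\pi_1 M)=S^2$ and $\partial\widetilde\Sigma\to S^2$, whose point-identification relations are both governed by the stable and unstable laminations of $\Phi$ exactly as in the proof of Theorem~\ref{ct=min}; but the flow/universal-circle packaging is cleaner.

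The main obstacle I expect is this second step: isolating from the literature a correctly attributed statement that the universal circle of a fibered hyperbolic $3$--manifold depends, up to $\pi_1(M)$-equivariant conjugacy, only on the fibered face — in particular that the identification $\partial\widetilde F\cong\mathcal U$ is equivariant for all of $\pi_1(M)$ and not merely for $\pi_1(F)$ — and verifying that this $\mathcal U$ is literally the universal circle action defined concretely in \eqref{eq:univ} via Fuchsian and Nielsen data. A secondary point is to confirm that ``belongs to the original fibered face, up to multiplication by $-1$'' is precisely the hypothesis feeding Fried's common-transverse-flow conclusion, and that negating the fibration class does not change the conjugacy class of the universal circle action, since it only reverses the flow direction.
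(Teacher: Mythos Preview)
Your approach is essentially the paper's, but packaged through heavier machinery than needed. The paper works directly with the suspension flow of the original $S_r$-fibration: by \cite[Theorem 14.11]{FLP1991}, the fiber surface $F$ carrying $G$ can be isotoped transverse to that flow precisely because it lies in the same fibered face; then both $\widetilde{S_r}$ and $\widetilde{F}$ are global sections of the lifted flow on $\widetilde{M}$, and flowing along leaves gives a $G$-equivariant homeomorphism $\widetilde{F}\to\widetilde{S_r}$, hence a conjugacy $\partial\widetilde{F}\cong\partial\widetilde{S_r}$ between the standard Fuchsian action of $G$ and $\rho_{\mathrm u}\restriction_G$. Your route via Fried's theorem and the universal circle of a flow encodes the same geometry, but the paper's argument avoids your main worry entirely: you only need the section-to-section flow map to be $G$-equivariant, not $\pi_1(M)$-equivariant, and that is immediate since $G$ preserves the flow lines and both sections. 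So you can drop the intermediate object $\mathcal U$ and the concern about attributing full $\pi_1(M)$-equivariance.
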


\bp
Theorem 14.11 of 
\cite{FLP1991} shows that a closed embedded geometrically infinite fiber surface in $M$ can be isotoped to be transverse to the flow if and only if it lies in the fibered face of the Thurston unit norm sphere. Let $F$ be any such surface with fundamental group $G (\subset \pi_1(M))$. Then $G$ acts on the leaf space of the flow (since $F$ is transverse to the flow). Since both $\yt{S_r}$ and $\tilde{F}$ are sections of the flow ($S_r$ being the fiber) on $\tilde{M}$,
the $G$ actions on  $\yt{S_r}$ and $\tilde{F}$ are conjugate. Hence the universal circle action of $G$  is conjugate to the action of $G$ on $\partial \tilde{F}$, which is the standard action.
\ep

As we remarked in the introduction, K. Mann has a construction of surface group actions on $S^1$ that is not semi-conjugate to any faithful projective action, which appears in her thesis~\cite{MannThesis}. In fact, she showed that if a fiber subgroup $G\le\pi_1(M)$ is not in the original fibered face, then $\rho_{\mathrm{u}}\restriction_G$ is not semi-conjugate to any faithful projective action.
Let us sketch her argument.

Note that the Thurston norm is the dual to the Euler class of the surface group fiber; see \cite{Thurston1986MAMS}.
Since $G\unlhd \pi_1(M)$, we have
\[\Lambda\circ\rho_u(G)=\Lambda\circ\rho_u(M)=S^1.\]
In other words, $\rho_u\restriction_G$ is minimal. 
If $\rho_u\restriction_G$ is semi-conjugate to a projective action,
then one can find 
\[\rho\co G\to\PSL_2(\bR)\]
which is conjugate to $\rho_u\restriction_G$.
As $\rot\circ\rho_u(G)=\{0\}$, we see $\rho$ is actually Fuchsian (Lemma~\ref{lem:dense}). 
In particular, $\rho^*\eu$ is a maximal Euler class. 
But this contradicts the fact that the surface subgroup came from a different fibered face of the Thurston norm ball.

\subsubsection{Free subgroups of $\pi_1(M)$}
Let us now show that the universal circle action restricts to some non-linear free group action, in the spirit of Theorem~\ref{thm:qf-nonconj}. 
We recall the following straightforward fact, which follows from an easy ping--pong argument; see for instance~\cite[p. 467]{BH1999}.

\begin{lem}\label{lem:pingpong}
Let $G$ be a torsion-free word-hyperbolic group,
and let $\{g_1,\ldots,g_n\}\sse G$ be elements such that no two of them generate a cyclic group. Then for all sufficiently large $N>0$, the group $\langle g_1^N,\ldots,g_n^N\rangle$ is free of rank $n$.
\end{lem}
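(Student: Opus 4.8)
The plan is a classical ping--pong argument on the Gromov boundary $\partial G$. First I would handle the degenerate cases: if $n\le 1$ the claim is immediate, since a nontrivial element of a torsion-free word-hyperbolic group has infinite order and hence generates a copy of $\bZ$; and if $G$ is elementary then $G$ is virtually cyclic, hence infinite cyclic by torsion-freeness, so the hypothesis that no two of the $g_i$ lie in a common cyclic subgroup forces $n\le 1$. So I may assume $G$ is non-elementary and $n\ge 2$; in particular each $g_i\ne 1$ has infinite order and, acting on $\partial G$, exhibits north--south dynamics with an attracting fixed point $g_i^{+\infty}$ and a repelling one $g_i^{-\infty}$ (see Example~\ref{exmp:attr}(7)).

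The first real step is to show that the two-point sets $\Fix_{\partial G}(g_i)=\{g_i^{+\infty},g_i^{-\infty}\}$ are pairwise disjoint for $i\ne j$; this is exactly where the hypothesis enters. If $g_j$ fixed a point of $\Fix_{\partial G}(g_i)$, then, by standard convergence-group facts for word-hyperbolic groups (cf.\ the discussion of $E(u)=\Stab(\partial\ell_u)$ in the proof of Theorem~\ref{thm:baumslag-acyl}, and \cite[Proposition 6]{BF2002GT}, \cite[Theorem 6.8]{DGO2017}), we would have $g_j\in E(g_i)$, the unique maximal virtually cyclic subgroup containing $g_i$; then $\langle g_i,g_j\rangle\le E(g_i)$ would be virtually cyclic, hence infinite cyclic since $G$ is torsion-free, contradicting the assumption that $g_i$ and $g_j$ do not generate a cyclic group. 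I expect this disjointness to be the main obstacle, in the sense that it is the only step using anything beyond routine dynamics; everything else is bookkeeping.

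With disjointness in hand the ping--pong is routine. Since $\partial G$ is a compact metrizable Hausdorff space and the $2n$ points $g_i^{\pm\infty}$ are distinct, I can pick pairwise disjoint open neighborhoods $U_i^{+}\ni g_i^{+\infty}$ and $U_i^{-}\ni g_i^{-\infty}$, shrunk slightly so that $g_i(\overline{U_i^{+}})\subseteq U_i^{+}$ and $g_i^{-1}(\overline{U_i^{-}})\subseteq U_i^{-}$. North--south dynamics then yields $N_0$ such that, for every $N\ge N_0$ and every $i$, one has $g_i^{N}(\partial G\setminus U_i^{-})\subseteq U_i^{+}$ and $g_i^{-N}(\partial G\setminus U_i^{+})\subseteq U_i^{-}$, and, iterating, the same inclusions hold with $N$ replaced by $mN$ for any $m\ne 0$. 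Setting $X_i=U_i^{+}\cup U_i^{-}$, disjointness of the neighborhoods gives $g_i^{mN}(X_j)\subseteq X_i$ for all $j\ne i$ and all $m\ne 0$, while $\partial G$ is infinite so some point lies outside every $X_i$. The ping--pong lemma for free products (e.g.\ \cite[p.~467]{BH1999}) then shows that $\langle g_1^{N},\ldots,g_n^{N}\rangle$ is the free product of the infinite cyclic groups $\langle g_i^{N}\rangle$, i.e.\ free of rank $n$, for every $N\ge N_0$, which is the assertion.
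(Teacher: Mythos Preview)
Your argument is correct and is precisely the ``easy ping--pong argument'' the paper alludes to; the paper does not actually give a proof of this lemma but merely cites \cite[p.~467]{BH1999}, which is exactly the ping--pong you have carried out on $\partial G$. Your handling of the disjointness of the boundary fixed-point sets via $E(g_i)$ and torsion-freeness is the expected input, and the rest is standard.
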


The last main point of Section~\ref{ss:3mfd} is the following.
\begin{thm}\label{thm:free nonlinear}
There exists a finitely generated free subgroup $F$ of $\pi_1(M)$ such that $\rho_{\mathrm{u}}\restriction_F$ is not semi--conjugate into any  linear action of $F$.
\end{thm}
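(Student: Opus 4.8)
The plan is to build the desired free subgroup $F \le \pi_1(M)$ directly inside the fiber group $\pi_1(S_r)$, combining a pseudo-Anosov power with a carefully chosen element of the surface group so that the resulting free group has limit set (in $S^1 = \partial\bH^2$) containing more than four fixed points of $\rho_{\mathrm{u}}(t)$, and then invoke the same dynamical obstruction used in Theorem~\ref{thm:qf-nonconj} and Proposition~\ref{prop:closed nonlinear semi}. First I would fix a large power $t^L$ of the stable letter so that $\rho_{\mathrm{u}}(t^L)$ has an even number $2m \ge 4$ of fixed points on $S^1$ with alternating attracting/repelling dynamics (using \cite{CB1988} and our standing assumption (i)). Since $\rho_{\mathrm{u}}$ restricts to a Fuchsian action on $\pi_1(S_r)$, I would pick a hyperbolic element $h \in \pi_1(S_r)$ whose axis endpoints in $\partial\bH^2$ are in general position with respect to $\Fix\rho_{\mathrm{u}}(t^L)$; in particular $h$ and $t^L$ generate a non-elementary subgroup, so by Lemma~\ref{lem:pingpong} (applied inside the word-hyperbolic group $\pi_1(M)$ to the pair $\{h, t^L\}$, which generate no cyclic group) there is $N \gg 0$ with $F = \langle h^N, t^{LN}\rangle$ free of rank two.

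Next I would locate the limit set $\Lambda = \Lambda\circ\rho_{\mathrm{u}}(F) \subseteq S^1$ by a ping--pong / attractor analysis: the forward and backward iterates of $h^N$ accumulate on the two axis endpoints of $h$, and those of $t^{LN}$ accumulate on the $2m$ fixed points of $t^L$ (attracting ones forward, repelling ones backward), as in Example~\ref{exmp:attr}(5)--(6) and Lemma~\ref{lem:doubly}. After possibly enlarging $N$ so that small neighborhoods of these $2m+2$ points form a ping--pong configuration, the limit set $\Lambda$ is a Cantor set containing all $2m$ fixed points of $\rho_{\mathrm{u}}(t^L)$, hence strictly more than four of them (this is where I use $2m \ge 4$, taking $L$ a bit larger if the minimal pseudo-Anosov has exactly four prongs, or simply noting $m\ge 2$ suffices for ``more than four'' only if $m \ge 3$; so I should choose $L$ large enough to guarantee at least six fixed points — a standard fact since the number of periodic points of $t^L$ grows with $L$). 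Then I replace the fiber-subgroup statement of Proposition~\ref{prop:closed nonlinear semi} by a direct argument: $\rho_{\mathrm{u}}\restriction_F$ has no global fixed point (because $h^N$ and $t^{LN}$ have no common fixed point on $S^1$, their fixed-point sets being disjoint by construction), so by Lemma~\ref{lem:lie-circle} any linear action semi-conjugate to $\rho_{\mathrm{u}}\restriction_F$ must factor through $\PSL_2^{(k)}(\bR)$ for some $1\le k<\infty$; moreover $k>1$ since $t^{LN}\in F$ acts with more than two fixed points, and then by Lemma~\ref{lem:psl2k-fix} and Remark~\ref{rem:psl2k-fix} the minimalization $\bar\rho$ has every nontrivial element acting with a number of fixed points that is a multiple of $k$ (either $k$ parabolic or $2k$ hyperbolic). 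But the minimalization of $\rho_{\mathrm{u}}\restriction_F$, obtained by collapsing the complementary intervals of the Cantor set $\Lambda$, sends $t^{LN}$ to an element with exactly the number of fixed points equal to the number of $t^L$-fixed points surviving in $\Lambda$ — which I arrange to be \emph{not} a multiple of $k$ for the relevant $k$, or more robustly: the element $h^N \in H$ acts in the minimalization with one or two fixed points (its axis endpoints, which are isolated in $\Lambda$ as attractor/repeller), forcing $k \le 2$; combined with $k > 1$ we get $k = 2$, and then $t^{LN}$ would need an even number of fixed points that is a multiple of $2$, yet I can choose $L$ so that $\Lambda$ retains an \emph{odd} number $\ge 3$ of $t$-fixed points (e.g.\ three consecutive ones between two axis endpoints of $h$, cf. Figure~\ref{f:pA}), a contradiction.

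The main obstacle is controlling exactly \emph{which} fixed points of $\rho_{\mathrm{u}}(t^L)$ survive in the Cantor set $\Lambda\circ\rho_{\mathrm{u}}(F)$ after minimalization, since the minimalization map $\Lambda \to S^1$ can be two-to-one and may collapse adjacent $t$-fixed points that happen to be ``adjacent boundary points'' of $\Lambda$. To handle this cleanly I would choose the hyperbolic element $h$ and the integer $N$ so that the ping--pong neighborhoods separate the $t^L$-fixed points from one another inside $\Lambda$ — i.e.\ between any two consecutive $t^L$-fixed points lying in $\Lambda$ there is a point of $\Lambda$ coming from an $h^N$-orbit, so that no two $t$-fixed points are adjacent boundary points of $\Lambda$ and hence none are identified by the minimalization. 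This is arranged by taking the axis endpoints of $h$ to separate the $t^L$-fixed points into two arcs and then using that the full $F$-orbit of these points is dense in $\Lambda$; a short combinatorial argument with the ping--pong dynamics (each $t^{LN}$-translate of an $h^N$-fixed neighborhood lands near a distinct $t$-fixed point) gives the required interleaving. Once the interleaving is secured, the fixed-point count of $t^{LN}$ in the minimalization is precisely $2m$ (all of them survive), and choosing $L$ with $2m$ not divisible by $4$ — together with the constraint $k\in\{1,2\}$ forced by $h^N$ and the constraint $k>1$ forced by $t^{LN}$ — yields the contradiction. Finally, to pass from ``not conjugate to any linear action'' to ``not semi-conjugate'', I use that semi-conjugacy to a linear action $\rho'$ means $\rho_{\mathrm{u}}\restriction_F$ and $\rho'$ share a common minimalization (Theorem~\ref{thm:equiv}), and the minimalization of a $\PSL_2^{(k)}(\bR)$-action is again a $\PSL_2^{(k)}(\bR)$-action by Lemma~\ref{lem:psl2k-fix}, so the fixed-point-parity obstruction above applies verbatim to the common minimalization. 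This completes the argument.
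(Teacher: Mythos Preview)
Your overall architecture matches the paper's: take a free group $F\le\pi_1(M)$ containing a power of the stable letter together with fiber elements, show that enough $t$--fixed points survive in the limit set and hence in the minimalization to force $k\ge 2$, and then derive a contradiction from the fiber part. But there are two genuine gaps in your endgame.

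First, your claim that ``the number of periodic points of $t^L$ grows with $L$'' is false for the boundary action $\rho_{\mathrm u}(t)$ on $S^1$. The boundary map of a pseudo-Anosov has finitely many fixed points with strict north--south dynamics on each complementary arc, so $\Fix\rho_{\mathrm u}(t^L)=\Fix\rho_{\mathrm u}(t)$ for every $L\ge 1$. You therefore cannot arrange $2m\ge 6$, nor choose $L$ so that $2m$ fails a prescribed divisibility; under the standing hypothesis (i) you only know $2m\ge 4$, and $2m=4$ is entirely possible. Your parity argument (``$2m$ not divisible by $4$'') then collapses, since $4=2k$ with $k=2$ is consistent.

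Second, and relatedly, the contradiction at $k=2$ is not numerical but dynamical, and the paper exploits this. The paper takes $X\subset\pi_1(S_r)$ with one fixed point in \emph{each} complementary arc of $\Fix\rho_{\mathrm u}(t)$; this makes it immediate that no two $t$--fixed points are adjacent in $\Lambda$, and it guarantees that $H=F\cap\pi_1(S_r)$ is a non-elementary Fuchsian group whose limit set equals $\Lambda(F)$. Then $\bar\rho\restriction_H$ is a minimalization of a Fuchsian action, hence itself Fuchsian by Lemma~\ref{lem:min-fuchs}, so each nontrivial element of $H$ has either one parabolic or two hyperbolic fixed points in the minimalization. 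Neither option is compatible with $\bar\rho(H)\le\PSL_2^{(k)}(\bR)$ for $k\ge 2$: one fixed point is never a multiple of $k$, and two \emph{hyperbolic} fixed points cannot occur in $\PSL_2^{(2)}(\bR)$, where two fixed points forces a parabolic lift (Remark~\ref{rem:psl2k-fix}). You came close --- you note that $h^N$ has two attractor/repeller fixed points and that this forces $k\le 2$ --- but you then abandon this for the flawed divisibility count instead of observing that ``two hyperbolic'' already contradicts $k=2$.

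With a single fiber element $h$, the interleaving claim (that $\Lambda(F)$ meets every complementary arc of $\Fix\rho_{\mathrm u}(t)$) is in fact true, but it requires a genuine orbit-tracing argument rather than the sketch you give; the paper sidesteps this entirely by choosing $X$ so that the interleaving is built in from the start.
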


\begin{proof}
Since $\pi_1(S_r)$ acts minimally, we can choose a minimal collection of elements $X\sse\pi_1(S_r)$
such that each component of \[S^1\setminus\Fix\rho_{\mathrm{u}}(t)\] 
contains a fixed point of some element in $\rho_{\mathrm{u}}(X)$.
We set $F$ to be the free group generated by sufficiently high powers of $t$ and of the elements of the finite set $X$, as guaranteed by Lemma~\ref{lem:pingpong}. 
Say $t^N\in F$ for some $N>0$, and put $\rho=\rho_{\mathrm{u}}\restriction_F$.

Write $H=F\cap\pi_1(S_r)\unlhd F$. Since
\[F/H\le \pi_1(M)/\pi_1(S_r)\cong\bZ,\]
the restriction $\rho\restriction_H$ is a non-elementary Fuchsian action. 
The limit set \[\Lambda:=\Lambda\circ\rho(F)=\Lambda\circ\rho(H)\]  contains $\Fix\rho(t)$ and $\Fix\rho(a)$ for all $a\in H$.

\emph{Assume for contradiction} that $\rho$ is semi-conjugate to a linear action $\bar\rho$. 
By Lemma~\ref{lem:lie-circle}, there exists some integer $k\ge1$ such that
\[
\bar\rho\co F\to \PSL_2^{(k)}(\bR).\]
By Lemma~\ref{lem:psl2k-fix}, we may assume $\bar\rho$ is a minimalization of $\rho$. In particular, there exists a surjective monotone degree one map $h$ on $S^1$ such that 
\[\rho\succcurlyeq_h\bar\rho.\]

We claim now that no two fixed points of $t$ are identified under $h$. Indeed, $h$ is either two--to--one or one--to--one at every point in the limit set $\Lambda$.
Precisely, $h$ is either a homeomorphism or given by the ``devil's staircase map" or ``Nielsen's Cannon--Thurston map" (see~\cite{Floyd}). Since any interval bounded by two fixed points of $\rho(t)$ contains a point in $\Lambda$ within its interior, we have that $h$ cannot identify distinct fixed points of $t$, thus establishing the claim.
It follows that $\bar\rho(t)$ fixes at least four points
and so, $k\ge2$.

Since $\Lambda=\Lambda\circ\rho(H)$, we see that $\bar\rho\restriction_H$ is a minimalization of $\rho\restriction_H$.
Let $a\in H$.
By Remark~\ref{rem:psl2k-fix}, the set $\Fix\bar\rho(a)$ consists of one parabolic or two hyperbolic fixed points. 
This contradicts that $\bar\rho(F)\le\PSL_2^{(k)}(\bR)$.
\end{proof}


\subsection{Nonlinear Smooth Actions of Free Groups}\label{ss:smooth-free}
So far, we have described nonlinear actions of free groups regarded as subgroups of fibered hyperbolic $3$--manifolds, which in turn are regarded as subgroups of mapping class groups.
These nonlinear actions are not, however, guaranteed to be smooth, as Nielsen's actions of mapping class groups are non-smooth.
Actually, no finite index subgroups of mapping class groups admit $C^2$--smooth actions on the circle~\cite{BKK16}. 

We will now produce nonlinear actions of free groups that are $C^\infty$--smooth.

\begin{thm}\label{thm:liegp}
For each $n\ge2$, there exists a faithful action
\[\rho_n\co F_n\to \Diff^\infty_+(S^1)\]
which is not semi-conjugate to any linear action of $F_n$ on $S^1$.
\end{thm}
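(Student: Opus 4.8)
\textbf{Proof plan for Theorem~\ref{thm:liegp}.}

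The plan is to realize $F_n$ as a group of $C^\infty$ diffeomorphisms of $S^1$ in such a way that the action is dynamically rich enough to rule out semi-conjugacy to $\PSL_2^{(k)}(\bR)$ or $\SO(2)$, which are the only transitive options (and the only relevant Lie groups by Theorem~\ref{thm:ghys-lie}, once we arrange that there is no global fixed point). Concretely, I would start by taking a finite-area torsion-free Fuchsian group $\Gamma$ acting on $S^1=\partial\bH^2$ by analytic (hence $C^\infty$) diffeomorphisms, with at least one cusp, so that $\Gamma$ contains a nontrivial parabolic $p$ with a unique (parabolic, non-hyperbolic) fixed point. Inside $\Gamma$ pick a free subgroup $F=\form{a_1,\dots,a_{n-1},p}$ of rank $n$ with $p$ parabolic and the $a_i$ hyperbolic; this is possible by taking high powers and applying a ping--pong argument as in Lemma~\ref{lem:pingpong}. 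The resulting $\rho_n\co F_n\to\Diff^\infty_+(S^1)$ is faithful, has no global fixed point (it already acts minimally on its limit set, which is all of $S^1$ in the lattice case), and has rotation spectrum $\{0\}$ since every element is parabolic or hyperbolic.

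Next I would argue by contradiction that $\rho_n$ is not semi-conjugate to any linear action. Suppose it were; since having a global fixed point is a semi-conjugacy invariant (Corollary~\ref{cor:fin-orbit}) and $\rho_n$ has none, Lemma~\ref{lem:lie-circle} forces the target Lie group to be $\PSL_2^{(k)}(\bR)$ for some $1\le k<\infty$, and we obtain $\bar\rho\co F_n\to\PSL_2^{(k)}(\bR)$ with $\rho_n\sim_{\mathrm{semi}}\bar\rho$. Because $\rho_n$ is minimal, by Lemma~\ref{lem:psl2k-fix} we may take $\bar\rho$ to be a minimal representation into $\PSL_2^{(k)}(\bR)$, and in fact $\bar\rho$ is itself minimal on $\bR/k\bZ$. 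The key structural input is then Remark~\ref{rem:psl2k-fix}: for each $g\in F_n$ with $\bar\rho(g)\ne 1$, the fixed-point set of $\bar\rho(g)$ on $\bR/k\bZ$ consists either entirely of hyperbolic fixed points ($2k$ of them, downstairs a hyperbolic element) or entirely of parabolic fixed points ($k$ of them).

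The contradiction I aim for comes from comparing the local behavior at fixed points on the two sides. The element $\rho_n(p)$ is parabolic, so downstairs it has a single parabolic fixed point; the elements $\rho_n(a_i)$ are hyperbolic with hyperbolic (attracting/repelling) fixed points. I would use Matsumoto/Takamura-type rigidity — the fact that, under semi-conjugacy via a monotone degree-one map $h$, the ``type'' of an isolated fixed point (attracting vs.\ repelling vs.\ parabolic/one-sided) is preserved away from the intervals that get collapsed, together with minimality of $\rho_n$ which forces $h$ to be either a homeomorphism or a devil's-staircase collapse along wandering intervals. Since $\rho_n$ is minimal with full limit set $S^1$, there are no wandering intervals to collapse for generic elements, so $h$ is a genuine conjugacy onto $\PSL_2^{(k)}$; but then the number of fixed points of $\bar\rho(g)$ must be a multiple of $k$ for \emph{every} $g$, whereas $\rho_n(p)$ has exactly one fixed point. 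Hence $k=1$, i.e.\ $\bar\rho$ maps into $\PSL_2(\bR)$; but then Lemma~\ref{lem:dense} plus $\rot\circ\bar\rho(F_n)=\{0\}$ forces $\bar\rho$ discrete, and by Theorem~\ref{thm:rigid} there are only finitely many such conjugacy classes, all of which are analytically (indeed $C^\infty$) conjugate to the standard Fuchsian $\rho_n$ — so this is \emph{not} a contradiction yet. The fix is to choose $\rho_n$ from the start to be a \emph{non-standard} $C^\infty$ action, e.g.\ by taking the standard Fuchsian $F_n$-action and performing a smooth blow-up at a single orbit of a parabolic fixed point to create an exceptional minimal set (a Denjoy-type $C^\infty$ surgery), or alternatively by deforming $\rho_n(p)$ to a parabolic-type diffeomorphism with an extra pair of tangencies while keeping the group free via Theorem~\ref{thm:circle baire1}/Corollary~\ref{cor:circle baire1}.

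\textbf{Main obstacle.} The delicate point — and where I expect the real work to lie — is the smooth surgery: one must modify the standard $C^\infty$ Fuchsian action of $F_n$ (equivalently, choose the generators) so that (a) the group stays free and faithful, (b) the resulting action still has no global fixed point (so Theorem~\ref{thm:ghys-lie} pins the Lie group down), and (c) the action acquires a dynamical feature — either an exceptional minimal set, or an element with a fixed-point count/local-type that is incompatible with \emph{every} $\PSL_2^{(k)}(\bR)$ via Remark~\ref{rem:psl2k-fix} — all while remaining $C^\infty$. Keeping freeness under the perturbation is exactly the content of the smooth Baire-category combination theorem (Theorem~\ref{thm:circle baire1} and Corollary~\ref{cor:circle baire1}): I would take a standard smooth Fuchsian rank-$(n-1)$ free action, then adjoin one more generator $\psi$ which is a smooth diffeomorphism engineered to have, say, exactly three hyperbolic fixed points, and invoke Corollary~\ref{cor:circle baire1} to get freeness of the resulting rank-$n$ group for a very general choice; then Remark~\ref{rem:psl2k-fix} (an element with an odd or otherwise non-$k$-divisible number of hyperbolic fixed points for each $k$) gives the contradiction cleanly, since under the minimalization map (which is a homeomorphism here because the full group still acts minimally) the fixed-point count and types are preserved. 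Verifying minimality of the combined action and the smoothness of $\psi$ with the prescribed fixed-point data are the routine-but-nontrivial checks I would carry out in detail.
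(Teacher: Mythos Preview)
Your final plan is close in spirit to the paper's, but there are two concrete gaps. First, ``a smooth diffeomorphism with exactly three hyperbolic fixed points'' does not exist on $S^1$: attracting and repelling fixed points must alternate around the circle, so the number of hyperbolic fixed points is always even. What you actually want is an element whose fixed set mixes \emph{types} --- the paper takes $a\in\Diff^\infty_+(S^1)$ with three fixed points, two hyperbolic and one parabolic --- so that Remark~\ref{rem:psl2k-fix} is violated for every $k$. Second, Corollary~\ref{cor:circle baire1} gives freeness only for a \emph{very general} $\psi$, and the locus of $\psi$ with a prescribed fixed-point pattern is nowhere dense; you cannot simultaneously make $\psi$ generic and force its dynamics. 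The fix is to use Theorem~\ref{thm:circle baire1} instead: fix the special element $a$ once and for all, and take a very general \emph{conjugator} $\nu$, so that $a^\nu$ automatically inherits $a$'s fixed-point types and $\langle G, a^\nu\rangle\cong G*\bZ$.

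With those corrections your argument essentially works, and in one respect it is cleaner than the paper's. The paper takes $G=H=\langle a\rangle$ in Theorem~\ref{thm:circle baire1} (no Fuchsian group at all), obtaining $\langle a,a^\nu\rangle\cong F_2$; it then has to argue separately that the fixed sets of $a$ and $a^\nu$ interlace so that the limit set is infinite and the minimalization map $h$ is injective on $\Fix a$ (this is where the open condition \eqref{eqn:nu} and Lemma~\ref{lem:hyp-fixed} enter). Your idea of adjoining $a^\nu$ to a \emph{minimal} Fuchsian free lattice $G$ would make that step trivial: minimality of $G$ forces minimality of the whole group, so the semi-conjugacy to $\PSL_2^{(k)}(\bR)$ is an honest conjugacy and fixed-point types transfer for free. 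Just note that a rank-$(n-1)$ Fuchsian lattice requires $n\ge3$, so you would still need to treat $n=2$ separately (e.g.\ build $F_3$ and pass to a finite-index $F_2$, or imitate the paper's two-copies-of-$a$ construction).
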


Recall the definition of hyperbolic and parabolic fixed points from Remark~\ref{rem:psl2k-fix}.

\begin{lem}\label{lem:hyp-fixed}
Let $g,\bar g\in \Homeo_+(S^1)$,
and let $h\co S^1\to S^1$ be a surjective monotone degree one map
such that $h\circ g = \bar g\circ h$.
If $h$ is injective on $\Fix g$,
then we have:
\be
\item
$\Fix \bar g = h(\Fix g)$;
\item
If $x\in \Fix g$ is a hyperbolic fixed point, then so is $h(x)$;
\item
If $x\in \Fix g$ is a parabolic fixed point, then so is $h(x)$.
\ee
\end{lem}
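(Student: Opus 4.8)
The plan is to reduce all three assertions to one structural fact: under the injectivity hypothesis, $h^{-1}(h(x))=\{x\}$ for every isolated point $x$ of $\Fix g$, and then to transfer the one-sided dynamics of $g$ at $x$ across $h$. First I would dispose of (1), which in fact needs no hypothesis. If $g(x)=x$ then $\bar g(h(x))=h(g(x))=h(x)$, so $h(\Fix g)\subseteq\Fix\bar g$. Conversely, take $y\in\Fix\bar g$ and let $J=h^{-1}(y)$. Passing to a lift $\tilde h$ of $h$ (nondecreasing, commuting with $T$), $\tilde h^{-1}(\tilde y)$ is a compact interval of length $<1$, so $J$ is a proper closed arc of $S^1$. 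From $h g=\bar g h$ we get $h(g(J))=\bar g(h(J))=\{y\}$, hence $g(J)\subseteq J$; applying the same argument to $g^{-1}$ (using $hg^{-1}=\bar g^{-1}h$ and $\bar g^{-1}(y)=y$) gives $g^{-1}(J)\subseteq J$, so $g(J)=J$. An orientation-preserving circle homeomorphism carrying a proper closed arc onto itself fixes that arc, and in particular fixes its endpoints (or $J$ is a single point); either way $J$ meets $\Fix g$, so $y\in h(\Fix g)$. This proves $\Fix\bar g=h(\Fix g)$.

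Next comes the key step: for an isolated point $x$ of $\Fix g$ with $y=h(x)$, I claim $h^{-1}(y)=\{x\}$ and $y$ is isolated in $\Fix\bar g$. If $h^{-1}(y)$ were a nondegenerate arc $[a_0,b_0]$, then by the previous paragraph $a_0,b_0\in\Fix g$ with $h(a_0)=h(b_0)=y$, contradicting injectivity of $h$ on $\Fix g$; hence $h^{-1}(y)=\{x\}$. If $y$ were an accumulation point of $\Fix\bar g$, choose $y_n\to y$ in $\Fix\bar g\setminus\{y\}$, write $y_n=h(x_n)$ with $x_n\in\Fix g$ by part (1), pass to a convergent subsequence $x_n\to x_\infty\in\Fix g$, and note $h(x_\infty)=y$ forces $x_\infty=x$; then $x$ would be an accumulation point of $\Fix g$, a contradiction. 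So $y$ is isolated in $\Fix\bar g$. Moreover, since $h^{-1}(y)=\{x\}$, if $(c,d)\ni x$ is an arc with $(c,d)\cap\Fix g=\{x\}$, then $h([c,x])=[h(c),y]$ with $h(c)\neq y$, and likewise $h([x,d])=[y,h(d)]$ with $h(d)\neq y$, so $h$ maps each side of $x$ onto a genuine one-sided arc at $y$.

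Finally, for (2) and (3) I would transfer the direction of motion. On $(c,x)$ the homeomorphism $g$ has no fixed point, so either $g(z)\in(z,x)$ for all $z\in(c,x)$ (``toward $x$'') or $g(z)\in(c,z)$ for all such $z$ (``away from $x$''). Given $w\in(h(c),y)$, pick $z\in(c,x)$ with $h(z)=w$; then $\bar g(w)=h(g(z))$, and since $h$ is monotone, $\bar g(w)$ lies on the same side of $w$ as $g(z)$ lies of $z$. Also $\bar g(w)\neq y$ (else $w=\bar g^{-1}(y)=y$) and $\bar g(w)\neq w$ (since $w\notin\Fix\bar g$, using isolatedness of $y$). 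So $\bar g$ moves the one-sided neighbourhood of $y$ in the same direction that $g$ moves at $x$; in the ``toward'' case, iterating gives $\bar g^n(w)\to y$ because the only fixed point of $\bar g$ trapped strictly between $w$ and $y$ is $y$ (as $y$ is isolated). Carrying this out on both sides of $x$: if $x$ is attracting (toward on both sides) then $y$ is attracting; if $x$ is repelling, apply the argument to $g^{-1},\bar g^{-1}$ (same $h$, $\Fix g^{-1}=\Fix g$) to conclude $y$ is repelling; if $x$ is parabolic — toward on one side, away on the other — then $y$ is isolated, neither attracting nor repelling, hence parabolic. The hard part will be the bookkeeping in this last step: ensuring that the plateaus of $h$ sitting away from $x$ create no extra fixed points of $\bar g$ near $y$ and do not reverse the one-sided behaviour, which is precisely what $h^{-1}(y)=\{x\}$ together with the monotonicity of $h$ rules out.
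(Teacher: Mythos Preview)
Your proof is correct and follows essentially the same route as the paper's: show that $g$ stabilizes each fiber $h^{-1}(y)$ for $y\in\Fix\bar g$, deduce that these fibers are singletons under the injectivity hypothesis, and then read off the one-sided dynamics at $h(x)$ from those at $x$ via monotonicity of $h$. Your write-up is in fact more detailed than the paper's, which compresses parts (2) and (3) to the single sentence ``by looking at the dynamics of $\langle g\rangle$ on $U$''; and your observation that part (1) alone does not require the injectivity hypothesis (since the endpoints of a nondegenerate $h^{-1}(y)$ already lie in $\Fix g$) is a correct minor sharpening that the paper does not isolate.
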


\bp
For part (1),
the inclusion $h(\Fix g)\sse\Fix\bar g$ is obvious. 
To show the opposite inclusion, 
assume
for some $x\in S^1\setminus \Fix g$
that
\[ h\circ g(x)=\bar g\circ h(x) = h(x).\]
Then $J=h^{-1}(h(x))$ is a non-degenerate closed interval containing $x$ and $g(x)$. Since $\form{g}$ acts on $J$,
we see that $\partial J\sse \Fix g$ and that $h(\partial J)=\{h(x)\}$.
This contradicts the assumption that $h\restriction_{\Fix g}$ is injective. So, part (1) is proved.

The above proof shows that
for each $y\in\Fix \bar g$ the set $h^{-1}(y)$ is a singleton.
For each isolated fixed point $x\in\Fix g$,
the point $h(x)$ is an isolated point in $\Fix \bar g$.
If $V$ is an open neighborhood of $h(x)$ such that 
\[\Fix\bar g\cap V = \{h(x)\},\]
then the only fixed point of $g$ in $U=h^{-1}(V)$ is $x$.
By looking at the dynamics of the group $\form{g}$ on $U$, we obtain  parts (2) and (3).
\ep

\bp[Proof of Theorem~\ref{thm:liegp}]
Let us first consider the case $n=2$. We put 
\[A = \{0,1/3,2/3\}\sse S^1.\]
Pick $a\in\Diff^\infty_+(S^1)$ such that 
$\Fix a = A$
and such that 
\[
a(t)
\in\begin{cases}
(t,1/3),&\text{ if }t\in(0,1/3),\\
(1/3,t),&\text{ if }t\in(1/3,2/3),\\
(2/3,t),&\text{ if }t\in(2/3,1).
\end{cases}\]
The fixed points $0$ and $1/3$ are hyperbolic
and $2/3$ is parabolic; see Figure~\ref{fig:liegp}.

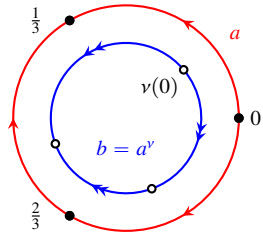
\begin{figure}[h!]
  \tikzstyle {bv}=[black,draw,shape=circle,fill=black,inner sep=1pt]
  \tikzstyle {wv}=[black,draw,shape=circle,fill=white,inner sep=1pt]  
  \tikzstyle {a}=[red,postaction=decorate,decoration={%
    markings,%
    mark=at position .5 with {\arrow[red]{stealth};}}]
  \tikzstyle {b}=[blue,postaction=decorate,decoration={%
    markings,%
    mark=at position .47 with {\arrow[blue]{stealth};},%
    mark=at position .53 with {\arrow[blue]{stealth};}}]
\begin{tikzpicture}[scale=.5,>=stealth',thick]
\foreach \a in {1,...,3}{
\draw (\a*360/3: 3) node [bv] {};
}

\draw [a] (0:3) node [black,right] {\tiny $0$} arc (0:120:3);
\draw [a] (240:3) arc (240:120:3) node [black,left=.2] {\tiny $\frac13$};
\draw [a] (360:3) arc (360:240:3) node [black,left=.2] {\tiny $\frac23$};
\draw [b] (40:2) arc (40:200:2);
\draw [b] (290:2) arc (290:200:2);
\draw [b] (400:2) arc (400:290:2);
\draw [black] (.9,.8) node {\tiny $\nu(0)$};
\draw (0,-.8) node [blue] {\tiny $b=a^\nu$};
\draw (2.9,1.8) node [above,red] {\tiny $a$};
\foreach \a in {1,...,3}{
\draw (\a*360/3: 3) node [bv] {};
}
\foreach \b in {40,200,290}{
\draw (\b: 2) node [wv] {};
}
\end{tikzpicture}%
\caption{Proof of Theorem~\ref{thm:liegp}.}
\label{fig:liegp}
\end{figure}


By Theorem~\ref{thm:circle baire1}, there is $\nu\in\Diff^\infty_+(S^1)$ such that $\form{a,a^\nu}\cong F_2$ and moreover,
\begin{equation}\label{eqn:nu}
t<\nu(t)<t+1/3\text{ for all }t\in A.\end{equation}
This is because \eqref{eqn:nu} is an open condition for $\nu$.
Let us denote a generating set of $F_2$ as $\{a_0,b_0\}$,
and define $\rho\co F_2\to\Diff^\infty_+(S^1)$ by
\[\rho(a_0)=a,\quad\rho(b_0) = a^\nu.\]

By construction, the finite set
\[C:=A\cup\nu(A)\] is contained in the closure of every orbit of $\rho$;
this follows from that the supporting intervals of $a$ and $a^\nu$ form a ``chain'' of intervals, as defined in~\cite{KKL2016}.
We see that $\Lambda(\rho)$ is infinite and contains $C$.

We claim that $\rho$ is a desired representation of the theorem for $n=2$. Assume for contradiction that $\rho$ is semi-conjugate to a linear action. By Lemma~\ref{lem:lie-circle}, we can find a representation
\[
\bar\rho\co F_2\to \PSL_2^{(k)}(\bR)\]
for some $k$ such that $\rho$ is semi-conjugate to $\bar\rho$.
By applying Lemma~\ref{lem:psl2k-fix}, we may further assume that $\bar\rho$ is minimal.
In particular, we have that $\bar\rho$ is a minimalization of $\rho$.

Let $h\co S^1\to S^1$ be a surjective monotone degree one map so that \[\rho\succcurlyeq_h\bar\rho,\] as in Definition~\ref{defn:minimalization}.
The map $h$ is either two-to-one or one-to-one at each point of $C$.
Since $A$ and $\nu(A)$ are alternating on $S^1$ and since $h$ is monotone degree--one, we see that $h\restriction_A$ is injective.
By Lemma~\ref{lem:hyp-fixed}, the set $h(A)$ will contain both hyperbolic and parabolic fixed points of $\bar\rho(a)$.
As we are assuming $\bar\rho(F_2)\le\PSL_2^{(k)}(\bR)$, we have a contradiction.


For the case $n>2$, we simply consider the embedding as a finite index subgroup
\[ F_n\to F_2\]
and define $\rho_n$ as the restriction of $\rho$ on $F_n$. 
Then for some $N>0$, we have
\[ \form{a^N, (a^{\nu})^N}\le \rho(F_n).\]
Since all the dynamical properties of $\form{a,a^\nu}$ that we used are shared by $\form{a^N,(a^\nu)^N}$ as well, we see that 
$\rho_n$ is not linear even up to semi-conjugacy.
\ep

\appendix
\section{Equivalent Notions of Semi-Conjugacy }\label{sec:append}
The aim of this appendix is to illustrate the equivalence of the different notions of semi-conjugacy in Theorem~\ref{thm:equiv}. We will mainly give an account of the fact that monotone equivalence and minimalization equivalence  both coincide with the other notions of semi-conjugacy.
We will also guide the reader through the literature for a proof that the rest of the notions of semi-conjugacy are  equivalent to each other. 

The facts in this section are mostly based on Ghys' original ideas in \cite{Ghys1987,Ghys2001}. 
Readers are also referred to very recent surveys by Mann~\cite{Mann-hb} and by Bucher--Frigerio--Hartnick~\cite{BFH2014}.

\emph{Throughout this appendix, we let $L$ be a countable group.}

\medskip\noindent {\bf Constructing a common blow-up}\\
The implication of the following lemma is that the $\succcurlyeq$--relation on $\bR$ induced by a (possibly discontinuous) semi-conjugating map can be ``generated'' by continuous semi-conjugating maps.

\begin{lem}\label{lem:real-me}
Let $L$ be a countable group.
\be
\item
Suppose we have two actions \[\rho_0,\rho_1\in\Hom(L,\Homeo_+(\bR)).\]
Then $\rho_0\succcurlyeq_h\rho_1$
for some proper nondecreasing map $h$
if and only if there exists an action $\rho\in \Hom(L,\Homeo_+(\bR))$
and surjective nondecreasing maps 
\[
h_1,h_2\co \bR\to\bR\]
such that $\rho\succcurlyeq_{h_i}\rho_i$ for $i=1,2$.
\item
Suppose we have two actions \[\rho_0,\rho_1\in\Hom(L,\Homeo_\bZ(\bR)).\]
Then $\rho_0\succcurlyeq_h\rho_1$
for some monotone degree one map $h$
if and only if there exists an action  $\rho\in \Hom(L,\Homeo_\bZ(\bR))$ and surjective monotone degree one maps 
\[
h_1,h_2\co \bR\to\bR\]
such that $\rho\succcurlyeq_{h_i}\rho_i$ for $i=1,2$.
\ee
\end{lem}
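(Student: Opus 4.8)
The two parts are parallel, with (2) obtained from (1) by adding a routine ``degree one'' bookkeeping that I describe at the end; so I concentrate on (1).

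\emph{Backward implication.} Suppose $\rho$ is a common blow-up of $\rho_0$ and $\rho_1$, realised by surjective nondecreasing maps $g_0,g_1\co\bR\to\bR$ with $\rho\succcurlyeq_{g_0}\rho_0$ and $\rho\succcurlyeq_{g_1}\rho_1$. Recall that a surjective nondecreasing self-map of $\bR$ is automatically continuous and proper. Define $\sigma\co\bR\to\bR$ by $\sigma(x)=\inf g_0^{-1}(x)$; each fibre $g_0^{-1}(x)$ is a nonempty compact interval, so $\sigma$ is a well-defined nondecreasing proper section of $g_0$, i.e.\ $g_0\circ\sigma=\Id_\bR$. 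Put $h=g_1\circ\sigma$, which is again nondecreasing and proper. For $g\in L$ one has $g_0^{-1}(\rho_0(g)x)=\rho(g)\bigl(g_0^{-1}(x)\bigr)$, because $g_0\circ\rho(g)=\rho_0(g)\circ g_0$; and since $\rho(g)$ is an increasing homeomorphism it commutes with taking infima of bounded sets, so $\sigma\circ\rho_0(g)=\rho(g)\circ\sigma$. Hence
\[
h\circ\rho_0(g)=g_1\circ\sigma\circ\rho_0(g)=g_1\circ\rho(g)\circ\sigma=\rho_1(g)\circ g_1\circ\sigma=\rho_1(g)\circ h ,
\]
so $\rho_0\succcurlyeq_h\rho_1$.

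\emph{Forward implication, the refined line.} Suppose $\rho_0\succcurlyeq_h\rho_1$ with $h$ proper nondecreasing, and write $h(x^{\pm})$ for the one-sided limits. Form the \emph{completed graph}
\[
X=\bigl\{(x,y)\in\bR^2\ :\ h(x^{-})\le y\le h(x^{+})\bigr\},
\]
totally ordered by the lexicographic order. The key point---and the main obstacle---is to verify that $(X,\le)$ is order-isomorphic to $\bR$. It has no endpoints (it contains $(x,h(x))$ for every $x$) and is densely ordered: between two points with distinct first coordinates one interpolates a point $(q,h(q))$, while two points with equal first coordinate $x$ both lie on the segment $\{x\}\times[h(x^{-}),h(x^{+})]$, which is nondegenerate only at the countably many jumps of $h$. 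It is Dedekind complete, by a short case analysis according as the supremum of the first coordinates of a bounded subset is attained or not. And it has a countable order-dense subset, namely $\{(q,h(q)):q\in\bQ\}$ together with $\{s\}\times\bigl(\bQ\cap[h(s^{-}),h(s^{+})]\bigr)$ over the countably many jump points $s$ of $h$. By the standard characterisation of $\bR$ among linear orders, $X\cong\bR$; in particular the order topology on $X$ is the usual one, and every order-preserving self-bijection of $X$ is an orientation-preserving homeomorphism.

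\emph{Forward implication, the action.} Let $L$ act on $X$ by $g\cdot(x,y)=(\rho_0(g)x,\rho_1(g)y)$. Since $\rho_0(g),\rho_1(g)$ are increasing homeomorphisms they commute with one-sided limits, and $h\circ\rho_0(g)=\rho_1(g)\circ h$ gives $h\bigl((\rho_0(g)x)^{\pm}\bigr)=\rho_1(g)\bigl(h(x^{\pm})\bigr)$, so this diagonal action preserves $X$; transporting it through the identification $X\cong\bR$ yields $\rho\in\Hom(L,\Homeo_+(\bR))$. The coordinate projections $\pi_0,\pi_1\co X\to\bR$ are nondecreasing, are surjective---every $x$ has $(x,h(x))\in X$, and, since $h$ is proper, every $y$ is hit by $(x,y)$ with $x=\inf\{t:h(t)\ge y\}$---and satisfy $\pi_i\circ\rho(g)=\rho_i(g)\circ\pi_i$ by construction; pushing $\pi_0,\pi_1$ through $X\cong\bR$ gives the required surjective nondecreasing maps realising $\rho$ as a common blow-up of $\rho_0$ and $\rho_1$.

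\emph{Part (2).} Everything goes through verbatim once one observes that a monotone degree one map is automatically proper. In the forward direction the completed graph $X$ of such an $h$ is invariant under the translation $(x,y)\mapsto(x+1,y+1)$, so---choosing the order-isomorphism $X\cong\bR$ so as to conjugate this translation to $T$---the diagonal $L$-action lands in $\Homeo_\bZ(\bR)$ and the projections become surjective monotone degree one maps. In the backward direction $g_0^{-1}(x+1)=g_0^{-1}(x)+1$ gives $\sigma(x+1)=\sigma(x)+1$, so $h=g_1\circ\sigma$ is again monotone degree one.
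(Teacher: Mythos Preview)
Your proof is correct and rests on the same underlying object as the paper's: the set of pairs $(x,y)$ with $h(x^-)\le y\le h(x^+)$---your completed graph---is exactly the leaf space of the singular foliation of the strip $\bR\times I$ by segments $[(x,0),(y,1)]$ that the paper constructs, with the diagonal $L$--action and the two coordinate projections playing identical roles in both arguments. Where you invoke the abstract order-theoretic characterisation of $\bR$ to identify $X$ with the line, the paper instead parametrises the leaves concretely by their intersection with the midline $\bR\times\{1/2\}$, which amounts to the explicit map $(x,y)\mapsto(x+y)/2$; the alternative proof given in the paper makes this even more explicit via $f^\pm(y)=(y+h(y^\pm))/2$. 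The payoff of the paper's concrete parametrisation is that the degree-one bookkeeping in part~(2) is automatic (the map $(x,y)\mapsto(x+y)/2$ visibly conjugates the diagonal shift to $T$), whereas your abstract route needs the extra step of choosing the order-isomorphism compatibly with the translation---true, since any fixed-point-free increasing self-homeomorphism of $\bR$ is conjugate to $T$, but a small argument is hiding behind that clause. Your backward implication via $h=g_1\circ\inf\circ g_0^{-1}$ matches the paper's one-line argument (which writes $\sup$ in place of $\inf$).
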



\bp 
Let us prove part (2) of the lemma, since the proof of part (1) is very similar and simpler.
We first consider the forward direction.
For each $x\in \bR$, let
\[
h(x-)=\lim_{t\to x-0}h(t)\quad\text{and}\quad
h(x+)=\lim_{t\to x+0}h(t).\]
Let us consider the strip \[X = \bR\times I \sse\bR^2.\]
For two points $p,q\in X$, we denote by $[p,q]$ the segment joining $p$ and $q$.
Define a singular foliation of $X$:
\[
\FF
=\left\{[(x,0),(y,1)] \co x\in \bR\text{ and }y\in[h(x-),h(x+)]
\right\}.\]
Note that the foliation is $\bZ \times \{0\}$ periodic, so that it is determined by what happens in
the square $[0, 1] \times I$.
Each point $(x,1/2)\in X$ belongs to a unique leaf, say $\LL(x)\in\FF$.
So there exists maps $h_i\co\bR\to\bR$ uniquely determined by the condition:
\[\LL(x)=[(h_0(x),0),(h_1(x),1)].\]
The map $h_i$ is monotone, since leaves do not intersect in the interior of $X$.
 Also \[h\circ T= T\circ h\] implies that \[h_i\circ T= T\circ h_i\] for each $i=0,1$.
Whenever $\{x_n\}$ converges to $x$, the sequence of leaves $\{\LL(x_n)\}$ converges to $\LL(x)$ in the Hausdorff distance. So $h_i$ is continuous. 

For each $g\in L$ and $x\in\bR$,  we define $\rho(g)(x)\in\bR$ by the formula:
\[
\LL(\rho(g)(x)) = \left[(\rho_0(g)\circ h_0(x),0),(\rho_1(g)\circ h_1(x),1)\right].\]
Indeed, the right-hand side is a leaf of the foliation since
 \[h_1(x)\in [h(h_0(x)-),h(h_0(x)+)].\]
 
Then it is routine to check that
 $\rho(g)\in\Homeo_\bZ(\bR)$ for each $g\in L$ and $\rho\co L\to\Homeo_\bZ(\bR)$ is a group homomorphism.
Since \[\left[(\rho_0(g)\circ h_0(x),0),(\rho_1(g)\circ h_1(x),1)\right]\]
is the unique leaf containing $(\rho(g)(x),1/2)$, the relation
$\rho\succcurlyeq_{h_i}\rho_i
$ follows.
For the backward direction, we simply use the map
\[
h(x) = h_2\circ \sup(h_1^{-1}(x)).\qedhere\]
\ep

\bp[Alternative proof of Lemma~\ref{lem:real-me} (2)]
One can compute Cartesian coordinate description of the above maps as follows. Define two strictly increasing maps \[f^-(y)=\frac{y+h(y-)}2,\quad f^+(y)=\frac{y+h(y+)}2.\]
Then $f^-$ and $f^+$ are left- and right-continuous, respectively.
Moreover, $f^-=f^+$ except for the (countably many) jump discontinuities of $h$. 
So there uniquely exists a continuous monotone map $h_0$ which is the ``inverse'' of $f^\pm$ in the following sense:
\[f^-\circ h_0(x)\le x\le f^+\circ h_0(x), \text{ for all }x.\]
Define \[h_1(x) = 2x - h_0(x).\] It is immediate that $h_0$ and $h_1$ are continuous and \[h_i\circ T=T\circ h_i.\] If $x<y$, then
\[h_1(x)=2x-h_0(x)\le h(h_0(x)+)\le h(h_0(y)-)\le 2y - h_0(y)=h_1(y).\]
So $h_1$ is also monotone.

For each $a\in L$, define 
\[
\rho(a)(x) = \frac{\rho_0(a)h_0(x)+\rho_1(a)h_1(x)}2.\]
 Therefore 
we can write \[\rho(a)(x)\in[p^-,p^+],\] where
\begin{align*}
p^\pm &= \frac{\rho_0(a)h_0(x)+\rho_1(a)\circ h(h_0(x)\pm)}2
\\
&= \frac{\rho_0(a)h_0(x)+h(\rho_0(a)\circ h_0(x)\pm)}2
=f^\pm(\rho_0(a)h_0(x)).\end{align*}
By the definition of $h_0$, we have \[h_0\rho(a)(x)=\rho_0(a)h_0(x).\]
Moreover, 
\[
h_1\rho(a)(x) = 2\rho(a)(x) - h_0\rho(a)(x)
= 2\rho(a)(x) - \rho_0 h_0(x) = \rho_1(a)h_1(x).\]

We see
\begin{align*}
\rho(a)(\rho(b)x)
&=
\frac{
\rho_0(a)h_0\rho(b)(x)+\rho_1(a)h_1\rho(b)(x)
}2
\\
&=
\frac{
\rho_0(a)\rho_0(b)h_0(x)+\rho_1(a)\rho_1(b)h_1(x)
}2
=\rho(ab)(x).
\end{align*}
Hence $\rho$ is the desired group action.
\ep

\medskip\noindent {\bf The canonical Euler cocycle}\\
The following lemma describes relations between the rotation (or translation) number
and Euler (or canonical Euler) 2-cocycles.
\begin{lem}\label{lem:eu-tau}
\be
\item
For each $g\in \Homeo_+(S^1)$, we have
\begin{align*}
\rot^\sim\circ s(g)&=\lim_{n\to\infty}\frac1n\sum_{k=1}^n \eu(g,g^k)\quad\textrm{in }\bR,\\
\rot(g)&=\lim_{n\to\infty}\frac1n\sum_{k=1}^n \eu(g,g^k)\mod \bZ.
\end{align*}
\item
As maps on $L\times L$, we have
\[
\tau=\eu-\partial(\rot^\sim\circ s).\]
\ee
\end{lem}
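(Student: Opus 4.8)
The plan is to derive both parts from a single bookkeeping identity relating powers of the canonical lift $s(g)$ to the canonical lift of $g^n$. Throughout I write $s=s^0$ for the normalized section and $\eu^0$ for the Euler cocycle based at $0$; the symbol $\eu$ in the statement denotes this cocycle (rather than its cohomology class).

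For part (1), first I would prove by induction on $n\ge 1$ that
\[ s(g)^n = T^{c_n}\, s(g^n), \qquad c_n := \sum_{k=1}^{n-1}\eu^0(g, g^k), \]
the base case $n=1$ being trivial (with $c_1=0$ and $s(g^0)=\Id$). The inductive step writes $s(g)^{n+1}=s(g)\cdot s(g)^n = s(g)\,T^{c_n}s(g^n)=T^{c_n}\,s(g)s(g^n)=T^{c_n+\eu^0(g,g^n)}s(g^{n+1})$, using that $T$ is central in $\Homeo_\bZ(\bR)$ and then the defining relation $s(f)s(h)=T^{\eu^0(f,h)}s(fh)$. Evaluating this identity at $0$ gives $s(g)^n(0)=c_n+s(g^n)(0)$ with $s(g^n)(0)\in[0,1)$ bounded; dividing by $n$ and letting $n\to\infty$ then gives $\rot^\sim(s(g))=\lim_n c_n/n$ directly from the limit definition of $\rot^\sim$. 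Since the cocycle is $\{0,1\}$-valued, replacing the summation bound $n-1$ by $n$ changes the average by at most $1/n$, so $\rot^\sim(s(g))=\lim_n \frac1n\sum_{k=1}^{n}\eu^0(g,g^k)$ in $\bR$; reducing modulo $\bZ$ and recalling $\rot(g)=\rot^\sim(s(g))\bmod\bZ$ yields the second formula, which is precisely Lemma~\ref{lem:eu-rot}.

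For part (2), I would first record the elementary fact that $\rot^\sim(T^m\phi)=m+\rot^\sim(\phi)$ for $\phi\in\Homeo_\bZ(\bR)$ and $m\in\bZ$, which is immediate from $(T^m\phi)^n=T^{mn}\phi^n$ and the limit definition of $\rot^\sim$. Then, fixing $f,h\in L$, I apply $\rot^\sim$ to $s(f)s(h)=T^{\eu^0(f,h)}s(fh)$ and use that $\tau$ is independent of the chosen lifts, so $\tau(f,h)=\rot^\sim(s(f)s(h))-\rot^\sim(s(f))-\rot^\sim(s(h))$. This gives
\[ \tau(f,h)=\eu^0(f,h)+\rot^\sim(s(fh))-\rot^\sim(s(f))-\rot^\sim(s(h))=\eu^0(f,h)-\partial(\rot^\sim\circ s)(f,h), \]
which is the asserted identity $\tau=\eu-\partial(\rot^\sim\circ s)$, and it is consistent with the shorthand $\tau=-s^*(\partial\rot^\sim)$ recorded earlier in the text.

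The only (modest) obstacle is getting the inductive identity and its indexing exactly right; everything else is unwinding definitions, together with centrality of $T$ in $\Homeo_\bZ(\bR)$ and homogeneity of $\rot^\sim$ under integer translations. No input is needed beyond the already-stated definitions of $s^x$, $\eu^x$, $\rot^\sim$, and $\tau$.
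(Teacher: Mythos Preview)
Your proof is correct and follows essentially the same approach as the paper. For part (1), the paper phrases the computation as counting integers in the intervals $(s(g)^k(0),s(g)^{k+1}(0)]$, which telescopes to $\lfloor s(g)^n(0)\rfloor$; your inductive identity $s(g)^n=T^{c_n}s(g^n)$ is exactly the algebraic content behind that count, and part (2) is identical to the paper's argument.
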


\bp
(1) 
For $g,h\in\Homeo_+(S^1)$, observe that $\eu(g,h)=1$ if and only if \[s(g)\circ s(h)(0)\ge 1.\]
Hence for each circle homeomorphism $g$, we have
\[
\rot^\sim\circ s(g)=
 \lim_n \frac1n \sum_{k=0}^{n-1}\left| (s(g)^k(0),s(g)^{k+1}(0)]\cap\bZ
 \right|
=
  \lim_n \frac1n \sum_{k=0}^{n-1} \eu(g,g^k).\]
The second statement obviously follows.

(2) Recall from Section \ref{sec:prelim} that for $f,g\in\Homeo_+(S^1)$ and their arbitrary lifts $\tilde f,\tilde g\in\Homeo_\bZ(\bR)$,
 the canonical Euler cocycle is given by
\[
\tau(f,g)=\rot^\sim(\tilde f\tilde g)-\rot^\sim(\tilde f)-\rot^\sim(\tilde g).\]

Thus, for $g,h\in\Homeo_+(S^1)$, we have
\begin{align*}
\tau(g,h) &=\rot^\sim(s(g)s(h))-\rot^\sim\circ s(g)-\rot^\sim\circ s(h)\\
&=\rot^\sim(s(gh))+\eu(g,h)-\rot^\sim\circ s(g)-\rot^\sim\circ s(h)\\
&=\eu(g,h)+\partial(\rot^\sim\circ s)(g,h).\qedhere\end{align*}
\ep

\bp[Proof of Theorem~\ref{thm:equiv}, (\ref{part:eu})$\Rightarrow$(\ref{part:matsumoto})]
There exists a bounded map $\beta\co L\to\bZ$ such that
\[
\rho_1^*\eu = \rho_0^*\eu+\partial\beta.\]
By part (1) of Lemma~\ref{lem:eu-tau}, for each $g\in L$ we have
\begin{align*}
\rot^\sim\circ s\circ \rho_1(g)
&=
\lim_n \frac1n \sum_{k=0}^{n-1} \rho_1^*\eu(g,g^k)\\
&=\lim_n \frac1n \sum_{k=0}^{n-1} \left(
\rho_0^*\eu(g,g^k) + \beta(g)+\beta(g^k)-\beta(g^{k+1})\right)\\
&=\rot^\sim\circ s\circ \rho_0(g)+\beta(g),
\end{align*}
which proves the first equality. Moreover, we see from part (2) of the same Lemma that
\[
\rho_1^*\tau=
\rho_1^*\eu-
\partial(\rot^\sim\circ s\circ\rho_1)
=
\rho_0^*\eu+\partial\beta
-\partial(\rot^\sim\circ s\circ\rho_0+\beta)
=\rho_0^*\tau.\qedhere\]
\ep

\medskip\noindent {\bf Properties of a blow-up}\\
Let us explain implications of having a common blow-up, closely following~\cite{Calegari2007}. Let $h\co S^1\to S^1$ be a monotone degree one map.
Following the notation from~\cite{Calegari2007}, we let $\mathrm{Gap}(h)$ denote the set of locally constant points of $h$, and  \[\mathrm{Core}(h)=S^1\setminus\mathrm{Gap}(h).\]

\begin{lem}[{\cite[Lemma 2.14]{Calegari2007}}]\label{l:core-gap}
If $h$ is a non-constant monotone degree one map on $S^1$, then the set
$\mathrm{Core}(h)$ is perfect and uncountable.
\end{lem}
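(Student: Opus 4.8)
The plan is to lift $h$ to the universal cover, reduce the statement to a claim about the closed, $T$--invariant set $\mathrm{Core}(\tilde h)\subseteq\bR$, and then invoke the classical fact that a nonempty perfect subset of a complete metric space is uncountable. First I would record that a non-constant monotone degree one map $h$ of $S^1$ --- in the sense relevant here, namely the surjective ones occurring as blow-up maps in Definition~\ref{defn:minimalization} --- lifts to a map $\tilde h\co\bR\to\bR$ that is continuous, nondecreasing, and satisfies $\tilde h(x+1)=\tilde h(x)+1$; surjectivity of $\tilde h$ is forced by these algebraic properties, and a surjective nondecreasing map is automatically continuous. Set $\mathrm{Gap}(\tilde h)=\{x\in\bR\co\tilde h\text{ is constant on a neighborhood of }x\}$, which is open and $T$--invariant, and $\mathrm{Core}(\tilde h)=\bR\setminus\mathrm{Gap}(\tilde h)$, whose image in $S^1$ is exactly $\mathrm{Core}(h)$. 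Since $\bR\to S^1$ is a local homeomorphism and $\mathrm{Core}(\tilde h)$ is $T$--invariant, $\mathrm{Core}(h)$ is perfect (respectively uncountable) precisely when $\mathrm{Core}(\tilde h)$ is.

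Next I would check nonemptiness and perfectness of $\mathrm{Core}(\tilde h)$. The open set $\mathrm{Gap}(\tilde h)$ is a disjoint union of open intervals, and $\tilde h$ is constant on each (a locally constant function on a connected set is constant). If some component $(a,b)$ had $b-a\geq1$, then $\tilde h$ would equal a single value $c$ on $(a,a+1)$, giving $\tilde h(a+1)=c$ by continuity; but $\tilde h(a+1)=\tilde h(a)+1=c+1$, a contradiction. So every gap has length less than one and $\mathrm{Core}(\tilde h)$ is nonempty. For perfectness, suppose $\tilde x_0\in\mathrm{Core}(\tilde h)$ were isolated, so that $(\tilde x_0-\epsilon,\tilde x_0)$ and $(\tilde x_0,\tilde x_0+\epsilon)$ lie in $\mathrm{Gap}(\tilde h)$ for small $\epsilon>0$. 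Being connected, each lies in one component of $\mathrm{Gap}(\tilde h)$, so $\tilde h$ equals a constant $a$ on the first and a constant $b$ on the second; continuity at $\tilde x_0$ then forces $a=\tilde h(\tilde x_0)=b$, whence $\tilde h$ is constant on $(\tilde x_0-\epsilon,\tilde x_0+\epsilon)$ and $\tilde x_0\in\mathrm{Gap}(\tilde h)$ --- a contradiction. Hence $\mathrm{Core}(\tilde h)$ is closed with no isolated points, i.e.\ perfect, and therefore so is $\mathrm{Core}(h)$.

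Finally, $\mathrm{Core}(h)$ is a nonempty perfect subset of the compact (hence complete) metric space $S^1$, so it contains a homeomorphic copy of the Cantor set and in particular is uncountable, completing the proof. The one genuinely delicate point is the reduction in the first paragraph: the conclusion \emph{fails} for discontinuous monotone degree one maps --- for instance the descent of $x\mapsto\lfloor x\rfloor$ to $S^1$ is monotone of degree one with $\mathrm{Core}$ a single point --- so I must use that the maps considered here are surjective, hence continuous, and exploit continuity exactly at the two places above (to rule out gaps of length $\geq1$ and to rule out isolated points of the core). Everything else is elementary point-set topology.
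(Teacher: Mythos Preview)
Your proof is correct and follows essentially the same approach as the paper's one-line argument: both hinge on the observation that no two components of $\mathrm{Gap}(h)$ can share an endpoint, which is exactly your continuity-at-$\tilde x_0$ step. The paper just compresses everything into that single sentence, while you unpack the lift, the nonemptiness, and the perfectness explicitly.

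One point worth flagging: your careful discussion of the continuity hypothesis is genuinely sharper than the paper's treatment. The paper's Definition~\ref{defn:monotone} allows discontinuous monotone degree one maps, and as you note with the floor-function example, the lemma is false at that level of generality; the paper's proof implicitly uses continuity (otherwise ``no two intervals have a common endpoint'' need not hold) without saying so. Since the lemma is only ever applied to surjective --- hence continuous --- blow-up maps, this does not affect the paper's logic, but your proposal makes the needed hypothesis explicit.
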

\bp
This immediately follows from that $\mathrm{Gap}(h)$ is a countable disjoint union of open intervals such that no two intervals have a common endpoint.
\ep

\begin{lem}\label{lem:minimal}
Let $\rho_0$ and $\rho_1$ be circle actions of $L$.
\be
\item
If $\rho_0$ and $\rho_1$ have a common blow-up,
then we have that $\rho_0\succcurlyeq_h \rho_1$
for some monotone degree one map $h$ on $S^1$,
and that
$\rho_0^*\eu_b=\rho_1^*\eu_b$.
\item
If $\rho_0$ and $\rho_1$ are minimal, and if $\rho_0\succcurlyeq_h\rho_1$ for some monotone degree one map $h$ on $S^1$, then $\rho_0$ and $\rho_1$ are conjugate.
\ee
\end{lem}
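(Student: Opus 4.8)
The plan is to treat the two parts separately; the common theme is to convert the various maps into closed invariant subsets of $S^1$ and then invoke minimality.

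\emph{Part (1).} By definition, a common blow-up of $\rho_0$ and $\rho_1$ is a circle action $\rho$ together with surjective (hence continuous) monotone degree one maps $h_0,h_1$ on $S^1$ with $\rho\succcurlyeq_{h_i}\rho_i$ for $i=0,1$. I would first manufacture a monotone degree one map $h$ on $S^1$ with $\rho_0\succcurlyeq_h\rho_1$, using the ``pseudo-inverse'' device already employed in the backward direction of Lemma~\ref{lem:real-me}(2). Passing to $\bR$ and choosing lifts compatibly (possible because $\tilde h_0,\tilde h_1$ are surjective), one obtains $\tilde\rho,\tilde\rho_0,\tilde\rho_1\colon L\to\Homeo_\bZ(\bR)$ and continuous surjective monotone degree one maps $\tilde h_0,\tilde h_1\colon\bR\to\bR$ with $\tilde h_i\circ\tilde\rho(a)=\tilde\rho_i(a)\circ\tilde h_i$. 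Set $g(x)=\sup\tilde h_0^{-1}(x)$; since $\tilde h_0$ is nondecreasing, proper and surjective, $g$ is a well-defined nondecreasing map commuting with $T$, and from $\tilde h_0^{-1}(\tilde\rho_0(a)(x))=\tilde\rho(a)(\tilde h_0^{-1}(x))$ together with the fact that $\tilde\rho(a)$ is an increasing homeomorphism one gets $g\circ\tilde\rho_0(a)=\tilde\rho(a)\circ g$. Then $\tilde h:=\tilde h_1\circ g$ is monotone degree one and $\tilde h\circ\tilde\rho_0(a)=\tilde h_1\circ\tilde\rho(a)\circ g=\tilde\rho_1(a)\circ\tilde h$, so $\tilde h$ descends to the desired $h$. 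For the equality $\rho_0^*\eu_b=\rho_1^*\eu_b$ I would use that a continuous monotone degree one semi-conjugacy preserves the bounded Euler class---either by the cocycle computation exhibiting $\rho^*\eu^x$ and $\rho_i^*\eu^{h_i(x)}$ as differing by the coboundary of a bounded function, or by citing the equivalence of conditions~(\ref{part:monotone}) and~(\ref{part:eu}) of Theorem~\ref{thm:equiv} (Calegari; see Remark~\ref{rem:semi-conj-history})---applied to $\rho\succcurlyeq_{h_0}\rho_0$ and $\rho\succcurlyeq_{h_1}\rho_1$, which yields $\rho_0^*\eu_b=\rho^*\eu_b=\rho_1^*\eu_b$.

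\emph{Part (2).} Now $\rho_0,\rho_1$ are minimal and $\rho_0\succcurlyeq_h\rho_1$ for a monotone degree one map $h$ on $S^1$, and I must show $h$ is a homeomorphism. \textbf{Step 1: $h$ is injective.} The set $\mathrm{Gap}(h)$ of locally constant points of $h$ is open, and since $h\circ\rho_0(a)=\rho_1(a)\circ h$ with $\rho_1(a)$ a homeomorphism, $\rho_0(a)$ carries locally constant points of $h$ to locally constant points of $h$; thus $\mathrm{Gap}(h)$ is $\rho_0(L)$-invariant, so $\mathrm{Core}(h)=S^1\setminus\mathrm{Gap}(h)$ is a nonempty (Lemma~\ref{l:core-gap}) closed $\rho_0(L)$-invariant set, whence $\mathrm{Core}(h)=S^1$ by minimality of $\rho_0$. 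Therefore $\mathrm{Gap}(h)=\varnothing$, i.e. $h$ is strictly monotone and injective. \textbf{Step 2: $h$ is continuous.} A monotone $h$ has only jump discontinuities, and at such a discontinuity $x$ the open arc $I_x=(h(x^-),h(x^+))$ is a proper arc of $S^1$ (the jump of a lift of $h$ is at most $1$). A short computation with $h\circ\rho_0(a)=\rho_1(a)\circ h$, using continuity of $\rho_0(a)$ and $\rho_1(a)$, shows that $\rho_1(a)$ maps $I_x$ onto $I_{\rho_0(a)(x)}$; hence the union $J$ of all such arcs is an open $\rho_1(L)$-invariant set, and it is a disjoint union of proper open arcs of $S^1$, so $J\ne S^1$. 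If $h$ were discontinuous then $J\neq\varnothing$, and then $S^1\setminus J$ would be a proper nonempty closed $\rho_1(L)$-invariant subset of $S^1$, contradicting minimality of $\rho_1$. Hence $h$ is continuous, and being a continuous strictly increasing degree one map it is a homeomorphism; then $\rho_1(a)=h\circ\rho_0(a)\circ h^{-1}$ shows $\rho_0$ and $\rho_1$ are conjugate.

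\emph{Main obstacle.} The delicate point is Step 2 of part (2): knowing only $\mathrm{Gap}(h)=\varnothing$ is not enough to conclude $h$ is a homeomorphism, since a strictly monotone $h$ can still have jumps, so one genuinely needs minimality of the \emph{target} action $\rho_1$, applied to the invariant family $J$ of ``image gaps'' of $h$. The construction in part~(1) and the cocycle bookkeeping for the Euler class are comparatively routine.
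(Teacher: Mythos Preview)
Your proposal is correct and follows essentially the same approach as the paper. For part~(1), the paper constructs $h$ directly on $S^1$ via $h(x)=h_1\circ\inf\circ h_0^{-1}(x)$ (you use $\sup$ after lifting to $\bR$, which is the same device), and then carries out the cocycle computation you mention: choosing a basepoint in $\mathrm{Core}(h)$ that is not a gap endpoint, one gets $\rho_0^*\eu^0=\rho_1^*\eu^p$ \emph{as cocycles}, not merely up to a bounded coboundary. One caution: within this paper's logic, your alternative of citing Theorem~\ref{thm:equiv}~(\ref{part:monotone})$\Leftrightarrow$(\ref{part:eu}) would be circular, since this lemma is used in its proof; stick with the direct cocycle argument.

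For part~(2), the paper runs your two steps in the opposite order: it first shows $h(S^1)$ is $\rho_1(L)$--invariant and hence dense (so $h$ is continuous and surjective), then shows $\mathrm{Gap}(h)$ is $\rho_0(L)$--invariant and hence empty (so $h$ is injective). Your argument dualizes the continuity step by looking at the jump arcs $I_x$ instead of the image; both use minimality of $\rho_1$ on the target side and minimality of $\rho_0$ on the source side in the same way.
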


\bp
(1)
There exists a circle action $\rho$ and surjective monotone degree one maps $h_0,h_1$ such that \[\rho\succcurlyeq_{h_i}\rho_i.\]
We define for each $x\in S^1$,
\[h(x) = h_1\circ\inf\circ h_0^{-1}(x).\]
The monotone degree one map $h$ is well-defined since 
\[\varnothing\ne h_0^{-1}(x)\ne S^1.\]

\begin{claim*}
For each $g\in L$ and $x\in S^1$, we have
\[\rho(g)\circ h_0^{-1}(x)=h_0^{-1}\circ\rho_0(g)(x).\]
\end{claim*}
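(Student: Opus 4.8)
The plan is to unwind the definitions and exploit the fact that $\rho(g)$ and $\rho_0(g)$ are homeomorphisms, so that the intertwining relation $h_0\circ\rho(g)=\rho_0(g)\circ h_0$ (which holds because $\rho\succcurlyeq_{h_0}\rho_0$) passes to point preimages. Concretely, I would first fix $g\in L$ and $x\in S^1$ and prove the inclusion $\rho(g)\bigl(h_0^{-1}(x)\bigr)\subseteq h_0^{-1}\bigl(\rho_0(g)(x)\bigr)$: if $y\in S^1$ satisfies $h_0(y)=x$, then applying the relation gives $h_0(\rho(g)(y))=\rho_0(g)(h_0(y))=\rho_0(g)(x)$, so $\rho(g)(y)\in h_0^{-1}(\rho_0(g)(x))$.

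Next I would establish the reverse inclusion. Take $z\in S^1$ with $h_0(z)=\rho_0(g)(x)$ and set $y=\rho(g)^{-1}(z)$, which makes sense since $\rho(g)\in\Homeo_+(S^1)$. Then $h_0(\rho(g)(y))=h_0(z)=\rho_0(g)(x)$, while the intertwining relation also gives $h_0(\rho(g)(y))=\rho_0(g)(h_0(y))$. Hence $\rho_0(g)(h_0(y))=\rho_0(g)(x)$, and since $\rho_0(g)$ is injective we conclude $h_0(y)=x$, i.e.\ $y\in h_0^{-1}(x)$ and $z=\rho(g)(y)\in\rho(g)\bigl(h_0^{-1}(x)\bigr)$. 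Combining the two inclusions yields the claimed set equality, and I would note in passing that $h_0^{-1}(x)$ is nonempty and a proper subset of $S^1$ (by surjectivity of $h_0$ and non-triviality of the blow-up relation), so the quantities appearing in the definition of $h$ are well defined.

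There is essentially no obstacle here: the only subtlety is being careful that the equality is an equality of sets (or, equivalently, of the closed arcs $h_0^{-1}(x)$ up to orientation-preserving identification), and that one uses both that $\rho(g)$ is invertible (for the reverse inclusion) and that $\rho_0(g)$ is injective (to cancel $\rho_0(g)$). After the claim is in hand, the intended use is immediate: applying $h_1\circ\inf$ to both sides of $\rho(g)\circ h_0^{-1}(x)=h_0^{-1}\circ\rho_0(g)(x)$ and using $\rho\succcurlyeq_{h_1}\rho_1$ together with the fact that $\rho(g)$ preserves the order of $S^1$ and hence commutes with $\inf$ on arcs, one gets $h\circ\rho_0(g)=\rho_1(g)\circ h$, so that $h$ semi-conjugates $\rho_0$ to $\rho_1$; and the equality of bounded Euler classes then follows from Theorem~\ref{t:ghys2} once one checks $h$ is a genuine monotone degree one map, which is where Lemma~\ref{l:core-gap} is invoked.
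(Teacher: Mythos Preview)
Your proof is correct and essentially identical to the paper's: both establish the two inclusions by applying the intertwining relation $h_0\circ\rho(g)=\rho_0(g)\circ h_0$ in each direction, with the paper phrasing the reverse inclusion via $g^{-1}$ rather than your explicit appeal to injectivity of $\rho_0(g)$. Your additional remarks on how the claim is used downstream also match the paper's argument.
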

Part $\sse$ can be seen from
\[
h_0\circ\rho(g)\circ h_0^{-1}(x)
=
\rho_0(g)\circ h_0\circ h_0^{-1}(x)=\rho_0(g)(x).\]
The opposite inclusion follows from
\[
h_0\circ\rho(g^{-1})\circ h_0^{-1}\circ\rho_0(g)(x)
=
\rho_0(g^{-1})\circ h_0\circ h_0^{-1}\circ\rho_0(g)(x)=x.\]

Now, we see $\rho_0\succcurlyeq_h\rho_1$ from
 the above claim
 and from
\[
\rho_1(g)\circ h(x)
=\rho_1(g)\circ h_1\circ\inf\circ h_0^{-1}(x)
=h_1\circ\rho(g)\circ\inf\circ h_0^{-1}(x)=h\circ\rho_0(g)(x).\]

For the second part, we may assume $\rho_0$ is a blow-up of $\rho_1$.
By Lemma~\ref{l:core-gap}, we may require that
\[0\in \mathrm{Core}(h),\]
and furthermore, that $0$ is not a boundary point of an interval in $\mathrm{Gap}(h)$.

Choose an arbitrary continuous monotone degree one map
$\tilde h\in\Homeo_\bZ(\bR)$ lifting $h$
and let $p=\tilde h(0)$.
Let us denote the sections
\[
s(f) =\tilde f,\quad
s^p(f) = \hat f.\]
so that \[\tilde f(0)\in[0,1),\quad \hat f(p)\in[p,p+1)\] for each $f\in\Homeo_+(S^1)$.
Then the following diagram commutes modulo $\bZ$:
\[
\xymatrix{
\bR \ar[r]^{\widetilde{\rho_1(a)}} \ar[d]^{\tilde h}& \bR \ar[d]^{\tilde h} \\
\bR \ar[r]^{\widehat{\rho_1(a)}} & \bR.}
\]

We claim that
the above diagram commutes on the nose.
Evaluating at $0$, we see 
\[
\widehat{\rho_1(a)}\tilde h(0)
=
\widehat{\rho_1(a)}p
\in[p,p+1),\quad\text{and that}\quad
p=\tilde h(0)\le \tilde h \widetilde{\rho_0(a)}(0)\le \tilde h(1)=p+1.\]
Since $h$ is not locally constant on either side of $0$, we see \[\tilde h \widetilde{\rho_0(a)}(0)\ne \tilde h(1).\] This establishes the commutativity of the diagram.

For $a,b\in  G$, we have that
\begin{align*}
\rho_0^* \eu(a,b)=1
&\Leftrightarrow \widetilde{\rho_0(a)}\widetilde{\rho_0(b)}0\ge1\\
&\Leftrightarrow \tilde h\widetilde{\rho_0(a)}\widetilde{\rho_0(b)}0\ge\tilde h1=p+1\\
&\Leftrightarrow \widehat{\rho_1(a)}\widehat{\rho_1(b)} p\ge p+1\\
&\Leftrightarrow\rho_1^* \eu^p(a,b)=1.
\end{align*}
Note we used again the condition that $0$ is not a locally constant point of $h$.
It follows that 
\[\rho_0^*\eu=\rho_1^* \eu^p\] as cocycles.
By the basepoint independence of the bounded Euler class, we have \[\rho_0^* \eu_b=\rho_1^* \eu_b\] in $H^2_b( L;\bZ)$.

(2)
Let $\rho_0\succcurlyeq_h\rho_1$ for some monotone degree one map $h$ on $S^1$.
Note that \[\rho_1(G)\left(h(S^1)\right)=h\left(\rho_0(G)(S^1)\right)=h(S^1).\]
So $h(S^1)$ is $\rho_1(G)$--invariant. By minimality, we see $h(S^1)$ is dense in $S^1$.
It follows that $h$ does not have jump discontinuities, and so surjective and  continuous.
For each open nonempty interval $J$ in $\mathrm{Gap}(h)$, we see
\[h\circ\rho_0(g)\restriction_J=\rho_1(g)\circ h\restriction_J\]
is constant. So $\mathrm{Gap}(h)$ is $\rho_0(G)$--invariant.
Since \[\mathrm{Core}(h)=S^1\setminus \mathrm{Gap}(h)\] is a closed, uncountable (Lemma~\ref{l:core-gap}) and $\rho_0(G)$--invariant, we see that $\mathrm{Core}(h)=S^1$. So $h$ is injective.
This shows that $h$ is a homeomorphism.
\ep

\noindent {\bf Finite orbits}\\
Note that a circle action $\rho$ of $L$ has a finite orbit if and only if a finite-index subgroup of $L$ has a global fixed point.
Recall the \emph{exponent}\index{exponent (of a group element)} of $g$ in a group is the smallest positive integer $N$ such that $g^N=1$.

\begin{lem}\label{lem:exponent}
Let $\rho$ be a circle action of $L$.
\be
\item
Then $\rho$ has a finite orbit of cardinality $N$ 
if and only if the exponent of $\rho^*\eu_b$ is $N$ in $H^2_b(L;\bZ)$. 
\item
If $\rho$ has a finite orbit,
then the map $\bar\rho=T\circ\rot\circ\rho$ is a group action of $L$ and satisfies $\bar\rho^*\eu_b=\rho^*\eu_b$.
\ee
\end{lem}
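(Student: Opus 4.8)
The plan is to prove part (2) first, since it is needed for part (1), and then treat the forward and backward directions of part (1) separately; the backward direction is where the real work lies.

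For part (2), suppose $\rho$ has a finite orbit $F=\{x_1,\dots,x_N\}$, labeled in the cyclic order of $S^1$. Each $\rho(g)$ is orientation preserving, so it permutes $F$ by a cyclic shift, and transitively because $F$ is an orbit; recording the shift gives a surjection $\sigma\colon L\to\bZ/N\bZ$, and lifting to $\bR$ and reading off the translation number shows $\rot\circ\rho(g)=\sigma(g)/N$. Hence $\rot\circ\rho=\tfrac1N\sigma$ is a homomorphism into $\tfrac1N\bZ/\bZ$, so $\bar\rho=T\circ\rot\circ\rho$, which sends $g$ to the rotation by $\sigma(g)/N$, is a group homomorphism. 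To compare bounded Euler classes I would compute the point-cocycles at well-chosen basepoints: take $x_1\in F$ for $\rho$ and $0$ for $\bar\rho$, fix lifts $\tilde x_1<\dots<\tilde x_N<\tilde x_1+1$ and extend periodically by $\tilde x_{j+N}=\tilde x_j+1$. Then one checks $s^{x_1}(\rho(g))(\tilde x_j)=\tilde x_{j+\sigma(g)}$ and $s^0(\bar\rho(g))=T(\sigma(g)/N)$, and both $\rho^*\eu^{x_1}(g,h)$ and $\bar\rho^*\eu^0(g,h)$ evaluate to the carry $\lfloor(\sigma(g)+\sigma(h))/N\rfloor$. By basepoint independence of the bounded Euler class this gives $\rho^*\eu_b=\bar\rho^*\eu_b$.

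For part (1), the forward direction: with $F$ as above, $\rot\circ\rho(L)=\tfrac1N\bZ/\bZ$ has order exactly $N$. Now $N\rho^*\eu_b=0$ follows from part (2) and the identity $N\bar\rho^*\eu^0=\partial\sigma$, where $\sigma\colon L\to\{0,\dots,N-1\}\subseteq\bZ$ is bounded. That no smaller $m$ works is the essential use of Lemma~\ref{lem:eu-rot}: if $m\rho^*\eu^0=\partial\beta$ with $\beta$ bounded and integer valued, a telescoping computation yields $m\rot\circ\rho(g)=\lim_n\tfrac1n\sum_{k=1}^n\partial\beta(g,g^k)\equiv\beta(g)\equiv 0\pmod\bZ$, so $\rot\circ\rho(L)$ sits inside the order-$m$ group $\tfrac1m\bZ/\bZ$, forcing $N\mid m$. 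Hence the exponent is exactly $N$.

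The converse is the hard part and the place I expect the obstacle. Suppose the exponent of $\rho^*\eu_b$ is $N<\infty$, say $N\rho^*\eu^0=\partial\beta$ with $\beta$ bounded. Over $\bR$ this reads $\rho^*\eu^0=\partial(\tfrac1N\beta)$, and since $\rho^*\eu^0$ is integer valued, $\phi:=\tfrac1N\beta\bmod\bZ$ is a genuine homomorphism $L\to\tfrac1N\bZ/\bZ$; the telescoping argument again identifies $\phi=\rot\circ\rho$. Set $\psi=T\circ\rot\circ\rho$, a homomorphism into the finite cyclic group of rotations of order $M:=|\rot\circ\rho(L)|\mid N$. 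Writing $\beta=N\gamma+a$ with $a(g)\in\{0,\dots,N-1\}$ and $\gamma$ bounded gives $\rho^*\eu^0=\psi^*\eu^0+\partial\gamma$, hence $\rho^*\eu_b=\psi^*\eu_b$. By Theorem~\ref{t:ghys2} this forces $\rho\sim_{\mathrm{semi}}\psi$, and then by the implication $(1)\Rightarrow(\ref{part:symmetric})$ of Theorem~\ref{thm:equiv} there is a monotone degree one map $v$ on $S^1$ with $\psi\succcurlyeq_v\rho$. Since $\psi$ has a finite orbit $E$, the set $v(E)$ is finite, nonempty, and $\rho(L)$--invariant, so $\rho$ has a finite orbit; calling its cardinality $N'$ and invoking the forward direction applied to $\rho$ gives that the exponent equals $N'$, whence $N'=N$. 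This is non-circular: the converse uses only Theorem~\ref{t:ghys2} and $(1)\Rightarrow(\ref{part:symmetric})$ of Theorem~\ref{thm:equiv}, neither of whose proofs uses this lemma. The genuine obstacle, then, is simply to see that a torsion bounded Euler class forces the action to be semi-conjugate to a finite rotation action — which the reduction to $\psi$ above is designed to handle — after which the semi-conjugacy invariance of having a finite orbit does the rest.
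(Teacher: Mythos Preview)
Your proof is correct, but for the backward direction of (1) you take a genuinely different route from the paper. For part (2) and the forward implication of (1) your argument is essentially the paper's (with the minor variation that you deduce minimality of $N$ via Lemma~\ref{lem:eu-rot} rather than by the purely algebraic observation that $\tilde\beta - k\tilde\gamma$ is a bounded homomorphism to $\bZ$, hence zero).

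For the converse of (1), the paper gives a direct, self--contained construction: from $N\rho^*\eu = \partial\tilde\beta$ one gets a homomorphism $\beta\colon L\to\bZ/N\bZ$; on the kernel $H=\ker\beta$ the restricted bounded Euler class vanishes, so $\rho\restriction_H$ lifts to an action $\tilde\sigma$ on $\bR$ with $\tilde\sigma(H)(0)$ bounded, and $p=\sup\tilde\sigma(H)(0)$ is a global fixed point for $H$, producing an orbit of size $|L/H|=N$. Your argument instead packages the same cocycle computation into the identity $\rho^*\eu_b=\psi^*\eu_b$ for the rotation action $\psi=T\circ\rot\circ\rho$, then invokes Ghys' Theorem~\ref{t:ghys2} and the implication $(1)\Rightarrow(\ref{part:symmetric})$ of Theorem~\ref{thm:equiv} to transport a finite orbit from $\psi$ to $\rho$. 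As you note, this is non-circular because those particular implications (namely $(\ref{part:eu})\Rightarrow(\ref{part:matsumoto})\Rightarrow(\ref{part:r-semi})\Rightarrow(\ref{part:symmetric})$ in the paper's scheme) do not use Lemma~\ref{lem:exponent}. The paper's approach buys a completely elementary argument that is independent of the semi-conjugacy machinery and hence can serve as a building block for Theorem~\ref{thm:equiv}; your approach buys a clean conceptual reduction and illustrates the general principle that any property visible from the bounded Euler class (here: having finite exponent) is a semi-conjugacy invariant, at the cost of importing Ghys' theorem as a black box.
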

\bp
(1) ($\Rightarrow$) Suppose 
\[\rho(G)p=\{p_0=p,p_1,\ldots,p_{N-1}\}\] in the cyclic order of $S^1$. Then we have a surjective group homomorphism $\beta\co G\to\bZ/N\bZ$ such that $\rho(g)p_i=p_{i+\beta(g)}$.
We may assume $p=0$.
Fix a lift \[\tilde\beta\co G\to \{0,1,\ldots,N-1\}.\]
Then 
\[
\rho^*\eu(a,b)=\frac1N\left(\tilde\beta(a)+\tilde\beta(b)-\tilde\beta(ab)\right)=\frac1N\partial\tilde\beta(a,b).\]
So we have $N\rho^*\eu_b=0$. 

Let $M$ be the exponent of $\rho^*\eu_b=0$. 
We have $M\rho^*\eu=\partial\tilde\gamma$ for some bounded map $\tilde\gamma\co G\to\bZ$. 
We can write $N=Mk$ for some $k>0$ by the minimality of $M$.
We see
\[
\partial\left(\tilde\beta-k\tilde\gamma\right)
=N\left(\frac1N\partial\tilde\beta-\frac1M\partial\tilde\gamma
\right)=0.
\]
So $\tilde\beta-k\tilde\gamma\co G\to\bZ$ is a homomorphism,
which is bounded. It follows that $\tilde\beta=k\tilde\gamma$.
Since $\tilde\beta$ is surjective, we have $k=1$ and $M=N$.

(1) ($\Leftarrow$)
Suppose $N$ is the exponent of $\rho^*\eu$. Note that we allow $N=1$. Then \[N\rho^*\eu=\partial\tilde\beta\] for some 
bounded map \[\tilde\beta\co G\to \bZ.\]
We have a group homomorphism 
\[\beta\co G\to\bZ/N\bZ\] from the post-composition
$\bZ\to\bZ/N\bZ$. The minimality of $N$ implies that $\beta$ is surjective. 

Set $H=\ker\beta$ and $\sigma=\rho\restriction_H$. 
Note that \[\tilde\beta(a)/N\in\bZ\] for each $a\in H$.
By writing
\[
\sigma^*\eu(a,b)=
{\tilde\beta(a)}/N
+\tilde\beta(b)/N-\tilde\beta(ab)/N
\]
we see that \[\sigma^*\eu_b=0\in H^2_b(H;\bZ).\]
Write the cocycle
$\sigma^*\eu=\partial\gamma$ for some bounded map $\gamma$ on $H$. We can lift $\sigma$ to 
 $\tilde\sigma\co H\to\Homeo_\bZ(\bR)$ by the formula
\[\tilde\sigma(g)=T(-\gamma(g)) s\circ\sigma(g);\]
see Section~\ref{sec:prelim}.
Then 
\[
\tilde\sigma(g)(0)=s\circ\sigma(g)(0)-\gamma(0)
\in [-\|\gamma\|_\infty,\|\gamma\|_\infty+1).\]
So we can find \[p=\sup\{\tilde\sigma(g)(0)\co g\in G\}\in\Fix\tilde\sigma.\] The projection of $p$ is a global fixed point of $\sigma$.
Moreover,
\[
\left|\rho(G)p\right|
=
\left|
G/H
\right|=N.\]

(2) 
Let $\rho(G)p$ be a finite orbit.
Using the notations from the proof of (1), we observe
\[\rot\circ\rho(g)=\beta(g)/N.\]
So $\bar\rho$ is a group homomorphism. 
While proving the direction ($\Rightarrow$) in (1),
we have already seen 
that
\[
\rho^*\eu^p(a,b)=\frac1N\left(\tilde\beta(a)+\tilde\beta(b)-\tilde\beta(ab)\right)=
\bar\rho^*\eu^p(a,b)\]
 as cocycles.
\ep

\begin{lem}\label{lem:fin-orb}
Let $\rho_0,\rho_1$ be circle actions of $L$.
Suppose either 
\be[(i)]
\item
$\rho_0$ and $\rho_1$ have the same bounded Euler class, or
\item
$\rho_0$ and $\rho_1$ have a common minimalization.
\ee
If $\rho_0$ has a finite orbit, then so does $\rho_1$.
\end{lem}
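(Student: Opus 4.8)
The plan is to reduce both hypotheses to the single assertion that $\rho_0^*\eu_b=\rho_1^*\eu_b$ in $H^2_b(L;\bZ)$, and then to quote Lemma~\ref{lem:exponent}(1): a circle action $\rho$ of $L$ has a finite orbit of cardinality $N$ if and only if the exponent of $\rho^*\eu_b$ equals $N$. Once the bounded Euler classes agree, their exponents agree, so $\rho_0$ has a finite orbit of cardinality $N>0$ exactly when $\rho_1$ does (indeed with the same $N$), which is stronger than what is asked.

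In case (i) there is nothing to prove, since $\rho_0^*\eu_b=\rho_1^*\eu_b$ is the hypothesis. The work is all in case (ii). Here I would first establish the auxiliary fact that every minimalization $\bar\rho$ of a circle action $\rho$ of $L$ satisfies $\rho^*\eu_b=\bar\rho^*\eu_b$; this is proved by inspecting the two mutually exclusive clauses of Definition~\ref{defn:minimalization}. If $\rho(L)$ has a finite orbit, then $\bar\rho=T\circ\rot\circ\rho$ and the identity $\bar\rho^*\eu_b=\rho^*\eu_b$ is exactly Lemma~\ref{lem:exponent}(2). If instead $\rho$ is a blow-up of $\bar\rho$, then $\rho$ is a common blow-up of $\rho$ and $\bar\rho$ (via the identity map and via the given semi-conjugating map), so Lemma~\ref{lem:minimal}(1) gives $\rho^*\eu_b=\bar\rho^*\eu_b$. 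Applying this to $\rho_0$ and to $\rho_1$ with their common minimalization $\bar\rho$ yields $\rho_0^*\eu_b=\bar\rho^*\eu_b=\rho_1^*\eu_b$, which reduces case (ii) to case (i) and finishes the proof.

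A more elementary alternative for case (ii), which I would mention but not use, avoids cohomology entirely: if $\rho_0$ has a finite orbit then its minimalization is, by the exclusive clauses of Definition~\ref{defn:minimalization}, the rotation action $\bar\rho=T\circ\rot\circ\rho_0$, whose image is a finite cyclic group of rotations and which therefore has finite orbits; since $S^1$ is infinite, $\bar\rho$ is not minimal. As $\bar\rho$ is also a minimalization of $\rho_1$ and is not minimal, $\rho_1$ must fall into clause (i) of Definition~\ref{defn:minimalization}, i.e.\ $\rho_1(L)$ has a finite orbit. The only point requiring care in either route is the exclusivity of the two clauses in the definition of minimalization — equivalently, that a blow-up of a minimal action has no finite orbit — but this is immediate, since a finite invariant set downstairs would have to be all of $S^1$. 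After that observation the argument is a short chain of citations of Lemmas~\ref{lem:exponent} and~\ref{lem:minimal}, so I do not expect a genuine obstacle.
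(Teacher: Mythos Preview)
Your proposal is correct. For case~(i) you and the paper do exactly the same thing, invoking Lemma~\ref{lem:exponent}(1). For case~(ii) you take a different route: you prove the auxiliary fact that minimalization always preserves the bounded Euler class (splitting into the two clauses of Definition~\ref{defn:minimalization} and citing Lemma~\ref{lem:exponent}(2) and Lemma~\ref{lem:minimal}(1) respectively), thereby reducing case~(ii) to case~(i). The paper instead uses what you call the ``elementary alternative'': it simply observes that the minimalization of $\rho_i$ is minimal if and only if $\rho_i$ has no finite orbit, so a common minimalization immediately transfers the finite-orbit property. Your main route is a little longer but yields the stronger intermediate statement $\rho^*\eu_b=\bar\rho^*\eu_b$, which is independently useful (and indeed reappears in the proof of $(\ref{part:min})\Rightarrow(\ref{part:eu})$ in Theorem~\ref{thm:equiv}); the paper's route is more economical for this lemma alone.
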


\bp
In the case (i),  Lemma~\ref{lem:exponent} (1) implies the conclusion. In the case (ii), 
recall that a minimalization of $\rho_i$ is minimal if and only if $\rho$ does not have a finite orbit.
\ep

\bp[Proof of Lemma~\ref{lem:minimal-exists}]
Suppose $\rho$ has an exceptional minimal set $C$;
in particular, $C$ is a Cantor set.
In this case, we let $h\in S^1\to S^1$ be the \emph{Cantor function}  which is continuous and is locally constant on $S^1\setminus C$. Then we have $\rho\succcurlyeq_h \bar \rho$ for some minimal action $\bar \rho$. 
Uniqueness follows from 
Lemma~\ref{lem:minimal} implies that such $\bar\rho$ is unique up to conjugacy.

If $\rho$ has a finite orbit, then Lemma~\ref{lem:exponent} implies that the minimalization $\bar\rho$ is a well-defined circle action.\ep

For completeness,
let us record a result that corresponds to Lemma~\ref{lem:fin-orb}
in the case of $\Homeo_+(\bR)$.

\begin{lem}\label{l:semi}
Let $L$ be a countable group.
If
$\rho_0,\rho_1\in \Hom(L,\Homeo_+(\bR))$
 are semi-conjugate
 and if $\rho_0$ has no global fixed point, then neither does $\rho_1$.
\end{lem}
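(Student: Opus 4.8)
The plan is to reduce everything to one elementary observation: a nondecreasing, equivariant map $\bR\to\bR$ carries the global fixed point set of the source action into that of the target action. The lemma is then just the contrapositive of this, applied in the appropriate direction, and the only work is to arrange that direction.

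First I would invoke the symmetry of the semi-conjugacy relation on $\Hom(L,\Homeo_+(\bR))$ (Lemma~\ref{lem:equiv}); equivalently, I would pass to a common blow-up via Lemma~\ref{lem:real-me}~(1), which produces an action $\rho\in\Hom(L,\Homeo_+(\bR))$ and surjective nondecreasing maps $h_0,h_1\co\bR\to\bR$ with $\rho\succcurlyeq_{h_i}\rho_i$ for $i=0,1$. Assume for contradiction that $\rho_1$ has a global fixed point $x_1$, so $\rho_1(g)(x_1)=x_1$ for all $g\in L$. Since $h_1$ is surjective nondecreasing it is proper, and an easy point-set argument shows the fiber $J:=h_1^{-1}(x_1)$ is a nonempty compact interval $[a,b]$ (if a fiber had an open end, $h_1$ would skip a nondegenerate interval of values, contradicting surjectivity). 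From $h_1\circ\rho(g)=\rho_1(g)\circ h_1$ we get that $J$ is $\rho(L)$-invariant: $h_1(\rho(g)(y))=\rho_1(g)(x_1)=x_1$ for $y\in J$. Each $\rho(g)$ is an orientation-preserving self-homeomorphism of $\bR$ mapping $[a,b]$ into itself, hence onto itself (apply the same to $g^{-1}$), hence fixing $a$ and $b$. Thus $a\in\Fix\rho(L)$, and then $h_0\circ\rho(g)=\rho_0(g)\circ h_0$ gives $\rho_0(g)(h_0(a))=h_0(\rho(g)(a))=h_0(a)$ for all $g$, so $h_0(a)\in\Fix\rho_0(L)$. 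This contradicts $\Fix\rho_0(L)=\varnothing$, completing the proof. I would also note the one-line shortcut: by symmetry of semi-conjugacy there is a proper nondecreasing $h$ with $\rho_1\succcurlyeq_h\rho_0$, whence $h(\Fix\rho_1(L))\subseteq\Fix\rho_0(L)$ directly from the intertwining relation; but I prefer the blow-up version as the main argument since it does not presuppose that the $\succcurlyeq$-relation is available in the convenient direction.

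The hard part will be essentially nonexistent: this is the $\Homeo_+(\bR)$-analogue of Lemma~\ref{lem:fin-orb} and is recorded for completeness. The only two steps requiring a sentence of care are (i) keeping track of which way the semi-conjugating map points, handled by Lemma~\ref{lem:equiv} or by passing to a blow-up, and (ii) the observation that a surjective monotone map $\bR\to\bR$ has compact point-preimages. Everything else is a routine diagram chase.
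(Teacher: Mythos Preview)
Your proof is correct and follows essentially the same approach as the paper's: both pass to a common blow-up via Lemma~\ref{lem:real-me}~(1), pull back a putative global fixed point of $\rho_1$ to a $\rho$-invariant (bounded, connected, nonempty) subset of $\bR$, and read off a global fixed point from its boundary. The only cosmetic difference is that the paper phrases the reduction as ``we may assume $\rho_0\succcurlyeq_h\rho_1$ for some surjective $h$'' (implicitly replacing $\rho_0$ by the blow-up), whereas you keep the blow-up $\rho$ explicit and push the fixed point down through $h_0$; your version is slightly more transparent about where surjectivity is used.
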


\bp
By Lemma~\ref{lem:real-me} (1), we may assume $\rho_0\succcurlyeq_h\rho_1$ for some surjective $h$. 
Let us assume the contrary and pick $y\in\Fix\rho_1$.
Then $J=h^{-1}(y)$ is a $\rho_0(G)$--invariant proper connected subset of $\bR$.
Since $\partial J$ consists of global fixed points of $\rho_0$, it follows that $J$ is empty. This contradicts the surjectivity of $h$.
\ep

\bp[Proof of Theorem~\ref{thm:equiv}]
The implication (\ref{part:r-semi})$\Rightarrow$(\ref{part:monotone})
 is immediate by Lemma~\ref{lem:real-me}.
The proof of
(\ref{part:eu})$\Rightarrow$(\ref{part:matsumoto})
was given right after Lemma~\ref{lem:eu-tau}.
If (\ref{part:r-semi}) holds, then Lemma~\ref{lem:equiv} implies that (\ref{part:symmetric}) also holds.

For (\ref{part:monotone})$\Rightarrow$(\ref{part:min}),
let $\rho$ be a common blow-up of $\rho_0$ and $\rho_1$,
and let $\bar\rho_i$ be a minimalization of $\rho_i$.
We already saw that $\rho_0^*\eu_b=\rho_1^*\eu_b$
in Lemma~\ref{lem:minimal}.  By Lemma~\ref{lem:fin-orb}, we have two cases.
If $\rho_0$ and $\rho_1$ have finite orbits, then  
\[\bar\rho_i=T\circ\rot\circ\rho_i.\]
Lemma~\ref{lem:eu-rot} implies that $\bar\rho_0=\bar\rho_1$.
So we may assume neither have finite orbits
and $\bar\rho_i$ is minimal.
By Lemma~\ref{lem:minimal}, we see $\bar\rho_0\succcurlyeq_h\bar\rho_1$
for some monotone degree one map $h$,
and moreover, $\bar\rho_0$ and $\bar\rho_1$ are conjugate.

For (\ref{part:min})$\Rightarrow$(\ref{part:eu}),
suppose $\rho_0$ and $\rho_1$ have a common minimalization $\bar\rho$.
By Lemma~\ref{lem:fin-orb}, either both have finite orbits
or $\bar\rho$ is minimal. By applying Lemma~\ref{lem:exponent} (2)
in the first case and 
by applying Lemma~\ref{lem:minimal} (1) in the second,
we see
\[\rho_0^*\eu_b=\bar\rho^*\eu_b=\rho_1^*\eu_b.\qedhere\]

For (\ref{part:monotone})$\Rightarrow$(\ref{part:mann-path}),
we may assume $\rho_0\succcurlyeq_h\rho_1$ for some surjective monotone degree one map $h$. As in~\cite[Proposition 7.6]{Mann2015IM}, we pick a path \[\{h_t\co t\in[0,1)\}\sse \Homeo_+(S^1)\]
such that $h_0=\Id$ and $\lim_{t\to 1}h_t = h$.
Then $\{\rho_t(g)=h_t\rho_0(g) h_t^{-1}\}$ is the desired path.

For (\ref{part:mann-path})$\Rightarrow$ (\ref{part:matsumoto}),
we note that the following is integer--valued:
\[\rot^\sim\circ s\circ\rho_t(g)-\rot^\sim\circ s\circ\rho_0(g)\]
By Lemma~\ref{lem:eu-tau}, we see
\[
\rho_t^*\tau-\rho_0^*\tau
=\rho_t^*\eu-\rho_0^*\eu-
(s\circ \rho_t)^*\partial\rot^\sim+(s\circ \rho_0)^*\partial\rot^\sim\]
is an integer--valued function on $L\times L\times  I $.
Since $\tau$ is continuous on $\Homeo_+(S^1)\times\Homeo_+(S^1)$,
we see that $\rho_t^*\tau=\rho_0^*\tau$, as desired.

The condition (\ref{part:matsumoto}) implies (\ref{part:r-semi}),
as elementarily (without using cohomology) proven in~\cite[Theorem 5.11]{Mann-hb}.
The implication (\ref{part:symmetric})$\Rightarrow$(\ref{part:eu}) is included in \cite[Theorem 1.4]{BFH2014}.
\ep


\section*{Acknowledgments}
The authors are grateful to two anonymous referees for their careful reading and comments. The authors thank M. Bestvina, B. Bowditch, D. Calegari, M. Casals-Ruiz, K. Fujiwara, T. Ito, J. Kahn, M. Kapovich, I. Kim, Y. Matsuda, K. Ohshika, A. Reid, M. Sakuma, Z. Sela, A. Sisto, M. Triestino, T. N. Venkataramana and H. Wilton for helpful discussions. The authors thank M. Wolff for sharing~\cite{WolffPreprint} and for several helpful comments. The authors are particularly grateful to K. Mann for a number of insightful comments and for pointing out several references, particularly~\cite{DK2006Duke}. The second and third authors thank the Tata Institute for Fundamental Research in Mumbai, and the first and second authors thank the Institute for Mathematical Sciences in Singapore, where parts of this research were conducted. The authors were supported by the National Science Foundation 
under Grant No. DMS-1440140 at  the Mathematical Sciences Research Institute in Berkeley
during Fall 2016, where this research was completed. The first author is supported by Samsung Science and Technology Foundation (SSTF-BA1301-06).
The second author is partially supported by Simons Foundation Collaboration Grant number 429836, by an Alfred P. Sloan Foundation Research Fellowship, and by NSF Grant DMS-1711488.
The third author is supported in part by a Department of Science and Technology  J.C. Bose Fellowship grant.


\bibliographystyle{amsplain}
\bibliography{ref}
\printindex

\end{document}